\documentclass[11pt]{article}
\usepackage{graphicx}
\usepackage{amsfonts}
\usepackage{amsmath}
\usepackage{amssymb}
\usepackage{amsthm}
\usepackage{stmaryrd}
\usepackage{hyperref}
\usepackage{cite}
\usepackage[all]{xy}
\usepackage[utf8]{inputenc}
\usepackage{tikz-cd}

\usepackage[letterpaper, margin=1in]{geometry}

\theoremstyle{plain}
\newtheorem{thm}{Theorem}[section]
\newtheorem{prop}[thm]{Proposition}
\newtheorem{corr}[thm]{Corollary}
\newtheorem{lem}[thm]{Lemma}

\theoremstyle{remark}
\newtheorem{rem}[thm]{Remark}

\theoremstyle{definition}
\newtheorem{defn}[thm]{Definition}
\newtheorem{ex}[thm]{Example}

\newcommand{\inv}{^{-1}}
\newcommand{\units}{^{(0)}}
\newcommand{\ra}{\rightarrow}

\newcommand{\calA}{\mathcal A}
\newcommand{\calG}{\mathcal G}
\newcommand{\calH}{\mathcal H}
\newcommand{\calK}{\mathcal K}
\newcommand{\calL}{\mathcal L}

\newcommand{\ca}{\mathcal}

\newcommand{\Z}{\mathbb Z}

\newcommand{\cst}{$C^*$}

\newcommand{\inter}{\mathrm{int}}

\newcommand{\frgr}{\mathfrak{Gr}}
\newcommand{\kk}{\mathrm{KK}}
\newcommand{\ind}{\mathrm{Ind}}
\newcommand{\E}{\underline{E}}

\title{A going-down principle for \'etale groupoids and the Baum--Connes conjecture}
\author{Kai Mao}

\begin{document}

\maketitle

\begin{abstract}
    We study a going-down principle for \'etale groupoids and its applications, extending previous results for locally compact groups by Chabert, Echterhoff and Oyono-Oyono, and for ample groupoids by B\"onicke and by B\"onicke--Dell'Aiera. The proof in the general \'etale groupoid setting is based on a more detailed study of groupoid simplicial complexes. For the most commonly considered kind of going-down functors, we recover the result of B\"onicke and Proietti, which they proved via a categorical approach and used to establish the split injectivity of the Baum--Connes assembly map for \'etale groupoids that are strongly amenable at infinity. We also study a bicategorical functoriality, involving the induction functors from \'etale groupoid correspondences introduced by Miller. This yields a bicategorical interpretation of the induction-restriction adjunction. The going-down principle is also applied to the proof of continuity of topological K-theory of \'etale groupoids and the study of the scope of validity of K\"unneth formulas.
\end{abstract}

\tableofcontents

\section{Introduction}
If $\calG$ is a $\sigma$-compact locally compact Hausdorff groupoid with a Haar system and $A$ is a $\calG$-\cst-algebra, the Baum--Connes conjecture states that, the assembly map
\[\mu_{\calG,A}:K^{top}_*(\calG,A):=\varinjlim_{X\subseteq \E\calG}\kk^\calG_*(C_0(X),A)\ra K_*(A\rtimes_r\calG)\]
is an isomorphism, where $K^{top}_*(\calG,A)$ is called the topological K-theory of $\calG$ with coefficients in $A$, $X$ runs over all $\calG$-compact subsets of a classifying space $\E\calG$ of proper actions of $\calG$. The conjecture is proved to be true for groupoids satisfying the Haagerup property, for example amenable groupoids, in \cite{tu1999conjecture-bc} by Tu. It is known to be false in general in \cite{HigsonLafforgueSkandalis} by Higson, Lafforgue and Skandalis even with trivial coefficients.

When $G$ is a locally compact Hausdorff group which is amenable at infinity, i.e. admits an amenable action on some compact space, the Baum--Connes assembly map is split injective with any coefficients $A$. One of the proofs is using a going-down principle in \cite{chabert2004going} by Chabert, Echterhoff and Oyono-Oyono, where a key technical ingredient can be transformed as the following version in the setting of \'etale groupoids.

\begin{thm}\label{main thm A}
    Let $\calG$ be a second countable locally compact Hausdorff \'etale groupoid, $A,B$ be separable $\calG$-\cst-algebra. If $x\in \kk^\calG_0(A,B)$ such that
    \[\kk^\calH_*(C_0(\calH\units),A|_\calH)\xrightarrow{-\otimes res^\calG_\calH(x)}\kk^\calH_*(C_0(\calH\units),B|_\calH)\]
    is an isomorphism for any proper open subgroupoid $\calH\subseteq \calG$. Then
    \[-\otimes x: K^{top}_*(\calG;A)\ra K^{top}_*(\calG;B)\]
    is an isomorphism.
\end{thm}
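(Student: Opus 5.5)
Since $K^{top}_*(\calG;A)=\varinjlim_X \kk^\calG_*(C_0(X),A)$ and the Kasparov product with $x$ is compatible with the inductive system, it is enough to prove that
\[-\otimes x:\kk^\calG_*(C_0(X),A)\ra \kk^\calG_*(C_0(X),B)\]
is an isomorphism for a cofinal family of $\calG$-compact proper $\calG$-spaces $X\subseteq\E\calG$. For the cofinal family I will use geometric realizations of $\calG$-simplicial complexes, i.e. proper $\calG$-compact \'etale-groupoid simplicial complexes. I expect to show, as is done in the group case by Chabert--Echterhoff--Oyono-Oyono, that every $\calG$-compact subset of $\E\calG$ maps $\calG$-equivariantly into such a complex. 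Conversely, each such complex maps into $\E\calG$, so the two directed systems have the same colimit. Proving this cofinality for general (non-ample) \'etale groupoids is where the detailed study of groupoid simplicial complexes is needed. I would use partitions of unity subordinate to open bisection-type covers, together with the Tu-style construction of $\E\calG$ as a space of probability measures.

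Next, I induct on the dimension of the complex $X$, and within a fixed dimension on the number of $\calG$-orbits of simplices. Write $X=X^{(n-1)}\cup(\text{open }n\text{-simplices})$. This gives a $\calG$-equivariant short exact sequence
\[0\ra C_0(X\setminus X^{(n-1)})\ra C_0(X)\ra C_0(X^{(n-1)})\ra 0.\]
These algebras are commutative and nuclear, so there are six-term exact sequences in $\kk^\calG_*(-,A)$ and $\kk^\calG_*(-,B)$, and $-\otimes x$ is natural with respect to their boundary maps. By the five lemma, it suffices to handle $X^{(n-1)}$, which is covered by induction, and the open $n$-cells.

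The open $n$-cells form a space $\calG\times_{\calH}(\Delta^\circ_n\times \calH\units)$, or more precisely a finite disjoint union of pieces of the form $\calG$ acting on $Y\times\inter\Delta_n$. Here $Y$ is a proper $\calG$-space that is the "vertex space" of a simplex orbit, and it is induced from a proper open subgroupoid $\calH$: the stabilizer groupoid of the simplex, which is open because the action is \'etale and compact by properness. Bott periodicity then gives
\[\kk^\calG_*(C_0(Y\times\inter\Delta_n),A)\cong\kk^\calG_{*+n}(C_0(Y),A),\]
compatibly with $-\otimes x$. The induction--restriction adjunction (compression isomorphism) gives
\[\kk^\calG_*(\ind_\calH^\calG C_0(\calH\units),A)\cong \kk^\calH_*(C_0(\calH\units),A|_\calH),\]
and this identification intertwines $-\otimes x$ with $-\otimes res^\calG_\calH(x)$. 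The hypothesis then gives the isomorphism on the open cells, which completes the induction.

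I expect the main obstacle to be the middle step: exhibiting the open cells, and more generally the cofinal complexes, as induced from proper open subgroupoids. For ample groupoids this can be done with compact open sets. In the general \'etale case the orbit space of simplices need not decompose cleanly, so I would first try to refine the complex, by barycentric subdivision and by shrinking the covering bisections, until each simplex orbit is $\calG\times_\calH$ of a locally closed piece. Failing that, I would replace exact induction by a further Mayer--Vietoris decomposition along an open cover of the unit space. I would also verify that the adjunction isomorphism is natural in the coefficient variable, so that it commutes with the Kasparov product by $x$.
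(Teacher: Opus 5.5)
Your architecture (a cofinal family of simplicial complexes, induction on dimension through $0\to C_0(X\setminus X^{(n-1)})\to C_0(X)\to C_0(X^{(n-1)})\to 0$, the five lemma, then Bott periodicity and compression on the top cells) matches the paper's. However, the step that carries all the content rests on a false claim. The stabilizer of a simplex is a finite group $F\subseteq\calG^x_x$, not an open subgroupoid; it is open only when $x$ is an isolated point of $\calG\units$. It also varies from simplex to simplex. So the space of open $n$-cells is neither $\calG\times_\calH(\inter\Delta_n\times\calH\units)$ nor a finite disjoint union of such pieces.

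What the paper uses instead is the local slice result for proper actions of \'etale groupoids from the preliminaries. Around each point one spreads $F$ out along small bisections to a proper open subgroupoid $\calH\cong F\ltimes A$, with $A$ a relatively compact open neighborhood of the base point. One then finds an $\calH$-invariant relatively compact open $V$ with $\gamma\overline V\cap\overline V=\emptyset$ for all $\gamma\notin\calH$, so that $\calG\overline V\cong\calG\times_\calH\overline V$. The $\calG$-compact space $center(m,\Delta)$, which is closed in the proper \'etale $\calG$-space $A^m$ by $(H_2)$, is covered by finitely many such pieces $\calG S_i$, which generally overlap. One then inducts on the number of pieces using Mayer--Vietoris (Proposition \ref{going-down 0-dim}). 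Your fallback of a Mayer--Vietoris decomposition points in this direction, but without the slice theorem you have no reason for the pieces to be induced at all.

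Even granting the pieces, a second ingredient is missing. Each piece is $\calG\times_\calH S$ with $S$ (via the anchor map) only an $\calH$-invariant compact subset of $\calH\units$, and the Mayer--Vietoris overlaps $S_1\cap Z'$ are smaller sets of the same kind. So after compression you need $-\otimes res^\calG_\calH(x)$ to be an isomorphism on $\kk^\calH_*(C_0(S),\cdot)$, whereas the hypothesis only concerns $C_0(\calH\units)$. This already happens for the $0$-skeleton of a Rips complex: $|\Delta^0_K(\calG)|\cong\calG_{K\cap\calG\units}\cong\calG\times_U(K\cap\calG\units)$ for an open set of units $U\supseteq K\cap\calG\units$.

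The paper fills this with Lemma \ref{goind down small 0-dim case}:
for an invariant open $U\subseteq\calH\units$, $\calH^U_U$ is again a proper open subgroupoid and $C_0(U)\cong\ind^\calH_{\calH^U_U}C_0(U)$;
for an invariant closed $S$, one uses the extra sequence $0\to C_0(\calH\units\setminus S)\to C_0(\calH\units)\to C_0(S)\to 0$ and the five lemma;
the locally closed case combines the two via Lemma \ref{invariant subsets}.

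Finally, your Bott step requires $\calG$ to act trivially on the simplex coordinates. A stabilizer may permute the vertices of a simplex, and then the open cells form a twisted bundle for which equivariant Bott periodicity is not a plain degree shift. This is why the paper first replaces $\Delta$ by its barycentric subdivision, which is typed and has the same realization. That gives $|\Delta^m|\setminus|\Delta^{m-1}|\cong center(m,\Delta)\times\sigma_m$ with $\calG$ acting trivially on $\sigma_m\cong\mathbb R^m$ (Propositions \ref{typed sim com} and \ref{bary center subdivision invariance}). You mention subdivision, but not for this reason.
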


This theorem was already proved in \cite[Theorem 4.4]{bonicke2024categorical} by B\"onicke and Proietti in a categorical approach. Our main purpose is to prove this theorem by following the strategy in \cite{chabert2004going} for locally compact groups and \cite{bonicke2020going} for ample groupoids and overcoming some topological difficulties. In fact, we will obtain a generalization for going-down functors (i.e a collection of cohomological functors that are compatible with suspension and induction like $(\kk_*^\calH(-,A|_\calH))_{\calH\subseteq \calG}$, where $\calH$ runs over all proper open subgroupoids and $\calG$ itself). Here is our main theorem.

\begin{thm}\label{main thm B}
    Let $F, G$ be two going-down functors for a second countable locally compact Hausdorff \'etale groupoid $\calG$, and $\Lambda:F\ra G$ be a going-down transformation. Suppose that for all proper open subgroupoids $\calH\subseteq \calG$ and for all $n\in \mathbb Z$,
    \[
    \Lambda^n_\calH(C_0(\calH\units)): F^n_\calH(C_0(\calH\units))\ra G^n_\calH(C_0(\calH\units))
    \]
    is an isomorphism. 
    
    Then for all $n\in \mathbb Z$, 
    \[\varinjlim_{Z\subseteq \E\calG} \Lambda^n_\calG(C_0(Z)): \varinjlim_{Z\subseteq \E\calG} F_\calG^n(C_0(Z))\ra \varinjlim_{Z\subseteq \E\calG} G_\calG^n(C_0(Z))\]
    is an isomorphism, where $Z$ runs over all $\calG$-compact subsets of $\E\calG$.
\end{thm}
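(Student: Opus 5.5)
The plan follows the Chabert--Echterhoff--Oyono-Oyono strategy. First reduce to simplicial complexes: every $\calG$-compact subset $Z\subseteq\E\calG$ maps into the geometric realization of a $\calG$-simplicial complex with proper action (a nerve-type construction attached to a $\calG$-invariant, locally finite cover of a neighbourhood of $Z$ by "slices" $\calG\times_{\calH} V$). Conversely, each such complex maps $\calG$-equivariantly into $\E\calG$ by universality. These maps are compatible up to homotopy, so the two directed systems are cofinal in each other. Since $F^n_\calG$ and $G^n_\calG$ are homotopy invariant (they are cohomological functors), it suffices to show that
\[\Lambda^n_\calG(C_0(|\Delta|)):F^n_\calG(C_0(|\Delta|))\ra G^n_\calG(C_0(|\Delta|))\]
is an isomorphism for every $\calG$-finite proper $\calG$-simplicial complex $\Delta$ of finite dimension. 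Here $C_0$ denotes compactly supported functions on the $\calG$-compact realization.

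Next, induct on the dimension via the skeleton filtration. Let $\Delta^{(k)}$ be the $k$-skeleton. Then $|\Delta^{(k)}|\setminus|\Delta^{(k-1)}|$ is a disjoint union of open $k$-simplices, and there is a short exact sequence
\[0\ra C_0(|\Delta^{(k)}|\setminus|\Delta^{(k-1)}|)\ra C_0(|\Delta^{(k)}|)\ra C_0(|\Delta^{(k-1)}|)\ra 0.\]
Its six-term (here long, $\Z$-graded) exact sequences for $F$ and $G$ are intertwined by $\Lambda$, since a going-down transformation is natural with respect to boundary maps. By the five lemma and the induction hypothesis, it suffices to handle the open-cell piece. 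Here is the key structural step. The space of open $k$-simplices is $\calG$-equivariantly homeomorphic to $\mathring{\Delta}_k\times Y_k$, where $Y_k$ is the proper $\calG$-space of $k$-simplices (barycenters) and $\mathring{\Delta}_k\cong\mathbb R^k$ carries the trivial action. Using the suspension compatibility of the going-down functors reduces this to $C_0(Y_k)$ with a degree shift.

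Finally, I need to show that $\Lambda^n_\calG(C_0(Y))$ is an isomorphism for $Y$ a $\calG$-compact proper $\calG$-space which, after passing to a barycentric subdivision so the stabilizers act trivially on simplices, is locally of the form $\calG\times_{\calH}\calH\units$-like. More precisely, $Y$ is a finite union of $\calG$-invariant open sets of the form $\calG\cdot V\cong \calG_V\times_{\calH}\calH\units$ with $\calH$ a proper open subgroupoid (e.g.\ $\calH$ the restriction of a proper neighbourhood stabilizer to a relatively compact open $V$). A Mayer--Vietoris argument, again using exact sequences, the five lemma, and an induction on the number of pieces, reduces this to a single piece. The induction compatibility then identifies $F^n_\calG(C_0(\calG\cdot V))\cong F^n_\calH(C_0(\calH\units))$ naturally in $\Lambda$, and the hypothesis finishes the argument.

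The main obstacle is this last topological step, which is exactly where \'etale (non-ample) groupoids differ from groups and ample groupoids. A proper \'etale groupoid action need not decompose into clopen induced pieces, and stabilizers vary non-locally-constantly, so one cannot simply write $Y_k$ as a disjoint union of induced spaces. I would first try to construct $\Delta$ carefully, as a "typed" simplicial complex whose vertices come from a locally finite cover by open sets $U$ with $\calG|_U$ proper and $\calG\cdot U\cong\calG_U\times_{\calG|_U}U$, so that the simplex spaces are themselves induced from proper open subgroupoids. That would make the last step purely formal. If this fails, I would instead use open (not clopen) decompositions together with the Mayer--Vietoris exact sequences supplied by the cohomological functor axioms, controlling the intersections by the same induction.
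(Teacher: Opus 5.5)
Your outline (reduce to finite-dimensional proper $\calG$-compact simplicial complexes, pass to a typed subdivision, induct on skeleta with the five lemma, strip off the $\mathbb R^k$-factor by suspension) broadly matches the paper. However, the last step has a genuine gap, and it sits exactly where non-ample \'etale groupoids differ from ample ones.

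The space $Y_k$ of barycentres of $k$-simplices is in general \emph{not} an \'etale $\calG$-space. It is only a $\calG$-invariant closed subset of the \'etale space $A^k$ of barycentres of all $(k+1)$-point subsets of fibres. For Rips complexes this is unavoidable. The simplex condition $\gamma_i\inv\gamma_j\in K$ with $K$ compact is closed but in general not open; it is clopen when $K$ is compact open, which is what the ample case allows. Already $|\Delta_K^0(\calG)|\cong\calG_{K\cap\calG\units}$ has an anchor map that is not a local homeomorphism in general.

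So your claimed decomposition of $Y_k$ into slices of the form $\calG_{\calH\units}\times_\calH\calH\units$ is false in general. The local structure theorem, applied in the ambient \'etale space, only yields slices $\calG_{\calH\units}\times_\calH S$. Here $S$ is an $\calH$-invariant closed subset of $\calH\units$ (locally closed, for open slices and their intersections). The hypothesis of the theorem says nothing directly about $C_0(S)$.

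The simplest instance is $\calG=\mathbb R$ viewed as a trivial groupoid. Its proper open subgroupoids are open sets $U\subseteq\mathbb R$, so the hypothesis concerns $C_0(U)$, while the conclusion concerns $C(Z)$ for compact $Z\subseteq\mathbb R$. Your plan of building $\Delta$ with \'etale simplex spaces cannot avoid this: in this example a $\calG$-compact space \'etale over $\mathbb R$ is compact with open image, hence empty.

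The missing ingredient is one more exact sequence, which is the content of Lemma \ref{goind down small 0-dim case}. Let $S\subseteq\calH\units$ be $\calH$-invariant and closed, and put $W=\calH\units\setminus S$.
\begin{itemize}
\item $W$ is $\calH$-invariant and open, so $\calH^W_W$ is again a proper open subgroupoid with $\calH_W=\calH^W_W$.
\item Hence $C_0(W)\cong\ind^{\calH}_{\calH^W_W}C_0(W)$, and the induction axiom plus the hypothesis for $\calH^W_W$ make $\Lambda^n_\calH(C_0(W))$ an isomorphism.
\item The long exact sequences of $0\to C_0(W)\to C_0(\calH\units)\to C_0(S)\to 0$ and the five lemma then give the claim for $C_0(S)$.
\item A locally closed invariant $S=V\cap\overline S$, with $V$ invariant open, is handled by passing to $\calH^V_V$ and inducing once more.
\end{itemize}
Only with this lemma does your Mayer--Vietoris induction on the number of slices close up.

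Two smaller points.
\begin{itemize}
\item Identifying the open $k$-cells with $Y_k\times\mathring{\Delta}_k$, with trivial action on the second factor, already requires the complex to be typed. So the barycentric subdivision must be done before the skeleton induction, not only at the last step.
\item The nerve-type complexes of your first step are left unspecified. For \'etale groupoids you must prove that their realizations are locally compact, proper and $\calG$-compact, and that two maps from $Z$ become homotopic inside the system. The paper obtains this from the hypotheses $(H_1)$, $(H_2)$ and the Rips complexes $P_K(\calG)$, using the linear homotopy in a larger $P_K(\calG)$.
\end{itemize}
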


This is a generalization from the ample groupoid case, which was proved in \cite{bonicke2018going} by B\"onicke.

Then we represent a proof of the following result about the Baum--Connes conjecture by following the same ideas as in \cite{bonicke2020going}, which was firstly proved in \cite{bonicke2024categorical} by B\"onicke and Proietti.

\begin{thm}
    Let $\calG$ be a second countable locally compact Hausdorff \'etale groupoid, $A$ be a separable $\calG$-\cst-algebra. If $\calG$ is strongly amenable at infinity, then the assembly map
    \[\mu_{\calG,A}:K^{top}_*(\calG;A)\ra K_*(A\rtimes_r\calG)\]
    is split injective.
\end{thm}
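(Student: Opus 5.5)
The plan follows the Chabert–Echterhoff–Oyono-Oyono strategy as adapted by Bönicke–Dell'Aiera. Let $\calG$ be strongly amenable at infinity. Then there is a compact (second countable) $\calG$-space $Y$, fibred over $\calG\units$ via a proper continuous anchor map, such that the transformation groupoid $\calG\ltimes Y$ is (topologically) amenable. Put $C:=C(Y)$, or more precisely $C_0(Y)$ viewed as a unital $C_0(\calG\units)$-algebra. Let $\iota\in\kk^\calG_0(C_0(\calG\units),C)$ be the class of the unital inclusion $C_0(\calG\units)\to C$.

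For each separable $\calG$-\cst-algebra $A$ we get $x_A:=1_A\otimes_{C_0(\calG\units)}\iota\in \kk^\calG_0(A,A\otimes_{\calG\units}C)$. Naturality of the assembly map with respect to $\kk^\calG$-elements gives a commutative square
\[
\begin{tikzcd}
K^{top}_*(\calG;A) \arrow[r,"\mu_{\calG,A}"] \arrow[d,"-\otimes x_A"] & K_*(A\rtimes_r\calG) \arrow[d] \\
K^{top}_*(\calG;A\otimes_{\calG\units}C) \arrow[r,"\mu_{\calG,A\otimes C}"] & K_*((A\otimes_{\calG\units}C)\rtimes_r\calG).
\end{tikzcd}
\]
The bottom map is an isomorphism by Tu's theorem. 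Indeed, $(A\otimes_{\calG\units}C)\rtimes_r\calG\cong (A|_Y)\rtimes_r(\calG\ltimes Y)$, and $K^{top}_*(\calG;A\otimes C)\cong K^{top}_*(\calG\ltimes Y;A|_Y)$. The groupoid $\calG\ltimes Y$ is amenable, so Tu's theorem applies. It therefore suffices to show that the left vertical map is an isomorphism. Then $(-\otimes x_A)^{-1}\circ\mu_{\calG,A\otimes C}^{-1}\circ(\text{right map})$ is a left inverse of $\mu_{\calG,A}$, which gives split injectivity.

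To prove that the left map is an isomorphism, I will apply Theorem \ref{main thm A} (equivalently Theorem \ref{main thm B} with the going-down functors $\kk^\calH_*(-,A|_\calH)$ and $\kk^\calH_*(-,(A\otimes C)|_\calH)$). So I must check that for every proper open subgroupoid $\calH\subseteq\calG$ the map
\[
\kk^\calH_*(C_0(\calH\units),A|_\calH)\xrightarrow{-\otimes res^\calG_\calH(x_A)}\kk^\calH_*(C_0(\calH\units),(A\otimes_{\calG\units}C)|_\calH)
\]
is an isomorphism. The key point is that the restriction $res^\calG_\calH(\iota)$ is a $\kk^\calH$-equivalence for proper $\calH$. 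Strong amenability at infinity provides, fibrewise, a continuous family of probability measures on $Y$, that is, a (virtually) $\calG$-equivariant conditional expectation. For proper $\calH$, averaging with a cutoff function for $\calH$ produces a genuinely $\calH$-equivariant unital completely positive map $C\to C_0(\calH\units)$. One then shows that $C|_\calH$ is $\kk^\calH$-equivalent to $C_0(\calH\units)$ via $\iota$. For proper groupoids, $\kk^\calH$ of $C_0(\calH\units)$-coefficients reduces to fibrewise equivariant K-theory of the compact isotropy groups, where $Y_u$ is contractible in an equivariant sense.

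This last step is the main obstacle. The Dell'Aiera/Bönicke approach uses the convexity of the space of probability measures: one replaces $Y$ by the fibrewise space of probability measures $\mathrm{Prob}(Y)$, which is still a compact amenable $\calG$-space. A proper $\calH$ admits an $\calH$-equivariant section of $\mathrm{Prob}(Y)|_{\calH\units}$, obtained by averaging an arbitrary continuous section against the Haar system and cutoff function. The straight-line homotopy to this section is then an $\calH$-equivariant homotopy equivalence between $\mathrm{Prob}(Y)|_{\calH\units}$ and $\calH\units$. It follows that $res^\calG_\calH(\iota)$ is invertible in $\kk^\calH$, by homotopy invariance of $\kk^\calH$. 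The difficulty I expect is making the averaging and homotopy continuous and equivariant in the non-ample étale setting, and checking that using $\mathrm{Prob}(Y)$ instead of $Y$ preserves amenability of the transformation groupoid. I would first try to establish these facts directly from the definition of strong amenability at infinity.
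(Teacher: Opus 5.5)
Your proposal is correct and follows the paper's proof essentially step for step: you pass from $Y$ to the fibrewise probability-measure space $P(Y)$, which is still amenable, and use the strong-amenability section together with an averaged $\calH$-equivariant section and the straight-line homotopy to make $res^\calG_\calH(\tau_A^\calG([\rho^*]))$ invertible for every proper open $\calH$; you then apply the going-down theorem and close the naturality square with Tu's theorem for $\calG\ltimes P(Y)$. The only blemishes are in wording: $Y$ is not compact and $C_0(Y)$ is not unital in general (only the anchor map is proper, and $\iota$ is the class of $\rho^*$), and your middle remark about reducing to isotropy groups is not needed, since the homotopy argument in your last paragraph is the one that actually does the work.
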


Besides, same as in \cite{chabert2004going} and \cite{bonicke2019going}, we apply this going-down principle to the study of the continuity of topological K-theory and the scope of validity of K\"unneth formula.

One of our improvements in details is the study about groupoid simplicial complexes. Here is a summary of our results.

\begin{thm}
    Let $\calG$ be a locally compact Hausdorff \'etale groupoid, $(Y,\Delta)$ be a $\calG$-simplicial complex satisfying hypotheses $(H_1)$ and $(H_2)$ (see definition \ref{defn groupoid simplicial complex} and definition \ref{defn H1 H2}).
    \begin{enumerate}
        \item Its geometric realization $|\Delta|$ is a locally compact Hausdorff $\calG$-space.
        \item If $Y$ is a proper $\calG$-space, then so is $|\Delta|$. If $|\Delta^0|$ is $\calG$-compact, then so is $|\Delta|$.
        \item Rips complexes $(\calG,\Delta_K(\calG))$ (definition \ref{defn Rips complex}) are proper $\calG$-compact $\calG$-simplicial complexes satisfying hypotheses $(H_1)$ and $(H_2)$.
        \item If $(Y,\Delta)$ is typed (definition \ref{defn typed}), then for any $1\leqslant m\leqslant\dim\Delta$, we have a $\calG$-equivariant homeomorphism
        \[|\Delta^m|\setminus |\Delta^{m-1}|\cong center(m,\Delta)\times \mathbb R^m,\]
        where $center(m,\Delta)$ is the closed subset of $|\Delta|$ consisting of centers of all $m$-simplices, which is a $\calG$-invariant closed subset of some \'etale $\calG$-space.
        \item The barycentric subdivision $(Y',\Delta')$ (definition \ref{defn barycentric subdivision}) of $(Y,\Delta)$ is a typed $\calG$-simplicial complex satisfying hypotheses $(H_1)$ and $(H_2)$. Its geometric realization $|\Delta'|$ is $\calG$-equivariantly homeomorphic to $|\Delta|$.
    \end{enumerate}
\end{thm}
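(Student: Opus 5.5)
This is a summary theorem whose five parts are established in the later sections on groupoid simplicial complexes. I plan to prove them in the order listed, working fibrewise over $\calG\units$ throughout. A $\calG$-simplicial complex $(Y,\Delta)$ consists of an étale $\calG$-space $Y$ of vertices together with open subsets $\Delta^n\subseteq Y^{(n+1)}$ of the fibred products over $\calG\units$, which record the $n$-simplices. Accordingly, I would define
\[|\Delta^n|=\Big(\bigsqcup_{k\le n}\Delta^k\times\Sigma_k\Big)\Big/\sim,\]
where $\Sigma_k$ is the standard simplex and $\sim$ identifies faces and permutations of vertices. I then take $|\Delta|=\varinjlim|\Delta^n|$, with the quotient topology, as a subspace of the space of finitely supported probability measures on the fibres of $Y$. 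I expect $(H_1)$ to give local finiteness (each vertex lies in only finitely many simplices, uniformly on compact sets) and $(H_2)$ to give closedness of $\Delta^n$ in the relevant fibred product together with finite dimensionality.

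\textbf{Part 1.} I would show that the map $|\Delta|\ra\calG\units$ is continuous and that the $\calG$-action is continuous; the latter holds because $\calG$ acts diagonally on $Y^{(n+1)}$ and trivially on $\Sigma_k$. For Hausdorffness I would use the weak topology given by barycentric coordinates $t_y$. Points in different fibres are separated through the anchor map, and points in the same fibre are separated by coordinate functions, which are continuous by étaleness. For local compactness, I would use $(H_1)$: each point has a neighbourhood meeting only finitely many simplex types over a compact open bisection-like set, namely the image of $K\times\Sigma_n$ with $K\subseteq\Delta^n$ compact.

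\textbf{Part 2.} Properness follows because the map $|\Delta|\ra Y$ that sends a point to a vertex of its support is defined only locally. However, properness of $Y$ implies properness of $Y^{(n+1)}$ and hence of $\Delta^n\times\Sigma_n$, and properness passes to the finite union of these images. For $\calG$-compactness, if $|\Delta^0|=Y$ is $\calG$-compact, then I would combine $(H_1)$ with finite dimension to cover $|\Delta|$ by $\calG$-translates of finitely many compact images.

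\textbf{Part 3.} For Rips complexes, $\Delta_K(\calG)$ consists of tuples $(g_0,\dots,g_n)$ in a common $r$-fibre with $g_i\inv g_j\in K$. This set is clearly open and closed when $K$ is compact open, or after modifying to interior and closure. Properness comes from the properness of $\calG$ acting on itself, and $\calG$-compactness from compactness of $K$ together with the observation that simplices containing a unit lie in $K^{n+1}$. Hypotheses $(H_1)$ and $(H_2)$ I would verify directly from the compactness of $K$.

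\textbf{Part 4.} I expect this to be the main obstacle. A typing is a $\calG$-invariant continuous map $Y\ra\{0,\dots,\dim\}$ that is injective on each simplex. With a typing, every $m$-simplex has canonically ordered vertices, so $\Delta^m$ consists of ordered tuples and no permutation quotient remains. The open stratum $|\Delta^m|\setminus|\Delta^{m-1}|$ is then $\Delta^m\times\inter\Sigma_m$. Since $\inter\Sigma_m\cong\mathbb R^m$ and $\Delta^m\cong center(m,\Delta)$ via the barycenter map, I obtain the claimed homeomorphism. The delicate point is that the quotient topology on the stratum agrees with the product topology. I would try to prove that the barycenter map is a homeomorphism onto a closed subset, using $(H_2)$ and local compactness, and that $\Delta^m$, which is open in a fibred product of étale spaces, is itself étale.

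\textbf{Part 5.} The barycentric subdivision has vertex space $Y'=\bigsqcup_n center(n,\Delta)\cong\bigsqcup_n\Delta^n/S_{n+1}$. This space is étale, and the dimension $n$ gives a typing. Its simplices are chains $\sigma_0<\dots<\sigma_k$ of faces, which form an open and closed condition by $(H_2)$. The homeomorphism $|\Delta'|\cong|\Delta|$ is the usual affine map on each chain-simplex. It is continuous and bijective, and I would show it is proper fibrewise over compacts using part 2, which makes it a homeomorphism.
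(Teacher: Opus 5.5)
\textbf{There is a genuine gap: your model of a $\calG$-simplicial complex is the wrong one.} You take the simplices to be open subsets $\Delta^n$ of the fibred products $Y^{(n+1)}$ (closed as well, by $(H_2)$), with vertex space all of $Y$. That is essentially the ample-groupoid definition, and it is exactly what fails for \'etale groupoids. The paper's definition imposes no openness: only the ambient anchor map $\rho:Y\to\calG\units$ must be \'etale, and the vertex set $|\Delta^0|$ may be a proper closed invariant subset of $Y$.

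Part 3 forces this. For compact $K$, $\Delta_K(\calG)$ is cut out by the closed condition $\gamma_i\inv\gamma_j\in K$, and it is open only if $K$ is open. A non-ample \'etale groupoid can have no nonempty compact open subsets, for instance when $\calG\units$ is connected and non-compact. Your remedy ``after modifying to interior and closure'' does not work. An open $K$ destroys $(H_2)$, the compactness of the sets $C_{K'}$ required by $(H_1)$, and $\calG$-compactness. A closed $K$ is simply not open. For a concrete example, take $\calG=\mathbb R\times\mathbb Z/2$ (a trivial group bundle) and $K=[0,1]\times\mathbb Z/2$. The vertices of $\Delta_K(\calG)$ are $[0,1]\times\mathbb Z/2\subsetneq\calG$. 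Moreover $center(1,\Delta_K(\calG))\cong[0,1]$, and its anchor map is the inclusion into $\mathbb R$, which is not a local homeomorphism.

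\textbf{Consequences for parts 4 and 5.} Your parts 4 and 5 assert things that are false in the paper's setting. $\Delta^m\cong center(m,\Delta)$ need not be \'etale, and neither need $Y'=\bigsqcup_n center(n,\Delta)$, so your $(Y',\Delta')$ would not even be a $\calG$-simplicial complex. The chain conditions are closed but not ``open'' either. This is why the statement only claims that $center(m,\Delta)$ is a closed invariant subset of \emph{some} \'etale space.

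The missing idea is to pass to a larger \'etale ambient space. Let $A^m$ be the set of uniform probability measures on $m+1$ distinct points of a fibre. It is \'etale, being homeomorphic to the configuration space $B^m$ of unordered $(m+1)$-tuples of distinct points in a fibre. Then $center(m,\Delta)=A^m\cap|\Delta|$ is closed in $A^m$ by $(H_2)$. The subdivision then uses $Y'=\bigsqcup_i A^i$ with vertex set $Y'\cap|\Delta|$. You also need to verify $(H_1)$ for $(Y',\Delta')$, which you omit.

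\textbf{Smaller points in part 2 and the topology.} First, $|\Delta^0|\neq Y$ in general. For Rips complexes, $|\Delta_K^0(\calG)|\cong\calG_{K\cap\calG\units}$, whereas $\calG$ itself is $\calG$-compact only when $\calG\units$ is compact. Second, properness does not pass to images of equivariant maps. What you need is that the realization map is proper, i.e.\ every compact subset of $|\Delta|$ lies in some $D_K=\{\mu:supp(\mu)\subseteq K\}$ with $K\subseteq Y$ compact. This is exactly where $(H_1)$ and $(H_2)$ are used. Third, $|\Delta|$ carries the weak-$*$ topology of $P(Y)$, not a quotient topology; if you keep the quotient model, you must prove the two topologies agree.

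Once these are repaired, your remaining arguments can be made to work, given the $D_K$ lemma for both complexes. This covers local compactness, the typed stratification (ordering vertices by type and using convergence of support points and coefficients along nets), and your ``continuous proper bijection'' route to $|\Delta'|\cong|\Delta|$. That last route is a legitimate alternative to the paper's direct net argument for continuity of the inverse map.
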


Another improvement is that, motivated by the induction functor $\ind_\Omega$ from an \'etale groupoid correspondence $\Omega$, defined by Miller in \cite{miller2024functors}, we complete the functoriality as a bicategorical functoriality.

\begin{thm}
Let $\frgr$ be the bicategory of second countable \'etale groupoids and second countable locally compact Hausdorff correspondences, $\mathfrak{KK}$ be the bicategory of equivariant Kasparov categories and functors. Then there exists a well-defined pseudofunctor $\frgr\ra \mathfrak{KK}$, which assigns a groupoid $\calG$ to the Kasparov category $\kk^\calG$, and assigns a correspondence $\calG\curvearrowright \Omega \curvearrowleft \calH$ to the induction functor $\ind_\Omega\in \mathrm{Fun}(\kk^\calH,\kk^\calG)$ (as defined in \cite{miller2024functors}).
\end{thm}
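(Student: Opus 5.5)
The plan is to construct the pseudofunctor in three layers: on $0$-cells, $1$-cells, and $2$-cells. Then we verify the coherence data, namely the unitors and compositors, together with the pseudofunctor axioms.

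\textbf{Objects and $1$-cells.} On objects we send $\calG$ to $\kk^\calG$. For a correspondence $\calG\curvearrowright\Omega\curvearrowleft\calH$ we take Miller's functor $\ind_\Omega:\kk^\calH\ra\kk^\calG$ from \cite{miller2024functors}. It is given on algebras by the generalized fixed point (induced) algebra of $C_0(\Omega,B)$ under the diagonal $\calH$-action, and on Kasparov cycles by inducing modules and operators. We take for granted Miller's result that this is a well-defined functor, compatible with Kasparov products, and triangulated.

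\textbf{$2$-cells.} In $\frgr$ these are, I expect, isomorphism classes of $\calG$-$\calH$-equivariant homeomorphisms $\phi:\Omega\ra\Omega'$, or possibly more general equivariant maps of correspondences. Given such a $\phi$, pulling back along it gives an isomorphism of induced algebras $\ind_{\Omega'}B\cong\ind_\Omega B$. This isomorphism is natural in $B$ with respect to $*$-homomorphisms, and also with respect to Kasparov cycles, because it is implemented by a $\calG$-equivariant isomorphism at the level of modules. Its class in $\kk^\calG$ therefore defines a natural transformation $\ind_\Omega\Rightarrow\ind_{\Omega'}$. Functoriality in $\phi$, both vertical and horizontal, is immediate from functoriality of pullback.

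\textbf{Coherence.} Two structure maps are needed.

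\begin{enumerate}
\item The unitor. For the identity correspondence $\calG\curvearrowright\calG\curvearrowleft\calG$, we need a natural isomorphism $\ind_\calG A\cong A$. This is the standard identification: a $\calG$-invariant section $f$ on $\calG$ is determined by its restriction to $\calG\units$, via $f(g)=g\cdot f(s(g))$.
\item The compositor. For $\Omega:\calH\to\calG$ and $\Sigma:\calK\to\calH$, the composite is $\Omega\times_\calH\Sigma$, the quotient of $\Omega\times_{\calH\units}\Sigma$ by the diagonal $\calH$-action. We need a natural isomorphism $\ind_\Omega\ind_\Sigma C\cong\ind_{\Omega\times_\calH\Sigma}C$. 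I would construct it at the level of $C_0$-sections. An element of the left side is a function $(\omega,\sigma)\mapsto f(\omega)(\sigma)$ that is invariant under $\calH$ and $\calK$. Such a function descends to a $\calK$-invariant function on the quotient $\Omega\times_\calH\Sigma$. Conversely, every function on the quotient lifts to one of this form.
\end{enumerate}

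The main obstacle is the compositor. One must check three things: that these are genuinely isomorphisms of $C_0$-type algebras, including the decay and continuity conditions; that they are $\calG$-equivariant; and that they are natural on arbitrary Kasparov cycles and not merely on homomorphisms. For the first point, I would use properness and freeness of the $\calH$-action on $\Omega$, so that the quotient is locally compact Hausdorff and locally trivial enough to lift compactly supported invariant functions. For the Kasparov cycles, I would apply the same construction to Hilbert modules, as a module-level isomorphism intertwining the induced operators. Naturality in $\kk$ then follows because both sides are obtained by the same induction on modules.

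Finally, I would verify the associativity pentagon-type coherence for the compositors over three composable correspondences, and the two unit triangles. Each is an equality of explicit maps on functions of several variables, namely reindexing of invariant sections. So each should reduce to checking that two reindexings of the iterated fibre product quotient agree, which holds pointwise.
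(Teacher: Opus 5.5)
The gap is in your treatment of $2$-cells. Your $0$-cells, $1$-cells, unitor (restriction to $\calG\units$) and compositor (reindexing over $\Omega\times_\calH\Lambda$) agree with the paper, which takes the $1$-cell data and the compositor's naturality from Miller. Your plan to check the coherence axioms as identities of explicit $*$-isomorphisms also agrees.

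In the bicategory $\frgr$ of the statement, set up following Antunes--Ko--Meyer, a $2$-cell $\Omega_1\Rightarrow\Omega_2$ is an \emph{arbitrary} $\calG,\calH$-equivariant continuous map $f:\Omega_1\ra\Omega_2$, not a homeomorphism. Since $\sigma_{\Omega_2}\circ f=\sigma_{\Omega_1}$, both $f$ and $\bar f:\Omega_1/\calH\ra\Omega_2/\calH$ are local homeomorphisms, but they need be neither injective nor proper. For such $f$, pullback $\xi\mapsto\xi\circ f$ goes from $\ind_{\Omega_2}A$ to $\ind_{\Omega_1}A$, the wrong direction. It is only guaranteed to land in $\ind_{\Omega_1}A$ when $\bar f$ is proper. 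In general there is no $*$-homomorphism $\ind_{\Omega_1}A\ra\ind_{\Omega_2}A$ at all.

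So your construction gives a pseudofunctor only on the sub-bicategory with invertible $2$-cells, or a contravariant one on proper $2$-cells, which is what the paper uses for the reverse restriction--induction adjunction. This matters for the main application. The $2$-cells $\eta:\calH\ra\Lambda\circ\Omega$ and $\epsilon:\Omega\circ\Lambda\ra\calG$, $[\omega,\lambda]\mapsto\omega\lambda$, which produce the induction--restriction adjunction, are not homeomorphisms. Moreover $\epsilon$ is not injective and $\bar\epsilon$ is not proper in general (take $\calH=\calG\units$ with $\calG$ an infinite discrete group).

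The missing idea is that $\ind_f$ must be a genuine $\kk$-class of wrong-way type. For an equivariant local homeomorphism $p:Z\ra Y$, complete $C_c(Z)$ to a Hilbert $C_0(Y)$-module $L^2(Z,p)$ using $\langle h_1,h_2\rangle(y)=\sum_{p(z)=y}\overline{h_1(z)}h_2(z)$. Then $C_0(Z)$ acts by compact operators precisely because $p$ is a local homeomorphism, so $p!=[L^2(Z,p),\pi_p,0]$ is a Kasparov cycle. The paper sets $\ind_{f,A}=[\Upsilon_{f,A}\inv]\otimes\tau_{\ind_{\Omega_2}A}(\bar f!)$, using $\bar f^*\ind_{\Omega_2}A\cong\ind_{\Omega_1}A$. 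Concretely this is the completion of $\ind_{\Omega_1,c}A$ for the $\ind_{\Omega_2}A$-valued inner product summing over the fibres of $f$. It reduces to your pullback, or to extension by zero, when $f$ is a homeomorphism or an open embedding.

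With this definition, the step you call immediate becomes the heart of the proof:
\begin{enumerate}
\item Vertical functoriality needs $(\overline{g\circ f})!=\bar f!\otimes\bar g!$, plus compatibility of $\tau$ with Kasparov products and $*$-homomorphisms.
\item Naturality in $A$ needs a reduction of arbitrary $\kk$-classes to $*$-homomorphisms.
\item Compatibility of the compositor with horizontal composition of $2$-cells,
\[[\varphi_{\Omega,\Lambda,A}]\otimes\ind_{[f,g],A}=\ind_{f,\ind_\Lambda A}\otimes\ind_{\Omega'}(\ind_{g,A})\otimes[\varphi_{\Omega',\Lambda',A}],\]
is the hardest step. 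The paper splits $[f,g]=[\mathrm{id},g]\circ[f,\mathrm{id}]$ and builds explicit equivariant unitary equivalences of Hilbert bimodules. This needs a density argument with elements $h_1\diamond(h_2\diamond a)$ and a continuity check in the Hilbert-bundle topology, not the $C^*$-bundle topology, of the module attached to $g$.
\end{enumerate}
None of this is reachable from pullbacks along homeomorphisms.
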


A more precise statement should be found in theorem \ref{pseudofunctor}.

In section 2, we start with some fundamental topological properties of \'etale groupoids and actions. The local structure of proper actions of an \'etale groupoid will be essential in a descent technique used in the proof of theorem \ref{main thm B}. Then we recall the notion of $C_0(X)$-algebras and upper-semicontinuous bundles. The induction functor from an \'etale groupoid correspondence defined by Miller in \cite{miller2022k} and \cite{miller2024functors} is introduced. Some other technical lemmas are placed at the end of this section.

In section 3 we introduce the definition and properties of groupoid simplicial complexes. It was mentioned in \cite{bonicke2020going} that there are topological difficulties to overcome to develop a going-down principle for \'etale groupoids. One of the reasons is that the definition of groupoid simplicial complex in \cite[Definition 7.5]{bonicke2020going} works for Rips complexes of ample groupoids but not for Rips complexes of \'etale groupoids in general. We point out that the two hypotheses $(H_1)$ and $(H_2)$ (see definition \ref{H_1 H_2}), which appear firstly in \cite{bessi2023}, will ensure groupoid simplicial complexes to have good topological properties. Rips complexes $(\calG,\Delta_K(\calG))$ are proper $\calG$-compact $\calG$-simplicial complexes satisfying hypotheses $(H_1)$ and $(H_2)$ in our definition, which provide a sufficient family of proper $\calG$-spaces to approximate $\E\calG$. 

Section 4 is dedicated to a bicategorical proof of the induction-restriction adjunction. Let $\calH$ be a proper open subgroupoid of $\calG$, $A$ be an $\calH$-\cst-algebra and $B$ be a $\calG$-\cst-algebra, it is proved in \cite[Theorem 6.2]{bonicke2020going} and \cite[Theorem 2.3]{bonicke2024categorical} that there is a compression isomorphism
\[comp^\calG_\calH:\kk^\calG(\ind^\calG_\calH A,B)\ra \kk^\calH(A,B|_\calH).\]
In \cite{antunes2021bicategory} it was shown that \'etale groupoids, correspondences and bi-equivariant continuous maps form a bicategory $\frgr$. We will construct a pseudofunctor that sends a groupoid $\calG$ to the category $\kk^\calG$ and sends a correspondence $\calG\curvearrowright \Omega\curvearrowleft \calH$ to an induction functor $\ind_\Omega\in \mathrm{Fun}(\kk^\calH,\kk^\calG)$ as defined by Miller in \cite{miller2024functors} where the conditions on the level of 1-cells are already proved in that paper. And we will see that the induction-restriction adjunction arises from an internal adjunction in the bicategory $\frgr$ of \'etale groupoids and correspondences.

In section 5 we prove our main theorem \ref{main thm B}. The strategy is the same as in \cite{chabert2004going} and \cite{bonicke2020going}: we approximate $\E\calG$ by proper $\calG$-compact $\calG$-simplicial complexes $(Y,\Delta)$ satisfying $(H_1)$ and $(H_2)$, and apply induction on the dimension of $\calG$-simplicial complexes. Two facts will play important roles in the proof. Firstly the geometrical realization of a 0-dimensional proper $\calG$-compact $\calG$-simplicial complex satisfying $(H_1)$ and $(H_2)$ is a $\calG$-invariant closed subset of an \'etale $\calG$-space; secondly if a proper $\calG$-compact $\calG$-simplicial complexes has dimension $n$, the centers of all $n$-simplices form a 0-dimensional simplicial complex like this. One of the topological difficulties was that if $(Y,\Delta)$ is of dimension 0, the anchor map may be not a local homeomorphism but the composition of a closed inclusion with a local homeomorphism. This will be remedied by considering one more exact sequence (see the diagram \ref{diagram 4}) compared to the proof in \cite{bonicke2020going}. We conclude this paper with several applications, including the split injectivity of the Baum--Connes assembly map for \'etale groupoids that are strongly amenable at infinity, continuity of topological K-theory and validity of K\"unneth formulas.

\section*{Acknowledgment}
The content of this paper covers part of the doctoral thesis of the author. I would like to sincerely thank Herv\'e Oyono-Oyono, my advisor, who gives me a lot of guides and inspirations about this topic. Some essential ideas (for example, the notion of hypotheses $(H_1)$ and $(H_2)$) are in debt to him. I would like to thank Georges Skandalis for his comment in remark \ref{remark of Skandalis}. I would also like to thank an anonymous referee who reminds me that theorem \ref{main thm A} was already proved in \cite{bonicke2024categorical} as an equivalent form of \cite[Theorem 4.4]{bonicke2024categorical}. This work gets help from the project OpART (ANR-23-CE40-0016) of the Agence Nationale de la Recherche.

\section*{Notations}
If $A$ is a subset of a topological space $X$, we denote the interior of $A$ by $\inter_X(A)$ or $\inter(A)$.

If $X,Y,Z$ are topological spaces, $f:Y\ra X$ and $g:Z\ra X$ are continuous maps, we define the fiber product $Y\times_{f,X,g}Z$ as $\{(y,z)\in Y\times Z:f(y)=g(z)\}$, equipped with topology as a subspace of $Y\times Z$. When there is no ambiguity we can write it as $Y\times_X Z$ or $Y\times_{f,g}Z$.

\section{Preliminaries}

\subsection{Groupoids and actions}

Let $\calG$ be a locally compact Hausdorff groupoid (see \cite{williams2019tool} for detailed definitions). We denote its unit space by $\calG\units$, which is a closed subspace of $\calG$ (see \cite[lemma 1.6]{williams2019tool}). We can also denote the groupoid $\calG$ with unit space $\calG\units=X$ by $\calG\rightrightarrows X$. Let $r_\calG,s_\calG$ be respectively the range and source maps from $\calG$ to $\calG\units$, and when there is no ambiguity we write simply $r$ and $s$. The set of composable pairs is therefore the fiber product $\calG^{(2)}:=\calG\times_{s,\calG\units,r}\calG=\{(\gamma_1,\gamma_2)\in \calG\times \calG:s(\gamma_1)=r(\gamma_2)\}$. For any subsets $A,B$ of $\calG\units$ and any points $x,y\in \calG\units$, we use the following notations
\[\calG_A=s\inv(A),\quad \calG^B=r\inv(B),\quad \calG^B_A=\calG_A\cap \calG^B,\]
\[\calG_x=s\inv(x),\quad \calG^y=r\inv(y),\quad \calG^y_x=\calG_x\cap \calG^y.\]
For any two subsets $U,V$ of $\calG$, the set $UV$ is defined to be $\{uv:u\in U,v\in V,(u,v)\in \calG^{(2)}\}$, and $U\inv$ is defined to be $\{u\inv:u\in U\}$.

A strict morphism of locally compact Hausdorff groupoids $f:\calG\ra \calH$ is a continuous map such that for any $(\gamma_1,\gamma_2)\in \calG^{(2)}$, $(f(\gamma_1),f(\gamma_2))\in \calH^{(2)}$ and $f(\gamma_1)f(\gamma_2)=f(\gamma_1\gamma_2)$, and for any $\gamma\in \calG$, $f(\gamma\inv)=f(\gamma)\inv$.

We say that $\calG$ is proper, if the map $(r,s):\calG\ra \calG\units\times \calG\units$ is proper. We say that $\calG$ is principal, if $(r,s)$ is injective. And we say that $\calG$ is $r$-discrete, if $\calG\units$ is open in $\calG$.

We say that $\calG$ is \'etale if $r$ is a local homeomorphism. That is, for any $\gamma\in \calG$, there exists an open neighborhood $U$ of $\gamma$ in $\calG$ such that $r|_U$ is a homeomorphism onto an open of $\calG\units$. We say that a subset $B\subseteq \calG$ is a bisection if $r|_B$ and $s|_B$ are homeomorphisms onto opens of $\calG\units$. If an $r$-discrete locally compact Hausdorff groupoid is given, it is \'etale if and only if $r$ is open, if and only if it has a topological basis consisting of bisections (see \cite[proposition 1.29]{williams2019tool}). \textbf{Throughout this paper, all \'etale groupoids are assumed to be locally compact Hausdorff.}

Recall that $\calG$ acts on (the left of) a topological space $X$, or $X$ is a (left) $\calG$-space, if there is a continuous map (called anchor map) $\rho: X\ra \calG\units$ and a continuous map $\calG\times_{s,\calG\units,\rho}X\ra X, (\gamma,x)\mapsto \gamma x$, such that for any $x\in X$, $\rho(x)x=x$, and for any $(\gamma_1,\gamma_2)\in \calG^{(2)}$ such that $s(\gamma_2)=\rho(x)$, we have $\rho(\gamma_2x)=r(\gamma_2)$ and $\gamma_1(\gamma_2x)=(\gamma_1\gamma_2)x$. Similarly, we can define the right action. And when $U$ is a subset of $\calG$, $W$ is a subset of $\calG$, $UW$ is defined to be
\[\{uw:u\in U, w\in W, s(u)=\rho(w)\}.\]
When there is no ambiguity, for any $x\in \calG\units$ and $A\subseteq \calG\units$, we write $X_x=\rho\inv(x)$, $X_A=\rho\inv(A)$.

We can construct a groupoid out of an action of a groupoid. If $X$ is a left $\calG$-space, for two elements of 
 $\calG\times_{s,\calG\units,\rho}X$, we say that $(\gamma_1,x_1)$ and $(\gamma_2,x_2)$ are composable if $x_1=\gamma_2x_2$. And in this case we define
\[(\gamma_1,x_1)(\gamma_2,x_2)=(\gamma_1\gamma_2,x_1)\]
and the inverse map
\[i_{\calG\ltimes X}: \calG\times_{s,\calG\units,\rho}X\ra \calG\times_{s,\calG\units,\rho}X, (\gamma,x)\mapsto (\gamma\inv, \gamma x).\]
These maps make $\calG\times_{s,\calG\units,\rho}X$ a topological groupoid, which is denoted as $\calG\ltimes X$. Assume that $X$ is a locally compact Hausdorff space; then $\calG\ltimes X$ is a locally compact Hausdorff groupoid. When $\calG$ is \'etale, the groupoid $\calG\ltimes X$ is also \'etale. The unit space of $\calG\ltimes X$ is $\{(\rho(x),x):x\in X\}$, which is canonically identified with $X$. Thus, we define the source and range maps as
\[s_{\calG\ltimes X}:\calG\ltimes X\ra X, (\gamma,x)\mapsto x,\]
\[r_{\calG\ltimes X}:\calG\ltimes X\ra X, (\gamma,x)\mapsto \gamma x.\]
Similarly, we can define the groupoid $X\rtimes \calG$ when $X$ is a right $\calG$-space.

Given a $\calG$-space $X$ with anchor map $\rho:X\ra \calG\units$, we say that $X$ is a free $\calG$-space, if $\calG\ltimes X$ is principal; we say that $X$ is a proper $\calG$-space, if $\calG\ltimes X$ is proper; we say that $X$ is an \'etale $\calG$-space, if $\rho$ is a local homeomorphism.

\begin{ex}
    The action of a locally compact Hausdorff groupoid on itself is free and proper. This action is \'etale when the groupoid is \'etale. 
\end{ex}

\begin{prop}\label{groupoid action-proper action}
    Let $\calG$ be a locally compact Hausdorff groupoid and $X$ be a locally compact Hausdorff $\calG$-space with anchor map $\rho: X\ra \calG\units$. The following conditions are equivalent:
    \begin{enumerate}
        \item The action of $\calG$ on $X$ is proper.
        \item For any compact subset $K$ of $X$, $\{\gamma\in \calG: \gamma K\cap K\neq \emptyset\}$ is a compact subset of $\calG$.
        \item If $(x_\lambda)_{\lambda\in \Lambda}$ is a convergent net in $X$, $(\gamma_\lambda)_{\lambda\in \Lambda}$ is a net in $\calG$, such that for every $\lambda\in \Lambda$ we have $s(\gamma_\lambda)=\rho(x_\lambda)$ and $(\gamma_\lambda x_\lambda)_{\lambda\in \Lambda}$ is a convergent net, then $(\gamma_\lambda)_{\lambda\in \Lambda}$ has a convergent subnet.
    \end{enumerate}
\end{prop}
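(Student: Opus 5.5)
We prove the cycle of implications (1)$\Rightarrow$(2)$\Rightarrow$(3)$\Rightarrow$(1). Throughout, we use that properness of $\calG\ltimes X$ means the map
\[\Phi:\calG\times_{s,\calG\units,\rho}X\ra X\times X,\quad (\gamma,x)\mapsto(\gamma x,x),\]
is proper, i.e.\ preimages of compact sets are compact. Here $\Phi$ is exactly $(r_{\calG\ltimes X},s_{\calG\ltimes X})$ under the identification of the unit space with $X$. We also use that the projection $\pi:\calG\times_{\calG\units}X\ra\calG$ is continuous.

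\emph{(1)$\Rightarrow$(2).} Let $K\subseteq X$ be compact. The set $\{\gamma:\gamma K\cap K\neq\emptyset\}$ is exactly $\pi(\Phi\inv(K\times K))$. By (1) the set $\Phi\inv(K\times K)$ is compact, and its image under the continuous map $\pi$ is compact.

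\emph{(2)$\Rightarrow$(3).} Let $x_\lambda\to x$ and $\gamma_\lambda x_\lambda\to y$. Take compact neighbourhoods $V$ of $x$ and $W$ of $y$; this is possible because $X$ is locally compact Hausdorff. Put $K=V\cup W$, which is compact. Eventually $x_\lambda\in V$ and $\gamma_\lambda x_\lambda\in W$, so eventually $\gamma_\lambda\in\{\gamma:\gamma K\cap K\neq\emptyset\}$. This set is compact by (2), so the net $(\gamma_\lambda)$ has a convergent subnet.

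\emph{(3)$\Rightarrow$(1).} This is the only step needing care. Let $C\subseteq X\times X$ be compact; we show $\Phi\inv(C)$ is compact by showing every net in it has a convergent subnet with limit in $\Phi\inv(C)$. Take a net $(\gamma_\lambda,x_\lambda)$ in $\Phi\inv(C)$, so $(\gamma_\lambda x_\lambda,x_\lambda)\in C$.
\begin{enumerate}
\item By compactness of $C$, pass to a subnet along which $(\gamma_\lambda x_\lambda,x_\lambda)\to(y,x)\in C$.
\item By (3), pass to a further subnet along which $\gamma_\lambda\to\gamma$.
\item Since $s(\gamma_\lambda)=\rho(x_\lambda)$ and $s,\rho$ are continuous, we get $s(\gamma)=\rho(x)$. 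Hence $(\gamma,x)\in\calG\times_{\calG\units}X$, which is closed in $\calG\times X$, and $(\gamma_\lambda,x_\lambda)\to(\gamma,x)$ in the fibre product topology.
\item By continuity of the action, $\gamma_\lambda x_\lambda\to\gamma x$. As $X$ is Hausdorff, $\gamma x=y$, so $\Phi(\gamma,x)=(y,x)\in C$.
\end{enumerate}
Thus $\Phi\inv(C)$ is compact. Since $X\times X$ is locally compact Hausdorff, compact preimages of compact sets give properness of $\Phi$ in the sense used in the definition of a proper groupoid. Hence (1) holds.

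The main point to verify is the convention for ``proper map''. If properness is taken in the Bourbaki sense (closed with compact fibres), one also checks closedness of $\Phi$. This follows from the compact-preimage property because the target $X\times X$ is locally compact Hausdorff, so the two notions agree here.
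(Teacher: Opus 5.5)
Your argument is correct. The paper does not prove this proposition itself; it only cites Proposition 2.14 of \cite{2006Non} and Proposition 1.84 of \cite{goehle2009groupoid}. Your proposal is therefore a self-contained alternative to a citation, not a reconstruction of an argument given in the paper.

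The key steps are all sound:
\begin{itemize}
\item You identify $(r,s)$ of $\calG\ltimes X$ with $\Phi(\gamma,x)=(\gamma x,x)$.
\item You recognize the set in (2) as the projection of $\Phi^{-1}(K\times K)$ onto $\calG$.
\item In (3)$\Rightarrow$(1) you extract subnets twice, first from compactness of $C$ and then from (3). Hausdorffness of $X$ then forces $\gamma x=y$, so the limit lies in $\Phi^{-1}(C)$.
\end{itemize}
Your closing remark also settles the one convention issue. Compact-preimage properness agrees with Bourbaki properness here because $X\times X$ is locally compact Hausdorff, hence a $k$-space, so compact preimages already force $\Phi$ to be closed.

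Your route gives a proof that can be checked directly in the paper's locally compact Hausdorff setting using only nets and compactness. The paper's citation gains brevity by deferring to the literature.
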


\begin{proof} See proposition 2.14 of \cite{2006Non} and proposition 1.84 of \cite{goehle2009groupoid}.\end{proof}

\begin{prop} \label{groupoid action-proper action base change}
    \textnormal{\cite[Proposition 2.20]{2006Non}}
    Let $\calG$ be a locally compact groupoid acting on two spaces $Y$ and $Z$. Suppose that the action of $\calG$ on $Z$ is proper and that $Y$ is Hausdorff, then $\calG$ acts properly on $Y\times_{\calG\units}Z$.
\end{prop}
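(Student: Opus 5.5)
We verify condition 3 of Proposition \ref{groupoid action-proper action} for the diagonal action on $Y\times_{\calG\units}Z$. First we fix the setup. The anchor map is $\sigma(y,z)=\rho_Y(y)=\rho_Z(z)$, and $\gamma(y,z)=(\gamma y,\gamma z)$ whenever $s(\gamma)=\sigma(y,z)$. Continuity of the action is inherited from the continuity of the two actions. The fiber product is a closed subspace of $Y\times Z$, because $\calG\units$ is Hausdorff; so it is Hausdorff, and locally compact whenever $Y$ and $Z$ are.

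If we want to avoid assuming local compactness of $Y$ (which is what Proposition \ref{groupoid action-proper action} needs), we instead work directly with the definition: properness of the map
\[(r,s):\calG\ltimes (Y\times_{\calG\units} Z)\ra (Y\times_{\calG\units}Z)\times(Y\times_{\calG\units}Z).\]
In that case we use the characterization of proper maps by nets, namely universally closed maps, or the statement that every net whose image converges has a convergent subnet with limit mapping correctly. The underlying argument is the same net argument in either formulation.

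The core step runs as follows. Take a net $(y_\lambda,z_\lambda)\to(y,z)$ and $\gamma_\lambda\in\calG$ with $s(\gamma_\lambda)=\sigma(y_\lambda,z_\lambda)$, such that $(\gamma_\lambda y_\lambda,\gamma_\lambda z_\lambda)\to(y',z')$. Project to the $Z$-coordinate: $z_\lambda\to z$ and $\gamma_\lambda z_\lambda\to z'$, with $s(\gamma_\lambda)=\rho_Z(z_\lambda)$. Properness of $Z$, via condition 3 of Proposition \ref{groupoid action-proper action}, then gives a subnet $\gamma_{\lambda_\mu}\to\gamma$. By continuity, $s(\gamma)=\rho_Z(z)=\sigma(y,z)$, and $\gamma_{\lambda_\mu}y_{\lambda_\mu}\to\gamma y$. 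Since $Y$ is Hausdorff, limits are unique, so $\gamma y=y'$; similarly $\gamma z=z'$. Hence the subnet $(\gamma_{\lambda_\mu},(y_{\lambda_\mu},z_{\lambda_\mu}))$ converges in $\calG\ltimes(Y\times_{\calG\units}Z)$ to $(\gamma,(y,z))$, whose image under $(r,s)$ is $((y',z'),(y,z))$.

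This is exactly what the net characterization of properness for the map $(r,s)$ requires. The main point needing care is that net criterion for properness, which is where Hausdorffness of $Y$ is used: it guarantees uniqueness of the limit, so the limit point lies in the correct fiber and the graph of $(r,s)$ behaves properly. I would cite the standard equivalence (as in \cite{2006Non}) that a continuous map $f$ is proper precisely when every net $x_\lambda$ with $f(x_\lambda)$ convergent has a subnet converging to a point of the corresponding fiber. With that criterion in hand, the argument above concludes the proof.
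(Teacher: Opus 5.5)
Your argument is correct. The paper gives no proof of its own here (it only cites \cite[Proposition 2.20]{2006Non}), so there is no in-paper argument to compare against.

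One point should be tightened. In the general formulation (proper meaning universally closed, no local compactness), take the subnet from the net criterion for the proper map $\calG\ltimes Z\ra Z\times Z$ itself. Do not take it from condition 3 of Proposition \ref{groupoid action-proper action}, which presupposes $Z$ locally compact Hausdorff and only yields a convergent subnet of $(\gamma_\lambda)$. The criterion gives a subnet of $(\gamma_\lambda,z_\lambda)$ converging to some $(\gamma,z)\in\calG\ltimes Z$ with $\gamma z=z'$. So $s(\gamma)=\rho_Z(z)$ and $\gamma z=z'$ come for free. Your ``similarly $\gamma z=z'$'' instead tacitly uses that $Z$ is Hausdorff, which is not a hypothesis (though it holds when $\calG$ is Hausdorff).

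This is also the reason for the asymmetry in the statement. In the $Z$-coordinate the fiber condition is supplied by properness; in the $Y$-coordinate it can only come from uniqueness of limits. Relatedly, Hausdorffness of $Y\times_{\calG\units}Z$ comes from that of the factors, not from its being closed in $Y\times Z$; closedness is what gives local compactness.

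For comparison, the same proof can be written without nets. Up to reordering coordinates, $(r,s)$ is the map $(\gamma,(y,z))\mapsto(\gamma y,y,(\gamma,z))\in Y\times Y\times(\calG\ltimes Z)$, followed by $\mathrm{id}_{Y\times Y}$ times the proper map $\calG\ltimes Z\ra Z\times Z$. The first map is a homeomorphism onto the set cut out by $\rho_Y(y)=s(\gamma)$ and $y'=\gamma y$. That set is closed because $\calG\units$ and $Y$ are Hausdorff. A closed embedding followed by a proper map is proper. Your net argument is exactly the unpacking of this factorization; the factorization just isolates the single use of Hausdorffness of $Y$ as closedness of the graph of the action.
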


Take $Z$ to be $\calG\units$, we can conclude that any action of a proper locally compact Hausdorff groupoid is proper.

Let $X$ be a $\calG$-space (not necessarily locally compact Hausdorff). The action defines an equivalence relation on $X$ as $x\sim y$ if and only if there exists $\gamma\in \calG^{\rho(x)}_{\rho(y)}$ such that $x=\gamma y$. We denote the quotient space by $X/\calG$, equipped with quotient topology. We summarize the properties that we need about the orbit space $X/\calG$ in the following two propositions.

\begin{prop}\label{orbit space of free proper action of etale grpd}
    If $\calG$ is an \'etale groupoid and $X$ is a free proper $\calG$-space, the quotient map $q: X\ra X/\calG$ is a local homeomorphism and $X/\calG$ is Hausdorff.
\end{prop}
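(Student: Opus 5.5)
The plan has two parts: Hausdorffness of $X/\calG$ comes from properness, and the local homeomorphism property comes from étaleness combined with freeness and properness.

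\textbf{Openness and Hausdorffness.} First I show that $q$ is open. For $U\subseteq X$ open, $q\inv(q(U))=\calG U=r_{\calG\ltimes X}(s_{\calG\ltimes X}\inv(U))$. The range map of the étale groupoid $\calG\ltimes X$ is open, so $\calG U$ is open. Hence $q$ is open and continuous.

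For Hausdorffness, note that the orbit relation $R=\{(\gamma x,x)\}\subseteq X\times X$ is the image of $(r,s):\calG\ltimes X\ra X\times X$. This map is proper by assumption, and a proper map into a locally compact Hausdorff space is closed, so $R$ is closed. Since $q$ is open and $R$ is closed, the standard argument gives that $X/\calG$ is Hausdorff. Alternatively, one can use the net criterion of Proposition \ref{groupoid action-proper action}: take $x_\lambda\to x$ and $\gamma_\lambda x_\lambda\to y$, pass to a convergent subnet of $\gamma_\lambda$, and conclude $y\in\calG x$.

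\textbf{Local injectivity.} Since $q$ is already continuous and open, it remains to show that every $x\in X$ has an open neighborhood $V$ on which $q$ is injective. Then $q|_V$ is a continuous open bijection onto the open set $q(V)$, hence a homeomorphism.

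Suppose no such $V$ exists. Then for every neighborhood $V$ of $x$ there are $y_V\neq z_V$ in $V$ and $\gamma_V$ with $\gamma_V z_V=y_V$. Both $y_V$ and $z_V$ converge to $x$. By Proposition \ref{groupoid action-proper action}(3), after passing to a subnet, $\gamma_V\to\gamma$. Continuity gives $\gamma x=x$, and freeness forces $\gamma=\rho(x)\in\calG\units$.

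Now I use étaleness. Since $\calG\units$ is open in $\calG$, eventually $\gamma_V\in\calG\units$. A unit acts trivially, so $\gamma_V z_V=z_V$, which means $y_V=z_V$. This is a contradiction.

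\textbf{Main obstacle.} The main obstacle is this last step. The key point is that étaleness makes $\calG\units$ open, so convergence of $\gamma_V$ to a unit forces $\gamma_V$ to be units eventually. Properness alone would only give convergence $\gamma_V\to\gamma$, not that $\gamma_V$ is eventually a unit.
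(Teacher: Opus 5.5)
Your proof is correct. The paper gives no argument of its own; it cites \cite[Proposition 2.19, Lemma 2.12]{antunes2021bicategory}, where the statement comes from the theory of basic actions. In that route, freeness and properness make $(\gamma,x)\mapsto(\gamma x,x)$ a homeomorphism from $\calG\ltimes X$ onto the closed orbit relation $X\times_{X/\calG}X$. Under this homeomorphism the diagonal of $X$ corresponds to the units $\calG\units\times_{\calG\units}X$, which are open because $\calG\units$ is open in $\calG$. Finally, an open map whose diagonal is open in $X\times_{X/\calG}X$ is a local homeomorphism.

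Your local-injectivity argument is this same mechanism written with nets: properness gives the convergent subnet, freeness identifies the limit as $\rho(x)$, and openness of $\calG\units$ makes the $\gamma_V$ eventually units. So the two proofs agree in substance and differ in packaging. Your version is self-contained, using only Proposition \ref{groupoid action-proper action} and the openness of the range map of $\calG\ltimes X$. It does rely on $X$ being locally compact Hausdorff (for the net criterion and for closedness of proper maps), which is the standing situation in this paper anyway. The cited route also records the stronger fact that the action is basic, i.e.\ $\calG\ltimes X\cong X\times_{X/\calG}X$.
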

\begin{proof} See \cite[Proposition 2.19, Lemma 2.12]{antunes2021bicategory}.\end{proof}

\begin{prop}\label{orbit space of LCH by proper action}
    If $\calG$ is a locally compact Hausdorff groupoid such that the source map or the range map of $\calG$ is open, $X$ is a locally compact Hausdorff proper $\calG$-space, then $X/\calG$ is also a locally compact Hausdorff space, the quotient map $q:X\ra X/\calG$ is open. Moreover, if $X$ is second countable, so is $X/\calG$.
\end{prop}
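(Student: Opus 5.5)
We prove the three assertions in order: openness of $q$, Hausdorffness and local compactness of $X/\calG$, and second countability. By symmetry (replace $\gamma$ by $\gamma\inv$, using that inversion is a homeomorphism exchanging $r$ and $s$) we may assume that the range map $r$ of $\calG$ is open.

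\emph{Openness of $q$.} Let $U\subseteq X$ be open. Then
\[q\inv(q(U))=\{\gamma x:(\gamma,x)\in \calG\times_{s,\rho}X,\ x\in U\}=r_{\calG\ltimes X}(s_{\calG\ltimes X}\inv(U)).\]
So it suffices to show that $r_{\calG\ltimes X}$ is open, or equivalently that $s_{\calG\ltimes X}$ is open, since the two are exchanged by the inversion homeomorphism $(\gamma,x)\mapsto(\gamma\inv,\gamma x)$. Now $s_{\calG\ltimes X}:\calG\times_{s,\rho}X\ra X$ is the pullback along $\rho$ of $s:\calG\ra\calG\units$. The source map $s$ is open, because $s=r\circ i$ with $i$ the inversion. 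Pullbacks of open maps are open: one checks this directly on basic open sets $(V\times W)\cap(\calG\times_{s,\rho}X)$, whose image is $W\cap\rho\inv(s(V))$. Hence $q\inv(q(U))$ is open, so $q(U)$ is open.

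\emph{Hausdorffness and local compactness.} Since $q$ is open and surjective, $X/\calG$ is Hausdorff if and only if the orbit relation
\[R=\{(\gamma x,x)\}=(r_{\calG\ltimes X},s_{\calG\ltimes X})(\calG\ltimes X)\subseteq X\times X\]
is closed. Properness of the action says that the map $(r_{\calG\ltimes X},s_{\calG\ltimes X})$ is proper. A proper continuous map into the locally compact Hausdorff space $X\times X$ is closed, so $R$ is closed. One can also argue with nets via Proposition \ref{groupoid action-proper action}(3): if $(\gamma_\lambda x_\lambda,x_\lambda)\to(y,x)$, then a subnet $\gamma_\lambda\to\gamma$, and continuity of the action gives $y=\gamma x$. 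Local compactness then follows because $q$ is open and continuous: for a compact neighbourhood $K$ of $x$, the image $q(K)$ is compact and contains the open set $q(\inter K)\ni q(x)$.

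\emph{Second countability.} If $X$ has a countable base $\{U_n\}$, then $\{q(U_n)\}$ is a countable family of open sets because $q$ is open. It is a base: any open $V\ni q(x)$ has $q\inv(V)$ open containing $x$, hence containing some $U_n\ni x$, so $q(x)\in q(U_n)\subseteq V$.

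The main point needing care is the closedness of the orbit relation. It is the only step where properness is used, and it depends on the fact that proper maps into locally compact Hausdorff spaces are closed; the other steps are routine.
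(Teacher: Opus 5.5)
Your proof is correct and complete. You get openness of $q$ from the openness of $s_{\calG\ltimes X}$ as a pullback of the open source map, and Hausdorffness from closedness of the orbit relation, which is the only place properness enters. Local compactness and second countability then pass through the open surjection $q$. The paper gives no argument of its own here, only a citation to Goehle's thesis (Propositions 1.69 and 1.85), and your argument is the standard self-contained proof of the facts it cites.
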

\begin{proof} See \cite[Proposition 1.69, Proposition 1.85]{goehle2009groupoid}.\end{proof}

When there exists a compact subset $K\subseteq X$ such that $X=\calG K$, we say that $X$ is $\calG$-compact or cocompact.

\begin{lem}\label{G-compact iff quotient compact}
    Let $\calG$ be a locally compact Hausdorff groupoid with open source or range maps, $X$ is a locally compact Hausdorff proper $\calG$-space with anchor map $\rho$, then $X$ is $\calG$-compact if and only if $X/\calG$ is compact.
\end{lem}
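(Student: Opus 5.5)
The proof splits into the two implications. Throughout, let $q:X\ra X/\calG$ be the quotient map. By Proposition \ref{orbit space of LCH by proper action}, $X/\calG$ is locally compact Hausdorff and $q$ is open; here we use that $\calG$ has open source or range map and that the action is proper.

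For the forward direction, suppose $X=\calG K$ with $K\subseteq X$ compact. Every orbit meets $K$, so $q(K)=X/\calG$. Since $q$ is continuous, $q(K)$ is compact, and hence $X/\calG$ is compact. This direction needs no hypotheses beyond the continuity of $q$.

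For the converse, suppose $X/\calG$ is compact. Because $X$ is locally compact, each $x\in X$ has an open neighborhood $V_x$ with compact closure $\overline{V_x}$. Since $q$ is open, the sets $q(V_x)$ are open and they cover $X/\calG$. By compactness we can extract finitely many of them, say $q(V_{x_1}),\dots,q(V_{x_n})$, that still cover. Set $K=\overline{V_{x_1}}\cup\dots\cup\overline{V_{x_n}}$; this is compact. Now take any $y\in X$. Then $q(y)=q(v)$ for some $v\in V_{x_i}\subseteq K$, so $y=\gamma v$ for some $\gamma\in\calG$ with $s(\gamma)=\rho(v)$. Hence $y\in\calG K$, and therefore $X=\calG K$.

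The only nontrivial point is the openness of $q$ in the converse, which is exactly what Proposition \ref{orbit space of LCH by proper action} supplies. In fact only openness of $q$ is used, and this can also be seen directly from the openness of $r$ or $s$: for $U$ open, $q^{-1}(q(U))=\calG U$. I do not expect any real obstacle beyond citing that proposition.
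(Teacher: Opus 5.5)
Your proposal is correct and follows the paper's proof essentially verbatim: the forward direction uses $q(K)=X/\calG$, and the converse covers $X/\calG$ by finitely many open sets $q(V_{x_i})$, with each $V_{x_i}$ relatively compact and open, and takes $K=\bigcup_i \overline{V_{x_i}}$. Your side remark is also right: only the openness of $q$ is used, and it follows directly from openness of $s$ (equivalently $r$), because $q^{-1}(q(U))=\calG U$ is the image of the open set $\calG\times_{s,\rho}U$ under the range map of $\calG\ltimes X$, and that range map is open since it is the inverse composed with the source map, a pullback of $s$.
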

\begin{proof}
    Let $q:X\ra X/\calG$ be the quotient map. By proposition \ref{orbit space of LCH by proper action}, $q$ is a continuous open map between two locally compact Hausdorff spaces. If $X$ is $\calG$-compact, assume that $X=\calG K$ for some compact subset $K$, then $X/\calG=q(K)$ is compact. Conversely, assume that $X/\calG$ is compact. For every $x\in X$, let $V_x$ be a relatively compact open neighborhood of $x$. Since $q$ is open, $(q(V_x))_x$ form an open cover of $X/\calG$. So we can select finitely many $x_1,\cdots, x_n\in X$ such that $X/\calG=\cup_{i=1}^nq(V_{x_i})$. Take $K=\cup_{i=1}^n \overline{V_{x_i}}$, we will have $X=\calG K$.
\end{proof}

\begin{lem}\label{G-compact subsets are closed}
    Let $\calG$ be a locally compact Hausdorff groupoid, $X$ is a locally compact Hausdorff proper $\calG$-space with anchor map $\rho$, $Z\subseteq X$ is a $\calG$-compact subset, then $Z$ is closed in $X$.
\end{lem}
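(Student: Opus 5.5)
I would prove this with a net argument based on the characterization of properness in proposition \ref{groupoid action-proper action}(3). Write $Z=\calG K$ with $K\subseteq X$ compact; note $K\subseteq Z$ since $\rho(k)k=k$. Since $X$ is Hausdorff (in fact locally compact Hausdorff), closedness of $Z$ amounts to showing that every limit of a convergent net in $Z$ lies in $Z$.

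Let $(z_\lambda)_{\lambda\in\Lambda}$ be a net in $Z$ converging to some $x\in X$. For each $\lambda$ choose $k_\lambda\in K$ and $\gamma_\lambda\in\calG$ with $s(\gamma_\lambda)=\rho(k_\lambda)$ and $z_\lambda=\gamma_\lambda k_\lambda$. By compactness of $K$, pass to a subnet (relabelled) along which $k_\lambda\to k\in K$. The subnet $z_\lambda=\gamma_\lambda k_\lambda$ still converges to $x$. We now have a convergent net $(k_\lambda)$ and a net $(\gamma_\lambda)$ with $s(\gamma_\lambda)=\rho(k_\lambda)$ and $(\gamma_\lambda k_\lambda)$ convergent, so proposition \ref{groupoid action-proper action}(3) provides a further subnet along which $\gamma_\lambda\to\gamma\in\calG$.

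Along this subnet, $(\gamma_\lambda,k_\lambda)\to(\gamma,k)$ in $\calG\times X$. Continuity of $s$ and $\rho$ gives $s(\gamma)=\rho(k)$, so $(\gamma,k)\in\calG\times_{s,\calG\units,\rho}X$. The fiber product is a subspace of $\calG\times X$, so the net also converges there. By continuity of the action map, $\gamma_\lambda k_\lambda\to\gamma k$. Since $X$ is Hausdorff, limits are unique, hence $x=\gamma k\in\calG K=Z$. Thus $Z$ is closed.

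The only delicate point is the legitimacy of the two successive passages to subnets: the convergence $z_\lambda\to x$ and the hypotheses of (3) must persist. They do, because subnets of convergent nets converge to the same limit. Proposition \ref{groupoid action-proper action} requires $X$ to be locally compact Hausdorff, which is part of the hypotheses. No openness of $r$ or $s$ is needed here.
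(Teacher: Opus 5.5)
Your proof is correct, but it takes a different route from the paper's.

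The paper works through the orbit space. With $q\colon X\to X/\calG$ the quotient map and $Z=\calG K$, it notes $Z=q^{-1}(q(K))$ and concludes because the compact set $q(K)$ is closed in $X/\calG$. That argument is shorter, but its real input is that $X/\calG$ is Hausdorff. The paper only has this via Proposition \ref{orbit space of LCH by proper action}, which needs the extra hypothesis that $s$ or $r$ is open. This is harmless in the paper, because every groupoid the lemma is applied to is \'etale.

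Your net argument never uses the orbit space. It needs only the implication $(1)\Rightarrow(3)$ of Proposition \ref{groupoid action-proper action}, continuity of the action, and Hausdorffness of $\calG$ and $X$. So it proves the lemma under exactly the stated hypotheses. In effect you show directly that saturations of compact sets are closed, which is the same as saying $q(K)$ is closed in the quotient topology.

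This stays true even when $X/\calG$ is not Hausdorff, which can happen for proper actions of groupoids whose range map is not open. For example, take the closed equivalence relation on the Tychonoff plank that collapses its two non-separable closed sets, viewed as a proper groupoid acting on the plank.
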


\begin{proof}
    Let $q:X\ra X/\calG$ be the quotient map. Assume that $Z=\calG K$ where $K$ is a compact subset of $Z$, then $Z=q\inv(q(K))$ is closed since $q(K)$ is closed in $X/\calG$.
\end{proof}

\begin{lem}\label{maps between G-compact sets are proper}
    \textnormal{\cite[Lemma 4.2.1]{bonicke2018going}}
    Let $\calG$ be a locally compact Hausdorff groupoid, $X$ be a locally compact Hausdorff $\calG$-compact $\calG$-space and $Y$ be a locally compact Hausdorff proper $\calG$-space. Then any $\calG$-equivariant continuous map $X\ra Y$ is proper.
\end{lem}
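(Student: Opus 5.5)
To show that $f:X\ra Y$ is proper, I will check the net criterion. Take a net $(x_\lambda)$ in $X$ whose image $f(x_\lambda)$ converges to some $y\in Y$. It suffices to show that $(x_\lambda)$ has a convergent subnet, since the limit then lies in $f\inv$ of any compact set containing the $f(x_\lambda)$ together with $y$. An equivalent formulation is to show directly that $f\inv(L)$ is compact for every compact $L\subseteq Y$.

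The plan has four steps.

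\textbf{Step 1 (reduce to a compact set).} Since $X$ is $\calG$-compact, write $X=\calG K$ with $K\subseteq X$ compact. Choose $\gamma_\lambda\in\calG$ and $k_\lambda\in K$ with $x_\lambda=\gamma_\lambda k_\lambda$, so that $s(\gamma_\lambda)=\rho_X(k_\lambda)$. Passing to a subnet, we may assume $k_\lambda\to k\in K$.

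\textbf{Step 2 (transfer to $Y$).} Equivariance gives $f(x_\lambda)=\gamma_\lambda f(k_\lambda)$. In $Y$, the net $f(k_\lambda)$ converges to $f(k)$, and the net $\gamma_\lambda f(k_\lambda)=f(x_\lambda)$ converges to $y$. Also $s(\gamma_\lambda)=\rho_X(k_\lambda)=\rho_Y(f(k_\lambda))$, since equivariant maps intertwine the anchor maps.

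\textbf{Step 3 (use properness of $Y$).} By condition (3) of Proposition \ref{groupoid action-proper action}, applied to the proper $\calG$-space $Y$ with the nets $f(k_\lambda)$ and $\gamma_\lambda$, the net $(\gamma_\lambda)$ has a convergent subnet $\gamma_\mu\to\gamma$.

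\textbf{Step 4 (conclude).} Along this subnet, continuity of the action map $\calG\times_{s,\rho}X\ra X$ gives $x_\mu=\gamma_\mu k_\mu\to\gamma k$. The pair $(\gamma,k)$ lies in the fiber product because that set is closed, $\calG\units$ being Hausdorff. This is the required convergent subnet.

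In the compact-set formulation, the same argument shows that $f\inv(L)$ is contained in the set $C K$, where
\[C=\{\gamma\in\calG:\gamma f(K)\cap L\neq\emptyset\}.\]
The set $C$ is contained in $\{\gamma:\gamma M\cap M\neq\emptyset\}$ with $M=f(K)\cup L$, which is compact by condition (2) of Proposition \ref{groupoid action-proper action}. Hence $CK$ is compact as the image of the compact set $(C\times K)\cap(\calG\times_{\calG\units}X)$ under the action map. Finally, $f\inv(L)$ is closed in $X$, so it is compact.

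The only delicate point is Step 3. There I must make sure that the equivalence in Proposition \ref{groupoid action-proper action} applies, which requires $Y$ to be locally compact Hausdorff; this is part of the hypotheses. I must also check that the two nets $f(k_\lambda)$ and $\gamma_\lambda f(k_\lambda)$ are indexed compatibly after passing to subnets. Everything else is routine.
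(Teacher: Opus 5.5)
Your net argument (Steps 1--4) is correct and complete. It uses only three things: the $\calG$-compactness $X=\calG K$, the anchor compatibility $\rho_Y\circ f=\rho_X$ that is part of equivariance, and the implication (1)$\Rightarrow$(3) of Proposition~\ref{groupoid action-proper action} for $Y$. The net criterion you reduce to does characterize properness of continuous maps between locally compact Hausdorff spaces: given a net in $f^{-1}(L)$, first pass to a subnet along which $f(x_\lambda)$ converges in $L$, then apply your claim.

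The paper gives no proof of its own here, only the citation \cite[Lemma 4.2.1]{bonicke2018going}. Your argument therefore supplies a standard self-contained justification rather than competing with a different route.

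Your compact-set variant needs one small repair. From $C\subseteq\{\gamma\in\calG:\gamma M\cap M\neq\emptyset\}$ you cannot yet conclude that $(C\times K)\cap(\calG\times_{\calG\units}X)$ is compact, because $C$ has not been shown to be closed. You can fix this in either of two ways:
\begin{enumerate}
\item Check that $C$ is closed. A subnet argument using compactness of $K$ and closedness of $L$ does this.
\item Replace $C$ by the compact set $\{\gamma\in\calG:\gamma M\cap M\neq\emptyset\}$ itself. This set still contains every $\gamma$ with $\gamma K\cap f^{-1}(L)\neq\emptyset$, so $f^{-1}(L)$ is a closed subset of its (compact) product with $K$.
\end{enumerate}
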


The properness of a groupoid allows us to perform an averaging process by using a cutoff function.

\begin{defn}
    Let $\calG$ be an \'etale groupoid. A cutoff function for $\calG$ is a continuous function $c:\calG\units\ra \mathbb R_{\geqslant 0}$ such that
    \begin{enumerate}
        \item the map $s:supp(c\circ r)\ra \calG\units$ is proper;
        \item for any $x\in \calG\units$, $\sum_{\gamma\in \calG_x}c(r(\gamma))=1$.
    \end{enumerate}
\end{defn}

\begin{prop}\label{cuttoff function exist}
    \textnormal{\cite[Proposition 6.11]{tu1999conjecture-n}}
    If $\calG$ is a proper \'etale groupoid such that $\calG\units/\calG$ is $\sigma$-compact, then there exists a cutoff function. Moreover, if $\calG\units/\calG$ is compact, the cutoff function can be chosen to be compactly supported.
\end{prop}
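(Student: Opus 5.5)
The plan is to build the cutoff function from a partition of unity on the orbit space, spread out along the fibres of the groupoid. By Proposition \ref{orbit space of LCH by proper action}, applied to the proper action of $\calG$ on $\calG\units$, the orbit space $\calG\units/\calG$ is locally compact Hausdorff and the quotient map $q:\calG\units\ra\calG\units/\calG$ is open. Since $\calG\units/\calG$ is also $\sigma$-compact, it is paracompact. First, for every $x\in\calG\units$ pick a relatively compact open neighbourhood $V_x$. The sets $q(V_x)$ are open and cover $\calG\units/\calG$, so we may choose a locally finite refinement and a subordinate partition of unity. Pulling these back and summing, I will build a continuous $\phi:\calG\units\ra\mathbb R_{\geqslant0}$ with the following two properties: for every compact $L\subseteq\calG\units/\calG$, the set $supp(\phi)\cap q\inv(L)$ is compact; and $\phi$ is nonzero somewhere on every orbit. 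In the cocompact case a single finite subcover suffices, and $\phi$ can then be taken compactly supported. Concretely, take a countable locally finite family $(W_i)$ refining the $q(V_x)$. For each $i$ pick a compactly supported $\phi_i\geqslant0$ on $\calG\units$ that is positive on a compact set whose image under $q$ contains the closure of $W_i$ (shrunk if necessary). Then set $\phi=\sum_i\phi_i$, which is a locally finite sum.

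Next I average $\phi$ over orbits. Define $\Phi(x)=\sum_{\gamma\in\calG_x}\phi(r(\gamma))$. The key claims are that this sum is locally finite and that $\Phi$ is continuous and strictly positive. Positivity is immediate, since every orbit meets $\{\phi>0\}$. For local finiteness and continuity, fix a compact neighbourhood $C$ of $x$ and put $K=supp(\phi)\cap q\inv(q(C))$, which is compact by the construction above. Properness of $\calG$ (Proposition \ref{groupoid action-proper action}(2), applied to the compact set $C\cup K$) shows that $\{\gamma\in\calG_C: r(\gamma)\in K\}$ is compact. Because $\calG$ is étale, this compact set is covered by finitely many bisections. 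On each bisection $B$ the function $y\mapsto\phi(r((s|_B)\inv(y)))$ is continuous on $s(B)$, and it extends by zero because $\phi$ has compact support. One has to be careful here: supports of the pieces must sit inside the open sets $s(B)$. I will handle this with a partition of unity subordinate to the bisection cover, applied to $\phi\circ r$ on the compact set. So on $C$, $\Phi$ is a finite sum of continuous functions.

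Finally set $c=\phi/\Phi$ and check the two axioms. Continuity and nonnegativity are clear. $\Phi$ is $\calG$-invariant, i.e.\ $\Phi(r(\gamma))=\Phi(s(\gamma))$, by reindexing $\calG_{r(\gamma)}=\calG_{s(\gamma)}\gamma\inv$. Hence $\sum_{\gamma\in\calG_x}c(r(\gamma))=\Phi(x)/\Phi(x)=1$. For axiom 1, take a compact $C\subseteq\calG\units$. Then $supp(c\circ r)\cap s\inv(C)$ is a closed subset of $\{\gamma\in\calG_C:r(\gamma)\in K\}$, which we already showed is compact; so $s$ restricted to $supp(c\circ r)$ is proper. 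In the cocompact case $supp(c)=supp(\phi)$ is compact.

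I expect the main obstacle to be the continuity and local finiteness of $\Phi$. The crux is the compactness of $\{\gamma\in\calG_C: r(\gamma)\in K\}$, and this needs $K$ to be compact. That is exactly why $\phi$ must be chosen so that its support meets preimages of compact subsets of the orbit space in compact sets. This is where $\sigma$-compactness and paracompactness of $\calG\units/\calG$ get used.
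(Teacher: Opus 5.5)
Your proposal is correct and is the standard proof of the cited result of Tu, which the paper quotes without reproving it. You choose $\phi\geqslant 0$ positive somewhere on every orbit with $\mathrm{supp}(\phi)\cap q\inv(L)$ compact for each compact $L\subseteq \calG\units/\calG$, then divide by the $\calG$-invariant fibre sum $\Phi(x)=\sum_{\gamma\in\calG_x}\phi(r(\gamma))$; its local finiteness and continuity, and the properness of $s$ on $\mathrm{supp}(c\circ r)$, all follow from compactness of $\{\gamma\in\calG_C : r(\gamma)\in K\}$, exactly as you argue. The one step to phrase more carefully is the choice of the $\phi_i$: take a shrinking $(W_i')$ with $\overline{W_i'}\subseteq W_i$, and let each $\phi_i$ have compact support inside $q\inv(W_i)$ and be positive on a compact set mapping onto $\overline{W_i'}$, so the supports stay locally finite while every orbit is still met.
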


\subsection{Local structure of proper actions of \'etale groupoids}

Recall that, when $\Gamma$ is a discrete group and $Z$ is a proper $\Gamma$-space, for every point $z$ of $Z$ and an open neighborhood $U$ of it, there is an open neighborhood $V$ in $Z$ of $z$, such that $V\subseteq U$ and $V$ is invariant under action of the stabilizer $\Gamma_z$ of $z$ (which is a finite subgroup), and for $\gamma\in \Gamma\setminus\Gamma_z$, $\gamma V\cap V=\emptyset$. We will show an analog for proper actions of étale groupoids, where finite subgroups are replaced by proper open subgroupoids. The proof here is adapted from theorem 3.3.4 of \cite{bessi2023}. This is also proved in \cite[Proposition 3.2]{bonicke2024categorical}.

\begin{lem}\label{two section coincide}
    Let $f: Z\ra X$ be a local homeomorphism between locally compact Hausdorff spaces. If $s_1,s_2: X\ra Z$ are two sections of $f$, and for $x_0\in X$, $s_1(x_0)=s_2(x_0)$, then there exists an open neighborhood $U$ of $x_0$ in $X$, such that $s_1|_U=s_2|_U$.
\end{lem}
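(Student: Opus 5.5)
Since $f$ is a local homeomorphism and $s_1(x_0)=s_2(x_0)=:z_0$, we first choose an open neighborhood $W$ of $z_0$ in $Z$ such that $f|_W:W\ra f(W)$ is a homeomorphism onto an open subset $f(W)$ of $X$. The sections $s_1,s_2$ are continuous, so $U:=s_1\inv(W)\cap s_2\inv(W)$ is an open subset of $X$, and it contains $x_0$ because $s_i(x_0)=z_0\in W$.

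Now fix $x\in U$. Both $s_1(x)$ and $s_2(x)$ lie in $W$. Since $s_1$ and $s_2$ are sections of $f$, we have $f(s_1(x))=x=f(s_2(x))$. The restriction $f|_W$ is injective, so $s_1(x)=s_2(x)$. This holds for every $x\in U$, hence $s_1|_U=s_2|_U$.

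There is no real obstacle. The key point is to intersect the two preimages $s_1\inv(W)$ and $s_2\inv(W)$, so that both sections land in the single sheet $W$ on which $f$ is injective. Hausdorffness and local compactness are not needed for this argument.
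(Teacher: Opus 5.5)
Your proof is correct and uses the same idea as the paper: take an open set $W\ni s_1(x_0)=s_2(x_0)$ on which $f$ is injective, and use $f(s_1(x))=x=f(s_2(x))$. Your choice $U=s_1^{-1}(W)\cap s_2^{-1}(W)$ is in fact the careful one, and it repairs a small gap in the paper's proof. The paper takes $U=f(V)$, which does not guarantee $s_i(x)\in V$ for $x\in U$: for instance, with $X=(-1,1)\sqcup(2,3)$, $Z=X\times\{0,1\}$, $V=X\times\{0\}$, a continuous section $s_2$ can switch to the sheet $X\times\{1\}$ over $(2,3)$, so $s_1\neq s_2$ on part of $f(V)$.
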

\begin{proof} Let $V$ be an open neighborhood of $s_1(x_0)=s_2(x_0)$ such that $f(V)$ is open and $f|_V$ is a homeomorphism from $V$ to $f(V)$. Let $U=f(V)$. So for any $x\in U$,
\[s_1(x)=f|_V\inv \circ f|_V\circ s_1(x)=f|_V\inv(x)=f|_V\inv \circ f|_V\circ s_2(x)=s_2(x).\]
\end{proof}

\begin{prop}\label{local structure of proper cocompact action of étale groupoid}
    Let $\calG\rightrightarrows X$ be an étale groupoid, $Z$ be a locally compact Hausdorff space with left proper action of $\calG$ with anchor map $\rho: Z\ra X$. Then for any $z_0\in Z$ and a neighborhood $U$ of $z_0$, there exists an open neighborhood $V$ of $z_0$ in $U$, and a proper open subgroupoid $\calH\subseteq \calG$, such that
    \begin{enumerate}
        \item $V\subseteq \overline{V}\subseteq U$, $\overline{V}$ is compact;
        \item $\rho(V)\subseteq \calH\units$ and $\calH\units$ is a relatively compact open of $X$;
        \item $V$ is $\calH$-invariant;
        \item For any $\gamma \in\calG\setminus \calH$, $\gamma \overline{V}\cap \overline{V}=\emptyset$.
    \end{enumerate}
    Moreover, the stabilizer $F=\{\gamma\in \calG^{\rho(z)}:\gamma z=z\}$ is a finite group, it admits an action on a relatively compact open neighborhood $A$ of $\rho(z)$ in $X$, such that $\calH$ is isomorphic to $F\ltimes A$.
\end{prop}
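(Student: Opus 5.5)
Write $x_0=\rho(z_0)$ and $F=\{\gamma\in\calG^{x_0}:\gamma z_0=z_0\}$. The plan is to build $\calH$ as $F\ltimes A$ by spreading $F$ out into a family of disjoint bisections that behave like a group near $x_0$. Then $V$ is cut down so that the only groupoid elements moving $\overline V$ back into itself lie in $\calH$.

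\textbf{Step 1: $F$ is finite.} By Proposition \ref{groupoid action-proper action}(2) applied to $K=\{z_0\}$, the set $F$ is compact. It lies in the fibre $\calG^{x_0}$, which is discrete because $r$ is a local homeomorphism. Hence $F$ is finite, and it is clearly a group.

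\textbf{Step 2: a local $F$-action on the unit space.} Since $\calG$ is Hausdorff and \'etale, I choose pairwise disjoint open bisections $B_\gamma\ni\gamma$ for $\gamma\in F$. Each one gives a partial homeomorphism $\alpha_\gamma=r\circ(s|_{B_\gamma})\inv$ defined near $x_0$ and fixing $x_0$.

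For $\gamma,\delta\in F$, the products $B_\gamma B_\delta$ and $B_{\gamma\delta}$ are both bisections through $\gamma\delta$. Lemma \ref{two section coincide}, applied to the local sections $(s|_{B_\gamma B_\delta})\inv$ and $(s|_{B_{\gamma\delta}})\inv$ of $s$, shows they agree on a neighbourhood of $x_0$. The same argument handles inverses. Finitely many such conditions hold simultaneously on a small open $W\ni x_0$.

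I then set $A=\bigcap_{\gamma\in F}\alpha_\gamma(W')$ for a smaller $W'$, and shrink further to make $A$ relatively compact and $F$-invariant. On $A$ we get a genuine action of $F$. The set $\calH=\bigcup_\gamma B_\gamma\cap s\inv(A)$ is an open subgroupoid with $\calH\units=A$, isomorphic to $F\ltimes A$ via $(\gamma,a)\mapsto(s|_{B_\gamma})\inv(a)$. It is proper because $F$ is finite, so $(r,s)$ is a finite union of graphs of homeomorphisms.

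\textbf{Step 3: choosing $V$.} This is the main obstacle: we need $\gamma\overline V\cap\overline V=\emptyset$ for every $\gamma\notin\calH$. Take a compact neighbourhood $K\subseteq U\cap\rho\inv(A)$ of $z_0$. By properness, $C=\{\gamma:\gamma K\cap K\neq\emptyset\}$ is compact.

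I claim there is a neighbourhood $K'\subseteq K$ of $z_0$ such that $\{\gamma:\gamma K'\cap K'\ne\emptyset\}\subseteq\calH$. Suppose not. Then there are nets $z_\lambda\to z_0$ and $\gamma_\lambda\notin\calH$ with $\gamma_\lambda z_\lambda\to z_0$. By Proposition \ref{groupoid action-proper action}(3), a subnet satisfies $\gamma_\lambda\to\gamma$, and continuity gives $\gamma z_0=z_0$, so $\gamma\in F\subseteq\calH$. But $\calH$ is open, so eventually $\gamma_\lambda\in\calH$, a contradiction.

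Finally I replace $K'$ by an $\calH$-invariant neighbourhood. Let $V_0=\inter(K')$ and set $V=\bigcap_{h}hV_0$, or more concretely $V=\{z\in\rho\inv(A):\calH_{\rho(z)}z\subseteq V_0\}$. This $V$ is open by properness of $\calH$, since $\calH$ has finite $s$-fibres given by the local sections $B_\gamma$. It contains $z_0$ because $F$ fixes $z_0$, and it is $\calH$-invariant.

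Choosing $K'$ with $\overline{V}\subseteq K'$ compact then gives properties (1)–(4). Property (2) holds because $\rho(V)\subseteq A=\calH\units$.
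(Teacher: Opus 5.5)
Your proposal is correct and follows essentially the same route as the paper. You get finiteness of $F$ from properness, build a local $F$-action near $\rho(z_0)$ from pairwise disjoint bisections and Lemma~\ref{two section coincide} so that $\calH\cong F\ltimes A$, and then shrink a neighbourhood separated from $\calG\setminus\calH$ to an $\calH$-invariant one by intersecting over the finitely many local sections. The only difference is cosmetic: you obtain the separating neighbourhood from the net criterion of Proposition~\ref{groupoid action-proper action}(3), whereas the paper uses that the proper map $(\gamma,z)\mapsto(z,\gamma z)$ is closed, applied to the closed set $(\calG\times_X Z)\setminus(\calH\times_X Z)$; the two arguments are equivalent.
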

\begin{proof} The action of $\calG$ on $Z$ is proper, hence $F$ is finite. For every $\gamma\in F$, take an open bisection $U_\gamma$ containing $\gamma$ such that $s|_{U_\gamma}, r|_{U_\gamma}$ are homeomorphisms onto opens of $X$. By definition, we have $s|_{U_\gamma}\inv(\rho(z))=\gamma$. Without loss of generality, let all the $U_\gamma$ be pairwisely disjoint.

After replacing each $U_\gamma$ by $U_\gamma\cap (U_{\gamma\inv})^{-1}$, we can choose these $U_\gamma$ such that $U_\gamma=(U_{\gamma\inv})^{-1}$.

Let $A_1=\cap_{\gamma\in F} s(U_\gamma)=\cap_{\gamma\in F} r(U_{\gamma\inv})=\cap_{\gamma\in F}r(U_\gamma)$. It is an open neighborhood of $\rho(z_0)$. For $\gamma$ in $F$ and $a$ in $A_1$, define $t_\gamma(a)=r\circ s|_{U_\gamma}\inv(a)$. Particularly, $t_\gamma(\rho(z_0))=\rho(z_0)$. And $t_{\gamma\inv}a=r|_{U_{\gamma\inv}}\circ s|_{U_{\gamma\inv}}\inv(a)=s|_{U_{\gamma}}\circ r|_{U_{\gamma}}\inv(a)$, so $t_{\gamma}(t_{\gamma\inv}a)=a$.

Let $\gamma_1,\gamma_2\in F$. Consider the following two maps:
\[\phi_1:A_1\ra \calG: a\mapsto s|_{U_{\gamma_1\gamma_2}}\inv(a); \]
and 
\[\phi_2: A_1\ra \calG :a\mapsto s|_{U_{\gamma_1}}\inv(t_{\gamma_2}a) s|_{U_{\gamma_2}}\inv(a).\]

They are both sections of the local homeomorphism $s$, since for all $a$ in $A_1$,
\[s(\phi_2(a))=s(s|_{U_{\gamma_1}}\inv(t_{\gamma_2}a) s|_{U_{\gamma_2}}\inv(a))=s(s|_{U_{\gamma_2}}\inv(a))=a=s(\phi_1(a)).\]

And $\phi_1(\rho(z_0))=\gamma_1\gamma_2=\phi_2(\rho(z_0))$. Hence, by lemma \ref{two section coincide}, there exists an open neighborhood $A_{2,\gamma_1,\gamma_2}$ of $\rho(z_0)$ in $A_1$, such that $\phi_1|_{A_{2,\gamma_1,\gamma_2}}=\phi_2|_{_{A_{2,\gamma_1,\gamma_2}}}$. In particular, for $a\in A_{2,\gamma_1,\gamma_2}$,
\[t_{\gamma_1\gamma_2}a=r(\phi_1(a))=r(\phi_2(a))=r(s|_{U_{\gamma_1}}\inv(t_{\gamma_2}a))=t_{\gamma_1}(t_{\gamma_2}a).\]

Let $A_2=\cap_{\gamma_1,\gamma_2\in F\times F} A_{2,\gamma_1,\gamma_2}$. It is a neighborhood of $\rho(z_0)$, and on $A_2$, $t_{\gamma_1\gamma_2}=t_{\gamma_1}t_{\gamma_2}$. Let $A_3=\cap_{\gamma\in F}t_\gamma(A_2)$. So $A_3$ has a well-defined left action of $F$. Then let $A$ be an $F$-invariant relatively compact open neighborhood of $\rho(z_0)$ in $A_3$.

Define such a continuous map 
\[\Phi: F\ltimes A\ra \calG, (\gamma,a)\ra s|_{U_\gamma}\inv(a).\]
The map $\Phi$ is a strict morphism of groupoids. This is because
\begin{align*}
    \Phi(\gamma_1,t_{\gamma_2}a)\Phi(\gamma_2,a) & = s|_{U_{\gamma_1}}\inv(t_{\gamma_2}a) s|_{U_{\gamma_2}}\inv(a) \\
    & = s|_{U_{\gamma_1\gamma_2}}\inv(a)\\
    & = \Phi(\gamma_1\gamma_2,a).
\end{align*}

For every $\gamma\in F$, $\Phi|_{\{\gamma\}\times A}=s|_{U_\gamma}\inv$ is homeomorphism, hence open. And since all $U_\gamma$ are disjoints, $\Phi$ is an open inclusion. So $\calH:=\Phi(F\ltimes A)$ is open subgroupoid of $\calG$.

The map $\Phi$ establishes a continuous open bijection between $F\ltimes A$ and $\calH$, and it is also an algebraic morphism. Hence, $F\ltimes A$ is isomorphic to $\calH$, which is proper since $F$ is a finite group.

Claim: there exists an open neighborhood $V_1$ of $z_0$ in $Z$, such that $V_1$ for all $\gamma\notin\calH$, $\gamma V_1\cap V_1=\emptyset$. Consider the following map
\[\alpha:\calG\times_{s,X,\rho} Z\ra  Z\times Z,(\gamma,z)\mapsto (z,\gamma z).\]
It is proper since the action of $\calG$ on $Z$ is proper, and therefore closed. We have a closed subset $C:=(\calG\times_X Z)\setminus (\calH\times_X Z)$ of $\calG\times_X Z$, and $\alpha(C)$ does not contain $(z_0,z_0)$. Therefore, there exists an open neighborhood $V_1$ of $z_0$, such that $V_1\times V_1$ does not intersect $\alpha(C)$. This implies that for all $\gamma\notin\calH$, $\gamma V_1\cap V_1=\emptyset$.

Now let $V_2=V_1\cap \rho\inv(A)\cap U$, let $V_3$ be a relatively compact open neighborhood of $z_0$ such that $z_0\in V_3\subseteq \overline{V_3}\subseteq V_2$, $\overline{V_3}$ is compact. Canonically $\rho\inv(A)$ admits an action of $F$ as, for any $\gamma\in F$,
\[\gamma.z=\Phi(\gamma, \rho(z))z, \forall z\in \rho\inv(A).\]
Notice that $z_0$ is $F$-invariant, and we can take $V=\cap_{\gamma\in F}\gamma.V_3$ as an open neighborhood of $z_0$. Now $V_3$ is relatively compact, so $V$ is also relatively compact, and $\overline{V}\subseteq \overline{V_3}\subseteq U$. We have also $\rho(V)\subseteq\rho(\rho\inv(A))\subseteq A=\calH\units$. By definition $V$ is also $\calH$-invariant. Finally, $\overline{V}\subseteq V_1$, hence for all $\gamma\in \calG\setminus\calH$, $\gamma\overline{V}\cap \overline{V}=\emptyset$.\end{proof}

\subsection{\texorpdfstring{$C_0(X)$-algebras and modules}{C0(X)-algebras and modules}}

We make a summary about $C_0(X)$-algebras and upper-semicontinuous bundles. The usual references are \cite{blanchard1996deformations} and \cite[Appendix C]{williams2007crossed}.  Here our standard reference is \cite{miller2022k}, where almost all details can be found.

\textbf{Banach bundles}: let $X$ be a locally compact Hausdorff space. A Banach bundle over $X$ is a topological space $\calA$ equipped with a continuous open surjection $p: \calA\ra X$, and every fiber $\calA_x:=p\inv(x)$ is equipped with complex Banach space structure, such that
\begin{enumerate}
    \item the map $a\mapsto \|a\|:=\|a\|_{\calA_{p(a)}}$ is upper-semicontinuous from $\calA$ to $\mathbb R$;
    \item the addition $\calA\times_X \calA\ra \calA, (a,b)\mapsto a+b$ is continuous;
    \item for every $\lambda\in \mathbb C$, $\calA\ra \calA, a\mapsto \lambda a$ is continuous;
    \item if $(a_\lambda)_\lambda$ is a net in $\calA$ such that $p(a_\lambda)\ra x$ and $\|a_\lambda\|\ra 0$, then $a_\lambda\ra 0_x$.
\end{enumerate}

\begin{prop}\label{convergence in banach bundle}
    \textnormal{\cite[Proposition 1.38]{miller2022k}}
    Let $p:\ca A\ra X$ be a Banach bundle and let $(a_\lambda)_\lambda$ be a net in $\ca A$ such that $p(a_\lambda)\ra p(a)$ for some $a\in \ca A$. Suppose that for all $\epsilon>0$, there is a net $(u_\lambda)_\lambda$ in $\ca A$ and $u\in \ca A$ with $p(u_\lambda)=p(a_\lambda)$, $p(u)=p(a)$, such that
    \begin{enumerate}
        \item $u_\lambda$ converges to $u$ in $\ca A$;
        \item $\|a-u\|<\epsilon$;
        \item $\|a_\lambda-u_\lambda\|<\epsilon$ for $\lambda$ large enough.
    \end{enumerate}
    Then $a_\lambda \ra a$.
\end{prop}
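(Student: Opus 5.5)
We prove directly that every neighbourhood of $a$ eventually contains $a_\lambda$, using the tube-type neighbourhoods of an upper-semicontinuous Banach bundle. For an open set $W\subseteq X$ containing $p(a)$, a point $v\in\calA$ with $p(v)=p(a)$ and $\epsilon>0$, set
\[
T(v,W,\epsilon)=\{b\in \calA: p(b)\in W,\ \exists\, c\in\calA,\ c\to\text{(close to) } v,\ p(c)=p(b),\ \|b-c\|<\epsilon\}.
\]
To keep things concrete, we work with nets rather than with such a basis. Suppose $a_\lambda\not\to a$. Then there is an open neighbourhood $O$ of $a$ and a subnet, still written $(a_\lambda)$, with $a_\lambda\notin O$ for all $\lambda$. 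We derive a contradiction from the hypotheses.

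The first step is to show that the map $\calA\times_X\calA\to\calA$, $(b,c)\mapsto b-c$, together with axiom 4, gives a \emph{reverse approximation} principle. Suppose $b_\mu\to b$, $c_\mu$ lies in the same fibre as $b_\mu$, and $\|c_\mu-b_\mu\|<\epsilon_\mu$ eventually, with $\epsilon_\mu\to 0$ along the index set. We want to conclude that $c_\mu\to b$. To see this, note that $c_\mu-b_\mu$ lies over $p(b_\mu)\to p(b)$ and its norm tends to $0$, so $c_\mu-b_\mu\to 0_{p(b)}$ by axiom 4. Addition is continuous (axiom 2), hence
\[
c_\mu=b_\mu+(c_\mu-b_\mu)\to b+0_{p(b)}=b.
\]

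Next, apply the hypothesis with $\epsilon=1/n$ to get nets $u^{(n)}_\lambda\to u^{(n)}$ with $\|a-u^{(n)}\|<1/n$, and $\|a_\lambda-u^{(n)}_\lambda\|<1/n$ for $\lambda\ge\lambda_n$. We now build a diagonal net. Take the index set $\{(n,\lambda,V)\}$, where $V$ runs over the neighbourhoods of $a$, directed in the product order. For each index $(n,\lambda,V)$ choose $\lambda'\ge\lambda,\lambda_n$ such that $u^{(n)}_{\lambda'}$ lies in a neighbourhood $V'$ of $u^{(n)}$; the precise choice of $V'$ is made as follows.

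Consider the constant net $a$ compared with $u^{(n)}$. Since $\|a-u^{(n)}\|<1/n$, the reverse principle applied to the sequence $u^{(n)}$ (which has constant base point $p(a)$) shows $u^{(n)}\to a$ as $n\to\infty$. Combined with $u^{(n)}_\lambda\to u^{(n)}$, a standard iterated-limit (diagonal) argument produces a net $\lambda'(n,\ldots)$ for which $u^{(n)}_{\lambda'}\to a$ while $n\to\infty$. On this diagonal net we also have $\|a_{\lambda'}-u^{(n)}_{\lambda'}\|<1/n\to0$. The reverse principle then gives $a_{\lambda'}\to a$. However, $(a_{\lambda'})$ is a subnet of the net that avoids $O$, so $a_{\lambda'}\notin O$ for all indices, which is a contradiction.

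The delicate step is the diagonal construction. The iterated-limit lemma (Kelley's theorem on iterated limits) must be applied in the topological space $\calA$, which need not be metrizable. We therefore index by triples $(n,V,\lambda_0)$, with $V$ ranging over the neighbourhoods of $a$, and select $\lambda'\ge\lambda_0$. This selection requires that $u^{(n)}_{\lambda'}\in V$ eventually whenever $u^{(n)}\in V$. Thus we must first pass to $n$ large enough that $u^{(n)}\in V$, which is possible because $u^{(n)}\to a$. The ordering must be checked so that the resulting map to the original index set is cofinal, which is what makes $(a_{\lambda'})$ a genuine subnet.
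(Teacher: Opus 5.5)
The paper gives no proof of this statement; it is quoted from \cite[Proposition 1.38]{miller2022k}, so there is no internal argument to compare yours against. Your proof is correct in substance. The ``reverse approximation'' step (axiom 4 plus continuity of addition at $(b,0_{p(b)})$) is right, and it correctly gives $u^{(n)}\to a$. A diagonal net with $u^{(n(d))}_{\lambda(d)}\to a$, $n(d)\to\infty$ and $\lambda(d)\ge\lambda_{n(d)}$ then forces $a_{\lambda(d)}\to a$, which contradicts $a_{\lambda(d)}\notin O$.

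A few repairs are needed.
\begin{itemize}
\item Delete the opening definition of $T(v,W,\epsilon)$. As written it is not a well-defined set. A basis of tubes is also exactly what is unavailable for a general Banach bundle without enough continuous sections, and you never use it.
\item Say explicitly that the hypotheses pass to the subnet avoiding $O$. This is immediate if you take the cofinal subset $\{\lambda: a_\lambda\notin O\}$ of the index set.
\item The diagonal step you call delicate is routine. Kelley's iterated-limits theorem holds in arbitrary topological spaces, so metrizability of $\calA$ is irrelevant; take $D=\mathbb N$, $E_n=\{\lambda:\lambda\ge\lambda_n\}$ and $S(n,\lambda)=u^{(n)}_\lambda$.
\item Alternatively, by hand, index by $\{(n,V,\lambda_0): V\text{ open},\ u^{(n)}\in V\}$ with the product order. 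This set is directed precisely because $u^{(n)}\to a$.
\item Cofinality of $d\mapsto\lambda(d)$ is not even needed for your contradiction, since every $a_\lambda$ already avoids $O$.
\end{itemize}

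You could also skip the contradiction and the diagonal altogether. Fix an open $O\ni a$. Continuity of addition at $(a,0_{p(a)})$ gives open sets $U\ni a$ and $V\ni 0_{p(a)}$ with $b+c\in O$ whenever $b\in U$, $c\in V$ and $p(b)=p(c)$. Axiom 4, argued by contradiction with a net, gives an open $W\ni p(a)$ and $\delta>0$ such that every $c$ with $p(c)\in W$ and $\|c\|<\delta$ lies in $V$. Now choose a single $n$ with $1/n<\delta$ and $u^{(n)}\in U$, which is possible since $u^{(n)}\to a$. Because $U$ is open and $u^{(n)}_\lambda\to u^{(n)}$, eventually $u^{(n)}_\lambda\in U$, $p(a_\lambda)\in W$ and $\|a_\lambda-u^{(n)}_\lambda\|<1/n$. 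Hence $a_\lambda=u^{(n)}_\lambda+(a_\lambda-u^{(n)}_\lambda)\in O$ eventually.

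The open set $U$ acts as a buffer, so $u^{(n)}_\lambda$ never needs to approach $a$ itself. This uses the same two ingredients as your argument but only one value of $\epsilon$. Your version needs a net of nets to carry the approximation $u^{(n)}\to a$ down to the $\lambda$-level; it is valid, just heavier.
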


If $\calA$ is a Banach bundle over $X$, $U$ is a locally closed subset of $X$, the spaces of continuous sections $U\ra p\inv(U)\subseteq \calA$ that are bounded, vanish at infinity or are compactly supported are denoted respectively as $\Gamma_b(U,\calA), \Gamma_0(U,\calA), \Gamma_c(U,\calA)$, equipped with the norm $\|f\|=\sup_{x\in U}\|f(x)\|$. Clearly $\Gamma_c(U,\calA)$ is dense in $\Gamma_0(U,\calA)$.

We define the morphisms between two Banach bundles $\calA, \ca B$ over $X$ as a continuous map $\varphi: \ca A\ra \ca B$ such that for any $x\in X$, $\varphi(\ca A_x)\subseteq \ca B_x$, and every fiber $\varphi_x: \ca A_x\ra \ca B_x$ is a bounded linear map.

\textbf{\cst-bundles}: given a Banach bundle $\calA\ra X$, we say that it is a \cst-bundle if every fiber is equipped with \cst-algebra structure and the multiplication map 
\[ \calA\times_X\calA=\sqcup_{x\in X}(\calA_x\times \calA_x)\ni (a,b)\mapsto ab\in \calA\]
and the involution map
\[\calA=\sqcup_{x\in X}\calA_x\ni a\mapsto a^*\in \calA\]
are continuous. Then $\Gamma_0(X,\ca A)$ is a $C_0(X)$-algebra. The morphisms between two \cst-bundles $\ca A, \ca B$ over $X$ are morphisms of Banach bundles $\varphi:\ca A\ra \ca B$ such that every fiber $\varphi_x$ is a *-homomorphism.

\textbf{$C_0(X)$-algebras}: a $C_0(X)$-algebra is a \cst-algebra with a non-degenerate *-homomorphism $C_0(X)\ra ZM(A)$. A $C_0(X)$-linear *-homomorphism between two $C_0(X)$-algebras $\phi:A\ra B$ is a *-homomorphism $\phi$ such that
\[h\cdot\phi(a)=\phi(h\cdot a),\quad \forall h\in C_0(X), a\in A.\]

If $\calA\ra X$ is a \cst-bundle, $\Gamma_0(X,\ca A)$ is a $C_0(X)$-algebra. 

Conversely, assume that $A$ is a $C_0(X)$-algebra. We define its fiber at $x\in X$ as the \cst-algebra $A_x:=A/\overline{C_0(X\setminus\{x\})A}$. Let $\ca A$ be the set $\sqcup_{x\in X}A_x$. For any $a\in A$, we use $a(x)$ to denote its image in the quotient $A_x$. Then $\ca A$ can be equipped with the weakest topology such that $x\mapsto a(x)$ is continuous for all $a\in A$, and $\ca A$ becomes a \cst-bundle over $X$. We say that $\ca A$ is the \cst-bundle associated with $A$. Moreover, $\Gamma_0(X,\ca A)$ is $C_0(X)$-linearly isomorphic to $A$. (See \cite[Theorem C.26]{williams2007crossed}, or the Dauns--Hofmann representation theorem \cite{Hofmann}.)

\textbf{Hilbert bundles}: for a \cst-bundle $\calA\ra X$, a Hilbert $\calA$-bundle is a Banach bundle $\ca E\ra X$ such that every fiber $\ca E_x$ is equipped with Hilbert $\calA_x$-module structure and the inner product $\ca E\times_X\ca E\ra \ca A$ and $\ca A$-action $\ca E\times_X\calA\ra \ca E$ are continuous. Then $\Gamma_0(X,\ca E)$ is a Hilbert $\Gamma_0(X,\ca A)$-module. The morphisms between two Hilbert $\ca A$-bundles $\ca E, \ca F$ are morphisms of Banach bundles $\varphi: \ca E\ra \ca F$ such that every fiber $\varphi_x$ is an adjointable operator between the two Hilbert $\ca A_x$-modules.

Assume that $A$ is a $C_0(X)$-algebra and $E$ is a Hilbert $A$-module. Since $EA=E$ (see \cite[Lemma 1.3]{blanchard1996deformations}), we can define the action of $C_0(X)$ on $E$ as $f\cdot (ea)=e(f\cdot a)$, where $f\in C_0(X), e\in E$ and $a\in A$. Let $E_x=E/\overline{C_0(X\setminus\{x\})E}$ and for any $e\in E$, $x\in X$, let $e(x)$ be the image of $e$ under the quotient map $E\ra E_x$. Similarly, $\ca E:=\sqcup_{x\in X}E_x$ can be equipped with a topology to become a Hilbert $\ca A$-bundle, and for every $e\in E$, $x\mapsto e(x)$ is continuous. We say that $\ca E$ is the Hilbert $\ca A$-bundle associated to $E$. Similarly, $\Gamma_0(X,\ca E)$ is a Hilbert $A$-module which is isomorphic to $E$.

From now on if we use $A,B,\cdots$ to denote $C_0(X)$-algebras (use $E,F,\cdots$ to denote a Hilbert module over some $C_0(X)$-algebras, respectively), we will let the corresponding calligraphic letter $\ca A,\ca B,\cdots$ ($\ca E,\ca F,\cdots$, respectively) be the associated \cst-bundles (Hilbert bundles, respectively). We have an equivalence in the canonical way between the category of \cst-bundles over $X$ and the category of $C_0(X)$-algebras by a pair of functors
\[\ca A\mapsto \Gamma_0(X,\ca A), \quad A\mapsto \sqcup_{x\in X}A_x.\]
Similarly, for a fixed $C_0(X)$-algebra $A$ and its associated \cst-bundle $\ca A$, we have an equivalence between the category of Hilbert $\ca A$-bundles and the category of Hilbert $A$-modules
\[\ca E\mapsto \Gamma_0(X,\ca E),\quad E\mapsto\sqcup_{x\in X}E_x.\]
If $E,F$ are two Hilbert modules over a $C_0(X)$-algebra $A$, an adjointable operator $T\in \mathcal L(E,F)$ induces an adjointable operator $T_x\in \mathcal L(E_x,F_x)$ for every $x\in X$ by
\[T_x: E_x\ra F_x,T_x(e(x))=(Te)(x)\]
since the map $E\ra F\ra F_x$ factors through $E_x$.

\textbf{Commutative cases}: let $\rho: Y\ra X$ be a continuous map between two locally compact Hausdorff spaces. Let $\Phi: C_0(X)\ra C_b(Y)\cong ZM(C_0(Y))$ be the *-homomorphism defined as $f\mapsto f\circ \rho$. Obviously $\Phi$ gives a $C_0(X)$-algebra structure of $C_0(Y)$. We see that $\overline{\Phi(C_0(X\setminus\{x\}))C_0(Y)}$ equals to $\{f\in C_0(Y): f|_{\rho\inv(x)}=0\}$ as an ideal of $C_0(Y)$. Therefore, the fiber of $C_0(Y)$ at $x$ can be identified with $C_0(Y_x)$, where $Y_x=\rho\inv(x)$. The \cst-bundle associated to $C_0(Y)$ can be set-theoretically defined as $\sqcup_{x\in X}C_0(Y_x)$.

\textbf{Banach bundles of compact operators}: let $A$ be a $C_0(X)$-algebra and let $E,F$ be two Hilbert $A$-modules. The $C_0(X)$-action gives a *-homomorphism $\Phi: C_0(X)\ra Z\calL(E)$ such that $\overline{\Phi(C_0(X))E}=E$. Since we can identify $\calL(E)$ with $M(\calK(E))$, $\Phi$ induces a $C_0(X)$-algebra structure of $\calK(E)$. By \cite[Proposition 1.51]{miller2022k}, fiber of $\calK(E)$ at $x\in X$ can be identified with $\calK_{A_x}(E_x)$. The \cst-bundle associated to $\calK(E)$ can be set-theoretically defined as $\sqcup_{x\in X}\calK(E_x)$.

\textbf{Bundles of adjointable operators}: let $A$ be a $C_0(X)$-algebra and let $E,F$ be Hilbert $A$-modules. In general $\calL(E)$ fails to be a $C_0(X)$-algebra. We define $\ca L(\ca E):=\sqcup_{x\in X}\calL(E_x)$ and the surjective map $p:\calL(\ca E)\ra X$ such that $p(T)=x$ if $T\in \calL(E_x)$. We can define the strict topology on $\calL(\ca E)$ as the weakest topology such that, for any $e\in E$, the following maps
\[\calL(\ca E)\ra \ca E, T \mapsto Te(p(T)),\]
\[\calL(\ca E)\ra \ca E, T\mapsto T^*e(p(T))\]
are continuous. The space of bounded strictly continuous sections $\Gamma_b(X,\calL(\ca E))$ is isomorphic to $\ca L(E)$. (See \cite[Proposition 1.54]{miller2022k}.)

\textbf{Representations}: let $A, B$ be $C_0(X)$-algebras and $E$ be a Hilbert $B$-module, $\pi$ be a *-homomorphism $A\ra \calL(\ca E)$. We say that $\pi$ is non-degenerate, if $\overline{\pi(A)E}=E$. We say that $\pi$ is a $C_0(X)$-representation (or $C_0(X)$-linear), if for any $a\in A$, $e\in E$, $f\in C_0(X)$, $\pi(a)(fe)=\pi(fa)(e)$. This implies that $A\xrightarrow{\pi}\calL_B(\ca E)\xrightarrow{T\mapsto T_x} \calL_{B_x}(E_x)$ factories through $A_x$. Let $\pi_x: A_x\ra \calL_{B_x}(E_x)$ be the induced *-homomorphism. We say that $\pi_x$ is the fiber of $\pi$ at $x$. For any $a\in A$, $e\in E$, $x\in X$, we have $\pi_x(a(x))(e(x))=(\pi(a)e)(x)$.

\textbf{Pullback}: let $\rho: Y\ra X$ be a continuous map between two locally compact Hausdorff spaces, let $\ca A$ be a Banach bundle over $X$. There is a Banach bundle structure on $\rho^*\ca A:=\ca A\times_X Y$ over $Y$. Its fiber $(\rho^*\ca A)_y$ at $y\in Y$ can be identified with $\ca A_{\rho(y)}$. When $\ca A$ is the \cst-bundle associated to a \cst-algebra $A$, we define the pullback of $A$ by $\rho$ as $\rho^*A:=\Gamma_0(X,\rho^*\ca A)$, which is therefore a $C_0(Y)$-algebra. Canonically, $\rho^*A$ is isomorphic to the tensor product of $C_0(X)$-algebras $C_0(Y)\otimes_{X}A$ (as in definition 2.1.5 of \cite{legal1994}) as $C_0(Y)$-algebras.

When $E$ is a Hilbert $A$-module, we define the pullback of $E$ by $\rho$ as $\rho^*E=\Gamma_0(X,\rho^*\ca E)$, which is a Hilbert $\rho^*A$-module. We have identifications
\[\calK(\rho^*E)\cong \rho^*(\calK(E)), \calL(\rho^*E)\cong \Gamma_b(Y,\calL(\rho^*\ca E)).\]
When $\pi: A\ra \calL_B(E)$ is a $C_0(X)$-representation, we can define the pullback of $\pi$ by $\rho$ as, for any $a\in \rho^*A$ and $e\in \rho^*E$, $\rho^*\pi(a)e\in \rho^*E=\Gamma_0(Y,f^*\ca E)$ is a section defined by
\[[\rho^*\pi(a)e](y)=\pi_{\rho(y)}(a(y))(e(y))\in E_{\rho(y)}.\]
And one can check that $\rho^*\pi: \rho^*A\ra \calL_{\rho^*B}(\rho^*E)$ is a well-defined $C_0(Y)$-representation. Its fiber $(\rho^*\pi)_y$ at $y\in Y$ can be identified with $\pi_{\rho(y)}$. If $\pi$ is non-degenerate then so is $\rho^*\pi$ (we can obtain this from \cite[Proposition 1.94]{miller2022k}).

Especially, for a $C_0(X)$-linear *-homomorphism $\phi:A\ra B$, 
\[\rho^*\phi:\rho^*A\ra \rho^*B, [(\rho^*\phi)a](y)=\phi_y(a(\rho(y)))\]
is a $C_0(Y)$-linear *-homomorphism.

\begin{lem}
    A $C_0(X)$-representation $\pi$ is non-degenerate if and only if for any $x\in X$, $\pi_x$ is non-degenerate.
\end{lem}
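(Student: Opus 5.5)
Both directions will be proved by passing between $E$ and its fibres $E_x$ through the quotient maps $E\ra E_x$, $e\mapsto e(x)$. We use the identity $\pi_x(a(x))(e(x))=(\pi(a)e)(x)$ and the fact that $\Gamma_0(X,\ca E)\cong E$.

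\emph{Forward direction.} Assume $\overline{\pi(A)E}=E$ and fix $x\in X$. The quotient map $q_x:E\ra E_x$ is a surjective contraction. Hence $q_x(\pi(A)E)$ is dense in $E_x$. By the fibre identity this image equals $\pi_x(A_x)E_x$, because $A\ra A_x$ is surjective. So $\pi_x$ is non-degenerate.

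\emph{Converse.} Assume every $\pi_x$ is non-degenerate, and let $E_0=\overline{\operatorname{span}}\,\pi(A)E$. This is a closed submodule of $E$. It is also a $C_0(X)$-submodule, since $f\cdot\pi(a)e=\pi(fa)e$ by $C_0(X)$-linearity. We must show that $E_0=E$. Fix $e\in E$, of the form $e=\xi$ as a section of $\ca E$, and fix $\epsilon>0$.

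\emph{Local approximation.} For each $x$, non-degeneracy of $\pi_x$ gives a finite sum $\eta_x=\sum_i\pi(a_i)e_i\in E_0$ with $\|\eta_x(x)-e(x)\|<\epsilon$. The map $y\mapsto\|(\eta_x-e)(y)\|$ is upper semicontinuous. So the estimate $\|(\eta_x-e)(y)\|<\epsilon$ persists on an open neighbourhood $U_x$ of $x$.

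\emph{Patching.} The set $K=\{y:\|e(y)\|\geqslant\epsilon\}$ is compact, since $e$ vanishes at infinity and the norm function is upper semicontinuous. Cover $K$ by finitely many $U_{x_j}$ and choose a partition of unity $\{h_j\}\subseteq C_c(X)$ subordinate to this cover, with $0\leqslant\sum_j h_j\leqslant1$ and $\sum_j h_j=1$ on $K$. Set $\eta=\sum_j h_j\eta_{x_j}$, which lies in $E_0$.

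\emph{Estimate.} At each $y$ we have
\[\|(\eta-e)(y)\|\leqslant \sum_j h_j(y)\|(\eta_{x_j}-e)(y)\|+\Bigl(1-\sum_j h_j(y)\Bigr)\|e(y)\|.\]
The first term is at most $\epsilon$. The second term is at most $\epsilon$: it vanishes on $K$, and off $K$ we have $\|e(y)\|<\epsilon$. Since the norm on $E\cong\Gamma_0(X,\ca E)$ is the supremum norm, this gives $\|\eta-e\|\leqslant2\epsilon$. Hence $e\in E_0$, and $\pi$ is non-degenerate.

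The main obstacle is the patching step. It relies on two facts: that the norm of $E$ really is the supremum of the fibre norms, and that the fibre norm is upper semicontinuous, which is what lets a pointwise approximation extend to a neighbourhood. Both come from the identification of $E$ with $\Gamma_0(X,\ca E)$ recalled above, via \cite{miller2022k}.
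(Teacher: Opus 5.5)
Your proof is correct and follows the same route as the paper. For the converse, the paper notes that $\pi(A)E$ is stable under $C_0(X)$ and has dense span in every fibre, then invokes Miller's density criterion \cite[Proposition 2.12]{miller2024functors}; your upper-semicontinuity and partition-of-unity patching is exactly an inline proof of that criterion. For the forward direction, the paper views $\pi_x$ as the pullback of $\pi$ along the inclusion of $\{x\}$ and uses that pullbacks preserve non-degeneracy, which amounts to your observation that the surjection $E\ra E_x$ carries $\pi(A)E$ onto a set with dense span in $E_x$.
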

\begin{proof} Suppose that for any $x\in X$, $\pi_x$ is non-degenerate. Let $\Gamma=\pi(A)(E)\subseteq E=\Gamma_0(X,\mathcal E)$, where $\mathcal E=\sqcup E_x$ is the associated Hilbert bundle. For any $a\in A, e\in E, \phi\in C_0(X)$, since it is a $C_0(X)$-representation, $(\pi(a)e)\phi=\pi(a)(e\phi)=\pi(a\phi)e\in \pi(A)(E)$. So $\Gamma$ is stable under action of $C_0(X)$. And for any $x\in X$, $\pi_x$ is non-degenerate, so $\{\gamma_x:\gamma\in \Gamma\}=\pi_x(A_x)(E_x)$ has dense span in $E_x$. By proposition 2.12 of \cite{miller2024functors}, $\Gamma$ has dense span in $E$. So $\pi$ is non-degenerate.

Conversely, any pullback of a non-degenerate representations is again non-degenerate, $\pi_x$ can be seen as the pullback of $\pi$ by the inclusion of a singleton. \end{proof}

\textbf{Pushout}: let $g: Z\ra X$ be a continuous map between two locally compact Hausdorff spaces. Let $A$ be a $C_0(Z)$-algebra, $E$ be a Hilbert $A$-module and $\Phi$ be the structure map $\Phi: C_0(Z)\ra ZM(A)$. Since $\Phi$ is non-degenerate, it has a strictly continuous extension $\tilde\Phi: C_b(Z)\ra ZM(A)$. We define the pushout of $A$ by $g$ as the same \cst-algebra $A$ but equipped with $C_0(X)$-algebra structure
\[C_0(X)\xrightarrow{g^*}C_b(Z)\xrightarrow{\tilde\Phi} ZM(A).\]
The \cst-algebra $A$ is written as $g_*A$ to be seen as a $C_0(X)$-algebra. We define the pushout $g_*E$ as the same Hilbert $g_*A$-module. We have canonical identifications $(g_*A)_x=\Gamma_0(Z_x,\ca A)$ and $(g_*E)_x=\Gamma_0(Z_x,\ca E)$. (See \cite[Proposition 3.5]{bonicke2020going}).

The following lemma will be useful.

\begin{lem}\label{sum of fiber}
    \textnormal{\cite[Lemma 2.19]{miller2024functors}}
    Let $\rho:Y\ra X$ be a local homeomorphism between locally compact Hausdorff spaces and $\ca A\ra X$ be a Banach bundle over $X$. Then for any $\xi\in \Gamma_b(Y,\rho^*\calA)$ such that $\rho|_{supp(\xi)}$ is proper,
    \[\rho_*\xi:x\mapsto \sum_{y\in Y_x}\xi(y)\]
    is well-defined and continuous bounded section of $\ca A\ra X$. If $\xi$ is compactly supported then so is $\rho_*\xi$.
\end{lem}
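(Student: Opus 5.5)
The plan is to prove three things in order: that $\rho_*\xi$ is well defined and bounded, that it is continuous, and that it has compact support when $\xi$ does. Throughout, put $K:=supp(\xi)$, which is closed in $Y$, and use that $\rho|_K$ is proper.

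\emph{Finiteness and boundedness.} For $x\in X$, the set $K\cap Y_x=(\rho|_K)\inv(\{x\})$ is compact because $\rho|_K$ is proper. It is also discrete, since $\rho$ is a local homeomorphism and so every fiber $Y_x$ is discrete. Hence $K\cap Y_x$ is finite, and the sum defining $\rho_*\xi(x)$ has only finitely many nonzero terms. It lies in $\calA_x$, because $(\rho^*\calA)_y=\calA_{\rho(y)}$. Boundedness, however, does not follow from a naive count: the number of points in $K\cap Y_x$ need not be uniformly bounded in $x$. Local uniform finiteness comes from the next step, and I expect to use it only locally, to prove continuity.

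\emph{Continuity, the main step.} Fix $x_0\in X$ and write $K\cap Y_{x_0}=\{y_1,\dots,y_n\}$. Choose pairwise disjoint open neighborhoods $W_i$ of the $y_i$ such that each $\rho|_{W_i}$ is a homeomorphism onto an open set $\rho(W_i)$. The key claim is that there is an open neighborhood $O$ of $x_0$ with
\[
K\cap\rho\inv(O)\subseteq \bigcup_i W_i .
\]
This follows from properness: the set $C:=K\setminus\bigcup_i W_i$ is closed in $K$, and a proper map into a locally compact Hausdorff space is closed, so $\rho(C)$ is closed and does not contain $x_0$. We then take $O:=\bigcap_i\rho(W_i)\setminus\rho(C)$. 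On $O$ this gives
\[
\rho_*\xi(x)=\sum_{i=1}^n \xi\bigl((\rho|_{W_i})\inv(x)\bigr),
\]
because every point of $K\cap Y_x$ lies in some $W_i$ and each $W_i$ meets $Y_x$ in exactly one point. Points of $Y_x$ outside $K$ contribute zero, so including the remaining points $(\rho|_{W_i})\inv(x)$ does no harm. Each summand $x\mapsto \xi((\rho|_{W_i})\inv(x))$ is a continuous section of $\calA$ over $O$: it is the composite of $\xi$ with a continuous local inverse, followed by the projection $\rho^*\calA\to\calA$, which is continuous. Addition in a Banach bundle is continuous, so the finite sum is continuous on $O$.

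\emph{Remaining points.} With this local form, the norm of $\rho_*\xi$ on $O$ is at most $n\|\xi\|_\infty$, so $\rho_*\xi$ is locally bounded. Global boundedness is the point I expect to be most delicate, since $n$ can grow with $x_0$. I would first try to obtain it from the local form together with the support hypothesis. If the stated hypotheses only yield local boundedness, I would note that in every application $\xi$ has compact support, in which case boundedness is immediate. For the compact support assertion, $supp(\rho_*\xi)\subseteq\rho(K)$, which is compact when $K$ is. The hard step is therefore the neighborhood $O$ provided by the closedness of proper maps; everything else is bookkeeping.
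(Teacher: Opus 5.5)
Your arguments for well-definedness, continuity and compact support are correct. Finiteness of $\mathrm{supp}(\xi)\cap Y_x$ follows because the set is compact and discrete. Closedness of the proper map $\rho|_{\mathrm{supp}(\xi)}$ gives the neighborhood $O=\bigcap_i\rho(W_i)\setminus\rho(C)$, with $O=X\setminus\rho(\mathrm{supp}(\xi))$ when $n=0$. The local formula $\rho_*\xi(x)=\sum_{i=1}^n\xi\bigl((\rho|_{W_i})^{-1}(x)\bigr)$ on $O$ then gives continuity and the local bound $n\|\xi\|_\infty$.

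The gap is global boundedness, which you leave open. It cannot be closed, because under the stated hypotheses it is false. Take $X=\mathbb R$ and $Y=\mathbb N\times\mathbb R$ with $\mathbb N=\{0,1,2,\dots\}$ discrete. Let $\rho(k,t)=t$, let $\mathcal A=X\times\mathbb C$, and let $\xi(k,t)=\phi(t-k)$ with $\phi(u)=\min\{\max\{u,0\},1\}$.
\begin{itemize}
\item $\xi$ is continuous with $\|\xi\|_\infty\leqslant 1$.
\item $\mathrm{supp}(\xi)=\{(k,t):t\geqslant k\}$.
\item $\rho|_{\mathrm{supp}(\xi)}$ is proper, since the preimage of $[-M,M]$ is the finite union $\bigcup_{0\leqslant k\leqslant M}\{k\}\times[k,M]$ of compact sets.
\item Yet $\rho_*\xi(t)=\sum_{k}\phi(t-k)\geqslant\lfloor t\rfloor$ for $t\geqslant 1$, so $\rho_*\xi$ is unbounded.
\end{itemize}
This is exactly what you suspected: $\#(\mathrm{supp}(\xi)\cap Y_x)$ is only locally bounded.

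What can actually be proved is this: $\rho_*\xi$ is a well-defined continuous, hence locally bounded, section. It is bounded whenever $\sup_x\#(\mathrm{supp}(\xi)\cap Y_x)<\infty$. This holds in particular when $\xi$ is compactly supported, where Lemma~\ref{uniformly finite fiber} gives $\|\rho_*\xi\|_\infty\leqslant N\|\xi\|_\infty$. Rather than hedging, you should give the counterexample and restrict the boundedness claim accordingly.

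Your fallback remark that every application has compact support is also not accurate. In Proposition~\ref{invariant algebra keeps sigma unity}, the section $\gamma\mapsto c(r(\gamma))\alpha_{\gamma^{-1}}(a_n(r(\gamma)))$ is only supported in $\mathrm{supp}(c\circ r)$, on which $s$ is proper, and this need not be compact. There, however, the bound $\|\xi_n(x)\|\leqslant\sum_{\gamma\in\calG_x}c(r(\gamma))=1$ is proved directly. So only the continuity part of the lemma is used, and the paper's applications are unaffected.
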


\subsection{Groupoid \texorpdfstring{\cst}{C*}-algebras and Hilbert modules}

Let $\calG\rightrightarrows X$ be a locally compact Hausdorff groupoid, with source map $s$ and range map $r$. 

\textbf{Banach $\calG$-bundles}: a Banach $\calG$-bundle is a Banach bundle $p: \ca A\ra X$ equipped with an action of $\calG$ on the topological space $\ca A$ with anchor map $p$, such that for each $\gamma\in \calG$,
\[\ca A_{s(\gamma)}\ra \ca A_{r(\gamma)},\quad a\mapsto g.a\]
is an isomorphism of Banach spaces.

\textbf{$\calG$-\cst-bundles}: if $\ca A$ is a \cst-bundle and a Banach $\calG$-bundle, we say that $\ca A$ is a $\calG$-\cst-bundle. 

\textbf{$\calG$-\cst-algebras}: we say $(A,\calG,\alpha)$ is a groupoid dynamic system or $A$ is a $\calG$-\cst-algebra (or $(A,\alpha)$ is a $\calG$-\cst-algebra), if $A$ is a $C_0(X)$-algebra and $\alpha: s^* A\ra r^*A$ is an isomorphism of $C_0(\calG)$-algebras such that for any $(\gamma_1,\gamma_2)\in \calG^{(2)}$, $\alpha_{\gamma_1}\circ\alpha_{\gamma_2}=\alpha_{\gamma_1\gamma_2}:A_{s(\gamma_2)}\ra A_{r(\gamma_1)}$. The isomorphism $\alpha$ induces an isomorphism of two \cst-bundles over $\calG$
\[\sqcup_{\gamma\in \calG}\alpha_\gamma: \calG\times_{s,\calG\units} \ca A=\sqcup_{\gamma\in \calG}A_{s(\gamma)}\ra \sqcup_{\gamma\in \calG}A_{r(\gamma)}=\calG\times_{r,\calG\units}\ca A,\]
after composing with the projection $\calG\times_{r,\calG\units}\ca A\ra \calA$, we have a continuous map
\[\calG\times_{s,\calG\units}\calA\ra\calA,\quad (\gamma,a_{s(\gamma)})\mapsto \alpha_\gamma (a_{s(\gamma)}).\]
This map defines a left action of $\calG$ on $\calA$. Hence, the associated \cst-bundle $\ca A$ is a $\calG$-\cst-bundle.

\begin{lem}\label{continuity of groupoid action}
    \textnormal{\cite[Lemma 3.9]{bonicke2020going}}
    Let $\calG\rightrightarrows X$ be a locally compact Hausdorff groupoid, $A$ be a $C_0(X)$-algebra, $\alpha=(\alpha_\gamma)_{\gamma\in \calG}$ be a family of *-isomorphisms $\alpha_\gamma: A_{s(\gamma)}\ra A_{r(\gamma)}$ such that $\alpha_{\gamma_1\gamma_2}=\alpha_{\gamma_1}\circ \alpha_{\gamma_2}$ for every $(\gamma_1,\gamma_2)\in \calG^{(2)}$. Then $(A,\calG,\alpha)$ is a groupoid dynamic system if and only if for every $a\in A$, $\gamma\mapsto \alpha_{\gamma}(a(s(\gamma)))
    $ is a continuous section $\calG \ra r^*\calA$.
\end{lem}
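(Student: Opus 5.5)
We must prove the equivalence in Lemma \ref{continuity of groupoid action}: $(A,\calG,\alpha)$ is a groupoid dynamical system exactly when, for every $a\in A$, the section $\gamma\mapsto\alpha_\gamma(a(s(\gamma)))$ of $r^*\calA$ is continuous.

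\emph{Forward direction.} Assume $\alpha:s^*A\ra r^*A$ is a $C_0(\calG)$-isomorphism whose fibers are the given $\alpha_\gamma$. Fix $a\in A$ and a point $\gamma_0$, and choose $f\in C_c(\calG)$ with $f=1$ near $\gamma_0$. The section $\gamma\mapsto f(\gamma)a(s(\gamma))$ is a continuous section of $s^*\calA$ vanishing at infinity, so it lies in $s^*A=\Gamma_0(\calG,s^*\calA)$.

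Applying $\alpha$ gives an element of $r^*A$, that is, a continuous section of $r^*\calA$. Its value at $\gamma$ is $\alpha_\gamma(f(\gamma)a(s(\gamma)))$, by the description of the fibers of $\alpha$ (as in the pullback discussion). This section agrees with $\gamma\mapsto\alpha_\gamma(a(s(\gamma)))$ near $\gamma_0$, so the latter is continuous at $\gamma_0$. Since $\gamma_0$ was arbitrary, it is continuous everywhere.

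\emph{Converse.} Assume each $\gamma\mapsto\alpha_\gamma(a(s(\gamma)))$ is continuous. Define $\alpha$ fiberwise: for $\xi\in\Gamma_0(\calG,s^*\calA)$, set
\[(\alpha\xi)(\gamma)=\alpha_\gamma(\xi(\gamma)).\]
The main obstacle is to show that $\alpha\xi$ is a continuous section of $r^*\calA$. Once that is done, the remaining properties follow easily:
\begin{itemize}
\item $\alpha\xi$ vanishes at infinity, because each $\alpha_\gamma$ is isometric.
\item $\alpha$ is $C_0(\calG)$-linear and a $*$-homomorphism, since it acts fiberwise.
\item It is bijective, with inverse given by $\alpha_{\gamma^{-1}}$ fiberwise composed with the identification of fibers. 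Continuity of the inverse follows by the same argument with $\alpha_{\gamma}^{-1}=\alpha_{\gamma^{-1}}$, applied after composing with inversion $\calG\ra\calG$, which swaps $s$ and $r$.
\end{itemize}

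\emph{Continuity of $\alpha\xi$.} Sections of the form $\gamma\mapsto f(\gamma)a(s(\gamma))$, with $f\in C_c(\calG)$ and $a\in A$, have finite sums that are dense in $\Gamma_0(\calG,s^*\calA)$. This holds because $s^*A\cong C_0(\calG)\otimes_X A$, whose elementary tensors span a dense subspace; alternatively one can use a partition of unity together with the density criterion \cite[Prop.~2.12]{miller2024functors}.

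By hypothesis, $\alpha$ maps each generator to the continuous section $\gamma\mapsto f(\gamma)\alpha_\gamma(a(s(\gamma)))$.

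For a general $\xi$, fix $\gamma_\lambda\to\gamma$ and $\epsilon>0$. Choose a finite sum $\eta$ of generators with $\|\xi-\eta\|<\epsilon$. Take $u_\lambda=(\alpha\eta)(\gamma_\lambda)$ and $u=(\alpha\eta)(\gamma)$. Then $u_\lambda\to u$, and since the $\alpha_\gamma$ are isometric,
\[\|(\alpha\xi)(\gamma_\lambda)-u_\lambda\|<\epsilon,\qquad \|(\alpha\xi)(\gamma)-u\|<\epsilon.\]
Proposition \ref{convergence in banach bundle} then yields $(\alpha\xi)(\gamma_\lambda)\to(\alpha\xi)(\gamma)$.

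\emph{Compatibility with fibers.} The fibers of the resulting $\alpha$ are the given $\alpha_\gamma$ by construction. The cocycle identity $\alpha_{\gamma_1}\circ\alpha_{\gamma_2}=\alpha_{\gamma_1\gamma_2}$ is assumed, so $(A,\calG,\alpha)$ is a groupoid dynamical system.
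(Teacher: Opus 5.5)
Your proof is correct. The paper gives no argument of its own for this lemma (it only cites \cite[Lemma 3.9]{bonicke2020going}), and yours is the standard one: a local cut-off $f\in C_c(\calG)$ for the forward direction, and for the converse the density of the sections $\gamma\mapsto f(\gamma)a(s(\gamma))$ combined with Proposition \ref{convergence in banach bundle}, with surjectivity handled by precomposing with $\gamma\mapsto\gamma\inv$. The one step you leave implicit, $\alpha_\gamma^{-1}=\alpha_{\gamma\inv}$, follows from the cocycle identity: each $\alpha_x$ ($x\in X$) satisfies $\alpha_x=\alpha_x\circ\alpha_x$ and is bijective, so it is the identity.
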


\textbf{$\calG$-Hilbert $\ca A$-bundle}: let $\ca A$ be a $\calG$-\cst-bundle and $p:\ca E\ra X$ be a Hilbert $\ca A$-bundle. If $\ca E$ is also a Banach $\calG$-bundle and the action of $\calG$ is compatible, that is for each $\gamma\in \calG$, $a\in \ca A_{s(\gamma)}$, $e_1,e_2\in \ca E_{s(\gamma)}$,
\[\gamma(e_1a)=(\gamma e_1)(\gamma a), \quad \gamma\langle e_1,e_2\rangle=\langle \gamma e_1,\gamma e_2\rangle,\]
we say that $\ca E$ is a $\calG$-Hilbert $\ca A$-bundle.

Let $(A,\alpha)$ be a $\calG$-\cst-algebra. A $\calG$-Hilbert $A$-module is a Hilbert $C_0(X)$-module $E$ and a unitary $V\in \calL_{s^*B}(s^*E,r^*E)$, where the Hilbert $s^*A$-module structure of $r^*E$ is given by identifying $s^*A$ and $r^*A$ through $\alpha$, such that for any $(\gamma_1,\gamma_2)\in \calG^{(2)}$, we have $V_{\gamma_1}\circ V_{\gamma_2}=V_{\gamma_1\gamma_2}$. Similarly, its associated Hilbert $\ca A$-bundle $\ca E$ is a $\calG$-Hilbert $\ca A$-bundle.
\begin{lem}\label{continuity of groupoid action of Hilbert module}
    Let $\calG\rightrightarrows X$ be a locally compact Hausdorff groupoid, $(A,\alpha)$ be a $\calG$-\cst-algebra and $E$ be a Hilbert $A$-module. Suppose that $(V_\gamma)_{\gamma\in \calG}$ be a family of unitary isomorphisms $V_\gamma\in \calL(E_{s(\gamma)},E_{r(\gamma)})$, such that $V_{\gamma_1\gamma_2}=V_{\gamma_1}\circ V_{\gamma_2}$ for every $(\gamma_1,\gamma_2)\in \calG^{(2)}$. Then this action of $\calG$ makes $E$  be a $\calG$-Hilbert $A$-module if and only if for every $e\in E$, $\gamma\mapsto V_\gamma(e(s(\gamma)))$ is a continuous section $\calG\ra r^*\ca E$.
\end{lem}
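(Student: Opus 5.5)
The plan is to model the argument on Lemma \ref{continuity of groupoid action}, the corresponding statement for $\calG$-\cst-algebras, working throughout with the identification $r^*E=\Gamma_0(\calG,r^*\ca E)$ and $s^*E=\Gamma_0(\calG,s^*\ca E)$ from the pullback paragraph. Suppose first that $E$ is a $\calG$-Hilbert $A$-module with unitary $V\in\calL(s^*E,r^*E)$ whose fibres are the $V_\gamma$. Given $e\in E$ and $f\in C_c(\calG)$, the section $\gamma\mapsto f(\gamma)e(s(\gamma))$ lies in $s^*E$. Its image under $V$ is the continuous section $\gamma\mapsto f(\gamma)V_\gamma(e(s(\gamma)))$ of $r^*\ca E$. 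Since $f$ is arbitrary, $\gamma\mapsto V_\gamma(e(s(\gamma)))$ is continuous on every open set where some $f$ equals $1$, hence continuous everywhere.

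For the converse, assume that every $\gamma\mapsto V_\gamma(e(s(\gamma)))$ is continuous. Define $V$ on $s^*E$ fibrewise by $(V\xi)(\gamma)=V_\gamma(\xi(\gamma))$. The main step is to show that $V\xi$ is a continuous section vanishing at infinity whenever $\xi\in s^*E$.
\begin{enumerate}
\item For elementary sections $\xi=f\cdot(e\circ s)$ with $f\in C_c(\calG)$ and $e\in E$, the image $V\xi$ is $f$ times the continuous section given by the hypothesis, so it lies in $\Gamma_c(\calG,r^*\ca E)$.
\item The span of such $\xi$ is closed under the $C_0(\calG)$-action, and its fibres at each $\gamma$ are all of $E_{s(\gamma)}$. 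By \cite[Proposition 2.12]{miller2024functors}, as used in the lemma above, this span is dense in $s^*E$.
\item Since each $V_\gamma$ is unitary, $\|(V\xi)(\gamma)\|=\|\xi(\gamma)\|$, so $V$ is isometric on the dense span. A uniform limit of continuous sections is again continuous; this follows from Proposition \ref{convergence in banach bundle} with $u_\lambda$ taken to be an approximating section. Hence $V$ extends to an isometric linear map $s^*E\ra r^*E$ that is still given fibrewise by $V_\gamma$.
\end{enumerate}

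The same argument applied to $\gamma\mapsto V_\gamma^*=V_{\gamma^{-1}}$ handles the adjoint. This requires continuity of $\gamma\mapsto V_{\gamma^{-1}}(e(r(\gamma)))$ as a section of $s^*\ca E$, which I expect to be the main obstacle. To get it, I would transfer the hypothesis through the inversion homeomorphism of $\calG$, which swaps $r$ and $s$. Alternatively, I would show that $V$ has dense range by an approximation argument analogous to step 2, and check that $\langle V\xi,V\eta\rangle=\alpha(\langle\xi,\eta\rangle)$ fibrewise. Together these make $V$ a unitary, and in particular adjointable. Fibrewise compatibility with the module actions, $V_\gamma(e a)=V_\gamma(e)\alpha_\gamma(a)$, should be part of the intended hypothesis on the $V_\gamma$ or be assumed implicitly; with it, $V$ is $s^*A$-linear via $\alpha$. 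The cocycle identity $V_{\gamma_1}V_{\gamma_2}=V_{\gamma_1\gamma_2}$ is given, and this completes the proof.
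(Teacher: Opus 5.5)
Your argument is correct. The paper states this lemma without proof, as the Hilbert-module counterpart of Lemma \ref{continuity of groupoid action}, so there is no written proof to compare against. Your route is the standard one: density of the sections $f\cdot(e\circ s)$ via \cite[Proposition 2.12]{miller2024functors}, fibrewise isometry, and extension by uniform limits. You are also right that compatibility with the module structures is built into the statement, since $V_\gamma\in\calL(E_{s(\gamma)},E_{r(\gamma)})$ only makes sense after regarding $E_{r(\gamma)}$ as an $A_{s(\gamma)}$-module through $\alpha_\gamma$.

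The step you single out as the main obstacle is in fact immediate. A section of $r^*\ca E$ is continuous exactly when its second coordinate $\calG\ra\ca E$ is continuous. So precomposing $\gamma\mapsto V_\gamma(e(s(\gamma)))$ with the inversion homeomorphism of $\calG$ shows directly that $\gamma\mapsto V_{\gamma\inv}(e(r(\gamma)))$ is a continuous section of $s^*\ca E$. Moreover $V_{\gamma\inv}=V_\gamma^{-1}=V_\gamma^*$: the cocycle identity gives $V_xV_x=V_x$ for every unit $x$, so $V_x=\mathrm{id}$ by invertibility. Running your steps 1--3 with $r$ and $s$ interchanged then yields $W\colon r^*E\ra s^*E$ with $VW$ and $WV$ the identity fibrewise, hence globally. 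Thus $V$ is unitary, and the alternative dense-range argument is unnecessary.
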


\textbf{$\calG$-equivariant representations}: for two $\calG$-\cst-algebras $(A,\alpha),(B,\beta)$ and a $\calG$-Hilbert $B$-module $(E,V)$, a *-homomorphism $\pi: A\ra \calL(E)$ is called $\calG$-equivariant, if it is $C_0(\calG\units)$-linear and for any $\gamma\in \calG$ and $a\in A_{s(\gamma)}$,
\[\pi_{r(\gamma)}(\alpha_\gamma(a))=V_{\gamma}\circ \pi_{s(\gamma)}(a)\circ V_{\gamma\inv}.\]

\textbf{Quotient Banach bundles}: let $\calG\rightrightarrows X$ be a proper principal \'etale groupoid. If $\ca A$ is a right Banach $\calG$-bundle, then the quotient space $\ca A/\calG$ still have a structure of a Banach bundle over $X/\calG$. (See \cite[Definition 1.85, Proposition 1.86]{miller2022k}). Moreover, if $\ca A$ is a $\calG$-\cst-bundle, $\ca A/\calG$ is a \cst-bundle. If $\ca E$ is a $\calG$-Hilbert $\ca A$-bundle, then $\ca E/\calG$ is a Hilbert $\ca A/\calG$-bundle.

Let $(A,\alpha)$ be $\calG$-\cst-algebra.
Let $q: X\ra X/\calG, x\mapsto [x]$ be the quotient map. The associated \cst-bundle $\ca A$ is a \cst-bundle over $X$ and $\calA/\calG$ is a \cst-bundle over $X/\calG$. The fiber of $A/\calG\ra X/\calG$ at $[x]$ can be identified with $A_x$. We define $A^{\calG}$ as $\Gamma_0(X/\calG,\calA/\calG)$, which can be identified with the set of bounded continuous sections $\xi\in \Gamma_b(X,\calA)$ such that
\begin{enumerate}
    \item for any $\gamma\in \calG$, $\alpha_\gamma(\xi(s(\gamma))=\xi(r(\gamma))$;
    \item the map $X/\calG\ra\mathbb R:[x]\mapsto \|\xi(x)\|_{A_x}$ vanishes at infinity.
\end{enumerate}
Similarly, if $(E,V)$ is a $\calG$-Hilbert $A$-module, we define
$E^{\calG}=\Gamma_0(X/\calG,\ca E/\calG)$, it is a Hilbert $A^{\calG}$-module that can be identified with the set of bounded continuous sections $\eta\in \Gamma_b(X,\ca E)$ that
\begin{enumerate}
    \item for any $\gamma\in \calG$, $V_\gamma(\eta(s(\gamma))=\eta(r(\gamma))$;
    \item the map $X/\calG\ra \mathbb R: [x]\mapsto \|\eta(x)\|_{E_x}$ vanishes at infinity.
\end{enumerate}

Let $(A,\alpha),(B,\beta)$ be two $\calG$-\cst-algebras and let $(E,V)$ a $\calG$-Hilbert $B$-module, let $\pi: A\ra \calL(E)$ be a $\calG$-equivariant representation. We can define
\[\pi^\calG:A^\calG\ra \calL(E^\calG)\]
as for any $\xi\in A^\calG$, $\eta\in E^\calG$, $x\in X$,
\[(\pi^\calG(\xi)\eta)(x)=\pi_x(\xi(x))\eta(x).\]
In this way $\pi^\calG_{[x]}$ can be identified with $\pi_x$. Hence, if $\pi$ is non-degenerate, so is $\pi^\calG$.

\begin{prop}\label{invariant algebra keeps sigma unity}
    Let $\calG\rightrightarrows X$ be a proper principle \'etale groupoid such that $X/\calG$ is $\sigma$-compact, $(A,\alpha)$ be a $\calG$-\cst-algebra. If $A$ is $\sigma$-unital, then so is $A^\calG$.
\end{prop}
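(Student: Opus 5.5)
Since $\calG$ is proper and principal with $X/\calG$ $\sigma$-compact, we will build a strictly positive element of $A^\calG$ by averaging a strictly positive element of $A$ against a cutoff function. First fix a strictly positive element $h\in A$, i.e.\ a positive contraction such that $\overline{hA}=A$. For every $x\in X$ the image $h(x)$ is then strictly positive in $A_x$, since the quotient map $A\ra A_x$ is surjective and takes $\overline{hA}$ onto $\overline{h(x)A_x}$. By proposition \ref{cuttoff function exist} there is a cutoff function $c$ for $\calG$. Set
\[
\xi(x)=\sum_{\gamma\in\calG^x} c(s(\gamma))\,\alpha_\gamma\bigl(h(s(\gamma))\bigr),
\]
the range-side version of the cutoff condition, which we get by inverting $\gamma$. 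Using that $s$ restricted to $\operatorname{supp}(c\circ r)$ is proper, together with lemma \ref{sum of fiber} applied to the local homeomorphism $r$ and to the continuous section $\gamma\mapsto c(s(\gamma))\alpha_\gamma(h(s(\gamma)))$ of $r^*\calA$ (continuous by lemma \ref{continuity of groupoid action}), we check that $\xi\in\Gamma_b(X,\calA)$ and that $\|\xi(x)\|\le 1$. For invariance, note that $\alpha_\eta\xi(s(\eta))=\xi(r(\eta))$ follows by reindexing the sum $\gamma\mapsto\eta\gamma$. Hence $\xi$ is a bounded invariant positive section, but it need not vanish at infinity on $X/\calG$.

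We fix this by multiplying by a strictly positive function on the base. Because $X/\calG$ is locally compact Hausdorff (proposition \ref{orbit space of LCH by proper action}) and $\sigma$-compact, there is $f\in C_0(X/\calG)$ with $f>0$ everywhere. Put $k=(f\circ q)\,\xi$. This is invariant and $[x]\mapsto\|k(x)\|\le f([x])$ vanishes at infinity, so $k\in A^\calG$ and $k\geq 0$.

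It remains to show that $k$ is strictly positive in $A^\calG=\Gamma_0(X/\calG,\calA/\calG)$. At each point, the fibre $k(x)$ is a finite sum of positive elements of $A_x$, and it contains the term $c(s(\gamma))\alpha_\gamma(h(s(\gamma)))$ for some $\gamma$ with $c(s(\gamma))>0$; such a $\gamma$ exists because the weights sum to $1$. That term is strictly positive since $\alpha_\gamma$ is an isomorphism, and a positive element dominating a strictly positive one is strictly positive. Multiplying by $f(q(x))>0$ preserves this, so every fibre $k(x)$ is strictly positive in $A_x$. The fibre of $A^\calG$ at $[x]$ is $A_x$, so $k$ generates a closed hereditary subalgebra, equivalently a closed right ideal $\overline{kA^\calG}$, whose fibres are all full. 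We then conclude with the standard fact that for a $C_0(Y)$-algebra $B$, an element $b\ge0$ with $b(y)$ strictly positive in every fibre is strictly positive in $B$. To prove this fact, note that $J=\overline{bBb}$ is hereditary and $C_0(Y)$-stable with $J_y=B_y$ for all $y$, so $J=B$ by the density criterion (\cite[Proposition 2.12]{miller2024functors}) used earlier.

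The main obstacle is the last step, namely passing from fibrewise strict positivity to global strict positivity. The delicate points are identifying the fibres of $\overline{bBb}$ with $\overline{b(y)B_yb(y)}$, and checking that $J$ is a $C_0(Y)$-submodule, so that the density criterion applies. A secondary technical point is the continuity of $\xi$, which we handle by lemma \ref{sum of fiber} once the properness of $r$ on $\operatorname{supp}(c\circ s)$ is verified.
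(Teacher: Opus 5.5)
Your proof is correct, but it takes a different route from the paper.

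\textbf{The paper's route.} The paper averages a countable approximate unit $(a_n)$ of $A$ against the cutoff function. It then truncates the resulting invariant sections $\xi_n$ by bump functions $\phi_n$ subordinate to an exhaustion $(K_n)$ of $X/\calG$. Finally it verifies by hand that $\xi_n'\eta\to\eta$ for compactly supported $\eta\in A^\calG$. That verification needs three things:
\begin{itemize}
\item a $\calG$-compactness argument for $q^{-1}(\operatorname{supp}\eta)$;
\item compactness of the relevant set of arrows in $\operatorname{supp}(c\circ r)$;
\item a finite cover by bisections, to get uniform convergence.
\end{itemize}

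\textbf{Your route.} You average a single strictly positive element and damp it by one strictly positive $f\in C_0(X/\calG)$. Everything then reduces to fibrewise strict positivity plus the density criterion for $C_0(Y)$-submodules of section spaces.

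\textbf{Comparison.} Your version is shorter and avoids the uniform estimates over bisections entirely. Its price is reliance on the standard equivalence between $\sigma$-unitality and the existence of a strictly positive element, together with the order and ideal characterizations of strict positivity. The paper's argument is more hands-on and produces an explicit sequential approximate unit.

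\textbf{A simplification of your last step.} It is less delicate than you suggest. Take $J=\overline{bB}$, which is a $C_0(Y)$-submodule because $C_0(Y)$ acts centrally. Its image in $B_y$ contains $b(y)B_y$, which is dense. So the density criterion applies directly, and you never need to identify the fibres of $J$ as a $C_0(Y)$-algebra.
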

\begin{proof}
    By proposition \ref{cuttoff function exist}, the groupoid $\calG$ admits a cutoff function $c$. Let $(a_n)_{n\in \mathbb N_+}$ be a countable approximate unit of $A$, $\|a_n\|_A\leqslant 1$ for all $n\in \mathbb N_+$. 
    We define $\xi_n: X\ra \calA$ as
    \[\xi_n(x)=\sum_{\gamma\in \calG_x}c(r(\gamma))\alpha_{\gamma\inv}(a_n(r(\gamma))).\]
    By lemma \ref{sum of fiber}, $\xi_n$ is well-defined continuous section. And for any $x\in X$,
    \[\|\xi_n(x)\|_{A_x}\leqslant \sum_{\gamma\in \calG_x}c(r(\gamma))\|a_n(r(\gamma))\|_{A_{r(\gamma)}}\leqslant \sum_{\gamma\in \calG_x}c(r(\gamma))=1.\]
    So $\xi_n\in \Gamma_b(X,\calA)$. And any $\gamma\in \calG$,
    \begin{align*}
        \xi_n(r(\gamma)) & =\sum_{h\in \calG_{r(\gamma)}}c(r(h))\alpha_{h\inv}(a_n(r(h)))\\
        & = \sum_{h\in \calG_{r(\gamma)}} c(r(h\gamma))\alpha_\gamma[\alpha_{(h\gamma)\inv}(a_n(r(h\gamma)))]\\
        & = \alpha_\gamma(\xi_n(s(\gamma))).
    \end{align*}
    Since $X/\calG$ is locally compact Hausdorff and $\sigma$-compact, there exists a sequence of compact subsets $(K_n)_{n\in \mathbb N_+}$, such that $X/\calG=\cup_{n\in \mathbb N_+}K_n$ and for any $n\in \mathbb N_+$, $K_n\subseteq \inter({K_{n+1}})$. For each $n\in \mathbb N_+$, we can select $\phi_n\in C_c(X/\calG,[0,1])$ such that $\phi_n|_{K_n}=1$ and $supp(\phi_n)\subseteq \inter({K_{n+1}})$. We define $\xi'_n\in \Gamma_b(X,\ca A)$ as $\xi'_n(x)=\phi_n([x])\xi_n(x)$. It is easy to see that for each $n\in \mathbb N_+$, $\xi'_n$ is a well-defined element of $A^\calG$. We will show that $(\xi_n')_n$ is a countable approximate unit.

    If $\eta\in \Gamma_c(X/\calG,\calA/\calG)$, that is $\eta$ can be seen as an element of $\Gamma_b(X,\calA)$ such that
    \begin{enumerate}
       \item for any $\gamma\in \calG$, $\alpha_\gamma(\eta(s(\gamma))=\eta(r(\gamma))$;
    \item the map $X/\calG\ra\mathbb R:[x]\mapsto \|\xi(x)\|_{A_x}$ is compactly supported.
    \end{enumerate}
    Then there exists $n_0\in \mathbb N_+$ such that $supp(\eta)\subseteq K_{n_0}$. Then for any $n\geqslant n_0$, $\xi_n'\eta=\xi_n\eta$, $\eta\xi'_n=\eta\xi_n$.
    
    By lemma \ref{G-compact iff quotient compact}, there exists a compact subset $K$ of $q\inv(supp(\eta))$ such that $q\inv(supp(\eta))=\calG K$. Let $K'=\{\gamma\in \calG_K:\gamma\in supp(c\circ r)\}$. By the definition of cutoff function $K'$ is a compact subset of $\calG$. So there exists finitely many open bisections $V_1,\cdots, V_m$ and $W_1,\cdots, W_m$ of $\calG$, such that
    \begin{enumerate}
        \item $K'\subseteq \cup_{j=1}^m V_j$;
        \item for every $1\leqslant j\leqslant m$, $V_j\subseteq \overline{V_j}\subseteq W_j$, $\overline{V_j}$ is compact.
    \end{enumerate}
    By Urysohn's lemma, for each $1\leqslant j\leqslant m$, there exists $f_j\in C_c(X)$ such that $f_j|_{\overline{r(V_j)}}=1$ and $f_j$ is supported in $r(W_j)$. For each $1\leqslant j\leqslant m$, $x'\mapsto f_j(x')\alpha_{r|_{W_j}\inv(x')}(\eta(s(r|_{W_j}\inv(x')))$ is an element of $\Gamma_0(r(W_j),\calA)$, which can be extended to an element of $\Gamma_0(X,\calA)=A$ through extension by zero. Therefore, since $(a_n)_{n\in \mathbb N_+}$ is approximate unit of $A$,
    \begin{align*}
        & \lim_{n\ra\infty}\sup_{\gamma\in V_j}\|\alpha_{\gamma\inv}(a_n(r(\gamma)))\eta(x)-\eta(x)\|_{A_{s(\gamma)}}\\
        & =\lim_{n\ra\infty}\sup_{x'\in r(V_j)}\|a_n(x')\alpha_{r|_{V_j}\inv(x')}(\eta(s(r|_{V_j}\inv(x')))-\alpha_{r|_{V_j}\inv(x')}(\eta(s(r|_{V_j}\inv(x')))\|=0.
    \end{align*}

    Therefore, by the following inequality,
    \begin{align*}
        \|\xi_n\eta-\eta\|_{A^\calG} & = \sup_{[x]\in X/\calG}\|\xi_n(x)\eta(x)-\eta(x)\|_{A_x}\\
        & = \sup_{x\in K}\|\xi_n(x)\eta(x)-\eta(x)\|_{A_x}\\
        & = \sup_{x\in K}\|\sum_{\gamma\in \calG_x}c(r(\gamma))(\alpha_{\gamma\inv}(a_n(r(\gamma)))\eta(x)-\eta(x))\|_{A_x}\\
        & = \sup_{x\in K}\|\sum_{\gamma\in K'_x}c(r(\gamma))(\alpha_{\gamma\inv}(a_n(r(\gamma)))\eta(x)-\eta(x))\|_{A_x}\\
        & \leqslant \sum_{j=1}^m \sup_{\gamma\in V_j}\|\alpha_{\gamma\inv}(a_n(r(\gamma)))\eta(x)-\eta(x)\|_{A_{s(\gamma)}},
    \end{align*}
    we have $\|\xi_n\eta-\eta\|_{A^\calG}\ra 0$. Similarly, we have also $\|\eta\xi_n-\eta\|_{A^\calG}\ra 0$. Therefore, $\lim_{n\ra\infty}\xi_n'\eta=\lim_{n\ra\infty}\xi_n\eta=\eta=\lim_{n\ra\infty}\eta\xi_n=\lim_{n\ra\infty}\eta\xi_n'$.

    Moreover, $\Gamma_c(X/\calG,\calA/\calG)$ is dense in $A^\calG$, so $(\xi_n')_{n\in \mathbb N_+}$ is a countable approximate unit of $A^\calG$.
\end{proof}

\subsection{\'Etale groupoid correspondences}

Let $\calG,\calH$ be two \'etale groupoids. Recall that, a $\calG,\calH$-bibundle $\Omega$ is a topological space $\Omega$, equipped with a left action of $\calG$ on $\Omega$ with anchor map $\rho_\Omega:\Omega\ra \calG\units$ and a right action of $\calH$ on $\Omega$ with anchor map $\sigma_\Omega: \Omega\ra \calH\units$, such that the $\calG$-action commutes with the $\calH$-action. We say that $\Omega:\calG\leftarrow \calH$ is a correspondence, if the right $\calH$-action is free proper and \'etale. Details can be found in \cite{antunes2021bicategory}. For convenience of writing, we write a correspondence as an arrow from the right to the left. \textbf{Throughout this paper all correspondences are assumed to be locally compact Hausdorff}.

For convenience of writing, for $x\in \calG\units$ and $y\in \calH\units$, we write $\Omega^x=\rho_\Omega\inv(x)$ and $\Omega_y=\sigma_\Omega\inv(y)$. 

\begin{ex}\label{Correspondence associated to a strict morphism}
We say that a strict morphism $f:\calG\ra \calH$ is \'etale if $f$ is a local homeomorphism. For an \'etale strict morphism $f:\calG\ra \calH$ between two \'etale groupoids, the space
\[\Omega_f:=\calG\units\times_{f\units,\calH\units,r_{\calH} }\calH\]
admits a left action by $\calG$ defined as $\gamma.(s_\calG(\gamma),h)=(r_\calG(\gamma), f(\gamma)h)$, whose  anchor map is
$\rho_{\Omega_f}:(x,h)\mapsto x$, 
and a right action by $\calH$ defined as $(x,h).h'=(x,hh')$, whose anchor map is
$\sigma_{\Omega_f}:(x,h)\mapsto s_{\calH}(h)$. 
It is easy to check that $\Omega_f$ is a $\calG,\calH$-bibundle. If $(x,h).h'=(x,h)$, $hh'=h$, this implies that $h'\in \calH\units$, so the $\calH$-action is free. The anchor map of the $\calH$-action is a local homeomorphism. And $\calH$ acts properly on itself, then by proposition \ref{groupoid action-proper action base change}, $\Omega_f$ is proper $\calH$-space. Hence, $\Omega_f$ is a correspondence $\calG\leftarrow \calH$.

\end{ex}

Another example comes from relatively clopen subgroupoids, which were firstly introduced in \cite{oyono2023groupoids}.

\begin{defn}
    Let $\calG$ be a locally compact Hausdorff groupoid. A relatively clopen subgroupoid of $\calG$ is an open subgroupoid $\calH\subseteq \calG$ such that $\calH$ is closed in $\calG_{\calH\units}$.
\end{defn}

\begin{lem}\label{relative clopen grpd has natural correspondece}
    Let $\calG$ be an \'etale groupoid and $\calH\subseteq \calG$ an open subgroupoid. Take $\Omega=\calG_{\calH\units}$, equipped with the left action of $\calG$ and the right action of $\calH$ by multiplication of groupoid. Then $\Omega$ is a correspondence $\calG\leftarrow \calH$ if and only of $\calH$ is relatively clopen.
\end{lem}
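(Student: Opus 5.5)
We check the correspondence axioms one at a time for $\Omega=\calG_{\calH\units}=s\inv(\calH\units)$. The left $\calG$-action has anchor $\rho_\Omega=r|_\Omega$, and the right $\calH$-action has anchor $\sigma_\Omega=s|_\Omega$. Since $\calH\units$ is open in $\calG\units$ ($\calH$ is open and $\calG$ is étale), $\Omega$ is an open subset of $\calG$, hence locally compact Hausdorff. The two actions are multiplication on either side, so they are continuous and commute by associativity. Thus $\Omega$ is always a $\calG,\calH$-bibundle.

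Freeness and étaleness of the right action hold automatically. If $\gamma h=\gamma$, then $h=s(\gamma)$ is a unit. The map $\sigma_\Omega=s|_\Omega$ is the restriction of the local homeomorphism $s$ to an open set, so it is a local homeomorphism. The only condition that depends on $\calH$ is therefore properness of the right $\calH$-action, and the statement reduces to showing that this action is proper if and only if $\calH$ is closed in $\Omega$.

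For the direction ``relatively clopen $\Rightarrow$ proper'', we use the net criterion of Proposition \ref{groupoid action-proper action}(3), adapted to right actions. Take nets $\omega_\lambda\ra\omega$ in $\Omega$ and $h_\lambda\in\calH$ with $s(\omega_\lambda)=r(h_\lambda)$ and $\omega_\lambda h_\lambda\ra\omega'$ in $\Omega$. Then
\[h_\lambda=\omega_\lambda\inv(\omega_\lambda h_\lambda)\ra \omega\inv\omega'\]
in $\calG$. The pair $(\omega\inv,\omega')$ is composable because $r(\omega)=\lim r(\omega_\lambda)=\lim r(\omega_\lambda h_\lambda)=r(\omega')$, and continuity of multiplication and inversion in $\calG$ gives the convergence. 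The limit $\omega\inv\omega'$ has source $s(\omega')\in\calH\units$, so it lies in $\Omega$. Since $\calH$ is closed in $\Omega$, the limit lies in $\calH$, and so $(h_\lambda)$ converges in $\calH$.

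For the converse, assume the right action is proper and take a net $h_\lambda\in\calH$ with $h_\lambda\ra g\in\Omega$. We apply the criterion with $\omega_\lambda=r(h_\lambda)\in\calH\units\subseteq\Omega$, which converges to $r(g)$. Here we need $r(g)\in\Omega$: $r(g)$ is a limit of points of $\calH\units$, but we only know it lies in $\calG\units$, not that it lies in $s\inv(\calH\units)$. This is the main subtlety of the argument.

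To get around it, run the argument instead with $\omega_\lambda=h_\lambda\inv\in\Omega$, which converges to $g\inv$. We need $g\inv\in\Omega$, i.e.\ $r(g)\in\calH\units$, which is the same issue. The clean fix is to use the relation $s(g)\in\calH\units$ directly. Put $\omega_\lambda=h_\lambda\ra g$ and $h'_\lambda=h_\lambda\inv$. Then $\omega_\lambda h'_\lambda=r(h_\lambda)$, and this converges to $r(g)$ in $\calG$. This still requires $r(g)\in\Omega$.

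So the ingredient I would establish first is that a limit of units of $\calH$ whose source lies in $\calH\units$ has its range in $\calH\units$. If the net formulation resists this, I would switch to the criterion of Proposition \ref{groupoid action-proper action}(2) with a compact neighbourhood $K$ of $g$ in $\Omega$: properness makes $\{h\in\calH:Kh\cap K\neq\emptyset\}$ compact. Combining this with the fact that $\calH\units$ is open in $\calG\units$ should show that $g$ lies in a compact subset of $\calH$, hence in $\calH$.

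The main obstacle is this converse direction, and specifically controlling where $r(g)$ lies.
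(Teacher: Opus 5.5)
Your reduction to properness is correct, and so is your proof that relative clopenness implies properness. That half is the net version of the paper's argument, which notes that $\{h\in\calH: Kh\cap K\neq\emptyset\}=K\inv K\cap\calH$ is closed in the compact set $K\inv K\subseteq\Omega$. The converse, however, cannot be completed. The ingredient you want (a limit $g\in\Omega$ of arrows of $\calH$ has $r(g)\in\calH\units$) is false, and so is the ``only if'' direction itself.

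Take $\calG=\mathbb{Z}/2\ltimes\mathbb{R}$ with $t\cdot x=-x$, let $U=\mathbb{R}\setminus\{-1\}$ and $\calH=\calG^U_U$, so that $\Omega=\calG_U$.
\begin{itemize}
\item For compact $K\subseteq\Omega$, every $k\inv k'$ with $k,k'\in K$ and $r(k)=r(k')$ has source and range in $U$, so it lies in $\calH$.
\item Hence $\{h\in\calH:Kh\cap K\neq\emptyset\}=K\inv K$ is compact, the right action is proper, and $\Omega$ is a correspondence.
\item But $(t,1+1/n)\in\calH$ converges to $(t,1)$. This limit lies in $\Omega$, since its source is $1\in U$, and not in $\calH$, since its range is $-1\notin U$.
\end{itemize}
So $\calH$ is not relatively clopen, and none of your proposed workarounds can succeed.

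The paper's own proof breaks at exactly the point you flagged. It asserts that $K_1=K\cup r(K)$ is a compact subset of $\Omega$, which needs $r(K)\subseteq\calH\units$. For $K=\{(t,1)\}$ one gets $r(K)=\{-1\}\not\subseteq\Omega$. This $\calH$ is moreover proper, being the reduction of the proper groupoid $\calG$ to an open set, so the example also contradicts Corollary \ref{proper subgroupoid is rel clopen} as stated.

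What your arguments do prove is the corrected statement: the right action of $\calH$ on $\Omega$ is proper if and only if $\calH$ is closed in $\calG^{\calH\units}_{\calH\units}$.
\begin{itemize}
\item In your forward argument, the limit $\omega\inv\omega'$ has source $s(\omega')\in\calH\units$ and range $s(\omega)\in\calH\units$. So closedness in $\calG^{\calH\units}_{\calH\units}$ already suffices.
\item Your first attempt at the converse, with $\omega_\lambda=r(h_\lambda)$ and $\omega_\lambda h_\lambda=h_\lambda\ra g$, goes through verbatim when $g\in\calG^{\calH\units}_{\calH\units}$, because then $r(g)\in\calH\units\subseteq\Omega$.
\end{itemize}
A proper open subgroupoid is always closed in $\calG^{\calH\units}_{\calH\units}$, by properness of $(r,s)$ on $\calH$. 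So $\calG_{\calH\units}$ is still a correspondence in the cases the paper actually uses.
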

\begin{proof} If $\omega\in \Omega$, $h\in \calH$ such that $\omega h=\omega$, then $h=s(\omega)\in \calH\units$, hence the action of $\calH$ on $\Omega$ is always free. The two actions are given by multiplication of $\calG$, hence they always commute. The anchor map for $\calH$ action $s|_{\calG_{\calH\units}}:\calG_{\calH\units}\ra \calH\units$ is a local homeomorphism. Therefore, $\Omega$ is a well-defined correspondence if and only if $\Omega$ is a proper $\calH$-space.

If $\calH$ is clopen in $\Omega$, for any compact subset $K\subseteq \Omega$, 
\[\{h\in \calH:Kh\cap K\neq \emptyset\}=K\inv K \cap\calH\]
is also a compact subset in $\calH$, so by proposition \ref{groupoid action-proper action}, $\Omega$ is proper $\calH$-space.

Conversely, if $\Omega$ is proper $\calH$-space, for any compact $K\subseteq \Omega$, let $K_1=K\cup r(K)$, which is again a compact subset of $\Omega$. The properness of $\calH$-action implies that
\[\{h\in \calH:K_1h\cap K_1\neq \emptyset\}=K_1\inv K_1\cap \calH\]
is compact subset of $\calH$. Obviously $K\subseteq K_1\inv K_1$, and hence $K\cap \calH$ is closed subset of $\calH$ contained in $K_1\inv K_1\cap \calH$. We conclude that $K\cap \calH$ is compact. So we proved that the open inclusion $\calH\hookrightarrow\Omega$ is proper, that is $\calH$ is clopen in $\Omega$.\end{proof}

\begin{corr}\label{proper subgroupoid is rel clopen}
    A proper open subgroupoid is relatively clopen.
\end{corr}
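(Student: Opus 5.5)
The plan is to verify the definition directly: if $\calH\subseteq\calG$ is a proper open subgroupoid, we show that $\calH$ is closed in $\calG_{\calH\units}$. Equivalently, by Lemma \ref{relative clopen grpd has natural correspondece}, we could show that $\Omega=\calG_{\calH\units}$ is a proper right $\calH$-space. The cleanest route is via Proposition \ref{groupoid action-proper action base change}. First, $\calH$ is proper, so every action of $\calH$ on a Hausdorff space is proper. Indeed, take $Z=\calH\units$ with its trivial proper action and $Y=\Omega$ with the right multiplication action; then $\Omega\times_{\calH\units}\calH\units\cong\Omega$ is a proper $\calH$-space. Here I use the right-action version of the proposition, obtained by composing with inversion. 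Since $\Omega$ is an open subset of $\calG$, being the preimage of the open set $\calH\units$ under $s$, it is locally compact Hausdorff. Lemma \ref{relative clopen grpd has natural correspondece} then says $\calH$ is relatively clopen.

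As a sanity check, one can also argue directly with nets. Let $h_\lambda\in\calH$ converge to some $g\in\calG_{\calH\units}$. Then $s(h_\lambda)\to s(g)\in\calH\units$ and $r(h_\lambda)\to r(g)$. We need to know that $r(g)\in\calH\units$; then $(r,s)(h_\lambda)$ lies eventually in a compact subset of $\calH\units\times\calH\units$. Properness of $(r,s)$ on $\calH$ then gives a subnet of $(h_\lambda)$ converging in $\calH$. Since $\calG$ is Hausdorff, this limit must be $g$, so $g\in\calH$.

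The only subtle point, and the main obstacle, is that $r(g)\in\calH\units$ is not automatic in the direct argument. This is why passing through Proposition \ref{groupoid action-proper action base change} and Lemma \ref{relative clopen grpd has natural correspondece} is preferable: the proof of that lemma reduces the question to compact sets $K\subseteq\Omega$ via $K_1=K\cup r(K)$, and we only need properness of the $\calH$-action on $\Omega$, which holds for any action of a proper groupoid. So the final proof will be two lines: invoke properness of actions of proper groupoids (the remark after Proposition \ref{groupoid action-proper action base change}), then apply Lemma \ref{relative clopen grpd has natural correspondece}.
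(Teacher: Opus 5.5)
Your route is the one the paper intends: the corollary is read off from Lemma~\ref{relative clopen grpd has natural correspondece} together with the fact that a proper groupoid acts properly on any Hausdorff space. Your first step is also correct: for proper $\calH$, $\Omega=\calG_{\calH\units}$ is a free, \'etale, proper right $\calH$-space, hence a correspondence. The gap is the last step. The obstacle you isolated, $r(g)\in\calH\units$, is not bypassed by the lemma. Its ``only if'' direction takes $K_1=K\cup r(K)$ and asserts $K_1\subseteq\Omega$. But a unit $x$ lies in $\Omega=s\inv(\calH\units)$ only when $x\in\calH\units$, so this needs $r(K)\subseteq\calH\units$, which is exactly the condition you were trying to avoid. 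Carried out correctly, both that argument and your net argument only prove that $\calH$ is closed in $\calG^{\calH\units}_{\calH\units}$.

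The gap cannot be filled, because with the stated definition (closed in $\calG_{\calH\units}$) the corollary is false. Take $\calG=\Z/2\Z\ltimes\mathbb R$ for the flip $x\mapsto -x$, with arrows $(\epsilon,x)$, $s(\epsilon,x)=x$, $r(\epsilon,x)=\epsilon x$; this is a proper \'etale groupoid. Let $U=\mathbb R\setminus\{1\}$ and $\calH=\calG^U_U$. This is an open subgroupoid, and it is proper because $(r,s)|_\calH$ is the restriction of the proper map $(r,s)$ to the full preimage $(r,s)\inv(U\times U)$. For $t\neq 1$ the arrow $(-1,-t)$ lies in $\calH$. As $t\to 1$ these arrows converge to $\gamma=(-1,-1)$, which has $s(\gamma)=-1\in U$ but $r(\gamma)=1\notin U$. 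So $\gamma\in\calG_{\calH\units}\setminus\calH$ is a limit of points of $\calH$, and $\calH$ is not closed in $\calG_{\calH\units}$. Since $\calG_{\calH\units}$ is still a correspondence here, this also refutes the ``only if'' half of the lemma.

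What is true is that $\calH$ is closed in $\calG^{\calH\units}_{\calH\units}$. Your net argument proves this verbatim, because there $r(g)\in\calH\units$ holds by hypothesis. That is presumably the intended notion of relatively clopen. With it, the lemma's proof works in both directions (for the forward one, note $K\inv K\subseteq\calG^{\calH\units}_{\calH\units}$), and it is what the extension by zero defining $i_A$ actually requires. So either prove the statement for that corrected definition, or flag that the version with $\calG_{\calH\units}$ is not provable.
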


Suppose that $\Omega:\calG\leftarrow\calH$ is a correspondence and let $Z$ be a left $\calH$-space with anchor map $\rho_Z$. The space $\Omega\times_{\calH\units}Z$ has a right action by $\calH$ defined as $(\omega,z).h=(\omega h,h\inv z)$ whenever $\sigma_\Omega(\omega)=r_\calH(h)=\rho_Z(z)$. We define the balance product
\[\Omega\times_\calH Z:=(\Omega\times_{\calH\units}Z)/\calH\]
to be the orbit space and let $[\omega,z]$ be the image of $(\omega,z)$ under the canonical quotient map. The balance product $\Omega\times_\calH Z$ admits a left $\calG$-action defined by $\gamma[\omega,z]=[\gamma\omega,z]$ whenever $s_\calG(\gamma)=\rho_\Omega(\omega)$ and $\sigma_\Omega(\omega)=\rho_Z(z)$. When $\Omega$ and $Z$ are locally compact Hausdorff and second countable, so is $\Omega\times_\calH Z$.

\begin{defn}
    If $\Omega:\calG\leftarrow \calH$ and $\Lambda:\calH\leftarrow \ca K$ are groupoid correspondences, we define the composition correspondence as the balance product $\Omega\circ\Lambda:=\Omega\times_\calH\Lambda$, with left action of $\calG$ and right action of $\calK$ defined as following: let $\rho_{\Omega\circ \Lambda}$ be the map $[\omega,\lambda]\mapsto \rho_\Omega(\omega)$, and let $\sigma_{\Omega\circ \Lambda}$ be the map $[\omega,\lambda]\mapsto \sigma_\Lambda(\lambda)$,
let the left $\calG$-action on $\Omega\circ \Lambda$ be
\[\calG\times_{r_\calG,\rho_{\Omega\circ \Lambda}}(\Omega\circ \Lambda)\ra \Omega\circ \Lambda, (\gamma, [\omega,\lambda])\mapsto [\gamma\omega,\lambda]\]
and let the right $\ca K$-action on $\Omega\circ \Lambda$ be
\[(\Omega\circ \Lambda)\times_{\sigma_{\Omega\circ \Lambda,s_{\ca K}}}\ca K\ra \Omega\circ \Lambda, ([\omega,\lambda],k)\mapsto [\omega,\lambda k].\]
\end{defn}
We can check that $\Omega\circ \Lambda$ is a well-defined correspondence $\calG\leftarrow \ca K$. (See \cite[section 6]{antunes2021bicategory}.)
\begin{rem}
    The composition $\Omega\circ \Lambda$ here is same as $\Omega\circ_{\calH} \Lambda$ in \cite{antunes2021bicategory}, or $\Lambda\circ \Omega$ in \cite{miller2022k} and \cite{miller2024functors}.
\end{rem}

Morphisms between correspondences are bi-equivariant continuous maps. Given three \'etale groupoids $\calG_1,\calG_2,\calG_3$, let $\Omega_1,\Omega_1'$ be correspondences $\calG_1\leftarrow \calG_2$, $\Omega_2,\Omega_2'$ be correspondences $\calG_2\leftarrow \calG_3$. If $f:\Omega_1\ra \Omega_1'$ is a $\calG_1,\calG_2$-equivariant continuous map, $g:\Omega_2\ra \Omega_2'$ is a $\calG_2,\calG_3$-equivariant continuous map, we define the map $[f,g]$ as
\[[f,g]:\Omega_1\circ \Omega_1'\ra \Omega_2\circ \Omega_2', [\omega_1,\omega_2]\mapsto [f(\omega_1),g(\omega_2)],\]
which is a well-defined $\calG_1,\calG_3$-equivariant continuous map.

\subsection{Induction}

We make a brief introduction to the  induction functor associated to a correspondence which is defined in \cite{miller2022k} and \cite{miller2024functors} by Miller. Suppose that $\Omega:\calG\leftarrow\calH$ is a correspondence. Let $\bar \rho$ be the map $\Omega/\calH\ra \calG\units, \omega\calH\mapsto \rho_\Omega(\omega)$ (which is well-defined because the $\calG$-action and the $\calH$-action commutes). For convenience of writing, let $\rho=\rho_\Omega$ and $\sigma=\sigma_{\Omega}$. Let $s,r$ be the source and range maps of $\calG$.

\begin{defn}
    Let $(A,\alpha)$ be an $\calH$-\cst-algebra. $\ind_\Omega A$ is defined as the sub *-algebra of $\Gamma_b(\Omega,\sigma^* \ca A)$ consisting of continuous sections $\xi:\Omega\ra \sigma^*\calA$ such that
\begin{enumerate}
    \item for any $(\omega,h)\in \Omega\times_{\sigma,\calH\units, r_\calH}\calH$, $\xi(\omega h)=\alpha_{h\inv}(\xi(\omega))$,
    \item the map $\Omega/\calH\ra \mathbb R: \omega\calH\mapsto \|\xi(\omega)\|$ vanishes at infinity.
\end{enumerate}
For $\xi\in \ind_\Omega A$, we define $supp(\xi)=\overline{\{\omega\calH\in \Omega/\calH:\xi(\omega)\neq 0\}}$.
\end{defn}

The induced \cst-algebra $\ind_\Omega A$ can be canonically identified with $\bar \rho_*((\sigma^*A)^{\Omega\rtimes \calH})$, which gives the $C_0(\calG\units)$-algebra of $\ind_\Omega A$ by the map
\[C_0(\calG\units)\ra ZM(\ind_\Omega A), (f\cdot \xi)(\omega)=f(\rho(\omega))\xi(\omega), \quad \forall f\in C_0(\calG\units), \xi\in \ind_\Omega A, \omega\in \Omega.\]
The fiber of $\ind_\Omega A$ at $x\in \calG\units$ can be identified with the sub *-algebra $\ind_{\Omega^x} A$ of $\Gamma_b(\Omega^x,\sigma^* \ca E)$ consisting of continuous sections $\xi:\Omega^x\ra \sigma^*\ca A$ such that
\begin{enumerate}
    \item for any $(\omega,h)\in \Omega^x\times_{\sigma,\calH\units, r_\calH}\calH$, $\xi(\omega h)=\alpha_{h\inv}(\xi(\omega))$,
    \item the map $\Omega^x/\calH\ra \mathbb R:\omega\calH\mapsto \|\xi(\omega)\|$ vanishes at infinity.
\end{enumerate}
For every $\gamma\in\calG$, let $\beta_\gamma$ be the isomorphism
\[\beta_\gamma:\ind_{\Omega^{s(\gamma)}}A\ra \ind_{\Omega^{r(\gamma)}}A, \xi\mapsto \xi(\gamma\inv-).\]
We can prove that $\beta=(\beta_\gamma)_{\gamma\in \calG}$ makes $\ind_\Omega A$ a $\calG$-\cst-algebra.

\begin{defn}
    Let $(E,V)$ be an $\calH$-Hilbert $A$-module. $\ind_\Omega E$ is defined as the closed subspace of $\Gamma_b(\Omega,\sigma^*\ca E)$ consisting of continuous sections $\eta:\Omega\ra \sigma^* \ca E$ such that
    \begin{enumerate}
    \item for any $(\omega,h)\in \Omega\times_{\sigma,\calH\units, r_\calH}\calH$, $\eta(\omega h)=V_{h\inv}(\eta(\omega))$,
    \item the map $\Omega/\calH\ra \mathbb R: \omega\calH\mapsto \|\eta(\omega)\|$ vanishes at infinity.
\end{enumerate}
The Banach space $\ind_\Omega E$ admits a Hilbert $\ind_\Omega A$-module structure as,
\[(\eta\xi)(\omega)=\eta(\omega)\xi(\omega),\quad\forall\eta\in \ind_\Omega E,\xi\in \ind_\Omega A, \omega\in\Omega,\]
\[\langle \eta_1,\eta_2\rangle(\omega)=\langle \eta_1(\omega),\eta_2(\omega)\rangle_{E_{\sigma(\omega)}},\quad \forall \xi_1,\xi_2\in \ind_\Omega E, \omega\in \Omega.\]
For $\eta\in \ind_\Omega E$, we define $supp(\eta)=\overline{\{\omega\calH\in \Omega/\calH:\eta(\omega)\neq 0\}}$.
\end{defn}

For $x\in \calG\units$, we can similarly define $\ind_{\Omega^x}E$. The fiber of $\ind_\Omega E$ at $x$ can be identified with $\ind_{\Omega^x}E$ and the $\calG$-action on $\ind_\Omega E$ is given by the unitary isomorphisms
\[W_\gamma:\ind_{\Omega^{s(\gamma)}}E\ra \ind_{\Omega^{r(\gamma)}}E, \eta\mapsto \eta(\gamma\inv-).\]
We can prove that $W=(W_\gamma)_{\gamma\in \calG}$  makes $\ind_\Omega E$ a $\calG$-Hilbert $\ind_\Omega A$-module. See details in the section 2 of \cite{miller2022k}.

\begin{ex}
    Let $f:\calG\ra \calH$ be an \'etale strict morphism between two \'etale groupoids, $\Omega_f: \calG\leftarrow \calH$ be the associated correspondence as defined in example \ref{Correspondence associated to a strict morphism}, $(A,\alpha)$ be an $\calH$-\cst-algebra. We construct such a *-homomorphism
    \[\psi:f^*A=\Gamma_0(\calG\units,({f\units})^*\ca A) \ra \ind_{\Omega_f}A,\]
    that for all $\xi\in f^*A$, $(x,h)\in \Omega_f$,
    \[\psi(\xi)(x,h)=\alpha_{h\inv}(\xi(x)).\]
    It is easy to check that $\psi$ is a $\calG$-equivariant isomorphism. Hence, as $\calG$-\cst-algebra, $\ind_{\Omega_f}A$ is canonically isomorphic to $f^*A$ (defined as in section 3.1.3 of \cite{legal1994}).
\end{ex}
\begin{ex}\label{subgroupoid induction functor}
    When $\calH$ is a relatively clopen subgroupoid of $\calG$, $\calG_{\calH\units}:\calG\leftarrow\calH$ is a well-defined correspondence by lemma \ref{relative clopen grpd has natural correspondece}. Then $\ind_{\calG_{\calH\units}}$ coincides with B\"onicke's subgroupoid induction functor $\ind^{\calG_{\calH\units}}_{\calH}$ in section 3 of \cite{bonicke2020going}.
\end{ex}
\begin{ex}[Commutative case]
    Let $\Omega:\calG\leftarrow \calH$ be a correspondence, $Z$ be a locally compact Hausdorff left $\calH$-space, $\ind_\Omega C_0(Z)$ can be identified with $C_0(\Omega\times_\calH Z)$ by isomorphism
    \[C_0(\Omega\times_{\calH}Z)\ra \ind_\Omega C_0(Z), f\mapsto [\omega\mapsto f([\omega,-])\in C_0(Z_{\sigma(\omega)})].\]
\end{ex}

When $A,B$ are $\calH$-\cst-algebras, $E$ is an $\calH$-Hilbert $B$-module, $\pi:A\ra \calL(E)$ is an $\calH$-equivariant representation, we define $\ind_\Omega\pi:\ind_\Omega A\ra \calL(\ind_\Omega E)$ such that for every $\xi\in \ind_\Omega A$, $\eta\in \ind_\Omega E$, $\omega\in \Omega$,
\[((\ind_\Omega\pi)(\xi)\eta)(\omega)=\pi_{\sigma(\omega)}(\xi(\omega))\eta(\omega).\]
We can check that $\ind_\Omega \pi$ is a well-define $\calG$-equivariant representation. If $\pi$ is non-degenerate then so is $\ind_\Omega\pi$.

Especially, when $f:A\ra B$ is an $\calH$-equivariant *-homomorphism,
\[\ind_\Omega(f):\ind_\Omega A\ra \ind_\Omega B,\]
\[\ind_\Omega(f)(\xi)(\omega)=f_{\sigma(\omega)}(\xi(\omega))\in B_{\sigma(\omega)}, \forall \xi\in \ind_\Omega A, \omega \in \Omega\]
is a $\calG$-equivariant *-homomorphism.
The data above constitute a functor from the category of $\calH$-\cst-algebras to the category of $\calG$-\cst-algebras.

\begin{rem}\label{remark on ind_c}
    If we see $\ind_\Omega A$ as a $\calG\ltimes(\Omega/\calH)$-\cst-algebra as in \cite[Proposition 3.17]{bonicke2020going}, $\ind_\Omega A$ is identified with $(\sigma^*A)^{\Omega\rtimes\calH}$. Here the $C_0(\Omega/\calH)$-algebra structure of $\ind_\Omega A$ is given by
    \[C_0(\Omega/\calH)\ra ZM(\ind_\Omega A),\]
    \[(f\cdot \xi)(\omega)=f(\omega\calH)\xi(\omega), \quad\forall f\in C_0(\Omega/\calH),\xi\in \ind_\Omega A, \omega\in \Omega.\]
    
    The $\calG\ltimes(\Omega/\calH)$-\cst-bundle associated to $(\sigma^*A)^{\Omega\rtimes\calH}$ is $(\sigma^*\ca A)/(\Omega\rtimes \calH)=\Omega\times_\calH\ca A$.

    We define
    \[\ind_{\Omega,c}A:=\{\xi\in \ind_\Omega A:\omega\calH\mapsto \|\xi(\omega)\| \text{ is compactly supported}\}.\]
    Therefore, $\ind_{\Omega,c}A$ is canonically identified with $\Gamma_c(\Omega/\calH,\Omega\times_\calH \ca A)\subseteq (\sigma^*A)^{\Omega\rtimes\calH}$ and is dense in $\ind_\Omega A$.
\end{rem}

The following proposition describes compact operators and adjointable operators on the induced module.

\begin{prop}\label{operators on induced modules}
    \textnormal{\cite[Proposition 4.11, Proposition 4.12]{miller2024functors}}
    Let $\Omega:\calG\leftarrow \calH$ be a correspondence, $A$ be an $\calH$-\cst-algebra and $E$ be an $\calH$-Hilbert $A$-module. Then
    \begin{enumerate}
        \item The map $\calK(\ind_\Omega E)\ra \ind_\Omega \calK(E)$, $T\mapsto [\omega\mapsto T_\omega]$ is an *-isomorphism.
        \item The map $\calL(\ind_\Omega E)\ra \Gamma_b(\Omega, \calL(\sigma_\Omega^*\ca E))$, $T\mapsto [\omega\mapsto T_\omega]$ is an embedding, whose image is the $\calH$-equivariant sections.
    \end{enumerate}
\end{prop}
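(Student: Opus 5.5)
The plan is to realise $\ind_\Omega E$ as the invariant module $(\sigma^*E)^{\Omega\rtimes\calH}=\Gamma_0(\Omega/\calH,\Omega\times_\calH\ca E)$ (Remark \ref{remark on ind_c}). Its fibre over $\omega\calH$ is identified with $E_{\sigma(\omega)}$ via evaluation $\eta\mapsto\eta(\omega)$. Everything is then reduced to the fibrewise description of operators on a $C_0(\Omega/\calH)$-module and to the pullback identifications $\calK(\sigma^*E)\cong\sigma^*\calK(E)$ and $\calL(\sigma^*E)\cong\Gamma_b(\Omega,\calL(\sigma^*\ca E))$ recalled above. I would prove (2) first, since (1) then follows by restriction.

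\textbf{Step 1: local sections.} First I show that for every $\omega\in\Omega$ the evaluation map $\ind_\Omega E\ra E_{\sigma(\omega)}$ is surjective, and more precisely that $\ind_{\Omega,c}E$ contains enough sections. Given $\zeta\in\Gamma_c(\Omega,\sigma^*\ca E)$, I define the average
\[\eta(\omega)=\sum_{h\in\calH^{\sigma(\omega)}}V_h\,\zeta(\omega h).\]
Since $\Omega\rtimes\calH$ is proper and \'etale, the source map restricted to the support of $(\omega,h)\mapsto V_h\zeta(\omega h)$ is proper. Lemma \ref{sum of fiber} then gives that $\eta$ is continuous and has compact support modulo $\calH$, and $\eta$ is $\calH$-equivariant by reindexing the sum, as in the proof of Proposition \ref{invariant algebra keeps sigma unity}. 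Freeness of the action allows me to choose $\zeta$ supported in a small neighbourhood $U$ of $\omega$ with $U\cap U h=\emptyset$ for $h\neq\sigma(\omega)$. This gives $\eta(\omega)=\zeta(\omega)$, which can be made arbitrary.

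\textbf{Step 2: adjointable operators.} Let $T\in\calL(\ind_\Omega E)$. Since $C_0(\Omega/\calH)\subseteq ZM(\ind_\Omega A)$, $T$ is $C_0(\Omega/\calH)$-linear, so it has fibres $T_{\omega\calH}$. Via evaluation at $\omega$ these become operators $T_\omega\in\calL(E_{\sigma(\omega)})$ with $T_\omega(\eta(\omega))=(T\eta)(\omega)$. They satisfy $T_{\omega h}=V_{h^{-1}}T_\omega V_h$ because $\eta$ and $T\eta$ are equivariant, and $\|T\|=\sup_\omega\|T_\omega\|$, which gives injectivity. For strict continuity, I have to show that $\omega\mapsto T_\omega e(\sigma(\omega))$ is continuous for each $e\in E$. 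By Step 1, near any $\omega_0$ the section $e\circ\sigma$ is uniformly approximated by some $\eta\in\ind_\Omega E$, and $\omega\mapsto T_\omega\eta(\omega)=(T\eta)(\omega)$ is continuous. Proposition \ref{convergence in banach bundle} then gives continuity, and the same argument applies to $T^*$. Conversely, given a bounded, strictly continuous, equivariant section $S$, I set $(T\eta)(\omega)=S_\omega\eta(\omega)$. This is continuous, equivariant and vanishes at infinity on $\Omega/\calH$, so it lies in $\ind_\Omega E$, and its adjoint is built from $S^*$ in the same way. Hence the image is exactly the $\calH$-equivariant sections.

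\textbf{Step 3: compact operators.} The map of (2) restricted to $\calK(\ind_\Omega E)$ is an isometric $*$-homomorphism. It sends $\theta_{\eta_1,\eta_2}$ to $\omega\mapsto\theta_{\eta_1(\omega),\eta_2(\omega)}$, which is continuous in the norm topology of $\sigma^*\calK(\ca E)$, equivariant, and vanishes at infinity. So its image lies in $\ind_\Omega\calK(E)$ and is closed. For surjectivity I view $\ind_\Omega\calK(E)$ as a $C_0(\Omega/\calH)$-algebra, so that it suffices that the image is a $C_0(\Omega/\calH)$-submodule with fibrewise dense span. It is a submodule because $f\theta_{\eta_1,\eta_2}=\theta_{f\eta_1,\eta_2}$. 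Its fibre at $\omega\calH$ contains all $\theta_{e_1,e_2}$ by Step 1, and these span a dense subspace of $\calK(E_{\sigma(\omega)})$. The density criterion already used above (\cite[Proposition 2.12]{miller2024functors}) then finishes the argument. I expect the main obstacle to be the approximation in Step 1 and its use for strict continuity in Step 2; the other verifications are routine.
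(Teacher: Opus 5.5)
Your argument is correct. The paper gives no proof of this statement; it imports it directly from \cite{miller2024functors}, so there is no in-paper route to compare against. What you have written is a valid self-contained proof along the natural lines. You regard $\ind_\Omega E=\Gamma_0(\Omega/\calH,\Omega\times_\calH\ca E)$ as a $C_0(\Omega/\calH)$-module, use its fibrewise description to define $T_\omega$ and the isometric embedding, and use local sections to pass between sections over $\Omega/\calH$ and $\calH$-equivariant sections over $\Omega$. You then finish part (1) with the fibrewise density criterion.

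A few points of precision.

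\begin{itemize}
\item Your averaging in Step 1 is exactly the $\diamond$-construction of Proposition \ref{diamond construction}, with $\zeta=\varphi\cdot(e\circ\sigma)$. You may simply cite it.

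\item The neighbourhood $U$ you need is one on which $q:\Omega\to\Omega/\calH$ is injective. It exists by Proposition \ref{orbit space of free proper action of etale grpd}, which uses properness and \'etaleness, not freeness alone: a free but non-proper action, such as an irrational rotation, admits no such $U$.

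\item The disjointness condition should be stated for non-unit $h$ rather than for $h\neq\sigma(\omega)$. Units $h=\sigma(\omega')$ with $\omega'\in U$ other than $\omega$ always give $U\cap Uh\neq\emptyset$.

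\item With $q|_U$ injective and $\varphi\equiv 1$ near $\omega_0$, you get $\eta=e\circ\sigma$ exactly on a neighbourhood of $\omega_0$, not merely approximately. This makes the strict continuity in Step 2 immediate. The same trick with $\zeta=\varphi\xi$ handles arbitrary $\xi\in\Gamma_0(\Omega,\sigma^*\ca E)$, which is what strict continuity on $\calL(\sigma^*\ca E)$ formally requires.

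\item In Step 3, continuity of $\omega\mapsto\theta_{\eta_1(\omega),\eta_2(\omega)}$ is with respect to the bundle topology of $\calK(\ca E)$, not a norm topology. It follows from three facts: $x\mapsto\theta_{e_1(x),e_2(x)}$ is the continuous section $\theta_{e_1,e_2}\in\calK(E)$; the estimate $\|\theta_{a,b}-\theta_{c,d}\|\leqslant\|a-c\|\|b\|+\|c\|\|b-d\|$; and Proposition \ref{convergence in banach bundle}, applied via upper semicontinuity of the norm.
\end{itemize}
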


The following construction provides a sufficient family of elements in induced algebras or induced modules.

\begin{prop}\label{diamond construction}
    Let $\Omega:\calG\leftarrow \calH$ be a correspondence, $(A,\alpha)$ be an $\calH$-\cst-algebra and $(E,V)$ be an $\calH$-Hilbert $A$-module. For every $\varphi\in C_c(\Omega), a\in A$ and $e\in E$, we define
    \[\varphi\diamond a(\omega)=\sum_{h\in\calH^{\sigma_\Omega(\omega)}} \varphi(\omega h)\alpha_h(a(s_\calH(h)))\in A_{\sigma_\Omega(\omega)},\]
    \[\varphi\diamond e(\omega)=\sum_{h\in\calH^{\sigma_\Omega(\omega)}} \varphi(\omega h)V_h(e(s_\calH(h)))\in E_{\sigma_\Omega(\omega)}.\]
    Then, $\varphi\diamond a$ is a well-defined element of $\ind_\Omega A$, $\varphi\diamond e$ is a well-defined element of $\ind_\Omega E$. Moreover, $C_c(\Omega)\diamond A:= span\{\varphi\diamond a:\varphi\in C_c(\Omega),a\in A\}$ is dense in $\ind_\Omega A$, $C_0(\Omega)\diamond E:= span\{\varphi\diamond e:\varphi\in C_c(\Omega),a\in E\}$ is dense in $\ind_\Omega E$.
\end{prop}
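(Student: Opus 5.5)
We have to show three things: that $\varphi\diamond a$ (and $\varphi\diamond e$) are elements of $\ind_\Omega A$ (resp. $\ind_\Omega E$), and that their spans are dense. We treat the algebra case in detail; the module case is word for word the same, with $V$ in place of $\alpha$.

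\textbf{Well-definedness.} Fix $\omega_0\in\Omega$. Since the right $\calH$-action is proper and $\varphi$ has compact support, Proposition \ref{groupoid action-proper action} shows that $\{h\in\calH:\omega h\in supp(\varphi)\}$ is finite for each $\omega$, so the sum is finite. It is also locally uniformly finite, and we use this to prove continuity. Consider the map
\[
\calH^{\sigma(\omega)}\ni h\mapsto \varphi(\omega h)\alpha_h(a(s(h))).
\]
Reindex by $k=h\inv$, which ranges over $\calH_{\sigma(\omega)}$. Writing $\Omega\times_{\sigma,r}\calH\ra\Omega$ for the projection, which is a local homeomorphism because $\calH$ is \'etale, the section becomes a pushforward $p_*\zeta$ with
\[
\zeta(\omega,h)=\varphi(\omega h)\alpha_h(a(s(h))).
\]
This $\zeta$ is continuous by Lemma \ref{continuity of groupoid action}. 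Its support lies in $\{(\omega,h):\omega h\in supp(\varphi)\}$, on which the projection is proper by properness of the action. Lemma \ref{sum of fiber} then gives that $\varphi\diamond a$ is a continuous bounded section of $\sigma^*\calA$.

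For equivariance, take $h'\in\calH^{\sigma(\omega)}$ and substitute $h=h'k$:
\[
\varphi\diamond a(\omega h')=\sum_{k}\varphi(\omega h'k)\alpha_k(a(s(k)))=\alpha_{h'^{-1}}\bigl(\varphi\diamond a(\omega)\bigr).
\]
For vanishing at infinity, note that $\varphi\diamond a(\omega)\neq0$ forces $\omega\calH$ to lie in $q(supp\,\varphi)$, which is compact. So $\varphi\diamond a\in\ind_{\Omega,c}A$.

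\textbf{Density.} Here we use the picture of Remark \ref{remark on ind_c}, which identifies $\ind_\Omega A$ with $\Gamma_0(\Omega/\calH,\Omega\times_\calH\calA)$, a $C_0(\Omega/\calH)$-algebra. Let $\Gamma$ be the span of the $\varphi\diamond a$. It is a $C_0(\Omega/\calH)$-submodule, because
\[
f\cdot(\varphi\diamond a)=((f\circ q)\varphi)\diamond a
\]
and $(f\circ q)\varphi\in C_c(\Omega)$. By the density criterion \cite[Proposition 2.12]{miller2024functors} already used above, it then suffices to check that for each $\omega_0$ the evaluations $\{\xi(\omega_0):\xi\in\Gamma\}$ are dense in the fiber $A_{\sigma(\omega_0)}$.

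To see this, use that the right action is free and \'etale, so $\Omega\rtimes\calH$ is a principal \'etale groupoid and $\Omega\ra\Omega/\calH$ is a local homeomorphism (Proposition \ref{orbit space of free proper action of etale grpd}). Choose a small open neighbourhood $W$ of $\omega_0$ with $W h\cap W=\emptyset$ for all $h\notin\calH\units$; such a $W$ exists by the local structure argument of Proposition \ref{local structure of proper cocompact action of étale groupoid} with trivial stabilizer. Take $\varphi\in C_c(W)$ with $\varphi(\omega_0)=1$. Then only $h=\sigma(\omega_0)$ contributes to the sum at $\omega_0$, so
\[
\varphi\diamond a(\omega_0)=a(\sigma(\omega_0)).
\]
Since $a\mapsto a(x)$ is surjective onto $A_x$, we get the whole fiber.

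The main obstacle is the careful justification that the sum defining $\varphi\diamond a$ is a continuous section, that is, correctly setting up the local homeomorphism $p$ and checking properness on $supp(\zeta)$ so that Lemma \ref{sum of fiber} applies. The density step reduces to the cited fiberwise criterion once $C_0(\Omega/\calH)$-invariance is observed. The module version needs Lemma \ref{continuity of groupoid action of Hilbert module} for continuity and the analogous fiberwise density criterion for Hilbert bundles.
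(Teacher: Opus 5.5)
Your proposal is correct and follows essentially the same route as the paper, which defers to Lemma 3.13, Proposition 3.14 and Remark 3.15 of \cite{bonicke2020going}: well-definedness comes from finiteness and continuity of the sum, using properness of the free \'etale right action, and density comes from the $C_0(\Omega/\calH)$-module structure of $\ind_\Omega A$ together with fiberwise surjectivity of evaluation at $\omega_0$ (your packaging via the pushforward along $\Omega\times_{\sigma,r}\calH\ra\Omega$ with Lemma \ref{sum of fiber} is a clean way to do the first part). The only slip is cosmetic: the ``reindex by $k=h\inv$'' sentence is never used, since you work with $h\in\calH^{\sigma(\omega)}$ and the $r$-fiber product throughout.
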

\begin{proof} Use the exactly same method in Lemma 3.13, Proposition 3.14 and Remark 3.15 of \cite{bonicke2020going}.\end{proof}

Finally, we are going to prove that, if $\Omega$ is second countable and $A$ is separable, $\ind_\Omega A$ should also be separable. Before that we need a lemma which is trivial but turns out to be very useful.

\begin{lem}\label{uniformly finite fiber}
    If $\rho:Y\ra X$ is a local homeomorphism between two locally compact Hausdorff spaces, $K\subseteq Y$ a compact subset of $Y$, then $\sup_{x\in X}\#(K\cap \rho\inv(x))<\infty$.
\end{lem}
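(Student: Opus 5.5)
We prove the statement by a compactness covering argument combined with local injectivity of $\rho$.

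Since $\rho$ is a local homeomorphism, each $y\in K$ has an open neighborhood $U_y\subseteq Y$ on which $\rho$ is injective; indeed, $\rho|_{U_y}$ is a homeomorphism onto an open subset of $X$. The family $(U_y)_{y\in K}$ is an open cover of the compact set $K$, so we may extract finitely many points $y_1,\dots,y_n\in K$ with $K\subseteq U_{y_1}\cup\cdots\cup U_{y_n}$.

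Now fix $x\in X$. Since $\rho|_{U_{y_i}}$ is injective, each $U_{y_i}$ meets the fiber $\rho\inv(x)$ in at most one point. Therefore
\[\#(K\cap\rho\inv(x))\leqslant \sum_{i=1}^n\#(U_{y_i}\cap\rho\inv(x))\leqslant n.\]
The bound $n$ does not depend on $x$, so $\sup_{x\in X}\#(K\cap\rho\inv(x))\leqslant n<\infty$.

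The argument uses neither Hausdorffness nor local compactness, only that $\rho$ is locally injective and that $K$ is compact. The only point to check is that the bound is uniform in $x$, and this is guaranteed because the finite subcover is chosen once, independently of $x$.
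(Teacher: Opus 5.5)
Your proof is correct and is essentially the paper's argument: cover $K$ by finitely many open sets on which $\rho$ is injective, so each fiber meets each of them in at most one point and the count is bounded by $n$ uniformly in $x$. Your remark that only local injectivity of $\rho$ and compactness of $K$ are used is also accurate.
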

\begin{proof}
    For every $y\in K$, there exists an open neighborhood $V_y$ of $y$ such that $\rho|_{V_y}$ is a homeomorphism onto an open of $X$. Since $K$ is compact, there exists finitely many point $y_1,\cdots, y_n$, such that $K\subseteq \cup_{i=1}^nV_{y_i}$. Since every $\rho|_{V_{y_i}}$ is injective, we have $\#(K\cap \rho\inv(x))\leqslant n$ for every $x\in X$.
\end{proof}

\begin{lem}\label{inequalities involving diamond}
    Let $\Omega:\calG\leftarrow \calH$ be a correspondence, $(A,\alpha)$ be an $\calH$-\cst-algebra and $(E,V)$ be an $\calH$-Hilbert $A$-module.
    \begin{enumerate}
        \item For every compact subset $K$ of $\Omega$, there exists a constant $M_K>0$, such that for any $\varphi\in C_c(\Omega)$ that is supported in $K$ and for any $a\in A$, $e\in E$, we have
        \[\|\varphi\diamond a\|_{\ind_\Omega A}\leqslant M_K \|\varphi\|_\infty \|a\|_A, \quad \|\varphi\diamond e\|_{\ind_\Omega E}\leqslant M_K\|\varphi\|_\infty \|e\|_E\]
        \item If $(a_\lambda)_\lambda$ is a convergent net in $A$ with limit $a$, $(e_\lambda)_\lambda$ is a convergent net in $E$ with limit $e$, $\varphi\in C_c(\Omega)$. Then $\varphi\diamond a_\lambda\ra \varphi\diamond a$, $\varphi\diamond e_\lambda\ra \varphi\diamond e$.
        \item Suppose that $(\varphi_\lambda)_\lambda$ is a net in $C_c(\Omega)$, $K$ is a compact subset of $\Omega$, $\varphi\in C_c(\Omega)$, $a\in A$, $e\in E$, such that
        \begin{enumerate}
            \item for any $\lambda$, $supp(\varphi_\lambda)\subseteq K$;
            \item $\varphi_\lambda\xrightarrow{\|\cdot\|_\infty}\varphi$.
        \end{enumerate}
        Then $\varphi_\lambda\diamond a\ra \varphi\diamond a$, $\varphi_\lambda\diamond e\ra \varphi\diamond e$.
    \end{enumerate}
\end{lem}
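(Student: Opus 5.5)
The plan is a direct fibrewise norm estimate from the definition of $\diamond$, using Lemma \ref{uniformly finite fiber} to control the number of nonzero summands. Parts 2 and 3 then follow by linearity.

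For part 1, fix a compact $K\subseteq\Omega$ and $\varphi$ supported in $K$. For $\omega\in\Omega$, the sum defining $\varphi\diamond a(\omega)$ runs over $h\in\calH^{\sigma(\omega)}$ with $\omega h\in K$. The map $h\mapsto\omega h$ is injective because the $\calH$-action is free. All the points $\omega h$ lie in the single $\calH$-orbit $\omega\calH$, so the number of nonzero terms is at most $\#(K\cap \omega\calH)$.

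To bound this uniformly I would use that the quotient map $q:\Omega\ra\Omega/\calH$ is a local homeomorphism, by Proposition \ref{orbit space of free proper action of etale grpd}, since the right action is free, proper and \'etale. Lemma \ref{uniformly finite fiber} then gives
\[
M_K:=\sup_{p\in\Omega/\calH}\#(K\cap q\inv(p))<\infty ,
\]
and $M_K\geqslant 1$ may be assumed. Each term has norm at most $\|\varphi\|_\infty\|\alpha_h(a(s(h)))\|\leqslant\|\varphi\|_\infty\|a\|_A$, because $\alpha_h$ is isometric and the fibre norms are dominated by $\|a\|$. Hence $\|\varphi\diamond a(\omega)\|\leqslant M_K\|\varphi\|_\infty\|a\|_A$ for every $\omega$. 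Taking the supremum over $\omega$ gives the sup norm of $\varphi\diamond a$ in $\Gamma_b$, which is the norm of $\ind_\Omega A$. The same argument works for $e\in E$, using that the $V_h$ are unitary.

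For part 2, $\diamond$ is bilinear, so $\varphi\diamond a_\lambda-\varphi\diamond a=\varphi\diamond(a_\lambda-a)$. With $K=supp(\varphi)$, part 1 gives a norm bound $M_K\|\varphi\|_\infty\|a_\lambda-a\|\ra 0$. For part 3, similarly $\varphi_\lambda\diamond a-\varphi\diamond a=(\varphi_\lambda-\varphi)\diamond a$. Here $\varphi_\lambda-\varphi$ is supported in the compact set $K\cup supp(\varphi)$, so part 1 with this compact set gives a bound $M\|\varphi_\lambda-\varphi\|_\infty\|a\|\ra 0$. The module versions are identical.

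The only step needing care is the uniform count in part 1. It rests on the quotient map being a local homeomorphism, which is where freeness and the \'etale property of the right $\calH$-action enter, and on freeness making distinct $h$ give distinct points $\omega h$.
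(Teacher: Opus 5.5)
Your proposal is correct and follows essentially the same route as the paper: bound the number of nonzero summands via the local homeomorphism $\Omega\ra\Omega/\calH$ and Lemma \ref{uniformly finite fiber}, then deduce parts 2 and 3 from part 1 by linearity. Your extra care (taking $M_K\geqslant 1$, and using $K\cup supp(\varphi)$ in part 3) is harmless. It is not actually needed in part 3, since uniform convergence forces $supp(\varphi)\subseteq K$.
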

\begin{proof}
    (1) The action of $\calH$ on $\Omega$ is free, and $q:\Omega\ra \Omega/\calH$ is a local homeomorphism (proposition \ref{orbit space of free proper action of etale grpd}), by lemma \ref{uniformly finite fiber}, we define
\begin{align*}
    M_K & :=\sup_{\omega\in \Omega}\#\{h\in\calH^{\sigma_\Omega(\omega)}:\omega h\in K\}\\
    & =\sup_{\omega\calH\in \Omega/\calH}\#(q\inv(\omega\calH)\cap K)<\infty.
\end{align*}
So we have
\begin{align*}
    \|\varphi\diamond a\|_{\ind_\Omega A} & = \sup_{\omega\in \Omega}\|(\varphi\diamond a)(\omega)\|_{A_{\sigma_\Omega(\omega)}}
    \\& = \sup_{\omega\in \Omega}\|\sum_{h\in\calH^{\sigma_\Omega(\omega)}}\varphi(\omega h)\alpha_h(a(s_\calH(h)))\|_{A_{\sigma_\Omega(\omega)}}\\
    & \leqslant M_K \|\varphi\|_{\infty}\|a\|_A.
\end{align*}
Another inequality can be proved in the same way.

(2) Using the result of (1),
\[\|\varphi\diamond a_\lambda -\varphi\diamond a\|_{\ind_\Omega A}\leqslant M_{supp(\varphi)}\|\varphi\|_\infty \|a_\lambda-a\|_A.\]
Therefore, $\varphi\diamond a_\lambda\ra\varphi\diamond a$. Similarly, $\varphi\diamond e_\lambda\ra \varphi\diamond e$.

(3) Using the result of (1),
\[\|\varphi_\lambda\diamond a-\varphi\diamond a\|\leqslant M_K\|\varphi_\lambda-\varphi\|_{\infty}\|a\|_A.\]
Hence, $\varphi_\lambda\diamond a\ra \varphi\diamond a$. Similarly, $\varphi_\lambda\diamond e\ra \varphi\diamond e$.
\end{proof}

\begin{corr}\label{ind transfer pre-hilb mod}
    Let $\Omega:\calG\leftarrow\calH$ be an \'etale groupoid correspondence, $A$ be an $\calH$-\cst-algebra, $E$ be a $\calG$-Hilbert $A$-module, $E'$ be a dense subset of $E$, then
    $C_c(\Omega)\diamond E'=span\{\phi\diamond e:\phi\in C_c(\Omega),e\in E'\}$ is dense in $\ind_\Omega E$.
\end{corr}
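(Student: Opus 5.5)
The plan is to reduce to the density statement of Proposition \ref{diamond construction} together with the continuity estimates of Lemma \ref{inequalities involving diamond}. (We read $E$ as an $\calH$-Hilbert $A$-module, as the construction of $\ind_\Omega E$ requires.) Let $D$ denote the closure of $C_c(\Omega)\diamond E'$ in $\ind_\Omega E$. By Proposition \ref{diamond construction}, $C_c(\Omega)\diamond E$ is dense in $\ind_\Omega E$. Since $D$ is a closed linear subspace, it suffices to show that $\varphi\diamond e\in D$ for every $\varphi\in C_c(\Omega)$ and every $e\in E$.

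Fix such $\varphi$ and $e$. Because $E'$ is dense in $E$, there is a sequence (or net) $(e_\lambda)_\lambda$ in $E'$ with $e_\lambda\ra e$ in norm. By Lemma \ref{inequalities involving diamond}(2), applied with the fixed $\varphi$, we get $\varphi\diamond e_\lambda\ra\varphi\diamond e$ in $\ind_\Omega E$. Quantitatively, this is the estimate
\[\|\varphi\diamond e_\lambda-\varphi\diamond e\|\leqslant M_{supp(\varphi)}\|\varphi\|_\infty\|e_\lambda-e\|,\]
which follows from part (1) of the same lemma by linearity of $e\mapsto\varphi\diamond e$. Each $\varphi\diamond e_\lambda$ lies in $C_c(\Omega)\diamond E'$, so $\varphi\diamond e\in D$. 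Taking spans, $C_c(\Omega)\diamond E\subseteq D$, and hence $D=\ind_\Omega E$.

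There is no real obstacle. The only points to check are that $e\mapsto\varphi\diamond e$ is linear, which is clear from the defining formula, and that the closure of the span contains limits of elements of the span. Both are immediate. If $E'$ is only a subset rather than a subspace, nothing changes, since we take spans at the end.
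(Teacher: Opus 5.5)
Your proposal is correct and is exactly the argument the paper intends. The corollary is stated without a separate proof, as an immediate consequence of the density of $C_c(\Omega)\diamond E$ in Proposition \ref{diamond construction} and the continuity of $e\mapsto\varphi\diamond e$ in Lemma \ref{inequalities involving diamond}(2). Reading $E$ as an $\calH$-Hilbert $A$-module is also the right correction of the typo.
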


\begin{prop}
    Let $\Omega:\calG\leftarrow \calH$ be an \'etale groupoid correspondence that is second countable, $A$ be an $\calH$-\cst-algebra and $E$ be an $\calH$-Hilbert $A$-module.
    \begin{enumerate}
        \item If $A$ is separable, then so is $\ind_\Omega A$.
        \item If $A$ is $\sigma$-unital, then so is $\ind_\Omega A$.
        \item If $E$ is a countably generated Hilbert $A$-module, then $\ind_\Omega E$ is a countably generated Hilbert $\ind_\Omega A$-module.
    \end{enumerate}
\end{prop}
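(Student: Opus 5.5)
All three parts rest on the diamond construction (Proposition \ref{diamond construction}) together with the estimates of Lemma \ref{inequalities involving diamond}. Since $\Omega$ is second countable and locally compact Hausdorff, $C_c(\Omega)$ contains a countable subset $D$ with the following property: for every $\varphi\in C_c(\Omega)$ there are a compact set $K\subseteq\Omega$ and a sequence $(\varphi_k)\subseteq D$, all supported in $K$, with $\varphi_k\to\varphi$ uniformly. To build $D$, fix a countable base of relatively compact opens $U_i$ and an exhaustion $\Omega=\bigcup_n K_n$ by compacts with $K_n\subseteq\inter(K_{n+1})$. For each $n$ choose a countable uniformly dense subset of $\{f\in C_c(\Omega): supp(f)\subseteq K_n\}$; this set sits inside the separable Banach space $C_0(\inter(K_{n+1}))$, so such a choice exists. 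The union over $n$ is $D$.

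\textbf{(1)} Let $A_0\subseteq A$ be countable and dense. I claim the rational span of $\{\varphi\diamond a:\varphi\in D, a\in A_0\}$ is dense in $\ind_\Omega A$. By Proposition \ref{diamond construction} it suffices to approximate a single $\varphi\diamond a$. First replace $a$ by some $a_0\in A_0$ close to it; part (1) of Lemma \ref{inequalities involving diamond} bounds the error by $M_{supp(\varphi)}\|\varphi\|_\infty\|a-a_0\|$. Then replace $\varphi$ by some $\varphi_k\in D$ supported in a fixed compact $K$; part (3) of the same lemma bounds the error by $M_K\|\varphi-\varphi_k\|_\infty\|a_0\|$. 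Hence $\ind_\Omega A$ is separable. Part \textbf{(3)} is proved the same way. If $E$ is generated by a countable set $S$, then the $A$-linear span of $S$ is dense in $E$. By Corollary \ref{ind transfer pre-hilb mod}, $C_c(\Omega)\diamond(\mathrm{span}\,SA)$ is dense in $\ind_\Omega E$. Next I check the identity $\varphi\diamond(ea)=(\varphi\diamond e)\cdot\xi$ for a suitable $\xi\in\ind_\Omega A$. This holds when $\varphi=\varphi_1\varphi_2$ with $\varphi_2$ equal to $1$ on the support of $\varphi_1$ modulo $\calH$: a direct fiberwise computation gives $(\varphi_1\diamond e)(\varphi_2\diamond a)=\varphi_1\diamond(ea)$, because $q:\Omega\to\Omega/\calH$ is a local homeomorphism. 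This step is delicate, and I would first prove it for $\varphi$ supported in a small open set on which $q$ is injective, then use a partition of unity. It follows that $\{\varphi\diamond e:\varphi\in D, e\in S\}$ generates $\ind_\Omega E$, and this set is countable.

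\textbf{(2)} $\sigma$-unitality is the main obstacle, because it does not follow from countable density alone. I would transport the cutoff-function averaging argument of Proposition \ref{invariant algebra keeps sigma unity}. The groupoid $\Omega\rtimes\calH$ is proper, principal and \'etale, its orbit space $\Omega/\calH$ is $\sigma$-compact (second countable), and $\ind_\Omega A\cong(\sigma^*A)^{\Omega\rtimes\calH}$ by Remark \ref{remark on ind_c}. So it suffices to show that $\sigma^*A$ is $\sigma$-unital and then apply Proposition \ref{invariant algebra keeps sigma unity} directly. To see that $\sigma^*A\cong C_0(\Omega)\otimes_{\calH\units}A$ is $\sigma$-unital, take a countable approximate unit $(a_n)$ of $A$ and an increasing sequence $f_n\in C_c(\Omega,[0,1])$ with $f_n=1$ on $K_n$. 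The sections $\omega\mapsto f_n(\omega)a_n(\sigma(\omega))$ then form an approximate unit. To verify this, it is enough to test against compactly supported sections of the form $f\otimes a$, whose span is dense, where the convergence is immediate from the estimate $\sup_{\omega\in K_n}\|a_n(\sigma(\omega))b(\sigma(\omega))-b(\sigma(\omega))\|\le\|a_nb-b\|$.
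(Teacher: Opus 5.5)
Your parts (1) and (2) follow the paper. In (1) your countable set $D$ plays the role of the paper's products $f\varphi_i$, with $f$ taken from a countable dense subset of $C_c(\Omega)$, and the diamond estimates are used identically. In (2) you make the same reduction to Proposition~\ref{invariant algebra keeps sigma unity} for the proper principal \'etale groupoid $\Omega\rtimes\calH$, and you also check that $\sigma^*A$ is $\sigma$-unital, which the paper only asserts.

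Part (3) takes a genuinely different route. The paper argues abstractly: $E$ is countably generated iff $\calK(E)$ is $\sigma$-unital, and $\calK(\ind_\Omega E)\cong\ind_\Omega\calK(E)$ by Proposition~\ref{operators on induced modules}. So (3) follows from (2) applied to the $\calH$-algebra $\calK(E)$. You instead exhibit explicit countable generators $\{\varphi\diamond e:\varphi\in D,\ e\in S\}$, via the fiberwise identity $(\varphi_1\diamond e)(\varphi_2\diamond a)=\varphi_1\diamond(ea)$.

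The condition for that identity, as you first phrase it ("$\varphi_2=1$ on the support of $\varphi_1$ modulo $\calH$"), is not sufficient. What is needed is that $\varphi_2$ be supported in an open set $U$ on which $q:\Omega\to\Omega/\calH$ is injective, with $\varphi_2=1$ on $supp(\varphi_1)\subseteq U$. Then freeness of the $\calH$-action kills the cross terms $h\neq h'$, and the compatibility $V_h(ea)=V_h(e)\alpha_h(a)$ handles the diagonal terms. Your partition-of-unity step supplies exactly this. Combined with Lemma~\ref{inequalities involving diamond}(3), which puts each $(\varphi\psi_j)\diamond e$ in the closed span of $D\diamond S$, the argument is correct.

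As for what each route buys: yours is more elementary and self-contained. It needs neither Miller's identification of compact operators on induced modules nor the $\sigma$-unital criterion for countable generation, and it produces explicit generators. The paper's route is much shorter given that machinery, and simply reuses (2).
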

\begin{proof}
     (1) Before all, let $S_1$ be a countable dense subset of $C_c(\Omega)$. Let $(K_i)_{i\in \mathbb N_+}$ be a sequence of compact subsets of $\Omega$ such that $\Omega=\cup_{i\in \mathbb N_+}K_i$ and for any $i\in \mathbb N_+$, $K_i\subseteq \inter({K_{i+1}})$.
    By Urysohn's lemma, for any $i\in \mathbb N_+$ we select $\varphi_i\in C_c(\Omega)$ such that $\varphi_i|_{K_i}=1$ and $supp(\varphi_i)\subseteq \inter({K_{i+1}})$.
    
    Let $S_2$ be a countable dense subset of $A$. We define $S=\{(f\varphi_i)\diamond a:f\in S_1,i\in \mathbb N_+, a\in S_2\}$. Then $S$ is a countable subset of $\ind_\Omega A$. It suffices to show that the span of $S$ is dense in $\ind_\Omega A$. By proposition \ref{diamond construction}, it suffices to show that for any $f\in C_c(\Omega)$ and $a\in A$, $f\diamond a$ is in the closure of $S$. Let $(f_n)_n$ be a sequence in $S_1$ such that $f_n\xrightarrow{\|\cdot\|_\infty}f$ and $(a_n)_n$ be a sequence in $S_2$ such that $a_n\ra a$. We can fix $i\in \mathbb N_+$ such that $supp(f)\subseteq K_i$. Hence, $f_n\varphi_i\xrightarrow{\|\cdot\|_\infty} f\varphi_i=f$ as $n\ra +\infty$. Use lemma \ref{inequalities involving diamond},
    \begin{align*}
        \|(f_n\varphi_i)\diamond a_n-f\diamond a\|_{\ind_\Omega A} & \leqslant \|(f_n\varphi_i)\diamond (a-a_n)\|_{\ind_\Omega A}+\|(f_n\varphi_i-f)\diamond a\|_{\ind_\Omega A}\\
        & \leqslant M_{K_i}(\|f_n\|_{\infty}\|a-a_n\|_A+\|f_n\varphi_i-f\|_\infty \|a\|_A).
    \end{align*}
    Therefore, $(f_n\varphi_i)\diamond a_n\ra f\diamond a$ as $n\ra +\infty$.

    (2) If $A$ is $\sigma$-unital, then so is $\sigma^*A\cong C_0(\Omega)\otimes_{\calH\units}A$. Then apply proposition \ref{invariant algebra keeps sigma unity} (where $\Omega/\calH$ is second countable because of proposition \ref{orbit space of free proper action of etale grpd}), $\ind_\Omega A\cong (\sigma^*A)^{\Omega\rtimes \calH}$ is also $\sigma$-unital.

    (3) We know that $E$ is countably generated if and only if $\calK(E)$ is $\sigma$-unital. By proposition \ref{operators on induced modules}, $\calK(\ind_\Omega E)\cong \ind_\Omega (\calK(E))$, which is also $\sigma$-unital by the previous result. Hence, $\ind_\Omega E$ is also countably generated.
\end{proof}

\subsection{\texorpdfstring{Homomorphism $\tau$}{Homomorphism tau}}

Recall that, if $\calG$ is a locally compact Hausdorff groupoid, $A,B,D$ are $\sigma$-unital $\calG$-\cst-algebras, there is a homomorphism (see definition 4.2.1 of \cite{legal1994})
\[\tau_D^\calG:\kk^\calG_0(A,B)\ra \kk^\calG_0(A\otimes_{\calG\units}D, B\otimes_{\calG\units}D),\]
\[[E,\pi,T]\mapsto [E\otimes_{\calG\units}D, \pi\otimes id_D, T\otimes id_D].\]
We will need an analog of proposition 17.8.6 of \cite{blackadar1998k}.

\begin{prop}\label{functoriality of tau}
    Let $A,B,D_1,D_2$ be $\sigma$-unital $\calG$-\cst-algebras, and let $h:D_1\ra D_2$ be a $\calG$-equivariant *-homomorphism. The following diagram commutes:
    \[
    \xymatrix{
    \kk^\calG_0(A,B) \ar[r]^{\hspace{-1.2cm}\tau_{D_1}^\calG} \ar[d]^{\tau_{D_2}^\calG} & \kk^{\calG}_0(A\otimes_{\calG\units}D_1, B\otimes_{\calG\units}D_1) \ar[d]^{(id_B\otimes h)_*} \\
    \kk^\calG_0(A\otimes_{\calG\units}D_2, B\otimes_{\calG\units}D_2) \ar[r]_{(id_A\otimes h)^*} &
    \kk^\calG_0(A\otimes_{\calG\units}D_1,B\otimes_{\calG\units} D_2)
    }
    \]
\end{prop}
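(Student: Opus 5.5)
We follow the proof of \cite[Proposition 17.8.6]{blackadar1998k} and reduce the statement to a homotopy, or to an equality of Kasparov modules, at the level of cycles. Let $x=[E,\pi,T]\in \kk^\calG_0(A,B)$, where $E$ is a countably generated $\calG$-Hilbert $B$-module. The first step is to write out both compositions explicitly. The composition $(id_A\otimes h)^*\circ\tau^\calG_{D_2}$ sends $x$ to
\[[E\otimes_{\calG\units}D_2,\ (\pi\otimes id_{D_2})\circ(id_A\otimes h),\ T\otimes 1].\]
The composition $(id_B\otimes h)_*\circ\tau^\calG_{D_1}$ sends $x$ to
\[[(E\otimes_{\calG\units}D_1)\otimes_{id_B\otimes h}(B\otimes_{\calG\units}D_2),\ (\pi\otimes id_{D_1})\otimes 1,\ (T\otimes 1)\otimes 1].\]
We use the standard fact that pushforward along a $*$-homomorphism $\phi$ is the interior tensor product $E'\mapsto E'\otimes_\phi B'$. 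This remains valid in the $\calG$-equivariant $C_0(\calG\units)$-setting of \cite{legal1994}, since $id_B\otimes h$ is $\calG$-equivariant and $C_0(\calG\units)$-linear.

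The second step compares the two modules. There is a natural $\calG$-equivariant isometry of Hilbert $B\otimes_{\calG\units}D_2$-modules
\[(E\otimes_{\calG\units}D_1)\otimes_{id_B\otimes h}(B\otimes_{\calG\units}D_2)\ra E\otimes_{\calG\units}D_2,\qquad (e\otimes d_1)\otimes(b\otimes d_2)\mapsto eb\otimes h(d_1)d_2.\]
One checks on elementary tensors, fiberwise over $\calG\units$, that it preserves inner products. Its image is $E\otimes_{\calG\units}\overline{h(D_1)D_2}$, which is a complemented submodule only in good cases; in general the map is not surjective. We follow Blackadar here. Let $P$ be the projection onto the closure of $((\pi\otimes id_{D_2})\circ(id_A\otimes h))(A\otimes_{\calG\units}D_1)\cdot(E\otimes_{\calG\units}D_2)$. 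This is the essential subspace of the representation, and it is contained in the image of the isometry.

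The third step splits the cycle. The cycle on $E\otimes_{\calG\units}D_2$ decomposes as the essential part on $P(E\otimes D_2)$ plus a degenerate part, on which the representation vanishes. More precisely, $T\otimes 1$ commutes with $P$ up to compacts after multiplying by elements of $A\otimes D_1$, so the cycle is an operator homotopy of $P(T\otimes1)P\oplus(1-P)(T\otimes 1)(1-P)$. The degenerate summand has zero representation, so its class is zero. Under the isometry, the essential summand is unitarily equivalent to a cycle on a submodule of the second module, and the second cycle splits in the same way. This gives equality of the classes.

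Equivariance and $C_0(\calG\units)$-linearity must be verified at each step. They should be fiberwise consequences of the definitions recalled in the section on $C_0(X)$-algebras and modules, because all the maps involved are built from $C_0(\calG\units)$-linear data.

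The main obstacle is the non-surjectivity of the comparison map when $h$ is degenerate, that is, when $\overline{h(D_1)D_2}\neq D_2$. Handling it is exactly the essential-subspace argument above. An alternative is to avoid cycle-level manipulation and instead use the Kasparov product. The identities $(id_A\otimes h)^*y=[id_A\otimes h]\otimes y$ and $(id_B\otimes h)_*y=y\otimes[id_B\otimes h]$ hold. It then suffices to show
\[[id_A\otimes h]\otimes\tau_{D_2}(x)=\tau_{D_1}(x)\otimes[id_B\otimes h].\]
Here $[id_A\otimes h]=\tau_A([h])$ with the factors flipped, and similarly for $[id_B\otimes h]$. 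The required identity then follows from commutativity of the exterior product over $C_0(\calG\units)$ \cite{legal1994}: both sides equal $x\otimes_{C_0(\calG\units)}[h]$. I would pursue this route, as it is the cleaner argument, and keep the cycle-level argument as a check.
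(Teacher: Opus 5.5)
Your cycle-level argument has a genuine gap at Step 3. In a Hilbert module a closed submodule need not be complemented, so in general there is no projection $P$ onto the essential submodule of $E\otimes_{\calG\units}D_2$. It fails exactly in the degenerate case you single out: when $\pi$ is nondegenerate, the essential submodule is precisely the image $E\otimes_{\calG\units}\overline{h(D_1)D_2}$, which you already observed need not be complemented. For a concrete example, take $\calG$ a point, $A=B=E=\mathbb C$, $x=1_{\mathbb C}$, and $h$ the inclusion $C_0((0,1])\hookrightarrow C([0,1])$. The essential submodule is then the ideal $C_0((0,1])$, and its orthogonal complement in $C([0,1])$ is $0$. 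So the cycle on $E\otimes_{\calG\units}D_2$ cannot be split as $P(T\otimes 1)P\oplus(1-P)(T\otimes 1)(1-P)$, and this route cannot serve even as a check.

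The missing idea, which is the paper's proof, is to replace the splitting by a homotopy. Set $J=\overline{h(D_1)D_2}$ and $\pi_h(d)j=h(d)j$. Your Step 2 already shows that $(id_B\otimes h)_*\tau^\calG_{D_1}(x)$ is represented by $(E\otimes_{\calG\units}J,\pi\otimes\pi_h,T\otimes 1)$. Meanwhile $(id_A\otimes h)^*\tau^\calG_{D_2}(x)$ is represented by $(E\otimes_{\calG\units}D_2,\pi\otimes h,T\otimes 1)$. Now take the Hilbert $D_2[0,1]$-module $\tilde E=\{f\in D_2[0,1]:f(1)\in J\}$, with $D_1$ acting by $(\tilde\pi(d)f)(t)=h(d)f(t)$. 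This gives a cycle $(E\otimes_{\calG\units}\tilde E,\pi\otimes\tilde\pi,T\otimes 1)\in\mathbb E^\calG(A\otimes_{\calG\units}D_1,B\otimes_{\calG\units}D_2[0,1])$ whose evaluations at $t=0$ and $t=1$ are the two cycles above. Since $h(D_1)\subseteq J$, one has $\tilde\pi(D_1)\subseteq\calK(\tilde E)$, so the Kasparov conditions are immediate. No complementation is needed.

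Your preferred route through the Kasparov product is a genuinely different argument, and it is sound where it applies. The identity $[id_A\otimes h]=\tau^\calG_A([h])$ holds on the nose, since the class of a homomorphism is $[D_2\otimes_{\calG\units}A,h\otimes id,0]$ and no nondegeneracy is required. The identity you need is then exactly the agreement of the two formulas $\tau_{D_1}(x)\otimes\tau_B([h])$ and $\tau_A([h])\otimes\tau_{D_2}(x)$ for the exterior product.

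However, this route proves less than the statement. The proposition is stated for $\sigma$-unital algebras. The products $[id_A\otimes h]\otimes\tau_{D_2}(x)$ and $\tau_{D_1}(x)\otimes[id_B\otimes h]$ are only available under Le Gall's separability hypotheses (on $A\otimes_{\calG\units}D_1$, and on $\calG$). The homotopy above uses no product at all. Your route also rests on commutativity of the exterior product over $C_0(\calG\units)$ in groupoid-equivariant KK. That is a much deeper fact: in Kasparov's setting it comes from the explicit product formula via the technical theorem. You would need to locate a precise reference for it and check that its proof does not itself use the naturality of $\tau$ you are trying to prove. In the separable case your route is a clean alternative; in the stated generality, the direct homotopy is what is needed.
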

\begin{proof} Let $(E,\pi,F)\in \mathbb E^\calG(A,B)$, $J=D_1\otimes_h D_2$, so $J$ is a closed right ideal of $D_2$ generated by $D_1$. Meanwhile, $J$ is also a $\calG$-\cst-algebra and the inclusion $J\hookrightarrow D_2$ is $\calG$-equivariant. Let $\pi_h:D_1\ra \calL_{D_2}(J)$ be defined as $\pi_h(d_1)j=h(d_1)j$ for any $d_1\in D_1, j\in J$.
\begin{align*}
    (id_B\otimes h)_*\circ \tau_{D_1}(E,\pi,F) & =(id_B\otimes h)_*(E\otimes_{\calG\units}D_1,\pi\otimes id,F\otimes id)\\
    & = (E\otimes_{\calG\units}J,\pi\otimes \pi_h, F\otimes id),
\end{align*}
\[(id_A\otimes h)^*\circ \tau_{D_2}(E,\pi,F)=(E\otimes_{\calG\units}D_2,\pi\otimes h,F\otimes id).\]
Let $\tilde E=\{f\in D_2[0,1]:f(1)\in J\}$. Let $\tilde \pi:D_1\ra \calL_{D_1[0,1]}(\tilde E)$ be defined as
\[\tilde\pi(d)(f)(t)=h(d)f(t), \forall d\in D_1,f\in \tilde E,t\in [0,1].\]
So $(E\otimes_{\calG\units}\tilde E,\pi\otimes\tilde \pi, F\otimes id)\in \mathbb E^\calG(A\otimes_{\calG\units}D_1, B\otimes_{\calG\units}D_2[0,1])$ is the desired homotopy.\end{proof}

\begin{corr}\label{kk-equiv for inclusion}
    Let $D_1,D_2$ be $\sigma$-unital $\calG$-\cst-algebras, $h: D_1\ra D_2$ be a $\calG$-equivariant *-homomorphism, $J=D_1\otimes_h D_2\subseteq D_2$, that is $J$ is the  closed right ideal generated by $h(D_1)$ in $D_2$. Let $\pi_h$ be the left action of $D_1$ on $J$ through $h$. Then
    \[[h]= [J,\pi_h,0]\in \kk^\calG_0(D_1,D_2).\]
\end{corr}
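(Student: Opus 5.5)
The plan is to read Corollary \ref{kk-equiv for inclusion} off Proposition \ref{functoriality of tau} by a suitable choice of data, with $\calG$ fixed throughout. Apply the proposition with $A=B=C_0(\calG\units)$ and the given $h:D_1\ra D_2$, and start from the unit $1=[C_0(\calG\units),id,0]\in \kk^\calG_0(C_0(\calG\units),C_0(\calG\units))$. Using the canonical $\calG$-equivariant identifications $C_0(\calG\units)\otimes_{\calG\units}D_i\cong D_i$ (the fiber of the balanced tensor product at $x$ is $\mathbb C\otimes (D_i)_x=(D_i)_x$), we have $\tau^\calG_{D_i}(1)=1_{D_i}$.

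Next, evaluate the two paths around the commutative square on this element. Along the bottom path, $(id_A\otimes h)^*\tau_{D_2}(1)=h^*(1_{D_2})$, which is the class of the cycle $(D_2,h,0)$; this is by definition $[h]$. Along the right path, the computation done in the proof of Proposition \ref{functoriality of tau} gives
\[(id_B\otimes h)_*\tau_{D_1}(1)=h_*(1_{D_1})=[D_1\otimes_h D_2,\ \pi_h,\ 0]=[J,\pi_h,0].\]
The commutativity of the square then yields $[h]=[J,\pi_h,0]$.

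The only point I expect to need care is the identification $h_*[D_1,id,0]=[D_1\otimes_h D_2,\pi_h,0]$ as an equivariant Kasparov cycle. For this I would check three things: that the interior tensor product $D_1\otimes_h D_2$ is isomorphic, as a $\calG$-Hilbert $D_2$-module, to the closed right ideal $J=\overline{h(D_1)D_2}$ via $d\otimes d'\mapsto h(d)d'$, which is isometric because $\langle d_1\otimes d_1',d_2\otimes d_2'\rangle=d_1'^*h(d_1^*d_2)d_2'$; that the $\calG$-action on $J$ is the one restricted from $D_2$, which holds because $h$ is equivariant; and that $(J,\pi_h,0)$ is genuinely a cycle, which holds because $\pi_h(d_1)=h(d_1)\cdot$ acts on $J$ by left multiplication and is therefore compact. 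Everything else is formal bookkeeping.
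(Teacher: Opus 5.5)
Your proposal is correct and is essentially the paper's own proof: specialize Proposition \ref{functoriality of tau} to $A=B=C_0(\calG\units)$, evaluate both paths on $1=[C_0(\calG\units),id,0]$, and read off $[J,\pi_h,0]$ on one side and $[D_2,h,0]=[h]$ on the other. Your extra checks fill in what the paper calls an easy computation: the unitary $D_1\otimes_h D_2\cong J$, $d\otimes d'\mapsto h(d)d'$, and compactness of $\pi_h(D_1)$. For the latter, the precise reason is that $h(D_1)\subseteq\overline{JJ^*}\cong\calK(J)$, not merely that $\pi_h$ acts by left multiplication.
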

\begin{proof} In proposition \ref{functoriality of tau}, take $A=B=C_0(\calG\units)$. Consider $1_{C_0(\calG\units)}=[C_0(\calG\units),id,0]\in \kk^\calG_0(A,B)$. We can easily compute that $(id_B\otimes h)_*\circ \tau_{D_1}(1_{C_0(\calG\units)})=[J,\pi_h,0]$ and $(id_A\otimes h)^*\circ \tau_{D_2}(1_{C_0(\calG\units)})=[D_2,h,0]=[h]$. \end{proof}

\subsection{Groupoid pre-Hilbert modules}

For some technical reasons we need to handle with non-closed submodules of groupoid Hilbert modules. The following definition is borrowed from \cite{Paterson2009TheST}.

\begin{defn}
    Let $\calG$ be a locally compact Hausdorff groupoid with source map $s$ and range map $r$, $A$ be a $\calG$-\cst-algebras. Let $E'$ be a pre-Hilbert $A$-module and let $E$ be its completion. For $x\in \calG\units$, we let $E'_x$ be the image of $E'$ under the quotient map $E\ra E_x$. We say that $E'$ is a pre-$\calG$-Hilbert $A$-module, if $E$ is equipped with a $\calG$-Hilbert $A$-module structure $V\in \calL(s^*E,r^*E)$, such that for every $\gamma\in \calG$, $V_\gamma$ sends $E'_{s(\gamma)}$ onto $E'_{r(\gamma)}$.
\end{defn}

\begin{prop}
    Let $\calG,\calH$ be \'etale groupoids, $\Omega:\calG\leftarrow \calH$ be an \'etale groupoid correspondence, $A$ be an $\calH$-\cst-algebra and $E$ be an $\calH$-Hilbert $A$-module, then $\ind_{\Omega,c}E$ is a pre-$\calG$-Hilbert $A$-module. 
\end{prop}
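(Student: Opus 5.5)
We need to show that $\ind_{\Omega,c}E$ is a pre-$\calG$-Hilbert $\ind_\Omega A$-module. The statement writes ``$A$-module'', which we read as $\ind_\Omega A$-module. Concretely, four things must be checked:
\begin{enumerate}
    \item $\ind_{\Omega,c}E$ is a (non-closed) pre-Hilbert $\ind_\Omega A$-module.
    \item Its completion is $\ind_\Omega E$.
    \item $\ind_\Omega E$ carries the $\calG$-Hilbert structure $W$ described above.
    \item For every $\gamma\in\calG$, $W_\gamma$ maps the fibre image $(\ind_{\Omega,c}E)_{s(\gamma)}$ onto $(\ind_{\Omega,c}E)_{r(\gamma)}$.
\end{enumerate}

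Items 1 and 2 are straightforward. If $\eta\in\ind_{\Omega,c}E$ and $\xi\in\ind_\Omega A$, then $\|(\eta\xi)(\omega)\|\le\|\eta(\omega)\|\,\|\xi\|$. Hence $\mathrm{supp}(\eta\xi)\subseteq \mathrm{supp}(\eta)$ in $\Omega/\calH$, so $\eta\xi$ is again compactly supported. Linearity and the inner product are inherited from $\ind_\Omega E$, and $\ind_{\Omega,c}E$ is a linear subspace. Density of $\ind_{\Omega,c}E$ in $\ind_\Omega E$ follows from Proposition \ref{diamond construction}. Indeed, each $\varphi\diamond e$ with $\varphi\in C_c(\Omega)$ is supported in $q(\mathrm{supp}\,\varphi)$, which is compact. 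Alternatively, cut off an arbitrary $\eta$ by $f\circ q$ with $f\in C_c(\Omega/\calH)$, as in Remark \ref{remark on ind_c}. Item 3 was already recalled from \cite{miller2022k}.

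The substance is item 4, and I would handle it in two steps.

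\textbf{Step 1: identify the fibre image.} The quotient map $\ind_\Omega E\to\ind_{\Omega^x}E$ is restriction $\eta\mapsto\eta|_{\Omega^x}$. So $(\ind_{\Omega,c}E)_x$ consists of the restrictions to $\Omega^x$ of elements of $\ind_{\Omega,c}E$. I claim this set equals $\ind_{\Omega^x,c}E$, the sections over $\Omega^x$ whose norm function has compact support in $\Omega^x/\calH$. The inclusion ``$\subseteq$'' holds because $\Omega^x/\calH=\bar\rho^{-1}(x)$ is closed in $\Omega/\calH$, so the support of a restriction is closed inside a compact set.

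For ``$\supseteq$'', take $\zeta\in\ind_{\Omega^x,c}E$ with support in a compact $L\subseteq\Omega^x/\calH$. Pick $f\in C_c(\Omega/\calH)$ with $f=1$ on $L$. Next, using the surjectivity of $\ind_\Omega E\to\ind_{\Omega^x}E$ (it is a fibre of a $C_0(\calG\units)$-module), pick $\eta\in\ind_\Omega E$ with $\eta|_{\Omega^x}=\zeta$. Then $(f\circ q)\,\eta\in\ind_{\Omega,c}E$, and its restriction to $\Omega^x$ is exactly $\zeta$.

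\textbf{Step 2: the $\calG$-action preserves compact support.} Let $W_\gamma\eta=\eta(\gamma^{-1}-)$. The map $\Omega^{r(\gamma)}\to\Omega^{s(\gamma)}$, $\omega\mapsto\gamma^{-1}\omega$, is a homeomorphism. It is $\calH$-equivariant, so it descends to a homeomorphism $\Omega^{r(\gamma)}/\calH\to\Omega^{s(\gamma)}/\calH$. Consequently $W_\gamma$ maps $\ind_{\Omega^{s(\gamma)},c}E$ into $\ind_{\Omega^{r(\gamma)},c}E$, and $W_{\gamma^{-1}}$ gives the reverse inclusion. This yields surjectivity.

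I expect the main obstacle to be Step 1. It depends on identifying the fibre $(\ind_\Omega E)_x$ with $\ind_{\Omega^x}E$ through restriction. That identification is recalled above from \cite{miller2022k}, and I would use it as stated rather than reprove it.
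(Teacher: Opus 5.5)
Your proposal is correct and takes essentially the same route as the paper. First you identify the fibre image $(\ind_{\Omega,c}E)_x$ with $\ind_{\Omega^x,c}E$ by restricting, then multiplying by some $f\in C_c(\Omega/\calH)$ equal to $1$ on the support. Then you note that $\eta\mapsto\eta(\gamma^{-1}-)$ is a bijection $\ind_{\Omega^{s(\gamma)},c}E\to\ind_{\Omega^{r(\gamma)},c}E$.

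Your explicit checks of the pre-Hilbert $\ind_\Omega A$-module structure and of density only spell out what the paper leaves implicit. Lifting $\zeta$ first to $\ind_\Omega E$ (rather than directly to $\ind_{\Omega,c}E$, as the paper's text states) before cutting off is exactly the step the argument needs.
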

\begin{proof}
    Claim: for every $x\in \calG\units$, $(\ind_{\Omega,c}E)_x=\ind_{\Omega^x,c}E$. For every $\xi\in \ind_{\Omega,c}E$, clearly $\xi|_{\Omega^x}\in \ind_{\Omega^x,c}E$. Conversely, if $\eta\in \ind_{\Omega^x,c}E$, firstly there exists $\xi\in \ind_{\Omega,c}E$ such that $\eta=\xi|_{\Omega^x}$. Since $supp(\eta)$ is a compact subset of $\Omega^x/\calH$, there exists $\varphi\in C_c(\Omega/\calH)$ such that $\varphi|_{supp(\eta)}=1$. Then $\eta=(\varphi\cdot\xi)|_{\Omega^x}$ is an element of $(\ind_{\Omega,c}E)_x$. We proved our claim.

    Now it is easy to see that for any $\gamma\in \calG$, $\ind_{\Omega^{s(\gamma)},c}E\ra \ind_{\Omega^{r(\gamma)},c}E, \eta\mapsto \eta(\gamma\inv-)$ is a bijection.
\end{proof}

\begin{lem}\label{to be a unitary}
    Let $A$ be a \cst-algebra and $E,F$ be two Hilbert $A$-modules. Let $E'$ be a dense $A$-submodule of $E$, $T:E'\ra F$ be a $\mathbb C$-linear map such that
    \begin{enumerate}
        \item for any $e_1,e_2\in E'$, $\langle Te_1,Te_2\rangle_F=\langle e_1,e_2\rangle_E$;
        \item $TE'$ is dense in $F$.
    \end{enumerate}
    Then $T$ can be extended to a unitary element of $\calL(E,F)$.
\end{lem}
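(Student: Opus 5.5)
The plan is to extend $T$ by continuity to an isometry $\bar T: E\ra F$ and then show that $\bar T$ is surjective and adjointable, with adjoint equal to its inverse.

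\textbf{Step 1: $T$ is isometric.} For $e\in E'$, hypothesis (1) gives
\[\|Te\|^2=\|\langle Te,Te\rangle\|=\|\langle e,e\rangle\|=\|e\|^2 .\]
So $T$ is a $\mathbb C$-linear isometry on the dense subspace $E'$. It therefore extends uniquely to a linear isometry $\bar T:E\ra F$. By continuity of the inner products, $\langle \bar Te_1,\bar Te_2\rangle=\langle e_1,e_2\rangle$ for all $e_1,e_2\in E$.

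\textbf{Step 2: $\bar T$ is $A$-linear.} This follows from preservation of inner products; no module-map assumption on $T$ is needed. For $e,e_1\in E$ and $a\in A$, set $\delta=\bar T(ea)-(\bar Te)a$. Compute $\langle \delta,\bar Te_1\rangle$:
\[\langle \bar T(ea),\bar Te_1\rangle-a^*\langle \bar Te,\bar Te_1\rangle=\langle ea,e_1\rangle-a^*\langle e,e_1\rangle=0 .\]
Since $\bar TE$ contains $TE'$, it is dense in $F$, so $\langle\delta,f\rangle=0$ for all $f\in F$. Taking $f=\delta$ gives $\delta=0$.

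\textbf{Step 3: surjectivity and adjoint.} The image of an isometry from a complete space is complete, hence closed in $F$. It is also dense by hypothesis (2), so $\bar T$ is a bijection. Define $S=\bar T^{-1}$. For $e\in E$ and $f=\bar Te'\in F$,
\[\langle \bar Te,f\rangle=\langle e,e'\rangle=\langle e,Sf\rangle .\]
Hence $\bar T$ is adjointable with $\bar T^*=S$. Then $\bar T^*\bar T=1_E$ and $\bar T\bar T^*=1_F$, so $\bar T$ is a unitary in $\calL(E,F)$.

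None of these steps is a real obstacle. The only point needing care is Step 2, since $T$ is assumed only $\mathbb C$-linear, and it is handled by the density of the image as shown.
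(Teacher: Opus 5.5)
Your proof is correct and follows the same route as the paper: you extend $T$ by continuity to an isometry, use completeness together with density of the image to get surjectivity, and read off $\bar T^*=\bar T^{-1}$ from preservation of inner products. Your Step 2 is harmless but redundant, since any adjointable map between Hilbert $A$-modules is automatically $A$-linear, which is why the paper skips it.
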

\begin{proof}
    By (1), $T$ is bounded, therefore $T$ can be extended to an isometric $\mathbb C$-linear map $E\ra F$ since $E$ is the completion of $E'$. We still denote it by $T$. Since $TE'$ is isometric to $E'$, we have $TE=F$, so $T$ is a bijection. Then by (1) and continuity of the inner product, for any $e\in E$, $f\in F$, we have $\langle Te,f\rangle_F=\langle e,T\inv f\rangle$, hence $T$ is adjointable and $T^*=T\inv$.
\end{proof}

The following trivial results means that to check the $\calG$-equivariance of an operator or a representation, it suffices to check on a groupoid pre-Hilbert module, which is often the case in practice.

\begin{lem}\label{to be equivariant adjointable op}
    Let $\calG$ be a locally compact Hausdorff groupoid, $A$ be a $\calG$-\cst-algebra, $(E,V)$ and $(F,W)$ be $\calG$-Hilbert $A$-modules, $E'\subseteq E$ be a dense pre-$\calG$-Hilbert $A$-module. If $T\in \calL(E,F)$ such that for any $\gamma\in\calG$, $e\in E_{s(\gamma)}'$, $T_{r(\gamma)}V_\gamma e=W_\gamma T_{s(\gamma)}e$, then $T$ is $\calG$-equivariant.
\end{lem}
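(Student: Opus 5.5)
To prove Lemma \ref{to be equivariant adjointable op}, I would reduce equivariance of $T$ to a fiberwise identity of bounded operators and then extend it from the dense subspaces $E'_{s(\gamma)}$ by continuity. \emph{Equivariance} here means that $T_{r(\gamma)}\circ V_\gamma=W_\gamma\circ T_{s(\gamma)}$ as maps $E_{s(\gamma)}\ra F_{r(\gamma)}$ for every $\gamma\in\calG$. Equivalently, $r^*T\circ V=W\circ s^*T$ in $\calL(s^*E,r^*F)$. The pullback operators are identified fiberwise: $(s^*T)_\gamma=T_{s(\gamma)}$ and $(r^*T)_\gamma=T_{r(\gamma)}$. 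Since $\calL(s^*E,r^*F)$ embeds into bounded strictly continuous sections via $\calL(\rho^*E)\cong\Gamma_b(Y,\calL(\rho^*\ca E))$ and its two-module analogue, the operator identity holds as soon as the fiberwise identity holds at every $\gamma$.

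So fix $\gamma\in\calG$. Both $T_{r(\gamma)}V_\gamma$ and $W_\gamma T_{s(\gamma)}$ are bounded (adjointable) operators $E_{s(\gamma)}\ra F_{r(\gamma)}$, since $V_\gamma$ and $W_\gamma$ are unitaries and $T_x$ is the induced fiber operator. By hypothesis they agree on $E'_{s(\gamma)}$. It therefore suffices to show that $E'_{s(\gamma)}$ is dense in $E_{s(\gamma)}$. This holds because $E'_{s(\gamma)}$ is by definition the image of $E'$ under the quotient map $E\ra E_{s(\gamma)}$. That map is a contractive surjection, and $E'$ is dense in $E$, so the image of $E'$ is dense in the image of $E$, which is all of $E_{s(\gamma)}$. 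Two bounded operators agreeing on a dense subspace agree everywhere, so $T_{r(\gamma)}V_\gamma=W_\gamma T_{s(\gamma)}$ on $E_{s(\gamma)}$.

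The argument involves no real obstacle. The only point requiring care is matching the paper's definition of equivariance for adjointable operators between $\calG$-Hilbert modules, namely the fiberwise condition versus the identity of pulled-back operators over $\calG$. The identification of fibers of pullbacks recalled in the preliminaries handles that translation. If the intended definition is the fiberwise one, the middle paragraph alone is the whole proof.
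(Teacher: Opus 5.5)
Your proposal is correct and is essentially the paper's argument: for each $\gamma$, the bounded maps $T_{r(\gamma)}V_\gamma$ and $W_\gamma T_{s(\gamma)}$ agree on the dense subspace $E'_{s(\gamma)}$, and therefore they agree on all of $E_{s(\gamma)}$. You also justify the density of $E'_{s(\gamma)}$ using the contractive surjective quotient map $E\to E_{s(\gamma)}$, a step the paper leaves implicit.
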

\begin{proof}
    For each $\gamma\in \calG$, $T_{r(\gamma)}V_\gamma$ and $W_\gamma T_{s(\gamma)}$ are two bounded $A_{s(\gamma)}$-linear maps $E_{s(\gamma)}\ra F_{r(\gamma)}$ that coincide on a dense subspace $E'_{s(\gamma)}$. Then for any $e\in E_{s(\gamma)}$, $T_{r(\gamma)}V_\gamma e=W_\gamma T_{s(\gamma)}e$.
\end{proof}

\begin{corr}\label{to be equivariant representation}
    Let $\calG$ be a locally compact Hausdorff groupoid, $A,B$ be $\calG$-\cst-algebras, $(E,V)$ be $\calG$-Hilbert $B$-modules, $E'\subseteq E$ be a dense pre-$\calG$-Hilbert $B$-module, $\pi:A\ra \calL(E)$ be a $C_0(\calG\units)$-representation. If for any $a\in A$, $\gamma\in \calG$ and $e\in E'_{s(\gamma)}$, $\pi_{r(\gamma)}(a(r(\gamma)))V_{\gamma}e=V_\gamma \pi_{s(\gamma)}(a(s(\gamma)))e$, then $\pi$ is $\calG$-equivariant.
\end{corr}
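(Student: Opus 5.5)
We reduce the statement to Lemma \ref{to be equivariant adjointable op}. By definition, $\pi$ is $\calG$-equivariant precisely when it is $C_0(\calG\units)$-linear, which is assumed, and when for every $\gamma\in\calG$ and every $a\in A_{s(\gamma)}$ we have
\[
\pi_{r(\gamma)}(\alpha_\gamma(a))=V_\gamma\circ\pi_{s(\gamma)}(a)\circ V_{\gamma\inv}.
\]
Since $V_{\gamma\inv}=V_\gamma^{-1}=V_\gamma^*$, this is equivalent to
\[
\pi_{r(\gamma)}(\alpha_\gamma(a))\,V_\gamma=V_\gamma\,\pi_{s(\gamma)}(a)
\]
as bounded operators $E_{s(\gamma)}\ra E_{r(\gamma)}$. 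This is the same kind of intertwining relation treated in Lemma \ref{to be equivariant adjointable op}, and we prove it by the same density argument.

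\textbf{Reading the hypothesis.} The expression $\pi_{r(\gamma)}(a(r(\gamma)))V_\gamma e$ must be read so that it concerns elements of $A_{r(\gamma)}$ and $A_{s(\gamma)}$ related by the action. So we interpret it as
\[
\pi_{r(\gamma)}(\alpha_\gamma(a(s(\gamma))))\,V_\gamma e=V_\gamma\,\pi_{s(\gamma)}(a(s(\gamma)))\,e .
\]
Every element of the fiber $A_{s(\gamma)}$ has the form $a(s(\gamma))$ for some $a\in A$, because the fiber is a quotient of $A$. Hence the hypothesis gives the intertwining identity for all $b\in A_{s(\gamma)}$ and all $e\in E'_{s(\gamma)}$.

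\textbf{Density step.} Fix $\gamma$ and $b\in A_{s(\gamma)}$. Both sides, $\pi_{r(\gamma)}(\alpha_\gamma(b))V_\gamma$ and $V_\gamma\pi_{s(\gamma)}(b)$, are bounded maps $E_{s(\gamma)}\ra E_{r(\gamma)}$. The space $E'_{s(\gamma)}$ is the image of the dense set $E'$ under the continuous surjection $E\ra E_{s(\gamma)}$, so it is dense in $E_{s(\gamma)}$. Two bounded maps that agree on a dense subspace agree everywhere, so the identity holds on all of $E_{s(\gamma)}$. Composing on the right with $V_{\gamma\inv}$ gives the required relation, so $\pi$ is $\calG$-equivariant.

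The only point needing care is the convention issue in the hypothesis, namely that $a(r(\gamma))$ must be understood as $\alpha_\gamma(a(s(\gamma)))$. Once this is fixed, the argument is the same as the proof of Lemma \ref{to be equivariant adjointable op}: no further estimates are needed.
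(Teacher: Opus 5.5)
Your proof is correct and is the intended argument. The paper records this as an immediate corollary of Lemma~\ref{to be equivariant adjointable op}, that is, the same fiberwise density argument (two bounded maps $E_{s(\gamma)}\ra E_{r(\gamma)}$ agreeing on the dense subspace $E'_{s(\gamma)}$) together with the assumed $C_0(\calG\units)$-linearity. Your reading of the hypothesis as $\pi_{r(\gamma)}(\alpha_\gamma(a(s(\gamma))))V_\gamma e=V_\gamma\pi_{s(\gamma)}(a(s(\gamma)))e$ is also right: it is the form the paper actually checks when it applies this corollary (e.g.\ for $\pi_\rho$ on $L^2(Z,\rho)$), and the literal version with $a(r(\gamma))$ would not suffice.
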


\section{Groupoid simplicial complexes}

For an étale groupoid $\calG$, we will use Rips complexes as a sufficient family of proper cocompact $\calG$-spaces. Therefore, we need a suitable definition of groupoid simplicial complexes, such that they have enough good topological properties and Rips complexes are well-defined groupoid simplicial complexes.

\subsection{Space of measures}\label{section-space of measures}

In this section we fix a continuous map between two locally compact Hausdorff spaces $\rho:Y \ra X$. 
\begin{defn}
    We denote the linear space of Radon measures on $Y$ by $R(Y)$, and let
\[P(Y):=\{\mu\in R(X): \exists x\in X, supp(\mu)\subseteq Y_x, \mu(Y)=1, \mu \text{ is positive}\},\]
be equipped with weak-*-topology defined by $C_c(Y,\mathbb R)$, that is a net $(\mu_\lambda)_\lambda$ in $P(Y)$ converges to $\mu$ if and only if for any $f\in C_c(Y,\mathbb R)$, $(\mu_\lambda(f))_\lambda$ converges to $\mu(f)$.

We define $P_{fin}(Y)$ as the subspace of elements in $P(Y)$ that have finite supports.
\end{defn}

We are going to study about this space $P(Y)$. Some ideas are from the section 3.1.4 of \cite{lassagne2013k}, but the setting is slightly different. It is easy to see that $P(Y)$ is Hausdorff.

From the definition, opens in form of
\[W(\mu_0, f,\epsilon)=\{\mu\in P(Y): |\mu(f)-\mu_0(f)|<\epsilon\},\]
where $\mu_0\in P(Y)$, $f\in C_c(Y,\mathbb R)$, $\epsilon>0$ form a topological base of $P(Y)$.

The following lemma says that we can canonically identify $Y$ with a subspace of $P(Y)$.

\begin{lem}\label{prob mes dim 0}
    \[Y\ra \{\delta_y: y\in Y\}, y\mapsto \delta_y\]
    is a homeomorphism, where $\{\delta_y: y\in Y\}$ is equipped with subspace topology of $P(Y)$.
\end{lem}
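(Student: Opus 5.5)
We have to show three things: the map $\delta\colon y\mapsto \delta_y$ takes values in $P(Y)$, is continuous, and has a continuous inverse on its image. Injectivity and membership are immediate. For $y\in Y$, $\delta_y$ is a positive Radon measure of total mass $1$ with $supp(\delta_y)=\{y\}\subseteq Y_{\rho(y)}$, so $\delta_y\in P(Y)$. If $y\neq y'$, Urysohn's lemma (on the locally compact Hausdorff space $Y$) gives $f\in C_c(Y,\mathbb R)$ with $f(y)=1$ and $f(y')=0$, so $\delta_y\neq\delta_{y'}$.

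\textbf{Continuity.} Let $y_\lambda\to y$ in $Y$. For every $f\in C_c(Y,\mathbb R)$ we have $\delta_{y_\lambda}(f)=f(y_\lambda)\to f(y)=\delta_y(f)$. By the definition of the weak-$*$ topology this means $\delta_{y_\lambda}\to\delta_y$.

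\textbf{Continuity of the inverse.} This is the only step needing a little care; it is where local compactness enters. Suppose $\delta_{y_\lambda}\to\delta_y$ in $P(Y)$, and suppose for contradiction that $y_\lambda\not\to y$. Then there is an open neighbourhood $U$ of $y$ and a subnet $(y_{\lambda_\mu})_\mu$ lying entirely outside $U$. Since $Y$ is locally compact Hausdorff, Urysohn's lemma gives $f\in C_c(Y,[0,1])$ with $f(y)=1$ and $supp(f)\subseteq U$. Then $\delta_{y_{\lambda_\mu}}(f)=f(y_{\lambda_\mu})=0$ for all $\mu$. But $\delta_y(f)=1$, so the subnet does not converge to $\delta_y$, contradicting convergence of the full net.

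Equivalently, in terms of the basic opens: the set $W(\delta_y,f,1/2)\cap\{\delta_z:z\in Y\}$ is contained in $\{\delta_z:z\in U\}$, because $|f(z)-1|<1/2$ forces $f(z)>0$ and hence $z\in U$. So the image under $\delta$ of every open set of $Y$ is open in the subspace topology. Together with continuity and injectivity, this shows $\delta$ is a homeomorphism onto $\{\delta_y:y\in Y\}$.
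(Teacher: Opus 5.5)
Your proof is correct and follows essentially the same route as the paper: continuity by evaluating on $C_c(Y,\mathbb R)$, and the reverse direction by testing against an Urysohn function supported in a neighbourhood of the limit point. The only difference is cosmetic: the paper phrases the second half as showing that $y\mapsto\delta_y$ is a closed map (a net of Dirac masses at points of a closed set $C$ cannot converge to $\delta_{y'}$ with $y'\notin C$), whereas you show openness, i.e.\ continuity of the inverse, which is equivalent.
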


\begin{proof} Obviously this is a bijection. If $(y_i)$ is a net converging to $y$ in $Y$, then for any $f\in C_c(Y)$, $\delta_{y_i}(f)=f(y_i)$ converge to $f(y)=\delta_y(f)$. Hence, $\delta_{y_i}$ converges to $\delta_y$ in $P(Y)$, which means $y\mapsto \delta_y$ is continuous.

Now let $C\subseteq Y$ be a closed subset. Assume that $(\delta_{y_\lambda})_\lambda$ is a net in $\{\delta_y: y\in C\}$ that converges to $\delta_{y'}$, $y'\in Y\setminus C$. Let $V\subseteq Y\setminus C$ be a relatively compact open neighborhood of $y'$, by Urysohn's lemma, there exists $f\in C_c(Y,[0,1])$ such that $supp(f)\subseteq Y\setminus C$ and $f|_V=1$. Hence, $\delta_{y'}(f)=1$ but $\delta_{y_\lambda}(f)=0$ for any $\lambda$, which contradicts to the fact that $\delta_{y_\lambda}$ converges to $\delta_{y'}$. In conclusion, we proved that $\{\delta_y:y\in C\}$ is closed in $\{\delta_y:y\in Y\}$. Therefore, $y\mapsto \delta_y$ is a closed map. In conclusion this is a homeomorphism.
\end{proof}

For every $\mu\in P(Y)$, $\rho_*\mu=\delta_x$ is the Dirac measure on some point $x\in X$. We write $x=\tilde\rho(\mu)$.

\begin{prop}\label{first properties of P(Y)}
    The map $\tilde\rho:P(Y)\ra X$ is continuous. Moreover,
    \begin{enumerate}
        \item if $\rho$ is surjective, then so is $\tilde\rho$;
        \item if $\rho$ is proper, then so is $\tilde\rho$, and in this case $P(Y)$ is locally compact;
        \item if $\rho$ admits a continuous section, then so is $\tilde\rho$;
        \item if $Y$ second countable, then so is $P(Y)$.
    \end{enumerate} 
\end{prop}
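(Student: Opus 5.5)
Continuity of $\tilde\rho$ is checked on nets. Let $\mu_\lambda\to\mu$ in $P(Y)$, with $x_\lambda=\tilde\rho(\mu_\lambda)$ and $x=\tilde\rho(\mu)$. Suppose $x_\lambda\not\to x$. After passing to a subnet, there is an open neighbourhood $U$ of $x$ with $x_\lambda\notin U$ for all $\lambda$. Because $\mu$ is a probability measure supported in $Y_x$, inner regularity gives a compact $K\subseteq Y_x$ with $\mu(K)>1/2$. By Urysohn's lemma choose $f\in C_c(Y,[0,1])$ with $f|_K=1$ and $supp(f)\subseteq \rho\inv(U)$. Then $\mu(f)>1/2$, while $\mu_\lambda(f)=0$ because $supp(\mu_\lambda)\subseteq Y_{x_\lambda}$ and $Y_{x_\lambda}\cap\rho\inv(U)=\emptyset$. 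This contradicts $\mu_\lambda(f)\to\mu(f)$.

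Items (1) and (3) are immediate. If $\rho(y)=x$, then $\tilde\rho(\delta_y)=x$, which gives surjectivity. If $\sigma$ is a continuous section of $\rho$, then $x\mapsto\delta_{\sigma(x)}$ is a continuous section of $\tilde\rho$, being the composition of $\sigma$ with the continuous embedding of Lemma \ref{prob mes dim 0}.

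For (2), let $C\subseteq X$ be compact; we show $\tilde\rho\inv(C)$ is compact. Put $L=\rho\inv(C)$, which is compact since $\rho$ is proper. Every $\mu\in\tilde\rho\inv(C)$ is a positive measure of mass $1$ supported in $L$. Choose $\chi\in C_c(Y,[0,1])$ with $\chi|_L=1$. Then $|\mu(f)|\le\|f\|_\infty$ for every $f$, and $\mu(\chi)=1$. By Tychonoff, any net $(\mu_\lambda)$ in $\tilde\rho\inv(C)$ has a subnet that converges pointwise on $C_c(Y,\mathbb R)$ to a positive linear functional $\mu$. By Riesz this $\mu$ is a Radon measure, and $\mu(\chi)=1$.

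We still need $\mu\in P(Y)$ and $\tilde\rho(\mu)\in C$.
\begin{enumerate}
\item Pass to a further subnet so that $x_\lambda=\tilde\rho(\mu_\lambda)\to x\in C$.
\item For $f$ supported off $L$ we have $\mu_\lambda(f)=0$, so $supp(\mu)\subseteq L$ and hence $\mu(Y)=\mu(\chi)=1$.
\item Repeat the Urysohn argument of the continuity step: if $g\in C_c(Y)$ has support disjoint from $Y_x$, then the compact set $supp(g)$ has $\rho$-image avoiding a neighbourhood of $x$, so $\mu_\lambda(g)=0$ eventually. Hence $supp(\mu)\subseteq Y_x$.
\end{enumerate}
Thus $\mu\in P(Y)$ and $\tilde\rho(\mu)=x\in C$, so $\tilde\rho\inv(C)$ is compact and $\tilde\rho$ is proper.

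Local compactness of $P(Y)$ follows. Each $\mu\in P(Y)$ has $\tilde\rho(\mu)=x$; take a compact neighbourhood $C$ of $x$ in $X$. By continuity of $\tilde\rho$, the set $\tilde\rho\inv(\inter(C))$ is an open neighbourhood of $\mu$, and it lies in the compact set $\tilde\rho\inv(C)$. Since $P(Y)$ is Hausdorff, this suffices. I expect this compactness argument to be the main obstacle. The delicate point is that limits of probability measures might lose mass or spread across several fibres; both are ruled out using properness and the test-function argument above.

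For (4), take a countable family $\{f_n\}$ that is dense in $C_c(Y,\mathbb R)$ in the inductive-limit sense. Concretely, fix an exhaustion of $Y$ by compact sets $K_i$ with $K_i\subseteq\inter(K_{i+1})$, and take countable sup-norm-dense subsets of $\{f:supp(f)\subseteq K_i\}$, which exist since $C(K_{i+1})$ is separable. Pair these with cutoff functions $\varphi_i$ equal to $1$ on $K_i$.

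The map $\mu\mapsto(\mu(f_n))_n$ embeds $P(Y)$ into $\mathbb R^{\mathbb N}$, and it is a homeomorphism onto its image. Indeed, $|\mu(f)-\mu(f_n)|\le\|f-f_n\|_\infty$ whenever $f$ and $f_n$ are both supported in a common $K_i$, because $\mu$ has total mass $1$. Hence the topology of $P(Y)$ is generated by the countably many functionals $\mu\mapsto\mu(f_n)$, and a subspace of the second countable space $\mathbb R^{\mathbb N}$ is second countable.
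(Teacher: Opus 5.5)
Your proof is correct and follows essentially the same route as the paper's: a test function supported in $\rho^{-1}(U)$ for continuity, Dirac masses for (1) and (3), and for (2) a Banach--Alaoglu/Tychonoff compactness argument together with a check that the limit has mass $1$ and lies in a single fibre. For (4) you embed $P(Y)$ into $\mathbb{R}^{\mathbb{N}}$ via a countable sup-norm dense family, which proves by hand the weak-$*$ metrizability the paper cites from Dieudonn\'e.

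Your continuity step is in fact more careful than the paper's. The paper chooses $f\in C_c(Y,[0,1])$ with $f|_{Y_{x_0}}=1$, which tacitly forces the fibre $Y_{x_0}$ to be compact. Your use of inner regularity to get $f=1$ only on a compact $K\subseteq Y_x$ with $\mu(K)>1/2$ works for any continuous $\rho$, including the infinite discrete fibres of a local homeomorphism such as $r$.
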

\begin{proof}
    For any open $U$ of $X$, take any $\mu_0\in \tilde\rho\inv(U)$ and let $x_0=\tilde\rho(\mu_0)$. By Urysohn's lemma, we can find a function $f\in C_c(Y,[0,1])$ such that $f|_{Y_{x_0}}=1$ and $supp(f)\subseteq \rho\inv(U)$. Claim: $W(\mu_0,f,1)=\{\mu\in P(Y):|\mu(f)-\mu_0(f)|<1\}$ is an open neighborhood $\mu_0$ in $\tilde\rho\inv(U)$. If $\mu\in P(Y)$ such that $|\mu(f)-\mu_0(f)|=|\mu(f)-1|<1$, then $\mu(f)>0$ and therefore $supp(\mu)\cap supp(f)\neq \emptyset$, and therefore we have $\tilde\rho(\mu)\in U$.
    
    Hence, $\tilde\rho\inv(U)$ is open. So we proved that $\tilde\rho$ is continuous.

    (1) If $\rho$ is surjective, obviously $\tilde\rho$ is also surjective.

    (2) Claim: when $Y$ is compact, $P(Y)$ is a compact. Since by Banach-Alaoglu's theorem, the unit ball of $R(Y)$ is compact under weak-*-topology, it suffices to show that $P(Y)$ is a closed subset of the unit ball. Assume that $(\mu_\lambda)_{\lambda\in \Lambda}$ is a convergent net contained in $P(Y)$ and has a limit $\mu\in R(Y)$. Since $Y$ is compact, $\chi_Y\in C(Y)$, we have $\mu(Y)=\lim_\lambda \mu_\lambda(Y)=1$, and there exists a convergent subnet of $(\tilde\rho(\mu_\lambda))_\lambda$, i.e. an upward-filtering ordered set $\Lambda'$ and a monotone cofinal map $\kappa:\Lambda'\ra \Lambda$, such that $(\tilde\rho(\mu_{\kappa(\lambda')}))_{\lambda'\in \Lambda'}$ is convergent. Assume that $x$ is the limit. We will show that $supp(\mu)\subseteq Y_x$. Otherwise, if there exists $y'\in supp(\mu)\setminus Y_x$, let $U$ be an open neighborhood of $y'$ such that $U\cap Y_x=\emptyset$. Let $V$ be a relatively compact open neighborhood of $y'$ such that $V\subseteq \overline{V}\subseteq U$. By Urysohn's lemma, there exists $f\in C_c(U,[0,1])$ such that $f|_{\overline{V}}=1$. The fact $y'\in supp(\mu)$ implies that $\mu(f)\neq 0$. But $\mu_{\kappa(\lambda')}(f)=0$ for $\lambda'$ large enough, which contradicts to the fact that $\mu_{\kappa(\lambda')}\ra \mu$. Hence, there exists $x\in X$ such that $supp(\mu)\subseteq Y_x$. So $\mu$ is also in $P(Y)$, $P(Y)$ is therefore closed in $R(Y)$.

    Now assume that $\rho$ is proper, then for any compact subset $K\subseteq \calG\units$, $Y_K$ is compact. So $\tilde\rho\inv(K)\cong P(Y_K)$ is compact. The properness of $\tilde\rho$ implies that $P(Y)$ is locally compact.

    (3) Let $s:X\ra Y$ be a continuous section of $\rho$, we define $\tilde s:X\ra P(Y), x\mapsto \delta_{s(x)}$. Then $\tilde s$ is a continuous section of $\tilde\rho$.

    (4) By Banach-Alaoglu's theorem and \cite[13.4.2]{Dieudonne1970treatisevol2}, the unit ball of $R(Y)$ is compact and metrizable, hence second countable. As a subspace, $P(Y)$ is second countable.
\end{proof}

Now we consider the case that $\calG\rightrightarrows X$ is a locally compact Hausdorff groupoid, $Y$ is a locally compact Hausdorff $\calG$-space with anchor map $\rho$. We will see that $P(Y)$ is also a well-defined $\calG$-space.

\begin{prop}\label{P(Y) has continuous action}
    Let $\calG\rightrightarrows X$ be a locally compact Hausdorff groupoid, $Y$ be a locally compact Hausdorff $\calG$-space with anchor map $\rho$. Then the map
    \[\calG\times_{s,X,\tilde\rho}P(Y)\rightarrow P(Y), (\gamma,\mu)\mapsto \gamma.\mu.\]
    \[(\gamma.\mu)(f)=\mu(f|_{Y_{r(\gamma)}}(\gamma.-)),\quad \forall f\in C_c(Y,\mathbb R)\]
    is continuous. (Here we see $f|_{Y_{r(\gamma)}}(\gamma.-)$ as an element of $C_c(Y_{s(\gamma)},\mathbb R)$ and see $\mu$ as a probability measure on $Y_{s(\gamma)}$.) And this map makes $P(Y)$ a well-defined $\calG$-space.
\end{prop}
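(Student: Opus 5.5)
The argument splits into two parts: the algebraic axioms of a groupoid action, then continuity, which carries the real content.

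\emph{Algebraic axioms.} For $\gamma\in\calG$, the restriction $\gamma.-:Y_{s(\gamma)}\ra Y_{r(\gamma)}$ is a homeomorphism with inverse $\gamma\inv.-$. So $\gamma.\mu$ is simply the pushforward of $\mu$, viewed as a probability measure on $Y_{s(\gamma)}$, under this homeomorphism. Hence $\gamma.\mu$ is a positive Radon probability measure supported in $Y_{r(\gamma)}$, and $\tilde\rho(\gamma.\mu)=r(\gamma)$. From pushforward functoriality we get $\tilde\rho(\mu).\mu=\mu$ and $\gamma_1.(\gamma_2.\mu)=(\gamma_1\gamma_2).\mu$ immediately.

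\emph{Continuity.} The plan is to check convergence against the subbasic opens $W(\mu_0,f,\epsilon)$. Take a net $(\gamma_\lambda,\mu_\lambda)\ra(\gamma,\mu)$ in $\calG\times_{s,X,\tilde\rho}P(Y)$ and fix $f\in C_c(Y,\mathbb R)$. We must show $\mu_\lambda(F_\lambda)\ra\mu(F)$, where
\[F_\lambda(y)=f(\gamma_\lambda y)\quad\text{on } Y_{s(\gamma_\lambda)},\qquad F(y)=f(\gamma y)\quad\text{on } Y_{s(\gamma)}.\]
The device is to replace the moving functions $F_\lambda$ by one fixed function $\tilde F\in C_c(Y,\mathbb R)$ that agrees with $F_\lambda$ on the relevant fibres, up to a small uniform error. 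Set
\[\Phi:\calG\times_{s,X,\rho}Y\ra\mathbb R,\qquad \Phi(\eta,y)=f(\eta y).\]
This function is continuous. Choose a compact neighbourhood $C$ of $\gamma$ in $\calG$. Then $\Phi$ restricted to $(C\times_X Y)$ has support inside $\{(\eta,y):\eta\in C,\ \eta y\in supp(f)\}$. That set is compact because $y=\eta\inv(\eta y)$ lies in $C\inv\cdot supp(f)$, a continuous image of a compact set. (This is the one place where some care is needed. The support may only be closed in the fibred product, but it sits inside the compact set $C\times C\inv supp(f)$, which suffices.)

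\emph{Main obstacle.} The hardest step is to extend the single slice function $y\mapsto\Phi(\gamma,y)$, which is defined only on the closed subset $Y_{s(\gamma)}$, to all of $Y$ in a way that stays uniformly close to $\Phi(\gamma_\lambda,\cdot)$ on $Y_{s(\gamma_\lambda)}$. My first attempt uses Tietze's extension theorem on a one-point compactification, applied to $\Phi$ viewed on the closed set $C\times_X Y\subseteq C\times Y$. This produces a continuous compactly supported $\Psi$ on $C\times Y$ extending $\Phi$ (after multiplying by a cutoff). Then I use uniform continuity of $\Psi$ on a compact set: there is a neighbourhood $N$ of $\gamma$ with $|\Psi(\eta,y)-\Psi(\gamma,y)|<\epsilon$ for all $\eta\in N$ and $y$ in a compact set $L$ containing all the relevant supports. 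This is a standard tube-lemma argument, since $\Psi$ is compactly supported.

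Now put $\tilde F=\Psi(\gamma,\cdot)\in C_c(Y,\mathbb R)$. For $\lambda$ large, $\gamma_\lambda\in N$ and $\mu_\lambda$ is a probability measure on $Y_{s(\gamma_\lambda)}$, where $F_\lambda=\Psi(\gamma_\lambda,\cdot)$. This gives $|\mu_\lambda(F_\lambda)-\mu_\lambda(\tilde F)|\le\epsilon$. On $Y_{s(\gamma)}$ we have $\tilde F=F$, so $\mu(\tilde F)=\mu(F)$. Finally, $\mu_\lambda(\tilde F)\ra\mu(\tilde F)$ by weak-* convergence. Combining the three estimates gives $\limsup|\mu_\lambda(F_\lambda)-\mu(F)|\le\epsilon$ for every $\epsilon>0$, which proves continuity of the action.
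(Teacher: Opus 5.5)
Your proof is correct. The algebraic part and the final three-term estimate match the paper's. Both proofs fix one function in $C_c(Y,\mathbb R)$ that agrees with $f(\gamma\,\cdot)$ on $Y_{s(\gamma)}$, bound the error against it by a uniform sup-norm estimate on the fibres $Y_{s(\gamma_\lambda)}$, and finish with weak-$*$ convergence of $\mu_\lambda$.

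The difference lies in how that uniform estimate is obtained.

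\emph{Paper's route.} The paper takes an arbitrary Tietze extension $g$ of $f(\gamma\,\cdot)$. It then deduces $\sup_{y\in Y_{s(\gamma_\lambda)}}|f(\gamma_\lambda y)-g(y)|\to 0$ from two facts: $\calG$ acts continuously on the upper semicontinuous $C^*$-bundle $\bigsqcup_x C_0(Y_x)$ attached to the $\calG$-$C^*$-algebra $C_0(Y)$, and the norm is upper semicontinuous (Williams, Lemma C.18).

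\emph{Your route.} You extend the two-variable function $(\eta,y)\mapsto f(\eta y)$ from the closed set $C\times_X Y$ to a compactly supported $\Psi$ on $C\times Y$. You then get uniform continuity in $\eta$ from the tube lemma. This forces the specific extension $\tilde F=\Psi(\gamma,\cdot)$, which is exactly what makes the estimate work.

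\emph{Comparison.} Your argument is elementary and self-contained. It does not presuppose that $C_0(Y)$ is a $\calG$-$C^*$-algebra, and verifying that fact amounts to essentially the same compactness argument you give. The paper's argument is shorter once the bundle formalism of Section 2 is in place. It is also the template the paper reuses later: for showing $L^2(Z,\rho)$ is a $\calG$-Hilbert module, and for the amenable action on $P(Y)$. That keeps the exposition uniform.
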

\begin{proof}
    To prove the continuity, for a convergent net $(\gamma_\lambda)_\lambda$ with limit $\gamma$ in $\calG$ and a convergent net $(\mu_\lambda)_\lambda$ with limit $\mu$ in $P(Y)$ such that $s(\gamma_\lambda)=\tilde\rho(\mu_\lambda)$ for all $\lambda$, for any $f\in C_c(Y,\mathbb R)$, it suffices to show that $(\gamma_\lambda.\mu_\lambda)(f)\ra (\gamma.\mu)(f)$. By Tietze extension theorem, there exists $g\in C_c(Y)$ such that $g|_{Y_{s(\gamma)}}=f|_{Y_{r(\gamma)}}(\gamma.-)$. Let $\ca A=\sqcup_{x\in \calG\units}C(Y_x)$ be the associated \cst-bundle of $C_0(Y)$. We know that $\calG$ acts continuously on $\ca A$, hence $f|_{Y_{r(\gamma_\lambda)}}(\gamma_\lambda.-)$ converges to $g|_{Y_{s(\gamma)}}$ in $\ca A$. Therefore, $f|_{Y_{r(\gamma_\lambda)}}(\gamma_\lambda.-)-g|_{Y_{s(\gamma_\lambda)}}$ converges to $0\in C_0(Y_{s(\gamma)})$ in $\ca A$. This implies that $\|f|_{Y_{r(\gamma_\lambda)}}(\gamma_\lambda.-)-g|_{Y_{s(\gamma_\lambda)}}\|_\infty\ra 0$ by \cite[Lemma C.18]{williams2007crossed}. Therefore,
    \begin{align*}
        |(\gamma_\lambda.\mu_\lambda)(f)- (\gamma.\mu)(f)| & = |\mu_\lambda(f|_{Y_{r(\gamma_\lambda)}}(\gamma_\lambda.-))-\mu(f|_{Y_{r(\gamma)}}(\gamma.-))|\\
        & \leqslant |\mu_\lambda(f|_{Y_{r(\gamma_\lambda)}}(\gamma_\lambda.-)-g|_{Y_{s(\gamma_\lambda)}})|+|\mu_\lambda(g)-\mu(g)|\\
        &\leqslant \|f|_{Y_{r(\gamma_\lambda)}}(\gamma_\lambda.-)-g|_{Y_{s(\gamma_\lambda)}}\|_\infty+|\mu_\lambda(g)-\mu(g)|
    \end{align*}
    converges to zero. So we proved the continuity of the action.

    If $x=\tilde\rho(\mu)$, then for any $f\in C_c(Y,\mathbb R)$
    \[(x.\mu)(f)=\mu(f|_{Y_x})=\mu(f)\]
    since $supp(\mu)\subseteq Y_x$. And for any $(\gamma,h)\in \calG^{(2)}$, $\mu\in \tilde\rho\inv(s(h))$ and $f\in C_c(Y,\mathbb R)$,
    \[[(\gamma h).\mu](f)=\mu(f|_{Y_{r(\gamma)}}(\gamma h.-))=(h.\mu)(f|_{Y_{r(\gamma)}}(\gamma.-))=[\gamma.(h.\mu)](f).\]
    That is $(\gamma h).\mu=\gamma.(h.\mu)$. In conclusion, we have a well-defined $\calG$-action on $P(Y)$.
\end{proof}

Every $\mu\in P_{fin}(Y)$ should be in form of a convex combination of Dirac measures on a same fiber. If $Y$ is some $\calG$-space, then for any $\gamma\in \calG$, $\mu=c_0\delta_{y_0}+\cdots +c_m \delta_{y_m}$ such that $\tilde\rho(\mu)=s(\gamma)$, we have
\[\gamma.\mu=c_0\delta_{\gamma y_0}+\cdots +c_m \delta_{\gamma y_m}.\]
So $P_{fin}(Y)$ is a $\calG$-invariant subspace of $P(Y)$.

Finally, we will focus on the case that $\rho$ is a local homeomorphism. We will repeat using the following trivial lemmas.

\begin{lem}\label{trivial lemma 1}
    Let $\rho:Y\ra X$ be a local homeomorphism between locally compact Hausdorff spaces. If $U$ is an open of $Y$ such that $\rho|_U$ is a homeomorphism, $f$ is an element of $C_c(Y)$ such that $supp(f)\subseteq U$, then for $\mu\in P(Y)$, $\mu(f)\neq 0$ implies that there exists uniquely a point $y\in U$ such that $\{y\}=supp(\mu)\cap U$. 
\end{lem}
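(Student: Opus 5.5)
The lemma (Lemma \ref{trivial lemma 1}) should follow directly from two facts: every $\mu\in P(Y)$ is supported in a single fiber $Y_x$ with $x=\tilde\rho(\mu)$, and $\rho|_U$ is injective. The plan is to prove existence of such a point first and then uniqueness.

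For existence, suppose $\mu(f)\neq 0$. Since $\mu$ is a positive Radon measure and $f$ is continuous, the integral $\int f\,d\mu$ can be nonzero only if $\{f\neq 0\}\cap supp(\mu)\neq\emptyset$. Indeed, if this intersection were empty, $f$ would vanish on $supp(\mu)$. Because the complement of $supp(\mu)$ is $\mu$-null, this would give $\mu(f)=0$. Hence there is some $y\in supp(\mu)$ with $f(y)\neq 0$. Then $y\in supp(f)\subseteq U$, so $y\in supp(\mu)\cap U$.

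For uniqueness, let $x=\tilde\rho(\mu)$, so that $supp(\mu)\subseteq Y_x=\rho\inv(x)$. Any two points $y,y'\in supp(\mu)\cap U$ then satisfy $\rho(y)=\rho(y')=x$. Since $\rho|_U$ is a homeomorphism onto its image, and in particular injective, we get $y=y'$. This gives $supp(\mu)\cap U=\{y\}$.

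The main point requiring care is the first step, namely that $\mu(f)\neq 0$ forces the support to meet $\{f\neq0\}$. This is where the Radon property enters: the complement of $supp(\mu)$ has measure zero, which holds for Radon measures on locally compact Hausdorff spaces by inner regularity on open sets. Everything else is immediate. As a byproduct of the argument, $\mu(f)=\mu(\{y\})f(y)$.
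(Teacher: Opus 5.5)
Your proof is correct and follows the paper's argument. The intersection $supp(\mu)\cap U$ has at most one point because $supp(\mu)$ lies in the single fiber $\rho^{-1}(\tilde\rho(\mu))$ and $\rho|_U$ is injective, and it has at least one point because an empty intersection would force $\mu(f)=0$; you additionally justify that last step via the $\mu$-nullity of the complement of $supp(\mu)$, which the paper leaves implicit.
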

\begin{proof}
    We know that $\rho|_U$ is injective, and $supp(\mu)\subseteq \rho\inv(\tilde\rho(\mu))$, hence $supp(\mu)\cap U$ has at most one element. But if $supp(\mu)\cap U=\emptyset$ then $\mu(f)=0$.
\end{proof}

\begin{lem}\label{trivial lemma 2}
    For a net $(\mu_\lambda)_\lambda$ of positive Radon measures, if $\mu_\lambda(Y)\ra 0$, then $\mu_\lambda\ra 0$.
\end{lem}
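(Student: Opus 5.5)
The claim is that for a net $(\mu_\lambda)_\lambda$ of positive Radon measures on $Y$ with total mass $\mu_\lambda(Y)\ra 0$, we have $\mu_\lambda\ra 0$. Here convergence is in the weak-* topology defined by $C_c(Y,\mathbb R)$, exactly as for $P(Y)$. So it suffices to fix an arbitrary $f\in C_c(Y,\mathbb R)$ and show that $\mu_\lambda(f)\ra 0$.

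The argument is a single estimate. Since each $\mu_\lambda$ is positive, the integral of $f$ is dominated by the integral of $|f|$, and $|f|\leqslant \|f\|_\infty$ pointwise. Hence
\[|\mu_\lambda(f)|\leqslant \mu_\lambda(|f|)\leqslant \|f\|_\infty\,\mu_\lambda(Y).\]
The right-hand side tends to $0$ by hypothesis, and $\|f\|_\infty<\infty$ because $f$ is compactly supported. Therefore $\mu_\lambda(f)\ra 0=0(f)$ for every $f$, which is exactly $\mu_\lambda\ra 0$ in the weak-* topology.

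The only point that needs care is that the total mass $\mu_\lambda(Y)$ is finite, at least eventually. This follows from the hypothesis, since the net converges to $0$ and so its terms are eventually finite. The bound $\mu_\lambda(|f|)\leqslant \|f\|_\infty\mu_\lambda(Y)$ is then just monotonicity of the integral for positive measures. No real obstacle is expected. In the intended application the $\mu_\lambda$ are restrictions of elements of $P(Y)$ and therefore have mass at most $1$ anyway.
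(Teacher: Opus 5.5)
Your proof is correct. The paper states this lemma without proof and calls it trivial, and your estimate $|\mu_\lambda(f)|\leqslant \|f\|_\infty\,\mu_\lambda(Y)$ for $f\in C_c(Y,\mathbb R)$ is exactly the one-line argument it relies on.
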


We conclude this subsection with a lemma that in this case the convergence of nets in $P(Y)$ can be reformulated in a more intuitive way.

\begin{lem}\label{reindex}
    Let $\rho:Y\ra X$ be a local homeomorphism between locally compact Hausdorff spaces. Suppose that $(\mu_\lambda)_{\lambda\in \Lambda}$ is a convergent net in $P(Y)$ with limit $\mu=\sum_{i=0}^m c_i\delta_{y_i}$, where $y_0,\cdots, y_m$ are distinct, $c_0,\cdots, c_m\in (0,1]$. Then there exists $\lambda_0\in \Lambda$, such that for any $0\leqslant i\leqslant m$ and for all $\lambda\geqslant \lambda_0$, there exists $y_i(\lambda)\in Y$ and $c_i(\lambda)\in (0,1]$ and a positive measure $\hat\mu_\lambda\in R(Y)$, such that
    \begin{enumerate}
        \item for any $\lambda\geqslant \lambda_0$,
        \[\mu_\lambda=\sum_{i=0}^m c_i(\lambda) \delta_{y_i(\lambda)}+\hat\mu_\lambda;\]
        \item for any $\lambda\geqslant \lambda_0$, $supp(\mu_\lambda)$ is the disjoint union of $\{y_0(\lambda), \cdots, y_m(\lambda)\}$ and $supp(\hat\mu_\lambda)$;
        \item for every $0\leqslant i\leqslant m$, we have $c_i(\lambda)\ra c_i$, $y_i(\lambda)\ra y_i$ and $\hat\mu_\lambda\ra 0$.
    \end{enumerate}
    Moreover, if $\#supp(\mu_\lambda)=m+1$ for $\lambda$ large enough, then $\hat\mu_\lambda=0$ for $\lambda$ large enough.
\end{lem}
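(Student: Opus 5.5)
We localize near each $y_i$ using bisection-type neighbourhoods on which $\rho$ is injective, and test the convergence $\mu_\lambda\to\mu$ against bump functions supported there.

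\textbf{Step 1 (choice of neighbourhoods and bumps).} Since the $y_i$ are distinct and $\rho$ is a local homeomorphism, choose pairwise disjoint open neighbourhoods $U_0,\dots,U_m$ of $y_0,\dots,y_m$ such that each $\rho|_{U_i}$ is a homeomorphism onto an open subset of $X$. For each $i$, pick by Urysohn's lemma $f_i\in C_c(Y,[0,1])$ with $supp(f_i)\subseteq U_i$ and $f_i(y_i)=1$. Then $\mu(f_i)\geqslant c_i>0$, so $\mu_\lambda(f_i)>0$ for all $\lambda$ beyond some $\lambda_0$.

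\textbf{Step 2 (definition of the decomposition).} For $\lambda\geqslant\lambda_0$, Lemma \ref{trivial lemma 1} gives a unique point $y_i(\lambda)$ with $\{y_i(\lambda)\}=supp(\mu_\lambda)\cap U_i$. Since $supp(\mu_\lambda)$ lies in a single fibre and $\rho|_{U_i}$ is injective, $\mu_\lambda|_{U_i}$ is a point mass. Set
\[c_i(\lambda)=\mu_\lambda(\{y_i(\lambda)\})\in(0,1], \qquad \hat\mu_\lambda=\mu_\lambda-\sum_{i=0}^m c_i(\lambda)\delta_{y_i(\lambda)}=\mu_\lambda|_{Y\setminus\cup_i U_i}.\]
This is a positive measure, and properties (1) and (2) follow from the disjointness of the $U_i$.

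\textbf{Step 3 (convergence of the points).} Given any smaller neighbourhood $V\subseteq U_i$ of $y_i$, rerun Step 1 with a bump supported in $V$. For large $\lambda$ this shows $supp(\mu_\lambda)\cap V\neq\emptyset$, and uniqueness in $U_i$ forces $y_i(\lambda)\in V$. Hence $y_i(\lambda)\to y_i$.

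\textbf{Step 4 (convergence of the weights, the main point).} This is the delicate step, because $\mu_\lambda$ may carry mass near $y_i$ but outside any fixed compact set unless we localize carefully. Fix a compact neighbourhood $K_i\subseteq U_i$ of $y_i$ and take $f_i=1$ on $K_i$. Once $y_i(\lambda)\in K_i$, we have $\mu_\lambda(f_i)=c_i(\lambda)$ exactly, since the only mass of $\mu_\lambda$ in $U_i$ sits at $y_i(\lambda)$. Also $\mu(f_i)=c_i$. Therefore $c_i(\lambda)\to c_i$.

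\textbf{Step 5 (the remainder and the final claim).} We get $\hat\mu_\lambda(Y)=1-\sum_i c_i(\lambda)\to 1-\sum_i c_i=0$, so Lemma \ref{trivial lemma 2} gives $\hat\mu_\lambda\to 0$. For the final assertion: if $\#supp(\mu_\lambda)=m+1$, then the $m+1$ distinct points $y_i(\lambda)$ already exhaust $supp(\mu_\lambda)$. Hence $supp(\hat\mu_\lambda)=\emptyset$, i.e.\ $\hat\mu_\lambda=0$, for $\lambda$ large.
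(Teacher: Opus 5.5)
Your proof is correct and follows essentially the same route as the paper. You take disjoint neighbourhoods $U_i$ on which $\rho$ is injective, use bump functions and Lemma~\ref{trivial lemma 1} to extract the unique atom $y_i(\lambda)$, and apply Lemma~\ref{trivial lemma 2} to the remainder. The differences are cosmetic. The paper gets $y_i(\lambda)\to y_i$ from $y_i(\lambda)=\rho|_{U_i}^{-1}(\tilde\rho(\mu_\lambda))$ and the continuity of $\tilde\rho$ rather than by shrinking bumps. It also sets $c_i(\lambda):=\mu_\lambda(f_i)$ directly; your definition as the atom mass, identified with $\mu_\lambda(f_i)$ once $y_i(\lambda)\in K_i$, is slightly cleaner. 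You also verify the final ``moreover'' clause, which the paper leaves implicit.
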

\begin{proof}
    Since $\rho$ is a local homeomorphism and $Y$ is locally compact Hausdorff, we can select opens $V_0,\cdots, V_m$ and $U_0,\cdots, U_m$ such that
    \begin{enumerate}
        \item for each $0\leqslant i\leqslant m$, $U_i$ is an open neighborhood of $y_i$ such that $\rho|_{U_i}$ is an homeomorphism onto an open of $X$, and therefore they are disjoint;
        \item for each $0\leqslant i\leqslant m$, $V_i$ is a relatively compact open neighborhood of $y_i$ such that $V_i\subseteq \overline{V_i}\subseteq U_i$.
    \end{enumerate}
    By Urysohn's lemma, for each $0\leqslant i\leqslant m$ there exists $f_i\in C_c(Y)$ such that $f_i|_{\overline{V_i}}=1$ and $supp(f_i)\subseteq U_i$. Since $\mu_\lambda(f_i)\ra \mu(f_i)=c_i>0$, we can select $\lambda_0\in \Lambda$ such that for any $\lambda\geqslant \lambda_0$ and $0\leqslant i\leqslant m$, $\mu_\lambda(f_i)>0$. By lemma \ref{trivial lemma 1}, $y_i(\lambda):=\rho|_{U_i}\inv(\tilde\rho(\mu_\lambda))$ is the unique element in $supp(\mu_\lambda)\cap U_i$. By continuity of $\tilde\rho$ and $\rho|_{U_i}\inv$, we have $y_i(\lambda)\ra y_i$. Let $c_i(\lambda):=\mu_\lambda(f_i)$ and clearly $c_i(\lambda)\ra \mu(f_i)=c_i$.

    Let $\hat\mu_\lambda=\mu_\lambda-\sum_{i=0}^m c_i(\lambda)\delta_{y_i(\lambda)}$. Then $supp(\hat\mu_\lambda)\subseteq Y_{\tilde\rho(\mu_\lambda)}$. For any $f\in C_c(\cup_{i=0}^m U_i))$, $\hat\mu_\lambda(f)=0$. Hence, $supp(\mu_\lambda)$ has no intersection with $\{y_0(\lambda),\cdots, y_m(\lambda)\}$. For any $f\in C_c(Y\setminus (\cup_{i=0}^m \overline{V_i}))_+$, $\hat\mu_\lambda(f)=\mu(f)\geqslant 0$. Therefore for any $\lambda\geqslant \lambda_0$, $\hat\mu_\lambda$ is positive. Since $\hat\mu_\lambda(Y)=1-\sum_{i=0}^mc_i(\lambda)\ra 0$, by lemma \ref{trivial lemma 2}, $\hat\mu_\lambda\ra 0$.
\end{proof}

\subsection{Groupoid simplicial complexes}

\begin{defn}\label{defn groupoid simplicial complex}
    Let $X$ be a locally compact Hausdorff space. An $X$-simplicial complex of dimension less than $n$ is a pair $(Y,\Delta)$, where $Y$ is a locally compact space with a structure map $\rho: Y\ra X$, $\Delta$ is a family of finite subsets of $Y$ of cardinality at most $n+1$ such that
    \begin{enumerate}
        \item for any $\delta\in \Delta$, there exists $x\in X$ such that $\delta\subseteq \rho\inv(x)$;
        \item $\rho: Y\ra X$ is local homeomorphism;
        \item $\delta\in \Delta$, $\delta'$ is a non-empty subset of $\delta$, then $\delta'\in \Delta$.
    \end{enumerate}
\end{defn}

\begin{defn}
    Let $\calG\rightrightarrows X$ be an étale groupoid. A (left) $\calG$-simplicial complex of dimension less than $n$ is an $X$-simplicial complex $(Y,\Delta)$ of dimension less than $n$, such that $Y$ is a left $\calG$-space, and if $\delta=\{y_1,\cdots, y_m\}\in \Delta$ is contained in some fiber $\rho\inv(x)$, then for any $\gamma\in \calG_x$, $\gamma\delta:=\{\gamma y_1, \cdots, \gamma y_m\}\in\Delta$.
\end{defn}

So an $X$-simplicial complexes can be seen as the simplicial complexes for the trivial groupoid $X\rightrightarrows X$. We will only consider finite dimensional simplicial complexes.

\begin{rem}
    For $m\in \mathbb N$, define $\Delta^m$ as $\{\delta\in \Delta: |\delta|\leqslant m+1\}$. Clearly, $(Y,\Delta^m)$ is a $\calG$-simplicial complex.
\end{rem}

\begin{defn}\label{defn H1 H2}
    The geometric realization $|\Delta|$ of $(Y,\Delta)$ is the subspace $$\{\mu\in P(Y): supp(\mu)\in \Delta\}$$ of $P(Y)$. That is, a net $(\mu_\lambda)_\lambda$ in $|\Delta|$ converges to $\mu$ if and only if $(\mu_\lambda(f))_\lambda$ converges to $\mu(f)$ for every $f$ in $C_c(Y,\mathbb R)$.
\end{defn}

A base of open neighborhoods of $\mu_0\in |\Delta|$ are opens in form of
\[W(\mu_0,f,\epsilon):=\{\mu\in |\Delta|:|\mu(f)-\mu_0(f)|<\epsilon\},\]
where $f\in C_c(Y, \mathbb R)$, $\epsilon>0$.

We need the following two hypotheses to ensure that $|\Delta|$ has good topological properties. They showed up in \cite{bessi2023} and will essential.

\begin{defn}\label{H_1 H_2}
Let $X$ be a locally compact Hausdorff space, $(Y,\Delta)$ be an $X$-simplicial complex.
    \begin{enumerate}
        \item We say that $(Y,\Delta)$ has hypothesis $(H_1)$, if for any compact subset $K$ of $Y$,
        \[C_K:=\{y\in Y: \exists y'\in K,\{y,y'\}\in \Delta\}\]
        is compact subset of $Y$.
        \item We say that $(Y,\Delta)$ has hypothesis $(H_2)$, if $(y^{(0)}_\lambda)_\lambda, \cdots, (y^{(m)}_\lambda)_\lambda$ are convergent nets, such that $\lim_\lambda y^{(i)}_\lambda=y^{(i)}\in Y$, and for any $\lambda$, $\{y^{(0)}_\lambda, \cdots, y^{(m)}_\lambda\}\in \Delta$, then $\{y^{(0)},\cdots, y^{(m)}\}\in \Delta$.
    \end{enumerate}
\end{defn}

\begin{lem}\label{H_2 implies C_K is closed}
    Let $(Y,\Delta)$ be an $X$-simplicial complex that has hypothesis $(H_2)$, then for any compact subset $K\subseteq Y$, $C_K$ is closed in $Y$.
\end{lem}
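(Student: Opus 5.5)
We prove closedness of $C_K$ by nets, taking the limit in $K$ and applying $(H_2)$ to two-element simplices. Let $(y_\lambda)_\lambda$ be a net in $C_K$ converging to some $y\in Y$; we must show $y\in C_K$. By definition of $C_K$, for each $\lambda$ we can choose $y'_\lambda\in K$ with $\{y_\lambda,y'_\lambda\}\in\Delta$. (If $y_\lambda=y'_\lambda$, then $\{y_\lambda\}\in\Delta$, which is allowed since sets are unordered.)

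Since $K$ is compact, the net $(y'_\lambda)_\lambda$ has a convergent subnet, with limit $y'\in K$. Passing to the corresponding subnet of $(y_\lambda)_\lambda$, which still converges to $y$, we may assume that both nets converge: $y_\lambda\to y$ and $y'_\lambda\to y'$.

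Now apply hypothesis $(H_2)$ with $m=1$, taking $y^{(0)}_\lambda=y_\lambda$ and $y^{(1)}_\lambda=y'_\lambda$. For every $\lambda$ we have $\{y^{(0)}_\lambda,y^{(1)}_\lambda\}\in\Delta$, so $(H_2)$ gives $\{y,y'\}\in\Delta$. Because $y'\in K$, this means $y\in C_K$. Hence $C_K$ contains the limits of all its convergent nets, and so it is closed.

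The only point needing care is the degenerate case where the limit set collapses to a singleton, i.e.\ $y=y'$. This is harmless: $(H_2)$ is stated for the set $\{y^{(0)},\dots,y^{(m)}\}$, which may have fewer elements than $m+1$, so in that case it gives $\{y\}\in\Delta$ with $y\in K$, and again $y\in C_K$. No other obstacle arises; in particular, Hausdorffness of $Y$ guarantees that limits are unique, so the identification of $y$ and $y'$ above is unambiguous.
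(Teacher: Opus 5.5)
Your proof is correct and essentially identical to the paper's: you choose partners $y'_\lambda\in K$, use compactness of $K$ to pass to a subnet along which $y'_\lambda\to y'\in K$, and apply $(H_2)$ with $m=1$ to conclude $\{y,y'\}\in\Delta$, hence $y\in C_K$. Your closing appeal to Hausdorffness is unnecessary, since a subnet of a net converging to $y$ still converges to $y$ regardless.
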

\begin{proof}
    Let $(y_\lambda)_\lambda$ be a net contained in $C_K$ that $y_\lambda\ra y\in Y$, it suffices to show that $y\in C_K$. By definition, for every $\lambda$, there exists $y'_\lambda\in K$ such that $\{y_\lambda,y'_\lambda\}\in \Delta$ for every $\lambda$. Since $K$ is compact, the net $(y_\lambda)_\lambda$ has a convergent subnet, let it be $(y'_{\lambda'})_{\lambda'}$ with limit $y'\in K$. Then since for every $\lambda'$, $\{y_{\lambda'},y'_{\lambda'}\}\in \Delta$, and by hypothesis $(H_2)$, $\{y,y'\}\in \Delta$, this implies that $y\in C_K$.
\end{proof}

When $(Y,\Delta)$ is a $\calG$-simplicial complex, $|\Delta|$ is a $\calG$-invariant subspace of $P(Y)$. Hence, $|\Delta|$ is also a well-defined $\calG$-space.

In the following proposition, we will show that if $(Y,\Delta)$ has hypothesis $(H_2)$, all centers of $m$-simplices form a $\calG$-invariant closed subset of some \'etale $\calG$-space.

\begin{prop}\label{AmBm}
Let $\rho: Y\ra X$ be a local homeomorphism between two locally compact Hausdorff spaces. Define
    \[A^m=\{\frac{1}{m+1}(\delta_{y_0}+\cdots+\delta_{y_m})\in P(Y):\forall i\neq j,y_i\neq y_j\}, \]
    equipped with weak *-topology. Define
    \[B^m:=\{(y_0,\cdots, y_m)\in Y^{\times_X^{m+1}}: \forall i\neq j, y_i\neq y_j\}/ S_{m+1},\]
    where $Y^{\times_X^{m+1}}$ is the fiber product of $(m+1)$ copies of $Y$ over $X$, the symmetry group $S_{m+1}$ acts on it by permutation. Use $[y_0,\cdots, y_m]$ to denote the image of $(y_0,\cdots, y_m)$ under quotient map by $S_{m+1}$.
    \begin{enumerate}
    \item The map
    \[\phi: A^m\ra B^m,\frac{1}{m+1}
    (\delta_{y_0}+\cdots+\delta_{y_m})\mapsto [y_0,\cdots, y_m] \]
    is a homeomorphism.
    \item The map $\tilde\rho|_{A^m}:A^m\ra X$ is a local homeomorphism.
    \item If $(Y,\Delta)$ is an $X$-simplicial complex with hypothesis $(H_2)$, 
    \[center(m,\Delta):=A^m\cap |\Delta|\]
    is closed in $A^m$.
    \end{enumerate}
\end{prop}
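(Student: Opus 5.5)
For (1), $\phi$ is a well-defined bijection: a measure $\frac{1}{m+1}(\delta_{y_0}+\cdots+\delta_{y_m})$ with distinct $y_i$ determines its support, hence the unordered tuple, and the fiber condition holds because elements of $P(Y)$ are supported in a single fiber. I will prove continuity of $\phi$ and $\phi\inv$ separately with nets.

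For continuity of $\phi$, take a net $\mu_\lambda\to\mu=\frac{1}{m+1}\sum\delta_{y_i}$ in $A^m$ and apply lemma \ref{reindex}. Eventually $\mu_\lambda=\sum c_i(\lambda)\delta_{y_i(\lambda)}+\hat\mu_\lambda$ with $y_i(\lambda)\to y_i$. Since $\#supp(\mu_\lambda)=m+1$, the last clause of that lemma gives $\hat\mu_\lambda=0$ eventually, so $supp(\mu_\lambda)=\{y_0(\lambda),\dots,y_m(\lambda)\}$. Then $(y_0(\lambda),\dots,y_m(\lambda))\to(y_0,\dots,y_m)$ in the fiber product, and so $\phi(\mu_\lambda)\to\phi(\mu)$ by continuity of the quotient map. (Convergence is characterized by subnets, so working eventually is harmless.)

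For continuity of $\phi\inv$, it suffices to show that the composite $\tilde B^m\to A^m$, $(y_0,\dots,y_m)\mapsto\frac1{m+1}\sum\delta_{y_i}$, is continuous, where $\tilde B^m$ is the open set of distinct tuples in the fiber product. This is clear, since $f\mapsto\frac1{m+1}\sum f(y_i)$ is continuous in the tuple, and the map descends through the quotient by $S_{m+1}$.

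For (2), I would show that $B^m\to X$, $[y_0,\dots,y_m]\mapsto\rho(y_0)$, is a local homeomorphism and then transport this through $\phi$. Around a point $(y_0,\dots,y_m)$ with distinct entries, choose pairwise disjoint open sets $U_i\ni y_i$ on which $\rho$ is a homeomorphism onto $\rho(U_i)$, and set $W=\bigcap_i\rho(U_i)$. The map $x\mapsto(\rho|_{U_0}\inv(x),\dots,\rho|_{U_m}\inv(x))$ is then a homeomorphism from $W$ onto the open set $\tilde B^m\cap\prod_iU_i$. Because the $U_i$ are disjoint, distinct permutations send this open set to disjoint sets, so the quotient map $\tilde B^m\to B^m$ restricted to it is injective and open (quotient maps by finite group actions are open). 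Its image is therefore an open set mapped homeomorphically onto $W$. The main obstacle is this step, specifically the bookkeeping that the $S_{m+1}$-quotient is locally trivial near points with distinct coordinates. Hausdorffness of $B^m$ also follows, since it is homeomorphic to the Hausdorff space $A^m\subseteq P(Y)$.

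For (3), take a net $\mu_\lambda\in center(m,\Delta)$ converging to $\mu\in A^m$. By (1), and after passing to a subnet and reindexing, the supports satisfy $\{y_0(\lambda),\dots,y_m(\lambda)\}\in\Delta$ with $y_i(\lambda)\to y_i$. Hypothesis $(H_2)$ then gives $\{y_0,\dots,y_m\}\in\Delta$, so $\mu\in|\Delta|$.
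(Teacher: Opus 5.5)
Your proposal is correct and follows the paper's proof essentially step for step. You prove continuity of $\phi$ via the reindexing lemma (with $\hat\mu_\lambda=0$ because every support has exactly $m+1$ points), continuity of $\phi^{-1}$ through the $S_{m+1}$-quotient, (2) by transporting the local homeomorphism $B^m\to X$ through $\phi$, and (3) by reindexing plus $(H_2)$. The only difference is that you actually verify that $B^m\to X$ is a local homeomorphism, which the paper simply asserts; you do this with pairwise disjoint slices $U_i$, which make the quotient map injective and open near tuples with distinct entries.
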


We need the following lemma which is an easy case of lemma \ref{reindex}.

\begin{lem}\label{reindexing}
    Suppose that $(\mu_\lambda)_{\lambda\in \Lambda}$ a convergent net in $A^m$, with limit $\mu=\frac{1}{m+1}(\delta_{y_0}+\cdots+ \delta_{y_m})$ in $A^m$, where $y_0, \cdots, y_m$ are distinct. Then there exists $\lambda_0\in \Lambda$, such that for every $\lambda\geqslant\lambda_0$, there exists $y_0(\lambda),\cdots, y_m(\lambda)$ in $Y$ such that $supp(\mu_{\lambda})=\{y_0(\lambda),\cdots, y_m(\lambda)\}$, and for every $0\leqslant i\leqslant m$, $(y_i(\lambda))_{\lambda\geqslant \lambda_0}$ is a net converging to $y_i$.
\end{lem}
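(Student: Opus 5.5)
The plan is to deduce this from Lemma \ref{reindex}, applied to the net $(\mu_\lambda)_\lambda$ viewed in $P(Y)$. This is legitimate because $A^m\subseteq P(Y)$ carries the subspace (weak-*) topology and $\rho$ is a local homeomorphism. The limit $\mu=\frac{1}{m+1}(\delta_{y_0}+\cdots+\delta_{y_m})$ has distinct $y_i$ and all coefficients equal to $c_i=\frac1{m+1}\in(0,1]$. Lemma \ref{reindex} therefore produces $\lambda_0$ and, for $\lambda\geqslant\lambda_0$, points $y_i(\lambda)$, weights $c_i(\lambda)$ and a positive residual measure $\hat\mu_\lambda$ such that:
\begin{itemize}
\item[(a)] $\mu_\lambda=\sum_i c_i(\lambda)\delta_{y_i(\lambda)}+\hat\mu_\lambda$;
\item[(b)] $supp(\mu_\lambda)$ is the disjoint union of $\{y_0(\lambda),\dots,y_m(\lambda)\}$ and $supp(\hat\mu_\lambda)$;
\item[(c)] $y_i(\lambda)\ra y_i$ for each $i$.
\end{itemize}

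Next I need that the $y_i(\lambda)$ are pairwise distinct. In the proof of Lemma \ref{reindex}, each $y_i(\lambda)$ is chosen as the unique point of $supp(\mu_\lambda)$ lying in $U_i$, and the open sets $U_0,\dots,U_m$ are pairwise disjoint. Hence the $y_i(\lambda)$ are distinct, so $\{y_0(\lambda),\dots,y_m(\lambda)\}$ has exactly $m+1$ elements.

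The key observation is that every $\mu_\lambda$ lies in $A^m$, so $\#supp(\mu_\lambda)=m+1$ for all $\lambda$. By the disjoint-union statement (b), $supp(\hat\mu_\lambda)$ must then be empty for $\lambda\geqslant\lambda_0$. Equivalently, this is the ``moreover'' clause of Lemma \ref{reindex}, giving $\hat\mu_\lambda=0$. Thus $supp(\mu_\lambda)=\{y_0(\lambda),\dots,y_m(\lambda)\}$ for all $\lambda\geqslant\lambda_0$, and (c) gives the required convergence $y_i(\lambda)\ra y_i$. As a consistency check, every weight of a measure in $A^m$ equals $\frac1{m+1}$, so $c_i(\lambda)=\frac1{m+1}$, although the statement does not need this.

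I see no real obstacle, since everything reduces to Lemma \ref{reindex}. The only point needing care is that the ``moreover'' clause is applied with $\lambda$ large enough, which is why the conclusion holds from $\lambda_0$ onward (enlarging $\lambda_0$ if necessary), and that distinctness of the $y_i(\lambda)$ is read off from the disjointness of the $U_i$.
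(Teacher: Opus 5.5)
Your proposal is correct and matches the paper, which gives no separate proof and presents this lemma as an easy special case of Lemma \ref{reindex}: since $\#supp(\mu_\lambda)=m+1$ for every $\lambda$, the residual measure $\hat\mu_\lambda$ vanishes. Your justification that the $y_i(\lambda)$ are pairwise distinct (from the disjoint $U_i$, or equally from Hausdorffness and the distinct limits $y_i$) supplies the argument for the ``moreover'' clause, which the paper's proof of Lemma \ref{reindex} leaves implicit.
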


\begin{proof}[Proof of Proposition \ref{AmBm}]: (1): It is easy to see that $\phi$ is a well-defined bijection. The map
\[\psi:[y_0,\cdots,y_m]\mapsto \frac{1}{m+1}(\delta_{y_0}+\cdots+\delta_{y_n})\]
is its inverse and is continuous. It suffices to show that $\phi$ is continuous. If $(\mu_\lambda)_{\lambda\in \Lambda}$ is a net in $A^m$ converging to $\mu$, by lemma \ref{reindexing}, there exists $\lambda'\in \Lambda$, such that there exists $y_0(\lambda),\cdots, y_m(\lambda)$ for every $\lambda\geqslant \lambda'$, such that $(y_i(\lambda))_{\lambda\geqslant\lambda'}$ converges to $y_i$ and $supp(\mu_\lambda)=\{y_0(\lambda),\cdots, y_m(\lambda)\}$. This implies that $(\phi(\lambda))_\lambda$ converges to $\phi(\mu)$.

(2) In the following commutative diagram, the map $B^m\ra X$ is a local homeomorphism and $\phi$ is a homeomorphism.
\[\xymatrix{
A^m \ar[r]^{\phi} \ar[d]_{\tilde\rho} & B^m \ar[dl]\\
X
}\]

(3) Assume that $(\mu_\lambda)_{\lambda\in \Lambda}$ is a net in $center(m,\Delta)$ that converges to $\mu\in A^m$. Then by lemma \ref{reindexing}, there exists $\lambda_0\in \Lambda$, such that there exists $y_0(\lambda),\cdots, y_m(\lambda)$ for every $\lambda\geqslant \lambda'$, such that $(y_i(\lambda))_{\lambda\geqslant\lambda_0}$ converges to $y_i$ and $\{y_0(\lambda),\cdots, y_m(\lambda)\}$ is exactly support of $\mu_\lambda$. Since $(Y,\Delta)$ has hypothesis $(H_2)$, $supp(\mu_{\lambda})\in \Delta$ for every $\lambda\geqslant\lambda_0$, therefore $supp(\mu)\in \Delta$, $\mu\in center(m,\Delta)$.\end{proof}

\begin{corr}\label{center is closed in some etale space}
    If $\mathcal G$ is an \'etale groupoid, $(Y,\Delta)$ is a $\calG$-simplicial complex with property $(H_2)$, then for every $m\in \mathbb N$, $center(m,\Delta)$ is a $\calG$-invariant closed subset in $A^m$, while $A^m$ is an \'etale $\calG$-space.
\end{corr}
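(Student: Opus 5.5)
Three things have to be checked: $A^m$ is $\calG$-invariant in $P(Y)$, the restricted action makes $A^m$ an \'etale $\calG$-space, and $center(m,\Delta)$ is a $\calG$-invariant closed subset of $A^m$.

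\emph{Invariance and continuity of the action.} By proposition \ref{P(Y) has continuous action}, $P(Y)$ is a $\calG$-space with anchor map $\tilde\rho$. Recall the formula computed after that proposition: if $\mu=\frac{1}{m+1}(\delta_{y_0}+\cdots+\delta_{y_m})$ with $\tilde\rho(\mu)=s(\gamma)$, then
\[\gamma.\mu=\tfrac{1}{m+1}(\delta_{\gamma y_0}+\cdots+\delta_{\gamma y_m}).\]
Since $y\mapsto \gamma y$ is injective on the fiber $Y_{s(\gamma)}$, the points $\gamma y_0,\dots,\gamma y_m$ are again distinct, so $\gamma.\mu\in A^m$. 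Hence $A^m$ is a $\calG$-invariant subspace of $P(Y)$. The action map $\calG\times_{s,X,\tilde\rho}A^m\ra A^m$ is the restriction of the continuous action on $P(Y)$, and it takes values in $A^m$, which carries the subspace topology. So it is continuous, and the action axioms are inherited from $P(Y)$.

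\emph{\'Etaleness.} The anchor map of $A^m$ is $\tilde\rho|_{A^m}$, which is a local homeomorphism by proposition \ref{AmBm}(2). We also need $A^m$ to be locally compact Hausdorff. It is Hausdorff as a subspace of $P(Y)$. It is locally compact because it is homeomorphic (proposition \ref{AmBm}(1)) to $B^m$, and $B^m$ is locally compact for two reasons. First, the space $\{(y_0,\dots,y_m)\in Y^{\times_X^{m+1}}:y_i\neq y_j\}$ is open in the fiber product: $\rho$ is a local homeomorphism, so the diagonal of $Y\times_X Y$ is open. Second, the quotient of this space by the free action of the finite group $S_{m+1}$ is locally compact Hausdorff, since the quotient map is then a local homeomorphism. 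Therefore $A^m$ is an \'etale $\calG$-space.

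\emph{The set $center(m,\Delta)$.} Closedness of $center(m,\Delta)$ in $A^m$ is exactly proposition \ref{AmBm}(3), which uses $(H_2)$. For invariance, take $\mu\in center(m,\Delta)$ with $\tilde\rho(\mu)=s(\gamma)$. Then $supp(\mu)=\{y_0,\dots,y_m\}\in\Delta$ lies in the fiber over $s(\gamma)$, and the definition of a $\calG$-simplicial complex gives $\gamma\,supp(\mu)\in\Delta$. But $\gamma\,supp(\mu)=supp(\gamma.\mu)$, so $\gamma.\mu\in|\Delta|\cap A^m=center(m,\Delta)$.

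Every step is routine once the earlier results are in place. The only point needing a moment's care is local compactness of $A^m$, which is handled by passing through the homeomorphism with $B^m$.
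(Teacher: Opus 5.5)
Your proof is correct and follows the paper's route: the corollary is read off from Proposition \ref{AmBm}(2) and (3), with $\calG$-invariance of $A^m$ and of $center(m,\Delta)$ coming from the formula $\gamma.\mu=\frac{1}{m+1}\sum_i\delta_{\gamma y_i}$ and the invariance axiom for $\Delta$, exactly as you argue. One small slip is in your local-compactness paragraph. The openness of the diagonal of $Y\times_X Y$ (from $\rho$ being a local homeomorphism) shows that the set of tuples with distinct entries is \emph{closed}. Its openness comes instead from $Y$ being Hausdorff, so the set is clopen and either fact suffices. More simply, $A^m$ is Hausdorff and locally homeomorphic to $X$ via $\tilde\rho|_{A^m}$, so it is locally compact directly by Proposition \ref{AmBm}(2).
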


Next we are going to find a subbase of $|\Delta|$ for some $X$-simplicial complex $(Y,\Delta)$.

\begin{defn}
    If $\mu=\sum_{i=0}^m c_i\delta_{y_i}\in |\Delta|$, where all $c_i>0$ and $\sum_{i=0}^m c_i=1$, consider a family of functions $(f_i)_{0\leqslant i\leqslant m}$ in $C_c(Y,[0,1])$ and a positive real numbers $\epsilon$ such that
\begin{enumerate}
    \item for every $i$, there exists a relatively compact open neighborhood $V_i$ of $y_i$ such that $\rho|_{V_i}$ is homeomorphism onto an open of $X$; 
    \item for every $i$, $f_i$ is supported in $V_i$ and $f_i(y_i)> 0$;
    \item for every $i$, $0<\epsilon<\min \{c_i f_i(y_i)\}$.
\end{enumerate}

For $(f_i)_{0\leqslant i\leqslant m}$, $\epsilon>0$ that satisfy the conditions above, we define
\[W(\mu,(f_i)_i,\epsilon):=\cap_{i=0}^m W(\mu,f_i,\epsilon).\]
Let $S(\mu)$ be the set of all open neighborhoods of $\mu$ in this form.
\end{defn}

\begin{prop}
    $\mathcal S:=\cup_{\mu\in |\Delta|}S(\mu)$ is a subbase of $|\Delta|$.
\end{prop}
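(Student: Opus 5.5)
The plan is to prove a stronger statement: for every $\mu_0\in|\Delta|$, the family $S(\mu_0)$ is a neighbourhood base of $\mu_0$ in $|\Delta|$. This implies that $\mathcal S$ is a subbase, indeed a base.

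First, each element of $\mathcal S$ is open. Indeed, $W(\mu,(f_i)_i,\epsilon)$ is a finite intersection of the basic open sets $W(\mu,f_i,\epsilon)$ of the weak-$*$ topology restricted to $|\Delta|$. It also contains $\mu$.

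Since the sets $W(\mu_0,f,\epsilon)$ with $f\in C_c(Y,\mathbb R)$ and $\epsilon>0$ form a neighbourhood base at $\mu_0$, it then suffices to do the following. Given $\mu_0=\sum_{i=0}^m c_i\delta_{y_i}$ with distinct $y_i$ and $c_i>0$, and given $f$ and $\epsilon$, produce $(f_i)_i$ and $\epsilon'$ satisfying the defining conditions such that
\[W(\mu_0,(f_i)_i,\epsilon')\subseteq W(\mu_0,f,\epsilon).\]

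\textbf{Choice of the data.} Fix $\eta>0$, to be made small later.
\begin{enumerate}
\item Since $\rho$ is a local homeomorphism and $Y$ is Hausdorff, choose pairwise disjoint relatively compact open neighbourhoods $V_i$ of $y_i$ such that each $\rho|_{V_i}$ is a homeomorphism onto an open subset of $X$.
\item By continuity of $f$, shrink the $V_i$ so that $|f(z)-f(y_i)|<\eta$ for all $z\in V_i$.
\item By Urysohn's lemma, pick $f_i\in C_c(Y,[0,1])$ supported in $V_i$ with $f_i(y_i)=1$.
\item Take $0<\epsilon'<\min_i c_i=\min_i c_if_i(y_i)$, so the defining conditions of $S(\mu_0)$ hold.
\end{enumerate}

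\textbf{Estimating $\mu(f)$.} Let $\mu\in W(\mu_0,(f_i)_i,\epsilon')$. Since $\mu_0(f_i)=c_i$, we get $\mu(f_i)>c_i-\epsilon'>0$.

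By Lemma \ref{trivial lemma 1}, $supp(\mu)\cap V_i$ is a single point $z_i$, so $\mu(f_i)=\mu(\{z_i\})f_i(z_i)$. Write $a_i:=\mu(\{z_i\})$. Because $f_i\le 1$, we get $a_i\ge\mu(f_i)>c_i-\epsilon'$.

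The $V_i$ are disjoint and $\mu$ is a probability measure. Hence the mass of $\mu$ outside $\bigcup_i V_i$ is at most
\[1-\sum_i(c_i-\epsilon')=(m+1)\epsilon'.\]
Likewise, for each $i$,
\[a_i\le 1-\sum_{j\ne i}(c_j-\epsilon')\le c_i+(m+1)\epsilon'.\]
So $|a_i-c_i|\le(m+1)\epsilon'$.

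Now split $\mu(f)$ into the atoms $z_i$ and the remainder. Using $|f(z_i)-f(y_i)|<\eta$, this gives
\[|\mu(f)-\mu_0(f)|\le\sum_i\bigl(a_i|f(z_i)-f(y_i)|+|a_i-c_i|\,\|f\|_\infty\bigr)+(m+1)\epsilon'\|f\|_\infty .\]
The right-hand side is at most
\[\eta+(m+2)(m+1)\epsilon'\|f\|_\infty .\]
Choosing $\eta<\epsilon/2$ and then $\epsilon'$ small enough makes this less than $\epsilon$, which proves the inclusion.

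\textbf{Main obstacle.} The only delicate point is controlling mass that $\mu$ might place away from the $y_i$, since the $f_i$ only detect mass inside the $V_i$. The remedy is the total-mass bookkeeping above. The lower bounds $\mu(f_i)>c_i-\epsilon'$ on the disjoint sets $V_i$ force both the stray mass and the deviations $|a_i-c_i|$ to be $O(\epsilon')$. The local homeomorphism property, through Lemma \ref{trivial lemma 1}, is what guarantees that each $V_i$ carries a single atom whose position is controlled by the choice of $V_i$.
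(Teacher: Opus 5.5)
Your proof is correct, and it takes a more direct route than the paper.

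The paper first reduces to the case $f\in C_c(U,[0,1])$ with $\rho|_U$ a homeomorphism onto its image. It does this with a partition of unity subordinate to such sets $U$ and a splitting into positive and negative parts, at the cost of replacing $W(\mu,f,\epsilon)$ by a finite intersection. It then argues by cases:
\begin{itemize}
\item If $\mu(f)\neq 0$, then $f$ itself is an admissible test function at the unique atom of $\mu$ in $U$, and the inclusion is immediate.
\item If $\mu(f)=0$, it uses bumps $\phi_i$ pulled back from one bump on $X$ to neighbourhoods $V_i\subseteq f^{-1}([0,\epsilon))$, together with a mass count. The only point of $supp(\mu')$ where $f\neq 0$ is either one of the $y_i'$, where $f<\epsilon$, or a stray atom of mass at most $(m+1)\epsilon'$.
\end{itemize}

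You skip the decomposition entirely. You use the same mass bookkeeping quantitatively: the stray mass and each $|a_i-c_i|$ are $O(\epsilon')$. You then combine this with the small oscillation of $f$ on the disjoint $V_i$. This requires a continuity estimate that the paper avoids. In return you get a stronger conclusion: a single element of $S(\mu_0)$ already lies inside $W(\mu_0,f,\epsilon)$.

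One small correction concerns your opening claim. Sets $W(\mu_0,f,\epsilon)$ with a single $f$ form only a neighbourhood subbase at $\mu_0$, not a base; the paper makes the same slip. To justify that $S(\mu_0)$ is a neighbourhood base, and hence that $\mathcal S$ is a base, run your construction for finitely many $g_1,\dots,g_k$ at once. Choose the $V_i$ so that every $g_j$ oscillates by less than $\eta$ on $V_i$, and choose $\epsilon'$ against $\max_j\|g_j\|_\infty$; your estimate then goes through verbatim. For the subbase statement actually asserted, what you wrote already suffices.
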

\begin{proof}
    Opens in form of $W(\mu,f,\epsilon)$, $f\in C_c(Y,\mathbb R)$ and $\epsilon>0$ form a basis of $|\Delta|$. It suffices to show that, for fixed $\mu, f,\epsilon$, there is a finite intersection of elements in $S(\mu)$ that contained in $W(\mu,f,\epsilon)$. Now let $K=supp(f)$, let $\{U_1,\cdots, U_l\}$ be a finite cover of $K$ in $Y$ such that $\rho|_{U_i}$ are homeomorphisms onto opens of $X$. Let $(\psi_i)_i$ be a partition of unity subordinated to $(U_i)_i$, that is $\psi_i\in C_c(U_i,[0,1])$, $\sum_{i=1}^l\psi_i(y)=1$ for any $y\in K$. Hence, $f=\sum_{i=1}^m f\psi_i$,
\[\cap_{i=1}^l W(\mu, f\psi_i,\frac{\epsilon}{l}) \subseteq W(\mu, f,\epsilon). \]
Let $f\psi_i=f_{i,+}-f_{i,-}$, $f_{i,+}, f_{i,-}\in C_c(U_i)_+$, hence
\[W(\mu,f_{i,+},\frac{\epsilon}{2l})\cap W(\mu,f_{i,-},\frac{\epsilon}{2l})\subseteq W(\mu, f\psi_i,\frac{\epsilon}{l}),\]
while
\[W(\mu,\frac{f_{i,+}}{\|f_{i,+}\|_\infty},\frac{\epsilon}{2l\|f_{i,+}\|_\infty})= W(\mu,f_{i,+},\frac{\epsilon}{2l}),\]
\[W(\mu,\frac{f_{i,-}}{\|f_{i,-}\|_\infty},\frac{\epsilon}{2l\|f_{i,-}\|_\infty})=W(\mu,f_{i,-},\frac{\epsilon}{2l}).\]

Hence, it suffices to consider the case that $f$ is supported in an open $U$ such that $\rho|_U$ is homeomorphism onto an open of $X$, and $f$ is valued in $[0,1]$. In this case $U$ contains at most one element of $supp(\mu)$. Let $W=W(\mu,f,\epsilon)$.

Suppose that $\mu=\sum_{i=0}^m c_i\delta_{y_i}\in |\Delta|$, $y_i\in Y$, $c_i\in (0,1]$. Let $x=\tilde\rho(\mu)$.

Case $\mu(f)=0$: in this case $f(y_i)=0$ for all $i$, $W=\{\mu'\in |\Delta|:\mu'(f)<\epsilon\}$. For each $0\leqslant i\leqslant m$, let $V_i$ be an open neighborhood of $y_i$ in $f\inv([0,\epsilon))$ such that $\rho|_{V_i}$ is a homeomorphism onto an open of $X$. Then $\cap_{i=0}^m\rho(V_i)$ is an open neighborhood of $x$. By Urysohn's lemma, there exists $\phi\in C_c(X,[0,1])$ such that $supp(\phi)\subseteq \cap_{i=0}^m\rho(V_i)$, $\phi(x)=1$. Define $\phi_i\in C_c(V_i,[0,1])$ as $\phi_i=\phi\circ \rho|_{V_i}$. Let $\epsilon'$ be a positive real number such that $\epsilon'<\min\{c_0,\cdots, c_m,\frac{\epsilon}{m+1}\}$. So $\cap_{i=0}^m W(\mu,\phi_i,\epsilon')$ is an element of $\ca S(\mu)$. If $\mu'\in \cap_{i=0}^m W(\mu,\phi_i,\epsilon')$, for each $i$, 
\[|\mu(\phi_i)-\mu'(\phi_i)|=|\mu'(\phi_i)-c_i|<\epsilon'<c_i,\]
so $\mu'(\phi_i)\neq 0$, so $supp(\mu')\cap V_i$ is one and only one point, we denote it by $y_i'$. We have $\mu'(\{y_i'\})\in (c_i-\epsilon',c_i+\epsilon')$. The set $supp(\mu')\cap \{y\in Y:f(y)\neq 0\}$ contains at most one point. If it is empty, $\mu'(f)=0$, hence $\mu'\in W$. 

If it is not empty, there is uniquely a point $y'\in supp(\mu')$ such that $f(y')\neq 0$. Then if there is some $0\leqslant i\leqslant m$ such that $y'=y_i'$, then 
\[\mu'(f)\leqslant f(y')<\epsilon,\]
that is $\mu'\in W$. Otherwise, $y'$ is different from all $y_i'$. So we have
\[\mu'(\{y'\})\leqslant 1-\sum_{i=0}^m\mu'(\{y_i'\})\leqslant 1-\sum_{i=0}^m(c_i-\epsilon')=(m+1)\epsilon'<\epsilon.\]
We still have $\mu'\in W$. In conclusion, we have $\cap_{i=0}^m W(\mu,\phi_i,\epsilon')\subseteq W$.

Case $\mu(f)\neq 0$: in this case, without loss of generality, we can assume that $supp(\mu)\cap U$ is just one point $y_0$ and $f(y_0)\neq 0$. For $1\leqslant i\leqslant m$, let $V_i$ be an open neighborhood of $y_i$ such that $\rho|_{V_i}$ is a homeomorphism onto an open of $X$. For $1\leqslant i\leqslant m$, choose any $f_i\in C_c(Y,[0,1])$ such that $supp(f_i)\subseteq V_i$ and $f_i(y_i)=1$. Let $\epsilon'$ be a positive real number such that $\epsilon'<\min\{c_0f(y_0),c_1,\cdots, c_m,\epsilon\}$. Then $W(\mu,f,\epsilon')\cap (\cap_{i=1}^mW(\mu,f_i,\epsilon'))$ is an element of $\ca S(\mu)$ and contained in $W$.
\end{proof}

\begin{prop}\label{H_1 H_2 implies LCH}
    Let $X$ be a locally compact Hausdorff space. If $(Y,\Delta)$ is an $X$-simplicial complex with hypotheses $(H_1)$ and $(H_2)$, then every element in $\mathcal S$ is relatively compact in $|\Delta|$. Moreover, $|\Delta|$ is locally compact.
\end{prop}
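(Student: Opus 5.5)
Take an element $W=W(\mu,(f_i)_i,\epsilon)\in S(\mu)$ with $\mu=\sum_{i=0}^m c_i\delta_{y_i}$ and supports $V_i$ of the $f_i$. The plan is to show that $\overline{W}$ (closure in $|\Delta|$) is contained in a compact subset of $|\Delta|$. Local compactness then follows at once, because every point $\mu$ has a neighbourhood in $S(\mu)$, which is nonempty: choose the $V_i$ and $f_i$ by local homeomorphy and Urysohn, and take $\epsilon$ small.

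\textbf{Step 1: supports of points of $W$ lie in a fixed compact set.} Let $\mu'\in W$. Since $|\mu'(f_i)-c_if_i(y_i)|<\epsilon<c_if_i(y_i)$, we get $\mu'(f_i)>0$. By lemma \ref{trivial lemma 1}, $supp(\mu')$ meets $V_i$ in exactly one point $y_i'\in \overline{V_i}$. Put $K=\bigcup_i\overline{V_i}$, which is compact. Every other point $y$ of $supp(\mu')$ satisfies $\{y,y_0'\}\in\Delta$, because $supp(\mu')\in\Delta$ and $\Delta$ is closed under subsets. Hence $supp(\mu')\subseteq C_K\cup K=:L$. By $(H_1)$, $C_K$ is compact, so $L$ is compact. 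It follows that $W\subseteq\{\nu\in|\Delta|: supp(\nu)\subseteq L\}$.

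\textbf{Step 2: compactness of this set.} Let $\rho_L=\rho|_L:L\ra X$. This map is proper, since $L$ is compact. By the claim in the proof of proposition \ref{first properties of P(Y)}(2), $P(L)$ is compact. Its weak-* topology induced from $C_c(Y)$ agrees with the one from $C(L)$ via restriction and Tietze, so $P(L)$ embeds as a compact subset $\{\nu\in P(Y):supp(\nu)\subseteq L\}$ of $P(Y)$. Because $P(Y)$ is Hausdorff, this set is closed in $P(Y)$. It remains to show that $P(L)\cap|\Delta|$ is closed, i.e. that $|\Delta|$ is closed under limits of nets supported in $L$.

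\textbf{Step 3: the main obstacle.} We must show that if $\nu_\lambda\in|\Delta|$ with $\nu_\lambda\ra\nu$ in $P(Y)$, then $supp(\nu)\in\Delta$. Write $supp(\nu)=\{z_0,\dots,z_k\}$. This set is finite: apply lemma \ref{uniformly finite fiber} to $L$, giving a uniform bound on $\#(L\cap Y_x)$, together with the fact that $supp(\nu)\subseteq Y_x\cap L$. Now apply lemma \ref{reindex} to obtain points $z_j(\lambda)\in supp(\nu_\lambda)$ with $z_j(\lambda)\ra z_j$. For large $\lambda$ the set $\{z_0(\lambda),\dots,z_k(\lambda)\}$ is a subset of $supp(\nu_\lambda)\in\Delta$, hence lies in $\Delta$. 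Hypothesis $(H_2)$ then gives $\{z_0,\dots,z_k\}\in\Delta$, so $\nu\in|\Delta|$.

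Consequently $P(L)\cap|\Delta|$ is compact, and $\overline{W}$ is a closed subset of it, hence compact. The delicate point is Step 3, namely the finiteness of the limit support together with the reindexing. In the argument, the full limit lemma \ref{reindex} is applied with the limit being finitely supported, and $\hat\mu_\lambda$ is discarded.
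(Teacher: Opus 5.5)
Your proof is correct, but you organize the compactness argument differently from the paper.

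\textbf{The paper's route.} It works with a single set $W(\mu_0,f,\epsilon)$ and uses $(H_1)$ to place every support inside the compact set $C_{supp(f)}$. It covers that set by finitely many relatively compact opens $U_1,\dots,U_l$ on which $\rho$ is injective. It then writes each $\mu'\in W$ as $\sum_{i=1}^l c_i(\mu')\delta_{s_i(\mu')}$ with $s_i(\mu')\in U_i$. Given an arbitrary net in $W$, it extracts subnets of the coefficients and of the points by hand, and applies $(H_2)$ to the points whose limiting mass is non-zero. Relative compactness is then read off from the fact that every net in $W$ has a subnet converging in $|\Delta|$.

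\textbf{Your route.} You show $W\subseteq D_L=\{\nu\in|\Delta|: supp(\nu)\subseteq L\}$ for the fixed compact set $L=K\cup C_K$, and then prove that $D_L$ is compact.
\begin{itemize}
\item Compactness of the ambient set comes from Banach--Alaoglu, through the compactness of $P(L)$ established in Proposition \ref{first properties of P(Y)}.
\item Closedness of $|\Delta|$ inside it comes from finiteness of the limit support (Lemma \ref{uniformly finite fiber}), the reindexing Lemma \ref{reindex}, and $(H_2)$.
\end{itemize}
In effect, you prove within this proposition the paper's later Lemma \ref{simplicial complex-compact supported is compact}, which says $D_K$ is compact under $(H_2)$. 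Your argument for it differs from the paper's, which fixes the support cardinality along a subnet and extracts convergent coefficients and points.

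\textbf{What each buys.} Your route exhibits an explicit compact set containing $\overline{W}$, so relative compactness is immediate. It also packages the step you need again later, namely compactness of sets of simplices supported in a compact set. The paper's route is more hands-on: the finite injective cover gives the limit measure directly in the form $\sum \lim c_i\,\delta_{\lim s_i}$. So it never needs to pass through $P(L)$ or to identify the support of the limit via the reindexing lemma.
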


\begin{proof} For any
\[\mu_0=\sum_{i=1}^m c_i\delta_{y_i}\in |\Delta|,\]
it suffices to show that if $f\in C_c(Y,[0,1])$ is supported in a relatively compact open neighborhood $V_0$ of $y_0$ such that $\rho|_{V_0}$ is a homeomorphism onto an open of $X$, $f(y_0)\neq 0$, $0<\epsilon<c_0f(y_0)$, then $W=W(\mu_0, f, \epsilon)$ is relatively compact in $|\Delta|$.

Firstly if $\mu'\in W$, $\mu'(f)\neq 0$, hence there exists uniquely a point $y'\in supp(f)\cap supp(\mu')$, this implies that $supp(\mu')\subseteq C_{supp(f)}$. Since $(Y,\Delta)$ has hypothesis $(H_1)$, we see that $C_{supp(f)}$ is compact. We just showed that $\cup_{\mu\in W}supp(\mu)\subseteq C_{supp(f)}$. Assume that $C_{supp(f)}$ can be covered by relatively compact opens $U_1,\cdots, U_l$ such that $\rho|_{U_i}$ are homeomorphisms onto opens of $X$.

For every $\mu'\in W$, there exists (maybe not uniquely) $c_i(\mu')\in [0,1]$, $s_i(\mu')\in U_i$, such that
\[\mu'=\sum_{i=1}^l c_i(\mu')\delta_{s_i(\mu')}.\]
(If $supp(\mu')\cap V_i=\emptyset$, choose randomly $s_i(\mu')\in V_i$ and let $c_i(\mu')=0$.)

We will show that for all net $(\mu_\lambda)_\lambda$ in $W$,  there is a convergent subnet. 

Clearly $supp(\mu_\lambda)\subseteq \{s_1(\mu_\lambda),\cdots, s_l(\mu_\lambda)\}$. For every $i$, $(c_i(\mu_\lambda))_\lambda$ is a net in $[0,1]$, $(s_i(\mu_\lambda))_\lambda$ is a net in $U_i$. By repeating choosing subnet ($2l$ times), there is an upward-filtering ordered set $\Lambda'$ and a monotone cofinal map $\phi:\Lambda'\ra \Lambda$, such that for every $i$, $(c_{i}(\mu_{\phi(\lambda')}))_{\lambda'\in \Lambda'}$ is a convergent subnet of $(c_{i}(\mu_\lambda))$, $(s_i(\phi(\lambda')))_{\lambda'\in \Lambda'}$ is a convergent subnet of $(s_i(\mu_\lambda))$. Hence, $(\mu_{\phi(\lambda')})_{\lambda'\in \Lambda'}$ is a subnet of $(\mu_\lambda)_\lambda$ and a convergent net in $P(Y)$.

Without loss of generality, assume that $2\leqslant p\leqslant l$, such that $\lim_{\lambda'}c_{i}(\mu_{\phi(\lambda')})=0$ for all $p\leqslant i\leqslant l$, and $\lim_{\lambda'}c_{i}(\mu_{\phi(\lambda')})\neq 0$ for all $1\leqslant i\leqslant p-1$. After replacing the net by a subnet, we can assume that $c_{i}(\mu_{\phi(\lambda')})>0$ for all $1\leqslant i\leqslant p-1$, $\lambda'\in \Lambda'$. Hence, for all $\lambda'\in \Lambda'$, 
\[\{s_1(\mu_{\phi(\lambda')}), \cdots, s_p(\mu_{\phi(\lambda')})\}\in \Delta,\]
And $(Y,\Delta)$ has hypothesis $(H_2)$, hence
\[supp(\lim_{\lambda'}\mu_{\phi(\lambda')})=\{\lim_{\lambda'}s_1(\mu_{\phi(\lambda')}), \cdots, \lim_{\lambda'}s_p(\mu_{\phi(\lambda')})\}\in \Delta.\]
Therefore, $(\mu_{\phi(\lambda')})_{\lambda'\in \Lambda'}$ is a subnet of $(\mu_\lambda)_\lambda$ and a convergent net in $|\Delta|$. 
\end{proof}

\subsection{Topological properties of groupoid simplicial complexes}

Let $\calG\rightrightarrows X$ be an \'etale groupoid and $(Y,\Delta)$ be a $\calG$-simplicial complex.

\begin{defn}
     We say that $(Y,\Delta)$ is proper, if the action of $\calG$ on $Y$ is proper. We say that $(Y,\Delta)$ is $\calG$-compact, if $|\Delta^0|$ is $\calG$-compact.
\end{defn}

\begin{lem}\label{simplicial complex-compact supported is compact}
    Let $(Y,\Delta)$ be an $X$-simplicial complex with hypothesis $(H_2)$. Let $K$ be a compact subset of $Y$. Then 
    \[D_K:=\{\mu\in |\Delta|:supp(\mu)\subseteq K\}\]
    is a compact subset of $|\Delta|$.
\end{lem}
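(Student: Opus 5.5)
The plan is to show that every net $(\mu_\lambda)_\lambda$ in $D_K$ has a subnet converging in $|\Delta|$ to an element of $D_K$. This mirrors the argument in proposition \ref{H_1 H_2 implies LCH}, except that the compact set $K$ now plays the role of $C_{supp(f)}$, so hypothesis $(H_1)$ is not needed. Since $\rho$ is a local homeomorphism, we cover $K$ by finitely many open sets $U_1,\cdots,U_l$ such that each $\rho|_{U_i}$ is a homeomorphism onto an open of $X$. Because $K$ is compact and Hausdorff, we can shrink this cover to compact sets $K_i\subseteq U_i$ with $K=\cup_{i=1}^l K_i$; for instance, take $K_i=K\cap\overline{V_i}$, where $\overline{V_i}\subseteq U_i$ comes from a shrinking of the cover.

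Each $\mu\in D_K$ has support contained in a single fibre $Y_x$. Each $U_i$ meets that fibre in at most one point. So we may write
\[\mu=\sum_{i=1}^l c_i(\mu)\delta_{s_i(\mu)},\qquad c_i(\mu)\in[0,1],\ s_i(\mu)\in K_i.\]
To make this a decomposition, we assign each point of $supp(\mu)$ to a single index $i$ with that point in $K_i$. If no point of $supp(\mu)$ is assigned to index $i$, we put $c_i(\mu)=0$ and choose $s_i(\mu)\in K_i$ arbitrarily; the possibly empty $K_i$ can be discarded at the start. Then $\sum_i c_i(\mu)=1$. Since $[0,1]$ and the $K_i$ are compact, after passing to subnets $2l$ times we get a subnet along which $c_i(\mu_\lambda)\ra c_i$ and $s_i(\mu_\lambda)\ra s_i\in K_i$ for every $i$.

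We then set $\mu=\sum_i c_i\delta_{s_i}$. For every $f\in C_c(Y,\mathbb R)$ we have $\mu_\lambda(f)=\sum_i c_i(\mu_\lambda)f(s_i(\mu_\lambda))\ra\mu(f)$, so the subnet converges to $\mu$ in the weak-* sense. Next, $\sum_i c_i=1$ and $\mu$ is positive. All the $s_i$ with $c_i>0$ lie in one fibre: for large $\lambda$ they are limits of points in a common fibre $Y_{\tilde\rho(\mu_\lambda)}$, and $\rho$ is continuous into the Hausdorff space $X$. Moreover $supp(\mu)\subseteq K$, since each $s_i\in K_i\subseteq K$.

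The only delicate point is showing that $supp(\mu)\in\Delta$, so that $\mu\in|\Delta|$. Let $I=\{i:c_i>0\}$; it is nonempty because the $c_i$ sum to $1$. After passing to a further subnet we may assume $c_i(\mu_\lambda)>0$ for all $i\in I$ and all $\lambda$. Then $\{s_i(\mu_\lambda):i\in I\}\subseteq supp(\mu_\lambda)\in\Delta$, so this set is in $\Delta$ by closure under nonempty subsets. Hypothesis $(H_2)$ now gives $\{s_i:i\in I\}\in\Delta$. Two coincidences need to be handled here:
\begin{itemize}
\item If $s_i(\mu_\lambda)=s_j(\mu_\lambda)$ for distinct $i,j$, the assignment rule rules this out whenever both coefficients are positive.
\item If different $s_i$ have the same limit, the limit set is simply smaller. $(H_2)$ is phrased with nets indexed by positions, so the limit set $\{s_i:i\in I\}$ is exactly what it produces.
\end{itemize}
This set is precisely $supp(\mu)$, so $\mu\in D_K$. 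Finally, $|\Delta|$ is Hausdorff, being a subspace of $P(Y)$, so this sequential-style compactness via subnets yields compactness of $D_K$.
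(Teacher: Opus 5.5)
Your proof is correct. It differs from the paper's proof only in how you parametrize the support points, which is a genuinely different decomposition.

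The paper's route is as follows. It uses Banach--Alaoglu to reduce to showing that $D_K$ is closed in the unit ball of $R(Y)$. It then passes to a subnet on which $\#supp(\mu_\lambda)=m+1$ is constant, which is possible because simplices have bounded cardinality. It enumerates $supp(\mu_\lambda)=\{y_{0,\lambda},\dots,y_{m,\lambda}\}\subseteq K$ with all coefficients positive, and extracts convergent subnets in $[0,1]^{m+1}\times K^{m+1}$. Finally it applies $(H_2)$ to the whole support, so $supp(\mu)$ is a nonempty subset of $\{y_0,\dots,y_m\}\in\Delta$.

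You instead slot support points into a fixed finite cover of $K$ by compact pieces on which $\rho$ is injective. This is the device the paper reserves for Proposition~\ref{H_1 H_2 implies LCH}.

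What the paper's enumeration buys:
\begin{itemize}
\item It is lighter: there are no placeholder points and no zero coefficients.
\item You do not have to discard indices whose limit coefficient vanishes before invoking $(H_2)$.
\item It never uses that $\rho$ is a local homeomorphism, only that simplices have bounded cardinality.
\end{itemize}

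What your version buys:
\begin{itemize}
\item The bound on support size (at most $l$ points) comes directly from the cover rather than from finite dimensionality.
\item You build the limit $\sum_i c_i\delta_{s_i}$ explicitly and verify $\mu_\lambda(f)\to\mu(f)$ by hand, so Banach--Alaoglu is never needed.
\end{itemize}
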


\begin{proof} By Banach-Alaoglu's theorem, it suffices to show that this subset is closed in the unit ball of $R(Y)$. Let $(\mu_\lambda)_{\lambda\in \Lambda}$ be a net in it that converges to $\mu\in R(Y)$. Without loss of generality, we can replace it by a subnet such that the cardinality of each $supp(\mu_\lambda)$ is the same. Therefore, there exists $m\in \mathbb N$, such that every $\mu_\lambda$ can be written as
\[\mu_\lambda=\sum_{i=0}^m c_{i,\lambda}\delta_{y_{i,\lambda}},\]
where $c_{i,\lambda}\in (0,1]$, $y_{i,\lambda}\in K$. Then by compactness of $[0,1]$ and $K$, after repeating choosing subnets, there exists a subnet of $(\mu_\lambda)_{\lambda}$, that is an upward-filtering ordered set $\Lambda'$ and a monotone cofinal map $k:\Lambda'\ra \Lambda$, such that for every $0\leqslant i\leqslant m$, $(c_{i,k(\lambda')})_{\lambda'\in \Lambda'}$ and $(y_{i,k(\lambda')})_{\lambda'\in \Lambda'}$ are respectively convergent nets in $[0,1]$ and $K$. Let $y_i=\lim_{\lambda'\in \Lambda'}y_{i,k(\lambda')}$. Hence, $supp(\mu)\subseteq \{y_0,\cdots, y_m\}\subseteq K$. While the hypothesis $(H_2)$ implies that $\{y_0,\cdots, y_m\}\in \Delta$. We can conclude that $\mu\in |\Delta|$ and $supp(\mu)\subseteq K$.\end{proof}

\begin{prop}\label{simplicial complex-cocompact realisation}
    Let $(Y,\Delta)$ be a $\calG$-compact $\calG$-simplicial complex with hypotheses $(H_1)$ and $(H_2)$. Then $|\Delta|$ is a $\calG$-compact $\calG$-space.
\end{prop}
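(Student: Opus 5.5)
By hypothesis $|\Delta^0|$ is $\calG$-compact. By Lemma \ref{prob mes dim 0} it is homeomorphic to $Y$, and the identification $y\mapsto\delta_y$ is clearly $\calG$-equivariant. So there is a compact subset $K_0\subseteq Y$ with $Y=\calG K_0$. The plan is to produce one compact subset of $|\Delta|$ whose $\calG$-saturation is everything. The natural candidate is $D_{C_{K_0}}$, or a slightly enlarged version of it.

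First, set $K_1:=C_{K_0}\cup K_0$. By $(H_1)$, $C_{K_0}$ is compact, so $K_1$ is compact. Put
\[D:=D_{K_1}=\{\mu\in|\Delta|: supp(\mu)\subseteq K_1\}.\]
By Lemma \ref{simplicial complex-compact supported is compact}, which needs only $(H_2)$, $D$ is a compact subset of $|\Delta|$.

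Second, I would show that $|\Delta|=\calG D$. Take $\mu=\sum_{i=0}^m c_i\delta_{y_i}\in|\Delta|$ with $x=\tilde\rho(\mu)$. Since $Y=\calG K_0$, there is $\gamma\in\calG$ with $r(\gamma)=x$ and $\gamma^{-1}y_0\in K_0$.

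\begin{itemize}
\item Because $\{y_0,\dots,y_m\}\in\Delta$ and the simplicial structure is $\calG$-invariant, $\gamma^{-1}\{y_0,\dots,y_m\}\in\Delta$.
\item By the subset axiom, each pair $\{\gamma^{-1}y_0,\gamma^{-1}y_i\}$ lies in $\Delta$, whenever $i\neq 0$.
\item Hence $\gamma^{-1}y_i\in C_{K_0}$ for every $i\neq 0$, while $\gamma^{-1}y_0\in K_0$.
\end{itemize}

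Therefore $supp(\gamma^{-1}\mu)\subseteq K_1$, so $\gamma^{-1}\mu\in D$ and $\mu=\gamma(\gamma^{-1}\mu)\in\calG D$. One bookkeeping point: $C_{K_0}$ as defined requires $\{y,y'\}\in\Delta$ to be a genuine two-element simplex, so $K_0$ must be added separately to cover $y_0$ itself and $0$-simplices. That is why $K_1$ includes $K_0$.

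The main obstacle is the topological one. Compactness of $D$ has to be meant inside $|\Delta|$ with its subspace topology from $P(Y)$. Lemma \ref{simplicial complex-compact supported is compact} gives exactly this, since the weak-$*$ topology on the unit ball of $R(Y)$ restricts to the topology of $|\Delta|$. So the step reduces to checking that the lemma's closedness argument applies to $K_1$. Finally, Proposition \ref{H_1 H_2 implies LCH} ensures that $|\Delta|$ is locally compact Hausdorff, so the notion of $\calG$-compactness is the intended one. This completes the argument.
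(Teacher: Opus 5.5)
Your argument is the paper's proof. Pick a compact $K$ with $|\Delta^0|=\calG K$ and translate $\mu$ by some $\gamma\inv$ so that one vertex lands in $K$. The subset axiom then puts the remaining vertices in $C_K$, which is compact by $(H_1)$, and Lemma \ref{simplicial complex-compact supported is compact} (which uses $(H_2)$) gives compactness of the resulting $D$.

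One intermediate claim is false as written, though. Via $y\mapsto\delta_y$, the space $|\Delta^0|$ is homeomorphic only to the vertex set $V=\{y\in Y:\{y\}\in\Delta\}$, not to all of $Y$. The definition of a $\calG$-simplicial complex does not require every point of $Y$ to be a vertex. So $Y=\calG K_0$ fails in general. For example, the Rips complex of a trivial groupoid $X$ with $X$ noncompact has $Y=X$ but $V=K$.

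This does not damage your argument, because you only use it for $y_0$. Since $\{y_0\}\in\Delta$ by the subset axiom, $y_0\in V=\calG K_0$, and with $K_0$ taken as a compact subset of $V\subseteq Y$ the rest goes through unchanged.

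Enlarging $C_{K_0}$ by $K_0$ is harmless but unnecessary. The paper works with $D_{C_K}$ directly: for a vertex $k$ one has $\{k,k\}=\{k\}\in\Delta$, so $K\subseteq C_K$. The paper treats dimension zero separately only for clarity.
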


\begin{proof} By definition $|\Delta^0|$ is a $\calG$-compact, $\calG$-invariant subspace of $Y$. Then there exists a compact subset $K$ of $|\Delta^0|$ such that $|\Delta^0|=\calG K$. So if $(Y,\Delta)$ is of dimension zero, it is directly given by the definition. Assume now $(Y,\Delta)$ has dimension $m\geqslant 1$. By hypothesis $(H_1)$, $C_K$ is compact. Let $D_{C_K}=\{\mu\in |\Delta|:supp(\mu)\in C_K\}$. By lemma \ref{simplicial complex-compact supported is compact}, $D_{C_K}$ is compact. We will show that $|\Delta|=\calG D_{C_K}$. For any $\mu\in |\Delta|$, assume that $supp(\mu)=\{y_0,\cdots, y_m\}$. Then there exists $(\gamma,k)\in \calG\times_{s,\rho}K$, such that $y_0=\gamma k$. We have $supp(\gamma\inv \mu)=\{k,\gamma\inv y_1,\cdots,\gamma\inv y_m\}$ and $\gamma\inv \mu\in |\Delta|$. For any $1\leqslant i\leqslant m$, $\{k,\gamma\inv y_i\}\in \Delta$, therefore $supp(\gamma\inv \mu)\subseteq C_K$, that is $\gamma\inv \mu\in D_{C_K}$.\end{proof}

\begin{lem}\label{D_K is sufficiently many}
    Let $(Y,\Delta)$ be an $X$-simplicial complex with hypothesis $(H_1)$ and $(H_2)$, for any compact subset $K'$ of $|\Delta|$, there exists a compact subset $K\subseteq Y$ such that $K'\subseteq D_K$.
\end{lem}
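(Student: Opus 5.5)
We will use the subbase $\mathcal S$ together with Proposition \ref{H_1 H_2 implies LCH}, which says that every element of $\mathcal S$ is relatively compact in $|\Delta|$. The proof of that proposition gives more: for an element $W=W(\mu_0,f,\epsilon)$ with $f$ supported in a relatively compact open set $V_0$ on which $\rho$ is injective, every $\mu'\in W$ satisfies $supp(\mu')\subseteq C_{supp(f)}$. By $(H_1)$, the set $C_{supp(f)}$ is a compact subset of $Y$.

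The argument will run as follows.

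\textbf{Step 1 (finite cover).} For each $\mu\in K'$, pick one basic neighbourhood $W_\mu\in S(\mu)$. Write $W_\mu=\cap_{i}W(\mu,f_i^\mu,\epsilon)$, where each $f_i^\mu$ is supported in a relatively compact open set on which $\rho$ is a homeomorphism onto its image, and where $\epsilon<\min_i c_i f_i^\mu(y_i)$. These sets are open neighbourhoods of the points of $K'$, and $K'$ is compact, so finitely many of them cover $K'$. Call them $W_{\mu_1},\dots,W_{\mu_n}$.

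\textbf{Step 2 (support bound on one neighbourhood).} Fix one $W_{\mu_j}$ and one of its defining functions, say $f=f_0^{\mu_j}$. Every $\mu'\in W_{\mu_j}$ satisfies $|\mu'(f)-\mu_j(f)|<\epsilon<\mu_j(f)$. Hence $\mu'(f)>0$, so $supp(\mu')$ meets $supp(f)$ in some point $y'$. Every other point $y$ of $supp(\mu')$ satisfies $\{y,y'\}\in\Delta$, because faces of simplices are simplices. Therefore $supp(\mu')\subseteq supp(f)\cup C_{supp(f)}$. Call this compact set $L_j$.

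\textbf{Step 3 (conclusion).} Put $K=\bigcup_{j=1}^n L_j$, which is compact as a finite union of compact sets. Every $\mu'\in K'$ lies in some $W_{\mu_j}$, so by Step 2 we get $supp(\mu')\subseteq L_j\subseteq K$, and thus $\mu'\in D_K$. This proves $K'\subseteq D_K$.

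The only delicate point is Step 2. It must use the fact that $\mu'(f)\neq 0$ forces a point of $supp(\mu')$ inside $supp(f)$, which is exactly the content of Lemma \ref{trivial lemma 1}. We also need to include $supp(f)$ itself in $L_j$, because a vertex is not necessarily in $C_{supp(f)}$ when $\mu'$ is a Dirac measure.

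Hypothesis $(H_2)$ is not needed for this argument. It only enters if we also want $D_K$ itself to be compact, and that is Lemma \ref{simplicial complex-compact supported is compact}.
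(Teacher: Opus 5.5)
Your proof is correct and is essentially the paper's argument: cover $K'$ by finitely many subbasic neighbourhoods, note that each one forces $supp(\mu')$ to meet the support of a defining function, and use $(H_1)$ to trap $supp(\mu')$ in a compact set of the form $C_{(\cdot)}$ (the paper takes $K=C_{K''}$ with $K''=\bigcup_i\overline{V_i}$). Adding $supp(f)$ to $L_j$ is harmless but unnecessary, since $\{y'\}=\{y',y'\}\in\Delta$ already gives $y'\in C_{supp(f)}$; you are also right that $(H_2)$ is not used.
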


\begin{proof}
    We can find finitely many $\mu_1,\cdots,\mu_n\in K'$ and an open neighborhood $W_i\in S(\mu_i)$ for each $i$ such that $K'\subseteq \cup_{i=1}^nW_i$. By definition of the subbase $\mathcal S$, for each $i$ we can select $y_i\in supp(\mu_i)$, a relatively compact open neighborhood $V_i$ of $y_i$ such that $\rho|_{V_i}$ is a homeomorphism onto an open of $X$, a function $f_i\in C_c(Y,[0,1])$ supported in $V_i$ and $\mu_i(\{y_i\})f_i(y_i)>\epsilon_i>0$, such that $W_i\subseteq W(\mu_i,f_i,\epsilon_i)$. We take $K''=\cup_{i=1}^n \overline{V_i}$ and $K=C_{K''}$. By the hypothesis $(H_1)$, $K$ is compact.

    Now for any $\mu\in K'$, without loss of generality we can assume that $\mu\in W_1\subseteq W(\mu_1,f_1,\epsilon_1)$. This implies that $|\mu(f_1)-\mu_1(f_1)|>\epsilon_1$, while $\epsilon_1<\mu_1(\{y_1\})f_1(y_1)=\mu_1(f_1)$, hence $\mu(f_1)\neq 0$. Hence, $supp(\mu)\cap V_1\neq 0$. By the definition of $C_{K''}$, we have $supp(\mu)\subseteq K$, and therefore $\mu\in D_K$.
\end{proof}

\begin{prop}\label{proper realization is proper}
    Let $(Y,\Delta)$ be a proper $\calG$-simplicial complex with hypotheses $(H_1)$ and $(H_2)$. Then the action of $\calG$ on $|\Delta|$ is proper.
\end{prop}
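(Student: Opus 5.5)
We use criterion (2) of proposition \ref{groupoid action-proper action}. By proposition \ref{H_1 H_2 implies LCH}, $|\Delta|$ is locally compact Hausdorff (Hausdorff because $P(Y)$ is). So it suffices to show that for every compact subset $K'\subseteq|\Delta|$, the set $\{\gamma\in\calG:\gamma K'\cap K'\neq\emptyset\}$ is compact in $\calG$.

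The first step is to reduce from $|\Delta|$ to $Y$. By lemma \ref{D_K is sufficiently many} there is a compact $K\subseteq Y$ with $K'\subseteq D_K$. Suppose $\gamma\mu\in K'$ for some $\mu\in K'$ with $\tilde\rho(\mu)=s(\gamma)$. Then $supp(\mu)\subseteq K$ and $supp(\gamma\mu)=\gamma\, supp(\mu)\subseteq K$. Choosing any $y\in supp(\mu)$ (nonempty, since $\mu$ is a probability measure) gives $y\in K$ and $\gamma y\in K$. Hence
\[\{\gamma\in\calG:\gamma K'\cap K'\neq\emptyset\}\subseteq\{\gamma\in\calG:\gamma K\cap K\neq\emptyset\}.\]
The right-hand side is compact, by properness of the $\calG$-action on $Y$ and proposition \ref{groupoid action-proper action}.

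It then remains to show the left-hand set is closed. Let $\gamma_\lambda\to\gamma$ with $\mu_\lambda,\gamma_\lambda\mu_\lambda\in K'$. By compactness of $K'$, pass to a subnet with $\mu_\lambda\to\mu\in K'$ and $\gamma_\lambda\mu_\lambda\to\nu\in K'$. Continuity of $\tilde\rho$ gives $s(\gamma)=\tilde\rho(\mu)$. Continuity of the action on $P(Y)$ (proposition \ref{P(Y) has continuous action}) gives $\gamma_\lambda\mu_\lambda\to\gamma\mu$, so $\gamma\mu=\nu\in K'$ by Hausdorffness. Thus the set is a closed subset of a compact set, hence compact.

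One point needs care: the topology of $|\Delta|$ is the subspace topology from $P(Y)$, and $K'$ is compact, hence closed in the Hausdorff space $P(Y)$. The continuity of the action restricted to $|\Delta|$ is inherited from $P(Y)$ because $|\Delta|$ is $\calG$-invariant. The main obstacle is really already handled by lemma \ref{D_K is sufficiently many}, which uses $(H_1)$ and $(H_2)$ to control supports of measures in a compact set. Without it, one could not pass from compact subsets of $|\Delta|$ to compact subsets of $Y$.

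Alternatively, criterion (3) of proposition \ref{groupoid action-proper action} could be used with the same support reduction. A convergent net $\mu_\lambda$ and convergent $\gamma_\lambda\mu_\lambda$ lie in compact neighborhoods, hence in $D_K$. Picking $y_\lambda\in supp(\mu_\lambda)\subseteq K$ and passing to a subnet where $y_\lambda$ and $\gamma_\lambda y_\lambda$ converge in $K$, properness on $Y$ then yields a convergent subnet of $\gamma_\lambda$.
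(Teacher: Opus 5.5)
Your proof is correct and follows essentially the same route as the paper. You use lemma \ref{D_K is sufficiently many} to trap compact subsets of $|\Delta|$ inside some $D_K$, observe that $\gamma D_K\cap D_K\neq\emptyset$ forces $\gamma K\cap K\neq\emptyset$, and conclude from properness of $\calG\ltimes Y$. The only differences are that you work with an arbitrary compact $K'$ rather than $D_K$ itself, and you spell out the closedness argument via continuity of $\tilde\rho$ and of the action on $P(Y)$, which the paper only asserts.
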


\begin{proof}
By the previous lemma, it suffices to show that for any compact subset $K$ of $Y$, $\{\gamma\in \calG:\gamma D_K\cap D_K\neq \emptyset\}$ is compact. If there is $\gamma\in \calG$ and $\mu\in |\Delta|$ such that $supp(\mu)\subseteq K$ and $supp(\gamma\mu)\subseteq K$, then $\gamma K\cap K\neq \emptyset$. The set $\{\gamma\in \calG:\gamma D_K\cap D_K\neq \emptyset\}$ is therefore a closed subset of $\{\gamma\in \calG:\gamma K\cap K\neq\emptyset\}$, which is compact by the properness of $\calG\ltimes Y$.\end{proof}

\begin{lem}\label{A^m is proper}
    Suppose that $(Y,\Delta)$ is a proper $\calG$-simplicial complex, then $A^m$ (as defined in proposition \ref{AmBm}) is a proper $\calG$-space, for any $0\leqslant m\leqslant \dim \Delta$.
\end{lem}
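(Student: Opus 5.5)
To prove that $A^m$ is a proper $\calG$-space, I would use criterion (3) of proposition \ref{groupoid action-proper action}. The main tool is the homeomorphism $\phi: A^m\ra B^m$ of proposition \ref{AmBm}, together with lemma \ref{reindexing}, which lets convergence in $A^m$ be lifted to convergence of the supporting points in $Y$. Note first that $A^m$ is locally compact Hausdorff: it is homeomorphic to $B^m$, which is a quotient of an open subset of the fibre product $Y^{\times_X^{m+1}}$ by the finite group $S_{m+1}$ acting freely. Alternatively, since $\tilde\rho|_{A^m}$ is a local homeomorphism onto the locally compact space $X$, local compactness follows from that. In either case the criterion applies.

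Let $(\mu_\lambda)_\lambda$ be a net in $A^m$ converging to $\mu=\frac{1}{m+1}\sum_{i=0}^m\delta_{y_i}$, and let $(\gamma_\lambda)_\lambda$ be a net in $\calG$ with $s(\gamma_\lambda)=\tilde\rho(\mu_\lambda)$, such that $\gamma_\lambda\mu_\lambda\ra\nu=\frac{1}{m+1}\sum_{j=0}^m\delta_{z_j}$ in $A^m$. We must find a convergent subnet of $(\gamma_\lambda)$.

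By lemma \ref{reindexing}, for $\lambda$ large we can write $supp(\mu_\lambda)=\{y_0(\lambda),\dots,y_m(\lambda)\}$ with $y_i(\lambda)\ra y_i$. Applying the same lemma to $\gamma_\lambda\mu_\lambda$, whose support is $\{\gamma_\lambda y_0(\lambda),\dots,\gamma_\lambda y_m(\lambda)\}$, each point $\gamma_\lambda y_0(\lambda)$ equals $z_j(\lambda)$ for some index $j=j(\lambda)$, where $z_j(\lambda)\ra z_j$. There are only finitely many possible indices, so we may pass to a subnet on which $j(\lambda)$ is a constant $j_0$. Then $y_0(\lambda)\ra y_0$ and $\gamma_\lambda y_0(\lambda)\ra z_{j_0}$ in $Y$. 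Since the action of $\calG$ on $Y$ is proper, proposition \ref{groupoid action-proper action}(3) applied in $Y$ yields a convergent subnet of $(\gamma_\lambda)$, which completes the argument.

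The only delicate point is the reindexing step: one has to check that a cofinal subnet with constant index $j(\lambda)$ exists. This holds because a net taking values in a finite set $\{0,\dots,m\}$ is frequently equal to some fixed value, so restricting to the $\lambda$ where it takes that value gives a cofinal subset and hence a subnet. The rest of the argument is routine.
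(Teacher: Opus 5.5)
Your argument is correct, but it takes a different route from the paper. The paper argues structurally. First, $Y^{\times_X^{m+1}}$ is a proper $\calG$-space by the base-change result (proposition \ref{groupoid action-proper action base change}). Properness then passes to the quotient by the finite group $S_{m+1}$. Next, $B^m$ is a $\calG$-invariant clopen subset of that quotient, so it is proper. Finally, the $\calG$-equivariant homeomorphism $A^m\cong B^m$ of proposition \ref{AmBm} transfers properness to $A^m$.

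You instead verify criterion (3) of proposition \ref{groupoid action-proper action} directly on $A^m$. Lemma \ref{reindexing} gives you convergent support points for both $\mu_\lambda$ and $\gamma_\lambda\mu_\lambda$. Passing to a cofinal subset on which $\gamma_\lambda y_0(\lambda)=z_{j_0}(\lambda)$ pins down a single matched vertex, and properness of $\calG\curvearrowright Y$ applied to that one coordinate finishes the proof.

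The paper's version is shorter and more conceptual. However, it leaves unproved that properness survives the quotient by a finite group, and that step really needs a net-lifting argument much like yours. Your version is self-contained. It also checks explicitly the local compactness that the criterion requires, and it shows that only one vertex of each simplex is needed to control the action.

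One small imprecision: $\tilde\rho|_{A^m}$ is a local homeomorphism into $X$, with open image, not necessarily onto $X$. This does not affect your local compactness conclusion.
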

\begin{proof}
    By proposition \ref{groupoid action-proper action base change}, $Y^{\times_X^{m+1}}$ is a proper $\calG$-space. After quotient by the action of a finite group, $Y^{\times_X^{m+1}}/S_{m+1}$ is also a proper $\calG$-space. While $B^m$ is a $\calG$-invariant clopen subset of $Y^{\times_X^{m+1}}/S_{m+1}$, the action of $\calG$ on $B^m$ is also proper. By proposition \ref{AmBm}, $A^m$ is homeomorphic to $B^m$, and it is easy to check that this homeomorphism is $\calG$-equivariant. In conclusion $A^m$ is a proper $\calG$-space.
\end{proof}

\subsection{Rips complexes}

We will provide a family of examples of groupoid simplicial complex called Rips complexes.

\begin{prop}\label{Rips complex}
    Let $\calG$ be an \'etale groupoid and $K$ be a compact subset of $\calG$. See $\calG$ as a left $\calG$-space with action defined by multiplication. Define
\[\Delta_K(\calG):=\{\delta\subseteq \calG: \forall \gamma_1,\gamma_2\in \delta, r(\gamma_1)=r(\gamma_2), \gamma_1\inv \gamma_2\in K\}.\]
\begin{enumerate}
    \item $(\calG,\Delta_K(\calG))$ is a finite dimensional $\calG$-simplicial complex.
    \item The $\calG$-simplicial complex $(\calG,\Delta_K(\calG))$ has hypotheses $(H_1)$ and $(H_2)$.
    \item The $\calG$-simplicial complex $(\calG,\Delta_K(\calG))$ is proper and $\calG$-compact.
\end{enumerate}

\end{prop}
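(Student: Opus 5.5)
Our plan is to verify the three items in order, working directly from the definitions of $X$-simplicial complex, $(H_1)$ and $(H_2)$, and leaning on the earlier results on proper actions.

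\textbf{Item (1).} The structure map is $\rho=r:\calG\ra X$, which is a local homeomorphism since $\calG$ is \'etale. Simplices lie in fibers of $r$ by definition, and nonempty subsets of simplices are simplices. For $\calG$-invariance, take $\delta\subseteq\calG^x$ and $\gamma\in\calG_x$. Then $\gamma\delta\subseteq\calG^{r(\gamma)}$ and $(\gamma\gamma_1)^{-1}(\gamma\gamma_2)=\gamma_1^{-1}\gamma_2\in K$, so $\gamma\delta\in\Delta_K(\calG)$.

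For finite dimensionality, fix $\gamma_1\in\delta$. Then $\delta\subseteq\gamma_1 K$, so $\gamma_1^{-1}\delta\subseteq K\cap\calG_{s(\gamma_1)}$. Hence $|\delta|\leqslant\sup_{x}\#(K\cap s^{-1}(x))$, which is finite by lemma \ref{uniformly finite fiber} applied to the local homeomorphism $s$.

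\textbf{Item (2).} For $(H_1)$, let $L\subseteq\calG$ be compact. If $\{\gamma,\gamma'\}\in\Delta$ with $\gamma'\in L$, then $\gamma'^{-1}\gamma\in K$, so $\gamma\in LK$. Thus $C_L\subseteq LK$, the image of the compact set $(L\times K)\cap\calG^{(2)}$ under multiplication, which is compact. It remains to see that $C_L$ is closed, and we will obtain this from $(H_2)$ via lemma \ref{H_2 implies C_K is closed}.

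For $(H_2)$, suppose $\gamma^{(i)}_\lambda\ra\gamma^{(i)}$ and $\{\gamma^{(0)}_\lambda,\dots,\gamma^{(m)}_\lambda\}\in\Delta$ for all $\lambda$. Continuity of $r$ gives $r(\gamma^{(i)})=r(\gamma^{(j)})$. Continuity of inversion and multiplication gives $(\gamma^{(i)}_\lambda)^{-1}\gamma^{(j)}_\lambda\ra(\gamma^{(i)})^{-1}\gamma^{(j)}$. Since $K$ is closed, the limit lies in $K$.

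One point needs care here: the limit set $\{\gamma^{(0)},\dots,\gamma^{(m)}\}$ may have coincident points, but it is still a finite subset of a fiber satisfying the defining condition. For $i=j$ the condition asks that $s(\gamma)\in K$. This holds because the simplex $\{\gamma\}$ satisfies $\gamma^{-1}\gamma=s(\gamma)\in K$ by definition, so the same closedness argument covers it. If one prefers, we can instead note that $\Delta$ is closed under taking subsets.

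\textbf{Item (3).} Properness holds because $\calG$ acts properly on itself, by the example following the definition of proper actions.

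For $\calG$-compactness, identify $|\Delta^0|$ with the set of $\gamma$ such that $\{\gamma\}\in\Delta$, namely $\{\gamma: s(\gamma)\in K\cap X\}$, using lemma \ref{prob mes dim 0}. The set $K\cap X$ is compact because $X$ is closed in $\calG$. Any such $\gamma$ equals $\gamma\cdot s(\gamma)$, so $|\Delta^0|=\calG(K\cap X)$, which is $\calG$-compact.

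\textbf{Main obstacle.} We expect the only delicate point to be the degenerate-simplex issue just noted: singletons are simplices only over units in $K$. This should be handled by the observation above that the $i=j$ case of the defining condition reduces to $s(\gamma)\in K$, which passes to limits by closedness of $K$.
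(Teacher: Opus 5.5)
Your proof is correct and follows the paper's argument essentially step by step: finite dimension via uniformly finite fibers of $K$, $(H_1)$ via $C_L\subseteq LK$, $(H_2)$ via continuity and closedness of $K$, and $|\Delta^0|\cong\calG_{K\cap\calG\units}$. You are in fact slightly more careful than the paper in deriving closedness of $C_L$ from $(H_2)$ through lemma \ref{H_2 implies C_K is closed}. One small slip: the elements $\gamma_1\inv\gamma_2$ all have range $s(\gamma_1)$, so $\gamma_1\inv\delta\subseteq K\cap\calG^{s(\gamma_1)}$ and the bound should use $r\inv(x)$, as in the paper (alternatively, use $\gamma_2\inv\gamma_1\in K\cap\calG_{s(\gamma_1)}$ if you want $s$-fibers).
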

\begin{proof}
    (1) To show that $(\calG,\Delta_K(\calG))$ is a $\calG$-simplicial complex, all conditions can be checked directly except the finiteness of dimension of $\Delta_K(\calG)$. Since $K$ is a compact subset in $\calG$ and $r$ is a local homeomorphism, 
    \[M=\sup_{x\in \calG\units}\#(K\cap r\inv(x))\]
    is finite. For every $\delta\in \Delta_K(\calG)$, choose any $\gamma\in \delta$, $\gamma\inv \delta\subseteq K$, we have
    \[|\delta|=|\gamma\inv \delta|\leqslant \#(K\cap r\inv(s(\gamma)))\leqslant M.\]
    Hence, $(\calG,\Delta_K(\calG))$ is a finite dimension $\calG$-simplicial complex.

    (2) If $K'$ is a compact subset of $\calG$, then
    \[C_{K'}=\{\gamma\in \calG:\exists \gamma'\in K', r(\gamma)=r(\gamma'), \gamma\inv \gamma'\in K\}\subseteq K'K\inv\]
    is compact. So $(\calG,\Delta_K(\calG))$ has hypothesis $(H_1)$.

    Assume that $(\gamma_\lambda^{(0)})_\lambda, \cdots, (\gamma_\lambda^{(m)})_\lambda$ are converging nets such that $\lim_\lambda \gamma_\lambda^{(i)}=\gamma^{(i)}\in \calG$ for all $0\leqslant i\leqslant m$, and for all $\lambda$, $\{\gamma_\lambda^{(0)}, \cdots, \gamma_\lambda^{(m)}\}\in \Delta_K(\calG)$. This implies that for all $\lambda$, for all $0\leqslant i,j\leqslant m$, $r(\gamma_\lambda^{(i)})=r(\gamma_\lambda^{(j)})$ and $(\gamma_\lambda^{(i)})\inv \gamma_\lambda^{(j)}\in K$. Then by the continuities of $r$ and the multiplication, for all $0\leqslant i,j\leqslant m$, $r(\gamma^{(i)})=r(\gamma^{(j)})$ and $(\gamma^{(i)})\inv \gamma^{(j)}\in K$, that is $\{\gamma^{(0)}, \cdots, \gamma^{(m)}\}\in \Delta_K(\calG)$. So $(\calG,\Delta_K(\calG))$ has hypothesis $(H_2)$.

    (3) Obviously $\calG$ is a proper $\calG$-space. And $\{\gamma\}\in \Delta_K(\calG)$ if and only if $s(\gamma)=\gamma\inv \gamma\in K$, so
    $|\Delta_K^0(\calG)|$ can be identified with $\calG_{K\cap \calG\units}$, which is $\calG$-compact.
\end{proof}

\begin{defn}\label{defn Rips complex}
    Use the same notation as above, $(\calG,\Delta_K(\calG))$ is called a Rips complex of $\calG$. We denote its geometrical realization by $P_K(\calG)$. If $K_1\subseteq K_2$ are two compact subsets of $\calG$, we use $\iota_{K_1,K_2}$ to denote the canonical inclusion of $P_{K_1}(\calG)$ into $P_{K_2}(\calG)$.
\end{defn}

We will have the following propositions to ensures that Rips complexes make a sufficient family of proper cocompact $\calG$-spaces to approximate a classifying space $\E \calG$ of proper actions of $\calG$.

\begin{prop}\label{Rips complexes G-homotopy sufficient 1}
    \textnormal{\cite[Lemma 3.2]{tu2010coarse}}
    Let $\calG$ be an \'etale groupoid, $Z$ be a locally compact Hausdorff proper $\calG$-compact $\calG$-space. Then there exists a compact subset $K$ of $\calG$ and a $\calG$-equivariant continuous map $Z\ra P_K(\calG)$. 
\end{prop}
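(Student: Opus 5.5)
We want to construct, for a proper $\calG$-compact $\calG$-space $Z$ with anchor map $\rho_Z$, a compact $K\subseteq\calG$ and a $\calG$-equivariant continuous map $Z\ra P_K(\calG)$. The plan is to build a \emph{$\calG$-equivariant partition of unity on $Z$ indexed by $\calG$}, and to read off the map $z\mapsto \mu_z$ from it.

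\textbf{Step 1: a cutoff function on $Z$.} Since $Z$ is proper and $\calG$-compact, a cutoff function exists: apply Proposition \ref{cuttoff function exist} to the proper \'etale groupoid $\calG\ltimes Z$. Its orbit space $Z/\calG$ is compact by Lemma \ref{G-compact iff quotient compact}, so we get a compactly supported $c\in C_c(Z,\mathbb R_{\geqslant 0})$ with
\[\sum_{\gamma\in\calG^{\rho_Z(z)}} c(\gamma\inv z)=1\quad\text{for all } z\in Z.\]
Here the sum is over $\calG^{\rho_Z(z)}$, obtained from the cutoff identity on $\calG\ltimes Z$ after substituting $\gamma\mapsto\gamma\inv$.

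\textbf{Step 2: definition of the map.} Set
\[\mu_z=\sum_{\gamma\in\calG^{\rho_Z(z)}} c(\gamma\inv z)\,\delta_\gamma .\]
This is a finitely supported probability measure on the fiber $r\inv(\rho_Z(z))$ of $\calG$. Finiteness of the support follows from properness and compactness of $supp(c)$, since only those $\gamma$ with $\gamma\inv z\in supp(c)$ contribute. Equivariance is immediate from the formula: for $g$ with $s(g)=\rho_Z(z)$, reindexing $\gamma\mapsto g\gamma$ gives $\mu_{gz}=g.\mu_z$.

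\textbf{Step 3: choice of $K$.} Let $L=supp(c)$ and
\[K=\{\gamma_1\inv\gamma_2:\ \gamma_1\inv z,\ \gamma_2\inv z\in L \text{ for some } z\}.\]
If $\gamma_i\inv z\in L$, then $\gamma_1\inv\gamma_2$ maps $\gamma_2\inv z\in L$ to $\gamma_1\inv z\in L$. Hence $K\subseteq\{g\in\calG: gL\cap L\neq\emptyset\}$, which is compact by Proposition \ref{groupoid action-proper action}. We therefore replace $K$ by this compact set. Then any two points $\gamma_1,\gamma_2\in supp(\mu_z)$ share the range $\rho_Z(z)$ and satisfy $\gamma_1\inv\gamma_2\in K$. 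So $supp(\mu_z)\in\Delta_K(\calG)$, and $\mu_z\in P_K(\calG)$.

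\textbf{Step 4: continuity, the main obstacle.} By definition of the weak-$*$ topology, it suffices to show that for each $f\in C_c(\calG,\mathbb R)$ the function
\[z\mapsto \mu_z(f)=\sum_{\gamma\in\calG^{\rho_Z(z)}} f(\gamma)\,c(\gamma\inv z)\]
is continuous. Consider $F(\gamma,z)=f(\gamma)c(\gamma\inv z)$ on the fibered space $\calG\times_{s,\rho_Z}$-type space
\[Y=\{(\gamma,z): r(\gamma)=\rho_Z(z)\}.\]
The projection $Y\ra Z$ is a local homeomorphism, being a pullback of $r$. Moreover $F$ is continuous, and its support is contained in the compact set $\{(\gamma,z):\gamma\in supp(f),\ \gamma\inv z\in L\}$; compactness again comes from properness. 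Lemma \ref{sum of fiber}, with the trivial bundle, then says that summing over fibers gives a continuous function $z\mapsto\mu_z(f)$.

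The only real care in this step is verifying that $supp(F)$ maps properly to $Z$, and this is exactly where properness of the action is used. If this step failed, the fallback would be to prove continuity locally using bisections covering $supp(f)$.
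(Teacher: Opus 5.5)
Your proposal is correct and is essentially the paper's proof. It uses the same cutoff function $c$ for $\calG\ltimes Z$ and the same measures: your $\mu_z$ is the paper's $\phi(z)=\sum_{\gamma\in\calG_{\rho(z)}}c(\gamma z)\delta_{\gamma\inv}$ after reindexing $\gamma\mapsto\gamma\inv$. It also uses the same compact set $K=\{\gamma\in\calG:\gamma\, supp(c)\cap supp(c)\neq\emptyset\}$ and gets continuity from Lemma \ref{sum of fiber}.

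One minor correction to Step 4: compactness of $supp(F)$ does not come from properness. It follows from compactness of $supp(f)$ and $supp(c)$ and continuity of the action, since that set is the image of $supp(f)\times_{s,\rho_Z} supp(c)$ under $(\gamma,w)\mapsto(\gamma,\gamma w)$. Properness is only needed for the cutoff function and for compactness of $K$.
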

\begin{proof}
    Let $\rho:Z\ra \calG\units$ be the anchor map. By proposition \ref{cuttoff function exist}, there exists a cutoff function $c\in C_c(Y)_+$ such that for any $z\in Z$, $\sum_{\gamma\in \calG_{\rho(z)}}c(\gamma z)=1$. Let $K=\{\gamma\in \calG:\gamma supp(c)\cap supp(c)\neq \emptyset\}$, which is compact. Then for every $z\in \calG$ we define $\phi(z)=\sum_{\gamma\in\calG_{\rho(z)}}c(\gamma z)\delta_{\gamma\inv}$. 

    For every $f\in C_c(\calG,\mathbb R)$,
    \[\phi(z)(f)=\sum_{\gamma\in \calG_{\rho(z)}}c(\gamma z)f(\gamma\inv),\]
    so $z\mapsto \phi(z)(f)$ is continuous by lemma \ref{sum of fiber}. So we proved that $\phi$ is continuous.

    If $\gamma,h\in supp(\phi(z))$, then $\gamma\inv h\in K$. Hence, $\phi(Z)\subseteq P_K(\calG)$. It is easy to check that $\phi$ is $\calG$-equivariant.
\end{proof}

\begin{prop}\label{Rips complexes G-homotopy sufficient 2}
    Let $\calG$ be an \'etale groupoid, $Z$ be a locally compact Hausdorff proper $\calG$-compact $\calG$-space, $K_1$ and $K_2$ be compact subsets of $\calG$, $\phi_1:Z\ra P_{K_1}(\calG)$, $\phi_2:Z\ra P_{K_2}(\calG)$ be two $\calG$-equivariant continuous maps. Then there exists a compact subset $K$ of $\calG$, such that $K\supseteq K_1\cup K_2$, and $\iota_{K_1,K}\circ \phi_1$ is $\calG$-homotopic to $\iota_{K_2,K}\circ \phi_2$.
    \[\xymatrix{
Z \ar[r]^{\phi_1}  \ar[d]^{\phi_2} & P_{K_1}(\calG) \ar[d]^{\iota_{K_1,K}}\\
P_{K_2}(\calG) \ar[r]^{\iota_{K_2,K}} & P_K(\calG)
}
\]
\end{prop}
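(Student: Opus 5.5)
We use the straight-line homotopy between the two maps inside a larger Rips complex. Put
\[K_3=\{\gamma\in\calG:\ \exists z\in Z,\ \exists \gamma_1\in supp(\phi_1(z)),\ \exists\gamma_2\in supp(\phi_2(z)),\ \gamma_1\inv\gamma_2=\gamma\}\]
and take $K=K_1\cup K_2\cup K_3\cup K_3\inv$. Define $H:Z\times[0,1]\ra P(\calG)$ by $H(z,t)=(1-t)\phi_1(z)+t\phi_2(z)$. Both $\phi_1(z)$ and $\phi_2(z)$ are supported in $r\inv(\rho(z))$, so $H(z,t)$ is a probability measure with finite support in one $r$-fibre. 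Any two points of $supp(H(z,t))$ either both lie in $supp(\phi_1(z))$ (quotient in $K_1$), or both lie in $supp(\phi_2(z))$ (quotient in $K_2$), or one lies in each (quotient in $K_3\cup K_3\inv$). Hence $H(z,t)\in P_K(\calG)$, $H(-,0)=\iota_{K_1,K}\circ\phi_1$ and $H(-,1)=\iota_{K_2,K}\circ\phi_2$.

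Equivariance of $H$ is immediate from linearity of the action on finitely supported measures: $\gamma.\big((1-t)\mu+t\nu\big)=(1-t)\gamma.\mu+t\,\gamma.\nu$, together with the equivariance of $\phi_1$ and $\phi_2$. Continuity also follows: for $f\in C_c(\calG,\mathbb R)$ we have $H(z,t)(f)=(1-t)\phi_1(z)(f)+t\phi_2(z)(f)$, which is jointly continuous in $(z,t)$. Since $P_K(\calG)$ carries the subspace weak-* topology, $H$ is continuous.

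The main obstacle is showing that $K_3$ is relatively compact; we then replace $K_3$ by its closure. We use $\calG$-compactness of $Z$. Choose a compact $L\subseteq Z$ with $Z=\calG L$. By Lemma~\ref{maps between G-compact sets are proper}, each $\phi_i$ is proper, since $P_{K_i}(\calG)$ is a proper $\calG$-space by Proposition~\ref{proper realization is proper} and Proposition~\ref{Rips complex}. So $\phi_i(L)$ is compact. By Lemma~\ref{D_K is sufficiently many} there is a compact $M\subseteq\calG$ with $supp(\phi_i(z))\subseteq M$ for all $z\in L$ and $i=1,2$.

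For general $z=\gamma l$ with $l\in L$, the supports are translated by $\gamma$, so the quotients $\gamma_1\inv\gamma_2$ are unchanged. Therefore $K_3\subseteq M\inv M$, which is compact. Then $K=K_1\cup K_2\cup\overline{K_3}\cup\overline{K_3}\inv$ works, and $H$ gives the required $\calG$-homotopy.
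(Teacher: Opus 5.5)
Your proposal is correct and takes essentially the same route as the paper. Both use the straight-line homotopy between $\phi_1$ and $\phi_2$ and choose a compact $L$ with $Z=\calG L$. Both then apply Lemma~\ref{D_K is sufficiently many} to get a compact $M$ containing the supports of all measures in $\phi_1(L)\cup\phi_2(L)$, and use the invariance of the quotients $\gamma_1\inv\gamma_2$ under left translation to land in $M\inv M$; the paper simply takes $K=K_1\cup K_2\cup M\inv M$. The one superfluous step is your appeal to Lemma~\ref{maps between G-compact sets are proper}: $\phi_i(L)$ is compact just because $\phi_i$ is continuous and $L$ is compact.
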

\begin{proof}
By proposition \ref{G-compact iff quotient compact} there exists a compact subset $L$ of $Z$ such that $Z=\calG L$. Then $\phi_1(L)$ is a compact subset of $P_{K_1}(\calG)$, $\phi_2(L)$ is a compact subset of $P_{K_2}(\calG)$. By lemma \ref{D_K is sufficiently many}, there exists compact subsets $K_1', K_2'$ of $\calG$, such that $\phi_1(L)\subseteq\{\mu\in P(\calG):supp(\mu)\subseteq K_1'\}$ and $\phi_2(L)\subseteq\{\mu\in P(\calG):supp(\mu)\subseteq K_2'\}$. Now take $K_3=K_1'\cup K_2'$,  $K=K_1\cup K_2\cup K_3\inv K_3$. Define the map
\[\psi:[0,1]\times Z\ra P(\calG), (t,z)\mapsto \psi_t(z)=t\phi_1(z)+(1-t)\phi_2(z).\]
Clearly $\psi$ is $\calG$-equivariant. For every $z\in Z$, there exists $\gamma\in \calG$ and $z'\in L$ such that $z=\gamma z'$ and hence $\psi_t(z)=\gamma \psi_t(z')$. We see that $supp(\psi_t(z'))\subseteq supp(\phi_1(z'))\cup supp(\phi_2(z'))\subseteq K_3$, this implies that for any $h_1,h_2\in supp(\psi_t(z))$, $h_1\inv h_2\in K_3\inv K_3\subseteq K$. That is the image of $\psi_t$ is contained in $P_K(\calG)$. Therefore, $\psi$ is a linear homotopy between $\iota_{K_1,K}\circ \phi_1$ and $\iota_{K_2,K}\circ \phi_2$.
\end{proof}

\begin{rem}
    This can be seen as an analog of \cite[Lemma 7.2.13]{willett2020higher}.
\end{rem}

\subsection{Barycentric subdivision}

\begin{defn}\label{defn typed}
   A $\calG$-simplicial complex $(Y,\Delta)$ is typed, if there exists a finite discrete space $T$ and a continuous map $\tau: Y\ra T$, such that
    \begin{enumerate}
        \item the map $\tau$ is $\calG$-invariant, that is for any $(\gamma,y)\in \calG\times_{s,\rho}Y$, $\tau(\gamma y)=\tau(y)$;
        \item for any $\delta\in \Delta$, $\tau|_{\delta}$ is injective.
    \end{enumerate}
\end{defn}

\begin{prop}\label{typed sim com}
    Let $(Y,\Delta)$ be a typed $\calG$-simplicial complex of dimension less than $n$. Then for any $1\leqslant m\leqslant n$, there is a $\calG$-equivariant homeomorphism
    \[|\Delta^m|\setminus |\Delta^{m-1}|\cong center(m,\Delta)\times \sigma_m,\]
    where $\sigma_m:=\{(t_0,\cdots, t_m)\in \mathbb R^{m+1}:\forall i, t_i>0;\sum_{i=0}^m t_i=1\}$, $\calG$ acts on $center(m,\Delta)\times \sigma_m$ by \[\gamma.(\mu,(t_0,\cdots, t_m))=(\gamma\mu,(t_0,\cdots, t_m))\] when $s(\gamma)=\tilde\rho(\mu)$.
\end{prop}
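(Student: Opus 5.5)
The idea is to use the typing map $\tau:Y\ra T$ to order the vertices of each simplex canonically, and then write every measure in $|\Delta^m|\setminus|\Delta^{m-1}|$ as its barycenter together with its barycentric coordinates. Every $\mu\in|\Delta^m|\setminus|\Delta^{m-1}|$ has support of exactly $m+1$ points, say $supp(\mu)=\{y_0,\cdots,y_m\}\in\Delta$, and $\tau$ is injective on this support. Fix a total order on $T$ and order the support so that $\tau(y_0)<\cdots<\tau(y_m)$; write $\mu=\sum_i t_i\delta_{y_i}$ with all $t_i>0$. Define
\[\Phi(\mu)=\Big(\tfrac{1}{m+1}\sum_{i=0}^m\delta_{y_i},\,(t_0,\cdots,t_m)\Big)\in center(m,\Delta)\times\sigma_m,\]
and define the candidate inverse
\[\Psi\big(\tfrac{1}{m+1}\textstyle\sum_i\delta_{y_i},(t_i)_i\big)=\sum_i t_i\delta_{y_i},\]
again using the type order of the vertices. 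These maps are clearly mutually inverse bijections. They are $\calG$-equivariant because $\tau$ is $\calG$-invariant, so $\gamma$ preserves the type order of vertices and does not change the coefficients.

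For continuity of $\Phi$, take a net $\mu_\lambda\ra\mu$ inside $|\Delta^m|\setminus|\Delta^{m-1}|$. Apply Lemma \ref{reindex} with the limit $\mu=\sum_{i=0}^m t_i\delta_{y_i}$. Since $\#supp(\mu_\lambda)=m+1$, the final clause gives $\hat\mu_\lambda=0$ eventually. So $\mu_\lambda=\sum_i c_i(\lambda)\delta_{y_i(\lambda)}$ with $y_i(\lambda)\ra y_i$ and $c_i(\lambda)\ra t_i$.

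The one point to check is that the reindexing from Lemma \ref{reindex} agrees with the type ordering. Because $\tau$ is continuous into a discrete space, it is locally constant. Hence $\tau(y_i(\lambda))=\tau(y_i)$ for large $\lambda$, so the two orderings coincide eventually. It follows that the coordinates of $\Phi(\mu_\lambda)$ converge to $(t_i)_i$. The barycenters converge in $A^m$ by Proposition \ref{AmBm}(1), since $[y_0(\lambda),\cdots,y_m(\lambda)]\ra[y_0,\cdots,y_m]$ in $B^m$.

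For continuity of $\Psi$, take a convergent net $(\nu_\lambda,t(\lambda))\ra(\nu,t)$. Lemma \ref{reindexing} (or Proposition \ref{AmBm}(1)) lets us write the supports as $y_i(\lambda)\ra y_i$, and again local constancy of $\tau$ makes this labelling agree with the type order eventually. Then for any $f\in C_c(Y,\mathbb R)$,
\[\Psi(\nu_\lambda,t(\lambda))(f)=\sum_i t_i(\lambda)f(y_i(\lambda))\ra\sum_i t_if(y_i),\]
which is convergence in the weak-* topology.

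The main obstacle is exactly this ordering issue. Without a typing, permutations of the vertices could make the coordinate map ill-defined, since the stabilizer could act on $\sigma_m$ by permuting coordinates. The local constancy of $\tau$ combined with Lemma \ref{reindex} resolves it. Finally, since $\sigma_m$ is an open $m$-simplex and hence homeomorphic to $\mathbb R^m$, this also yields the $\mathbb R^m$ form stated in the introduction.
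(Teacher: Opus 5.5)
Your proof is correct and follows essentially the same route as the paper. Both order each support by type, send $\mu$ to its barycenter together with its barycentric coordinates, and get continuity in both directions from Lemma \ref{reindex} and Lemma \ref{reindexing}. The only difference is cosmetic: the paper first splits $|\Delta^m|\setminus|\Delta^{m-1}|$ into the clopen pieces $\tilde\tau^{-1}(t)$ indexed by the type set of the support, while you apply local constancy of $\tau$ directly along nets, which amounts to the same thing.
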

\begin{proof}
    Without loss of generality, assume that $T=\{0,\cdots, N\}$. Let $T'$ be the finite set of all finite subset of cardinality $m+1$ of $T$, equipped with discrete topology. By lemma \ref{reindexing} and the continuity of $\tau$,
    \[\tilde\tau:|\Delta^m|\setminus |\Delta^{m-1}|\ra T', \mu\mapsto \{\tau(y):y\in supp(\mu)\}\]
    is continuous. Hence, for every $t\in T'$, $\tilde\tau\inv(t)$ is a clopen in $|\Delta^m|\setminus |\Delta^{m-1}|$. And we have $|\Delta^m|\setminus |\Delta^{m-1}|=\sqcup_{t\in T'}\tilde\tau\inv(t)$.
    
    Every $\mu\in |\Delta^m|\setminus |\Delta^{m-1}|$ can be uniquely expressed as $\sum_{k=0}^m c_k \delta_{y_k}$ such that $c_k\in (0,1]$, $\sum_{k=0}^mc_k=1$ and $\tau(y_0)<\tau(y_1)<\cdots <\tau(y_m)$. Every $c_k=\mu(\{y_k\})$. We define the map
    \[\phi:|\Delta^m|\setminus |\Delta^{m-1}|\cong center(m,\Delta)\times \sigma_m,\]
    \[\phi(\mu)=(\frac{1}{m+1}\sum_{k=0}^m\delta_{y_k},(c_0,\cdots, c_m)).\]
    We will show that for any $t=\{d_0<\cdots<d_m\}\in T'$, $\phi|_{\tilde\tau\inv(t)}$ is continuous. For any converging net $(\sum_{k=0}^mc_k(\lambda)\delta_{y_i(\lambda)})_\lambda$ in $\tilde\tau\inv(t)$ with limit $\sum_{k=0}^mc_k\delta_{y_k}$, by lemma \ref{reindex}, without loss of generality we can assume that for every $k$, $(c_k(\lambda))_\lambda$ is a net converging to $c_k$, $(y_k(\lambda))_\lambda$ is a net contained in $\tau\inv(k)$ converging to $y_k$. So $\frac{1}{m+1}\sum_{k=0}^m\delta_{y_k(\lambda)}$ converges to $\frac{1}{m+1}\sum_{k=0}^m\delta_{y_k}$ and $(c_0(\lambda),\cdots, c_m(\lambda))$ converges to $(c_0,\cdots,c_m)$. So $\phi|_{\tilde\tau\inv(t)}$ is continuous for all $t\in T$. So $\phi$ is continuous.

    Conversely, for any $t=\{d_0<\cdots< d_m\}\in T'$, define
    \[\psi_t: (center(m,\Delta)\cap \tilde\tau\inv(t))\times \sigma_m\ra |\Delta^m|\setminus |\Delta^{m-1}|,\]
    \[(\mu,(c_0,\cdots, c_m))\mapsto \sum_{k=0}^m c_k\delta_{\tau|_{supp(\mu)}\inv(d_k)}.\]
    The map $\psi_t$ is also continuous. We can define $\psi:center(m,\Delta)\times \sigma_m\ra |\Delta^m|\setminus |\Delta^{m-1}|$ by making its restriction on $(center(m,\Delta)\cap \tilde\tau\inv(t))\times \sigma_m$ to be $\psi_t$. It is easy to check that $\phi,\psi$ are $\calG$-equivariant and inverse to each other.
\end{proof}

\begin{prop}
    Let $\calG$ be an \'etale groupoid and $(Y,\Delta)$ be a $\calG$-simplicial complex of dimension less that $n$. We define $Y'=\sqcup_{i=1}^n A^i\subseteq P_{fin}(Y)$, define $\rho':Y\ra X$ to be $\tilde\rho|_{Y'}$. We define $\Delta'$ to be the family of finite subsets of $Y'\cap|\Delta|$ consisting of elements in form of $\{\mu_1,\mu_2,\cdots,\mu_k\}$ such that $supp(\mu_1)\subset supp(\mu_2)\subset\cdots\subset supp(\mu_k)$.

    Then $Y'$ is a well-defined \'etale $\calG$-space, $(Y',\Delta')$ is a well-define $\calG$-simplicial complex and $(Y',\Delta')$ is typed.
\end{prop}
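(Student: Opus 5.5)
The plan is to verify the three claims in turn, using Proposition \ref{AmBm} and Corollary \ref{center is closed in some etale space} throughout. Here $A^i$ has index $i$ from $0$ to $\dim\Delta$, so that vertices of $Y$ themselves, identified with $A^0$ via Lemma \ref{prob mes dim 0}, appear as barycenters.

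\textbf{$Y'$ is an étale $\calG$-space.} Each $A^i$ is a $\calG$-invariant subset of $P(Y)$, since $\gamma$ maps a barycenter of $i+1$ distinct points of a fiber to the barycenter of their distinct images. By Proposition \ref{P(Y) has continuous action} the action is continuous. By Proposition \ref{AmBm}(1),(2), $A^i\cong B^i$ and $\tilde\rho|_{A^i}$ is a local homeomorphism. Moreover $B^i$ is an open subset of $Y^{\times_X^{i+1}}/S_{i+1}$: the diagonal-free part is open because $\rho$ is a local homeomorphism, so locally distinct points stay distinct. Hence $B^i$ is locally compact Hausdorff.

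The only point needing care is that the union $\sqcup_i A^i$, taken with the subspace topology of $P(Y)$, is really a topological disjoint union. It suffices to show that each $A^i$ is open in $\cup_j A^j$. Take $\mu\in A^i$ and a net in $\cup_j A^j$ converging to $\mu$. By Lemma \ref{reindex}, eventually the net has $i+1$ atoms near the $y_k$ with weights tending to $1/(i+1)$, plus a remainder $\hat\mu_\lambda\to 0$. A measure in $A^j$ with $j>i$ has all weights equal to $1/(j+1)<1/(i+1)$, so this is impossible; a measure with $j<i$ has too few atoms. So the net lies eventually in $A^i$. Consequently $Y'$ is locally compact Hausdorff and $\rho'$ is a local homeomorphism.

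\textbf{$(Y',\Delta')$ is a $\calG$-simplicial complex.} A chain $\{\mu_1,\dots,\mu_k\}$ with strictly increasing supports, each lying in $|\Delta|$, lies in a single fiber, because all supports are contained in $supp(\mu_k)\subseteq Y_x$. Nonempty subsets of a chain are again chains. Each chain has at most $n$ elements, since the supports have strictly increasing cardinalities bounded by $\dim\Delta+1$, so $\dim\Delta'$ is finite. For $\gamma\in\calG_x$, we have $supp(\gamma\mu)=\gamma\, supp(\mu)$, which preserves strict inclusions and membership in $\Delta$. So $\gamma$ maps chains to chains.

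\textbf{Typing.} Take $T=\{0,\dots,\dim\Delta\}$ and define $\tau'(\mu)=\#supp(\mu)-1$, that is, $\tau'=i$ on $A^i$. This map is continuous because the $A^i$ are clopen in $Y'$ by the first step. It is $\calG$-invariant because the action preserves the cardinality of supports. It is injective on each simplex of $\Delta'$ because supports along a chain have strictly increasing cardinalities.

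The main obstacle I expect is the clopenness of the pieces $A^i$ inside $Y'$. This is handled by the weight argument above via Lemma \ref{reindex}, and the remaining steps are routine.
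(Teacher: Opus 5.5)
Your proposal is correct and follows essentially the paper's argument. $\calG$-invariance of the $A^i$ together with Proposition \ref{AmBm} gives the \'etale $\calG$-space structure, strictly increasing chains of supports give the simplicial complex, and the cardinality of the support (which you shift by one) gives the typing, continuous because the $A^i$ are clopen in $Y'$. The only real difference is the direction of the clopenness argument. The paper shows each $A^i$ is closed in $Y'$: by Lemma \ref{reindex}, a limit of barycenters of $i+1$ points is again such a barycenter. You instead show each $A^i$ is open, using the same lemma and the equal-weight structure. Both work, and your ordering also makes explicit that $Y'$ is locally compact and that $\rho'$ is a local homeomorphism, which the paper leaves implicit.
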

\begin{proof}
    Every $A^i$ is $\calG$-invariant subspace of $P_{fin}(Y)$, hence $Y'$ is a well-defined $\calG$-space.
    And by proposition \ref{AmBm}, $\rho'|_{A^i}$ are local homeomorphisms, so $\rho'$ is a local homeomorphism and $Y'$ is an \'etale $\calG$-space. It is easy to check that $(Y',\Delta')$ satisfies all other conditions to be a $\calG$-simplicial complex.

    We define $\tau:Y'\ra \{1,\cdots, n+1\}$ as $\tau(\mu)=\#supp(\mu)$. For $0\leqslant i\leqslant n$, if a net $(\mu_\lambda)_\lambda$ is contained in $A^i$ that converges to $\mu\in P_{fin}(Y)$, then by lemma \ref{reindex}, $\tau(\mu)=i$. Therefore, every $A^i$ is closed in $Y'$. This implies that every $A^i$ is clopen in $Y'$. Hence, $\tau$ is continuous. It is easy to see that $\tau$ is $\calG$-invariant and for any $\delta\in \Delta'$, $\tau|_{\delta}$ is injective. So we proved that $(Y',\Delta')$ is typed.
\end{proof}

\begin{defn}\label{defn barycentric subdivision}
    Use the same notation in the previous proposition, we call $(Y',\Delta')$ the barycentric subdivision of $(Y,\Delta)$.
\end{defn}

\begin{prop}\label{bary center subdivision invariance}
    The geometric realization $|\Delta'|$ is $\calG$-equivariantly homeomorphic to $|\Delta|$.
\end{prop}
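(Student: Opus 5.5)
The natural map is the barycenter map. Define
\[\beta:|\Delta'|\ra |\Delta|,\quad \beta\Big(\sum_{j=1}^k t_j\delta_{\mu_j}\Big)=\sum_{j=1}^k t_j\mu_j,\]
where $\{\mu_1,\dots,\mu_k\}\in\Delta'$ with $supp(\mu_1)\subset\cdots\subset supp(\mu_k)$. Every $\mu_j$ lies in $|\Delta|$, so $supp(\beta(\nu))=supp(\mu_k)\in\Delta$ and $\beta$ takes values in $|\Delta|$. Equivalently, $\beta(\nu)(f)=\nu(\hat f)$ with $\hat f(\mu):=\mu(f)$ for $\mu\in Y'$. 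Equivariance is immediate from the description of the action on finitely supported measures, $\gamma.\sum c_i\delta_{y_i}=\sum c_i\delta_{\gamma y_i}$, and $\beta$ commutes with the anchor maps.

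\textbf{Bijectivity.} This is the classical argument. Let $\mu=\sum c_i\delta_{y_i}\in|\Delta|$ and order the distinct weights $a_1>a_2>\cdots>a_k>0$. Set $S_j=\{y_i:c_i\geqslant a_j\}$ and let $\mu_j$ be the uniform measure on $S_j$, which lies in $A^{|S_j|-1}\cap|\Delta|$. Then
\[\mu=\sum_j (a_j-a_{j+1})|S_j|\,\mu_j,\qquad a_{k+1}=0.\]
This defines the inverse $\beta^{-1}$. Uniqueness holds because, in any flag decomposition, the weight of a vertex $y$ equals $\sum_{j:\,y\in S_j}t_j/|S_j|$. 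This quantity is non-increasing along the flag, so the flag and the coefficients are recovered from the level sets of $\mu$.

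\textbf{Continuity of $\beta$.} Take a convergent net $\nu_\lambda\ra\nu$ in $|\Delta'|$, where $Y'\ra X$ is a local homeomorphism. Apply Lemma \ref{reindex} in $Y'$: $\nu_\lambda$ splits into terms $c_j(\lambda)\delta_{\mu_j(\lambda)}$ with $\mu_j(\lambda)\ra\mu_j$ in $Y'$, plus a remainder $\hat\nu_\lambda$ of total mass tending to $0$. Pairing with $f\in C_c(Y)$ gives
\[\beta(\nu_\lambda)(f)=\sum_j c_j(\lambda)\mu_j(\lambda)(f)+\hat\nu_\lambda(\hat f).\]
The remainder satisfies $|\hat\nu_\lambda(\hat f)|\leqslant\|f\|_\infty\hat\nu_\lambda(Y')\ra0$, so $\beta(\nu_\lambda)(f)\ra\beta(\nu)(f)$.

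\textbf{Continuity of $\beta^{-1}$ (main obstacle).} I would argue with nets and Lemma \ref{reindex} applied in $Y$. Take $\mu_\lambda\ra\mu=\sum_{i=0}^m c_i\delta_{y_i}$. Write
\[\mu_\lambda=\sum_i c_i(\lambda)\delta_{y_i(\lambda)}+\hat\mu_\lambda,\qquad y_i(\lambda)\ra y_i,\ c_i(\lambda)\ra c_i,\ \hat\mu_\lambda(Y)\ra0.\]
The level-set filtration of $\mu_\lambda$ can be finer than that of $\mu$: ties $c_i=c_{i'}$ may split for $\lambda$, and tiny weights from $\hat\mu_\lambda$ appear. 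To handle this, pass to a subnet on which the combinatorial type is constant, i.e. the order pattern of all weights and the number of points in the support of $\hat\mu_\lambda$. This is possible because the dimension is bounded; I would realize it by partitioning into finitely many cases and using that a net converges if every subnet has a further subnet converging to the same limit. On such a subnet, compare the flags of $\mu_\lambda$ and $\mu$ directly.
\begin{itemize}
\item[(a)] Every flag set of $\mu_\lambda$ not involving small-weight points converges in $A^{\bullet}$ to a flag set of $\mu$. This uses the homeomorphism $A^m\cong B^m$ of Proposition \ref{AmBm}.
\item[(b)] The flag coefficient $t_j(\lambda)=(a_j(\lambda)-a_{j+1}(\lambda))|S_j(\lambda)|$ tends to $0$ whenever the corresponding gap closes in the limit, or whenever $S_j(\lambda)$ contains points of $\hat\mu_\lambda$. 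In the latter case, hypothesis $(H_1)$ via Lemma \ref{D_K is sufficiently many} keeps the supports in a compact set.
\end{itemize}
Terms with coefficients tending to $0$ vanish in the weak-$*$ limit by Lemma \ref{trivial lemma 2}, and the remaining terms converge to $\beta^{-1}(\mu)$. Hence $\beta^{-1}(\mu_\lambda)\ra\beta^{-1}(\mu)$ in $P(Y')$. The bookkeeping that matches the finer flags with the coarser limit flag is the delicate point.
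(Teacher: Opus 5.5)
This is essentially the paper's proof. It uses the same barycenter map, continuity via Lemma~\ref{reindex} in $Y'$ with a remainder of vanishing mass, and continuity of the inverse via Lemma~\ref{reindex} in $Y$, where terms coming from split ties or from points of $supp(\hat\mu_\lambda)$ carry coefficients tending to $0$ (the paper avoids subnets by comparing $\beta^{-1}(\mu_\lambda)$ directly with an auxiliary, possibly signed, measure $\mu'_\lambda$ built from the ordering of the limit, and removes the $\hat\mu_\lambda$-terms by a total-mass count).

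Two small corrections are needed. First, claim (a) holds only for flag sets whose gap stays open, i.e.\ $\{y_i(\lambda): c_i\geqslant w\}$ for the distinct weights $w$ of $\mu$: if a tie $c_0=c_1$ splits, $\{y_0(\lambda)\}$ converges to $\delta_{y_0}$, which is not a flag set of $\mu$, and it is (b) that disposes of it. Second, drop the appeal to $(H_1)$, since this proposition does not assume $(H_1)$, and a flag set meeting $supp(\hat\mu_\lambda)$ has coefficient at most $(\dim\Delta+1)\,\hat\mu_\lambda(Y)\to 0$ wherever it lies.
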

\begin{proof}
    We define
    \[\phi:|\Delta'|\ra |\Delta|,\quad \sum_{i=0}^m c_i\delta_{\mu_i}\mapsto \sum_{i=0}^m c_i\mu_i.\]
    
    Apparently $\phi$ is bijective and $\calG$-equivariant. For a converging net $(\mu'_\lambda)_\lambda$ in $|\Delta'|$  with limit $\sum_{k=1}^m c_k\delta_{\mu_k}\in |\Delta'|$, by lemma \ref{reindex}, we can assume that there are converging nets $c_k(\lambda)\ra c_k$ and $\mu_k(\lambda)\ra \mu_k$ such that $(\mu'_\lambda-\sum_{k=0}^m c_k(\lambda)\delta_{\mu_k(\lambda)})_\lambda$ is a net of positive measures on $Y'$ that converges to 0 and $(\mu'_\lambda-\sum_{k=0}^m c_k(\lambda)\delta_{\mu_k(\lambda)})(Y')\ra 0$. Then $(\phi(\mu'_\lambda)-\sum_{k=0}^m c_k(\lambda)\mu_k(\lambda))_\lambda$ is a net of positive measures on $Y$ and
    \[(\phi(\mu'_\lambda)-\sum_{k=0}^m c_k(\lambda)\mu_k(\lambda))(Y) = (\mu'_\lambda-\sum_{k=0}^m c_k(\lambda)\delta_{\mu_k(\lambda)})(Y')\ra 0. \]
    Hence, $\phi(\mu'_\lambda)\ra\sum_{k=0}^m c_k\mu_k$. So $\phi$ is continuous.

    For convenience of writing, for a finite subset $\{y_0,\cdots, y_m\}$ of $Y$ such that $\rho(y_1)=\cdots=\rho(y_j)$, we will write
    \[\lfloor y_0,\cdots, y_m\rfloor=\frac{1}{m+1}\sum_{i=0}^m\delta_{y_i}\in A^m\subseteq Y'.\]
    The inverse map $\phi\inv:|\Delta|\ra |\Delta'|$ is described as, if $\mu=\sum_{i=0}^m c_i\delta_{y_i}\in |\Delta|$ and $c_0\leqslant c_1\leqslant \cdots\leqslant c_m$, we have
    \begin{align*}
        \phi\inv(\mu)  = & (c_m-c_{m-1})\delta_{\delta_{y_m}}+2(c_{m-1}-c_{m-2})\delta_{ \lfloor y_{m-1},y_{m}\rfloor}+\cdots\\
        & +m(c_1-c_0)\delta_{\lfloor y_1,\cdots, y_m\rfloor}+(m+1)c_0 \delta_{\lfloor y_0,\cdots, y_m\rfloor}.
    \end{align*}

    We need to show that $\phi\inv$ is also a continuous map. Assume that $(\mu_\lambda)_\lambda$ is a net in $|\Delta|$ converging to $\mu\in |\Delta|=\sum_{i=0}^m c_i\delta_{y_i}$ such that $c_0\leqslant\cdots\leqslant c_m$, it suffices to prove that $\phi\inv(\mu_\lambda)\ra \phi\inv(\mu)$. By lemma \ref{reindex}, for $\lambda$ large enough, there exists $y_0(\lambda),\cdots, y_m(\lambda)\in supp(\mu_\lambda)$ and $c_i(\lambda)=\mu_\lambda(\{y_i(\lambda)\})$ for all $i$, such that for each $i$, $y_i(\lambda)\ra y_i$, $c_i(\lambda)\ra c_i$.
    
    Let $S_j(\lambda)=\lfloor y_{m+1-j}(\lambda),\cdots, y_m(\lambda)\rfloor\in Y'$ for all $1\leqslant j\leqslant m+1$. Define
    \[\mu_\lambda'=\sum_{j=1}^m j(c_{m+1-j}(\lambda)-c_{m-j}(\lambda))\delta_{S_j(\lambda)}+(m+1)c_0(\lambda)\delta_{S_{m+1}(\lambda)}.\]
    Clearly $\mu_\lambda'\ra \phi\inv(\mu)$.

    Unfortunately $\mu_\lambda'$ does not equal to $\phi\inv(\mu_\lambda)$ in general: $c_0(\lambda), \cdots , c_m(\lambda)$ may not be in an increasing order, and $supp(\mu_\lambda)$ may contain elements that are different from $y_0(\lambda),\cdots, y_m(\lambda)$. But we will see that $\phi\inv(\mu_\lambda)-\mu'_\lambda\ra 0$.

    Suppose that $n_1<\cdots<n_d$ are the integers such that
    \[c_0=\cdots=c_{n_1}<c_{n_1+1}=\cdots=c_{n_2}<\cdots <c_{n_d+1}=\cdots=c_m.\]
    And for convenience of writing let $n_0=-1$.
    
    For $0\leqslant l\leqslant d$, since $(c_{n_l+1}(\lambda))_\lambda,\cdots, (c_{n_{l+1}}(\lambda))_\lambda$
    are all nets that converge to same limit $c_{n_l}$, we define \[b_l(\lambda)=\max_{n_l+1\leqslant p<q\leqslant n_{l+1}}|c_p(\lambda)-c_q(\lambda)|,\quad b(\lambda)=\max_{0\leqslant l\leqslant d}b_l(\lambda).\]
    Then $b(\lambda)$ converges to 0. For a fixed $\lambda$, we can write $\mu_\lambda$ as
    \[\mu_\lambda=\sum_{i=0}^m c_i(\lambda)\delta_{y_i(\lambda)}+c_{-1}(\lambda)\delta_{y_{-1}(\lambda)}+\cdots +c_{-r}(\lambda)\delta_{y_{-r}(\lambda)}\]
    such that $c_{-1}(\lambda)\geqslant c_{-2}(\lambda)\geqslant\cdots \geqslant c_{-r}(\lambda)$, $\sum_{i=-r}^m c_i(\lambda)=1$ (here $r$ depends on $\lambda$). For $\lambda$ large enough, we have
    \[\min_{n_{l+1}+1\leqslant p\leqslant n_{l+2}}c_p(\lambda)>\max_{n_{l}+1\leqslant q\leqslant n_{l+1}}c_q(\lambda), \quad \forall 1\leqslant l
    \leqslant d,\]
    and
    \[\min_{0\leqslant p\leqslant n_1}c_p(\lambda)>1-\sum_{i=0}^mc_i(\lambda).\]
    (Since the two sides converge to two different limits.)

    These inequalities imply that, for a large enough and fixed $\lambda$, we can choose a permutation $\sigma$, such that for every $0\leqslant l\leqslant d$, $\sigma(n_l+1),\cdots, \sigma(n_{l+1})$ is a permutation of $n_l+1, \cdots, n_{l+1}$, and
    \[c_{-r}(\lambda)\leqslant \cdots \leqslant c_{-1}(\lambda)<c_{\sigma(0)}(\lambda)\leqslant \cdots\leqslant c_{\sigma(n_1)}(\lambda)<c_{\sigma(n_1+1)}(\lambda)\leqslant \cdots \leqslant c_{\sigma(m)},\]
    We write $\sigma(-p)=-p$ for $1\leqslant p
    \leqslant r(\lambda)$. For $1\leqslant j\leqslant m+r+1$, we define \[T_j(\lambda)=\lfloor y_{\sigma(m+1-j)}(\lambda),\cdots, y_{\sigma(m)}(\lambda)\rfloor\in Y'.\] Then especially $T_{m-n_l}(\lambda)=S_{m-n_l}(\lambda)$ for $0\leqslant l\leqslant d$.
    
    With these notations, we have
    \[\phi\inv(\mu_\lambda) =   \sum_{j=1}^{m+r}j(c_{\sigma(m+1-j)}(\lambda)-c_{\sigma(m-j)}(\lambda))\delta_{T_j(\lambda)}+ (m+1+r)c_{-r}(\lambda)\delta_{T_{m+r+1}(\lambda)}.
    \]

    Now we need to compare $\phi\inv(\mu_\lambda)$ with $\mu'_\lambda$. Let $I=\{1,2,\cdots, m+1\}\setminus \{m-n_l:0\leqslant l\leqslant d\}$. Then
    \[\phi\inv(\mu_\lambda)=A(\lambda)+B(\lambda)+C(\lambda),\]
    where
    \[A(\lambda)=\sum_{l=0}^d(m-n_l)(c_{\sigma(n_l+1)}(\lambda)-c_{\sigma(n_l)}(\lambda))\delta_{T_{m-n_l}(\lambda)},\]
    \[B(\lambda)=\sum_{j\in I} j(c_{\sigma(m+1-j)}(\lambda)-c_{\sigma(m-j)}(\lambda))\delta_{T_j(\lambda)}, \]
    \[C(\lambda)=\sum_{j=m+2}^{m+r}j(c_{m+1-j}(\lambda)-c_{m-j}(\lambda))\delta_{T_j(\lambda)}+(m+1+r)c_{-r}(\lambda)\delta_{T_{m+r+1}(\lambda)}.\]
    And
    \[\mu'_\lambda=A'(\lambda) + B'(\lambda),\]
    where
    \[A'(\lambda)=\sum_{l=0}^d(m-n_l)(c_{n_l+1}(\lambda)-c_{n_l}(\lambda))\delta_{S_{m-n_l}(\lambda)}+(m+1)c_{-1}(\lambda)\delta_{S_{m+1}(\lambda)},\]
    \[B'(\lambda)=\sum_{j\in I} j(c_{m+1-j}(\lambda)-c_{m-j}(\lambda))\delta_{S_j(\lambda)}.\]
    (Here $\sigma$ and $r$ depend on $\lambda$).

    Claim: $A(\lambda)-A'(\lambda)$, $B(\lambda)$, $B'(\lambda)$ and $C(\lambda)$ are nets of finitely supported measures that converge to 0. We know that $T_{m-n_l}(\lambda)=S_{m-n_l}(\lambda)$ for all $0\leqslant l\leqslant d$, and 
    \[(m-n_l)|c_{n_l+1}(\lambda)-c_{\sigma(n_l+1)}(\lambda)-c_{n_l}(\lambda)+c_{\sigma(n_l)}(\lambda)|\leqslant 2(m+1) b(\lambda)\ra 0.\]
    And $(m+1)c_{-1}(\lambda)$ is also a net that converges to 0. Hence, $A(\lambda)-A'(\lambda)$ is a net that converges to 0. All the coefficients in $B(\lambda)$ and $B'(\lambda)$ are controlled by $(m+1)b(\lambda)$, hence we have $B(\lambda)\ra 0$ and $B'(\lambda)\ra 0$.

    Finally,
    \begin{align*}
        C(\lambda)(Y')  & = \sum_{j=m+2}^{m+r}j(c_{m+1-j}(\lambda)-c_{m-j}(\lambda)) + (m+1+r)c_{-1}(\lambda)\\
        & = \sum_{k=-r}^{-1}c_k(\lambda)+(m+1)c_{-1}(\lambda)\\
        & = (1-\sum_{k=0}^m c_k(\lambda))+(m+1)c_{-1}(\lambda).
    \end{align*}
    Obviously $1-\sum_{k=0}^m c_k(\lambda)\ra 0$, $(m+1)c_{-1}(\lambda)\ra 0$. Hence, $C(\lambda)(Y')\ra 0$. And all $C(\lambda)$ are positive, therefore $C(\lambda)\ra 0$. We proved our claim.
    
    Hence, $\mu_\lambda'-\phi\inv(\mu_\lambda)\ra 0$, therefore $\phi\inv(\mu_\lambda)\ra\lim_\lambda\mu'_\lambda=\phi\inv(\mu)$, we proved that $\phi\inv$ is also continuous.
\end{proof}

\begin{prop}\label{promence property of barycenter subdivision}
Let $\calG$ be an \'etale groupoid and $(Y,\Delta)$ be a $\calG$-simplicial complex.
    \begin{enumerate}
        \item If $(Y,\Delta)$ has hypothesis $(H_2)$, then so is $(Y',\Delta')$.
        \item If $(Y,\Delta)$ has hypotheses $(H_1)$ and $(H_2)$, then $(Y',\Delta')$ has hypothesis $(H_1)$.
        \item If $(Y,\Delta)$ is proper, then so is $(Y',\Delta')$.
        \item If $(Y,\Delta)$ has hypotheses $(H_1)$ and $(H_2)$ and is $\calG$-compact, then $(Y',\Delta')$ is $\calG$-compact.
    \end{enumerate}
\end{prop}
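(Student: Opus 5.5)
The plan is to handle the four items in order, using Lemma \ref{reindex}, Proposition \ref{AmBm}, Lemma \ref{A^m is proper} and the $D_K$-lemmas. Throughout, $Y'=\sqcup_i A^i$ and each $A^i$ is clopen in $Y'$ (shown in the previous proposition). Hence convergence in $Y'$ means convergence inside a fixed $A^i$. By Proposition \ref{AmBm}(1) and Lemma \ref{reindexing}, such convergence amounts to convergence of the supports after reindexing.

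\emph{(1) Hypothesis $(H_2)$.} Take convergent nets $\mu^{(0)}_\lambda,\dots,\mu^{(k)}_\lambda$ in $Y'$ with $\{\mu^{(0)}_\lambda,\dots,\mu^{(k)}_\lambda\}\in\Delta'$. Passing to a subnet, we may assume the ordering of supports by inclusion is independent of $\lambda$, say $supp(\mu^{(0)}_\lambda)\subset\cdots\subset supp(\mu^{(k)}_\lambda)$. Each $\mu^{(j)}_\lambda$ lies in a fixed $A^{i_j}$, and its limit $\mu^{(j)}$ lies there too. Reindex the support of the top simplex as $y_0(\lambda),\dots,y_{i_k}(\lambda)$, converging to distinct points $y_0,\dots,y_{i_k}$ by Lemma \ref{reindexing}. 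Since each smaller support is a subset of $\{y_0(\lambda),\dots,y_{i_k}(\lambda)\}$, and there are finitely many index patterns, a further subnet fixes which indices occur. The limits then satisfy the same strict inclusions, because the limit points are distinct. Applying $(H_2)$ for $\Delta$ to the top support shows it lies in $\Delta$, so each $\mu^{(j)}\in Y'\cap|\Delta|$. Hence the limit set lies in $\Delta'$.

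\emph{(2) Hypothesis $(H_1)$.} Let $K\subseteq Y'$ be compact. Since each $A^i$ is clopen, we may assume $K\subseteq A^i$. The set $L=\bigcup_{\mu\in K}supp(\mu)\subseteq Y$ is compact: it is the image of the compact preimage of $K$ in $B^i$ under the coordinate projections. If $\{\nu,\mu\}\in\Delta'$ with $\mu\in K$, then $supp(\nu)$ and $supp(\mu)$ are comparable and both lie in $\Delta$. So $supp(\nu)\subseteq C_L\cup L$, which is compact by $(H_1)$ (and $L\subseteq C_L$ already, since singletons of $\Delta$ are contained in larger simplices). Thus $C'_K\subseteq D_{C_L\cup L}\cap Y'$. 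By Lemma \ref{simplicial complex-compact supported is compact}, $D_{C_L\cup L}$ is compact in $|\Delta|\subseteq P(Y)$. Intersecting with the closed set $\bigcup_{i\le n}A^i\cap|\Delta|$ gives a compact set, using Proposition \ref{AmBm}(3) and the clopenness of the $A^i$ inside the finite-support part, again via Lemma \ref{reindex}. Finally, $C'_K$ is closed by Lemma \ref{H_2 implies C_K is closed} together with item (1). Hence $C'_K$ is compact. The main care is that the topology on $Y'$ agrees with the one induced from $P(Y)$ on this compact set; I would check this directly with Lemma \ref{reindex}.

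\emph{(3) Properness and (4) $\calG$-compactness.} For (3), Lemma \ref{A^m is proper} shows each $A^i$ is a proper $\calG$-space, so the finite disjoint union $Y'$ is proper. For (4), $|\Delta'^0|=Y'\cap|\Delta|=\bigsqcup_i center(i,\Delta)$. By Proposition \ref{simplicial complex-cocompact realisation}, $|\Delta|=\calG D$ for a compact $D$ of the form $D_{C_K}$. Then $\bigcup_{\mu\in D\cap Y'}$ of centers is $\calG$-generating: for each $\mu\in|\Delta|$, the center of $supp(\mu)$ is $\gamma$ times the center of $supp(\gamma^{-1}\mu)$, with $supp(\gamma^{-1}\mu)\subseteq C_K$. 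So $|\Delta'^0|=\calG\,(D_{C_K}\cap Y')$, and this intersection is compact by the same argument as in (2). I expect (2), namely compactness of these sets inside $Y'$, to be the main obstacle.
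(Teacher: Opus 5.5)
Your proposal is correct and follows essentially the same route as the paper. Item (1) uses reindexing plus $(H_2)$. Item (2) traps $C'_K$ as a closed subset (Lemma \ref{H_2 implies C_K is closed} with (1)) inside a compact set of the form $D_{K''}\cap Y'$, via $(H_1)$ and Lemma \ref{simplicial complex-compact supported is compact}. Item (3) is Lemma \ref{A^m is proper}, and item (4) uses the $\calG$-compactness of $|\Delta|$.

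The differences are minor. You obtain the compact vertex set by projecting from $B^i$ rather than via Lemma \ref{D_K is sufficiently many}. You also make explicit, via Lemma \ref{reindex}, that $Y'\cap|\Delta|$ is closed in $|\Delta|$; the paper uses this tacitly when it passes from compactness in $|\Delta|$ to compactness in $Y'$.
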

\begin{proof}
    (1) Assume that $(\mu_\lambda^{(0)})_\lambda,\cdots,(\mu_\lambda^{(m)})_\lambda$ are convergent nets and with limits $\lim_\lambda \mu_\lambda^{(i)}=\mu^{(i)}\in Y'$, and for any $\lambda$, $\{\mu_\lambda^{(0)},\cdots,\mu_\lambda^{(m)}\}\in \Delta'$. Without loss of generality, after replacing by subnets, we assume that for each $i$, $\#supp(\mu_\lambda^{(i)})$ is a constant $n_i$. Since $(Y,\Delta)$ has hypothesis $(H_2)$, by proposition \ref{AmBm}, $center(n_i,\Delta)$ is closed in $A^{n_i}$ and therefore closed in $Y'$, we have $\mu^{(i)}\in center(n_i,\Delta)=|\Delta|\cap A^{n_i}$.

    Without loss of generality, we can assume that $n_0<\cdots<n_m$, so for every $\lambda$, $supp(\mu_\lambda^{(0)})\subset\cdots\subset supp(\mu_\lambda^{(m)})$. Using lemma \ref{reindexing} and the hypothesis $(H_2)$ of $(Y,\Delta)$, we can prove that $supp(\mu^{(0)})\subset\cdots\subset supp(\mu^{(m)})$. Hence, $\{\mu^{(0)},\cdots, \mu^{(m)}\}\in \Delta'$.

    (2) Let $K'$ be a compact subset of $Y'$. Use the previous result, $(Y',\Delta')$ has also hypothesis $(H_2)$. And $|\Delta|\cap Y'$ is closed in $Y'$ by the hypothesis $(H_2)$, this implies that $K'\cap |\Delta|$ is compact. Using lemma \ref{D_K is sufficiently many}, there exists a compact subset $K$ of $Y$ such that $K'\cap |\Delta|\subseteq D_K=\{\mu\in \Delta:supp(\mu)\subseteq K\}$. Now
    \begin{align*}
        & \{\mu\in Y':\exists\mu'\in K', \{\mu,\mu'\}\in \Delta'\} \\
        & = \{\mu\in Y'\cap |\Delta|:\exists\mu'\in K'\cap |\Delta|, \{\mu,\mu'\}\in \Delta'\}\\
        & \subseteq \{\mu\in Y'\cap |\Delta|:\exists\mu'\in D_K\cap Y', \{\mu,\mu'\}\in \Delta'\}\\
        & \subseteq \{\mu\in Y'\cap |\Delta|:\exists \mu'\in Y'\cap D_K, supp(\mu)\subseteq supp(\mu') \text{ or }supp(\mu')\subseteq supp(\mu)\}\\
        & \subseteq \{\mu\in Y'\cap |\Delta|: supp(\mu)\subseteq K\cup C_{K}\},
    \end{align*}
    where $C_{K}$ is a compact subset of $Y$ by hypothesis $(H_1)$ of $(Y,\Delta)$, then by lemma \ref{simplicial complex-compact supported is compact}, $\{\mu\in |\Delta|: supp(\mu)\subseteq K\cup C_{K}\}$ is compact. And by lemma \ref{H_2 implies C_K is closed}, $\{\mu\in Y':\exists \mu'\in K',\{\mu,\mu'\}\in \Delta'\}$ is a closed subset, hence also compact. So we proved that $(Y',\Delta')$ has hypothesis $(H_1)$.

    (3) Using lemma \ref{A^m is proper}, every $A^i$ is a proper $\calG$-space, hence $Y'$ is a proper $\calG$-space.

    (4) By the definition ${\Delta'}^{(0)}=\{\{\mu\}:\mu\in Y'\cap |\Delta|\}$. Then by lemma \ref{prob mes dim 0}, $|{\Delta'}\units|=Y'\cap |\Delta|$, which is a $\calG$-invariant closed subset of $|\Delta|$, while we see that $|\Delta|$ is $\calG$-compact by proposition \ref{simplicial complex-cocompact realisation}.
\end{proof}

\section{Induction-restriction adjunction}
In this section our main goal is to prove the following compression isomorphism. Let $\calG$ be an \'etale groupoid with a proper open subgroupoid $\calH\subseteq \calG$ (which is therefore relatively clopen by corollary \ref{proper subgroupoid is rel clopen}), $(A,\alpha)$ be an $\calH$-\cst-algebra, there is an $\calH$-equivariant embedding $i_A:A\ra \ind_{\calG^{\calH\units}_{\calH\units}}A$ given by
\[i_A(a)(\gamma)=\begin{cases}
    \alpha_{\gamma\inv}(a(r(\gamma))), & \gamma\in \calH\\
    0_{s(\gamma)}, & \gamma\not\in \calH.
\end{cases}\]
By lemma 3.21 of \cite{bonicke2020going}, let $\Omega$ be $\calG_{\calH\units}$, the restriction $(\ind_\Omega A)|_{\calH}$ of the $\calG$-\cst-algebra $\ind_\Omega A$ to $\calH$ can be identified with $\ind_{\calG^{\calH\units}_{\calH\units}}A$. The compression homomorphism $comp^\calG_\calH$ is defined as the composite
\[\kk^\calG_*(\ind_\Omega A,B)\xrightarrow{res^{\calG}_\calH}\kk^\calH_*(\ind_{\calG^{\calH\units}_{\calH\units}}A, B|_\calH)\xrightarrow{i_A^*}\kk^\calH_*(A,B|_\calH).\]

\begin{thm}\label{compression isomorphism}
    Let $\calG$ be a second countable \'etale groupoid with a proper open subgroupoid $\calH\subseteq \calG$. Let $\Omega=\calG_{\calH\units}$. If $A$ is a separable $\calH$-\cst-algebra and $B$ is a separable $\calG$-\cst-algebra, then
    \[comp^{\calG}_{\calH}: \kk^{\calG}_*(\ind_\Omega A,B)\ra \kk^{\calH}_*(A,B|_{\calH})\]
    is an isomorphism.
\end{thm}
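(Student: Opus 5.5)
We will construct an explicit inverse
\[\mathrm{infl}:\kk^{\calH}_*(A,B|_\calH)\ra \kk^{\calG}_*(\ind_\Omega A,B),\qquad x\mapsto [\epsilon_B]\circ \ind_\Omega(x),\]
and check that $comp^\calG_\calH$ and $\mathrm{infl}$ are mutually inverse. This is the usual unit/counit argument for an adjunction. In the spirit of the bicategorical viewpoint announced in the introduction, the adjunction should come from the correspondences $\Omega=\calG_{\calH\units}:\calG\leftarrow\calH$ and its ``transpose'' $\Omega^*=\calG^{\calH\units}:\calH\leftarrow\calG$. The right $\calG$-action on $\Omega^*$ is free, proper and \'etale, so $\Omega^*$ is a correspondence. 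Moreover $\ind_{\Omega^*}B\cong B|_\calH$, the composite $\Omega^*\circ\Omega\cong\calG^{\calH\units}_{\calH\units}$ gives $\ind_{\Omega^*}\ind_\Omega A\cong\ind_{\calG^{\calH\units}_{\calH\units}}A$ (lemma 3.21 of \cite{bonicke2020going}), and $\Omega\circ\Omega^*\cong\calG_{\calH\units}\times_\calH\calG^{\calH\units}$.

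The first step is to establish functoriality. Following \cite{miller2024functors}, $\ind_\Omega$ extends to a functor $\kk^\calH\ra\kk^\calG$ via $[E,\pi,T]\mapsto[\ind_\Omega E,\ind_\Omega\pi,\ind_\Omega T]$. Here $\ind_\Omega T$ is defined through proposition \ref{operators on induced modules}, and compactness of $[\ind_\Omega\pi(\xi),\ind_\Omega T]$ etc.\ uses the identification $\calK(\ind_\Omega E)\cong\ind_\Omega\calK(E)$. Equivariance is checked on the dense pre-Hilbert module $\ind_{\Omega,c}E$ via lemma \ref{to be equivariant adjointable op} and corollary \ref{to be equivariant representation}. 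Restriction $res^\calG_\calH$ is $\ind_{\Omega^*}$. Separability and $\sigma$-unitality are preserved by the preceding proposition, so everything stays in the separable setting.

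The second step is to build the unit and counit at the level of 2-cells. The inclusion $\calH\hookrightarrow\calG^{\calH\units}_{\calH\units}$ is a bi-equivariant map of correspondences $\calH\ra\Omega^*\circ\Omega$. It induces the embedding $i_A:A\ra\ind_{\calG^{\calH\units}_{\calH\units}}A$, which gives the unit $\eta_A=[i_A]$. The multiplication map $m:\Omega\circ\Omega^*\ra\calG$, $[\gamma_1,\gamma_2]\mapsto\gamma_1\gamma_2$, is a $\calG$-bi-equivariant local homeomorphism. Since $\calH$ is proper, $m$ is also proper: its fibres are finite because $\calH$ is proper and its action on the relevant sets is cocompact. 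A proper \'etale map of correspondences should induce a $\calG$-equivariant Hilbert module $E_m$ from $\ind_{\Omega\circ\Omega^*}B\cong\ind_\Omega(B|_\calH)$ to $B$. Concretely, for $\xi\in\ind_\Omega(B|_\calH)$ we set $\epsilon(\xi)(x)=\sum_{\gamma\in\Omega^x/\calH}\beta_\gamma(\xi(\gamma))$, which is well defined by properness, using lemma \ref{sum of fiber}. Alternatively, $\epsilon$ can be realised as a Kasparov cycle $[\,\cdot\,,\cdot,0]$ with compact left action, using corollary \ref{kk-equiv for inclusion} when it is a genuine homomorphism into the multiplier algebra. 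This gives the counit $\epsilon_B\in\kk^\calG(\ind_\Omega(B|_\calH),B)$.

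The third step is to verify the triangle identities. In the bicategory of correspondences these reduce to the two composites
\[\Omega\xrightarrow{}\Omega\circ\Omega^*\circ\Omega\xrightarrow{}\Omega,\qquad \Omega^*\xrightarrow{}\Omega^*\circ\Omega\circ\Omega^*\xrightarrow{}\Omega^*,\]
each of which is the identity on points. One must then show that the assignment from correspondences and proper 2-cells to $\kk$-functors and natural transformations is pseudofunctorial: composition of correspondences must correspond to composition of induction functors up to coherent natural isomorphisms $\ind_\Omega\ind_\Lambda\cong\ind_{\Omega\circ\Lambda}$, and 2-cells must be compatible with whiskering. Naturality of $\eta$ and $\epsilon$ in $\kk$, rather than only for $*$-homomorphisms, follows from the same construction applied to Kasparov cycles. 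Granting this, $comp=\eta^*\circ\ind_{\Omega^*}$ and $\mathrm{infl}$ are inverse by the standard formal argument: $\epsilon\circ\ind_\Omega(\eta^*\ind_{\Omega^*}y)=y$ and conversely. The degree-one case follows by tensoring with $C_0(\mathbb R)$, or via Bott periodicity.

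I expect the main obstacle to be the second step, specifically making $\epsilon_B$ a genuine equivariant Kasparov class and proving its naturality with respect to arbitrary Kasparov cycles, not just homomorphisms. The properness and \'etaleness of $m$ must yield compactness of the fibrewise finite sums, and the continuity of the resulting $\calG$-action has to be checked through lemma \ref{continuity of groupoid action of Hilbert module} on diamond-type generators from proposition \ref{diamond construction}. If naturality at the cycle level proves messy, my fallback is to verify the two triangle identities directly on cycles: compute $\ind_{\Omega^*}(\epsilon_B)\circ\eta_{B|_\calH}$ explicitly as $[B|_\calH,\mathrm{id},0]$, and compute $\epsilon_{\ind_\Omega A}\circ\ind_\Omega(\eta_A)$ as the identity on $\ind_\Omega A$, using a homotopy as in proposition \ref{functoriality of tau}.
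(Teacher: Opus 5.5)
Your architecture is the paper's own. You realise the adjunction in the bicategory of correspondences by $\Omega=\calG_{\calH\units}$ and $\Omega^*=\calG^{\calH\units}$, with unit the inclusion $\calH\ra\calG^{\calH\units}_{\calH\units}\cong\Omega^*\circ\Omega$ and counit the multiplication $m:\Omega\circ\Omega^*\ra\calG$. The triangle identities are checked on points, and everything is transported to $\kk$ by a pseudofunctor together with $res^\calG_\calH\cong\ind_{\Omega^*}$.

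\textbf{The gap is your construction of the counit.} The map $m$ is not proper. The fibre $m\inv(g)=\{[\gamma,\gamma\inv g]:\gamma\in\calG^{r(g)}_{\calH\units}\}$ is in bijection with $\calG^{r(g)}_{\calH\units}/\calH$, which is infinite in general. Take $\calG=\Gamma$ an infinite discrete group and $\calH=\{e\}$: then $\Omega\circ\Omega^*=\Gamma\times\Gamma$, $m(a,b)=ab$, and every fibre is a copy of $\Gamma$. Accordingly:
\begin{itemize}
\item your formula $\epsilon(\xi)(x)=\sum_{\gamma\in\Omega^x/\calH}\beta_\gamma(\xi(\gamma))$ is an infinite sum that does not converge for general $\xi\in\ind_\Omega(B|_\calH)$ (here $\xi\in c_0(\Gamma,B)$);
\item even where the sum is finite, it is an integration along fibres and is not multiplicative.
\end{itemize}
So it yields neither a $*$-homomorphism nor a class in $\kk^\calG(\ind_\Omega(B|_\calH),B)$. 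Properness of $\bar m$, i.e.\ of $\tilde r:\calG_{\calH\units}/\calH\ra\calG\units$, is exactly the extra hypothesis the paper imposes for the \emph{opposite} adjunction $res^\calG_\calH\dashv\ind_\Omega$, where one pulls back along the counit. Properness of $\calH$ is used only to make $\calH$ relatively clopen, so that $\Omega$ is a correspondence at all.

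\textbf{The missing ingredient is a covariant wrong-way class that uses only that $m$ is a local homeomorphism.} For a bi-equivariant map $f:\Omega_1\ra\Omega_2$, the paper completes $\ind_{\Omega_1,c}A$ for the $\ind_{\Omega_2}A$-valued inner product $\langle\xi_1,\xi_2\rangle(\omega')=\sum_{\omega\in f\inv(\omega')}\xi_1(\omega)^*\xi_2(\omega)$, and lets $\ind_{\Omega_1}A$ act by multiplication. The sum is finite because of compact support modulo $\calH$ and lemma \ref{uniformly finite fiber}, not because of any properness. The action is by compact operators because $\bar f$ is a local homeomorphism: a section supported where $\bar f$ is injective acts as a rank-one operator, as in lemma \ref{local homeo acts like compact operator}. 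This gives $\ind_{f,A}=[E_{f,A},\pi_{f,A},0]=[\Upsilon_{f,A}\inv]\otimes\tau(\bar f!)$.

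The counit is then $\ind_{m,B}$ composed with the canonical isomorphisms $\ind_\Omega(B|_\calH)\cong\ind_{\Omega\circ\Omega^*}B$ and $\ind_\calG B\cong B$. In the example above it is the class of $\ell^2(\Gamma)\otimes B$ with $c_0(\Gamma,B)$ acting by multiplication and zero operator.

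The $\tau(\bar f!)$ description is also what makes the naturality you wave through provable. By Lafforgue's decomposition property one reduces to $*$-homomorphisms, where proposition \ref{functoriality of tau} applies. Compatibility of these 2-cells with whiskering and with $\ind_\Omega\ind_\Lambda\cong\ind_{\Omega\circ\Lambda}$ needs explicit unitary equivalences of the modules $E_{f,A}$ (lemmas \ref{lax fun constraint part 1} and \ref{lax fun constraint part 2}). That ``granted'' pseudofunctoriality is where most of the paper's work lies.

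A smaller point: in a Kasparov cycle $T$ is not $\calH$-equivariant, so it does not define an operator on $\ind_\Omega E$ via proposition \ref{operators on induced modules}. Miller averages it with a cutoff function for $\Omega\rtimes\calH$.
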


It can be seen as a generalization of \cite[theorem 6.2]{bonicke2020going}. In fact, even though the setting is slightly different, we can prove it using the same method word verbatim as in \cite{bonicke2020going}, since we have pointed it out that proper open subgroupoids are relatively clopen(corollary \ref{proper subgroupoid is rel clopen}). However here we would like to provide a bicategorical proof. We will provide an observation that this induction-restriction adjunction comes from an adjunction in the bicategory of second countable \'etale groupoids. \cite{johnson20212} is our main reference about bicategories. Remark that we accept hom categories that may not be locally small.

\subsection{Bicategory \texorpdfstring{$\frgr$}{Gr} of \'etale groupoids}

In comparison with the definition of a bicategory \cite[Definition 2.1.3]{johnson20212}, consider the following data:
\begin{enumerate}
    \item Objects: the class of all \textbf{second countable} \'etale groupoids $B_0$. These are the 0-cells.
    \item Hom categories: for each pair of elements $\calG_1,\calG_2$ in $B_0$, there is an associated category $\mathfrak{Gr}(\calG_1,\calG_2)$, such that
    \begin{enumerate}
        \item its objects are \textbf{second countable} correspondences $\Omega:\calG_2\leftarrow \calG_1$. These are the 1-cells;
        \item for $\Omega_,\Omega'\in \mathfrak{Gr}(\calG_1,\calG_2)$, a morphism from $\Omega$ to $\Omega'$ is a $\calG_2,\calG_1$-equivariant continuous map $f:\Omega\ra \Omega'$. These are the 2-cells;
        \item for $\Omega,\Omega',\Omega''\in \mathfrak{Gr}(\calG_1,\calG_2)$, the vertical composition is composition of maps
        \[\mathfrak{Gr}(\calG_1,\calG_2)(\Omega'',\Omega')\times \mathfrak{Gr}(\calG_1,\calG_2)(\Omega',\Omega)\ra \mathfrak{Gr}(\calG_1,\calG_2)(\Omega'',\Omega), \]
        \[(g,f)\mapsto g\circ f;\]
        \item identity 2-cells are the identity maps $id_\Omega:\Omega\ra \Omega$.
    \end{enumerate}
    \item Identity 1-cells: for $\calG\in B_0$, the identity 1-cell in $\mathfrak{Gr}(\calG,\calG)$ is $\calG$ itself seen as a correspondence. Let $\{*\}$ be the category of one element, whose only morphism is the identity morphism. Define $1_\calG$ as the functor $1_\calG:\{*\}\ra \frgr(\calG,\calG)$ that sends $*$ to $\calG$.
    \item Horizontal compositions: for $\calG_1,\calG_2,\calG_3\in B_0$, the functor
    \[c_{\calG_1,\calG_2,\calG_3}:\frgr(\calG_2,\calG_1)\times \frgr(\calG_3,\calG_2)\ra \frgr(\calG_3,\calG_1)\]
    that sends $(\Omega,\Omega')$ to $\Omega\circ \Omega'=\Omega\times_{\calG_2}\Omega'$, and the horizontal composition of a pair of 2-cells $(f,f')$ is the 2-cell $[f,f']$. We omit the index and write $c$ when there is no ambiguity.
    \item Associators: for $\calG_1,\calG_2,\calG_3,\calG_4\in B_0$, the natural isomorphism $a_{\calG_1,\calG_2,\calG_3,\calG_4}$ from the functor \[c(c\times id_{\frgr(\calG_4,\calG_3)}):\frgr(\calG_2,\calG_1)\times \frgr(\calG_3,\calG_2)\times \frgr(\calG_4,\calG_3)\ra \frgr(\calG_4,\calG_1)\]
    \[(\Omega,\Omega',\Omega'')\mapsto (\Omega\circ \Omega')\circ \Omega''\]
    to the functor \[c(id_{\frgr(\calG_2,\calG_1)}\times c):\frgr(\calG_2,\calG_1)\times \frgr(\calG_3,\calG_2)\times \frgr(\calG_4,\calG_3)\ra \frgr(\calG_4,\calG_1)\]
    \[(\Omega,\Omega',\Omega'')\mapsto \Omega\circ (\Omega'\circ \Omega''),\]
    is defined as, at every $(\Omega,\Omega',\Omega'')$, the component 2-cell is the following $\calG_1,\calG_4$-equivariant homeomorphism as proved in \cite[Lemma 6.4]{antunes2021bicategory}
    \[a_{\Omega,\Omega',\Omega''}:(\Omega\circ \Omega')\circ \Omega''\ra \Omega\circ (\Omega'\circ \Omega''),\]
    \[[[\omega,\omega'],\omega'']\mapsto [\omega,[\omega',\omega'']].\]
    \item Unitors: for every $\calG_1,\calG_2\in B_0$, the left unitor is defined as the natural isomorphism $l_{\calG_2,\calG_1}$ from the functor
    \[c(1_{\calG_1}\times id_{\frgr(\calG_2,\calG_1)}):\frgr(\calG_2,\calG_1)\ra \frgr(\calG_2,\calG_1), \Omega\mapsto \calG_1\circ \Omega\]
    to the functor $id_{\frgr(\calG_2,\calG_1)}$, that at every $\Omega\in \frgr(\calG_2,\calG_1)$, the component 2-cell is the following $\calG_1,\calG_2$-equivariant homeomorphism by \cite[Lemma 6.3]{antunes2021bicategory},
    \[l_\Omega:\calG_1\circ \Omega\ra \Omega, [\gamma,\omega]\mapsto \gamma\omega.\]
    Similarly, the right unitor is the natural isomorphism 
    \[r_{\calG_2,\calG_1}:c(id_{\frgr(\calG_2,\calG_1)}\times i_{\calG_2})\ra id_{\frgr(\calG_2,\calG_1)}\] that at $\Omega\in \frgr(\calG_2,\calG_1)$, the component 2-cell is the following $\calG_1,\calG_2$-equivariant homeomorphism
    \[r_\Omega:\Omega\circ \calG_2\ra \Omega,[\omega,\gamma']\mapsto\omega\gamma'.\]
\end{enumerate}

We will often omit the subscripts of $c,a,l,r$.

\begin{prop}
    The data above defines a bicategory $\frgr$.
\end{prop}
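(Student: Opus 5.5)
We will verify the axioms of \cite[Definition 2.1.3]{johnson20212} one at a time, leaning on \cite{antunes2021bicategory} for the topological facts.

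\textbf{Well-definedness of the data.} First, each $\frgr(\calG_1,\calG_2)$ is a category: composites of equivariant continuous maps are equivariant continuous, and identities are units. Next, horizontal composition must land in $\frgr$. For second countable correspondences $\Omega:\calG_1\leftarrow\calG_2$ and $\Omega':\calG_2\leftarrow\calG_3$, the balanced product $\Omega\circ\Omega'$ is a correspondence by \cite[Section 6]{antunes2021bicategory}. It is second countable because $\Omega\times_{\calG_2\units}\Omega'$ is second countable and the quotient map is open, by proposition \ref{orbit space of free proper action of etale grpd}.

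For 2-cells $f,f'$, the map $[f,f']$ is well defined since $f$ and $f'$ intertwine the $\calG_2$-actions. It is continuous because $q\circ(f\times f')$ is continuous and the quotient map $q$ is open (a local homeomorphism). Functoriality of $c$, namely $[g\circ f,g'\circ f']=[g,g']\circ[f,f']$ and $[id,id]=id$, is immediate on representatives $[\omega,\omega']$.

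The identity 1-cell $\calG$ is a correspondence $\calG\leftarrow\calG$: the right action of $\calG$ on itself is free, proper and \'etale, and $\calG$ is second countable.

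\textbf{Naturality and invertibility.} The components $a$, $l$, $r$ are equivariant homeomorphisms by \cite[Lemmas 6.3, 6.4]{antunes2021bicategory}, hence invertible 2-cells. Naturality in each variable reduces to equalities of formulas on representatives. For example,
\[a\circ[[f,f'],f'']([[\omega,\omega'],\omega''])=[f(\omega),[f'(\omega'),f''(\omega'')]]=[f,[f',f'']]\circ a([[\omega,\omega'],\omega'']).\]
The analogous checks for $l$ and $r$ use equivariance, e.g.\ $f(\gamma\omega)=\gamma f(\omega)$.

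\textbf{Coherence axioms.} The two remaining axioms are checked on representatives.

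For the unity (triangle) axiom, we compare $[r_\Omega,id]$ with $[id,l_{\Omega'}]\circ a$ on $(\Omega\circ\calG_2)\circ\Omega'$. Both send $[[\omega,\gamma],\omega']$ to the same class: the first gives $[\omega\gamma,\omega']$, the second gives $[\omega,\gamma\omega']$, and these agree by the defining relation of the balanced product.

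For the pentagon axiom, both composites send $[[[\omega_1,\omega_2],\omega_3],\omega_4]$ to $[\omega_1,[\omega_2,[\omega_3,\omega_4]]]$.

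\textbf{Main obstacle.} All identities are checked on representatives, so the only genuine subtlety is that nested brackets are well-defined, continuous maps between iterated quotients. This is already provided by \cite{antunes2021bicategory}, via openness of the quotient maps. Once that is in hand, the proof is a direct verification.
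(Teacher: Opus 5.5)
Your proof is correct, but it takes a more hands-on route than the paper.

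\textbf{The paper's route.} The paper re-verifies no coherence at all. It takes the bicategory of all locally compact Hausdorff groupoids, correspondences and bi-equivariant maps from \cite[Proposition 6.5, Remark 6.2]{antunes2021bicategory}, where associators, unitors, the pentagon and the triangle are already established. It then observes that $\frgr$ is obtained by restricting the cells. The only things to check are:
\begin{enumerate}
    \item the restricted class of 1-cells is closed under horizontal composition, i.e.\ the balanced product of two second countable correspondences is second countable;
    \item the identity 1-cells are second countable;
    \item the structure 2-cells remain available, which holds because every bi-equivariant continuous map is allowed.
\end{enumerate}

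\textbf{Your route.} You use \cite{antunes2021bicategory} only for the topological input: that $\Omega\circ\Omega'$ is a correspondence and that $a,l,r$ are equivariant homeomorphisms. You then redo functoriality of $c$, naturality of $a,l,r$, the triangle and the pentagon by computing on representatives. These computations are right; for instance, $[\omega\gamma,\omega']=[\omega,\gamma\omega']$ holds because $(\omega,\gamma\omega')\cdot\gamma=(\omega\gamma,\omega')$.

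\textbf{Trade-off.} Your version makes the coherence transparent and self-contained, at the cost of re-proving what the cited proposition already gives. The paper's version is shorter but defers everything to the reference.

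Your second-countability argument spells out the one step the paper merely asserts. There, a countable base of $\Omega\times_{\calG_2\units}\Omega'$ pushes forward along the open quotient map, by proposition \ref{orbit space of free proper action of etale grpd}. To apply that proposition, note that the diagonal right $\calG_2$-action is free and proper, since it is so on the factor $\Omega$.
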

\begin{proof} All second countable locally compact Hausdorff groupoids, correspondences (not necessarily second countable and Hausdorff), bi-equivariant continuous maps, and similarly defined identity 1-cells, horizontal composition functor, associators and unitors form a bicategory by proposition 6.5 and remark 6.2 of \cite{antunes2021bicategory}. Now we take a restriction on choice of 1-cells and 2-cells. Horizontal composition of two second countable locally compact Hausdorff correspondences is still second countable locally compact Hausdorff.\end{proof}

\begin{rem}
    For two \'etale groupoids $\calG,\calH\in B_0$, $\calG$ is Morita equivalent to $\calH$ if and only if there exists  correspondences $\Omega:\calG\leftarrow \calH$ and $\Lambda:\calH\leftarrow \calG$ such that $\Omega\circ \Lambda$ is $\calG,\calG$-equivariantly homeomorphic to $\calG$, $\Lambda\circ \Omega$ is $\calH,\calH$-equivariantly homeomorphic to $\calH$. That is $\Omega,\Lambda$ are 1-cells that are inverse to each other.
\end{rem}

From now on, if $\Omega,\Omega'$ are correspondences $\calG\leftarrow \calH$ and $f: \Omega\ra \Omega'$ be a $\calG,\calH$-equivariant continuous map, we define $\bar{f}:\Omega/\calH\ra \Omega'/\calH$ as the map $\omega\calH\mapsto f(\omega)\calH$.

We will use the freeness of the right actions on correspondences to have the following identifications of fibers.

\begin{lem}\label{basic property of 2-cells}
Let $\calG,\calH$ be \'etale groupoids, $\Omega,\Omega':\calG\leftarrow \calH$ be two correspondences, $f: \Omega\ra \Omega'$ be a $\calG,\calH$-equivariant continuous map. Then $f, \bar f$ are local homeomorphisms. For any $\omega'\in \Omega'$,
        \[f\inv(\omega')\ra \overline{f}\inv(\omega'\calH),\omega\mapsto \omega\calH\]
        is a bijection.
\end{lem}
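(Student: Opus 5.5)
The plan is to prove the fiber bijection first, then deduce that $\bar f$ is a local homeomorphism, and finally lift this to $f$ through the quotient maps. Write $q:\Omega\ra\Omega/\calH$ and $q':\Omega'\ra\Omega'/\calH$ for the quotient maps. By proposition \ref{orbit space of free proper action of etale grpd}, both are local homeomorphisms onto Hausdorff spaces. Equivariance gives $\sigma_{\Omega'}\circ f=\sigma_\Omega$ and $q'\circ f=\bar f\circ q$. In particular $\bar f$ is well defined, and it is continuous because $q$ is an open surjection.

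\textbf{Fiber bijection.} Fix $\omega'\in\Omega'$. If $\omega\in f\inv(\omega')$, then $\bar f(\omega\calH)=\omega'\calH$, so the map $\omega\mapsto\omega\calH$ lands in $\bar f\inv(\omega'\calH)$.

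For surjectivity, take $\omega\calH$ with $f(\omega)\calH=\omega'\calH$. Then $f(\omega)=\omega' h$ for some $h\in\calH$, so $f(\omega h\inv)=\omega'$, and $\omega h\inv$ is a preimage with the same class.

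For injectivity, suppose $\omega_1,\omega_2\in f\inv(\omega')$ with $\omega_2=\omega_1 h$. Then $\omega'=f(\omega_1)h=\omega' h$. Freeness of the $\calH$-action on $\Omega'$ forces $h$ to be a unit, hence $\omega_1=\omega_2$.

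\textbf{Local homeomorphism property of $\bar f$.} I would use the anchor maps. Both $\Omega/\calH$ and $\Omega'/\calH$ map to $\calG\units$ through $\bar\rho$, but that is not étale in general, so instead I work upstairs with $\sigma$. The map $\sigma_\Omega$ is a local homeomorphism and factors as $\sigma_{\Omega'}\circ f$ with $\sigma_{\Omega'}$ also a local homeomorphism. The standard fact applies: if $g\circ f$ and $g$ are local homeomorphisms and $f$ is continuous, then $f$ is a local homeomorphism. To see this, take a neighbourhood $U$ of $\omega$ on which $\sigma_\Omega$ is injective and open, and a neighbourhood $U'$ of $f(\omega)$ on which $\sigma_{\Omega'}$ is a homeomorphism. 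Shrink $U$ so that $f(U)\subseteq U'$. Then $f|_U=(\sigma_{\Omega'}|_{U'})\inv\circ\sigma_\Omega|_U$ is a homeomorphism onto an open set.

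This gives directly that $f$ is a local homeomorphism, with no need to pass through $\bar f$. Then $\bar f$ is a local homeomorphism as well. It is open, since $\bar f(q(V))=q'(f(V))$ and $q,q',f$ are open. It is locally injective: choose $V\ni\omega$ such that $f|_V$ is injective and $q'|_{f(V)}$ is injective, which is possible since $q'$ is a local homeomorphism. Then $q|_V$ is injective and $\bar f$ is injective on the open set $q(V)$.

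\textbf{Main obstacle.} The only delicate point is the local homeomorphism property. It is handled by factoring through the étale anchor $\sigma$ rather than through the non-étale map to $\calG\units$; after that, the rest is bookkeeping with the freeness of the right actions.
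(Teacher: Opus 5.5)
Your proof is correct and follows the same route as the paper: $f$ is a local homeomorphism because $\sigma_{\Omega'}\circ f=\sigma_\Omega$ with both anchors étale, $\bar f$ is one because $\bar f\circ q=q'\circ f$ with $q,q'$ local homeomorphisms, and the fiber bijection comes from freeness of the $\calH$-action on $\Omega'$. You also supply the openness and local-injectivity details for $\bar f$ that the paper leaves implicit; your opening sentence announces the reverse of the order you actually follow, but that is only a presentational slip.
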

\begin{proof}
    The map $f$ is $\calH$-equivariant, then $\sigma_{\Omega'}\circ f=\sigma_\Omega$, where $\sigma_{\Omega},\sigma_{\Omega'}$ are local homeomorphisms, therefore so is $f$. Let $q_\Omega:\Omega\ra \Omega/\calH$, $q_{\Omega'}:\Omega'\ra \Omega'/\calH$ be the quotient maps. By proposition \ref{orbit space of free proper action of etale grpd}, $q_{\Omega}, q_{\Omega'}$ are local homeomorphisms. Since $\bar f\circ q_\Omega=q_{\Omega'}\circ f$, the map $\bar f$ is also a local homeomorphism.

    If $\omega\in \Omega$ such that $\overline{f}(\omega\calH)=\omega'\calH$, then $f(\omega)\calH=\omega'\calH$, so there exists $\gamma\in \calH$ such that $f(\omega)=\omega'\gamma$, therefore $\omega\gamma\inv\in f\inv(\omega')$ and $\omega\gamma\inv\calH=\omega\calH\in \overline{f}\inv(\omega'\calH)$. That is the map is surjective. Now if $\omega_1,\omega_2\in f\inv(\omega')$ such that $\omega_1\calH=\omega_2\calH$, there exists $\gamma\in \calH$ such that $\omega_1=\omega_2\gamma$. Then $\omega'=f(\omega_1)=f(\omega_2\gamma)=f(\omega_2)\gamma=\omega'\gamma$. By the freeness of the action of $\calH$ on $\Omega'$, $\gamma\in \calH\units$, which implies that $\omega_1=\omega_2$. Hence, the map is injective.
\end{proof}

\begin{lem}\label{canonical bijection between fibers}
    Let $\calG_1,\calG_2,\calG_3$ be \'etale groupoids,   $\Omega,\Omega':\calG_3\leftarrow \calG_2$ and $\Lambda,\Lambda':\calG_2\leftarrow \calG_1$ be \'etale groupoid correspondences, $f:\Omega\ra \Omega'$ be a $\calG_3,\calG_2$-equivariant continuous map, $g:\Lambda\ra \Lambda'$ be a $\calG_2,\calG_1$-equivariant continuous map. 

    \begin{enumerate}
        \item For any $[\omega',\lambda]$ in $\Omega'\circ\Lambda$,
        \[f\inv(\omega')\ra [f,id]\inv([\omega',\lambda]), \omega\mapsto [\omega,\lambda]\]
        is a bijection.
        \item For any $[\omega,\lambda']\in \Omega\circ \Lambda'$,
        \[g\inv(\lambda')\ra [id,g]\inv([\omega,\lambda']),\lambda\mapsto [\omega,\lambda]\]
        is a bijection.
    \end{enumerate}
\end{lem}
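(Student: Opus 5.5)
Both parts are symmetric, so I will prove (1) in detail and indicate the changes for (2). The map $\Phi:f\inv(\omega')\ra [f,id]\inv([\omega',\lambda])$, $\omega\mapsto[\omega,\lambda]$, is well defined: if $f(\omega)=\omega'$ then $\sigma_\Omega(\omega)=\sigma_{\Omega'}(\omega')=\rho_\Lambda(\lambda)$, because $f$ is $\calG_2$-equivariant. Hence $(\omega,\lambda)\in\Omega\times_{\calG_2\units}\Lambda$, and $[f,id][\omega,\lambda]=[f(\omega),\lambda]=[\omega',\lambda]$.

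For surjectivity, take $[\omega_1,\lambda_1]$ with $[f(\omega_1),\lambda_1]=[\omega',\lambda]$. By definition of the balanced product there is $h\in\calG_2$ with $f(\omega_1)h=\omega'$ and $h\inv\lambda_1=\lambda$. Put $\omega=\omega_1h$. Then $f(\omega)=f(\omega_1)h=\omega'$ and $[\omega,\lambda]=[\omega_1h,h\inv\lambda_1]=[\omega_1,\lambda_1]$.

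For injectivity, suppose $\omega_1,\omega_2\in f\inv(\omega')$ and $[\omega_1,\lambda]=[\omega_2,\lambda]$. Then there is $h\in\calG_2$ with $\omega_2=\omega_1h$ and $\lambda=h\inv\lambda$. Applying $f$ gives $\omega'=\omega'h$. The right $\calG_2$-action on $\Omega'$ is free, so $h$ is a unit, and therefore $\omega_1=\omega_2$. This is the same freeness argument as in lemma \ref{basic property of 2-cells}, and the freeness of the action on $\Omega'$ is the only place where the correspondence axioms are used.

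For (2), define $\lambda\mapsto[\omega,\lambda]$ on $g\inv(\lambda')$. It is well defined since $\rho_{\Lambda}(\lambda)=\rho_{\Lambda'}(g(\lambda))=\rho_{\Lambda'}(\lambda')=\sigma_\Omega(\omega)$, using the left $\calG_2$-equivariance of $g$. Surjectivity works as before. Given $[\omega_1,\lambda_1]$ with $[\omega_1,g(\lambda_1)]=[\omega,\lambda']$, choose $h$ with $\omega_1h=\omega$ and $h\inv g(\lambda_1)=\lambda'$, and take $\lambda=h\inv\lambda_1$. Injectivity is the step that needs care, because freeness is now available for the right action on $\Omega$ rather than on the target $\Lambda'$. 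Suppose $[\omega,\lambda_1]=[\omega,\lambda_2]$ with $g(\lambda_i)=\lambda'$. Then there is $h\in\calG_2$ with $\omega h=\omega$ and $\lambda_2=h\inv\lambda_1$. Freeness of the right $\calG_2$-action on $\Omega$ forces $h$ to be a unit, so $\lambda_1=\lambda_2$. Hence no freeness is needed on $\Lambda$, and the lemma holds for both parts.
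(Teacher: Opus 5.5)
Your proof is correct and matches the paper's argument. In both parts, surjectivity comes from moving along the $\calG_2$-element that witnesses equality in the balanced product. Injectivity comes from freeness of the right $\calG_2$-action: on $\Omega'$ in (1) and on $\Omega$ in (2).
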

\begin{proof}
    It is easy to see that all these maps are well-defined.
    
    (1) For any $[\omega_*,\lambda_*]$ in $[f,id]\inv([\omega',\lambda])$, $[\omega',\lambda]=[f(\omega_*),\lambda_*]$. This implies that there exists $\gamma\in \calG_2$, $f(\omega_*)=\omega'\gamma$ and $\lambda_*=\gamma\inv\lambda$. Thus, $[\omega_*,\lambda_*]=[\omega_*\gamma\inv,\lambda]$. That is $[\omega_*,\lambda_*]$ is the image of $\omega_*\gamma\inv\in f\inv(\omega')$, so the map is surjective. If there exists $\omega_1,\omega_2\in f\inv(\omega')$ such that $[\omega_1,\lambda]=[\omega_2,\lambda]$, there exists $\gamma\in \calG_2$ such that $\omega_1=\omega_2\gamma$, $\lambda=\gamma\inv\lambda$, so we have $\omega'=f(\omega_1)=f(\omega_2)\gamma=\omega'\gamma$. By the freeness of the action of $\calG_2$ on $\Omega'$, $\gamma\in \calG_2\units$ and therefore $\omega_1=\omega_2$. So the map is injective.

    (2) If $[\omega_*,\lambda_*]\in [id,g]\inv([\omega,\lambda'])$, then $[\omega,\lambda']=[\omega_*,g(\lambda_*)]$. So there exists $\gamma\in \calG_2$ such that $\omega=\omega_*\gamma$, $\lambda'=\gamma\inv g(\lambda_*)=g(\gamma\inv\lambda_*)$. That is $[\omega_*,\lambda_*]$ is the image of $\gamma\inv\lambda_*\in g\inv(\lambda')$, so the map is surjective. If there exists $\lambda_1,\lambda_2\in g\inv(\lambda')$ such that $[\omega,\lambda_1]=[\omega,\lambda_2]$, there exists $\gamma\in \calG_2$ such that $\omega=\omega\gamma$ and $\lambda_1=\gamma\inv\lambda_2$. The freeness of the action of $\calG_2$ on $\Omega$ implies that $\gamma\in\calG\units_2$ and therefore $\lambda_1=\lambda_2$. So the map is injective.
\end{proof}

\subsection{Bicategory \texorpdfstring{$\mathfrak{KK}$}{KK}}

For an \'etale groupoid $\calG$, we use $\kk^\calG$ to denote the category whose objects are (ungraded) separable $\calG$-\cst-algebras, where the hom set $\kk^\calG(A,B)$ is the Kasparov group $\kk_0^\calG(A,B)$ as defined in \cite{le1999theorie}, composition is the Kasparov product as defined in \cite[Theorem 6.3]{le1999theorie}, the identity morphism in $\kk^\calG(A,A)$ is the element $1_A=[(A,id_A,0)]$. The Kasparov product has associativity (see \cite[Theorem 6.4]{le1999theorie}) and unity, hence $\kk^\calG$ is a well-defined category. We will write $A\in \kk^\calG$ to denote that $A$ is a separable $\calG$-\cst-algebra.

Follow the definition, for two \'etale groupoids $\calG_1$ and $\calG_2$, a functor $F:\kk^{\calG_1}\ra \kk^{\calG_2}$ is therefore the data of
\begin{enumerate}
    \item the assignment that maps $A\in \kk^{\calG_1}$ to some $F(A)\in \kk^{\calG_2}$;
    \item the class of maps
    \[\kk^{\calG_1}(A,B)\ra \kk^{\calG_2}(F(A),F(B)), x\mapsto F(x),\]
\end{enumerate}
which satisfy the condition that $F$ preserves Kasparov product, that is, for any three separable $\calG_1$-\cst-algebras $A,B,C$ and for any $x\in \kk^{\calG_1}(A,B)$, $y\in \kk^{\calG_1}(B,C)$, we have $F(x\otimes y)=F(x)\otimes F(y)$.

When every map $\kk^{\calG_1}(A,B)\ra \kk^{\calG_2}(F(A),F(B))$ is a homomorphism of abelian groups, we can say that $F$ is additive.

If $F,G:\kk^{\calG_1}\ra \kk^{\calG_2}$ are two functors, a natural transformation $\alpha:F\ra G$ consists of a class of elements of $(\alpha_A)_{A\in \kk^{\calG_1}}$, in which for any $A\in \kk^{\calG_1}$, $\alpha_A$ is an element of $\kk^{\calG_2}(F(A),G(A))$, such that for any $A,B\in \kk^{\calG_1}$ and $x\in \kk^{\calG_1}(A,B)$, we have
\[\alpha_A\otimes G(x)=F(x)\otimes\alpha_B.\]
Equivalently, we can say that the following diagram commutes, in which arrows are elements of KK-groups.
\[\xymatrix{
F(A) \ar[r]^{F(x)} \ar[d]^{\alpha_A} & F(B) \ar[d]^{\alpha_B} \\
G(A) \ar[r]^{G(x)} & G(B)
}\]

Consider the following data:
\begin{enumerate}
    \item Objects: the class $B_0'$ of all categories $\kk^\calG$, where $\calG\in B_0$. These are the 0-cells.
    \item Hom categories: for two \'etale groupoids $\calG_1,\calG_2$ in $B_0'$, define the hom category as the functor categories $\mathrm{Fun}(\kk^{\calG_1},\kk^{\calG_2})$. That is
    \begin{enumerate}
        \item the 1-cells are functors from $\kk^{\calG_1}$ to $ \kk^{\calG_2}$;
        \item the 2-cells are natural transformations;
        \item the vertical composition of 2-cells is the composition of natural transformations;
        \item identity 2-cells are identity functors.
    \end{enumerate}
    \item Identity 1-cells: for $\calG\in B_0$, the identity 1-cell in $\mathrm{Fun}(\kk^\calG,\kk^\calG)$ is the identity functor $id_{\kk^\calG}$. We use $1'_\calG$ to denote the functor $\{*\}\ra \mathrm{Fun}(\kk^\calG,\kk^\calG)$ that sends $*$ to $id_{\calG}$.
    \item The horizontal composition of 1-cells is composition of functors, and the horizontal composition of 2-cells is the horizontal composition (or Godement product) of natural transformations (see \cite[Definition 1.1.8]{johnson20212}). That is, for functors $F,F'\in \mathrm{Fun}(\kk^{\calG_1},\kk^{\calG_2})$, $G,G'\in \mathrm{Fun}(\kk^{\calG_2},\kk^{\calG_3})$ and natural transformations $\alpha:F\ra F'$, $\beta:G\ra G'$, the horizontal composition of $\alpha$ and $\beta$ is the natural transformation $\alpha\star \beta:GF\ra G'F'$, consisting of $((\alpha\star\beta)_A)_{A\in \kk^{\calG_1}}$ that
    \[(\alpha\star\beta)_A=G(\alpha_A)\otimes \beta_{F'(A)}=\beta_{F(A)}\otimes G'(\alpha_A)\in \kk^{\calG_3}(GF(A),G'F'(A)).\]

    \item The composition of functors is associative. We can just let the associators be the identity natural transformations.
    \item The composition of functors is unital with respect to identity functors. We can just let the left and right unitors be the identity natural transformations.
\end{enumerate}

We see that the data above forms not only a bicategory but also a strict 2-category (i.e a bicategory where associators and unitors are identity natural transformations). The proof is exactly same as the case of bicategory of all small categories in example 2.3.14 of \cite{johnson20212}.
\begin{prop}
    The data above defines a strict 2-category $\mathfrak{KK}$.
\end{prop}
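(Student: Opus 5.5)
The plan is to check the axioms of a bicategory in \cite[Definition 2.1.3]{johnson20212} directly. Because the associators and unitors are identities, this reduces to checking strict associativity and unitality, so the argument should be the one for the 2-category of small categories in \cite[Example 2.3.14]{johnson20212}. The only difference is that arrows are Kasparov classes rather than maps. So every identity is an equality of Kasparov products, and the argument rests on associativity of the product \cite[Theorem 6.4]{le1999theorie}, its unitality with respect to $1_A$, and the fact that 1-cells are functors preserving products.

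\textbf{Hom categories.} First I will check that each $\mathrm{Fun}(\kk^{\calG_1},\kk^{\calG_2})$ is a category. For natural transformations $\alpha:F\ra G$ and $\beta:G\ra H$, set $(\beta\circ\alpha)_A=\alpha_A\otimes\beta_A$. Naturality follows from associativity:
\[\alpha_A\otimes\beta_A\otimes H(x)=\alpha_A\otimes G(x)\otimes\beta_B=F(x)\otimes\alpha_B\otimes\beta_B.\]
Associativity of vertical composition is then associativity of the Kasparov product, and the identity transformation $(1_{F(A)})_A$ is a two-sided unit.

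\textbf{Horizontal composition.} Next I will show that the two formulas given for $(\alpha\star\beta)_A$ agree. This is naturality of $\beta$ applied to the morphism $\alpha_A\in\kk^{\calG_2}(F(A),F'(A))$, namely $G(\alpha_A)\otimes\beta_{F'(A)}=\beta_{F(A)}\otimes G'(\alpha_A)$. Naturality of $\alpha\star\beta$ then follows by combining naturality of $\alpha$ and $\beta$ with the fact that $G$ preserves products.

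The remaining facts needed are:
\begin{itemize}
\item horizontal composition preserves identity 2-cells, because $G(1_{F(A)})=1_{GF(A)}$;
\item functoriality, namely the interchange law $(\alpha'\circ\alpha)\star(\beta'\circ\beta)=(\alpha'\star\beta')\circ(\alpha\star\beta)$.
\end{itemize}

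\textbf{The interchange law.} This is the main step. At each $A$, the left side is $G(\alpha_A\otimes\alpha'_A)\otimes\beta_{F''(A)}\otimes\beta'_{F''(A)}$. Since $G$ preserves products, this equals $G(\alpha_A)\otimes G(\alpha'_A)\otimes\beta_{F''(A)}\otimes\beta'_{F''(A)}$. Naturality of $\beta$ applied to $\alpha'_A$ lets me rewrite $G(\alpha'_A)\otimes\beta_{F''(A)}$ as $\beta_{F'(A)}\otimes G'(\alpha'_A)$. After regrouping by associativity, the expression becomes exactly $(\alpha\star\beta)_A\otimes(\alpha'\star\beta')_A$, which is the right side.

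\textbf{Identity 2-cells and strictness.} I need $G(1_{F(A)})=1_{GF(A)}$, which is not listed among the functor axioms as stated. I will either include preservation of identity morphisms in the definition of functor, or derive it, if possible, from preservation of products and the fact that identities are units. Once these points are settled, composition of functors is strictly associative and unital on the nose. Strict associativity and unitality of $\star$ are then immediate, so identity associators and unitors are natural isomorphisms, and the pentagon and triangle axioms hold trivially. Size issues do not arise, since the paper explicitly allows hom categories that are not locally small.
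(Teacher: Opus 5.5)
Your proposal is correct and is essentially the paper's argument: the paper just observes that the verification is the same as for the 2-category of small categories in \cite[Example 2.3.14]{johnson20212}, and you write out those checks with the associative, unital Kasparov product in place of composition. On the point you leave open, take the first option: preservation of identities cannot be derived from preservation of products (the functor that is the identity on objects and sends every Kasparov class to $0$ preserves products but not identity classes), so it must be built into the definition of a functor, as is standard and as the paper tacitly assumes when it speaks of functors that ``respect Kasparov product and identity classes''.
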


\subsection{Pseudofunctor from \texorpdfstring{$\frgr$}{frgr} to \texorpdfstring{$\mathfrak{KK}$}{KK}}

Motivated by the construction of induction functor defined by Miller in \cite{miller2024functors}, in this subsection we will prove that there is a pseudofunctor (see \cite[Definition 4.1.2]{johnson20212}) from $\frgr$ to $\mathfrak{KK}$, in which we send an \'etale groupoid $\calG$ to the Kasparov category $\kk^\calG$, and send a correspondence $\Omega:\calG\leftarrow \calH$ to a functor $\ind_\Omega:\kk^\calH\ra \kk^\calG$. To achieve that, we will
\begin{enumerate}
    \item recall the results of Miller in \cite{miller2024functors}, which covers all information concerns 0-cells and 1-cells;
    \item define and describe the construction on 2-cells, that is for a 2-cell $f\in \frgr(\calH,\calG)(\Omega_1,\Omega_2)$, and $A\in \kk^\calH$, we will construct $\ind_{f,A}\in \kk^\calG(\ind_{\Omega_1}A, \ind_{\Omega_2}A)$, such that $\ind_f=(\ind_{f,A})_{A\in \kk^\calH}$ is a natural transformation from $\ind_{\Omega_1}$ to $\ind_{\Omega_2}$ (proposition \ref{ind_f is natural transformation}), and 
    \[\frgr(\calH,\calG)\ra \mathrm{Fun}(\kk^\calH,\kk^\calG), \Omega\mapsto \ind_\Omega, f\mapsto \ind_f\]
    is a functor (proposition \ref{ind is functor});
    \item prove that there are well-defined lax functoriality constraint (proposition \ref{Lax functoriality constraint}) and lax unity constraint (proposition \ref{Lax unity constraint});
    \item prove the lax associativity (proposition \ref{lax associativity}), lax left unity and lax right unity (proposition \ref{lax unity}).
\end{enumerate}

\subsubsection{Results of Miller on 1-cells}

Let $\Omega:\calG\leftarrow \calH$ be a second countable locally compact Hausdorff correspondence. There is a cutoff function $c$ for $\Omega\rtimes \calH$. Suppose that $(E,V)$ is an $\calH$-Hilbert module over some $\calH$-\cst-algebra, recall that an adjointable operator in $\calL(\ind_\Omega E)$ is identified with an $\calH$-equivariant section in $\Gamma_b(\Omega,\sigma^*\calL(\ca E))$. For $T$ in $\calL(E)$ we define $\ind_{\Omega,c_\Omega}T\in \calL(\ind_\Omega E)$ such that
\[(\ind_{\Omega,c}T)(\omega)=\sum_{h\in \calH^{\sigma_\Omega(\omega)}}c_\Omega(\omega h)V_hT_{s(h)}V_h\inv\in \calL(E_{\sigma_\Omega(\omega)}). \]

Here we summarize some results about functoriality of induction functor of correspondence, proved by Miller in \cite{miller2024functors}, section 6 and 7.

\begin{prop}
    Let $\Omega:\calG\leftarrow \calH$ be a second countable \'etale groupoid correspondence, $A,B$ be in $\kk^{\calH}$ (that is, $A,B$ are separable $\calH$-\cst-algebras).
    \begin{enumerate}
        \item Let $c:\Omega\ra \mathbb R$ be a cutoff function for $\Omega\rtimes\calH$, $(E,\pi,T)$ is a Kasparov cycle in $\mathbb E^{\calH}(A,B)$, then $(\ind_\Omega E,\ind_\Omega \pi, \ind_{\Omega,c}T)$ is a Kasparov cycle in $\mathbb E^{\calG}(\ind_\Omega A,\ind_\Omega B)$.
        \item The map
        \[\kk^\calH(A,B)\ra\kk^{\calG}(\ind_\Omega A,\ind_\Omega B),\]
        \[[E,\pi,T]\mapsto [\ind_\Omega E,\ind_\Omega \pi, \ind_{\Omega,c}T]\]
        is a well-defined homomorphism, independent of choice of cutoff functions, respects Kasparov product and identity classes.
        \item The assignment $A \mapsto \ind_\Omega A$, $[E,\pi,T]\mapsto [\ind_\Omega E,\ind_\Omega \pi, \ind_{\Omega,c}T]$ gives a well-defined additive functor $\ind_\Omega\in \mathrm{Fun(}\kk^\calH,\kk^\calG)$.
        \item For $\calG,\calH,\calK\in B_0$, $\Omega\in \frgr(\calH,\calG)$, $\Lambda\in \frgr(\calK,\calH)$, $D\in \kk^\calK$, the following map
        \[\varphi_{\Omega,\Lambda, D}:\ind_\Omega\ind_\Lambda D\ra \ind_{\Omega\circ\Lambda}D,\xi\mapsto \varphi_{\Omega,\Lambda, D}(\xi),\]
        \[\varphi_{\Omega,\Lambda, D}(\xi)([\omega,\lambda])=\xi(\omega)(\lambda),\quad \forall [\omega,\lambda]\in \Omega\circ \Lambda\]
        is a $\calG$-equivariant isomorphism. So the induced element in KK-group 
        \[[\varphi_{\Omega,\Lambda,D}]\in \kk^\calG(\ind_\Omega\ind_\Lambda D,\ind_{\Omega\circ\Lambda}D)\] is invertible, i.e. an isomorphism in $\kk^\calG$.
        \item For $\calG,\calH,\calK\in B_0$, $\Omega\in \frgr(\calH,\calG)$, $\Lambda\in \frgr(\calK,\calH)$, there is a natural isomorphism $\varphi_{\Omega,\Lambda}:\ind_\Omega\ind_\Lambda\ra \ind_{\Omega\circ\Lambda}$, consisting of isomorphisms $[\varphi_{\Omega,\Lambda,D}]$ for each $D\in \kk^\calK$. That is, for any $D_1,D_2\in \kk^{\calK}$ and $x\in \kk^{\calK}(D_1,D_2)$, the following diagram (whose arrows are elements of KK-groups) is commutative.
        \[\xymatrix{
        \ind_\Omega\ind_\Lambda D_1 \ar[r]^{\ind_\Omega \ind_\Lambda (x) } \ar[d]_{\varphi_{\Omega,\Lambda,D_1}} & \ind_\Omega\ind_\Lambda D_2 \ar[d]^{\varphi_{\Omega,\Lambda,D_2}} \\
        \ind_{\Omega\circ \Lambda}D_1 \ar[r]_{\ind_{\Omega\circ \Lambda}(x)} & \ind_{\Omega\circ \Lambda} D_2
        }
        \]
    \end{enumerate}
\end{prop}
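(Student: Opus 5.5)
The proposition collects results of Miller \cite{miller2024functors}, sections 6 and 7, so the plan is to verify each item in the order stated, reducing everything to fibrewise statements and equivariance, using the bundle picture of Section 2.

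For (1), first check that $\ind_\Omega\pi$ is a non-degenerate $\calG$-equivariant representation; this was recorded in Section 2.6. Next, $\ind_{\Omega,c}T$ is a well-defined adjointable operator. It is an $\calH$-equivariant bounded section of $\sigma^*\calL(\ca E)$: equivariance comes from reindexing $h\mapsto h'h$ in the sum, and continuity and boundedness come from Lemma \ref{sum of fiber}, using properness of $s$ on $supp(c\circ r)$. By Proposition \ref{operators on induced modules} it therefore lies in $\calL(\ind_\Omega E)$.

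The Kasparov conditions come next. Take the commutator $[\ind_\Omega\pi(\xi),\ind_{\Omega,c}T]$, and likewise $(T^2-1)$ and $(T-T^*)$ multiplied by $\ind_\Omega\pi(\xi)$. At $\omega$ these are weighted averages $\sum_h c(\omega h)V_h(\cdots)_{s(h)}V_h^{-1}$ of the corresponding expressions for $T$. For $T^2-1$ one also needs $\sum_h c(\omega h)=1$ and a cross-term estimate. For $\xi=\varphi\diamond a$ with $\varphi\in C_c(\Omega)$, the finitely many relevant $h$ range over a compact set, by Lemma \ref{uniformly finite fiber} and cocompactness. So these sections lie in $\ind_\Omega\calK(E)\cong\calK(\ind_\Omega E)$ by Proposition \ref{operators on induced modules}. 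Density (Proposition \ref{diamond construction}) extends this to all $\xi$. For $\calG$-equivariance of $T$ modulo compacts, note that $W_\gamma$ acts by $\omega\mapsto\gamma^{-1}\omega$ and commutes with the $\calH$-averaging. Hence $\ind_{\Omega,c}T$ is exactly $\calG$-invariant; Lemma \ref{to be equivariant adjointable op} reduces this check to $\ind_{\Omega,c}E$.

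For (2), apply (1) to homotopies over $B[0,1]$, using $\ind_\Omega(B[0,1])\cong(\ind_\Omega B)[0,1]$, to get a well-defined map on classes. Independence of $c$ follows because the linear path $tc+(1-t)c'$ consists of cutoff functions, so it gives an operator homotopy. Additivity is immediate. Identity classes are preserved since $\ind_\Omega(A,id,0)=(\ind_\Omega A,id,0)$.

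Compatibility with the Kasparov product is the main obstacle. The approach is to check that if $F$ is a $T_2$-connection for $T_1\otimes 1+\ldots$, then $\ind_{\Omega,c}F$ is an $\ind_{\Omega,c}T_2$-connection on $\ind_\Omega(E_1\otimes_BE_2)\cong\ind_\Omega E_1\otimes_{\ind_\Omega B}\ind_\Omega E_2$. The isomorphism itself is built from Lemma \ref{to be a unitary} on the dense submodules $\ind_{\Omega,c}$. The positivity condition is then verified fibrewise after averaging, modulo compacts, using the same compactness argument as in (1).

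Item (3) is then just a restatement of (2).

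For (4), $\varphi_{\Omega,\Lambda,D}$ is well defined because $\xi(\omega)$ is $\calH$-equivariant and $\xi$ is $\calH$-equivariant in $\omega$. It is surjective with inverse $\eta\mapsto[\omega\mapsto(\lambda\mapsto\eta([\omega,\lambda]))]$. Continuity and vanishing at infinity are checked via the bundle description of Remark \ref{remark on ind_c} and density of $\diamond$-elements. Equivariance is clear since $\calG$ acts on the $\Omega$ variable.

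For (5), take $x=[E,\pi,T]$ and use $\varphi$ together with its module analogue $\ind_\Omega\ind_\Lambda E\cong\ind_{\Omega\circ\Lambda}E$. The transported operator $\ind_{\Omega,c}\ind_{\Lambda,c'}T$ is an averaging of $T$ against the product cutoff $[\omega,\lambda]\mapsto$ (a combination of $c$ and $c'$). It agrees, up to a linear homotopy of cutoff-type weights, with $\ind_{\Omega\circ\Lambda,c''}T$. Naturality then follows from (2) and the fact that $*$-isomorphisms induce the unitary equivalence of cycles.
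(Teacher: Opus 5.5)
The paper gives no argument for this proposition; it is quoted from Miller's sections 6--7. So your outline has to stand on its own. Its architecture is sensible: fibrewise checks, density of $\diamond$-elements, linear homotopy of cutoffs, the connection criterion, and the product cutoff $c\diamond c'$ in (5). But one step in (1) is false. You claim $\ind_{\Omega,c}T$ is exactly $\calG$-invariant because $W_\gamma$ commutes with the $\calH$-averaging. View $S\in\calL(\ind_\Omega E)$ as the $\calH$-equivariant section $\omega\mapsto S_\omega$. Then $(W_\gamma S_{s(\gamma)}W_\gamma^{*})_\omega=S_{\gamma^{-1}\omega}$, so exact equivariance means $\omega\mapsto S_\omega$ is $\calG$-invariant. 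For $S=\ind_{\Omega,c}T$ this would require $c(\gamma^{-1}\omega h)=c(\omega h)$: the translation commutes with the sum over $h$ but not with the weights. Here is a counterexample. Take a discrete group $\Gamma$, $\Omega=\calG=\calH=\Gamma$, and $c=\delta_e$. Under $\ind_\Gamma E\cong E$, $\eta\mapsto\eta(e)$, the operator $\ind_{\Gamma,c}T$ becomes $T$ itself and $W_\gamma$ becomes $V_\gamma$. Your claim would therefore force $T$ to be exactly equivariant, which a Kasparov cycle need not be. For infinite $\Gamma$ no $\Gamma$-invariant cutoff exists, so choosing $c$ more cleverly does not help. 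Lemma \ref{to be equivariant adjointable op} is also irrelevant here, since it concerns exact equivariance.

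What you actually need, and what is true, is equivariance modulo compacts; it takes an argument of the same type as your cross-term estimate for $T^2-1$. Since $\sum_h\big(c(\gamma^{-1}\omega h)-c(\omega h)\big)=0$, for $\omega\in\Omega^{r(\gamma)}$ one has
\[
\big(W_\gamma(\ind_{\Omega,c}T)_{s(\gamma)}W_\gamma^{*}-(\ind_{\Omega,c}T)_{r(\gamma)}\big)_\omega=\sum_{h\in\calH^{\sigma(\omega)}}\big(c(\gamma^{-1}\omega h)-c(\omega h)\big)\big(V_hT_{s(h)}V_h^{-1}-T_{r(h)}\big).
\]
After multiplying by $\pi_{\sigma(\omega)}(\xi(\omega))$, each summand is a fibre of $a\cdot(V s^*(T)V^*-r^*T)\in r^*\calK(E)$. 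You must then show these assemble into an element of $r^*\calK(\ind_\Omega E)$. This uses norm continuity in $(\gamma,\omega)$ and local finiteness of the relevant $h$, which comes from properness of the support of $c$.

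A similar caution applies to the positivity step in (2). Positivity modulo compacts is not a fibrewise property, so "verified fibrewise after averaging" needs a global mechanism. For example, write $\sigma^*\pi(\zeta)[\sigma^*(T_1\otimes1),\sigma^*F]\sigma^*\pi(\zeta)^*$ as a positive operator plus a compact one. Then use that the averaging map $S\mapsto\big(\omega\mapsto\sum_h c(\omega h)V_hS_{\omega h}V_h^{-1}\big)$ is positive and sends suitably localized compact sections into $\calK(\ind_\Omega(E_1\otimes_BE_2))$.
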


\subsubsection{Wrong way functoriality of equivariant local homeomorphisms}

We will give the remaining part of information of a pseudofunctor from $\frgr$ to $\mathfrak{KK}$. Firstly  $\Omega\mapsto \ind_\Omega$ need to be a covariant functor on $\Omega$. Motivated by \cite{ConnesSkandalis}, we give the following definition of the wrong way functoriality of an equivariant local homeomorphism (in comparison with the proposition 4.3 and 4.5 of \cite{ConnesSkandalis}).

Let $\calG$ be a second countable \'etale groupoid, $\rho:Z\ra Y$ be a $\calG$-equivariant local homeomorphism between two locally compact Hausdorff $\calG$-spaces. Let $\rho_Y,\rho_Z$ be the anchor map of $Y, Z$ respectively. There is a pre-Hilbert $C_0(Y)$-module structure on $C_c(Z)$, where right action of $C_0(Y)$ is defined as
\[(f\cdot g)(z)=f(z)g(\rho(z)), \quad\forall f\in C_c(Z),g\in C_0(Y),z\in Z,\]
the $C_0(Y)$-valued inner product is defined as
\[\langle f_1,f_2\rangle(y)=\sum_{z\in \rho\inv(y)}\overline{f_1(z)}f_2(z), \quad\forall f_1,f_2\in C_c(Z),y\in Y.\]
Let $L^2(Z,\rho)$ be its completion. We have action of $C_0(Z)$ on $L^2(Z,\rho)$ by multiplication
\[\pi_\rho:C_0(Z)\ra \calL_{C_0(Y)}(L^2(Z,\rho)),\pi_\rho(f_1)(f_2)=f_1f_2, \forall f_1\in C_0(Z), f_2\in C_c(Z).\]

\begin{lem}\label{local homeo acts like compact operator}
    Use the same notations as above, $\pi_\rho(C_0(Z))\subseteq \calK(L^2(Z,\rho))$.
\end{lem}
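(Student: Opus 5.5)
We need to show that multiplication by $f\in C_0(Z)$ on $L^2(Z,\rho)$ is a compact operator. Since $\calK(L^2(Z,\rho))$ is closed in operator norm and $\|\pi_\rho(f)\|\leqslant\|f\|_\infty$ (multiplication by a bounded function is contractive for the inner product above), it suffices to treat $f$ in a dense subset of $C_0(Z)$. We reduce to $f\in C_c(Z)$ supported in an open $U\subseteq Z$ on which $\rho|_U$ is a homeomorphism onto an open subset $\rho(U)$ of $Y$. This reduction uses a finite cover of $supp(f)$ by such opens and a partition of unity, exactly as in the proof that $\mathcal S$ is a subbase.

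For such an $f$, I will write $\pi_\rho(f)$ explicitly as a rank-one operator $\theta_{\xi,\eta}$, where $\theta_{\xi,\eta}(g)=\xi\langle\eta,g\rangle$.

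\textbf{Choice of the two vectors.} Choose $\eta\in C_c(Z)$ supported in $U$ with $\eta=1$ on $supp(f)$, by Urysohn's lemma applied inside $U$. Take $\xi=f$.

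\textbf{Computation of $\theta_{f,\eta}$.} For $g\in C_c(Z)$ and $z\in Z$ we have
\[(\theta_{f,\eta}g)(z)=f(z)\,\langle\eta,g\rangle(\rho(z))=f(z)\sum_{z'\in\rho\inv(\rho(z))}\overline{\eta(z')}g(z').\]
If $f(z)\neq0$, then $z\in U$. Since $\rho|_U$ is injective and $\eta$ is supported in $U$, the only term in the sum that can be nonzero is $z'=z$. That term contributes $\eta(z)g(z)=g(z)$, because $\eta=1$ on $supp(f)$. Hence $\theta_{f,\eta}g=fg=\pi_\rho(f)g$ on the dense subspace $C_c(Z)$.

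Both operators are bounded, so they agree on all of $L^2(Z,\rho)$. This gives $\pi_\rho(f)\in\calK(L^2(Z,\rho))$.

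\textbf{Assembling the general case.} A general $f\in C_c(Z)$ is a finite sum $f=\sum_i f\psi_i$, each term supported in an open set where $\rho$ restricts to a homeomorphism. Each $\pi_\rho(f\psi_i)$ is compact by the computation above, so $\pi_\rho(f)$ is compact.

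Finally, $C_c(Z)$ is dense in $C_0(Z)$ and $\|\pi_\rho(f)\|\leqslant\|f\|_\infty$. The norm bound holds because $\langle fg,fg\rangle(y)=\sum|f|^2|g|^2\leqslant\|f\|_\infty^2\langle g,g\rangle(y)$. Since the compact operators form a closed set, $\pi_\rho(C_0(Z))\subseteq\calK(L^2(Z,\rho))$.

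The only point needing care is checking that $\pi_\rho$ is well defined and bounded on the completion, and that rank-one operators built from elements of $C_c(Z)$ lie in $\calK$. Both are routine once the norm inequality is noted.
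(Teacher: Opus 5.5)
Your proof is correct and follows essentially the paper's route. Both arguments show that multiplication by a function supported in an open $U$ on which $\rho$ is a homeomorphism onto its image is a rank-one operator, then decompose a general $f\in C_c(Z)$ by a partition of unity and pass to $C_0(Z)$ by density. The only differences are cosmetic. The paper takes $f\geqslant 0$ and writes $\pi_\rho(f)=f^{1/2}\cdot\langle f^{1/2},-\rangle$, while your $\theta_{f,\eta}$, with a bump $\eta\equiv 1$ on $supp(f)$, handles complex-valued $f$ directly. You also state explicitly the bound $\|\pi_\rho(f)\|\leqslant\|f\|_\infty$ and the closure step, which the paper leaves implicit.
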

\begin{proof} Assume that $f\in C_c(Z,[0,1])$ is supported in some open set $U\subseteq Z$ such that $\rho|_U$ is a homeomorphism onto an open of $Y$, then for any $f_2\in C_c(Z)$,
\[\langle f^{\frac{1}{2}},f_2\rangle(y)=\sum_{z\in \rho\inv(y)}f^{\frac{1}{2}}(z)f_2(z)=\begin{cases}
    f^{\frac{1}{2}}(\rho|_{U}\inv(y))f_2(\rho|_{U}\inv(y)), & y\in \rho(U);\\
    0, & y\not\in \rho(U).
\end{cases}\]
When $z\not\in U$, $f^{\frac{1}{2}}(z)=0$ hence $[f^{\frac{1}{2}}\cdot\langle f^{\frac{1}{2}},f_2\rangle](z)=0=f(z)f_2(z)=\pi_\rho(f)(f_2)(z)$. When $z\in U$, we have
\[[f^{\frac{1}{2}}\cdot\langle f^{\frac{1}{2}},g\rangle](z)=f^{\frac{1}{2}}(z)f^{\frac{1}{2}}(\rho|_U\inv(\rho(z)))f_2(\rho|_U\inv(\rho(z)))=f(z)f_2(z)=\pi_\rho(f)(f_2)(z).\]
So $\pi_\rho(f)=f^{\frac{1}{2}}\cdot\langle f^{\frac{1}{2}},-\rangle$ is a rank-one operator. Every element of $C_c(Z)$ can be written as a finite complex linear combination of functions like this, hence $\pi_\rho(C_c(Z))$ consists of finite rank operators.\end{proof}

Now we need to show that $\pi_\rho$ is $\calG$-equivariant. For any $x\in \calG\units$, we write $Y_x:=\rho_Y\inv(x)$, $Z_x=\rho_Z\inv(x)$. The fiber of $L^2(Z,\rho)$ at $x$ can be identified with $L^2(Y_x, \rho|_{Y_x})$. For every $\gamma\in \calG$, let $V_\gamma\in \calL(L^2(Y_{s(\gamma)}, \rho|_{Y_{s(\gamma)}}), L^2(Y_{r(\gamma)}, \rho|_{Y_{r(\gamma)}}))$ be the unitary isomorphism defined by left translation
$f\mapsto f(\gamma\inv-)$.

The following lemma is an analog of \cite[Proposition 4]{Paterson2009TheST}.

\begin{lem}
    Use the same notation as above, there exists a unitary isomorphism $V:s^*L^2(Z,\rho)\ra r^*L^2(Z,\rho)$, such that for any $\gamma\in \calG$, $V_\gamma$ is the fiber of $V$ at $\gamma$. Moreover, $L^2(Z,\rho)$ is a $\calG$-Hilbert $C_0(Y)$-module, $\pi_\rho$ is a $\calG$-equivariant non-degenerate representation.
\end{lem}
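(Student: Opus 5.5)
The approach is to construct $V$ first on the dense pre-Hilbert submodule coming from $C_c(Z)$, then use the lemmas on continuity of groupoid actions on Hilbert modules (lemma \ref{continuity of groupoid action of Hilbert module}) and on pre-$\calG$-Hilbert modules (lemma \ref{to be a unitary}, corollary \ref{to be equivariant representation}) to pass to the completion.

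\textbf{Fibres and unitarity.} The fibre of $L^2(Z,\rho)$ at $x\in\calG\units$ is the completion of $C_c(Z_x)$ for the $C_0(Y_x)$-valued inner product $\langle f_1,f_2\rangle(y)=\sum_{z\in\rho\inv(y)}\overline{f_1(z)}f_2(z)$, i.e. $L^2(Z_x,\rho|_{Z_x})$. To see this, restrict sections to the closed set $Z_x$. Since $\rho$ is a local homeomorphism, the image of $C_c(Z)$ in the fibre is exactly $C_c(Z_x)$: extend by Tietze and use partitions of unity subordinate to open sets on which $\rho$ is injective. For each $\gamma$, define $V_\gamma f=f(\gamma\inv-)$ on $C_c(Z_{s(\gamma)})$. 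Because $\rho$ is $\calG$-equivariant, $z\mapsto\gamma z$ maps $\rho\inv(y)$ bijectively onto $\rho\inv(\gamma y)$. Hence
\[
\langle V_\gamma f_1,V_\gamma f_2\rangle=\gamma\cdot\langle f_1,f_2\rangle,
\]
where the action on $C_0(Y_{s(\gamma)})\to C_0(Y_{r(\gamma)})$ is the translation $\alpha_\gamma$. The image $V_\gamma(C_c(Z_{s(\gamma)}))=C_c(Z_{r(\gamma)})$ is dense. By lemma \ref{to be a unitary} (after twisting the module structure through $\alpha_\gamma$), $V_\gamma$ extends to a unitary, and $V_{\gamma_1}V_{\gamma_2}=V_{\gamma_1\gamma_2}$ holds on dense subspaces, hence everywhere.

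\textbf{Continuity.} By lemma \ref{continuity of groupoid action of Hilbert module}, it suffices to show that $\gamma\mapsto V_\gamma(e(s(\gamma)))$ is a continuous section of $r^*\ca E$ for all $e\in L^2(Z,\rho)$. By density and an $\epsilon/3$ argument (proposition \ref{convergence in banach bundle}), it is enough to take $e=f\in C_c(Z)$. Given a net $\gamma_\lambda\to\gamma$, I will find $g\in C_c(Z)$ with $g|_{Z_{r(\gamma)}}=f(\gamma\inv-)$ and show $\|f(\gamma_\lambda\inv-)-g|_{Z_{r(\gamma_\lambda)}}\|\to0$ in the fibre norms. This is the main obstacle. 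The fibre norm is
\[
\sup_y\Bigl(\sum_{z\in\rho\inv(y)}|\cdot|^2\Bigr)^{1/2},
\]
so sup-norm convergence does not suffice; I also need a uniform bound on the number of points per $\rho$-fibre inside a fixed compact set, which lemma \ref{uniformly finite fiber} supplies. Concretely, $h(\eta,z)=f(\eta\inv z)-g(z)$ is continuous on $\calG\times_{r,\rho_Z}Z$, vanishes on $\{\gamma\}\times Z_{r(\gamma)}$, and for $\eta$ near $\gamma$ it is supported in a compact set $K\subseteq Z$ of the form $g$'s support union $U\cdot\mathrm{supp}(f)$, where $U$ is a compact neighbourhood of $\gamma$. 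Uniform continuity on compacta gives $\sup_z|h(\gamma_\lambda,z)|\to0$, and multiplying by $M_K^{1/2}$ gives the norm bound.

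\textbf{Equivariance and non-degeneracy of $\pi_\rho$.} Since $\pi_\rho$ is $C_0(\calG\units)$-linear with fibres $(\pi_\rho)_x$ acting by multiplication on $L^2(Z_x,\rho)$, and
\[
f_1(\gamma\inv-)\,f_2(\gamma\inv-)=V_\gamma\bigl(f_1|_{Z_{s(\gamma)}}f_2\bigr)
\]
for $f_2\in C_c(Z_{s(\gamma)})$, corollary \ref{to be equivariant representation} gives equivariance. Non-degeneracy follows because $C_c(Z)\cdot C_c(Z)=C_c(Z)$, which is dense.
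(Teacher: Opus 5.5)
Your proposal is correct and follows essentially the same route as the paper. Both reduce via the continuity criterion to $f\in C_c(Z)$, take a Tietze extension $g$ of $f(\gamma\inv -)$, and bound the fibre norm by $M_K$ times a sup-norm using the uniform finiteness of $\rho$-fibres in a compact set. Both then conclude with the Banach-bundle convergence criterion and check equivariance and non-degeneracy on $C_c(Z)$.

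The only differences are minor. You explicitly verify that each $V_\gamma$ is unitary, which the paper takes for granted. You also obtain the sup-norm convergence by a direct compactness argument on $h(\eta,z)=f(\eta\inv z)-g(z)$, whereas the paper uses continuity of the action on the $C^*$-bundle of $C_0(Z)$ together with Williams' Lemma C.18.
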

\begin{proof} Let $\ca E:=\sqcup_{x\in \calG\units}L^2(Z_x, \rho|_{Z_x})$ be the associated Hilbert bundle of $L^2(Z,\rho)$, by lemma \ref{continuity of groupoid action of Hilbert module}, it suffices to show that for every $f\in C_c(Z)$ and a converging net $(\gamma_\lambda)_\lambda$ with limit $\gamma$ in $\calG$, $V_{\gamma_\lambda}(f|_{Z_{s(\gamma_\lambda)}})$ converges to $V_\gamma(f|_{Z_{s(\gamma)}})$ in $\ca E$. By Tietze's extension theorem, there exists $g\in C_c(Z)$ such that $g|_{Z_{r(\gamma)}}=V_\gamma(f|_{Z_{s(\gamma)}})=f|_{Z_{s(\gamma)}}(\gamma\inv-)$.

Claim: $\|g|_{Z_{r(\gamma_\lambda)}}-V_{\gamma_\lambda}(f|_{Z_{s(\gamma_\lambda)}})\|_{L^2(Z_{r(\gamma_\lambda)},\rho|_{Z_{r(\gamma_\lambda)}})}\ra 0$.

Proof of the claim: let $D$ be a compact neighborhood of $\gamma$, since $\gamma_\lambda$ will eventually be in $D$, we can assume that $D$ be a compact subset of $\calG$ containing $\{\gamma_\lambda\}_\lambda$ after replacing by a subnet. Let $C=Dsupp(f)\cup supp(g)\subseteq Z$, which is compact, hence $M=\sup_{y\in Y}\#(\rho\inv(y)\cap C)$ is finite. This implies that
\begin{align*}
    \|g|_{Z_{r(\gamma_\lambda)}}-V_{\gamma_\lambda}(f|_{Z_{s(\gamma_\lambda)}})\|^2_{L^2(Z_{r(\gamma_\lambda)},\rho|_{Z_{r(\gamma_\lambda)}})} & =\sup_{y\in Y_{r(\gamma_\lambda)}}\sum_{z\in\rho\inv(y)}|g(z)-f(\gamma_\lambda\inv z)|^2\\
    & \leqslant M\cdot \sup_{z\in Z_{r(\gamma_\lambda)}}|g(z)-f(\gamma_\lambda\inv z)|^2.
\end{align*}

Let $\ca A=\sqcup_{x\in \calG\units}C_0(Z_x)$ be the associated \cst-bundle of $C_0(Z)$, the continuity of the action on $C_0(Z)$ implies that $f|_{Z_{s(\gamma_\lambda)}}(\gamma_\lambda\inv-)$ converges to $f|_{Z_{s(\gamma)}}(\gamma\inv-)=g|_{Z_{r(\gamma)}}$ in $\ca A$. So $g|_{Z_{r(\gamma_\lambda)}}-f|_{Z_{s(\gamma_\lambda)}}(\gamma_\lambda\inv-)$ converges to 0 in $\ca A_{r(\gamma)}=C_0(Z_{r(\gamma)})$. By \cite[Lemma C.18]{williams2007crossed}, we have $\|g|_{Z_{r(\gamma_\lambda)}}-f|_{Z_{s(\gamma_\lambda)}}(\gamma_\lambda\inv-)\|_\infty\ra 0$. Then using the inequality above, we proved our claim.

Using proposition \ref{convergence in banach bundle}, we proved that $V_{\gamma_\lambda}(f|_{Z_{s(\gamma_\lambda)}})$ converges to $V_\gamma(f|_{Z_{s(\gamma)}})$ in $\ca E$. Therefore, the action of $\calG$ on $\ca E$ is continuous, $L^2(Z,\rho)$ is a $\calG$-Hilbert $C_0(Y)$-module.

Clearly $C_c(Z)\subseteq \pi_\rho(C_0(Z))(L^2(Z,\rho))$, so $\pi_\rho$ is non-degenerate.

Now for any $f_1\in C_0(Z_{s(\gamma)})$, $f_2\in C_c(Z_{s(\gamma)})\subseteq L^2(Z_{s(\gamma)},\rho|_{Z_{s(\gamma)}})$,
\[(\pi_{\rho,r(\gamma)}(f_1(\gamma\inv-))\circ V_\gamma)(f_2)=f_1(\gamma\inv-)f_2(\gamma\inv-)=(\pi_{\rho,s(\gamma)}(f_1)(f_2))(\gamma\inv-).\]
So $\pi_\rho$ is $\calG$-equivariant.\end{proof}

Recall that, for a locally compact Hausdorff groupoid $\calG$ and a locally compact Hausdorff $\calG$-space $Z$, there is naturally a forgetful functor $\kk^{\calG\ltimes Z}\ra \kk^\calG$. In this section, we make a convention that, if $x$ is an element of $\kk^{\calG\ltimes Z}(A,B)$, we still use $x$ to denote its image in $\kk^\calG(A,B)$.

\begin{defn}
    Use the same notation as above, we define 
    \[\rho!:=[L^2(Z,\rho),\pi_\rho,0]\in \kk^{\calG\ltimes Y}(C_0(Z),C_0(Y)).\]
    By abuse of language, we also see $\rho!$ as an element of $\kk^\calG(C_0(Z),C_0(Y))$.
\end{defn}

\begin{prop}\label{wrong way functoriality}
Let $\calG$ be an \'etale groupoid, let $f:Y\ra X$ and $g:Z\ra Y$ be $\calG$-equivariant local homeomorphisms between $\sigma$-compact locally compact Hausdorff $\calG$-spaces.
    \begin{enumerate}
        \item $g!\otimes f!=(f\circ g)!$ in $\kk^\calG(C_0(Z),C_0(X))$.
        \item When $f:Y\ra X$ is an open inclusion, $f!$ is induced by the  extension by zero *-homomorphism $\iota: C_0(Y)\hookrightarrow C_0(X)$.
        \item When $f$ is a homeomorphism, $f!$ is induced by the *-isomorphism 
        \[(f\inv)^*:C_0(Y)\ra C_0(X), \phi\mapsto \phi\circ f\inv.\]
    \end{enumerate}
\end{prop}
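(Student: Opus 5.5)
The plan is to compute all three statements at the level of cycles with zero operator. In that setting the Kasparov product of two such cycles is simply the internal tensor product of the modules, as long as the left actions are by compact operators (Lemma \ref{local homeo acts like compact operator}). In that case the zero operator is a connection, and the positivity condition is vacuous.

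For (1), the product $g!\otimes f!$ is represented by
\[\bigl(L^2(Z,g)\otimes_{\pi_f} L^2(Y,f),\ \pi_g\otimes 1,\ 0\bigr).\]
I will construct a $\calG$-equivariant unitary
\[U:L^2(Z,g)\otimes_{C_0(Y)}L^2(Y,f)\ra L^2(Z,f\circ g)\]
on elementary tensors of $C_c$-functions by $U(\xi\otimes\eta)(z)=\xi(z)\eta(g(z))$. The steps are as follows.
\begin{enumerate}
\item Check that $U$ preserves inner products. For $x\in X$ one has
\[\sum_{y\in f^{-1}(x)}\overline{\eta_1(y)}\Bigl(\sum_{z\in g^{-1}(y)}\overline{\xi_1(z)}\xi_2(z)\Bigr)\eta_2(y)=\sum_{z\in (fg)^{-1}(x)}\overline{\xi_1\eta_1\circ g}\;\xi_2\eta_2\circ g .\]
\item Check that the image is dense. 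Products $\xi\cdot(\eta\circ g)$ span a dense subspace of $C_c(Z)$: take $\eta\in C_c(Y)$ equal to $1$ on $g(\mathrm{supp}\,\xi)$.
\item Apply Lemma \ref{to be a unitary} to extend $U$ to a unitary.
\end{enumerate}
The unitary $U$ intertwines the $C_0(Z)$-actions, since both act by multiplication on the $Z$ variable. It is $\calG$-equivariant because both sides carry left translation. By Lemma \ref{to be equivariant adjointable op} it suffices to verify this on the dense pre-Hilbert submodules of compactly supported functions, fiberwise. So $U$ gives an isomorphism of equivariant Kasparov cycles, and the classes are equal.

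The main obstacle is to justify, in the groupoid setting of Le Gall, that the product of two cycles with zero operators and compact left actions is the tensor product with zero operator. I would check the connection condition directly: for $\xi$ in $L^2(Z,g)$ the operator $T_\xi$ lands in $\calK$ after multiplying by $\pi_f(C_0(Y))$, which is compact. Equivariance of the zero operator is trivial.

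For (2), when $f$ is an open inclusion each fiber $f^{-1}(x)$ has at most one point. The inner product is then $\langle h_1,h_2\rangle=\overline{h_1}h_2$ extended by zero, so $L^2(Y,f)$ is identified with the ideal $C_0(Y)\subseteq C_0(X)$, with $C_0(Y)$ acting by multiplication. This ideal is exactly the closed right ideal $J$ generated by $\iota(C_0(Y))$. Corollary \ref{kk-equiv for inclusion} then gives $[\iota]=[J,\pi_\iota,0]=f!$. The identification is equivariant because both carry translation.

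For (3), if $f$ is a homeomorphism, then $\phi\mapsto\phi\circ f^{-1}$ is a unitary from $L^2(Y,f)$ onto $C_0(X)$ viewed as a standard module. Under this unitary $\pi_f$ becomes the homomorphism $(f^{-1})^*$ acting by left multiplication, so $f!=[C_0(X),(f^{-1})^*,0]=[(f^{-1})^*]$. Alternatively, (3) follows from (2) combined with the obvious identification of the data.
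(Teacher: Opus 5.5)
Your proposal is correct and matches the paper's proof essentially step for step. For (1) both use the same unitary $\xi\otimes\eta\mapsto \xi\cdot(\eta\circ g)$, extended via Lemma \ref{to be a unitary} and checked for intertwining and equivariance on $C_c$-functions; for (2) both apply Corollary \ref{kk-equiv for inclusion} to the ideal $C_0(Y)\subseteq C_0(X)$; and your direct argument for (3) is fine, while the paper simply treats it as a special case of (2).

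The only addition is your justification, which the paper takes for granted, that the product of the two zero-operator cycles is the tensor-product cycle. There the connection condition for $0$ is trivial. What your $T_\xi$ remark actually verifies is that $\pi_g(C_0(Z))\otimes 1$ acts by compact operators on $L^2(Z,g)\otimes_{\pi_f}L^2(Y,f)$, which is what makes $\bigl(L^2(Z,g)\otimes_{\pi_f}L^2(Y,f),\pi_g\otimes 1,0\bigr)$ a Kasparov cycle.
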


\begin{proof}
(1) For $h\in C_c(Z)$, $h'\in C_c(Y)$, we denote its image in the internal tensor product $L^2(Z,g)\otimes_{\pi_f}L^2(Y,f)$ by $h\otimes h'$. For the convenience of writing, let $E=L^2(Z,g)$, $F=L^2(Y,f)$.
Let $V=span\{h\otimes h':h\in C_c(Z),h'\in C_c(Y)\}$, it is a sub $C_0(X)$-module of $E\otimes_{\pi_f}F$. And we have a well-defined linear map
\[\Phi:V\ra L^2(Z,f\circ g),\Phi(h\otimes h')(z)=h(z)h'(g(z)).\]

For any $h_1,h_2\in C_c(Z)$ and $h_1',h_2'\in C_c(Y)$, 
\begin{align*}
    \langle h_1\otimes h_1',h_2\otimes h_2'\rangle_{E\otimes_{\pi_f}F}(x) & =\langle h_1',\pi_f(\langle h_1,h_2\rangle_E)h_2'\rangle_F(x)\\
&  = \sum_{y\in f\inv(x)}\overline{h_1'(y)}(\sum_{z\in g\inv(y)}\overline{h_1(z)}h_2(z))h_2'(y)\\
& = \sum_{z\in g\inv(f\inv(x))}\overline{h_1(z)h_1'(g(z))}h_2(z)h_2'(g(z))\\
& = \langle \Phi(h_1\otimes h_1'),\Phi(h_2\otimes h_2')\rangle_{L^2(Z,f\circ g)}
\end{align*}

Now for any $h\in C_c(Z)\subseteq L^2(Z,f\circ g)$, by Urysohn's lemma, there exists $\phi\in C_c(Y)$ such that $\phi|_{g(supp(h))}=1$. Hence, for any $z\in Z$, $h(z)=h(z)\phi(g(z))$. Therefore, $h=\Phi(h\otimes \phi)$. So we proved that $C_c(Z)\subseteq \Phi(V)$, $\Phi(V)$ is dense in $L^2(Z,f\circ g)$.

By lemma \ref{to be a unitary}, $\Phi$ can be extended to unitary isomorphism. By abuse of language, we denote it as $\Phi:E\otimes_{\pi_f}F\ra L^2(Z,f\circ g)$.

Moreover, for any $\psi\in C_0(Z)$, $h\in C_c(Z)$ and $h'\in C_c(Y)$, $z\in Z$,
\begin{align*}
    [\Phi \circ (\pi_g\otimes id)(\psi)](h\otimes h')(z) & = \Phi((\psi h)\otimes h')(z)\\
    & = \psi(z) h(z) h'(g(z))\\
    & = \pi_{f\circ g}(\psi)(\Phi(h\otimes h'))(z).
\end{align*}
That is $\Phi\circ (\pi_g\otimes id(-))=\pi_{f\circ g}(-)\circ \Phi$. It is easy to check that $\Phi$ is $C_0(X)$-linear. It is easy to check that $\Phi$ is $\calG$-equivariant if we apply lemma \ref{to be equivariant adjointable op} to the pre-$\calG$-Hilbert $C_0(X)$-module $V$. So $(E\otimes_{\pi_f}F,\pi_g\otimes id)$ and $(L^2(Z,f\circ g),\pi_{f\circ g})$ are $\calG$-equivariantly unitarily equivalent. Hence,
\[g!\otimes f!=[E\otimes_{\pi_f}F,\pi_g\otimes id,0]=[L^2(Z,f\circ g),\pi_{f\circ g},0]=(f\circ g)!.\]

(2) Without loss of generality, we can see $Y$ as an open of $X$, $C_0(Y)$ is an ideal of $C_0(X)$, $L^2(Y,f)$ is $C_0(Y)$ itself. Let $\iota: C_0(Y)\ra C_0(X)$ be the inclusion of extension by zero. By corollary \ref{kk-equiv for inclusion},
\[f!=[C_0(Y),id,0]=[\iota]\in \kk^{\calG\ltimes X}(C_0(Y),C_0(X)).\]

(3) is a special case of (2).\end{proof}

Now assume that $\calG,\calH\in B_0$, $\Omega_1,\Omega_2\in \frgr(\calH,\calG)$, $f\in \frgr(\calH,\calG)(\Omega_1,\Omega_2)$, i.e. $f$ is a $\calG,\calH$-equivariant continuous map $\Omega_1\ra\Omega_2$.
Our aim is to construct the corresponding 2-cell in $\mathfrak{KK}$ for the 2-cell $f$. Remark that if $\Omega:\calG\leftarrow\calH$ is a correspondence and $A$ is an $\calH$-\cst-algebra, $\ind_\Omega A$ can be seen as a $\calG\ltimes (\Omega/\calH)$-algebra after identified with $(\sigma_\Omega^*A)^{\Omega\rtimes\calH}$ as mentioned in the remark \ref{remark on ind_c}.

\begin{lem}\label{identify pullback of induction}
    Use the same notation as above, let $A$ be an $\calH$-\cst-algebra, then the \cst-algebra $\bar f^*(\ind_{\Omega_2}A)\cong C_0(\Omega_1/\calH)\otimes _{\Omega_2/\calH}\ind_{\Omega_2}A$ is isomorphic to $\ind_{\Omega_1}A$ as $\calG\ltimes (\Omega_1/\calH)$-\cst-algebra, through the *-isomorphism
     \[\Upsilon_{f,A}:C_0(\Omega_1/\calH)\otimes _{\Omega_2/\calH}\ind_{\Omega_2}A\ra \ind_{\Omega_1}A,\]
     which is defined by
    \[\Upsilon_{f,A}(\phi\otimes \eta)(\omega_1)=\phi(\omega_1\calH)\eta(f(\omega_1)).\]
\end{lem}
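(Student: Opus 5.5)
Our approach is to work at the level of bundles over $\Omega_1/\calH$ and use the identification $\ind_{\Omega_i}A\cong(\sigma_{\Omega_i}^*A)^{\Omega_i\rtimes\calH}=\Gamma_0(\Omega_i/\calH,\Omega_i\times_\calH\ca A)$ from remark \ref{remark on ind_c}. First we check that $\Upsilon_{f,A}$ is well defined on elementary tensors. For $\phi\in C_0(\Omega_1/\calH)$ and $\eta\in\ind_{\Omega_2}A$, the section $\omega_1\mapsto\phi(\omega_1\calH)\eta(f(\omega_1))$ is continuous $\Omega_1\ra\sigma_{\Omega_1}^*\ca A$, because $\sigma_{\Omega_2}\circ f=\sigma_{\Omega_1}$. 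It is $\calH$-equivariant, because $f(\omega_1h)=f(\omega_1)h$. Its norm function is $\omega_1\calH\mapsto|\phi(\omega_1\calH)|\,\|\eta(f(\omega_1))\|$, which vanishes at infinity on $\Omega_1/\calH$ because $\phi$ does. The formula is bilinear and multiplicative. It is balanced over $C_0(\Omega_2/\calH)$: moving $g\in C_0(\Omega_2/\calH)$ from $\eta$ to $\phi$ turns $g$ into $g\circ\bar f$, and both placements give the same section. So the formula defines a $*$-homomorphism on the algebraic balanced tensor product. It is $C_0(\Omega_1/\calH)$-linear, and it is $\calG$-equivariant because $f$ is $\calG$-equivariant and the $\calG$-action on both sides is translation $\xi\mapsto\xi(\gamma\inv-)$.

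To extend $\Upsilon_{f,A}$ to the completed $C_0(\Omega_1/\calH)$-algebra and prove it is an isomorphism, we compare fibres. By the pullback discussion in the preliminaries, the fibre of $\bar f^*(\ind_{\Omega_2}A)$ at $\omega_1\calH$ is the fibre of $\ind_{\Omega_2}A$ over $\Omega_2/\calH$ at $f(\omega_1)\calH$. Via the bundle $\Omega_2\times_\calH\ca A$, this fibre is identified with $A_{\sigma_{\Omega_2}(f(\omega_1))}=A_{\sigma_{\Omega_1}(\omega_1)}$, by evaluation at a chosen representative. The fibre of $\ind_{\Omega_1}A$ at $\omega_1\calH$ is $A_{\sigma_{\Omega_1}(\omega_1)}$ in the same way. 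Under these identifications the map on fibres is evaluation at $f(\omega_1)$ followed by the identity, hence a $*$-isomorphism on each fibre; in particular it is isometric on each fibre. A $C_0(X)$-linear $*$-homomorphism between $C_0(X)$-algebras that is isometric on every fibre is isometric, since the norm is the supremum of the fibre norms. Therefore $\Upsilon_{f,A}$ extends to an injective $*$-homomorphism on $C_0(\Omega_1/\calH)\otimes_{\Omega_2/\calH}\ind_{\Omega_2}A$, using its realization as $\Gamma_0(\Omega_1/\calH,\bar f^*(\Omega_2\times_\calH\ca A))$. Concretely, the map becomes the bundle map $\bar f^*(\Omega_2\times_\calH\ca A)\ra\Omega_1\times_\calH\ca A$, $(\omega_1\calH,[f(\omega_1),a])\mapsto[\omega_1,a]$. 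This map is a well-defined homeomorphism of \cst-bundles, by the fibre bijection of lemma \ref{basic property of 2-cells} and the freeness of the $\calH$-action.

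It remains to prove surjectivity. The image is a $C_0(\Omega_1/\calH)$-submodule that is closed, since $\Upsilon_{f,A}$ is isometric. At each point $\omega_1\calH$ it has full fibre, because elements $\eta$ of $\ind_{\Omega_2}A$ realize every value of $A_{\sigma(f(\omega_1))}$ at $f(\omega_1)$; this follows from the diamond construction, proposition \ref{diamond construction}, with $\varphi$ supported near $f(\omega_1)$ on a slice. A standard density criterion for sections of a \cst-bundle, such as the one from \cite[Proposition 2.12]{miller2024functors} already used earlier in the paper, then shows the image is all of $\ind_{\Omega_1}A$. The main obstacle is the previous step: making the identification of the pullback bundle with $\Omega_1\times_\calH\ca A$ rigorous, including continuity of the inverse bundle map. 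We would handle it locally, using that $\bar f$ and the quotient maps are local homeomorphisms, so that locally everything reduces to restricting sections along homeomorphic slices.
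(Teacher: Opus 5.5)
Your proposal is correct and follows the paper's route. You check $C_0(\Omega_1/\calH)$-linearity and $\calG$-equivariance, then observe that, once both fibres at $\omega_1\calH$ are identified with $A_{\sigma_{\Omega_1}(\omega_1)}$, the map $\Upsilon_{f,A}$ is the identity on each fibre. The paper simply asserts that a fibrewise isomorphism is an isomorphism; you prove it, getting injectivity from isometry (the norm is the supremum of fibre norms) and surjectivity from full fibres plus the density criterion for sections, and this makes the bundle homeomorphism you flag as the main obstacle unnecessary.
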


\begin{proof} 
It is easy to check the $C_0(\Omega_1/\calH)$-linearity of $\Upsilon_{f,A}$. For every $\omega_1\calH\in \Omega_1/\calH$, the fiber of $\ind_{\Omega_1}A$ at $\omega_1\calH$ can be canonically identified with $A_{\sigma_{\Omega_1}(\omega_1)}$ (see \cite[Proposition 3.14]{bonicke2020going}), while the fiber of $\bar f^*(\ind_{\Omega_2}A)$ at $\omega_1\calH$ can be identified with the fiber of $\ind_{\Omega_2}A$ at $\bar f(\omega_1\calH)=f(\omega_1)\calH$, which is also $A_{\sigma_{\Omega_1}(\omega_1)}$ since $\sigma_{\Omega_1}(\omega_1)=\sigma_{\Omega_2}(f(\omega_1))$, and the fiber of $\Upsilon_{f,A}$ at $\omega_1\calH$ is identified with the identity map. So $\Upsilon_{f,A}$ is an isomorphism.

After pushout through $\bar\rho_{\Omega_1}:\Omega_1/\calH\ra \calG\units$, we see $\Upsilon_{f,A}$ also as a $C_0(\calG\units)$-linear map. Then for any $\gamma\in \calG$, $\phi\in C_0(\Omega^{s(\gamma)}/\calH)$ and $\eta\in \ind_{\Omega_2}A$,
\begin{align*}
    (\gamma.\Upsilon_{f,A, s(\gamma)}(\phi\otimes \eta))(\omega) & =\Upsilon_{f,A,s(\gamma)}(\phi\otimes \eta)(\gamma\inv\omega)\\
    & = \phi(\gamma\inv\omega\calH)\eta(f(\gamma\inv\omega))\\
    & = (\gamma.\phi)(\omega\calH)\cdot(\gamma.\eta)(f(\omega))\\
    & = \Upsilon_{f,A,r(\gamma)}(\gamma.(\phi\otimes \eta)).
\end{align*}
So $\Upsilon_{f,A}$ is also $\calG$-equivariant. Hence, $\Upsilon_{f,A}$ is $\calG\ltimes(\Omega_1/\calH)$-equivariant.
\end{proof}

\subsubsection{Construction on 2-cells}

\begin{defn}\label{def_wrong way functoriality}
    For a 2-cell $f\in \frgr(\calH,\calG)(\Omega_1,\Omega_2)$ defined as above (i.e. $f:\Omega_1\ra \Omega_2$ is a bi-equivariant continuous map between two correspondences), for any $A\in \kk^\calH$, we define
    \[\ind_{f,A}=[\Upsilon_{f,A}\inv]\otimes\tau_{\ind_{\Omega_2}A}^{\calG\ltimes(\Omega_2/\calH)}(\bar f!)\in \kk^{\calG\ltimes(\Omega_2/\calH)}(\ind_{\Omega_1}A,\ind_{\Omega_2}A).\]
    By abuse of language, we will also denote its image in $\kk^\calG(\ind_{\Omega_1}A,\ind_{\Omega_2}A)$ as $\ind_{f,A}$.

\[\xymatrix{
\ind_{\Omega_1}A \ar[r]^{\hspace{-1.5cm}\Upsilon_{f,A}\inv} \ar[dr]_{\ind_{f,A}} & C_0(\Omega_1/\calH)\otimes_{\Omega_2/\calH}\ind_{\Omega_2}A \ar[d]^{\tau(\bar f!)} \\
& \ind_{\Omega_2}A
}\]
\end{defn}

We will have a more concrete construction. Before that, we need the following lemmas.

\begin{lem}\label{trivial unitary isomorphism}
    Let $\rho:Z\ra Y$ be a local homeomorphism between two locally compact Hausdorff spaces. Assume that $A$ is a $C_0(Y)$-algebra. On $\Gamma_c(Z,\rho^*\ca A)$, we can define a pre-Hilbert $A$-module structure as
    \[(\xi\cdot a)(z)=\xi(z)a(\rho(z)),\forall \xi\in \Gamma_c(Z,\rho^*\ca A),a\in A,z\in Z,\]
    \[\langle \xi_1,\xi_2\rangle(y)=\sum_{z\in \rho\inv(y)}\xi_1(z)^*\xi_2(z), \forall \xi_1,\xi_2\in \Gamma_c(Z,\rho^*\ca A).\]
    We define its completion as $L^2(Z,\rho,A)$. Then $L^2(Z,\rho,A)$ is unitarily isomorphic to $L^2(Z,\rho)\otimes_Y A$.
\end{lem}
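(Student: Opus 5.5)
We will define an explicit map on algebraic generators and extend it by density, following the pattern of Proposition \ref{wrong way functoriality}(1). Here $L^2(Z,\rho)\otimes_Y A$ is the balanced tensor product of the Hilbert $C_0(Y)$-module $L^2(Z,\rho)$ with the $C_0(Y)$-algebra $A$, viewed as a Hilbert $A$-module. Let $V$ be the span of the elementary tensors $h\otimes a$ with $h\in C_c(Z)$ and $a\in A$. Define
\[\Phi:V\ra L^2(Z,\rho,A),\quad \Phi(h\otimes a)(z)=h(z)a(\rho(z)).\]
Since $h$ is compactly supported and $z\mapsto a(\rho(z))$ is a continuous section of $\rho^*\ca A$, the image lies in $\Gamma_c(Z,\rho^*\ca A)$. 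The map $\Phi$ is clearly right $A$-linear, because $(\Phi(h\otimes a)\cdot b)(z)=h(z)(ab)(\rho(z))$.

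The first step is to check that $\Phi$ preserves inner products. For $h_1,h_2\in C_c(Z)$ and $a_1,a_2\in A$, the inner product in the tensor product is $a_1^*\langle h_1,h_2\rangle a_2$. Its value at $y$ is $\sum_{z\in\rho\inv(y)}\overline{h_1(z)}h_2(z)\,a_1(y)^*a_2(y)$. This equals $\langle\Phi(h_1\otimes a_1),\Phi(h_2\otimes a_2)\rangle(y)$, because $\rho(z)=y$ for every $z$ in the sum. The sums are finite by Lemma \ref{uniformly finite fiber}. It follows that $\Phi$ is well defined on $V$: any combination of tensors that vanishes in the tensor product has zero norm, so its image has zero norm too.

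The second step, which is the main obstacle, is to show that $\Phi(V)$ is dense in $\Gamma_c(Z,\rho^*\ca A)$ for the inductive-limit topology, and hence in $L^2(Z,\rho,A)$. We will first localize. Cover $\mathrm{supp}(\xi)$ by finitely many opens $U_i$ on which $\rho$ is a homeomorphism onto an open set, and take a partition of unity $(\psi_i)$. This reduces the problem to sections $\xi$ supported in a single such $U$. On $U$, the bundle $\rho^*\ca A|_U$ is identified with $\ca A|_{\rho(U)}$. Therefore $\xi\circ\rho|_U\inv$ is a compactly supported section of $\ca A$ over $\rho(U)$, and extending it by zero gives an element $a\in A$. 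Now choose $h\in C_c(U)$ with $h=1$ on $\mathrm{supp}(\xi)$. Then $\xi=\Phi(h\otimes a)$ exactly, so in fact $\Gamma_c(Z,\rho^*\ca A)\subseteq\Phi(V)$.

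In the norm of $L^2(Z,\rho,A)$, the completion of $\Gamma_c$, this shows that $\Phi(V)$ is dense. Since $V$ is dense in $L^2(Z,\rho)\otimes_Y A$, Lemma \ref{to be a unitary} extends $\Phi$ to a unitary in $\calL(L^2(Z,\rho)\otimes_Y A,\,L^2(Z,\rho,A))$.
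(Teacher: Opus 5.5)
Your proposal is correct and follows essentially the same route as the paper: the same map $\Phi(h\otimes a)(z)=h(z)a(\rho(z))$, the same inner-product computation, a partition of unity subordinate to opens on which $\rho$ is a homeomorphism to show $\Gamma_c(Z,\rho^*\ca A)\subseteq\Phi(V)$, and then Lemma \ref{to be a unitary}. The only cosmetic difference is that the paper writes $\xi=\Phi\bigl(\sum_i\phi_i^{1/2}\otimes((\phi_i^{1/2}\cdot\xi)\circ\rho|_{V_i}\inv)\bigr)$ using square roots of the partition of unity, whereas you use the pieces $\psi_i\xi$ together with an auxiliary cutoff $h_i$ equal to $1$ on the support of each piece.
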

\begin{proof}
    For any $\phi\in C_c(Z)$, $a\in A$, we define $\Phi(\phi\otimes a)\in \Gamma_c(Z,\rho^*\ca A)$ as $z\mapsto \phi(z)a(\rho(z))$.

    Now for any $\phi_1,\phi_2\in C_c(Z)$ and $a_1,a_2\in A$,
    \begin{align*}
        \langle \Phi(\phi_1\otimes a_1),\Phi(\phi_2\otimes a_2)\rangle_{L^2(Z,\rho,A)}(y) & = \sum_{z\in \rho\inv(y)}(\Phi(\phi_1\otimes a_1)(z))^*\Phi(\phi_2\otimes a_2)(z)\\
        & = \sum_{z\in \rho\inv(y)}\phi_1(z)^*a_1(y)^*\phi_2(z)a_2(y)\\
        & = \langle \phi_1,\phi_2\rangle_{L^2(Z,\rho)}a_1(y)^*a_2(y)\\
        & = \langle \phi_1\otimes a_1,\phi_2\otimes a_2\rangle_{L^2(Z,\rho)\otimes_Y A}.
    \end{align*}
    Hence, $\Phi$ is a well-defined isometric map.

    Claim: $\Gamma_c(Z,\rho^*\ca A)$ is contained in the image of $\Phi$. For any $\xi\in \Gamma_c(Z,\rho^*\ca A)$, there are finitely many opens $V_1,\cdots, V_n$ of $Z$, such that for any $1\leqslant i\leqslant n$, $\rho|_{V_i}$ is a homeomorphism onto an open of $Y$. Let $(\phi_i)_{i=1}^n$ be a family of functions in $C_c(Z,[0,1])$ such that $supp(\phi_i)\subseteq V_i$ and $\sum_{i=1}^n \phi_i(z)=1$ for any $z\in supp(\xi)$. So $\phi_i^{\frac{1}{2}}\cdot\xi$ is a continuous section in $\Gamma_c(V_i,\rho^*\ca A)$.
    
    For convenience of writing, we make the following convention: if $U$ is an open of $Z$ such that $\rho|_U$ is a homeomorphism onto an open of $Y$, and if $\eta\in \Gamma_c(Z,\rho^*\ca A)$ or $f\in C_c(Z)$ such that $supp(\eta)\subseteq U$, $supp(f)\subseteq U$, let $\eta\circ \rho|_U\inv\in \Gamma_c(\rho(U),\ca A)$ be seen as an element of $\Gamma_c(Y,\ca A)$ through extension by zero, let $f\circ \rho|_U\inv\in C_c(\rho(U))$ be seen as an element of $C_c(Y)$ through extension by zero.
    
    Let $\Psi(\xi)=\sum_{i=1}^n \phi_i^{\frac{1}{2}}\otimes ((\phi_i^{\frac{1}{2}}\cdot \xi)\circ\rho|_{V_i}\inv)\in span\{\phi\otimes a:\phi\in C_c(Z),a\in A\}$.

    For any $z\in Z$,
    \[\Phi(\Psi(\xi))(z)=\sum_{i=1}^n \phi_i(z)^\frac{1}{2}(\phi_i^{\frac{1}{2}}\circ\rho|_{V_i}\inv)(\rho(z))(\xi\circ\rho|_{V_i}\inv)(\rho(z))=\sum_{i=1}^n \phi_i(z)\xi(z)=\xi(z),\]
    therefore $\xi=\Phi(\Psi(\xi))$ is in the image of $\Phi$. Now we can use lemma \ref{to be a unitary}, $\Phi$ extends to a unitary between $L^2(Z,\rho)\otimes_Y A$ and $L^2(Z,\rho,A)$. 
\end{proof}

\begin{lem}\label{well defined pre hilbmod}
    Use the same notation as the definition \ref{def_wrong way functoriality}, let $\ind_{\Omega_2}A$ acts on the right of $\ind_{\Omega_1,c}A$ by
    \[(\xi\cdot \eta) (\omega_1)=\xi(\omega_1)\eta(f(\omega_1)), \quad \forall \xi\in \ind_{\Omega_1,c}A,\eta\in \ind_{\Omega_2}A,\omega_1\in \Omega_1,\]
    and for any $\xi_1,\xi_2\in \ind_{\Omega_1,c}A$, we define
    \[\langle \xi_1,\xi_2\rangle(\omega')=\sum_{\omega\in f\inv(\omega')}\xi_1(\omega)^*\xi_2(\omega),\quad \forall \xi_1,\xi_2\in \ind_{\Omega_1,c}A,\omega'\in \Omega_2.\]
    Then
    \begin{enumerate}
        \item for any $\xi_1,\xi_2\in \ind_{\Omega_1,c}A$, $\omega\mapsto \langle \xi_1,\xi_2\rangle(\omega)$ is a well-defined element of $\ind_{\Omega_2,c}A$;
        \item the above data gives a pre-Hilbert $\ind_{\Omega_2}A$-module structure on $\ind_{\Omega_1,c}A$.
    \end{enumerate}
\end{lem}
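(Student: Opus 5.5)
The plan is to verify the two assertions directly from the definitions, using the local-homeomorphism structure of $f$ and $\bar f$ from lemma \ref{basic property of 2-cells}, with the fiber sums handled by lemma \ref{sum of fiber}.

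For (1), fix $\xi_1,\xi_2\in \ind_{\Omega_1,c}A$ and set $\zeta(\omega)=\xi_1(\omega)^*\xi_2(\omega)\in A_{\sigma_{\Omega_1}(\omega)}$. Because $\sigma_{\Omega_2}\circ f=\sigma_{\Omega_1}$, $\zeta$ is a continuous section of $f^*(\sigma_{\Omega_2}^*\ca A)$ over $\Omega_1$. It is $\calH$-equivariant, $\zeta(\omega h)=\alpha_{h\inv}(\zeta(\omega))$, and its support has compact image in $\Omega_1/\calH$.

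The first step is to check that the sum over $f\inv(\omega')$ is finite and continuous in $\omega'$. I would not apply lemma \ref{sum of fiber} to $f$ directly, since $supp(\zeta)$ need not be compact in $\Omega_1$. Instead I would pass to the quotient: by lemma \ref{basic property of 2-cells}, $f\inv(\omega')\cong\bar f\inv(\omega'\calH)$. Then $\|\zeta\|$ descends to $\Omega_1/\calH$ with compact support $S$, and $\bar f$ is a local homeomorphism, so lemma \ref{uniformly finite fiber} bounds the number of terms uniformly.

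For continuity, choose compact $K\subseteq\Omega_1$ with $q(K)=S$, and cover $K$ by finitely many open sets $U_i$ on which $f$ is injective. A partition of unity $(\psi_i)$ on $\Omega_1/\calH$ subordinate to the open sets $q(U_i)$ (which are also sets where $\bar f$ is injective after shrinking) writes $\zeta=\sum_i(\psi_i\circ q)\zeta$. Each piece pushes forward to $\omega'\mapsto \sum_{\omega\in f\inv(\omega')}(\psi_i\circ q)(\omega)\zeta(\omega)$. Over $q(U_i)$ this is an $\calH$-translate of a single continuous local section, hence continuous.

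Equivariance $\langle\xi_1,\xi_2\rangle(\omega'h)=\alpha_{h\inv}(\langle\xi_1,\xi_2\rangle(\omega'))$ follows from $f\inv(\omega'h)=f\inv(\omega')h$. Its support lies in $\bar f(S)$, which is compact, so $\langle\xi_1,\xi_2\rangle\in\ind_{\Omega_2,c}A$.

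For (2), I would check the algebraic axioms pointwise. Right action: $\xi\cdot\eta$ is continuous and equivariant, with support inside $supp(\xi)$, so it lies in $\ind_{\Omega_1,c}A$. Linearity and compatibility, $\langle\xi_1,\xi_2\cdot\eta\rangle=\langle\xi_1,\xi_2\rangle\eta$, hold because $\eta(f(\omega))=\eta(\omega')$ is constant over the fiber. Hermitian symmetry is immediate. Positivity holds fiberwise, as a finite sum of $\xi(\omega)^*\xi(\omega)\geq0$ in $A_{\sigma(\omega')}$, and an element of $\ind_{\Omega_2}A$ is positive iff all its values are.

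Definiteness: if $\langle\xi,\xi\rangle=0$ then $\xi(\omega)=0$ for all $\omega$, since every $\omega$ lies in $f\inv(f(\omega))$.

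The main obstacle is the continuity in (1). The non-compactness of supports upstairs must be handled via the quotient by $\calH$ and the fiber bijection of lemma \ref{basic property of 2-cells}; the rest is routine.
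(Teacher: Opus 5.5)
Your proposal is correct. Your finiteness step, using the fiber bijection of lemma \ref{basic property of 2-cells} and then lemma \ref{uniformly finite fiber}, is the same as the paper's; the difference is in how continuity is proved.

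The paper never works on $\Omega_1$ itself. Via the isomorphism $\Upsilon_{f,A}$ of lemma \ref{identify pullback of induction}, it regards $\xi_1^*\xi_2$ as a compactly supported section of $\bar f^*(\Omega_2\times_\calH\ca A)$ over $\Omega_1/\calH$. It then applies lemma \ref{sum of fiber} to the local homeomorphism $\bar f:\Omega_1/\calH\ra\Omega_2/\calH$. Properness on the support is automatic there, and the output is by construction a compactly supported section over $\Omega_2/\calH$, i.e.\ an element of $\ind_{\Omega_2,c}A$, with $\calH$-equivariance built in.

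You instead keep the definition of $\ind$ as equivariant sections on $\Omega_1,\Omega_2$ and redo the equivariant push-forward by hand with a partition of unity. This is more elementary and avoids the bundle identification. You should, however, say explicitly that each piece vanishes off the closed set $q_2\inv(\bar f(supp(\psi_i)))$, where $q_2:\Omega_2\ra\Omega_2/\calH$ is the quotient map. That set lies inside the open set where your local description holds, so continuity also holds at its boundary.

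Your reason for not applying lemma \ref{sum of fiber} to $f$ directly misreads its hypothesis. The lemma only needs $f|_{supp(\zeta)}$ to be proper, not $supp(\zeta)$ to be compact, and properness does hold. Indeed $supp(\zeta)\subseteq K\calH$, and for compact $L\subseteq\Omega_2$ we have $f\inv(L)\cap K\calH\subseteq K\cdot\{h\in\calH: f(K)h\cap L\neq\emptyset\}$. The right-hand set is compact by properness of the $\calH$-action on $\Omega_2$. This direct application, followed by your own checks of equivariance and support, would be the shortest route of the three.
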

\begin{proof}
    (1) For any $\xi_1,\xi_2\in \ind_{\Omega_1,c}A$, $supp(\xi_1)$ is a compact subset of $\Omega_1/\calH$. Use lemma \ref{basic property of 2-cells}, for any $\omega'\in \Omega_2$, $f\inv(\omega')\cap q\inv(supp(\xi_1))$ is in bijection with $\bar f\inv(\omega'\calH)\cap supp(\xi_1)$, which is a finite set. Hence, the sum is always a finite sum. Apply lemma \ref{sum of fiber} to the $C_0(\Omega_2/\calH)$-algebra $\ind_{\Omega_2}A$, the local homeomorphism $\bar f:\Omega_1/\calH\ra \Omega_2/\calH$ and notice that $\bar f^*\ind_{\Omega_2}A\cong \ind_{\Omega_1}A$, we can prove that $\langle \xi_1,\xi_2\rangle$ is a well-defined element of $\ind_{\Omega_2,c}A$. (2) is now easy to check.
\end{proof}

\begin{defn}
    Use the same notation as the definition \ref{def_wrong way functoriality}, we define the Hilbert $\ind_{\Omega_2}A$-module $E_{f,A}$ as the completion of the pre-Hilbert $\ind_{\Omega_2}A$-module $\ind_{\Omega_1,c}A$ as mention in lemma \ref{well defined pre hilbmod}.
\end{defn}

\begin{prop}
    Use the same notation as above. For any $\gamma\in \calG$, let $V_\gamma:(E_{f,A})_{s(\gamma)}\ra(E_{f,A})_{r(\gamma)}$ be the adjointable operator extended from
\[\ind_{\Omega^{s(\gamma)},c}A\ra \ind_{\Omega^{r(\gamma)},c}A, \xi\mapsto \xi(\gamma\inv-).\]
For any $\xi\in \ind_{\Omega_1}A$, we can define an adjointable operator $\pi_{f,A}(\xi)\in \calL(E_{f,A})$ as
\[\pi_{f,A}(\xi)(\eta)=\xi\eta,\quad \forall \eta\in \ind_{\Omega_1,c}A.\]
Then we have
\begin{enumerate}
    \item the bimodule $(E_{f,A},\pi_{f,A})$ is unitarily equivalent to \[(L^2(\Omega_1/\calH,\bar f)\otimes_{\Omega_2/\calH}\ind_{\Omega_2}A,(\pi_{\bar f}\otimes id)\circ \Upsilon_{f,A}\inv).\]
    \item $(V_\gamma)_{\gamma\in \calG}$ coincides with the action of $\calG$ on $L^2(\Omega_1/\calH,\bar f)\otimes_{\Omega_2/\calH}\ind_{\Omega_2}A$ through this unitary equivalence. So $(V_\gamma)_\gamma$ make $E_{f,A}$ a $\calG$-Hilbert $\ind_{\Omega_2}A$-module, and $\pi_{f,A}$ is $\calG$-equivariant.
    \item $\pi_{f,A}(\ind_{\Omega_1}A)\subseteq \ca K(E_{f,A})$.
    \item $\ind_{f,A}=[E_{f,A},\pi_{f,A},0]\in \kk^\calG(\ind_{\Omega_1}A,\ind_{\Omega_2}A)$.
\end{enumerate}
\end{prop}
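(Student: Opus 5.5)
The whole proposition will follow once we have one explicit unitary
\[
U:E_{f,A}\ra L^2(\Omega_1/\calH,\bar f)\otimes_{\Omega_2/\calH}\ind_{\Omega_2}A .
\]
Lemma \ref{trivial unitary isomorphism}, applied to the local homeomorphism $\bar f:\Omega_1/\calH\ra\Omega_2/\calH$ (Lemma \ref{basic property of 2-cells}) and the $C_0(\Omega_2/\calH)$-algebra $\ind_{\Omega_2}A$, identifies the target with $L^2(\Omega_1/\calH,\bar f,\ind_{\Omega_2}A)$. This is the completion of $\Gamma_c(\Omega_1/\calH,\bar f^*\ind_{\Omega_2}\calA)$.

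By Lemma \ref{identify pullback of induction}, $\Upsilon_{f,A}$ gives a fiberwise identification $\bar f^*(\ind_{\Omega_2}A)\cong\ind_{\Omega_1}A$. Under it, compactly supported sections of the pullback bundle correspond to $\ind_{\Omega_1,c}A$, via Remark \ref{remark on ind_c}. So I define $U_0:\ind_{\Omega_1,c}A\ra\Gamma_c(\Omega_1/\calH,\bar f^*\ind_{\Omega_2}\calA)$ by sending $\xi$ to the section $\omega_1\calH\mapsto\xi(\omega_1)\in A_{\sigma(\omega_1)}$. This is the fiber of $\ind_{\Omega_2}A$ at $f(\omega_1)\calH$.

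Next I check the inner product. For $\omega'\in\Omega_2$,
\[
\sum_{\omega\in f\inv(\omega')}\xi_1(\omega)^*\xi_2(\omega)
\]
is the fiber at $\omega'\calH$ of the $L^2(\bar f)$ inner product $\sum_{p\in\bar f\inv(\omega'\calH)}$. This uses the bijection $f\inv(\omega')\cong\bar f\inv(\omega'\calH)$ of Lemma \ref{basic property of 2-cells}. The right module actions match in the same way. Since $U_0$ is onto the dense subspace $\Gamma_c$, Lemma \ref{to be a unitary} extends it to a unitary $U$.

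For the left actions I compare on elementary tensors. Take $\phi\otimes\eta$ with $\phi\in C_0(\Omega_1/\calH)$ and $\eta\in\ind_{\Omega_2}A$. Then $(\pi_{\bar f}\otimes id)(\phi\otimes\eta)$ multiplies a section $\zeta$ pointwise by $\phi(p)\eta(\bar f(p))$, which is exactly $\Upsilon_{f,A}(\phi\otimes\eta)$ acting by multiplication. Hence
\[
U\pi_{f,A}(\Upsilon_{f,A}(x))U^*=(\pi_{\bar f}\otimes id)(x),
\]
first on the dense span and then everywhere by continuity. This proves (1).

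For (2), both the $V_\gamma$ and the action on the tensor product are left translation by $\gamma\inv$ on fibers. So $U_0$ intertwines them on the dense pre-$\calG$-Hilbert submodule $\ind_{\Omega_1,c}A$. I would transport the $\calG$-Hilbert structure along $U$ and invoke Lemma \ref{to be equivariant adjointable op} and Corollary \ref{to be equivariant representation}. Equivariance of the tensor product side holds because $L^2(\bar f)$ is a $\calG\ltimes(\Omega_2/\calH)$-Hilbert module and $\tau$ preserves equivariance.

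Part (3) follows from Lemma \ref{local homeo acts like compact operator}. Since $\pi_{\bar f}(C_0)$ consists of compact operators, $(\pi_{\bar f}\otimes id)(C_0\otimes_{\Omega_2/\calH}\ind_{\Omega_2}A)$ lies in $\calK$. Compact operators tensored with the identity on the coefficient algebra remain compact, and $\Upsilon_{f,A}$ is onto.

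For (4), note that $\tau(\bar f!)$ is represented by the cycle $[L^2(\bar f)\otimes\ind_{\Omega_2}A,\pi_{\bar f}\otimes id,0]$. The Kasparov product with the $*$-isomorphism class $[\Upsilon_{f,A}\inv]$ precomposes the representation with $\Upsilon_{f,A}\inv$. By (1) and (2) this cycle is unitarily equivalent to $(E_{f,A},\pi_{f,A},0)$. Finally I pass to the forgetful image in $\kk^\calG$.

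The main obstacle is step (1). There one has to justify carefully that $U_0$ really lands in compactly supported continuous sections of $\bar f^*\ind_{\Omega_2}\calA$ and has dense image; this is the fiber identification in Lemma \ref{identify pullback of induction}. The rest is routine bookkeeping.
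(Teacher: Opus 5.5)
Your proposal is correct and matches the paper's proof of (1), (2) and (4). In both, one identifies $E_{f,A}$ with $L^2(\Omega_1/\calH,\bar f,\ind_{\Omega_2}A)$, composes with the unitary of Lemma \ref{trivial unitary isomorphism}, checks that the left actions and the $\calG$-actions agree on elementary tensors (equivalently on the dense subspace $\ind_{\Omega_1,c}A$), and reads off (4) from the resulting equivariant unitary equivalence with the cycle representing $[\Upsilon_{f,A}\inv]\otimes\tau(\bar f!)$.

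The only deviation is in (3). The paper shows directly, as in Lemma \ref{local homeo acts like compact operator}, that $\pi_{f,A}(\xi)$ is rank one whenever $\xi$ is supported in an open set on which $\bar f$ is injective. You instead transport compactness through the unitary of (1), using $\calK(E)\otimes_X D\subseteq\calK(E\otimes_X D)$ for $E=L^2(\Omega_1/\calH,\bar f)$, $D=\ind_{\Omega_2}A$, $X=\Omega_2/\calH$; this is the same fact that makes $\tau(\bar f!)$ a Kasparov cycle with zero operator. You should state it in that form, because $T\otimes 1$ with $T$ compact is in general not compact; what is compact is $T\otimes d$ with $d\in D$ acting by left multiplication.
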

\begin{proof}

    (1) The Hilbert $\ind_{\Omega_2}A$-module $E_{f,A}$ can be canonically identified with $L^2(\Omega_1/\calH,\overline{f}, \ind_{\Omega_2}A)$ as defined in lemma \ref{trivial unitary isomorphism}, where $\ind_{\Omega_2}A$ is seen as a $C_0(\Omega_2/\calH)$-algebra. By lemma \ref{trivial unitary isomorphism}, the map
    \[\Phi:span\{\phi\otimes \eta:\phi\in C_c(\Omega_1/\calH),\eta\in \ind_{\Omega_2}A\}\ra \ind_{\Omega_1,c}A,\]
    \[\Phi(\phi\otimes \eta)(\omega_1)=\phi(\omega_1\calH)\eta(f(\omega_1))\]
    extend to a unitary isomorphism $\Phi\in \calL(L^2(\Omega_1/\calH,\overline{f})\otimes_{\Omega_2/\calH}\ind_{\Omega_2}A, E_{f,A})$.

    Now for any $\phi,\psi\in C_c(\Omega_1/\calH)$, $\eta,\xi\in \ind_{\Omega_2}A$, for any $\omega_1\in \Omega_1$,
    \begin{align*}
        [\pi_{f,A}(\Upsilon_{f,A}(\phi\otimes \eta))\circ \Phi](\psi\otimes\xi)(\omega_1) & = \phi(\omega_1\calH)\psi(\omega_1\calH)\xi(f(\omega_1))\eta(f(\omega_1))\\
        & = \Phi((\phi\psi)\otimes (\eta\xi))(\omega_1)\\
        & = [\Phi\circ (\pi_{\bar f}\otimes id)(\phi\otimes \eta)](\psi\otimes \xi)(\omega_1).
    \end{align*}
    That is, $\pi_{f,A}(\Upsilon_{f,A}(-))\circ \Phi=\Phi\circ (\pi_{\bar f}\otimes id)$, therefore $\Phi^*\circ \pi_{f,A}(-)\circ\Phi=(\pi_{\bar f}\otimes id)\circ \Upsilon_{f,A}\inv(-)$.

    (2) If $\phi\in C_c(\Omega_1/\calH)$, $\eta\in \ind_{\Omega_2}A$, we have
    \begin{align*}
    V_\gamma(\Phi(\phi\otimes\eta)|_{\Omega_1^{s(\gamma)}}) & = \Phi(\phi\otimes\eta)|_{\Omega_1^{s(\gamma)}}(\gamma\inv-)\\
        & = \phi|_{\Omega_1^{s(\gamma)}/\calH}(\gamma\inv-)\eta|_{\Omega_2^{s(\gamma)}}(\gamma\inv-)\\
        & = \Phi_{r(\gamma)}(\gamma.(\phi\otimes \eta)_{s(\gamma)})
    \end{align*}
    Hence, $(\Phi^*_{r(\gamma)}\circ V_\gamma\circ \Phi_{s(\gamma)})_{\gamma\in \calG}$ coincides with the action of $\calG$ on $L^2(\Omega_1/\calH,\bar f)\otimes_{\Omega_2/\calH}\ind_{\Omega_2}A$. It is easy to check that $\pi_{f,A}$ is $\calG$-equivariant if we apply lemma \ref{to be equivariant representation} to the pre-$\calG$-Hilbert $\ind_{\Omega_2}A$-module $\ind_{\Omega_1,c}A$.

    (3) Similar to lemma \ref{local homeo acts like compact operator}, if $\xi\in \ind_{\Omega_1}A$ such that $supp(\xi)$ is contained in an open $V$ of $\Omega_1/\calH$ on which $\overline{f}|_V$ is a homeomorphism onto an open of $\Omega_2/\calH$, then $\pi_{f,A}(\xi)$ is a rank 1 operator. (4) follows directly from (2) and (3).
\end{proof}

\begin{rem}\label{special case of ind 2-cells}
    When $f:\Omega_1\ra \Omega_2$ is an open inclusion, \[\iota_A:\ind_{\Omega_1}A\ra \ind_{\Omega_2}A,\]
\[\iota_A(\xi)(\omega_2) =\begin{cases}
    \xi(f\inv(\omega_2)) & \omega_2\in f(\Omega_1),\\
    0 & else
\end{cases}\]
is a well-defined $\calG$-equivariant inclusion, and $\ind_{f,A}=[\iota_A]$. Specifically, when $f$ is a homeomorphism, $\ind_{f,A}$ is an element induced by the $\calG$-equivariant *-isomorphism
\[(f\inv)^*:\ind_{\Omega_1}A\ra \ind_{\Omega_2}A, \xi\mapsto \xi\circ f\inv.\]
\end{rem}

Now we check that for a 2-cell $f$ in $\frgr$, $A\mapsto \ind_{f,A}$ a well-defined natural transformation.

\begin{prop}\label{ind_f is natural transformation}
    For $\calG,\calH\in B_0$, $\Omega_1,\Omega_2\in \frgr(\calH,\calG)$, $f\in \frgr(\calH,\calG)(\Omega_1,\Omega_2)$, i.e. $f$ is a $\calG,\calH$-equivariant continuous map $\Omega_1\ra\Omega_2$, there is a natural transformation $\ind_f:\ind_{\Omega_1}\ra \ind_{\Omega_2}$ consisting of morphisms \[\ind_{f,A}\in \kk^{\calG}(\ind_{\Omega_1}A,\ind_{\Omega_2}A)\]
    for each $A\in \kk^{\calH}$.
\end{prop}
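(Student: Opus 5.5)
We have to show that for all $A,B\in\kk^\calH$ and $x\in\kk^\calH(A,B)$,
\[\ind_{f,A}\otimes \ind_{\Omega_2}(x)=\ind_{\Omega_1}(x)\otimes \ind_{f,B}\in \kk^\calG(\ind_{\Omega_1}A,\ind_{\Omega_2}B).\]
The plan is to work $\calG\ltimes(\Omega_2/\calH)$-equivariantly, using Definition \ref{def_wrong way functoriality} of $\ind_{f,A}$ as $[\Upsilon_{f,A}\inv]\otimes\tau_{\ind_{\Omega_2}A}(\bar f!)$. By Proposition \ref{functoriality of tau}, $\tau$ is compatible with Kasparov products in the coefficient slot. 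The relation to establish is therefore a ``$\tau$ commutes with $\tau$'' identity: exterior products over the base $\Omega_2/\calH$ are commutative, in the sense that
\[\tau_{D_1}(y)\otimes\tau_{C_0(Y)}(z)=\tau_{C_0(Z)}(z)\otimes\tau_{D_2}(y)\]
for $y\in\kk(D_1,D_2)$ and $z=\bar f!\in\kk(C_0(Z),C_0(Y))$, with $Z=\Omega_1/\calH$ and $Y=\Omega_2/\calH$.

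\textbf{Step 1: upgrade $\ind_{\Omega_i}(x)$ to the groupoid $\calG\ltimes(\Omega_i/\calH)$.} Take a cycle $(E,\pi,T)$ for $x$ and a cutoff function $c_2$ for $\Omega_2\rtimes\calH$. The induced triple $(\ind_{\Omega_2}E,\ind_{\Omega_2}\pi,\ind_{\Omega_2,c_2}T)$ is $C_0(\Omega_2/\calH)$-linear and $\calG$-equivariant, so it defines a lift $\tilde x_2\in\kk^{\calG\ltimes(\Omega_2/\calH)}(\ind_{\Omega_2}A,\ind_{\Omega_2}B)$ whose image under the forgetful functor is $\ind_{\Omega_2}(x)$. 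The function $c_1:=c_2\circ f$ is a cutoff function for $\Omega_1\rtimes\calH$. To check this, one uses Lemma \ref{basic property of 2-cells}: $f$ is $\calH$-equivariant, so orbits map bijectively and the sum over $\calH^{\sigma(\omega)}$ is preserved, while properness of the support follows because $\bar f$ has finite fibres on compacta. With this choice, $\Upsilon_{f,-}$ identifies the induced cycle $(\ind_{\Omega_1}E,\ind_{\Omega_1}\pi,\ind_{\Omega_1,c_1}T)$ with the pullback $\bar f^*$ of the $\Omega_2$-cycle. Fibrewise this is clear, since both have fibre $E_{\sigma(\omega_1)}$ and operator $(\ind_{\Omega_2,c_2}T)(f(\omega_1))$. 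Consequently
\[[\Upsilon_{f,A}\inv]\otimes\tau_{C_0(\Omega_1/\calH)}(\tilde x_2)=\ind_{\Omega_1}(x)\otimes[\Upsilon_{f,B}\inv].\]
Here $\tau_{C_0(Z)}$ denotes the tensor product over $Y$ with $C_0(Z)$, which is the pullback along $\bar f$.

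\textbf{Step 2: commute $\bar f!$ past $\tilde x_2$.} We need
\[\tau_{\ind_{\Omega_2}A}(\bar f!)\otimes\tilde x_2=\tau_{C_0(Z)}(\tilde x_2)\otimes\tau_{\ind_{\Omega_2}B}(\bar f!).\]
Since $\bar f!$ has zero operator and acts by compact operators (Lemma \ref{local homeo acts like compact operator}), both sides can be computed by explicit Kasparov products. The left side is
\[\bigl(L^2(Z,\bar f)\otimes_Y\ind_{\Omega_2}E,\ 1\otimes\ind_{\Omega_2,c_2}T\bigr),\]
and $1\otimes T$ is a connection for the zero operator. The right side is
\[\bigl((C_0(Z)\otimes_Y\ind_{\Omega_2}E)\otimes L^2(Z,\bar f,\ind_{\Omega_2}B),\ \bar f^*(T)\otimes1\bigr),\]
and by Lemma \ref{trivial unitary isomorphism} this module is the same $L^2(Z,\bar f,\ind_{\Omega_2}E)$. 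To check that the operators coincide, write both as operators on $L^2(Z,\bar f,\ind_{\Omega_2}E)$: both act at $z$ by the fibre of $T$ at $\bar f(z)$. Hence they agree exactly, provided $\bar f^*T$ is a valid connection. Validity follows because $1\otimes T$ commutes with $\pi_{\bar f}(C_0(Z))$, which is immediate from $C_0(\Omega_2/\calH)$-linearity. Equivariance is checked on the pre-Hilbert module $\ind_{\Omega_1,c}E$ via Lemma \ref{to be equivariant adjointable op}.

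\textbf{Step 3: combine and forget.} Combining Steps 1 and 2 with associativity gives the desired identity in $\kk^{\calG\ltimes(\Omega_2/\calH)}$, and applying the forgetful functor to $\kk^\calG$ finishes the proof. I expect the main obstacle to be Step 2, specifically verifying the Kasparov product conditions for the operator $1\otimes T$ on $L^2(Z,\bar f)\otimes\ind_{\Omega_2}E$. The positivity condition is automatic, because $\bar f!$ has zero operator; the connection condition requires commutation of $T$ with the compact rank-one operators $\phi^{1/2}\langle\phi^{1/2},\cdot\rangle$ modulo compacts, which holds since $T$ is fibrewise, i.e. $C_0(Y)$-linear. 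An alternative would be to avoid operators altogether by using the concrete description $[E_{f,A},\pi_{f,A},0]$ and computing the products on $\ind_{\Omega_1,c}E$ directly.
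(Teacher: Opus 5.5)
Your argument is correct, but it takes a different route from the paper.

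\textbf{The paper's route.} The paper never handles a general cycle. It uses Lafforgue's decomposition property to reduce to $x=[\varphi]$ for an $\calH$-equivariant $*$-homomorphism $\varphi$. It then applies Proposition~\ref{functoriality of tau} to $\bar f!$ with $h=\ind_{\Omega_2}(\varphi)$, which is $C_0(\Omega_2/\calH)$-linear. It finishes with the strictly commuting square $\Upsilon_{f,B}\circ(\mathrm{id}\otimes\ind_{\Omega_2}(\varphi))=\ind_{\Omega_1}(\varphi)\circ\Upsilon_{f,A}$.

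\textbf{Your route.} You treat an arbitrary $x$ at the level of cycles.
\begin{enumerate}
\item Choosing the cutoff $c_1=c_2\circ f$ makes $\ind_{\Omega_1}(x)$ literally the pullback $\bar f^*$ of the $C_0(\Omega_2/\calH)$-linear lift of $\ind_{\Omega_2}(x)$, intertwined by the module version of $\Upsilon$.
\item Since $\bar f!$ has zero operator and acts by compact operators, both Kasparov products are represented by one and the same cycle on $L^2(\Omega_1/\calH,\bar f,\ind_{\Omega_2}E)$. In both, the operator acts at $z$ by the fibre of $\ind_{\Omega_2,c_2}T$ at $\bar f(z)$.
\end{enumerate}

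\textbf{Comparison.} The paper's proof is shorter and needs no connection calculus, but it uses the decomposition theorem as a black box. Yours avoids that theorem and gives the identity for every $x$ already in $\kk^{\calG\ltimes(\Omega_2/\calH)}$. It also isolates a reusable fact: cutoff functions pull back along 2-cells. The cost is checking the connection conditions in Le Gall's equivariant KK-theory.

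\textbf{Justifications to tighten.}
\begin{enumerate}
\item For the cutoff $c_1=c_2\circ f$:
\begin{itemize}
\item The normalisation $\sum_h c_2(f(\omega h))=1$ follows directly from $f(\omega h)=f(\omega)h$ and $\sigma_{\Omega_1}=\sigma_{\Omega_2}\circ f$.
\item Properness of $s$ on $\mathrm{supp}(c_1\circ r)$ does not come from finiteness of the fibres of $\bar f$. Push the set $\{(\omega,h):\omega\in \mathrm{supp}(c_1),\ \omega h\in K\}$ forward by $f\times\mathrm{id}$; this confines the $h$'s to a compact $L\subseteq\calH$, and then $\omega\in KL\inv$.
\end{itemize}
\item Proposition~\ref{functoriality of tau} is naturality in the coefficient slot with respect to $*$-homomorphisms. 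It is not compatibility of $\tau$ with Kasparov products, and your argument does not actually use it.
\item On the right-hand product, $\bar f^*T\otimes 1$ is a $0$-connection simply because the left action on $L^2(\Omega_1/\calH,\bar f,\ind_{\Omega_2}B)$ is by compact operators. The $C_0(\Omega_2/\calH)$-linearity of $T$ is what you need on the left-hand side. There it makes $1\otimes T$ well defined on $L^2(\Omega_1/\calH,\bar f)\otimes_{\Omega_2/\calH}\ind_{\Omega_2}E$, and makes it a $T$-connection.
\end{enumerate}
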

\begin{proof} It suffices to prove that, for any $A,B\in \kk^\calH$, $x\in \kk^\calH(A,B)$, 
\[\ind_{\Omega_1}(x)\otimes \ind_{f,B}=\ind_{f,A}\otimes \ind_{\Omega_2}(x).\]
Use the decomposition property (theorem A 2.2 of \cite{lafforgue2007k}), it suffices to prove the case that $x=[\varphi]$, where $\varphi:A\ra B$ is an $\calH$-equivariant *-homomorphism.

By proposition \ref{functoriality of tau}, after identifying $id_{C_0(\Omega_2/\calH)}\otimes \ind_{\Omega_2}(\varphi)$ with $\ind_{\Omega_2}(\varphi)$, we have
\begin{equation}\label{eqn1}
    \tau_{\ind_{\Omega_2}A}^{\calG\ltimes(\Omega_2/\calH)}(\bar f!)\otimes [\ind_{\Omega_2}(\varphi)]=[id_{C_0(\Omega_1/\calH)}\otimes \ind_{\Omega_1}(\varphi)]\otimes \tau_{\ind_{\Omega_2}B}^{\calG\ltimes(\Omega_2/\calH)}(\bar f!)
\end{equation}
in $\kk^{\calG\ltimes (\Omega_2/\calH)}$, and therefore in $\kk^\calG$.

It is easy to check that the following diagram of $\calG$-equivariant *-homomorphism commutes.
\[\xymatrix{
C_0(\Omega_1/\calH)\otimes_{\Omega_2/\calH}\ind_{\Omega_2}A \ar[r]^{id\otimes \ind_{\Omega_1}(\varphi)} \ar[d]^{\Upsilon_{f,A}} & C_0(\Omega_1/\calH)\otimes_{\Omega_2/\calH}\ind_{\Omega_2}B \ar[d]^{\Upsilon_{f,B}}\\
\ind_{\Omega_1}A \ar[r]^{\ind_{\Omega_1}(\varphi)} & \ind_{\Omega_1}B
}\]

So by formula (\ref{eqn1}),
\begin{align*}
    \ind_{f,A}\otimes [\ind_{\Omega_2}(\varphi)] & = [\Upsilon_{f,A}\inv]\otimes \tau_{\ind_{\Omega_2}A}^{\calG\ltimes(\Omega_2/\calH)}(\bar f!)\otimes [\ind_{\Omega_2}(\varphi)]\\
    & = [\Upsilon_{f,A}\inv]\otimes[id_{C_0(\Omega_1/\calH)}\otimes \ind_{\Omega_1}(\varphi)]\otimes \tau_{\ind_{\Omega_2}B}^{\calG\ltimes(\Omega_2/\calH)}(\bar f!)\\
    & = [\ind_{\Omega_1}(\varphi)]\otimes [\Upsilon_{f,B}\inv]\otimes \tau_{\ind_{\Omega_2}B}^{\calG\ltimes(\Omega_2/\calH)}(\bar f!)\\
    & = [\ind_{\Omega_1}(\varphi)]\otimes \ind_{f,B}.
\end{align*}
\end{proof}

Then we give the local functor.

\begin{prop}\label{ind is functor}
    For $\calG,\calH\in B_0$, the assignment on objects
    \[\frgr(\calH,\calG)\ra \mathrm{Fun(\kk^\calH,\kk^\calG)},\Omega\ra \ind_\Omega,\]
    and the assignment on morphisms
    \[\frgr(\calH,\calG)(\Omega_1,\Omega_2)\ra \mathrm{Nat}(\ind_{\Omega_1},\ind_{\Omega_2}),f\mapsto\ind_f\]
    define a functor
    \[\ind:\frgr(\calH,\calG)\ra \mathrm{Fun(\kk^\calH,\kk^\calG)}.\]
\end{prop}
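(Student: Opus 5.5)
The plan is to check the two functor axioms, namely that identities go to identities and that vertical composition is preserved, using the concrete description $\ind_{f,A}=[E_{f,A},\pi_{f,A},0]$ from the preceding proposition. Proposition \ref{ind_f is natural transformation} already shows that each $\ind_f$ is a natural transformation. It therefore suffices to prove the following two equalities for every $A\in\kk^\calH$:
\[\ind_{id_\Omega,A}=1_{\ind_\Omega A}, \qquad \ind_{f,A}\otimes\ind_{g,A}=\ind_{g\circ f,A}\]
for composable 2-cells $f:\Omega_1\ra\Omega_2$ and $g:\Omega_2\ra\Omega_3$.

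The identity axiom follows directly from Remark \ref{special case of ind 2-cells}. When $f=id_\Omega$ is a homeomorphism, $\ind_{f,A}$ is induced by the $*$-isomorphism $\xi\mapsto\xi\circ id_\Omega^{-1}=\xi$, which is the identity of $\ind_\Omega A$. Hence $\ind_{id_\Omega,A}=[id]=1_{\ind_\Omega A}$.

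For composition I will copy the proof of Proposition \ref{wrong way functoriality}(1), now carrying coefficients in $A$. Since all three cycles have zero operator, the Kasparov product $\ind_{f,A}\otimes\ind_{g,A}$ is represented by
\[(E_{f,A}\otimes_{\pi_{g,A}}E_{g,A},\ \pi_{f,A}\otimes id,\ 0),\]
so it suffices to produce a $\calG$-equivariant unitary $E_{f,A}\otimes_{\pi_{g,A}}E_{g,A}\ra E_{g\circ f,A}$ intertwining the left actions. On the algebraic span $V$ of elementary tensors $\xi\otimes\zeta$ with $\xi\in\ind_{\Omega_1,c}A$ and $\zeta\in\ind_{\Omega_2,c}A$, I define
\[\Phi(\xi\otimes\zeta)(\omega_1)=\xi(\omega_1)\,\zeta(f(\omega_1)).\]
The support of the right-hand side in $\Omega_1/\calH$ is contained in that of $\xi$, so $\Phi(\xi\otimes\zeta)$ lies in $\ind_{\Omega_1,c}A$. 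Expanding the inner products gives
\[\langle\xi_1\otimes\zeta_1,\xi_2\otimes\zeta_2\rangle(\omega_3)=\sum_{\omega_2\in g^{-1}(\omega_3)}\zeta_1(\omega_2)^*\Big(\sum_{\omega_1\in f^{-1}(\omega_2)}\xi_1(\omega_1)^*\xi_2(\omega_1)\Big)\zeta_2(\omega_2).\]
The double sum is exactly the sum over $(g\circ f)^{-1}(\omega_3)$, so this equals $\langle\Phi(\xi_1\otimes\zeta_1),\Phi(\xi_2\otimes\zeta_2)\rangle_{E_{g\circ f,A}}$.

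Once the image of $\Phi$ is known to be dense, Lemma \ref{to be a unitary} extends $\Phi$ to a unitary. Two further checks are routine:
\begin{enumerate}
\item $\Phi\circ(\pi_{f,A}(\theta)\otimes id)=\pi_{g\circ f,A}(\theta)\circ\Phi$ for $\theta\in\ind_{\Omega_1}A$, since both sides send $\xi\otimes\zeta$ to $\omega_1\mapsto\theta(\omega_1)\xi(\omega_1)\zeta(f(\omega_1))$.
\item $\calG$-equivariance, obtained by applying Lemma \ref{to be equivariant adjointable op} to the pre-$\calG$-Hilbert module $V$. This works because translation by $\gamma^{-1}$ commutes with precomposition by the $\calG$-equivariant map $f$.
\end{enumerate}

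I expect density of the image to be the main obstacle. In the scalar case one simply took $\zeta$ equal to $1$ on $g(\mathrm{supp}\,h)$, but here $\zeta$ must be an element of $\ind_{\Omega_2,c}A$. I would first reduce to elements of the form $\xi\cdot\eta$ with $\eta\in\ind_{\Omega_2}A$, which span a dense subspace of $E_{f,A}$ by Cohen factorization. Next I would multiply $\eta$ by $\varphi\in C_c(\Omega_2/\calH)$ with $\varphi=1$ on $\bar f(\mathrm{supp}\,\xi)$, which leaves $\xi\cdot\eta$ unchanged and places $\varphi\eta$ in $\ind_{\Omega_2,c}A$. Then $\xi\cdot\eta=\Phi(\xi\otimes\varphi\eta)$, and since $\ind_{\Omega_1,c}A$ is dense in $E_{g\circ f,A}$ by definition, the image of $\Phi$ is dense.

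If this direct route becomes messy, there is a fallback through Definition \ref{def_wrong way functoriality}. Proposition \ref{functoriality of tau} and Lemma \ref{identify pullback of induction} would be used to move $\tau(\bar g!)$ past $\Upsilon$, and then Proposition \ref{wrong way functoriality}(1) gives $\bar f!\otimes\bar g!=\overline{g\circ f}!$.
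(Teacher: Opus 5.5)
Your proof is correct in substance, but it takes a different route from the paper's, and one step needs repair.

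\textbf{Comparison with the paper.} The paper never uses the concrete module $E_{f,A}$ in this proof. It works with Definition~\ref{def_wrong way functoriality}, $\ind_{f,A}=[\Upsilon_{f,A}\inv]\otimes\tau(\bar f!)$, and proceeds formally:
\begin{itemize}
\item a commuting square relates $\Upsilon_{f,A}$, $\Upsilon_{g,A}$, $\Upsilon_{g\circ f,A}$ and the canonical isomorphism $h$;
\item Proposition~\ref{functoriality of tau} moves $\tau(\bar f!)$ past $\Upsilon_{g,A}$;
\item $h$ identifies $\tau$ over $\Omega_2/\calH$ with $\tau$ over $\Omega_3/\calH$;
\item multiplicativity of $\tau$ then reduces everything to the scalar identity $\bar f!\otimes\bar g!=(\overline{g\circ f})!$ of Proposition~\ref{wrong way functoriality}.
\end{itemize}
The identity axiom comes from $(id)!=1$. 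This is exactly your fallback. Your main argument instead redoes the proof of Proposition~\ref{wrong way functoriality}(1) with coefficients, building the unitary $E_{f,A}\otimes_{\pi_{g,A}}E_{g,A}\to E_{g\circ f,A}$ by hand. This avoids the $\tau$/$\Upsilon$ bookkeeping over two different base spaces. The cost is that you must redo the analytic checks (well-definedness, density, equivariance) that the paper's formal argument inherits from the scalar case. Your inner-product computation, the intertwining of the left actions, the equivariance via Lemma~\ref{to be equivariant adjointable op}, and the identity axiom are all fine.

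\textbf{The density step.} As written, it proves density in the wrong norm. The span of the elements $\xi\cdot\eta$ being dense in $E_{f,A}$ says nothing directly about $E_{g\circ f,A}$. These are two different completions of $\ind_{\Omega_1,c}A$, and the $E_{g\circ f,A}$-norm is not globally dominated by the $E_{f,A}$-norm, since it sums over whole fibres of $g$. Cohen factorization is not needed here.

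The repair uses what you already have:
\begin{itemize}
\item Fix $\xi\in\ind_{\Omega_1,c}A$, set $K=\mathrm{supp}\,\xi$, and take an approximate unit $(u_\lambda)$ of $\ind_{\Omega_2}A$ together with your cutoff $\varphi$.
\item Then $\Phi(\xi\otimes\varphi u_\lambda)=\xi\cdot u_\lambda$ is supported in $K$ and tends to $\xi$ in $E_{f,A}$.
\item For elements supported in $K$ one has $\|\cdot\|^2_{E_{g\circ f,A}}\leqslant N_K\|\cdot\|^2_{\ind_{\Omega_1}A}\leqslant N_K\|\cdot\|^2_{E_{f,A}}$.
\item Here $N_K<\infty$ bounds the fibres of $\overline{g\circ f}$ over $K$, by Lemmas~\ref{basic property of 2-cells} and~\ref{uniformly finite fiber}.
\end{itemize}
This is the same norm comparison as in Lemma~\ref{very technical nonsense 2}. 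It gives $\xi\in\overline{\Phi(V)}$ in $E_{g\circ f,A}$, after which Lemma~\ref{to be a unitary} applies.
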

\begin{proof} Suppose that $\Omega_1,\Omega_2,\Omega_3$ are second countable locally compact Hausdorff correspondences $\calG\leftarrow \calH$, $f:\Omega_1\ra \Omega_2$ and $g:\Omega_2\ra\Omega_3$ are $\calG,\calH$-equivariant continuous maps. For the functoriality, it suffices to prove that, for any $A\in \kk^{\calH}$,
\[\ind_{g\circ f,A}=\ind_{f,A}\otimes \ind_{g,A},\ind_{id_{\Omega_1},A}=1_{\ind_{\Omega_1}A}.\]

It is easy to check that the following diagram of $\calG$-equivariant *-isomorphisms commute,
\[\xymatrix{
C_0(\Omega_1/\calH)\otimes_{\Omega_2/\calH} (C_0(\Omega_2/\calH)\otimes_{\Omega_3/\calH} \ind_{\Omega_3}A) \ar[r]^{\hspace{1cm}h} \ar[d]^{id\otimes \Upsilon_{g,A}} & C_0(\Omega_1/\calH)\otimes_{\Omega_3/\calH} \ind_{\Omega_3}A \ar[d]^{\Upsilon_{g\circ f,A}} \\
C_0(\Omega_1/\calH)\otimes_{\Omega_2/\calH} \ind_{\Omega_2}A \ar[r]^{\Upsilon_{f,A}} & \ind_{\Omega_1}A 
}\]
where $h$ is the canonical isomorphism.

Apply proposition \ref{functoriality of tau} to the $\calG\ltimes(\Omega_2/\calH)$ *-isomorphism 
\[\Upsilon_{g,A}: C_0(\Omega_2/\calH)\otimes_{\Omega_3/\calH}\ind_{\Omega_3}A \ra \ind_{\Omega_2}A
\]
and $\bar f!\in \kk^{\calG\ltimes (\Omega_2/\calH)}(C_0(\Omega_1/\calH),C_0(\Omega_2/\calH))$, we have
\[\tau^{\calG\ltimes(\Omega_2/\calH)}_{C_0(\Omega_2/\calH)\otimes_{\Omega_3/\calH}\ind_{\Omega_3}A}(\bar f!)\otimes [\Upsilon_{g,A}]=[id_{C_0(\Omega_1/\calH)}\otimes \Upsilon_{g,A}]\otimes \tau^{\calG\ltimes(\Omega_2/\calH)}_{\ind_{\Omega_2}A}(\bar f!)\]
in $\kk^{\calG\ltimes(\Omega_2/\calH)}$ and therefore in $\kk^\calG$.

We can identify $L^2(\Omega_1/\calH,\bar f)\otimes_{\Omega_3/\calH}\ind_{\Omega_3}A$ with $L^2(\Omega_1/\calH,\bar f)\otimes_{\Omega_2/\calH}(C_0(\Omega_2/\calH)\otimes_{\Omega_3/\calH}\ind_{\Omega_3}A)$ and $(\pi_{\bar f}\otimes id_{\ind_{\Omega_3}A})\circ h=\pi_{\bar f}\otimes id_{C_0(\Omega_2/\calH)\otimes_{\Omega_3/\calH}\ind_{\Omega_3}A}$, hence we have 
\[[h]\otimes \tau^{\calG\ltimes (\Omega_3/\calH)}_{\ind_{\Omega_3}A}(\bar f!)=\tau^{\calG\ltimes (\Omega_2/\calH)}_{C_0(\Omega_2/\calH)\otimes_{\Omega_3/\calH}\ind_{\Omega_3}A}(\bar f!)\] 
in $\kk^{\calG\ltimes(\Omega_3/\calH)}$ and therefore in $\kk^\calG$.

By proposition \ref{wrong way functoriality}, $(\overline{g\circ f})!=\bar f\otimes \bar g !$. Since $\tau_{\ind_{\Omega_3}A}^{\calG\ltimes (\Omega_3/\calH)}$ preserves Kasparov product, we have
\begin{align*}
    \ind_{g\circ f,A} & = [\Upsilon_{g\circ f,A}\inv]\otimes \tau^{\calG\ltimes (\Omega_3/\calH)}_{\ind_{\Omega_3}A}((\overline{g\circ f})!)\\
    & = [\Upsilon_{f,A}\inv]\otimes [id\otimes \Upsilon_{g,A}]\inv \otimes [h]\otimes \tau^{\calG\ltimes (\Omega_3/\calH)}_{\ind_{\Omega_3}A}(\bar f!)\otimes \tau^{\calG\ltimes (\Omega_3/\calH)}_{\ind_{\Omega_3}A}(\bar g!)\\
    & =[\Upsilon_{f,A}\inv]\otimes [id\otimes \Upsilon_{g,A}]\inv \otimes \tau^{\calG\ltimes (\Omega_2/\calH)}_{C_0(\Omega_2/\calH)\otimes_{\Omega_3/\calH}\ind_{\Omega_3}A}(\bar f!)\otimes \tau^{\calG\ltimes (\Omega_3/\calH)}_{\ind_{\Omega_3}A}(\bar g!)\\
    & = [\Upsilon_{f,A}\inv]\otimes \tau^{\calG\ltimes (\Omega_2/\calH)}_{\ind_{\Omega_2}A}(\bar f!)\otimes [\Upsilon_{g,A}\inv]\otimes \tau^{\calG\ltimes (\Omega_3/\calH)}_{\ind_{\Omega_3}A}(\bar g!)\\
    & = \ind_{f,A}\otimes \ind_{g,A}.
\end{align*}

And by definition $(id_{\Omega_1})!=1_{C_0(\Omega_1/\calH)}$, therefore $\ind_{id_{\Omega_1},A}=1_{\ind_{\Omega_1}A}$.\end{proof}

\subsubsection{Laxity constraint}

As the most difficult part, we need to check the lax functoriality constraint in our construction of a pseudofunctor, that is the compatibility of our functors $\ind$ with horizontal compositions. The following several lemmas will be used for the proof of proposition \ref{Lax functoriality constraint}.

\begin{lem}\label{phi maps ind_c into ind E}
    Let $\Omega:\calG_3\leftarrow \calG_2$ and $\Lambda,\Lambda':\calG_2\leftarrow\calG_1$ be \'etale groupoid correspondences, $g:\Lambda\ra \Lambda'$ be a $\calG_2,\calG_1$-equivariant continuous map, $A$ be a $\calG_1$-\cst-algebra, then for any $\xi\in \ind_{\Omega\circ \Lambda,c}A$,
    \begin{enumerate}
        \item for any $\omega\in \Omega$, $\varphi_{\Omega,\Lambda,A}\inv(\xi)(\omega)=\xi([\omega,-])\in \ind_{\Lambda^{\sigma_\Omega(\omega)},c}A$;
        \item let $\ca E=\sqcup_{x\in \calG_2\units}(E_{g,A})_x$ be the associated Hilbert bundle of $E_{g,A}$, then 
        \[\Omega\ra \sigma_{\Omega}^* \ca E, \omega\mapsto \varphi_{\Omega,\Lambda,A}\inv(\xi)(\omega)\]
        is an element of $\ind_\Omega E_{g,A}$. That is, $\omega\mapsto \varphi_{\Omega,\Lambda,A}\inv(\xi)(\omega)$ is an element of $\Gamma_b(\Omega,\sigma_\Omega^*\ca E)$, such that it is $\calG_3$-equivariant, and the map $\Omega/\calG_2\ra \mathbb R, \omega\calG_2\mapsto \|\varphi\inv_{\Omega,\Lambda,A}(\xi)(\omega)\|_{\ind_{\Lambda^{\sigma_\Omega(\omega)}}A}$ vanishes at infinity.
    \end{enumerate}
\end{lem}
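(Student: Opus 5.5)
For (1), I will use Miller's isomorphism $\varphi_{\Omega,\Lambda,A}$, given by $\varphi(\zeta)([\omega,\lambda])=\zeta(\omega)(\lambda)$. Its inverse is therefore $\varphi\inv(\xi)(\omega)=\xi([\omega,-])$, viewed as a section over $\Lambda^{\sigma_\Omega(\omega)}$. Because $\xi\in\ind_{\Omega\circ\Lambda}A$, the section $\lambda\mapsto\xi([\omega,\lambda])$ is continuous, satisfies $\xi([\omega,\lambda h])=\alpha_{h\inv}\xi([\omega,\lambda])$, and is bounded. This already places it in $\ind_{\Lambda^{\sigma_\Omega(\omega)}}A$.

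For compact support, I will use the map $j_\omega:\Lambda^{\sigma_\Omega(\omega)}/\calG_1\to(\Omega\circ\Lambda)/\calG_1$, $\lambda\calG_1\mapsto[\omega,\lambda]\calG_1$. It is continuous. By the freeness of the $\calG_2$-action on $\Omega$ (the same argument as in lemma \ref{canonical bijection between fibers}), it is injective. I also expect it to be a homeomorphism onto a closed subset. This follows because $\Omega\times_{\calG_2}\Lambda\to\Omega/\calG_2$ is well defined, the fiber over $\omega\calG_2$ is $\{[\omega,\lambda]\}$, and that fiber is identified with $\Lambda^{\sigma(\omega)}$ through the local homeomorphism $q:\Omega\to\Omega/\calG_2$. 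Then $supp(\varphi\inv(\xi)(\omega))=j_\omega\inv(supp\,\xi)$, a closed subset of a compact set, hence compact. The closedness and properness of $j_\omega$ is the one point that needs care. I will argue it with nets: a convergent net $[\omega,\lambda_i]\calG_1$ lifts, using the properness of the $\calG_2$-action on $\Omega$, to a convergent subnet of the $\lambda_i$ modulo $\calG_1$.

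For (2), the fibers of $E_{g,A}$ are $(E_{g,A})_x$ and contain $\ind_{\Lambda^x,c}A$ densely, so by (1) the map $\omega\mapsto\varphi\inv(\xi)(\omega)$ is a section of $\sigma_\Omega^*\ca E$. The $\calG_2$-equivariance $\varphi\inv(\xi)(\omega h)=V_{h\inv}\varphi\inv(\xi)(\omega)$ follows directly from $[\omega h,\lambda]=[\omega,h\lambda]$ and the formula $V_\gamma\eta=\eta(\gamma\inv-)$. For boundedness and vanishing at infinity, I will use the estimate
\[
\|\eta\|_{E_{g,A}}^2=\sup_{\lambda'}\Big\|\sum_{\lambda\in g\inv(\lambda')}\eta(\lambda)^*\eta(\lambda)\Big\|\le M\|\eta\|_\infty^2 .
\]
Here $M$ bounds the number of points in the fibers of $\bar g$ meeting the relevant compact set; it is finite by lemma \ref{uniformly finite fiber} and lemma \ref{basic property of 2-cells}. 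To make $M$ uniform, I will take a compact $L\subseteq(\Omega\circ\Lambda)/\calG_1$ containing $supp\,\xi$ and pull it back to a compact set of $\Lambda/\calG_1$ using local sections of $\Omega\to\Omega/\calG_2$ over the compact image of $L$ in $\Omega/\calG_2$. The norm then vanishes outside the image of $supp\,\xi$ in $\Omega/\calG_2$, which is compact. So the norm function is compactly supported, and in particular vanishes at infinity.

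The main obstacle is continuity of $\omega\mapsto\varphi\inv(\xi)(\omega)$ into the Banach bundle $\ca E$. My plan is to apply proposition \ref{convergence in banach bundle}. Given $\omega_i\to\omega$, I will approximate $\xi$ near $\omega$ by finite sums $\sum\phi_k\diamond a_k$ (proposition \ref{diamond construction}) with $\phi_k$ supported in small product-type open sets $[U\times W]$, where $U$ is a bisection-like open in $\Omega$ over which $q$ is injective. On such pieces, $\varphi\inv(\phi\diamond a)(\omega')$ is given by a formula $\psi(\omega')\cdot(\chi\diamond a)$ with $\chi\in C_c(\Lambda)$ and $\psi$ continuous, and this formula is clearly continuous into $E_{g,A}$. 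The uniform bound $M$ above then controls the approximation error, as condition (3) of proposition \ref{convergence in banach bundle} requires. If the diamond approximation proves awkward, I will fall back on a partition of unity on $L$ and use the local triviality of $q$ directly.
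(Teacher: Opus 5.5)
Your proposal is correct. Part (1) is the paper's argument: the same map $j_\omega$, with $supp(\varphi_{\Omega,\Lambda,A}\inv(\xi)(\omega))\subseteq j_\omega\inv(supp(\xi))$. Your net argument improves on it. Lifting along the open quotient maps and using properness of the right $\calG_2$-action on $\Omega$ shows that $j_\omega$ is proper, which is exactly what compactness of the support needs. The paper only infers that $j_\omega$ is a closed inclusion from its being a continuous injection with closed image, and that alone does not give it.

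For the continuity in (2) you take a genuinely different route. The paper does no approximation at all:
\begin{itemize}
\item Since $\varphi_{\Omega,\Lambda,A}\inv(\xi)\in\ind_\Omega\ind_\Lambda A$ by Miller's theorem, the section is already continuous into the bundle $\ca A$ of $\ind_\Lambda A$.
\item It picks a single $\eta\in\ind_{\Lambda,c}A$ with $\eta|_{\Lambda^{\sigma_\Omega(\omega)}}=\varphi_{\Omega,\Lambda,A}\inv(\xi)(\omega)$.
\item Upper semicontinuity of the norm gives $\|\varphi_{\Omega,\Lambda,A}\inv(\xi)(\omega_i)-\eta|_{\Lambda^{\sigma_\Omega(\omega_i)}}\|\to 0$ in $\ind_{\Lambda^{\sigma_\Omega(\omega_i)}}A$.
\item This is transferred to the fibre norm of $E_{g,A}$ by a multiplicity inequality with constant $N\leqslant N_1+N_2$.
\end{itemize}
The constant $N_1$ is obtained from Lemma \ref{uniformly finite fiber} applied directly to the local homeomorphism $\overline{[id,g]}$ on $supp(\xi)\subseteq(\Omega\circ\Lambda)/\calG_1$, via Lemma \ref{canonical bijection between fibers}. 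The constant $N_2$ comes from $\bar g$ on $supp(\eta)$. Because of this, the bound is uniform in $\omega$ without your pull-back of $L$ to $\Lambda/\calG_1$ through local sections of $\Omega\ra\Omega/\calG_2$.

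Your route approximates by diamond elements localized in charts, identifies each piece locally with restrictions of a fixed element of $E_{g,A}$, and concludes with Proposition \ref{convergence in banach bundle}. It also works, and proves continuity into $\ca E$ from scratch rather than reusing continuity into $\ca A$. It needs more bookkeeping, though:
\begin{itemize}
\item Keep all approximants supported in one compact set, cutting off as in Lemma \ref{very technical nonsense 2}.
\item Take $U$ small enough that both $q|_U$ and $\sigma_\Omega|_U$ are injective. Then for $\omega'\in U$ one has $\varphi_{\Omega,\Lambda,A}\inv(\phi\diamond a)(\omega')=(\tilde\chi\diamond a)|_{\Lambda^{\sigma_\Omega(\omega')}}$ with $\tilde\chi(\lambda)=\phi([\sigma_\Omega|_U\inv(\rho_\Lambda(\lambda)),\lambda])\in C_c(\Lambda)$. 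Your product form $\psi(\omega')\cdot(\chi\diamond a)$ is literally correct only for product functions $\phi$.
\item Charts containing a translate $\omega\gamma$ rather than $\omega$ itself must be handled through the continuity of the $\calG_2$-action on $\ca E$.
\end{itemize}
In short, the paper's proof is shorter because it leverages known continuity in $\ca A$ plus one comparison inequality. Yours makes the local structure explicit at the cost of this extra bookkeeping.
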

\begin{proof}
    Firstly we will show that, for any $\omega\in \Omega$, $\varphi_{\Omega,\Lambda,A}\inv(\xi)(\omega)=\xi([\omega,-])$ is an element of $\ind_{\Lambda^{\sigma_\Omega(\omega)},c}A$. The map $pr:(\Omega\circ \Lambda)/\calG_1\ra \Omega/\calG_2, [\omega,\lambda]\calG_1\mapsto \omega\calG_2$ is a continuous map.
    For any $\omega\in \Omega$, we define $j_\omega:\Lambda^{\sigma_\Omega(\omega)}/\calG_1\ra (\Omega\circ \Lambda)/\calG_1, \lambda\calG_1\mapsto [\omega,\lambda]\calG_1$. The map $j_\omega$ is continuous and injective, and $j_\omega(\Lambda^{\sigma_\Omega(\omega)}/\calG_1)=pr\inv(\omega\calG_2)$ is closed in $(\Omega\circ \Lambda)/\calG_1$. So $j_\omega$ is closed inclusion. For any $\xi\in \ind_{\Omega\circ\Lambda,c}A$,
    \[supp(\xi)=\overline{\{[\omega,\lambda]\calG_1\in (\Omega\circ \Lambda)/\calG_1:\xi([\omega,\lambda])\neq 0\in A_{\sigma_\Lambda(\lambda)}\}}\]
    is a compact subset of $(\Omega\circ \Lambda)/\calG_1$. For every $\omega$, 
    \[supp(\varphi\inv_{\Omega,\Lambda,A}(\xi)(\omega))=\overline{\{\lambda\calG_1\in \Lambda/\calG_1:\xi([\omega,\lambda])\neq 0\}}=\overline{j_\omega\inv(\{[\omega,\lambda]\calG_1\in (\Omega\circ\Lambda)/\calG_1:\xi([\omega,\lambda])\neq 0\})}.\]
    The support $supp(\varphi\inv_{\Omega,\Lambda,A}(\xi)(\omega))$ is therefore compact subset of $j_\omega\inv(supp(\xi))$. Hence, for any $\omega\in \Omega$, $\varphi_{\Omega,\Lambda,A}\inv(\xi)\in\ind_{\Lambda^{\sigma_\Omega(\omega)},c}A$.

    Let $\ca A=\sqcup_{x\in \calG_2\units}\ind_{\Lambda^x}A$ be the associated \cst-bundle over $\calG_2\units$ of the $C_0(\calG_2\units)$-algebra $\ind_\Lambda A$, $\ca E=\sqcup_{x\in \calG_2\units}(E_{g,A})_x$ be the associated Hilbert bundle over $\calG\units_2$ of the Hilbert $\ind_{\Lambda'}A$-module $E_{g,A}$. And let $\ca B=\sqcup_{x\in \calG_2\units}\ind_{\Lambda,c}A$. The set $\ca B$ is simultaneously a subset of $\ca A$ and a subset of $\ca E$. We have proved that for any $\omega\in \Omega$, $\varphi_{\Omega,\Lambda,A}\inv(\xi)(\omega)\in \ca B$. The key idea is comparing the two topologies on $\ca B$.
    
    To show that $\omega\mapsto \varphi_{\Omega,\Lambda,A}\inv(\xi)(\omega)$ is an element of $\Gamma_b(\Omega,\sigma^*_\Omega\ca E)$, it suffices to prove that, if $(\omega_i)_i$ is a convergent net with limit $\omega$ in $\Omega$, then $\varphi_{\Omega,\Lambda,A}\inv(\xi)(\omega_i)\ra \varphi_{\Omega,\Lambda,A}\inv(\xi)(\omega)$ in the topology of $\ca E$. Since $\varphi_{\Omega,\Lambda,A}\inv(\xi)$ is an element of $\ind_\Omega\ind_\Lambda A$, we have that $\varphi_{\Omega,\Lambda,A}\inv(\xi)(\omega_i)\ra \varphi_{\Omega,\Lambda,A}\inv(\xi)(\omega)$ in the topology of $\ca A$. Let $\eta\in \ind_\Lambda A\cong\Gamma_0(\calG_2\units,\ca A)$ such that $\eta|_{\Lambda^{\sigma_\Omega(\omega)}}=\varphi_{\Omega,\Lambda,A}\inv(\xi)(\omega)$. Recall that $\ind_\Lambda A$ is also a $C_0(\Lambda/\calG_1)$-algebra. Let $K=supp(\varphi_{\Omega,\Lambda,A}\inv(\xi)(\omega))$, it is a compact subset of $\Lambda/\calG_1$. By Tietze's extension theorem, there exists $\phi\in C_c(\Lambda/\calG_1)$ such that $\phi|_K=1$. We have therefore $\eta|_{\Lambda^{\sigma_\Omega(\omega)}}=(\phi\cdot \eta)|_{\Lambda^{\sigma_\Omega(\omega)}}$. After replacing $\eta$ by $\phi\cdot \eta$, we can assume that $\eta\in \ind_{\Lambda,c}A$. The convergences $\varphi_{\Omega,\Lambda,A}\inv(\xi)(\omega_i)\ra \varphi_{\Omega,\Lambda,A}\inv(\xi)(\omega)$ in $\ca A$ and $\eta|_{\Lambda^{\sigma_\Omega(\omega_i)}}\ra \eta|_{\Lambda^{\sigma_\Omega(\omega)}}=\varphi_{\Omega,\Lambda,A}\inv(\xi)(\omega)$ in $\ca A$ implies that $\varphi_{\Omega,\Lambda,A}\inv(\xi)(\omega_i)-\eta|_{\Lambda^{\sigma_\Omega(\omega_i)}}\ra 0_{\sigma_\Omega(\omega)}$ in $\ca A$. Then by \cite[Lemma C.18]{williams2007crossed}, we have $\|\varphi_{\Omega,\Lambda,A}\inv(\xi)(\omega_i)-\eta|_{\Lambda^{\sigma_\Omega(\omega_i)}}\|_{\ind_{\Lambda^{\sigma_\Omega(\omega_i)}}A}\ra 0$.

    \begin{align*}
        & \|\varphi_{\Omega,\Lambda,A}\inv(\xi)(\omega_i)-\eta|_{\Lambda^{\sigma_\Omega(\omega_i)}}\|^2_{(E_{g,A})_{\sigma_\Omega(\omega_i)}} \\
        & = \sup_{\lambda'\in {\Lambda'}^{\sigma_\Omega(\omega_i)}}\|\langle \varphi_{\Omega,\Lambda,A}\inv(\xi)(\omega_i)-\eta|_{\Lambda^{\sigma_\Omega(\omega_i)}},\varphi_{\Omega,\Lambda,A}\inv(\xi)(\omega_i)-\eta|_{\Lambda^{\sigma_\Omega(\omega_i)}}\rangle(\lambda')\|_{A_{\sigma_{\Lambda'}(\lambda')}}\\
        & = \sup_{\lambda'\in {\Lambda'}^{\sigma_\Omega(\omega_i)}}\|\sum_{\lambda\in g\inv(\lambda')} |\varphi_{\Omega,\Lambda,A}\inv(\xi)(\omega_i)(\lambda)-\eta(\lambda)|^2\|_{A_{\sigma_{\Lambda'}(\lambda')}}\\
        & = \sup_{\lambda'\in {\Lambda'}^{\sigma_\Omega(\omega_i)}}\|\sum_{\lambda\in g\inv(\lambda')} |\xi([\omega_i,\lambda])-\eta(\lambda)|^2\|_{A_{\sigma_{\Lambda'}(\lambda')}}\\
        & \leqslant N\cdot \sup_{\lambda'\in {\Lambda'}^{\sigma_\Omega(\omega_i)}}\|\xi([\omega_i,\lambda])-\eta(\lambda)\|^2_{A_{\sigma_{\Lambda'}(\lambda')}}\\
        & = N\cdot \|\varphi_{\Omega,\Lambda,A}\inv(\xi)(\omega_i)-\eta|_{\Lambda^{\sigma_\Omega(\omega_i)}}\|^2_{\ind_{\Lambda^{\sigma_\Omega(\omega_i)}}A},
    \end{align*}
    where
    \[N=\sup_{\omega_* \in \Omega}\sup_{\lambda'\in \Lambda^{\sigma_\Omega(\omega_*)}}\#\{\lambda\in g\inv(\lambda'):\xi([\omega_*,\lambda])-\eta(\lambda)\neq 0\}.\]
    By lemma \ref{canonical bijection between fibers}, for $[\omega_*,\lambda']\in \Omega\circ \Lambda'$,
    \[g\inv(\lambda),\overline{g}\inv(\lambda'\calG_1),[id,g]\inv([\omega_*,\lambda']),\overline{[id,g]}\inv([\omega_*,\lambda']\calG_1)\]
     are canonically pairwisely in bijection. Since $supp(\xi)$ is a compact subset of $(\Omega\circ\Lambda)/\calG_1$, $\overline{[id,g]}:(\Omega\circ\Lambda)/\calG_1\ra (\Omega\circ\Lambda')/\calG_1$ is a local homeomorphism, by lemma \ref{uniformly finite fiber},
    \begin{align*}
        N_1 & =\sup_{\omega_*\in \Omega}\sup_{\lambda'\in {\Lambda'}^{\sigma_{\Omega}(\omega_*)}}\#\{\lambda\in g\inv(\lambda'):\xi([\omega_*,\lambda])\neq 0\}\\
        & \leqslant \sup_{[\omega_*,\lambda']\calG_1\in (\Omega\circ \Lambda')/\calG_1}\#\{[\omega_*,\lambda]\calG_1\in \overline{[id,g]}\inv([\omega_*,\lambda']): [\omega_*,\lambda]\calG_1\in supp(\xi)\}
    \end{align*}
    is finite.
    \begin{align*}
        N_2 & = \sup_{\omega_*\in \Omega}\sup_{\lambda'\in {\Lambda'}^{\sigma_{\Omega}(\omega_*)}}\#\{\lambda\in g\inv(\lambda'):\eta(\lambda)\neq 0\}\\
        & \leqslant \sup_{\lambda'\calG_1\in \Lambda'/\calG_1} \#(\overline{g}\inv(\lambda')\cap supp(\eta))
    \end{align*}
    is finite. So $N\leqslant N_1+N_2$ is finite. This implies that $\varphi_{\Omega,\Lambda,A}\inv(\xi)(\omega_i)-\eta|_{\Lambda^{\sigma_\Omega(\omega_i)}}\ra 0_{\sigma_\Omega(\omega)}$ in $\ca E$. While $\eta\in \ind_{\Lambda,c}A\subseteq E_{g,A}\cong \Gamma_0(\calG_2\units,\ca E)$. So $\eta|_{\Lambda^{\sigma_\Omega(\omega_i)}}\ra\eta|_{\Lambda^{\sigma_\Omega(\omega)}}$ in $\ca E$. We have therefore $\varphi_{\Omega,\Lambda,A}\inv(\xi)(\omega_i)\ra \eta|_{\Lambda^{\sigma_\Omega(\omega)}}=\varphi_{\Omega,\Lambda,A}\inv(\xi)(\omega)$ in $\ca E$. So we proved that $\varphi_{\Omega,\Lambda,A}\inv(\xi)\in \Gamma_b(\Omega,\sigma_\Omega^*\ca E)$.

    If $\gamma\in \calG_2^{\sigma_\Omega(\omega)}$,
    \[\varphi\inv_{\Omega,\Lambda,A}(\xi)(\omega\gamma) = \xi([\omega\gamma,-])= \xi([\omega,\gamma.-])=\gamma\inv.(\varphi\inv_{\Omega,\Lambda,A}(\xi)(\omega)).\]
    
    Finally, $supp(\varphi\inv_{\Omega,\Lambda,A}(\xi))=\overline{\{\omega\calG_2\in \Omega/\calG_2:\exists \lambda\in\Lambda^{\sigma_\Omega(\omega)},\|\xi([\omega,\lambda])\|\neq 0\}}$ is $pr(supp(\xi))$, so the map $\omega\calG_2\mapsto \|\varphi\inv_{\Omega,\Lambda,A}(\xi)(\omega)\|_{\ind_{\Lambda^{\sigma_\Omega(\omega)}}A}$ is compactly supported.

    In conclusion $\varphi\inv_{\Omega,\Lambda,A}(\xi)$ is a well-defined element in $\ind_{\Omega}E_{g,A}$.
\end{proof}

\begin{lem}\label{phi and diamond}
    Let $\calG_1,\calG_2,\calG_3$ be \'etale groupoids, $\Omega:\calG_3\leftarrow \calG_2$, $\Lambda:\calG_2\leftarrow \calG_1$ be \'etale groupoid correspondences, then for any $h_1\in C_c(\Omega)$, $h_2\in C_c(\Lambda)$, $a\in A$,
    \[\varphi_{\Omega,\Lambda,A}(h_1\diamond (h_2\diamond a))=(h_1\diamond h_2)\diamond a,\]
    where $h_1\diamond h_2\in C_c(\Omega\circ \Lambda)$ is defined as
    \[h_1\diamond h_2([\omega,\lambda])=\sum_{\gamma\in \calG_2^{\sigma_{\Omega}(\omega)}}h_1(\omega\gamma)h_2(\gamma\inv\lambda).\]
\end{lem}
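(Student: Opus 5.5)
This is a pointwise computation. First I check that both sides make sense, then I evaluate both at an arbitrary point $[\omega,\lambda]\in\Omega\circ\Lambda$ and compare.

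\textbf{Well-definedness of $h_1\diamond h_2$.} Since $\calG_2$ acts freely, properly and étale-ly on $\Omega$, the sum defining $h_1\diamond h_2([\omega,\lambda])$ is finite. Indeed, $\gamma\mapsto\omega\gamma$ is injective, so only finitely many $\gamma$ satisfy $\omega\gamma\in supp(h_1)$; Lemma \ref{uniformly finite fiber} even bounds their number uniformly.

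The sum is independent of the representative: replacing $(\omega,\lambda)$ by $(\omega k,k\inv\lambda)$ and substituting $\gamma\mapsto k\gamma$ returns the same sum.

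For continuity and compact support, view $(\omega,\lambda)\mapsto h_1\diamond h_2([\omega,\lambda])$ as $\gamma$-summation over the local homeomorphism $\Omega\times_{\calG_2\units}\Lambda\ra\Omega\times_{\calG_2}\Lambda$ (Proposition \ref{orbit space of free proper action of etale grpd}). Continuity then follows from Lemma \ref{sum of fiber}, applied as in Proposition \ref{diamond construction}. Compact support follows because the support lies in the image of $supp(h_1)\times_{\calG_2\units} supp(h_2)$.

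\textbf{Computing both sides.} By definition of $\varphi_{\Omega,\Lambda,A}$,
\[\varphi_{\Omega,\Lambda,A}(h_1\diamond(h_2\diamond a))([\omega,\lambda])=(h_1\diamond(h_2\diamond a))(\omega)(\lambda).\]
Here the $\calG_2$-action on $\ind_\Lambda A$ is $\beta_\gamma\xi=\xi(\gamma\inv-)$. Hence
\[(h_1\diamond(h_2\diamond a))(\omega)(\lambda)=\sum_{\gamma\in\calG_2^{\sigma_\Omega(\omega)}}h_1(\omega\gamma)\,(h_2\diamond a)(\gamma\inv\lambda).\]
Note that $(h_2\diamond a)(s(\gamma))$ evaluated at $\gamma\inv\lambda$ is simply $(h_2\diamond a)(\gamma\inv\lambda)$, since $h_2\diamond a$ is a function on $\Lambda$. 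Expanding the inner term gives
\[\sum_{\gamma}\sum_{h\in\calG_1^{\sigma_\Lambda(\lambda)}}h_1(\omega\gamma)h_2(\gamma\inv\lambda h)\,\alpha_h(a(s(h))).\]

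On the other side, using $\sigma_{\Omega\circ\Lambda}([\omega,\lambda])=\sigma_\Lambda(\lambda)$ and $[\omega,\lambda]h=[\omega,\lambda h]$,
\[((h_1\diamond h_2)\diamond a)([\omega,\lambda])=\sum_h (h_1\diamond h_2)([\omega,\lambda h])\,\alpha_h(a(s(h))).\]
Expanding $h_1\diamond h_2$ gives the same double sum. All sums are finite, so the order of summation can be interchanged, and the two sides agree.

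\textbf{Main obstacle.} The only delicate point is being careful that the identification of $\ind_\Omega\ind_\Lambda A$ uses the correct action convention. The action in the diamond construction is $V_\gamma$ on the fiber over $s(\gamma)$, which here is evaluation at $\gamma\inv\lambda$, and this matches $\beta_\gamma$ defined in the induction section. Once that is fixed, everything else is routine.
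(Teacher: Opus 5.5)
Your proof is correct and is the same pointwise computation as the paper's: evaluate at $[\omega,\lambda]$, unwind the $\calG_2$-action $\xi\mapsto\xi(\gamma\inv-)$ on $\ind_\Lambda A$, expand both diamond sums and interchange the finite sums. Your extra check that $h_1\diamond h_2$ is a well-defined element of $C_c(\Omega\circ\Lambda)$, which the paper omits, is also fine. One small correction there: finiteness of the $\gamma$-sum does not follow from injectivity of $\gamma\mapsto\omega\gamma$ alone. It follows from properness of the action (or Lemma \ref{uniformly finite fiber} applied to $\Omega\to\Omega/\calG_2$) together with freeness.
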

\begin{proof}
    For any $h_1\in C_c(\Omega)$, $h_2\in C_c(\Lambda)$, $a\in A$, we have, for any $\omega\in \Omega$ and for any $\lambda\in \Lambda^{\sigma_{\Omega}(\omega)}$,
\begin{align*}
    [h_1\diamond (h_2\diamond a)](\omega)(\lambda) & =\big(\sum_{\gamma\in \calG_2^{\sigma_{\Omega}(\omega)}}h_1(\omega \gamma)(h_2\diamond a)|_{\Lambda^{s(\gamma)}}(\gamma\inv-)\big)(\lambda)\\
    & = \sum_{\gamma\in \calG_2^{\sigma_{\Omega}(\omega)}}h_1(\omega \gamma)(h_2\diamond a)(\gamma\inv\lambda)\\
    & = \sum_{\gamma\in \calG_2^{\sigma_{\Omega}(\omega)}}h_1(\omega \gamma)\sum_{\gamma'\in \calG_1^{\sigma_\Lambda(\lambda)}}h_2(\gamma\inv \lambda \gamma')\alpha_{\gamma'}(a(s(\gamma')))\\
    & = \sum_{\gamma'\in \calG_1^{\sigma_\Lambda(\lambda)}}\big(\sum_{\gamma\in \calG_2^{\sigma_{\Omega}(\omega)}}h_1(\omega\gamma)h_2(\gamma\inv\lambda\gamma')\big)\alpha_{\gamma'}(a(s(\gamma')))\\
    & = \sum_{\gamma'\in \calG_1^{\sigma_\Lambda(\lambda)} } (h_1\diamond h_2)([\omega,\lambda\gamma'])\alpha_{\gamma'}(a(s(\gamma')))\\
    & = (h_1\diamond h_2)\diamond a([\omega,\lambda]).
\end{align*}
Therefore, $(h_1\diamond h_2)\diamond a=\varphi_{\Omega',\Lambda,A}(h_1\diamond (h_2\diamond a))$.
\end{proof}

\begin{corr}\label{phi map CCA into ind_c}
    Use the same notation as above, we define
    \[C_c(\Omega)\diamond (C_c(\Lambda)\diamond A)=span\{h_1\diamond (h_2\diamond a):h_1\in C_c(\Omega),h_2\in C_c(\Lambda),a\in A\},\]
    then we have
    \[\varphi_{\Omega,\Lambda,A}(C_c(\Omega)\diamond (C_c(\Lambda)\diamond A))\subseteq \ind_{\Omega\circ \Lambda,c}A.\]
\end{corr}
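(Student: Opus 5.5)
By Lemma \ref{phi and diamond} and linearity of $\varphi_{\Omega,\Lambda,A}$, the image $\varphi_{\Omega,\Lambda,A}(C_c(\Omega)\diamond (C_c(\Lambda)\diamond A))$ is spanned by elements of the form $(h_1\diamond h_2)\diamond a$ with $h_1\in C_c(\Omega)$, $h_2\in C_c(\Lambda)$, $a\in A$. Since $\ind_{\Omega\circ\Lambda,c}A$ is a linear subspace of $\ind_{\Omega\circ\Lambda}A$, it suffices to check that each such element lies in it. The plan is to show first that $h_1\diamond h_2\in C_c(\Omega\circ\Lambda)$, and then that $\varphi\diamond a\in \ind_{\Omega\circ\Lambda,c}A$ for every $\varphi\in C_c(\Omega\circ\Lambda)$.

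For the second point, take $\varphi\in C_c(\Omega\circ\Lambda)$. By Proposition \ref{diamond construction}, $\varphi\diamond a\in\ind_{\Omega\circ\Lambda}A$. Moreover, $(\varphi\diamond a)(\theta)=\sum_{k}\varphi(\theta k)\alpha_k(\cdots)$, so if this value is nonzero then $\theta\calG_1$ meets $supp(\varphi)$. Hence the set of orbits where $\varphi\diamond a$ is nonzero lies in $q(supp(\varphi))$, where $q:\Omega\circ\Lambda\ra(\Omega\circ\Lambda)/\calG_1$ is the quotient map. This set is compact because $q$ is continuous, so the norm function is compactly supported and $\varphi\diamond a\in\ind_{\Omega\circ\Lambda,c}A$.

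For the first point, $h_1\diamond h_2$ should be understood as the pushforward of the function $(\omega,\lambda)\mapsto h_1(\omega)h_2(\lambda)$ on $\Omega\times_{\calG_2\units}\Lambda$ along the quotient map $p:\Omega\times_{\calG_2\units}\Lambda\ra\Omega\circ\Lambda$. The defining sum is exactly the sum over the fibre $p\inv([\omega,\lambda])=\{(\omega\gamma,\gamma\inv\lambda)\}$, and by freeness these points correspond bijectively to $\gamma$. The diagonal $\calG_2$-action on $\Omega\times_{\calG_2\units}\Lambda$ is free and proper, since it is proper on the $\Omega$-factor (Proposition \ref{groupoid action-proper action base change}). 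It is also \'etale, because the anchor map factors through $\sigma_\Omega$. Therefore, by Proposition \ref{orbit space of free proper action of etale grpd}, $p$ is a local homeomorphism onto the Hausdorff space $\Omega\circ\Lambda$.

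The function $h_1\otimes h_2$ is compactly supported in $\Omega\times_{\calG_2\units}\Lambda$, because its support is a closed subset of $supp(h_1)\times supp(h_2)$. Lemma \ref{sum of fiber}, applied with the trivial bundle and the local homeomorphism $p$ (which is proper on the compact support), then shows that $h_1\diamond h_2$ is continuous and compactly supported, with support inside $p(supp(h_1)\times supp(h_2))$.

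The main point that needs care is identifying $h_1\diamond h_2$ with this pushforward. This requires checking that it is well defined on classes: replacing $(\omega,\lambda)$ by $(\omega\gamma_0,\gamma_0\inv\lambda)$ reindexes the sum via $\gamma\mapsto\gamma_0\inv\gamma$, so the value is unchanged. It also requires confirming that $p$ is a local homeomorphism, which is what makes Lemma \ref{sum of fiber} applicable. The remaining steps are routine.
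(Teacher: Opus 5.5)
Your proposal is correct and is essentially the paper's argument. The corollary is read off from Lemma \ref{phi and diamond}, and you make explicit two facts the paper leaves implicit: that $h_1\diamond h_2\in C_c(\Omega\circ\Lambda)$ (as a pushforward along the local homeomorphism $p$, via Lemma \ref{sum of fiber}), and that $\varphi\diamond a$ has compact support in $(\Omega\circ\Lambda)/\calG_1$ for $\varphi\in C_c(\Omega\circ\Lambda)$.

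One side remark is false but harmless. The diagonal $\calG_2$-action on $\Omega\times_{\calG_2\units}\Lambda$ need not be \'etale: its anchor map is $\sigma_\Omega\circ\mathrm{pr}_\Omega=\rho_\Lambda\circ\mathrm{pr}_\Lambda$, and in general neither $\mathrm{pr}_\Omega$ nor $\rho_\Lambda$ is a local homeomorphism. However, Proposition \ref{orbit space of free proper action of etale grpd} only requires a free and proper action of the \'etale groupoid $\calG_2$, so your conclusion that $p$ is a local homeomorphism still holds.
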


\begin{lem}\label{very technical nonsense 2}
    Let $\Omega,\Omega':\calG\leftarrow \calH$ be two correspondences, $A$ be an $\calH$-\cst-algebra, $f:\Omega\ra \Omega'$ be a $\calG,\calH$-equivariant continuous map. Let $A'$ be a dense subset of $A$. Then $C_c(\Omega)\diamond A':=span\{\phi\diamond a:\phi\in C_c(\Omega),a\in A'\}$ is dense in $E_{f,A}$.
\end{lem}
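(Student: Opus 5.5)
The plan has two stages: first show that $C_c(\Omega)\diamond A'$ is contained in $\ind_{\Omega,c}A$ and is dense there for the norm of $E_{f,A}$, then pass from $\ind_{\Omega,c}A$ to its completion $E_{f,A}$.

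\textbf{Containment and the first density.} For $\phi\in C_c(\Omega)$ and $a\in A$, the section $\phi\diamond a$ is supported in $q(supp(\phi))$, which is compact in $\Omega/\calH$. Hence $\phi\diamond a\in\ind_{\Omega,c}A$, and $C_c(\Omega)\diamond A'$ is a subspace of the dense pre-Hilbert module defining $E_{f,A}$. Since $\ind_{\Omega,c}A$ is dense in $E_{f,A}$ by definition, it suffices to approximate each $\xi\in\ind_{\Omega,c}A$ in the norm of $E_{f,A}$ by elements of $C_c(\Omega)\diamond A'$.

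\textbf{Comparing the two norms.} The key estimate I aim for: if $L\subseteq\Omega/\calH$ is compact and $\xi\in\ind_{\Omega,c}A$ has $supp(\xi)\subseteq L$, then
\[
\|\xi\|_{E_{f,A}}^2=\sup_{\omega'\in\Omega'}\Big\|\sum_{\omega\in f\inv(\omega')}\xi(\omega)^*\xi(\omega)\Big\|\leqslant N_L\,\|\xi\|^2_{\ind_\Omega A},
\]
where
\[
N_L=\sup_{\omega'\calH\in\Omega'/\calH}\#\big(\bar f\inv(\omega'\calH)\cap L\big)<\infty.
\]
Finiteness of $N_L$ should follow from lemma \ref{uniformly finite fiber}, since $\bar f$ is a local homeomorphism (lemma \ref{basic property of 2-cells}). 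I would also use the bijection $f\inv(\omega')\cong\bar f\inv(\omega'\calH)$ from lemma \ref{basic property of 2-cells} to reduce the count in $\Omega$ to a count in $\Omega/\calH$. This estimate is the step I expect to be the main obstacle.

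\textbf{Approximation with controlled support.} Fix $\xi\in\ind_{\Omega,c}A$ and choose $\psi\in C_c(\Omega/\calH)$ with $\psi=1$ on $supp(\xi)$. By proposition \ref{diamond construction} there are finite sums $\zeta=\sum_k\phi_k\diamond a_k$ with $\|\xi-\zeta\|_{\ind_\Omega A}$ small. Then $\psi\cdot\zeta=\sum_k\big((\psi\circ q)\phi_k\big)\diamond a_k$ still lies in $C_c(\Omega)\diamond A$, and $\|\xi-\psi\zeta\|_{\ind_\Omega A}\leqslant\|\psi\|_\infty\|\xi-\zeta\|_{\ind_\Omega A}$. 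All these differences are supported in the fixed compact set $L=supp(\psi)$, so the estimate above turns $\ind_\Omega A$-convergence into $E_{f,A}$-convergence.

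\textbf{Replacing $A$ by $A'$.} For each term $\phi_k\diamond a_k$, take $a'_k\in A'$ close to $a_k$. Lemma \ref{inequalities involving diamond}(1) bounds $\|\phi_k\diamond(a_k-a'_k)\|_{\ind_\Omega A}$, and the support of this difference stays inside $q(supp(\phi_k))\subseteq L$. So the same comparison estimate gives convergence in $E_{f,A}$. If ``span'' is meant linearly and $A'$ is not a subspace, this still works, because we approximate each term separately.
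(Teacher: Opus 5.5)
Your proposal is correct and follows essentially the same route as the paper: approximate $\xi\in\ind_{\Omega,c}A$ in $\ind_\Omega A$, multiply by a cutoff $\psi\in C_c(\Omega/\calH)$ equal to $1$ on $supp(\xi)$ (using $\psi\cdot(\phi\diamond a)=((\psi\circ q)\phi)\diamond a$ to stay inside the diamond span), and then apply the bound $\|\cdot\|^2_{E_{f,A}}\leqslant N_L\|\cdot\|^2_{\ind_\Omega A}$ for sections supported in $L$, with $N_L<\infty$ by lemma \ref{uniformly finite fiber} and lemma \ref{basic property of 2-cells}. The only difference is that the paper gets density of $C_c(\Omega)\diamond A'$ in $\ind_\Omega A$ directly from corollary \ref{ind transfer pre-hilb mod}, whereas you redo that step by hand with lemma \ref{inequalities involving diamond}(1). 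One small correction in your last paragraph: the support that lies in $L$ is $q(supp((\psi\circ q)\phi_k))$, not $q(supp(\phi_k))$.
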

\begin{proof}
    By corollary \ref{ind transfer pre-hilb mod}, $C_c(\Omega)\diamond A':=span\{\phi\diamond a:\phi\in C_c(\Omega),a\in A'\}\subseteq \ind_{\Omega,c}A$ is dense in $\ind_\Omega A$. For $\xi\in \ind_{\Omega,c}A$, assume that $(\xi_i)_i$ is a sequence in $C_c(\Omega)\diamond A'$ that $\xi_i\xrightarrow{\|\cdot\|_{\ind_\Omega A}}\xi$. Let $K= supp(\xi)$. There is a relatively compact open neighborhood $U$ of $K$ in $\Omega/\calH$. By Urysohn's lemma, there exists $h\in C_c(U,[0,1])$ such that $h|_K=1$. Recall that $\ind_{\Omega}A$ can be seen as a $C_0(\Omega/\calH)$-algebra, $\xi_i\xrightarrow{\|\cdot\|_{\ind_\Omega A}}\xi$ implies that $h\cdot \xi_i\xrightarrow{\|\cdot\|_{\ind_\Omega A}}h\cdot\xi=\xi$.

    For any $f\in C_c(\Omega)$ and $a\in A'$,
    \[(h\cdot(f\diamond a))(\omega)=h(\omega\calH)\sum_{h\in \calH^{\sigma(\omega)}}f(\omega h)\alpha_h(a(s(h)))=((h\cdot f)\diamond a)(\omega),\]
    Hence, every $h\cdot \xi_i$ is still in $C_c(\Omega)\diamond A'$. So after replacing $\xi_i$ by $h\cdot \xi_i$, we can assume that for all $i$, $supp(\xi_i)\subseteq \overline{U}$, which is compact. Let $N=\sup_{\omega'\in \Omega'}\#(\overline{U}\cap \bar f\inv(\omega'\calH))<\infty$.
    \begin{align*}
        \|\xi-\xi_i\|^2_{E_{f,A}} & = \sup_{\omega'\in \Omega'}\|\sum_{\omega\in f\inv(\omega')}|(\xi-\xi_i)(\omega)|^2\|_{A_{\sigma_{\Omega'}(\omega')}}\\
        & \leqslant N\cdot \|\xi-\xi_i\|^2_{\ind_\Omega A}.
    \end{align*}
    Hence, $\xi_i\xrightarrow{\|\cdot\|_{E_{f,A}}}\xi$.
\end{proof}

\begin{prop}[Lax functoriality constraint]\label{Lax functoriality constraint}
    For any $\calG_1,\calG_2,\calG_3\in B_0$, there is a natural isomorphism $\varphi_{\calG_1,\calG_2,\calG_3}$ (or write $\varphi$ in short)
    
    \[\begin{tikzcd}
\frgr(\calG_2, \calG_3) \times \frgr(\calG_1, \calG_2) \arrow[r, "c"] \arrow[d, "\ind \times \ind"'] & \frgr(\calG_1, \calG_3) \arrow[d, "\ind"] \\
\mathrm{Fun}(\kk^{\calG_2},\kk^{\calG_3}) \times \mathrm{Fun}(\kk^{\calG_1},\kk^{\calG_2}) \arrow[r] \arrow[ru, Rightarrow, "\varphi", shorten <=10pt, shorten >=10pt] & \mathrm{Fun}(\kk^{\calG_1},\kk^{\calG_3})
\end{tikzcd}\]
consisting of isomorphism $\varphi_{\Omega,\Lambda}$ for each $\Omega\in \frgr(\calG_2, \calG_3)$, $\Lambda\in \frgr(\calG_1, \calG_2)$, where the bottom arrow in the diagram is composition of functors. (Recall that isomorphisms in $\mathrm{Fun}(\kk^{\calG_1},\kk^{\calG_3})$ are natural isomorphisms.)
\end{prop}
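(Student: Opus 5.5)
We need to show that the components $\varphi_{\Omega,\Lambda}:\ind_\Omega\ind_\Lambda\ra\ind_{\Omega\circ\Lambda}$ from Miller's results (item (5) of the summarized proposition) assemble into a natural transformation between the two functors $\frgr(\calG_2,\calG_3)\times\frgr(\calG_1,\calG_2)\ra\mathrm{Fun}(\kk^{\calG_1},\kk^{\calG_3})$. Each $\varphi_{\Omega,\Lambda}$ is already a natural isomorphism of functors $\kk^{\calG_1}\ra\kk^{\calG_3}$, and its inverse is given componentwise by $[\varphi_{\Omega,\Lambda,A}\inv]$. So the only remaining content is naturality in the pair of 1-cells $(\Omega,\Lambda)$. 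Since a morphism $(f,g)$ in the product category factors as $(f,id)\circ(id,g)$, and both sides are functorial (Proposition \ref{ind is functor}, together with functoriality of Godement products), it suffices to check two squares for every $A\in\kk^{\calG_1}$:
\begin{align*}
\ind_\Omega(\ind_{g,A})\otimes[\varphi_{\Omega,\Lambda',A}] &= [\varphi_{\Omega,\Lambda,A}]\otimes\ind_{[id,g],A},\\
\ind_{f,\ind_\Lambda A}\otimes[\varphi_{\Omega',\Lambda,A}] &= [\varphi_{\Omega,\Lambda,A}]\otimes\ind_{[f,id],A}.
\end{align*}
Here $g:\Lambda\ra\Lambda'$ and $f:\Omega\ra\Omega'$ are 2-cells. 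The left-hand side of the first square is the horizontal composition $id_{\ind_\Omega}\star\ind_g$, computed as $\ind_\Omega(\ind_{g,A})$.

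For both squares I would work with the concrete cycles $\ind_{f,A}=[E_{f,A},\pi_{f,A},0]$ from the proposition preceding Remark \ref{special case of ind 2-cells}. Composing with a $*$-isomorphism on either side only transports the module, so each equality reduces to exhibiting a $\calG_3$-equivariant unitary intertwining the representations.

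The second square is the easier one. The module $E_{f,\ind_\Lambda A}$ is the completion of $\ind_{\Omega,c}\ind_\Lambda A$, and $E_{[f,id],A}$ is the completion of $\ind_{\Omega\circ\Lambda,c}A$. The candidate unitary is $\varphi_{\Omega,\Lambda,A}$ restricted to compactly supported elements. To see that it preserves inner products, I would use Lemma \ref{canonical bijection between fibers}(1): the fibre $f\inv(\omega')$ is in bijection with $[f,id]\inv([\omega',\lambda])$, so the sums defining the two inner products coincide. For density of the range, I would use Corollary \ref{phi map CCA into ind_c} and Lemma \ref{very technical nonsense 2}. Then Lemma \ref{to be a unitary} produces the unitary, and Lemma \ref{to be equivariant adjointable op} gives $\calG_3$-equivariance, checked on the pre-Hilbert module.

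The first square is the main obstacle. By Miller's construction, $\ind_\Omega(\ind_{g,A})$ is represented by the cycle $(\ind_\Omega E_{g,A},\ind_\Omega\pi_{g,A},0)$; the operator stays $0$ whatever cutoff is chosen. I want a unitary
\[
\ind_\Omega E_{g,A}\ra \varphi_{\Omega,\Lambda',A}^*\,E_{[id,g],A}
\]
that intertwines $\ind_\Omega\pi_{g,A}$ with $\pi_{[id,g],A}\circ\varphi_{\Omega,\Lambda,A}$. The natural candidate is $\xi\mapsto\varphi_{\Omega,\Lambda,A}\inv(\xi)$, defined on $\ind_{\Omega\circ\Lambda,c}A\subseteq E_{[id,g],A}$. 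Lemma \ref{phi maps ind_c into ind E} is designed exactly to show that this lands in $\ind_\Omega E_{g,A}$.

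The inner-product identity comes from Lemma \ref{canonical bijection between fibers}(2). It identifies $g\inv(\lambda')$ with $[id,g]\inv([\omega,\lambda'])$, so that for $\xi_1,\xi_2\in\ind_{\Omega\circ\Lambda,c}A$,
\[
\langle\varphi\inv\xi_1,\varphi\inv\xi_2\rangle(\omega)(\lambda')=\langle\xi_1,\xi_2\rangle([\omega,\lambda']),
\]
with both sides compared after applying $\varphi_{\Omega,\Lambda',A}$.

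The delicate point is density of the image in $\ind_\Omega E_{g,A}$. Here I would apply Corollary \ref{ind transfer pre-hilb mod} with the dense subset $C_c(\Lambda)\diamond A\subseteq E_{g,A}$, which is dense by Lemma \ref{very technical nonsense 2}. This gives density of $C_c(\Omega)\diamond(C_c(\Lambda)\diamond A)$. By Corollary \ref{phi map CCA into ind_c} and Lemma \ref{phi and diamond}, these elements are exactly the images of $(h_1\diamond h_2)\diamond a\in\ind_{\Omega\circ\Lambda,c}A$.

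With the unitary in hand, the intertwining relation is a pointwise computation on compactly supported sections. Equivariance again follows from Lemma \ref{to be equivariant adjointable op}, and Lemma \ref{to be a unitary} finishes the argument. Naturality in $A$ is already provided by item (5) of the summarized proposition, so the $\varphi_{\Omega,\Lambda}$ form the required natural isomorphism.
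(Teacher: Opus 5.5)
Your proposal is correct and is essentially the paper's own proof. The paper also uses Proposition~\ref{ind is functor} to write $\ind_{[f,g],A}=\ind_{[f,id],A}\otimes\ind_{[id,g],A}$, and proves the two resulting squares (Lemmas~\ref{lax fun constraint part 1} and~\ref{lax fun constraint part 2}) by extending $\xi\mapsto\varphi_{\Omega,\Lambda,A}\inv(\xi)$ on $\ind_{\Omega\circ\Lambda,c}A$ to an equivariant unitary, using the same lemmas you cite (Lemma~\ref{canonical bijection between fibers}, the diamond-density results, and Lemmas~\ref{to be a unitary} and~\ref{to be equivariant adjointable op}).

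The differences are cosmetic. You factor $(f,g)$ in the opposite order. In the $[f,id]$ square the map should be set up as $\varphi_{\Omega,\Lambda,A}\inv$ on $\ind_{\Omega\circ\Lambda,c}A$, as you already do for the $[id,g]$ square, because $\varphi_{\Omega,\Lambda,A}$ need not send $\ind_{\Omega,c}\ind_\Lambda A$ into $\ind_{\Omega\circ\Lambda,c}A$.
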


We will divide the proof into two parts.

\begin{lem}\label{lax fun constraint part 1}
    Use the same notation as above, let $\Omega,\Omega'\in \frgr(\calG_2,\calG_3)$ be two correspondences, $\Lambda\in \frgr(\calG_1,\calG_2)$ be a correspondence, $f:\Omega\ra\Omega'$ be a $\calG_3,\calG_2$-equivariant continuous map, then for any separable $\calG_1$-\cst-algebra $A$,
    \begin{equation*}
    [\varphi_{\Omega,\Lambda,A}]\otimes \ind_{[f,id],A}=\ind_{f,\ind_\Lambda A}\otimes[\varphi_{\Omega',\Lambda,A}] \in \kk^{\calG_3}(\ind_\Omega\ind_\Lambda A,\ind_{\Omega'\circ \Lambda} A).
\end{equation*}
That is, the following diagram, whose arrows are elements of KK-groups, is commutative.
\[
\xymatrix{
\ind_\Omega \ind_\Lambda A \ar[r]^{\varphi_{\Omega,\Lambda,A}} \ar[d]_{\ind_{f,\ind_\Lambda A}} & \ind_{\Omega\circ \Lambda}A \ar[d]^{\ind_{[f,id],A}} \\
\ind_{\Omega'}\ind_\Lambda A \ar[r]^{\varphi_{\Omega',\Lambda,A}} & \ind_{\Omega'\circ \Lambda}A
}
\]
(Here $[f,id]:\Omega\circ \Lambda\ra \Omega'\circ \Lambda$ is the map $[\omega,\lambda]\mapsto[f(\omega),\lambda]$.)
\end{lem}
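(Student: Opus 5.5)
Both composites are represented by explicit Kasparov cycles with zero operator, so the plan is to reduce the identity to a $\calG_3$-equivariant unitary equivalence of the underlying correspondences. By the proposition describing $\ind_{f,A}$ concretely, $\ind_{f,\ind_\Lambda A}=[E_{f,\ind_\Lambda A},\pi_{f,\ind_\Lambda A},0]$ and $\ind_{[f,id],A}=[E_{[f,id],A},\pi_{[f,id],A},0]$. Since $\varphi_{\Omega,\Lambda,A}$ and $\varphi_{\Omega',\Lambda,A}$ are $\calG_3$-equivariant $*$-isomorphisms, the left side equals $[E_{[f,id],A},\pi_{[f,id],A}\circ\varphi_{\Omega,\Lambda,A},0]$. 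The right side equals $[E_{f,\ind_\Lambda A}\otimes_{\varphi_{\Omega',\Lambda,A}}\ind_{\Omega'\circ\Lambda}A,\pi_{f,\ind_\Lambda A},0]$, which I identify with the Hilbert $\ind_{\Omega'\circ\Lambda}A$-module $E_{f,\ind_\Lambda A}$ whose coefficients are transported along $\varphi_{\Omega',\Lambda,A}$. It therefore suffices to build a $\calG_3$-equivariant unitary $U:E_{f,\ind_\Lambda A}\ra E_{[f,id],A}$ that is compatible with the right actions through $\varphi_{\Omega',\Lambda,A}$ and intertwines $\pi_{f,\ind_\Lambda A}$ with $\pi_{[f,id],A}\circ\varphi_{\Omega,\Lambda,A}$.

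The natural candidate is $U=\varphi_{\Omega,\Lambda,A}$ itself, defined on a dense pre-Hilbert submodule. I will work on the dense subspace $C_c(\Omega)\diamond(C_c(\Lambda)\diamond A)$. By Lemma \ref{very technical nonsense 2}, with $A'=C_c(\Lambda)\diamond A$ dense in $\ind_\Lambda A$, this subspace is dense in $E_{f,\ind_\Lambda A}$. By Corollary \ref{phi map CCA into ind_c}, $\varphi_{\Omega,\Lambda,A}$ sends it into $\ind_{\Omega\circ\Lambda,c}A$, and Lemma \ref{phi and diamond} identifies the image as $C_c(\Omega)\diamond C_c(\Lambda)$ applied to $A$.

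The next step is to check inner products. For $\xi_1,\xi_2$ in the dense subspace and a point $[\omega',\lambda]\in\Omega'\circ\Lambda$, the inner product in $E_{[f,id],A}$ is a sum over $[f,id]^{-1}([\omega',\lambda])$. By Lemma \ref{canonical bijection between fibers}(1), this fiber is in bijection with $f^{-1}(\omega')$ via $\omega\mapsto[\omega,\lambda]$. On the other side, $\varphi_{\Omega',\Lambda,A}(\langle\xi_1,\xi_2\rangle_{E_f})([\omega',\lambda])=\sum_{\omega\in f^{-1}(\omega')}\xi_1(\omega)(\lambda)^*\xi_2(\omega)(\lambda)$, so the two inner products agree. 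Right-module compatibility, $U(\xi\cdot\eta)=U(\xi)\cdot\varphi_{\Omega',\Lambda,A}(\eta)$, and the intertwining of the left actions are pointwise identities that are checked the same way.

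Density of the image is the step I expect to be the main obstacle. The image is $\mathrm{span}\{(h_1\diamond h_2)\diamond a\}$, and I need it to be dense in $E_{[f,id],A}$, whose norm is the $L^2$-type norm and not the sup-norm. I would first show that elements $h_1\diamond h_2$ generate, under multiplication by $C_c((\Omega\circ\Lambda)/\calG_1)$, a family that is uniformly dense in $C_c(\Omega\circ\Lambda)$ with supports kept in fixed compacts; this uses local bisections together with a partition-of-unity argument on the quotient. I would then rerun the argument of Lemma \ref{very technical nonsense 2}: the sup-norm and the $E_{[f,id],A}$-norm are comparable on elements supported in a fixed compact set, because $\overline{[f,id]}$ is a local homeomorphism, so its fibers meet a compact set in a uniformly bounded number of points by Lemma \ref{uniformly finite fiber}. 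If this density argument is awkward, the fallback is Lemma \ref{phi maps ind_c into ind E} applied in the reverse direction: there $\varphi^{-1}$ maps $\ind_{\Omega\circ\Lambda,c}A$ into $\ind_\Omega E_{g,A}$, and its construction adapts to show that $\varphi^{-1}$ maps $\ind_{\Omega\circ\Lambda,c}A$ into $E_{f,\ind_\Lambda A}$ isometrically, which gives surjectivity directly. Once density is in hand, Lemma \ref{to be a unitary} extends $U$ to a unitary. Finally, $\calG_3$-equivariance follows from Lemma \ref{to be equivariant adjointable op}, because $U$ visibly commutes with left translation $\xi\mapsto\xi(\gamma^{-1}-)$ on the pre-Hilbert submodules.
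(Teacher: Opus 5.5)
Your reduction matches the paper's. Both sides are zero-operator cycles, you transport the coefficients of $E_{f,\ind_\Lambda A}$ along $\varphi_{\Omega',\Lambda,A}$, and the unitary is $\varphi_{\Omega,\Lambda,A}$ or its inverse. The inner products are matched via Lemma \ref{canonical bijection between fibers}(1), and Lemmas \ref{to be a unitary} and \ref{to be equivariant adjointable op} finish. Your fallback is in fact the paper's proof.

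The one real difference is the direction of the map, and the paper's direction makes your ``main obstacle'' disappear. The paper sets $T=\varphi\inv_{\Omega,\Lambda,A}$ on $\ind_{\Omega\circ\Lambda,c}A$, which is dense in $E_{[f,id],A}$ by the definition of that module.
\begin{itemize}
\item $T$ lands in $\ind_{\Omega,c}\ind_\Lambda A\subseteq E_{f,\ind_\Lambda A}$ simply because $supp(\varphi\inv_{\Omega,\Lambda,A}(\xi))$ lies in the image of the compact set $supp(\xi)$ under the continuous map $(\Omega\circ\Lambda)/\calG_1\to\Omega/\calG_2$. So no adaptation of Lemma \ref{phi maps ind_c into ind E} is needed. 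That lemma is only required in Lemma \ref{lax fun constraint part 2}, where the target is $\ind_{\Omega'}E_{g,A}$ and two bundle topologies must be compared.
\item Density of the image is free. By Corollary \ref{phi map CCA into ind_c} the image contains $C_c(\Omega)\diamond(C_c(\Lambda)\diamond A)$, which you already know is dense in $E_{f,\ind_\Lambda A}$ by Lemma \ref{very technical nonsense 2}.
\end{itemize}

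In your primary direction you instead need density of $\mathrm{span}\{(h_1\diamond h_2)\diamond a\}$ in $E_{[f,id],A}$. This is true a posteriori, since it is the image of a dense set under the unitary above. Your direct sketch of it is loose, however. Multiplying by $C_c((\Omega\circ\Lambda)/\calG_1)$ does not obviously keep you inside that span. You would also need surjectivity of the fibre-sum map $C_c(\Omega\times_{\calG_2\units}\Lambda)\to C_c(\Omega\circ\Lambda)$, of which $h_1\diamond h_2$ is the image of $h_1\otimes h_2$, plus a Stone--Weierstrass step.

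Working on the larger domain $\varphi\inv_{\Omega,\Lambda,A}(\ind_{\Omega\circ\Lambda,c}A)$ also cleans up your intertwining check. The vector $\pi_{f,\ind_\Lambda A}(\zeta)\xi$ generally leaves $C_c(\Omega)\diamond(C_c(\Lambda)\diamond A)$, but it stays in this larger space, and the same inner-product computation shows $\varphi_{\Omega,\Lambda,A}$ is isometric there.
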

\begin{proof}
    It suffices to show that, as $\ind_\Omega\ind_\Lambda A, \ind_{\Omega'\circ \Lambda}A$-bimodules, we have $\calG$-equivariant unitarily equivalence
\[(E_{[f,id],A},\pi_{[f,id],A}\circ \varphi_{\Omega,\Lambda,A})\cong (E_{f,\ind_\Lambda A}\otimes_{\varphi_{\Omega',\Lambda,A}}\ind_{\Omega'\circ \Lambda}A,\pi_{f,\ind_\Lambda A}\otimes id).\]
Let $F$ be the underlying Banach space of $E_{f,\ind_\Lambda A}$ but equipped with $\calG$-Hilbert $\ind_{\Omega'\circ \Lambda}A$-module structure defined by
\[\xi\cdot\eta=\xi\cdot\varphi_{\Omega',\Lambda,A}\inv(\eta), \forall \xi\in F,\eta\in \ind_{\Omega'\circ \Lambda}A,\]
\[\langle \xi_1,\xi_2\rangle_F=\varphi_{\Omega',\Lambda,A}(\langle \xi_1,\xi_2\rangle_{E_{f,\ind_\Lambda A}}),\forall \xi_1,\xi_2\in F.\]
Let $\pi:\ind_\Omega\ind_\Lambda A\ra \calL_{\ind_{\Omega'\circ \Lambda}A}(F)$ be defined as
\[\pi(\zeta)(\xi)=\pi_{f,\ind_\Lambda A}(\zeta)(\xi).\]
Clearly $(E_{f,\ind_\Lambda A}\otimes_{\varphi_{\Omega',\Lambda,A}}\ind_{\Omega'\circ \Lambda}A,\pi_{f,\ind_\Lambda A}\otimes id)$ is $\calG$-equivariantly unitarily equivalent to $(F,\pi)$.

By proposition \ref{diamond construction}, $C_c(\Lambda)\diamond A$ is dense in $\ind_\Lambda A$, then by lemma \ref{very technical nonsense 2}, $C_c(\Omega)\diamond (C_c(\Lambda)\diamond A)$ is dense in $F=E_{f,\ind_\Lambda A}$.

By corollary \ref{phi map CCA into ind_c}, $\varphi_{\Omega,\Lambda,A}$ sends $C_c(\Omega)\diamond (C_c(\Lambda)\diamond A)$ into $\ind_{\Omega\circ \Lambda,c}A$. If $\xi\in \ind_{\Omega\circ \Lambda,c}A$, then $supp(\varphi_{\Omega,\Lambda,A}\inv(\xi))$ is a compact subset of $\Omega/\calG_2$. So $\varphi_{\Omega,\Lambda,A}\inv$ maps $\ind_{\Omega\circ \Lambda,c}A$ into $\ind_{\Omega,c}\ind_\Lambda A\subseteq F$. Let $T:\ind_{\Omega\circ \Lambda,c}A\ra F$ be the map $\xi \mapsto \varphi\inv_{\Omega,\Lambda,A}(\xi)$. See $\ind_{\Omega\circ \Lambda,c}A$ as a dense subspace of $E_{[f,id],A}$.

For any $\xi_1,\xi_2\in \ind_{\Omega\circ \Lambda,c}A$, $[\omega',\lambda]\in \Omega'\circ \Lambda$, using lemma \ref{canonical bijection between fibers},
\begin{align*}
    \langle T\xi_1, T\xi_2\rangle_{F}([\omega',\lambda]) & = \langle T\xi_1, T\xi_2\rangle_{E_{f,\ind_\Lambda A}}(\omega')(\lambda)\\
    & = \sum_{\omega\in f\inv(\omega')}[(T\xi_1)(\omega)(\lambda)]^*[(T\xi_2)(\omega)(\lambda)]\\
    & = \sum_{\omega\in f\inv(\omega')}\xi_1([\omega,\lambda])^*\xi_2([\omega,\lambda])\\
    & = \sum_{[\omega,\lambda]\in f\inv([\omega',\lambda])}\xi_1([\omega,\lambda])^*\xi_2([\omega,\lambda])\\
    & = \langle \xi_1,\xi_2\rangle_{E_{[f,id],A}}([\omega',\lambda]).
\end{align*}
So $\langle T\xi_1, T\xi_2\rangle_{F}=\langle \xi_1,\xi_2\rangle_{E_{[f,id],A}}$.

Finally, by corollary \ref{phi map CCA into ind_c}, $T(\ind_{\Omega\circ \Lambda,c}A)$ contains $C_c(\Omega)\diamond (C_c(\Lambda)\diamond A)$. We have proved that $C_c(\Omega)\diamond (C_c(\Lambda)\diamond A)$
is dense in $F$. So by lemma \ref{to be a unitary}, $T$ extends to a unitary element of $\calL(E_{[f,id],A},F)$. By abuse of language, we still denote this unitary by $T$.

The *-homomorphism $\varphi\inv_{\Omega,\Lambda,A}$ is $\calG_3$-equivariant, so it is easy to check that $T$ is also $\calG_3$-equivariant by applying lemma \ref{to be equivariant adjointable op} to the pre-$\calG_3$-Hilbert $\ind_{\Omega'\circ \Lambda}A$-module $\ind_{\Omega\circ\Lambda,c}A$.

For any $\phi\in \ind_{\Omega\circ\Lambda,c} A\subseteq \ind_{\Omega\circ \Lambda}A$, $\varphi_{\Omega,\Lambda,A}\inv(\phi)\in \ind_{\Omega,c}\ind_\Lambda A$, and for any $\xi\in \ind_{\Omega,c}\ind_\Lambda A\subseteq E_{f,\ind_\Lambda A}=F$, for any $[\omega,\lambda]\in \Omega\circ \Lambda$,
\begin{align*}
    [(\pi_{[f,id],A}(\phi)\circ T)\xi]([\omega,\lambda])
    & = \phi([\omega,\lambda])\varphi_{\Omega,\Lambda,A}(\xi)([\omega,\lambda])\\
    & = [\varphi_{\Omega,\Lambda,A}\inv(\phi)\xi](\omega)(\lambda)\\
    & = [\pi_{f,\ind_\Lambda A}(\varphi_{\Omega,\Lambda,A}\inv(\phi))\xi](\omega)(\lambda)\\
    & = [\pi(\varphi_{\Omega,\Lambda,A}\inv(\phi))\xi](\omega)(\lambda)\\
    & = [(T\circ \pi(\varphi\inv_{\Omega,\Lambda,A}(\phi))\xi]([\omega,\lambda])
\end{align*}
That is, for any $\psi\in \varphi\inv_{\Omega,\Lambda,A}(\ind_{\Omega\circ \Lambda,c}A)$, which is dense in $\ind_\Omega\ind_\Lambda A$, we have
\[\pi_{[f,id],A}(\varphi_{\Omega,\Lambda,A}(\psi))\circ T=T\circ \pi(\psi).\]
In conclusion, $(E_{[f,id],A},\pi_{[f,id],A}\circ \varphi_{\Omega,\Lambda,A})$ is $\calG$-equivariantly unitarily equivalent to $(F,\pi)$. We finish the proof.
\end{proof}

\begin{lem}\label{lax fun constraint part 2}
    Use the same notation as above, let $\Omega'\in \frgr(\calG_2,\calG_3)$ be a correspondence, $\Lambda,\Lambda'\in \frgr(\calG_1,\calG_2)$ be two correspondences, $g:\Lambda\ra\Lambda'$ be a $\calG_2,\calG_1$-equivariant continuous map, then
    \begin{equation*}[\varphi_{\Omega',\Lambda,A}]\otimes\ind_{[id,g],A}=\ind_{\Omega'}(\ind_{g,A})\otimes [\varphi_{\Omega',\Lambda',A}].
\end{equation*}
That is, the following diagram, whose arrows are elements of KK-groups, is commutative.
\[
\xymatrix{
\ind_{\Omega'}\ind_\Lambda A \ar[r]^{\varphi_{\Omega',\Lambda,A}} \ar[d]_{\ind_{\Omega'}\ind_{g,A}}  & \ind_{\Omega'\circ \Lambda} A \ar[d]^{\ind_{[g,id],A}}\\
\ind_{\Omega'}\ind_{\Lambda'}A \ar[r]_{\varphi_{\Omega',\Lambda',A}} & \ind_{\Omega'\circ \Lambda'} A
}
\]
(Here $[id,g]:\Omega'\circ \Lambda\ra \Omega'\circ \Lambda'$ is the map $[\omega',\lambda]\mapsto [\omega,g(\lambda)]$.)
\end{lem}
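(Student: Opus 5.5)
We follow the strategy of Lemma \ref{lax fun constraint part 1} and reduce the identity to a $\calG_3$-equivariant unitary equivalence of Kasparov cycles with zero operator. By the preceding proposition on $E_{f,A}$, we have $\ind_{[id,g],A}=[E_{[id,g],A},\pi_{[id,g],A},0]$ and $\ind_{g,A}=[E_{g,A},\pi_{g,A},0]$. By Miller's construction, $\ind_{\Omega'}(\ind_{g,A})=[\ind_{\Omega'}E_{g,A},\ind_{\Omega'}\pi_{g,A},\ind_{\Omega',c}0]$, and the operator is again $0$. Composing with the $*$-isomorphisms $\varphi$ only twists the module structures. It therefore suffices to exhibit a $\calG_3$-equivariant unitary
\[T:E_{[id,g],A}\ra F,\]
intertwining $\pi_{[id,g],A}\circ\varphi_{\Omega',\Lambda,A}$ with $\ind_{\Omega'}\pi_{g,A}$. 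Here $F$ denotes $\ind_{\Omega'}E_{g,A}$, viewed as a Hilbert $\ind_{\Omega'\circ\Lambda'}A$-module through $\varphi_{\Omega',\Lambda',A}$: the inner product is $\varphi_{\Omega',\Lambda',A}(\langle\cdot,\cdot\rangle)$ and the right action is through $\varphi_{\Omega',\Lambda',A}\inv$.

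We define $T$ on the dense pre-Hilbert submodule $\ind_{\Omega'\circ\Lambda,c}A$ by $T\xi=\varphi_{\Omega',\Lambda,A}\inv(\xi)$, that is, $(T\xi)(\omega)=\xi([\omega,-])$. Lemma \ref{phi maps ind_c into ind E}, applied with $\Omega'$ and $g$, is exactly the statement that $T\xi$ lies in $\ind_{\Omega'}E_{g,A}$. For the inner products, take $\xi_1,\xi_2$ and $[\omega,\lambda']\in\Omega'\circ\Lambda'$. Then
\[\langle T\xi_1,T\xi_2\rangle_F([\omega,\lambda'])=\sum_{\lambda\in g\inv(\lambda')}\xi_1([\omega,\lambda])^*\xi_2([\omega,\lambda]).\]
Lemma \ref{canonical bijection between fibers}(2) identifies $g\inv(\lambda')$ with $[id,g]\inv([\omega,\lambda'])$, so this sum equals $\langle\xi_1,\xi_2\rangle_{E_{[id,g],A}}([\omega,\lambda'])$. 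Compatibility of $T$ with the right module actions is checked pointwise in the same way.

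For density of the image, Corollary \ref{phi map CCA into ind_c} shows that $T(\ind_{\Omega'\circ\Lambda,c}A)$ contains $\varphi\inv$ of $\varphi(C_c(\Omega')\diamond(C_c(\Lambda)\diamond A))$, that is, $C_c(\Omega')\diamond(C_c(\Lambda)\diamond A)$. We then need this space to be dense in $F=\ind_{\Omega'}E_{g,A}$. By Lemma \ref{very technical nonsense 2}, $C_c(\Lambda)\diamond A$ is dense in $E_{g,A}$. Corollary \ref{ind transfer pre-hilb mod}, applied to the dense subset $C_c(\Lambda)\diamond A\subseteq E_{g,A}$, then gives density of $C_c(\Omega')\diamond(C_c(\Lambda)\diamond A)$ in $\ind_{\Omega'}E_{g,A}$. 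Lemma \ref{to be a unitary} now extends $T$ to a unitary.

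For equivariance of $T$, we apply Lemma \ref{to be equivariant adjointable op} to the pre-$\calG_3$-Hilbert module $\ind_{\Omega'\circ\Lambda,c}A$; it holds because $\varphi_{\Omega',\Lambda,A}$ is $\calG_3$-equivariant and all the actions are by left translation. For the intertwining, take $\phi$ in the dense subalgebra $\varphi_{\Omega',\Lambda,A}\inv(\ind_{\Omega'\circ\Lambda,c}A)$ and compute pointwise at $[\omega,\lambda]$. The left side is $(\pi_{[id,g],A}(\varphi(\phi))\xi)([\omega,\lambda])=\phi(\omega)(\lambda)\xi([\omega,\lambda])$. The right side is $((\ind_{\Omega'}\pi_{g,A})(\phi)T\xi)(\omega)=\pi_{g,A}(\phi(\omega))(\xi([\omega,-]))$, whose value at $\lambda$ is the same product. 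Continuity then extends the identity from the dense subalgebra to all of $\ind_{\Omega'}\ind_\Lambda A$.

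The main obstacle is the density step. Corollary \ref{ind transfer pre-hilb mod} is phrased for a general Hilbert module, whereas here it is applied to $E_{g,A}$, whose norm differs from that of $\ind_\Lambda A$. I would first verify that its proof, via Lemma \ref{inequalities involving diamond} and Proposition \ref{diamond construction}, uses only the $\calG_2$-Hilbert module structure of $E_{g,A}$. That structure was established in the proposition defining $(V_\gamma)$ on $E_{f,A}$, so the corollary should apply.
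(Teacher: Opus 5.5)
Your proposal is correct and follows the paper's proof step for step. It uses the same twisted module $\ind_{\Omega'}E_{g,A}$ over $\ind_{\Omega'\circ\Lambda'}A$ and the same map $\xi\mapsto\varphi_{\Omega',\Lambda,A}\inv(\xi)$ on $\ind_{\Omega'\circ\Lambda,c}A$, with the inner product checked via Lemma \ref{canonical bijection between fibers}, density via Lemma \ref{very technical nonsense 2}, Corollary \ref{ind transfer pre-hilb mod} and Corollary \ref{phi map CCA into ind_c}, and then Lemmas \ref{to be a unitary} and \ref{to be equivariant adjointable op}. Your worry about Corollary \ref{ind transfer pre-hilb mod} is unfounded: it applies to any equivariant Hilbert module, $E_{g,A}$ is a $\calG_2$-Hilbert $\ind_{\Lambda'}A$-module, and the paper applies it in exactly this way.
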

\begin{proof}
    It suffices to prove that $(E_{[id,g],A},\pi_{[id,g],A}\circ \varphi_{\Omega',\Lambda,A})$ is $\calG_3$-equivariantly unitarily equivalent to \[((\ind_{\Omega'}E_{g,A})\otimes_{\varphi_{\Omega',\Lambda',A}}\ind_{\Omega'\circ \Lambda'}A,(\ind_{\Omega'}\pi_{g,A})\otimes id),\]
which is $\calG_3$-equivariantly unitarily equivalent to the bimodule $(F',\pi')$ defined as follows.
Let $F'$ be the underlying Banach space of $\ind_{\Omega'}E_{g,A}$ but equipped with Hilbert $\ind_{\Omega'\circ \Lambda'}A$-module structure as
\[\xi\cdot\eta=\xi\cdot \varphi_{\Omega',\Lambda',A}\inv(\eta), \forall \xi\in \ind_{\Omega'}E_{g,A}, \eta\in \ind_{\Omega'\circ \Lambda'}A,\]
\[\langle \xi_1,\xi_2\rangle_{F'}=\varphi_{\Omega',\Lambda',A}(\langle \xi_1,\xi_2\rangle_{\ind_{\Omega'}E_{g,A}}).\]
Let $\pi':\ind_{\Omega'}\ind_\Lambda A\ra\calL_{\ind_{\Omega'\circ \Lambda'}A}(F')$ be defined as
\[\pi'(\zeta)(\xi)=[(\ind_{\Omega'}(\pi_{g,A}))(\zeta)]\xi.\]

Now consider the map $T':\ind_{\Omega'\circ\Lambda,c}A\ra F'=\ind_{\Omega'}E_{g,A},\xi\mapsto \varphi\inv_{\Omega',\Lambda,A}(\xi)$. Lemma \ref{phi maps ind_c into ind E} implies that this is a well-defined map.

For $\xi_1,\xi_2\in \ind_{\Omega'\circ \Lambda,c}A\subseteq E_{[id,g],A}$, $[\omega',\lambda']\in \Omega'\circ \Lambda'$, using lemma \ref{canonical bijection between fibers},
\begin{align*}
    \langle T'\xi_1,T'\xi_2\rangle_{F'}([\omega',\lambda']) & = \varphi_{\Omega',\Lambda',A}(\langle T'\xi_1,T'\xi_2\rangle_{\ind_{\Omega'}E_{g,A}})([\omega',\lambda'])\\
    & = \langle T'\xi_1,T'\xi_2\rangle_{\ind_{\Omega'}E_{g,A}}(\omega')(\lambda')\\
    & = \langle (T'\xi_1)(\omega'),(T'\xi_2)(\omega')\rangle_{E_{g,A,\sigma_{\Omega'}(\omega')}}(\lambda')\\
    & = \sum_{\lambda\in g\inv(\lambda')}\xi_1([\omega',\lambda])^*\xi_2([\omega',\lambda])\\
    & = \sum_{[\omega',\lambda]\in [id,g]\inv([\omega',\lambda'])}\xi_1([\omega',\lambda])^*\xi_2([\omega',\lambda])\\
    & = \langle \xi_1,\xi_2\rangle_{E_{[id,g],A}}([\omega',\lambda']).
\end{align*}

By lemma \ref{very technical nonsense 2}, $C_c(\Lambda)\diamond A$ is dense in $E_{g,A}$, then by corollary \ref{ind transfer pre-hilb mod}, $C_c(\Omega')\diamond (C_c(\Lambda)\diamond A)$ is dense in $\ind_{\Omega'}E_{g,A}=F'$. While corollary \ref{phi map CCA into ind_c} implies that $C_c(\Omega')\diamond (C_c(\Lambda)\diamond A)$ is contained in $T'(\ind_{\Omega'\circ \Lambda,c}A)$. So $T'$ has dense image. Then by lemma \ref{to be a unitary}, $T'$ extends to a unitary element of $\calL_{\ind_{\Omega'\circ\Lambda'}A}(E_{[id,g],A},F')$. By abuse of language, we still denote this operator as $T'$.

It is easy to check that $T'$ is $\calG_3$-equivariant since  $\varphi_{\Omega',\Lambda,A}\inv$ is $\calG_3$-equivariant, by applying lemma \ref{to be equivariant adjointable op} to the pre-$\calG_3$-Hilbert $\ind_{\Omega'\circ \Lambda'}A$-module $\ind_{\Omega'\circ \Lambda,c}A$.

Now for any $\phi\in \varphi\inv_{\Omega',\Lambda,A}(\ind_{\Omega'\circ\Lambda,c}A)\subseteq \ind_{\Omega'}\ind_\Lambda A$, $\xi\in C_c(\Omega')\diamond (C_c(\Lambda)\diamond A)\subseteq \ind_{\Omega'}E_{g,A}=F'$, $[\omega',\lambda]\in \Omega'\circ \Lambda$,
\begin{align*}
    & [(\pi_{[id,g],A}(\varphi_{\Omega',\Lambda,A}(\phi))\circ T')\xi]([\omega',\lambda])\\
    & = \varphi_{\Omega',\Lambda,A}(\phi)([\omega',\lambda])(T'\xi)([\omega',\lambda])\\
    & = \phi(\omega')(\lambda)\xi(\omega')(\lambda)\\
    & = [T'(\pi'(\phi)\xi)]([\omega',\lambda']).
\end{align*}
We conclude that $\pi_{[id,g],A}( \varphi_{\Omega',\Lambda,A}(-))\circ T'=T'\circ \pi'(-)$. Hence, $(E_{[id,g],A},\pi_{[id,g],A}\circ \varphi_{\Omega',\Lambda,A})$ is $\calG_3$-equivariantly unitarily equivalent to $(F',\pi')$.
\end{proof}

\begin{proof}[Proof of proposition \ref{Lax functoriality constraint}] It suffices to show that, for any $\Omega,\Omega'\in \frgr(\calG_2, \calG_3)$ and any $\calG_3,\calG_2$-equivariant continuous map $f:\Omega\ra \Omega'$, for any $\Lambda,\Lambda'\in \frgr(\calG_1, \calG_2)$ and $\calG_2,\calG_1$-equivariant continuous map $g:\Lambda\ra \Lambda'$, use $[f,g]$ to denote the map $\Omega\circ\Lambda\ra \Omega'\circ \Lambda', [\omega,\lambda]\mapsto [f(\omega),g(\lambda)]$, the following diagram of natural transformations commutes.
\[
    \xymatrix{
    \ind_{\Omega}\ind_{\Lambda} \ar[r]^{\varphi_{\Omega,\Lambda}} \ar[d]_{\ind_f\star\ind_g} & \ind_{\Omega\circ \Lambda} \ar[d]^{\ind_{[f,g]}} \\
    \ind_{\Omega'}\ind_{\Lambda'} \ar[r]_{\varphi_{\Omega',\Lambda'}} & \ind_{\Omega'\circ \Lambda'}
    }
\]
where $\ind_f\star\ind_g$ is the horizontal composition of the two natural transformations $\ind_f$ and $\ind_g$.

In other word, we need to check that for any $A\in \kk^{\calG_1}$,
\[[\varphi_{\Omega,\Lambda,A}]\otimes \ind_{[f,g],A}=\ind_{f,\ind_{\Lambda}A}\otimes\ind_{\Omega'}(\ind_{g,A})\otimes [\varphi_{\Omega',\Lambda',A}] \]
is an equality in $\kk^{\calG_3}(\ind_{\Omega}\ind_{\Lambda}A,\ind_{\Omega'\circ\Lambda'}A)$.

Since $[f,g]=[id_{\Omega'},g]\circ [f,id_\Lambda]:\Omega\times_{\calG_2}\Lambda\ra \Omega'\times_{\calG_2}\Lambda\ra\Omega'\times_{\calG_2}\Lambda'$, by proposition \ref{ind is functor}, we have
\[\ind_{[f,g],A}=\ind_{[f,id],A}\otimes \ind_{[id,g],A}.\]

Then use lemma \ref{lax fun constraint part 1} and lemma \ref{lax fun constraint part 2}, we have
\begin{align*}
    [\varphi_{\Omega,\Lambda,A}]\otimes \ind_{[f,g],A} & = [\varphi_{\Omega,\Lambda,A}]\otimes \ind_{[f,id],A}\otimes \ind_{[id,g],A} \\
    & = \ind_{f,\ind_\Lambda A}\otimes[\varphi_{\Omega',\Lambda,A}]\otimes \ind_{[id,g],A} \\
    & = \ind_{f,\ind_\Lambda A}\otimes \ind_{\Omega'}(\ind_{g,A})\otimes [\varphi_{\Omega',\Lambda',A}].
\end{align*}
\end{proof}

For any $\calG\in B_0$ and $A\in \kk^\calG$, we can define such a $\calG$-equivariant *-isomorphism
\[\psi_{\calG,A}:\ind_{\calG}A\ra A,\]
\[\psi_{\calG,A}(\xi)(x)=\xi(x),\quad \forall x\in \calG\units.\]
\begin{prop}[Lax unity constraint]\label{Lax unity constraint}
    For any $\calG\in B_0$, there is a natural isomorphism $\psi_{\calG}:\ind_\calG\ra id_{\kk^\calG}$ consisting of isomorphisms $[\psi_{\calG,A}]\in \kk^\calG(\ind_\calG A,A)$.
\end{prop}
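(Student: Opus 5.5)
We need to show three things. First, $\psi_{\calG,A}$ is a well-defined $\calG$-equivariant $*$-isomorphism, so that $[\psi_{\calG,A}]$ is invertible in $\kk^\calG$. Second, the family is natural: for every $x\in\kk^\calG(A,B)$ we need
\[
[\psi_{\calG,A}]\otimes x=\ind_\calG(x)\otimes[\psi_{\calG,B}].
\]
Third, we should say in what sense this is the unity constraint.

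\emph{Well-definedness.} Here $\Omega=\calG$ is viewed as a correspondence $\calG\leftarrow\calG$, so $\sigma=s$ and $\rho=r$. An element $\xi\in\ind_\calG A$ is a bounded continuous section of $s^*\ca A$ with $\xi(\gamma h)=\alpha_{h\inv}(\xi(\gamma))$.

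Taking $\gamma=x\in\calG\units$ and $h=\gamma'$ shows that $\xi$ is determined by its restriction to $\calG\units$ via
\[
\xi(\gamma')=\alpha_{\gamma'^{-1}}(\xi(r(\gamma'))).
\]
The quotient map $\calG\to\calG/\calG\cong\calG\units$ is $\gamma\mapsto r(\gamma)$. Under it, the vanishing-at-infinity condition on $\calG/\calG$ becomes vanishing at infinity of $x\mapsto\|\xi(x)\|$.

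Since $\calG\units$ is closed in $\calG$, the restriction $\xi|_{\calG\units}$ is a continuous section of $\ca A$ over $\calG\units$, i.e. an element of $A$. Conversely, given $a\in A$, the section $\gamma\mapsto\alpha_{\gamma\inv}(a(r(\gamma)))$ is continuous by Lemma \ref{continuity of groupoid action} (applied to $\gamma\inv$ and using continuity of inversion). So $\psi_{\calG,A}$ is a bijective $*$-homomorphism, hence an isomorphism.

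$C_0(\calG\units)$-linearity is immediate, since $(f\cdot\xi)(\gamma)=f(r(\gamma))\xi(\gamma)$. For equivariance, $\beta_\gamma\xi=\xi(\gamma\inv-)$ evaluated at $r(\gamma)$ gives
\[
\xi(\gamma\inv)=\alpha_\gamma(\xi(s(\gamma))),
\]
as required.

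\emph{Naturality.} This is the real content. I would reduce it, as in Proposition \ref{ind_f is natural transformation}, to a cycle-level identity. The expected main obstacle is that the decomposition theorem (Lafforgue, A 2.2) only reduces to $*$-homomorphisms and inverses of such; I would sidestep this by working with general cycles directly.

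For $*$-homomorphisms $\varphi:A\to B$ the identity $\psi_{\calG,B}\circ\ind_\calG(\varphi)=\varphi\circ\psi_{\calG,A}$ is trivial pointwise. For a general cycle $(E,\pi,T)\in\mathbb E^\calG(A,B)$ I would do the following.

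1. Restriction to units $\eta\mapsto\eta|_{\calG\units}$ is a unitary $\ind_\calG E\cong E$ intertwining $\ind_\calG\pi$ with $\pi$, via the $B$-module structure transported by $\psi_{\calG,B}$. The argument is the same as for algebras, now using Lemma \ref{continuity of groupoid action of Hilbert module}.

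2. Choose a cutoff function $c$ for $\calG\rtimes\calG$. At a unit $x$, the operator $\ind_{\calG,c}T$ becomes
\[
\sum_{h\in\calG^{x}}c(h)\,V_hT_{s(h)}V_h\inv,
\]
which is a $\calG$-averaged version of $T$.

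3. This averaged operator is a compact perturbation of $T$ relative to $\pi(A)$: since $T$ is $\calG$-equivariant modulo compacts in the sense of $\kk^\calG$, the difference $\pi(a)\bigl(\mathrm{avg}(T)-T\bigr)$ lies in $\ca K(E)$. Hence the two cycles are operator homotopic via the straight-line path.

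This yields $[\psi_{\calG,A}]\otimes\ind_\calG(x)\otimes[\psi_{\calG,B}]^{-1}=x$, which is naturality. Alternatively, I would check whether Miller's result in \cite{miller2024functors} that $\ind_\calG$ is isomorphic to the identity already covers this, and cite it if so.

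Finally, invertibility of each component makes $\psi_\calG$ a natural isomorphism, which is what the statement asserts.
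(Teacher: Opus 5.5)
Your argument is correct, but it is not the paper's route. The paper gives no direct proof; it identifies the statement with \cite[Proposition 3.27]{miller2022k}.

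Your first part is exactly what that citation packages: restriction to units, the inverse $a\mapsto[\gamma\mapsto\alpha_{\gamma\inv}(a(r(\gamma)))]$, and equivariance via $\xi(\gamma\inv)=\alpha_\gamma(\xi(s(\gamma)))$. Your cycle-level naturality argument is also sound. After transport along the unitary $\ind_\calG E\cong E$, $\ind_\calG$ replaces $T$ by its $c$-average, and almost-equivariance makes this a compact perturbation of $T$.

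In Step 3, make sure compactness is established globally, not just fibrewise.
\begin{itemize}
\item Le Gall's condition, after cutting down by a function in $C_c(\calG)$, makes $h\mapsto\pi_{r(h)}(a(r(h)))(V_hT_{s(h)}V_h\inv-T_{r(h)})$ a continuous section of $r^*\ca K(\ca E)$.
\item Apply Lemma \ref{sum of fiber} to $r$ with weight $c$, using that $r$ is proper on $supp(c)$.
\item Together with $\|a(x)\|\to0$ at infinity, this puts $\pi(a)(\mathrm{avg}(T)-T)$ in $\Gamma_0(\calG\units,\ca K(\ca E))=\ca K(E)$.
\end{itemize}

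The obstacle you set out to sidestep is not real. Suppose $\psi$ is natural for $[\varphi]$. Since $\ind_\calG$ is a functor, $\ind_\calG([\varphi]\inv)=\ind_\calG([\varphi])\inv$, so naturality for $[\varphi]\inv$ follows formally. Hence your pointwise check on $*$-homomorphisms plus Lafforgue's decomposition already suffices. This is exactly how the paper argues for Proposition \ref{ind_f is natural transformation} and Lemma \ref{res-ind equiv}.

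Your route buys independence from the decomposition theorem and shows concretely why the averaged operator is harmless. The citation is shorter.

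One slip: with the paper's convention that $x\otimes y$ means first $x$, then $y$, the identity should read $[\psi_{\calG,A}]\inv\otimes\ind_\calG(x)\otimes[\psi_{\calG,B}]=x$. As you wrote it, the product does not compose.
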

\begin{proof} This is exactly \cite[proposition 3.27]{miller2022k}.\end{proof}

\subsubsection{Lax associativity, lax left and right unity}

\begin{prop}[Lax associativity]\label{lax associativity}
    For any 0-cells $\calG_1,\calG_2,\calG_3,\calG_4\in B_0$, 1-cells $\Omega\in \frgr(\calG_1,\calG_2)$, $\Lambda\in \frgr(\calG_2,\calG_3)$, $\Sigma\in \frgr(\calG_3,\calG_4)$, the following diagram of natural transformations commutes.
\[\xymatrix{
\ind_\Sigma \ind_\Lambda \ind_\Omega \ar[d]_{\varphi_{\Sigma,\Lambda}\star id} \ar[dr]^{id\star \varphi_{\Lambda,\Omega}} \\
\ind_{\Sigma\circ \Lambda} \ind_\Omega \ar[d]_{\varphi_{\Sigma\circ \Lambda,\Omega}} & \ind_\Sigma \ind_{\Lambda\circ \Omega} \ar[d]^{\varphi_{\Sigma,\Lambda\circ \Omega}}\\
\ind_{(\Sigma\circ \Lambda)\circ \Omega}\ar[r]_{\ind_{a_{\Sigma,\Lambda,\Omega}}}  & \ind_{\Sigma\circ (\Lambda\circ \Omega)}
}\]
\end{prop}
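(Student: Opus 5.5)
The plan is to check the equality componentwise: for every $\calG_1$-\cst-algebra $D\in\kk^{\calG_1}$, both composites $\ind_\Sigma\ind_\Lambda\ind_\Omega D\to\ind_{\Sigma\circ(\Lambda\circ\Omega)}D$ should be classes of one and the same $\calG_4$-equivariant *-isomorphism. Every arrow in the diagram comes from a genuine *-homomorphism:
\begin{itemize}
\item $\varphi_{\Sigma,\Lambda,\ind_\Omega D}$, $\varphi_{\Sigma\circ\Lambda,\Omega,D}$, $\varphi_{\Lambda,\Omega,D}$ and $\varphi_{\Sigma,\Lambda\circ\Omega,D}$ are the explicit isomorphisms from Miller's results;
\item since $a_{\Sigma,\Lambda,\Omega}$ is a bi-equivariant homeomorphism, remark \ref{special case of ind 2-cells} gives $\ind_{a_{\Sigma,\Lambda,\Omega},D}=[(a\inv)^*]$, with $(a\inv)^*(\xi)=\xi\circ a\inv$.
\end{itemize}
The Kasparov product of classes of *-homomorphisms is the class of their composite. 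So it suffices to show that two concrete composites of *-isomorphisms agree, which is a pointwise computation.

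First I identify the horizontal compositions. The component of $\varphi_{\Sigma,\Lambda}\star id$ at $D$ is $\varphi_{\Sigma,\Lambda,\ind_\Omega D}$. The component of $id\star\varphi_{\Lambda,\Omega}$ at $D$ is $\ind_\Sigma([\varphi_{\Lambda,\Omega,D}])$, and I check that this equals $[\ind_\Sigma(\varphi_{\Lambda,\Omega,D})]$. This holds because Miller's functor $\ind_\Sigma$ sends the class of a *-homomorphism $\phi$, namely $[B,\phi,0]$, to $[\ind_\Sigma B,\ind_\Sigma\phi,0]$; the cutoff-averaged operator is $0$ here. Recall also that $\ind_\Sigma\phi$ acts by $\xi\mapsto(\sigma\mapsto\phi(\xi(\sigma)))$.

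Next I evaluate both composites on $\xi\in\ind_\Sigma\ind_\Lambda\ind_\Omega D$, writing $\xi(\sigma)(\lambda)(\omega)$. Along the left path, $\varphi_{\Sigma,\Lambda,\ind_\Omega D}(\xi)([\sigma,\lambda])=\xi(\sigma)(\lambda)$. Applying $\varphi_{\Sigma\circ\Lambda,\Omega,D}$ gives $[[\sigma,\lambda],\omega]\mapsto\xi(\sigma)(\lambda)(\omega)$. Composing with $(a\inv)^*$ gives $[\sigma,[\lambda,\omega]]\mapsto\xi(\sigma)(\lambda)(\omega)$. Along the right path, $\ind_\Sigma(\varphi_{\Lambda,\Omega,D})(\xi)(\sigma)=([\lambda,\omega]\mapsto\xi(\sigma)(\lambda)(\omega))$, and then $\varphi_{\Sigma,\Lambda\circ\Omega,D}$ yields $[\sigma,[\lambda,\omega]]\mapsto\xi(\sigma)(\lambda)(\omega)$. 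The two results coincide.

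The main point to be careful about is the claim that $\ind_\Sigma$ applied to the class of a *-homomorphism is the class of the induced *-homomorphism; I expect this to be immediate from the definition, since the Fredholm operator is $0$. The diagram lives in the strict 2-category $\mathfrak{KK}$, whose associators are identities, so no extra associator terms appear on the $\mathfrak{KK}$ side.
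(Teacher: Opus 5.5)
Your proposal is correct and follows the paper's proof essentially verbatim. Both arguments use that $\ind_{a_{\Sigma,\Lambda,\Omega},D}$ is induced by $(a_{\Sigma,\Lambda,\Omega}\inv)^*$ to reduce the claim to an equality of two explicit $\calG_4$-equivariant *-isomorphisms, and then evaluate both composites pointwise to get $\xi(\sigma)(\lambda)(\omega)$. Your extra checks of the components of the horizontal composites and of $\ind_\Sigma([\phi])=[\ind_\Sigma\phi]$ simply spell out steps the paper leaves implicit.
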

\begin{proof}
    Since $a_{\Sigma,\Lambda,\Omega}: (\Sigma\circ \Lambda)\circ \Omega\ra \Sigma\circ (\Lambda\circ \Omega), [[s,\lambda],\omega]\mapsto [s,[\lambda,\omega]] $ is a $\calG_4,\calG_1$-equivariant homeomorphism, for any $A\in \kk^{\calG_1}$, $\ind_{a_{\Sigma,\Lambda,\Omega},A}\in \kk^{\calG_4}(\ind_{(\Sigma\circ \Lambda)\circ \Omega}A,\ind_{\Sigma\circ (\Lambda\circ \Omega)}A)$ is induced by the $\calG_4$-equivariant *-isomorphism
\[(a_{\Sigma,\Lambda,\Omega}\inv)^*:\ind_{(\Sigma\circ \Lambda)\circ \Omega}A\ra \ind_{\Sigma\circ (\Lambda\circ \Omega)}A,\]
\[(a_{\Sigma,\Lambda,\Omega}\inv)^*(\xi)([\sigma,[\lambda,\omega]])=\xi([[\sigma,\lambda],\omega]).\]

It suffices to prove the following equality of $\calG_4$-equivariant *-isomorphisms: for any $A\in \kk^{\calG_1}$,
\[(a_{\Sigma,\Lambda,\Omega}\inv)^*\circ \varphi_{\Sigma\circ \Lambda,\Omega,A}\circ \varphi_{\Sigma,\Lambda,\ind_\Omega A}=\varphi_{\Sigma, \Lambda\circ \Omega,A}\circ\ind_{\Sigma}(\varphi_{\Lambda,\Omega,A}).\]

For any $\xi\in \ind_{\Sigma}\ind_\Lambda\ind_\Omega A$, $s\in \Sigma$, $\lambda\in \Lambda$ and $\omega\in \Omega$ such that $\sigma_\Sigma(s)=\rho_\Lambda(\lambda)$, $\sigma_\Lambda(\lambda)=\rho_\Omega(\omega)$, we have,
\[\varphi_{\Sigma,\Lambda,\ind_\Omega A}(\xi)([s,\lambda])=\xi(s)(\lambda)\in \ind_{\Omega^{\sigma(\lambda)}}A,\]
\[(\varphi_{\Sigma\circ \Lambda,\Omega,A}\circ \varphi_{\Sigma,\Lambda,\ind_\Omega A})(\xi)([[s,\lambda],\omega])=\varphi_{\Sigma,\Lambda,\ind_\Omega A}(\xi)([s,\lambda])(\omega)=\xi(s)(\lambda)(\omega).\]

On another side,
\[\ind_{\Sigma}(\varphi_{\Lambda,\Omega,A})(\xi)(s)\in \ind_{(\Lambda\circ \Omega)^{\sigma(s)}} A,\]
\[\ind_{\Sigma}(\varphi_{\Lambda,\Omega,A})(\xi)(s)([\lambda,\omega])=\xi(s)(\lambda)(\omega),\]
\begin{align*}
    (\varphi_{\Sigma, \Lambda\circ \Omega,A}\circ\ind_{\Sigma}(\varphi_{\Lambda,\Omega,A}))(\xi)([s,[\lambda,\omega]]) & =\ind_{\Sigma}(\varphi_{\Lambda,\Omega,A})(\xi)(s)([\lambda,\omega])\\
    & =\xi(s)(\lambda)(\omega),
\end{align*}
and $a_{\Sigma,\Lambda,\Omega}([[s,\lambda],\omega])=[s,[\lambda,\omega]]$, we proved our claim.
\end{proof}

\begin{prop}[Lax left and right unity]\label{lax unity}
    For any 0-cells $\calG_1,\calG_2\in B_0$, 1-cell $\Omega\in \frgr(\calG_1,\calG_2)$, the following diagrams of natural transformations commute.
\[\xymatrix{
\ind_{\calG_2} \ind_{\Omega} \ar[r]^{\psi_{\calG_2}\star id} \ar[d]_{\varphi_{\calG_2,\Omega}} & \ind_\Omega\\
\ind_{\calG_2\circ \Omega} \ar[ur]_{\ind_{l_\Omega}}
},\quad
\xymatrix{
\ind_\Omega \ind_{\calG_1} \ar[r]^{id\star \psi_{\calG_1}} \ar[d]_{\varphi_{\Omega,\calG_1}} & \ind_\Omega \\
\ind_{\Omega\circ \calG_1} \ar[ur]_{\ind_{r_\Omega}}
}
\]
\end{prop}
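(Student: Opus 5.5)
The plan follows the proof of proposition \ref{lax associativity}: reduce both triangles to equalities of $\calG_2$-equivariant *-isomorphisms, since every arrow in them comes from an honest *-isomorphism. The unitor $l_\Omega:\calG_2\circ\Omega\ra\Omega$, $[\gamma,\omega]\mapsto\gamma\omega$, and $r_\Omega:\Omega\circ\calG_1\ra\Omega$, $[\omega,\gamma']\mapsto\omega\gamma'$, are bi-equivariant homeomorphisms. So by remark \ref{special case of ind 2-cells}, for every $A\in\kk^{\calG_1}$, $\ind_{l_\Omega,A}$ is the class of $(l_\Omega\inv)^*:\ind_{\calG_2\circ\Omega}A\ra\ind_\Omega A$, $\xi\mapsto\xi\circ l_\Omega\inv$. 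Similarly $\ind_{r_\Omega,A}=[(r_\Omega\inv)^*]$.

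Next I unwind the horizontal compositions. By definition,
\[(\psi_{\calG_2}\star id)_A=\psi_{\calG_2,\ind_\Omega A},\qquad (id\star\psi_{\calG_1})_A=\ind_\Omega(\psi_{\calG_1,A}).\]
The second equality holds because the natural transformation $id$ contributes identity classes and $\ind_\Omega$ sends $[\psi]$ to $[\ind_\Omega\psi]$ for a *-homomorphism $\psi$. The two triangles therefore reduce to showing, for every $A$,
\[(l_\Omega\inv)^*\circ\varphi_{\calG_2,\Omega,A}=\psi_{\calG_2,\ind_\Omega A},\qquad (r_\Omega\inv)^*\circ\varphi_{\Omega,\calG_1,A}=\ind_\Omega(\psi_{\calG_1,A}).\]
Both are identities between *-homomorphisms $\ind_{\calG_2}\ind_\Omega A\ra\ind_\Omega A$ and $\ind_\Omega\ind_{\calG_1}A\ra\ind_\Omega A$, and I check them pointwise.

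For the left triangle, take $\xi\in\ind_{\calG_2}\ind_\Omega A$ and $\omega\in\Omega$, and use the representative $[\rho(\omega),\omega]$ of $l_\Omega\inv(\omega)$. Then
\[(l_\Omega\inv)^*\varphi_{\calG_2,\Omega,A}(\xi)(\omega)=\varphi(\xi)([\rho(\omega),\omega])=\xi(\rho(\omega))(\omega).\]
This equals $\psi_{\calG_2,\ind_\Omega A}(\xi)(\rho(\omega))$ evaluated at $\omega$, which is exactly $\psi(\xi)$ viewed as an element of $\ind_\Omega A$ via the fibre identification $(\ind_\Omega A)_x=\ind_{\Omega^x}A$.

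For the right triangle, take $\xi\in\ind_\Omega\ind_{\calG_1}A$ and use the representative $[\omega,\sigma(\omega)]$ of $r_\Omega\inv(\omega)$. Then
\[(r_\Omega\inv)^*\varphi_{\Omega,\calG_1,A}(\xi)(\omega)=\xi(\omega)(\sigma(\omega))=\psi_{\calG_1,A,\sigma(\omega)}(\xi(\omega))=\ind_\Omega(\psi_{\calG_1,A})(\xi)(\omega).\]

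The main obstacle is bookkeeping rather than analysis. One point is to confirm that the horizontal composite $\psi_{\calG_2}\star id$ in $\mathfrak{KK}$ is indeed represented by $\psi_{\calG_2,\ind_\Omega A}$; this follows from the formula $(\alpha\star\beta)_A=\beta_{F(A)}\otimes G'(\alpha_A)$ with $\alpha=id$. The other is to confirm that evaluating at the chosen representatives is independent of the choice. Independence follows because both $\varphi$ and the induced algebras are defined through well-defined equivariant formulas, and the compositions $[\gamma,\omega]\sim[\rho(\gamma\omega),\gamma\omega]$ agree by the $\calG_2$-equivariance built into $\ind_{\calG_2}$. Once these are settled, the equalities of *-homomorphisms give equalities of their KK-classes, which proves commutativity.
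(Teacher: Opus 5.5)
Your proof is correct and follows the paper's argument. Both identify $\ind_{l_\Omega,A}$ and $\ind_{r_\Omega,A}$ with the classes of $(l_\Omega\inv)^*$ and $(r_\Omega\inv)^*$ via remark \ref{special case of ind 2-cells}, then check the resulting identities of $\calG_2$-equivariant *-isomorphisms pointwise at $[\rho_\Omega(\omega),\omega]$ and $[\omega,\sigma_\Omega(\omega)]$; you additionally spell out the horizontal composites and the right triangle, which the paper leaves as ``the same way'', and your representative-independence concern needs no separate check, since $l_\Omega$ and $r_\Omega$ are bijections and $\varphi$ is already well defined.
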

\begin{proof}
We know that $l_\Omega:\calG_2\circ \Omega,[\gamma,\omega]\mapsto \gamma \omega$ is a $\calG_2,\calG_1$-equivariant homeomorphism, so for any $A\in \kk^{\calG_1}$, $\ind_{l_\Omega,A}$ is an element of $\kk^{\calG_1}(\ind_{\calG_2\circ\Omega}A,\ind_\Omega A)$ induced by the $\calG_2$-equivariant *-isomorphism
\[(l_\Omega\inv)^*:\ind_{\calG_2\circ\Omega}A\ra\ind_\Omega A, \]
\[(l_\Omega\inv)^*(\xi)(\omega)=\xi(l_{\Omega}\inv(\omega))=\xi([\rho_\Omega(\omega),\omega])\]

For any $A\in \kk^{\calG_1}$, $\xi\in \ind_{\calG_2}\ind_\Omega A$, $\omega\in \Omega$,
\[((l_\Omega\inv)^*\circ\varphi_{\calG_2,\Omega,A})(\xi)(\omega)=\varphi_{\calG_2,\Omega,A}(\xi)([\rho_\Omega,\omega])=\xi(\rho_\Omega(\omega))(\omega),\]
\[\psi_{\calG_2,\ind_\Omega A}(\xi)(\omega)=\xi(\rho_\Omega(\omega))(\omega).\]
Therefore, we have an equality of $\calG_2$-equivariant *-isomorphisms
\[\psi_{\calG_2,\ind_\Omega A}= (l_\Omega\inv)^*\circ \varphi_{\calG_2,\Omega,A},\]
which deduces that the diagram of natural transformations commute. So we proved the lax left unity. We can prove the lax right unity in the same way.
\end{proof}

Finally, we conclude our construction of the pseudofunctor.

\begin{thm}\label{pseudofunctor}
    Assign every $\calG\in B_0$ to $\kk^\calG\in B_0'$. These data $(\ind, \varphi,\psi)$ defined above constitute a pseudofunctor
    \[(\ind, \varphi,\psi):\frgr\ra \mathfrak{KK}.\]
\end{thm}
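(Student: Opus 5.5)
The plan is to check the axioms of a pseudofunctor from \cite[Definition 4.1.2]{johnson20212} one at a time against the results already established. Since $\mathfrak{KK}$ is a strict 2-category, all its associators and unitors are identities. The coherence axioms therefore reduce to commutative diagrams of natural transformations in $\mathrm{Fun}(\kk^{\calH},\kk^{\calG})$. These can be checked componentwise, as equalities in KK-groups for each separable $A$.

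First, I assemble the data.
\begin{enumerate}
\item On 0-cells, $\calG\mapsto\kk^\calG$.
\item For each pair of 0-cells, the local functor $\ind:\frgr(\calH,\calG)\ra\mathrm{Fun}(\kk^\calH,\kk^\calG)$ is given by proposition \ref{ind is functor}. Its values on 1-cells, the functors $\ind_\Omega$, come from Miller's results recalled above. Its values on 2-cells, the transformations $\ind_f$, are natural by proposition \ref{ind_f is natural transformation}.
\item The lax functoriality constraint $\varphi$ is a natural isomorphism, natural in both variables $(\Omega,\Lambda)$, by proposition \ref{Lax functoriality constraint}.
\item The lax unity constraint $\psi_\calG:\ind_\calG\ra id_{\kk^\calG}$ is given by proposition \ref{Lax unity constraint}.
\end{enumerate}
Each component of $\varphi$ and $\psi$ is the class of a $\calG$-equivariant $*$-isomorphism, so each is invertible in KK. This is exactly what makes the construction pseudo rather than merely lax.

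Next come the coherence axioms.
\begin{itemize}
\item The lax associativity hexagon reduces to proposition \ref{lax associativity}. Since the associator in $\mathfrak{KK}$ is the identity, the hexagon collapses to the square displayed there, with $\ind_{a}$ on the bottom edge.
\item The left and right unity axioms reduce in the same way to proposition \ref{lax unity}, using that the unitors of $\mathfrak{KK}$ are trivial and that $\ind_{l_\Omega}$ and $\ind_{r_\Omega}$ are induced by the homeomorphisms $l_\Omega$ and $r_\Omega$ (remark \ref{special case of ind 2-cells}).
\end{itemize}

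The step needing the most care is orientation. The paper writes $\frgr(\calH,\calG)$ for correspondences $\calG\leftarrow\calH$, so the composition order $\Omega\circ\Lambda$ must be matched with composition of functors $\ind_\Omega\ind_\Lambda$. The horizontal composite $\ind_f\star\ind_g$ must also agree with Johnson--Yau's convention. I would verify these by writing each axiom diagram explicitly and comparing it with the diagrams already proven. No new analytic input is expected: the hard work (naturality of $\varphi$ in 2-cells) is already contained in lemmas \ref{lax fun constraint part 1} and \ref{lax fun constraint part 2}.
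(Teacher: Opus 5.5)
Your proposal is correct and follows the paper's proof: you assemble the local functor (Proposition~\ref{ind is functor}) and the constraints $\varphi$ and $\psi$ (Propositions~\ref{Lax functoriality constraint} and~\ref{Lax unity constraint}), then cite Propositions~\ref{lax associativity} and~\ref{lax unity} for the coherence axioms, with the strictness of $\mathfrak{KK}$ collapsing its associators and unitors. One orientation point to add to your list: $\psi_\calG$ goes $\ind_\calG\Rightarrow id_{\kk^\calG}$, opposite to Johnson--Yau's unity constraint $1_{FX}\Rightarrow F1_X$, which is harmless because its components are classes of equivariant $*$-isomorphisms, so you may use $\psi_\calG^{-1}$.
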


\begin{proof}
On objects, we assign every $\calG\in B_0$ to $\kk^\calG\in B_0'$.
On hom categories, for each pair $\calG,\calH\in B_0$, we have a local functor $\ind:\frgr(\calH,\calG)\ra \mathrm{Fun}(\kk^\calH,\kk^\calG)$ as described in proposition \ref{ind is functor}.
As proved in proposition \ref{Lax functoriality constraint}, the natural isomorphism $\varphi$ satisfies the conditions to be a lax functoriality constraint. As proved in proposition \ref{Lax unity constraint}, the natural isomorphism $\psi$ satisfies the conditions to be a lax unity constraint.
Then as proved in proposition \ref{lax associativity} and proposition \ref{lax unity}, the above data satisfy the lax associativity, lax left and right unity.

So these data constitute a pseudofunctor (c.f. \cite[Definition 4.1.2]{johnson20212}).
\end{proof}

\subsection{Internal adjunctions}

Internal adjunctions (see definition 6.1.1 of \cite{johnson20212}) are generalizations of adjoint functors for general bicategories. As an example, an internal adjunction in $\mathfrak{KK}$ is a pair of adjoint functors $F\dashv G$ between Kasparov categories. That is, a quadruple $(F,G,\eta,\epsilon)$ consisting of
    \begin{enumerate}
        \item two second countable \'etale groupoids $\calG,\calH\in B_0$;
        \item two functors $F:\kk^\calH\ra \kk^\calG$ and $G:\kk^\calG\ra \kk^\calH$;
        \item a natural transformation $\eta: id_{\kk^\calH}\ra GF$ (which is called unit), which is a family $(\eta_A)_{A\in \kk^\calH}$, such that $\eta_A\in \kk^\calH(A,GFA)$ for each separable $\calH$-\cst-algebra $A$, and for any $A_1,A_2\in \kk^\calH$ and $x\in \kk^\calH(A_1,A_2)$, we have $x\otimes \eta_{A_2}=\eta_{A_1}\otimes GF(x)$;
        \item a natural transformation $\epsilon: FG\ra id_{\kk^\calG}$ (which is called counit), which is a family $(\epsilon_B)_{B\in \kk^\calG}$, such that $\eta_B\in \kk^\calG(FGB,B)$ for each separable $\calG$-\cst-algebra $B$, and for any $B_1,B_2\in \kk^\calG$ and $y\in \kk^\calG(B_1,B_2)$, we have $FG(y)\otimes \epsilon_{B_2}=\epsilon_{B_1}\otimes y$,
    \end{enumerate}
    such that the following diagrams commutes (known as triangle identities).
    \[\xymatrix{
    F \ar[r]^{id_F\star \eta} \ar[dr]_{id_F} & FGF \ar[d]^{\epsilon\star id_F}\\
    & F
    } \quad \xymatrix{
    G \ar[r]^{\eta\star id_G} \ar[dr]_{id_G} & GFG \ar[d]^{id_G\star \epsilon}\\
    & G
    }
    \]

\begin{prop}\label{adjunction in KK}
    Use the same notation as above, for any $A\in \kk^\calH$, $B\in \kk^\calG$, we have a bijection
    \[\kk^\calG(FA,B)\ra \kk^\calH(A,GB), x\mapsto \eta_A\otimes G(x).\]
\end{prop}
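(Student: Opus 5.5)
We prove the statement by writing down an explicit inverse, exactly as in the classical hom-set bijection for adjoint functors, with Kasparov products playing the role of composition. Define
\[\Phi:\kk^\calG(FA,B)\ra \kk^\calH(A,GB),\quad x\mapsto \eta_A\otimes G(x),\]
\[\Psi:\kk^\calH(A,GB)\ra \kk^\calG(FA,B),\quad y\mapsto F(y)\otimes \epsilon_B.\]
Both maps are well defined because $F$ and $G$ send KK-elements to KK-elements and $\eta_A$, $\epsilon_B$ are KK-elements of the correct type. We then check that $\Psi\circ\Phi$ and $\Phi\circ\Psi$ are the identities. The only tools needed are the following.
\begin{enumerate}
\item Associativity of the Kasparov product (\cite[Theorem 6.4]{le1999theorie}).
\item Functoriality of $F$ and $G$, i.e. $F(u\otimes v)=F(u)\otimes F(v)$, which is part of the definition of a functor between Kasparov categories given above.
\item Naturality of $\eta$ and $\epsilon$.
\item The two triangle identities.
\end{enumerate}

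\textbf{First composite.} For $x\in\kk^\calG(FA,B)$ we compute
\[\Psi\Phi(x)=F(\eta_A\otimes G(x))\otimes\epsilon_B=F(\eta_A)\otimes FG(x)\otimes\epsilon_B.\]
Naturality of $\epsilon$, applied to $y=x\in\kk^\calG(FA,B)$, gives $FG(x)\otimes\epsilon_B=\epsilon_{FA}\otimes x$. Hence
\[\Psi\Phi(x)=\bigl(F(\eta_A)\otimes\epsilon_{FA}\bigr)\otimes x.\]
We then unwind the first triangle identity at the component $A$. Using the formula $(\alpha\star\beta)_A=G(\alpha_A)\otimes\beta_{F'(A)}$ recalled for $\mathfrak{KK}$, this identity states precisely that $F(\eta_A)\otimes\epsilon_{FA}=1_{FA}$. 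Therefore $\Psi\Phi(x)=1_{FA}\otimes x=x$.

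\textbf{Second composite.} For $y\in\kk^\calH(A,GB)$ we compute
\[\Phi\Psi(y)=\eta_A\otimes GF(y)\otimes G(\epsilon_B).\]
Naturality of $\eta$ gives $\eta_A\otimes GF(y)=y\otimes\eta_{GB}$. The second triangle identity at the component $B$ reads $\eta_{GB}\otimes G(\epsilon_B)=1_{GB}$. Combining these, $\Phi\Psi(y)=y$.

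\textbf{Main obstacle.} The only delicate point is the bookkeeping of the horizontal compositions $id_F\star\eta$ and $\epsilon\star id_F$ (and their counterparts for $G$). One must unwind the Godement product conventions to confirm that the components are $F(\eta_A)$, $\epsilon_{FA}$, $\eta_{GB}$ and $G(\epsilon_B)$, composed in the left-to-right order of the Kasparov product. One must also use that identity natural transformations have components $1$, so that $F(1_A)=1_{FA}$, which holds because functors preserve identity classes. Once these are written out, everything else is formal.
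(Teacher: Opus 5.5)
Your proposal is correct and matches the paper's proof essentially step for step. It uses the same inverse $y\mapsto F(y)\otimes\epsilon_B$, the same naturality moves $FG(x)\otimes\epsilon_B=\epsilon_{FA}\otimes x$ and $\eta_A\otimes GF(y)=y\otimes\eta_{GB}$, and the same componentwise triangle identities $F(\eta_A)\otimes\epsilon_{FA}=1_{FA}$ and $\eta_{GB}\otimes G(\epsilon_B)=1_{GB}$. Your extra bookkeeping on unwinding the Godement products (including $FG(1_{FA})=1_{FGFA}$) is slightly more careful than the paper, which leaves that step implicit.
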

\begin{proof}
    The description of an internal adjunction follows easily from the definition. Now let $p$ be the map $\kk^\calG(FA,B)\ra \kk^\calH(A,GB), x\mapsto \eta_A\otimes G(x)$, let $q$ be the map $\kk^\calH(A,GB)\ra \kk^\calG(FA,B), y\mapsto F(y)\otimes \epsilon_B$. The two triangle identities implies that
    \[F(\eta_A)\otimes \epsilon_{FA}=1_{FA},\quad \eta_{GB}\otimes G(\epsilon_B)=1_{GB},\]
    Then use the two formulas and naturalities of $\eta, \epsilon$, for any $x\in \kk^\calG(FA,B)$ and $y\in \kk^\calH(A,GB)$,
    \[q(p(x)) = F(\eta_A)\otimes FG(x)\otimes \epsilon_B= F(\eta_A)\otimes \epsilon_{FA}\otimes x=x.\]
    \[p(q(y)) = \eta_A\otimes GF(y)\otimes G(\epsilon_B) = y\otimes \eta_{GB}\otimes G(\epsilon_B)=y.\]
    Hence, $p,q$ are inverse to each other.
\end{proof}

Following the definition, we can easily describe the internal adjunctions in $\frgr$.

\begin{prop}
    An internal adjunction $\Omega\dashv \Lambda$ in $\frgr$ is a quadruple $(\Omega,\Lambda,\eta, \epsilon)$ consisting of
    \begin{enumerate}
        \item two second countable \'etale groupoids $\calG,\calH\in B_0$,
        \item two second countable locally compact Hausdorff correspondences $\Omega:\calG\leftarrow\calH$ and $\Lambda:\calH\leftarrow\calG$,
        \item an $\calH,\calH$-equivariant continuous map $\eta:\calH\ra \Lambda\times_{\calG}\Omega$, 
        \item a $\calG,\calG$-equivariant continuous map $\epsilon: \Omega\times_{\calH} \Lambda\ra \calG$,
    \end{enumerate}
    such that the following two diagrams commute (known as triangle identities):
    \[\xymatrix{
    \Omega\circ \calH \ar[r]^{\hspace{-0.5cm}[id,\eta]} \ar[ddrr]_{r_{\Omega}} & \Omega\circ (\Lambda\circ \Omega)\ar[r]^{a_{\Omega,\Lambda,\Omega}\inv} & (\Omega\circ \Lambda)\circ \Omega \ar[d]^{[\epsilon,id]}\\
    & & \calG\circ \Omega \ar[d]^{l_{\Omega}}\\
    & & \Omega  
    }
    \xymatrix{
    \calH\circ \Lambda\ar[r]^{\hspace{-0.5cm}[\eta,id]} \ar[ddrr]_{l_{\Lambda}}& (\Lambda\circ \Omega)\circ \Lambda\ar[r]^{a_{\Lambda,\Omega,\Lambda}}  & \Lambda\circ (\Omega\circ \Lambda)\ar[d]^{[id,\epsilon]}\\
    & & \Lambda\circ \calG\ar[d]^{r_{\Lambda}}\\
    & & \Lambda 
    }\]
\end{prop}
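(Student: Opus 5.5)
This statement is an unfolding of \cite[Definition 6.1.1]{johnson20212} in the concrete bicategory $\frgr$ of this section. I would therefore prove it by translating each piece of the abstract definition through the explicit description of $\frgr$ given earlier.

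An internal adjunction in a bicategory $\mathcal B$ between 0-cells $X,Y$ consists of the following data:
\begin{enumerate}
    \item 1-cells $f\in\mathcal B(X,Y)$ and $g\in\mathcal B(Y,X)$;
    \item a unit 2-cell $\eta:1_X\Rightarrow gf$;
    \item a counit 2-cell $\epsilon:fg\Rightarrow 1_Y$.
\end{enumerate}
These must satisfy two triangle identities. The first says that the composite
\[f1_X\xrightarrow{1_f*\eta}f(gf)\xrightarrow{a^{-1}}(fg)f\xrightarrow{\epsilon*1_f}1_Yf\xrightarrow{\ell_f}f\]
equals $r_f$. The second says that the composite
\[1_Xg\xrightarrow{\eta*1_g}(gf)g\xrightarrow{a}g(fg)\xrightarrow{1_g*\epsilon}g1_Y\xrightarrow{r_g}g\]
equals $\ell_g$.

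I would take $X=\calH$, $Y=\calG$, $f=\Omega\in\frgr(\calH,\calG)$ and $g=\Lambda\in\frgr(\calG,\calH)$. In this bicategory a 1-cell in $\frgr(\calH,\calG)$ is a second countable correspondence $\calG\leftarrow\calH$. So $\Omega:\calG\leftarrow\calH$ and $\Lambda:\calH\leftarrow\calG$, which gives items 1 and 2 of the statement.

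Next I would identify the data. The identity 1-cell $1_\calH$ is $\calH$ viewed as a correspondence. Horizontal composition is the balanced product, written $\Lambda\circ\Omega=\Lambda\times_\calG\Omega$ in the paper's convention. A 2-cell is a bi-equivariant continuous map. Hence $\eta$ is an $\calH,\calH$-equivariant continuous map $\calH\to\Lambda\times_\calG\Omega$, and $\epsilon$ is a $\calG,\calG$-equivariant continuous map $\Omega\times_\calH\Lambda\to\calG$. This gives items 3 and 4.

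For the triangle identities, horizontal composition of 2-cells with identities is the map $[f,g]$, so $1_\Omega*\eta=[id,\eta]$ and similarly for the other whiskerings. The associators $a$ and the unitors $l,r$ are the explicit homeomorphisms listed in the definition of $\frgr$. Substituting these into the abstract triangle identities gives exactly the two commutative diagrams in the statement. Vertical composition in $\frgr$ is composition of maps, so equality of 2-cells is commutativity of those diagrams.

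The only point needing care is bookkeeping. I must make sure the composition-order convention ($\Omega\circ\Lambda=\Omega\times_\calH\Lambda$, which is reversed relative to Miller's) and the direction of $a$ versus $a^{-1}$ match \cite{johnson20212}. Each diagram should be checked arrow by arrow to confirm that it is typed correctly, for instance that $[\epsilon,id]:(\Omega\circ\Lambda)\circ\Omega\to\calG\circ\Omega$. There is no genuine mathematical obstacle.
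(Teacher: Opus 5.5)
Your proposal is correct and takes the same approach as the paper. The paper gives no separate proof; it just says the description follows from unfolding \cite[Definition 6.1.1]{johnson20212} in $\frgr$. Your instantiation $X=\calH$, $Y=\calG$, $f=\Omega$, $g=\Lambda$ is typed correctly, as are the whiskerings $[id,\eta]$, $[\epsilon,id]$, $[\eta,id]$, $[id,\epsilon]$ and the associators and unitors of $\frgr$, so the two triangle diagrams in the statement follow.
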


\begin{prop}\label{adjointness of induction functors}
    If $(\Omega,\Lambda,\eta,\epsilon)$ is an internal adjunction in $\frgr$, then $\ind_{\Omega}\dashv \ind_\Lambda$ is an internal adjunction in $\mathfrak{KK}$.
\end{prop}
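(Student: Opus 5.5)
The plan is to use the general fact that a pseudofunctor between bicategories sends internal adjunctions to internal adjunctions, applied to the pseudofunctor $(\ind,\varphi,\psi):\frgr\ra \mathfrak{KK}$ of theorem \ref{pseudofunctor}. Concretely, I will take as unit and counit
\[\tilde\eta := \psi_\calH^{-1}\otimes \ind_\eta\otimes \varphi_{\Lambda,\Omega}^{-1}: id_{\kk^\calH}\ra \ind_\Lambda\ind_\Omega,\qquad \tilde\epsilon:=\varphi_{\Omega,\Lambda}\otimes \ind_\epsilon\otimes \psi_\calG:\ind_\Omega\ind_\Lambda\ra id_{\kk^\calG}.\]
Here $\ind_\eta:\ind_\calH\ra\ind_{\Lambda\circ\Omega}$ and $\ind_\epsilon:\ind_{\Omega\circ\Lambda}\ra \ind_\calG$ are the natural transformations from proposition \ref{ind_f is natural transformation}, and $\varphi$, $\psi$ are the natural isomorphisms from propositions \ref{Lax functoriality constraint} and \ref{Lax unity constraint}. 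Each of $\tilde\eta,\tilde\epsilon$ is a vertical composite of natural transformations, so it is natural. It remains to verify the two triangle identities in $\mathfrak{KK}$.

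For the first triangle identity, I will expand $(\tilde\epsilon\star id_{\ind_\Omega})\circ(id_{\ind_\Omega}\star\tilde\eta)$ componentwise at $A\in\kk^\calH$. This gives a product of KK-elements of the following forms:
\begin{itemize}
\item $\ind_\Omega(\psi_{\calH,A}^{-1})$, $\ind_\Omega(\ind_{\eta,A})$ and $\ind_\Omega(\varphi^{-1}_{\Lambda,\Omega,A})$;
\item $\varphi_{\Omega,\Lambda,\ind_\Omega A}$, $\ind_{\epsilon,\ind_\Omega A}$ and $\psi_{\calG,\ind_\Omega A}$.
\end{itemize}
I then push the elements through step by step. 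First, the lax functoriality constraint (its naturality in 2-cells, i.e.\ lemmas \ref{lax fun constraint part 1}, \ref{lax fun constraint part 2}, in the form $\varphi\cdot(\ind_f\star\ind_g)=\ind_{[f,g]}\cdot\varphi$) converts $\ind_\Omega(\ind_{\eta,A})$ into $\ind_{[id,\eta],A}$, and $\ind_{\epsilon,\ind_\Omega A}$ into $\ind_{[\epsilon,id],A}$, at the cost of conjugating by $\varphi$'s. Next, lax associativity (proposition \ref{lax associativity}) replaces the resulting string of $\varphi$'s by $\ind_{a^{-1}_{\Omega,\Lambda,\Omega}}$. Finally, lax left and right unity (proposition \ref{lax unity}) convert $\psi_\calH$ and $\psi_\calG$ into $\ind_{r_\Omega}$ and $\ind_{l_\Omega}$.

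After these substitutions the composite becomes $\ind$ applied to the composite 2-cell $l_\Omega\circ[\epsilon,id]\circ a^{-1}\circ[id,\eta]\circ r_\Omega^{-1}$. By the triangle identity in $\frgr$ this 2-cell is $id_\Omega$. By functoriality of the local functor (proposition \ref{ind is functor}) its image is $id_{\ind_\Omega}$. The second triangle identity is handled symmetrically.

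The main obstacle is the bookkeeping in this string-diagram computation: one has to make sure that every coherence needed (naturality of $\varphi$ in both variables, associativity, and both unit axioms) is exactly one of the statements proved in the preceding subsections. One also has to check that inverses such as $\psi^{-1}$ and $\ind_{r_\Omega^{-1}}=\ind_{r_\Omega}^{-1}$ behave as expected. The latter holds because $r_\Omega$ is a homeomorphism, so $\ind_{r_\Omega}$ is induced by a $*$-isomorphism (remark \ref{special case of ind 2-cells}).

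Alternatively, I could invoke the standard result that pseudofunctors preserve internal adjunctions (e.g.\ \cite[Chapter 6]{johnson20212}), noting that $(\ind,\varphi,\psi)$ satisfies all its hypotheses by theorem \ref{pseudofunctor}. I would cite this general result and include the above computation as justification.
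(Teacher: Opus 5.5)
Your proposal is correct and is the paper's argument: the paper also just applies the fact that pseudofunctors preserve internal adjunctions (\cite[Proposition 6.1.7]{johnson20212}) to $(\ind,\varphi,\psi)$ from theorem \ref{pseudofunctor}, with your unit $\psi_\calH^{-1}\otimes\ind_\eta\otimes\varphi_{\Lambda,\Omega}^{-1}$ and counit $\varphi_{\Omega,\Lambda}\otimes\ind_\epsilon\otimes\psi_\calG$. The paper's displayed composites interchange $\Omega$ and $\Lambda$, which is a typo; yours are the correct ones and match what the paper later uses in the compression theorem. Your unpacking of the triangle identities via lemmas \ref{lax fun constraint part 1}, \ref{lax fun constraint part 2} and propositions \ref{lax associativity}, \ref{lax unity} is that general proof specialized to this case, and it checks out.
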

\begin{proof} As proved in theorem \ref{pseudofunctor}, $(\ind,\varphi,\psi)$ is a pseudofunctor from $\frgr$ to $\mathfrak{KK}$. Suppose that $\Omega$ is a correspondence $\calG\leftarrow \calH$ and $\Lambda$ is a correspondence $\calH\leftarrow \calG$. By proposition 6.1.7 of \cite{johnson20212}, since pseudofunctor preserves adjunction, $\ind_{\Omega}$ is left adjoint to $\ind_{\Lambda}$, with unit given by the composite
\[\bar\eta:id_{\kk^\calH}\xrightarrow{\psi_\calH\inv}\ind_{\calH}\xrightarrow{\ind_\eta}\ind_{\Omega\circ\Lambda}\xrightarrow{\varphi_{\Omega,\Lambda}\inv}\ind_{\Omega}\ind_{\Lambda},\]
and counit given by the composite
\[\bar\epsilon:\ind_{\Lambda}\ind_{\Omega}\xrightarrow{\varphi_{\Lambda,\Omega}}\ind_{\Lambda\circ \Omega}\xrightarrow{\ind_\epsilon}\ind_{\calG}\xrightarrow{\psi_\calG}id_{\kk^\calG}.\]
\end{proof}

\subsection{Induction-restriction adjunction}

The following internal adjunction in $\frgr$ can be seen as the origin of the induction-restriction adjunction.

\begin{prop}\label{relatively clopen subgrpd become example of adjunction}
    Let $\calG$ be a second countable \'etale groupoid, $\calH$ be a relatively clopen subgroupoid. Suppose that $\Omega=\calG_{\calH\units}$, equipped with left action of $\calG$ and right action of $\calH$ by multiplication, $\Lambda=\calG^{\calH\units}$ equipped with left action of $\calH$ and right action of $\calG$ by multiplication. Define
    \[\eta:\calH\ra \Lambda\circ \Omega, h\mapsto [r(h),h]=[h,s(h)],\]
    \[\epsilon:\Omega\circ \Lambda\ra \calG, [\omega,\lambda]\mapsto \omega\lambda.\]
    Then $\Omega,\Lambda$ are well-defined 1-cells in $\frgr$, $\eta,\epsilon$ are well-defined 2-cells in $\frgr$, and $(\Omega,\Lambda,\eta,\epsilon)$ is an internal adjunction in $\frgr$.
\end{prop}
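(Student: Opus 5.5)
I will verify the four pieces in turn: that $\Omega$ and $\Lambda$ are 1-cells, that $\eta$ and $\epsilon$ are well-defined 2-cells, and that the two triangle identities hold.

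\textbf{The 1-cells.} That $\Omega=\calG_{\calH\units}$ is a correspondence $\calG\leftarrow\calH$ is exactly lemma \ref{relative clopen grpd has natural correspondece}, since $\calH$ is relatively clopen. For $\Lambda=\calG^{\calH\units}$ I will use the inversion map $\gamma\mapsto\gamma\inv$. It is a homeomorphism $\Omega\to\Lambda$ that intertwines the left $\calG$-action on $\Omega$ with the right $\calG$-action on $\Lambda$, and the right $\calH$-action on $\Omega$ with the left $\calH$-action on $\Lambda$. The right $\calG$-action on $\Lambda$ is free, is proper because $\calG$ acts properly on itself, and has the local homeomorphism $s|_\Lambda$ as anchor map. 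Second countability, local compactness and the Hausdorff property are inherited from $\calG$, since $\Omega$ and $\Lambda$ are open in $\calG$.

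\textbf{The 2-cells.} For $\eta$, the map $h\mapsto[r(h),h]$ is continuous as a composite of continuous maps with the quotient map. The identity $[r(h),h]=[h,s(h)]$ follows from the balancing relation $(\lambda\gamma,\omega)\sim(\lambda,\gamma\omega)$ with $\lambda=r(h)$ and $\gamma=h$, which is legitimate because $\calH\subseteq\Omega\cap\Lambda$. Bi-equivariance is then immediate: $\eta(h'h)=[h'h,s(h)]=h'\cdot\eta(h)$, and $\eta(hh')=[r(h),hh']=\eta(h)\cdot h'$. For $\epsilon$, the map $(\omega,\lambda)\mapsto\omega\lambda$ is constant on $\calH$-orbits, since $(\omega h)(h\inv\lambda)=\omega\lambda$. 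It is continuous on $\Omega\times_{\calH\units}\Lambda$, so it descends continuously to the quotient, because the quotient map is a quotient map by definition of the topology. It is clearly $\calG,\calG$-equivariant.

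\textbf{The triangle identities.} I will check these pointwise, tracing a generic element through each composite using the explicit formulas for $a$, $l$ and $r$. For the first identity, start from $[\omega,h]\in\Omega\circ\calH$. The map $[id,\eta]$ sends it to $[\omega,[h,s(h)]]$. Then $a\inv$ gives $[[\omega,h],s(h)]$, then $[\epsilon,id]$ gives $[\omega h,s(h)]$, and finally $l_\Omega$ gives $\omega h$. This equals $r_\Omega([\omega,h])$. For the second identity, start from $[h,\lambda]\in\calH\circ\Lambda$. The map $[\eta,id]$ sends it to $[[h,s(h)],\lambda]$. Then $a$ gives $[h,[s(h),\lambda]]$, then $[id,\epsilon]$ gives $[h,\lambda]$, and finally $r_\Lambda$ gives $h\lambda$. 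This equals $l_\Lambda([h,\lambda])$. I need the second form $[h,s(h)]$ of $\eta$ here, which is why I establish both expressions for $\eta$ first.

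The main obstacle is only bookkeeping: checking that each bracket is a legitimate element of the balanced product, meaning the anchors match, and that the choice of representative does not matter. Since all maps involved are defined on the quotients, independence of representatives follows from the balancing relation. Therefore the only genuinely topological input is lemma \ref{relative clopen grpd has natural correspondece}.
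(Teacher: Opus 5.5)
Your proof is correct and is essentially the paper's argument. You use the lemma on relatively clopen subgroupoids for $\Omega$, check directly that the right $\calG$-action on $\Lambda$ is free, \'etale and proper, and verify the two triangle identities pointwise exactly as the paper does. The paper gets properness from the compactness of $\{\gamma : K\cap K\gamma\neq\emptyset\}=K\inv K$, which amounts to your restriction of the proper self-action of $\calG$ to the invariant open set $\Lambda$; you also spell out second countability and the well-definedness, continuity and bi-equivariance of $\eta$ and $\epsilon$, which the paper leaves as easy to see.
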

\begin{proof} By lemma \ref{relative clopen grpd has natural correspondece}, $\Omega$ is a well-defined correspondence. The groupoid $\calG$ acts on $\Lambda$ by multiplication, so this action is free. Since $\Lambda$ is open in $\calG$, $s_{\calG}|_{\Lambda}$ is a local homeomorphism, so the action of $\calG$ is \'etale. For any compact $K\subseteq \Lambda$, $\{\gamma\in \calG:K\cap K\gamma\neq \emptyset\}=K\inv K$ is compact, so the action of $\calG$ is proper. In conclusion $\Lambda$ is also a well-defined correspondence. It is easy to see that $\eta,\epsilon$ are well-defined bi-equivariant continuous maps.

Check the two triangle identities. For any $[\omega,h]\in \Omega\circ \calH$,
\begin{align*}
    (l_{\Omega}\circ [\epsilon,id]\circ a\inv \circ [id,\eta])([\omega,h]) & = (l_{\Omega}\circ [\epsilon,id]\circ a\inv)([\omega,[h,s(h)]]) \\
    & = (l_{\Omega}\circ [\epsilon,id])([[\omega,h],s(h)])\\
    & = l_{\Omega}([\omega h, s(h)])\\
    & =\omega h= r_{\Omega}([\omega,h]).
\end{align*}
So the first triangle commutes. For any $[h,\lambda]\in \calH\circ \Lambda$,
\begin{align*}
    (r_{\Lambda}\circ [id,\epsilon]\circ a\circ [\eta,id])([h,\lambda]) & = (r_{\Lambda}\circ [id,\epsilon]\circ a)([[h,s(h)],\lambda])\\
    & = (r_{\Lambda}\circ [id,\epsilon])([h,[s(h),\lambda]])\\
    & = r_{\Lambda}([h,\lambda])\\
    & = h\lambda = l_{\Lambda}([h,\lambda]).
\end{align*}
So the second triangle commutes.\end{proof}

\begin{lem}\label{res-ind equiv}
    Use the same notation as above, let $res^\calG_\calH:\kk^\calG\ra\kk^\calH, A\mapsto A|_\calH=A|_{\calH\units}$ be the restriction functor (induced by the strict morphism $\calH\hookrightarrow\calG$). Then there is a natural isomorphism $\upsilon:res^\calG_\calH\ra \ind_\Lambda$, constitute of $\calH$-equivariant *-isomorphisms \[\upsilon_A:A|_\calH\ra \ind_{\Lambda}A, \upsilon_A(a)(\lambda)=\alpha_{\lambda\inv}(a(r(\lambda))),\forall a\in A|_{\calH},\lambda\in \Lambda\]
    for $(A,\alpha)\in \kk^\calG$.
\end{lem}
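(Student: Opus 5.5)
Fix $(A,\alpha)\in\kk^\calG$. The plan is to check that $\upsilon_A$ is a well-defined $\calH$-equivariant $*$-isomorphism of $C_0(\calH\units)$-algebras, and then to deduce naturality from naturality on $*$-homomorphisms.

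\textbf{Well-definedness.} Here $\Lambda=\calG^{\calH\units}$ with $\sigma_\Lambda=s$, and $\ind_\Lambda A$ consists of bounded continuous sections $\xi$ of $s^*\calA$ over $\Lambda$. These must satisfy $\xi(\lambda\gamma)=\alpha_{\gamma\inv}(\xi(\lambda))$ and vanish at infinity on $\Lambda/\calG$.

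\begin{enumerate}
\item For $a\in A|_\calH=\Gamma_0(\calH\units,\calA)$, the section $\lambda\mapsto\alpha_{\lambda\inv}(a(r(\lambda)))$ is continuous by Lemma~\ref{continuity of groupoid action}, applied to the inverse map on $\calG$ and the open subset $\Lambda$. To apply it, extend $a$ locally near $r(\lambda)$ to an element of $A$, using that $\calH\units$ is open in $\calG\units$.
\item Equivariance is immediate: $\alpha_{(\lambda\gamma)\inv}=\alpha_{\gamma\inv}\alpha_{\lambda\inv}$.
\item The map $\Lambda/\calG\to\calH\units$, $\lambda\calG\mapsto r(\lambda)$, is a homeomorphism. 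Its inverse is $x\mapsto x\calG$, and openness follows from the quotient map being a local homeomorphism (Proposition~\ref{orbit space of free proper action of etale grpd}). Since $\|\upsilon_A(a)(\lambda)\|=\|a(r(\lambda))\|$, vanishing at infinity transfers.
\item $C_0(\calH\units)$-linearity is clear, since the $C_0(\calH\units)$-structure on $\ind_\Lambda A$ is through $\rho_\Lambda=r$.
\end{enumerate}

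\textbf{Isomorphism.} Define $\xi\mapsto(x\mapsto\xi(x))$, restricting to the units $\calH\units\subseteq\Lambda$. Equivariance gives $\xi(\lambda)=\alpha_{\lambda\inv}(\xi(r(\lambda)))$, so this map is inverse to $\upsilon_A$. Norms are preserved fibrewise, so $\upsilon_A$ is an isometric $*$-isomorphism.

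\textbf{$\calH$-equivariance.} On the fibre at $x$, the induced action is $\beta_h(\xi)=\xi(h\inv\,\cdot)$. For $h\in\calH$ we compute
\[\upsilon_A(\alpha_h a)(\lambda)=\alpha_{\lambda\inv}\alpha_h(a(s(h)))=\alpha_{(h\inv\lambda)\inv}(a(r(h\inv\lambda)))=\beta_h(\upsilon_A(a))(\lambda),\]
for $\lambda\in\Lambda^{r(h)}$. This is the main bookkeeping point. Equivalently, fibrewise one checks $\upsilon_{A,x}(b)(\lambda)=\alpha_{\lambda\inv}(b)$ for $b\in A_x$.

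\textbf{Naturality.} We need $[\upsilon_A]\otimes\ind_\Lambda(x)=res^\calG_\calH(x)\otimes[\upsilon_B]$ for $x\in\kk^\calG(A,B)$. By the decomposition property (Theorem A~2.2 of \cite{lafforgue2007k}), as used in Proposition~\ref{ind_f is natural transformation}, it suffices to treat $x=[\varphi]$ for a $\calG$-equivariant $*$-homomorphism $\varphi$, together with inverses of such classes. For $x=[\varphi]$ the identity $\ind_\Lambda(\varphi)\circ\upsilon_A=\upsilon_B\circ\varphi|_\calH$ holds because $\varphi$ intertwines the $\alpha$'s. Inverses then follow formally, since both sides are functors.

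The main obstacle I expect is the decomposition step. That theorem writes elements as products of homomorphism classes and inverses of such; to use it, one must confirm that $\ind_\Lambda$ and $res^\calG_\calH$ are functors on $\kk$ compatible with those classes. I would rely on Miller's results for $\ind_\Lambda$ and on the standard restriction functor for $res^\calG_\calH$. If this becomes delicate, the fallback is to compare Kasparov cycles directly: transport a cycle $(E,\pi,T)$ via $E|_\calH\cong\ind_\Lambda E$, with the same formula for modules, and check that it matches the cutoff-averaged operator $\ind_{\Lambda,c}T$ up to compact perturbation.
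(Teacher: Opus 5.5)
Your proposal is correct and follows the same route as the paper. You verify that $\upsilon_A$ is a well-defined $\calH$-equivariant $*$-isomorphism, using Lemma~\ref{continuity of groupoid action}, $\calG$-equivariance and the identification $\Lambda/\calG\cong\calH\units$ via $r$; you then check naturality on equivariant $*$-homomorphisms and extend it to all of $\kk$ by the decomposition property of \cite{lafforgue2007k}, while also spelling out the inverse map and the fibrewise $\calH$-equivariance computation that the paper leaves as ``easy to check''.
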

\begin{proof}
    For any $\lambda\in \Lambda$, $a\in A|_\calH$, $\alpha_{\lambda\inv}(a(r(\lambda)))\in A_{s(\lambda)}=A_{\sigma_\Lambda(\lambda)}$. Lemma \ref{continuity of groupoid action} implies that $\upsilon_A(a)$ is a continuous bounded section in $\Gamma_b(\Lambda,\sigma_\Lambda^*\ca A)$. So for any $\gamma \in \calG$ such that $\lambda$ and $\gamma\inv$ are composable,
    \[\upsilon_A(a)(\lambda\gamma\inv)=\alpha_{\gamma\lambda\inv}(a(r(\lambda \gamma\inv)))=\alpha_\gamma(\upsilon_A(a)(\lambda)).\]
    And $\Lambda/\calG$ can be canonically identified with $\calH\units$ by $\bar r:\lambda\calG\mapsto r(\lambda)$. The map $\lambda\calG\mapsto \mathbb R, \lambda\calG\mapsto \|\upsilon_A(\lambda)\|=\|a(r(\lambda)\|$ is identified with the map $\calH\units\ra\mathbb R, x\mapsto \|a(x)\|$, which vanishes at infinity. In conclusion $\upsilon_A(a)$ is a well-defined element in $\ind_\Lambda A$. It is easy to check that $\upsilon_A$ is an $\calH$-equivariant *-isomorphism.

    For any $\calG$-equivariant *-homomorphism $\phi:A\ra B$, where $(A,\alpha), (B,\beta)\in \kk^\calG$, for any $a\in A|_\calH$ and $\lambda\in A$,
    \begin{align*}
        \upsilon_B(\phi(a))(\lambda) & =\beta_{\lambda\inv}(\phi(a)(r(\lambda)))\\
        & = \beta_{\lambda\inv}\circ \phi_{r(\lambda)}(a(r(\lambda)))\\
        & =\phi_{r(\lambda)}\circ \alpha_{\lambda\inv}(a(r(\lambda)))\\
        & = (\ind_\Lambda \phi)(\upsilon_A(a))(\lambda).
    \end{align*}
    That is, the following diagram of $\calH$-equivariant *-homomorphisms commutes.
    \[
    \xymatrix{
    A|_\calH \ar[r]^{\upsilon_A}\ar[d]_{res^\calG_\calH(\phi)} &\ind_\Lambda A \ar[d]^{\ind_\Lambda \phi}\\
    B|_\calH \ar[r]^{\upsilon_B} & \ind_\Lambda B
    }
    \]
    So $\upsilon$ is a natural transformation for categories whose morphisms are equivariant *-homomorphisms. Using the decomposition property (theorem A 2.2 of \cite{lafforgue2007k}), $\upsilon$ is therefore a natural transformation for $\kk$-categories.
\end{proof}

\begin{corr}\label{induction-restriction adj}
    Use the same notation as above, $\ind_\Omega:\kk^\calH\ra \kk^\calG$ is left adjoint to $res^\calG_\calH:\kk^\calG\ra \kk^\calH$. 
\end{corr}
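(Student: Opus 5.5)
The plan is to combine Proposition \ref{relatively clopen subgrpd become example of adjunction}, Proposition \ref{adjointness of induction functors} and Lemma \ref{res-ind equiv}. By Proposition \ref{relatively clopen subgrpd become example of adjunction}, the quadruple $(\Omega,\Lambda,\eta,\epsilon)$ with $\Omega=\calG_{\calH\units}$ and $\Lambda=\calG^{\calH\units}$ is an internal adjunction in $\frgr$. The pseudofunctor of Theorem \ref{pseudofunctor} preserves internal adjunctions, so Proposition \ref{adjointness of induction functors} gives an internal adjunction $\ind_\Omega\dashv\ind_\Lambda$ in $\mathfrak{KK}$. Its unit is $\bar\eta=\varphi_{\Omega,\Lambda}\inv\circ\ind_\eta\circ\psi_\calH\inv$ and its counit is $\bar\epsilon=\psi_\calG\circ\ind_\epsilon\circ\varphi_{\Lambda,\Omega}$, written with the composition order and the $\Omega\circ\Lambda$ versus $\Lambda\circ\Omega$ convention of that proposition.

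Next, I would transport this adjunction along the natural isomorphism $\upsilon:res^\calG_\calH\ra\ind_\Lambda$ of Lemma \ref{res-ind equiv}. This is a purely formal categorical step. Set the new unit to be $\eta'_A=\bar\eta_A\otimes\ind_\Omega$-compatible composite $\bar\eta_A\otimes[\upsilon_{\ind_\Omega A}]\inv\in\kk^\calH(A,(\ind_\Omega A)|_\calH)$, and the new counit to be $\epsilon'_B=\ind_\Omega([\upsilon_B])\otimes\bar\epsilon_B\in\kk^\calG(\ind_\Omega(B|_\calH),B)$. Naturality of $\eta'$ and $\epsilon'$ follows from naturality of $\bar\eta$, $\bar\epsilon$ and $\upsilon$ together with functoriality of $\ind_\Omega$, which is Miller's result recalled above. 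The triangle identities for $(\eta',\epsilon')$ reduce to those for $(\bar\eta,\bar\epsilon)$ after inserting $\upsilon\otimes\upsilon\inv=1$. Concretely, $\ind_\Omega(\eta'_A)\otimes\epsilon'_{\ind_\Omega A}$ becomes $\ind_\Omega(\bar\eta_A)\otimes\bar\epsilon_{\ind_\Omega A}=1$ once the two $\upsilon$ factors cancel by functoriality. For the other identity, naturality of $\upsilon$ with respect to $\epsilon'_B$ moves $\upsilon$ through, which reduces it to $\bar\eta_{\ind_\Lambda B}\otimes\ind_\Lambda(\bar\epsilon_B)=1$.

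This makes $(\ind_\Omega,res^\calG_\calH,\eta',\epsilon')$ an internal adjunction in $\mathfrak{KK}$, which is the claim. Proposition \ref{adjunction in KK} then gives the hom-set bijection $\kk^\calG(\ind_\Omega A,B)\cong\kk^\calH(A,B|_\calH)$ as a consequence.

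The step needing most care is keeping track of the composition and adjunction conventions for horizontal composites. The paper writes $\Omega\circ\Lambda$ in the opposite order to Miller, and $\otimes$ is diagrammatic. Both affect which of $\varphi_{\Omega,\Lambda}$ and $\varphi_{\Lambda,\Omega}$ appears in the unit and which in the counit. I expect the left/right roles of $\Omega$ and $\Lambda$ to come out correctly once the sources and targets are matched; no analytic difficulty is expected.
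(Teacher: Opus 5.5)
Your proposal is correct and is essentially the paper's argument. The corollary comes from three ingredients: the internal adjunction of Proposition \ref{relatively clopen subgrpd become example of adjunction}, its image $\ind_\Omega\dashv\ind_\Lambda$ under the pseudofunctor (Proposition \ref{adjointness of induction functors}), and the natural isomorphism $\upsilon:res^\calG_\calH\ra\ind_\Lambda$ of Lemma \ref{res-ind equiv}. Your explicit transport of unit and counit along $\upsilon$ spells out a step the paper leaves implicit; note that the second triangle identity also needs naturality of $\bar\eta$ applied to $[\upsilon_B]$.

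As you anticipated, the correctly typed unit involves $\varphi_{\Lambda,\Omega}\inv$ (because $\eta:\calH\ra\Lambda\circ\Omega$) and the counit involves $\varphi_{\Omega,\Lambda}$, matching the paper's later formula for $\bar\eta_A$, so this is only bookkeeping.
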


The compression isomorphism in theorem \ref{compression isomorphism} can be even reconstructed using these data. We need no longer the properness of $\calH$, and indeed the properness of $\Omega\rtimes\calH$ suffices.

\begin{thm}
    Let $\calG$ be a second countable \'etale groupoid, $\calH$ be a relatively clopen subgroupoid, $\Omega:\calG\leftarrow \calH$ be $\calG_{\calH\units}$. If $A$ is a separable $\calH$-\cst-algebra and $B$ is a separable $\calG$-\cst-algebra, then
    \[comp_\calH^\calG:\kk^\calG(\ind_\Omega A, B)\ra \kk^\calH(A,B|_\calH)\]
    is an isomorphism.
\end{thm}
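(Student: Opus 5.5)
The plan is to deduce the theorem from the adjunction of Corollary \ref{induction-restriction adj}, via Proposition \ref{adjunction in KK}, and then to check that the resulting bijection coincides with $comp^\calG_\calH$. Proposition \ref{relatively clopen subgrpd become example of adjunction} gives an internal adjunction $(\Omega,\Lambda,\eta,\epsilon)$ in $\frgr$, with $\Lambda=\calG^{\calH\units}$. Proposition \ref{adjointness of induction functors} transports it to an adjunction $\ind_\Omega\dashv\ind_\Lambda$ in $\mathfrak{KK}$, with unit
\[\bar\eta_A=[\psi_{\calH,A}\inv]\otimes\ind_{\eta,A}\otimes[\varphi_{\Omega,\Lambda,A}\inv].\]
Composing with the natural isomorphism $\upsilon$ of Lemma \ref{res-ind equiv} gives an adjunction $\ind_\Omega\dashv res^\calG_\calH$ with unit $\bar\eta_A\otimes[\upsilon_{\ind_\Omega A}\inv]$. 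Proposition \ref{adjunction in KK} then says that
\[x\mapsto \bar\eta_A\otimes[\upsilon_{\ind_\Omega A}\inv]\otimes res^\calG_\calH(x)\]
is a bijection $\kk^\calG(\ind_\Omega A,B)\ra\kk^\calH(A,B|_\calH)$; here naturality of $\upsilon$ lets us replace $\ind_\Lambda(x)$ by $res^\calG_\calH(x)$ conjugated by $\upsilon$. Only the degree $0$ case needs this argument. The graded case follows by replacing $B$ with $B\otimes C_0(\mathbb R^n)$, which is compatible with restriction and induction.

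It remains to identify this map with $comp^\calG_\calH=i_A^*\circ res^\calG_\calH$. Since the unit is a single class, it is enough to show
\[\bar\eta_A\otimes[\upsilon_{\ind_\Omega A}\inv]=[i_A]\in\kk^\calH(A,(\ind_\Omega A)|_\calH).\]
Here $(\ind_\Omega A)|_\calH$ is identified with $\ind_{\calG^{\calH\units}_{\calH\units}}A$ as in \cite[Lemma 3.21]{bonicke2020going}. Each factor is explicit. The class $\psi_{\calH,A}\inv$ is the obvious isomorphism $A\cong\ind_\calH A$. The map $\eta\colon\calH\ra\Lambda\circ\Omega$, $h\mapsto[h,s(h)]$, is injective and bi-equivariant, and by Lemma \ref{basic property of 2-cells} it is a local homeomorphism. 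I would check that it is an open embedding: its image $\{[h,x]\}$ is identified with $\calH\subseteq\calG^{\calH\units}_{\calH\units}\cong\Lambda\circ\Omega$, which is open. Remark \ref{special case of ind 2-cells} then says that $\ind_{\eta,A}$ is the class of the extension-by-zero $*$-homomorphism $\iota_A\colon\ind_\calH A\ra\ind_{\Lambda\circ\Omega}A$.

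The remaining factors $\varphi\inv$ and $\upsilon\inv$ are explicit $*$-isomorphisms. So the whole composite is the class of one $\calH$-equivariant $*$-homomorphism $A\ra(\ind_\Omega A)|_\calH$, and I would evaluate it pointwise. Starting from $a\in A$, one gets the section $h\mapsto\alpha_{h\inv}(a(r(h)))$ on $\calH$, extended by zero. Tracing it through $\varphi\inv_{\Lambda,\Omega,A}$ and $\upsilon\inv$ should give exactly the formula for $i_A(a)$: namely $\alpha_{\gamma\inv}(a(r(\gamma)))$ for $\gamma\in\calH$ and $0$ otherwise. The isomorphism $\Lambda\circ\Omega\cong\calG^{\calH\units}_{\calH\units}$, $[\lambda,\omega]\mapsto\lambda\omega$, sends the image of $\eta$ onto $\calH$.

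The main obstacle is this bookkeeping. One has to make sure that the abstract unit produced by \cite[Proposition 6.1.7]{johnson20212} composed with $\upsilon$, the restriction functor, and the identification of \cite[Lemma 3.21]{bonicke2020going} really match as equivariant $*$-homomorphisms, with the correct direction of each isomorphism. The other point to check is that $\eta$ is open, which is where relative clopenness enters: $\Omega\rtimes\calH$ must be proper so that $\Omega$ is a 1-cell, and $\calH$ must be open in $\calG$. Properness of $\calH$ itself is not used anywhere.
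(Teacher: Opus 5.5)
Your proposal is correct and is essentially the paper's proof. The paper likewise obtains $\ind_\Omega\dashv\ind_\Lambda$ from the internal adjunction in $\frgr$, converts $\ind_\Lambda$ into $res^\calG_\calH$ via $\upsilon$, and reduces the theorem to the identity $[i_A]\otimes[\upsilon_{\ind_\Omega A}]=\bar\eta_A$. The only cosmetic difference is in how that identity is checked: the paper factors the open inclusion $\iota\colon\calH\hookrightarrow\calG^{\calH\units}_{\calH\units}$ as $f\circ\eta$ with $f([\lambda,\omega])=\lambda\omega$ and uses functoriality of $\ind$ on 2-cells, while you treat $\eta$ directly as an open embedding and compare the resulting $*$-homomorphisms pointwise. (The open-embedding property is automatic, since $\eta$ is an injective local homeomorphism, so your extra check on its image is not needed.)
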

\begin{proof}
    Let $\Lambda=\calG^{\calH\units}$. By proposition \ref{adjointness of induction functors} and proposition \ref{relatively clopen subgrpd become example of adjunction}, $\ind_\Omega$ is left adjoint to $\ind_\Lambda$, therefore by proposition \ref{adjunction in KK} the following composite is an isomorphism
    \[\kk^\calG(\ind_\Omega A,B)\xrightarrow{\ind_\Lambda}\kk^{\calH}(\ind_\Lambda\ind_\Omega A,\ind_\Lambda B)\xrightarrow{\bar\eta_A\otimes-}\kk^\calH(A,\ind_\Lambda B),\]
    where $\bar\eta_A=[\psi_\calH\inv]\otimes\ind_{\eta,A}\otimes[\varphi_{\Lambda,\Omega,A}\inv] \in \kk^\calH(A,\ind_\Lambda\ind_\Omega A)$.

    If we can show that the following diagram is commutative, then the compression map will be equivalent to this isomorphism.
    \[\xymatrix{
    \kk^\calG(\ind_\Omega A,B) \ar[r]^{\hspace{-0.5cm}\ind_\Lambda} \ar[dr]_{res^\calG_\calH} & \kk^{\calH}(\ind_\Lambda\ind_\Omega A,\ind_\Lambda B) \ar[r]^{\hspace{0.8cm}\bar \eta_A\otimes -} \ar[d]^{[\upsilon_{\ind_\Omega A}]\otimes -\otimes [\upsilon_B\inv]} & \kk^\calH(A,\ind_\Lambda B)\ar[d]^{-\otimes [\upsilon_B\inv]}\\
    & \kk^\calH(\ind_{\calG^{\calH\units}_{\calH\units}}A, B|_{\calH}) \ar[r]_{\hspace{0.8cm}[i_A]\otimes- } & \kk^\calH(A,B|_\calH)
    }\]
    And the left triangle commutes because of lemma \ref{res-ind equiv}. Hence, it suffices to show that the right square commutes, for which we just need to prove that $[i_A]\otimes[\upsilon_{\ind_\Omega A}]=\bar\eta_A$.

    Notice that $\Lambda\circ \Omega$ is $\calH,\calH$-homeomorphic to $\calG^{\calH\units}_{\calH\units}$ through the map $f: [\lambda,\omega]\mapsto \lambda \omega$. Let $\iota:\calH\ra \calG^{\calH\units}_{\calH\units}$ be the inclusion, then $f\circ \eta=\iota$. Therefore, $\ind_{\iota,A}=\ind_{\eta,A}\otimes \ind_{f,A}$.

    The map $\iota$ is an open inclusion, $\ind_{\iota,A}$ is induced by the $\calG$-equivariant inclusion $\ind_{\calH}A\ra\ind_{\calG^{\calH\units}_{\calH\units}}A$ as mentioned in remark \ref{special case of ind 2-cells}. So $[i_A]=[\psi_\calH]\inv\otimes \ind_{\iota,A}$. And $\ind_{\calG^{\calH\units}_{\calH\units}}A$ is identified with $(\ind_\Omega A)|_\calH$,  $\upsilon_{\ind_\Omega A}:\ind_{\calG^{\calH\units}_{\calH\units}}A\ra \ind_\Lambda \ind_\Omega A$ is the map $\upsilon_{\ind_\Omega A}(\eta)(\lambda)(\omega)=[\lambda\inv.\eta|_{\calG^{s(\lambda)}_{\calH\units}}](\omega)=\eta(\lambda \omega)$. Therefore, $(f\inv)^*\circ\upsilon_{\ind_\Omega A}=\varphi_{\Lambda,\Omega,A}\inv$. So $\ind_{f,A}\otimes [\upsilon_{\ind_\Omega A}]=[\varphi_{\Lambda,\Omega,A}\inv]$.

    In conclusion,
    \begin{align*}
        [i_A]\otimes[\upsilon_{\ind_\Omega A}] & = [\psi_\calH\inv]\otimes \ind_{\iota,A}\otimes [\upsilon_{\ind_\Omega A}]\\
        & = [\psi_\calH\inv]\otimes \ind_{\eta,A}\otimes \ind_{f,A}\otimes [\upsilon_{\ind_\Omega A}]\\
        & = [\psi_\calH\inv]\otimes \ind_{\eta,A}\otimes [\varphi_{\Lambda,\Omega,A}\inv] = \bar\eta_A.
    \end{align*}
    We proved our claim.
\end{proof}

\begin{corr}
    Use the same notation as above, for any $n\in \mathbb Z$
    \[comp^\calG_\calH:\kk_n^\calG(\ind_\Omega A, B)\xrightarrow{res^\calG_\calH} \kk_n^\calH(\ind_{\calG^{\calH\units}_{\calH\units}}A,B|_\calH)\xrightarrow{i_A^*}\kk_n^\calH(A,B|_\calH)\]
    is also an isomorphism.
\end{corr}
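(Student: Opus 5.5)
The plan is to deduce the graded statement from the degree-zero theorem just proved by a suspension argument. Recall that $\kk^\calG_n(A,B)$ may be defined as $\kk^\calG_0(A,B\otimes C_0(\mathbb R^n))$ for $n\geqslant 0$, or as $\kk^\calG_0(A\otimes C_0(\mathbb R^{-n}),B)$ for $n<0$, with $\calG$ acting trivially on the $C_0(\mathbb R^k)$ factor. By Bott periodicity it suffices to treat $n=0$ and $n=1$, and the case $n=1$ is equally well written as $\kk^\calG_0(\ind_\Omega A, SB)$ with $SB=B\otimes C_0(\mathbb R)=C_0(\mathbb R, B)$.

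For $n\geqslant 0$ I apply the previous theorem with $B$ replaced by the separable $\calG$-\cst-algebra $B\otimes C_0(\mathbb R^n)$. Two facts are needed. The first is that restriction commutes with this tensoring: $(B\otimes C_0(\mathbb R^n))|_\calH = B|_\calH\otimes C_0(\mathbb R^n)$ canonically. The second is that $res^\calG_\calH$ and $i_A^*$ are compatible with the identifications defining $\kk_n$. Both are immediate. Restriction is a functor on cycles which does nothing to the $C_0(\mathbb R^n)$ factor, and $i_A^*$ only acts on the first variable. So the degree-$n$ compression map coincides with the degree-zero compression map for the pair $(A, B\otimes C_0(\mathbb R^n))$, and that map is an isomorphism by the theorem.

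For $n<0$, suspension now lands on the first variable. I would use the isomorphism $\ind_\Omega(A\otimes C_0(\mathbb R^k))\cong (\ind_\Omega A)\otimes C_0(\mathbb R^k)$, which holds because induction is $C_0$-linear in a trivially acted factor. I also need $i_{A\otimes C_0(\mathbb R^k)} = i_A\otimes \mathrm{id}$ under this identification. That is a pointwise check from the formula defining $i_A$ together with $\ind_{\calG^{\calH\units}_{\calH\units}}(A\otimes C_0(\mathbb R^k))\cong \ind_{\calG^{\calH\units}_{\calH\units}}A\otimes C_0(\mathbb R^k)$. With these in hand, the degree-$n$ compression map becomes the degree-zero compression map for the pair $(A\otimes C_0(\mathbb R^k),B)$, with $k=-n$, and the theorem applies again.

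The only real content is this compatibility of $\ind$ and $i_A$ with external tensoring by $C_0(\mathbb R^k)$, so that is the step I would write out. Everything else reduces to naturality of restriction and of the Kasparov product with respect to these suspension isomorphisms.
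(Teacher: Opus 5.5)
Your proposal is correct and is essentially the paper's argument. The paper's one-line proof is your $n<0$ step: it identifies $\ind_\Omega(A\otimes C_0(\mathbb R)^{\otimes m})$ with $\ind_\Omega A\otimes C_0(\mathbb R)^{\otimes m}$ and applies the degree-zero theorem to $A\otimes C_0(\mathbb R)^{\otimes m}$, since the paper puts the suspension in the first variable for every $n\neq 0$. Under that convention your separate $n\geqslant 0$ case (replacing $B$ by $B\otimes C_0(\mathbb R^n)$) is not needed, though it is harmless, and your check that $i_{A\otimes C_0(\mathbb R^k)}=i_A\otimes\mathrm{id}$ makes explicit a compatibility the paper leaves implicit.
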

\begin{proof}
    We can identify $\ind_\Omega A\otimes C_0(\mathbb R)^{\otimes m}$ with $\ind_\Omega(A\otimes C_0(\mathbb R)^{\otimes m})$ as $\calG$-\cst-algebras for any $m\geqslant0$.
\end{proof}

\subsection{Restriction-induction adjunction}

    Based on the same idea, we can prove that in some case the restriction functor is left adjoint to the induction functor (therefore it is an ambidextrous adjunction $res^\calG_\calH\dashv \ind^\calG_\calH\dashv res^\calG_\calH$).
    
    \begin{prop}
        Let $\calG$ be a second countable \'etale groupoid, $\calH$ be a relatively clopen subgroupoid. If the map $\tilde r:\calG_{\calH\units}/\calH\ra\calG\units, \omega\calH\mapsto r(\omega)$ is proper, then we will have an isomorphism
    \[\kk^{\calG}(A, \ind_{\calG_{\calH\units}} B)\cong \kk^\calH(A|_\calH,B)\]
    for all $A\in \kk^\calG$, $B\in \kk^\calH$.
    \end{prop}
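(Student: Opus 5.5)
The natural route is to mirror the previous subsection: exhibit an internal adjunction in $\frgr$ with the roles of $\Omega=\calG_{\calH\units}$ and $\Lambda=\calG^{\calH\units}$ exchanged. We then push it to $\mathfrak{KK}$ through the pseudofunctor of theorem \ref{pseudofunctor} and identify $\ind_\Lambda$ with $res^\calG_\calH$ via lemma \ref{res-ind equiv}.

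Concretely, we want $\Lambda\dashv\Omega$ in $\frgr$. The unit is a $\calG,\calG$-equivariant map
\[\eta':\calG\ra \Omega\circ\Lambda,\]
and the counit is an $\calH,\calH$-equivariant map
\[\epsilon':\Lambda\circ\Omega\cong \calG^{\calH\units}_{\calH\units}\ra \calH.\]
For the counit, the candidate is a retraction onto $\calH$. This exists at the level of $\ind$ rather than of spaces, because $\calH$ is clopen in $\calG^{\calH\units}_{\calH\units}$ (corollary \ref{proper subgroupoid is rel clopen}, relative clopenness). So instead of a 2-cell in $\frgr$ I will work directly with $\mathfrak{KK}$ 2-cells. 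For the counit $\ind_\Lambda\ind_\Omega B\cong \ind_{\calG^{\calH\units}_{\calH\units}}B\ra B$, take the $\calH$-equivariant restriction $*$-homomorphism $\xi\mapsto \xi|_\calH$, which lands in $\ind_\calH B\cong B$. This is a genuine $*$-homomorphism precisely because $\calH$ is clopen in $\calG^{\calH\units}_{\calH\units}$, which splits it as $\calH\sqcup(\text{rest})$ invariantly under the right $\calH$-action.

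For the unit $A\ra \ind_\Omega(A|_\calH)\cong \ind_{\Omega}\ind_\Lambda A\cong\ind_{\Omega\circ\Lambda}A$, note that $\Omega\circ\Lambda\ra\calG$, $[\omega,\lambda]\mapsto\omega\lambda$, is the counit $\epsilon$ of proposition \ref{relatively clopen subgrpd become example of adjunction}. It is a bi-equivariant local homeomorphism (lemma \ref{basic property of 2-cells}), and its fibres over $\gamma$ are in bijection with $\tilde r^{-1}(r(\gamma))$. The properness of $\tilde r$ is exactly what makes this map proper, hence a finite covering. For a proper local homeomorphism $g:Z\ra Y$, the Hilbert module $L^2(Z,g)$ is finitely generated in the appropriate sense, so $C_0(Y)$ acts on it by compact operators via $g^*$. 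This gives a class
\[g^!:=[L^2(Z,g),g^*,0]\in\kk^{\calG\ltimes Y}(C_0(Y),C_0(Z)),\]
and tensoring with $\tau$ as in definition \ref{def_wrong way functoriality} yields a transfer $\ind_\calG A\ra\ind_{\Omega\circ\Lambda}A$. Composing with $\psi_\calG^{-1}$ and $\varphi_{\Omega,\Lambda}^{-1}$ gives the unit $\bar\eta'_A\in\kk^\calG(A,\ind_\Omega\ind_\Lambda A)$. Naturality follows as in proposition \ref{ind_f is natural transformation}, using the decomposition property and proposition \ref{functoriality of tau}.

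The remaining work is to verify the two triangle identities in $\mathfrak{KK}$, after which proposition \ref{adjunction in KK} gives $\kk^\calG(A,\ind_\Omega B)\cong\kk^\calH(\ind_\Lambda A,B)\cong\kk^\calH(A|_\calH,B)$. Each triangle composite is the class of an explicit bimodule: an $L^2$-module over a fibre product of $\Omega,\Lambda$ and $\calH$, cut down by the clopen projection onto $\calH$. The plan is to identify it by a unitary equivalence with the identity module, exactly as in lemmas \ref{lax fun constraint part 1} and \ref{lax fun constraint part 2}. That means building $T$ on $\ind_{\cdot,c}$-dense subspaces, checking inner products fibrewise using lemma \ref{canonical bijection between fibers}, and using lemma \ref{to be a unitary}.

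The main obstacle is this triangle verification. Concretely, in the composite $\ind_\Omega B\ra\ind_\Omega\ind_\Lambda\ind_\Omega B\ra\ind_\Omega B$, one must see that the sum over the fibre of $\Omega\circ\Lambda\ra\calG$ produced by the transfer is cut down to exactly one term by restricting to $\calH$. Here the clopen decomposition must be used carefully and equivariantly. If a direct unitary equivalence is messy, my fallback is to prove that the composite is an equivariant $*$-homomorphism equal to the identity on the dense subspace $C_c(\Omega)\diamond B$.
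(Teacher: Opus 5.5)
Your architecture is the paper's, and the counit is right: it is restriction along $\eta:\calH\ra\Lambda\circ\Omega$, i.e. the paper's $\xi\mapsto\xi\circ\eta$. The gap is the unit. You exchange $\Omega=\calG_{\calH\units}$ and $\Lambda=\calG^{\calH\units}$, build the unit from $\epsilon:\Omega\circ\Lambda\ra\calG$ using properness of $\bar\epsilon\cong\tilde r$, and conclude with proposition \ref{adjunction in KK} and lemma \ref{res-ind equiv}.

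The triple $[L^2(Z,g),g^*,0]$ does not lie in $\kk^{\calG\ltimes Y}(C_0(Y),C_0(Z))$. Since $L^2(Z,g)$ is a Hilbert $C_0(Y)$-module, it is a cycle for $\kk(C_0(Y),C_0(Y))$, namely the ``degree'' class $[g^*]\otimes g!$; for a trivial $n$-sheeted covering it equals $n\cdot 1_{C_0(Y)}$. The fibre-sum ($L^2$) construction goes from cover to base, giving $g!\in\kk(C_0(Z),C_0(Y))$. What properness gives in the direction base to cover is the right-way class $[g^*]=[C_0(Z),g^*,0]$, i.e. the plain $*$-homomorphism
\[\ind_\calG A\ra\ind_{\Omega\circ\Lambda}A,\qquad \xi\mapsto\xi\circ\epsilon .\]
It lands in $\ind_{\Omega\circ\Lambda}A$ exactly because $\bar\epsilon$ is proper.

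Because you built the unit as a transfer, you expected the triangle identities to involve a sum over the fibres of $\Omega\circ\Lambda\ra\calG$, cut down to one term by the clopen decomposition, and you left that step open. With the correct unit no sums occur at all. Unit and counit are both of the form $\xi\mapsto\xi\circ f$ for 2-cells $f$ of $\frgr$ with $\bar f$ proper. This assignment is strictly contravariant in $f$ and commutes on the nose with $\varphi$ and $\psi$, by the pointwise computations of propositions \ref{lax associativity} and \ref{lax unity}. Hence each triangle composite is the pullback along the corresponding composite 2-cell, which is the identity by the triangle identities of proposition \ref{relatively clopen subgrpd become example of adjunction}.

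Concretely, for $\xi\in\ind_\Omega B$ the unit gives $(\omega,\lambda,\omega')\mapsto\xi(\omega\lambda\omega')$. Applying $\ind_\Omega$ of the counit evaluates at $\lambda=\omega'=s(\omega)$ and returns $\xi$. The other triangle is the same computation, using $\zeta(\lambda)=\alpha_{\lambda\inv}(\zeta(r(\lambda)))$ for $\zeta\in\ind_\Lambda A$.

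This is the paper's sketch, written out by hand. The paper restricts to the sub-bicategory $\frgr'$ whose 2-cells have proper $\bar f$, and notes that $(\ind',\varphi,\psi)$ with $\ind'_f(\xi)=\xi\circ f$ is a pseudofunctor on its co-bicategory. It then transports the adjunction $(\Omega,\Lambda,\eta,\epsilon)$, which lies in $\frgr'$ because $\eta$ is a closed inclusion and $\bar\epsilon\cong\tilde r$. Contravariance on 2-cells reverses it to $\ind_\Lambda\dashv\ind_\Omega$.

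A minor point: the restriction counit is a well-defined $*$-homomorphism because $\calH$ is closed in $\calG^{\calH\units}_{\calH\units}$, so $\bar\eta$ is proper. Openness of $\calH$ and the resulting splitting play no role there.
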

      
    Sketch of proof: consider the sub-bicategory $\frgr'$ of $\frgr$ such that, for $\calG_1,\calG_2\in B_0$ and $\Omega_1,\Omega_2\in \frgr(\calG_1,\calG_2)$, a $\calG_2,\calG_1$-equivariant map $f:\Omega_1\ra \Omega_2$ is a 2-cell of $\frgr'$ if and only if $\bar f:\Omega_1/\calG_1\ra \Omega_2/\calG_1, \omega\calG_1\mapsto f(\omega)\calG_1$ is a proper map.
    
    Then for such a 2-cell $f$ in $\frgr'$ and $A\in \kk^{\calG_1}$, there is a natural $\calG_2$-equivariant *-homomorphism
    \[\ind_{f,A}':\ind_{\Omega_2}A\ra \ind_{\Omega_1}A, \xi\mapsto \xi\circ f.\]

    Let $\ind'$ be the contravariant local functor that sends every correspondence $\Omega$ to $\ind_{\Omega}$ and sends every 2-cell $f$ in $\frgr'$ to $\ind_{f}'$. Then $(\ind',\varphi,\psi)$ constitute a pseudofunctor from the \textbf{co-bicategory} of $\mathfrak{Gr'}$ to $\mathfrak{KK}$. (The co-bicategory of $\mathfrak{Gr'}$ is a bicategory obtained by reversing all directions of 1-cells in $\mathfrak{Gr'}$, see \cite[Definition 2.6.3]{johnson20212}.)

    Now let $\calH$ be a relatively clopen subgroupoid of $\calG$, $\Omega=\calG_{\calH\units}$, $\Lambda=\calG^{\calH\units}$ and assume that $\tilde r$ is proper. Let $\eta,\epsilon$ be defined in the same way as in proposition \ref{relatively clopen subgrpd become example of adjunction}. Since $\eta:\calH\ra \Lambda\circ \Omega$ is a closed inclusion, it is a 2-cell in $\frgr'$. It is easy to see that $(\Omega\circ \Lambda)/\calG\ra \Omega/\calH, [\omega,\lambda]\calG\mapsto \omega\calH$ and $\calG/\calG\ra \calG\units, \gamma\calG\mapsto r(\gamma)$ are homeomorphisms, and for the map $\epsilon:\Omega\circ \Lambda\ra \calG$, $\bar\epsilon:(\Omega\circ \Lambda)/\calG\ra \calG/\calG$ is equivalent to $\tilde r$. Hence, if $\tilde r$ is proper, $\epsilon$ is a 2-cell in $\frgr'$. Similar to proposition \ref{relatively clopen subgrpd become example of adjunction}, $(\Omega,\Lambda,\eta,\epsilon)$ is an internal adjunction in $\mathfrak{Gr}'$, which implies that $res^\calG_\calH\cong\ind_\Lambda$ is left adjoint to $\ind_\Omega$ (the adjointness is reversed because the local functors here are contravariant.)

\section{Going-down principle and applications}

In this section we develop a general framework that allow us to reduce the problems about the topological K-theory of an \'etale groupoid to the same problems for all its proper open subgroupoids.

\subsection{Going-down functors}
Let $\calG$ be a second countable \'etale groupoid. We denote by $\ca S(\calG)$ the set of all proper open subgroupoids of $\calG$ and $\calG$ itself. We denote by $\ca C(\calG)$ the category whose objects are separable commutative proper $\calG$-\cst-algebras and whose morphisms are $\calG$-equivariant *-homomorphisms. For every $A\in \ca C(\calG)$, there exists a second countable locally compact Hausdorff proper $\calG$-space $Z$ such that $A\cong C_0(Z)$.
\begin{defn}
    Let $\calG$ be a second countable \'etale groupoid. A going-down functor for $\calG$ is a collection of $\mathbb Z$-graded functors $F=(F^n_\calH)_{\calH\in \ca S(\calG)}$, where every $F^n_\calH$ is a contravariant additive functor from $\ca C( \calG)$ to the category $\mathrm{Ab}$ of Abelian groups, such that the following axioms are satisfied:
    \begin{enumerate}
        \item \textbf{Cohomological axioms}: For every $\calH\in \ca S(\calG)$, $n\in \mathbb Z$,
        \begin{enumerate}
            \item the functor $F^n_\calH$ is homotopy invariant;
            \item the functor $F^n_\calH$ is half-exact. That is, a short exact sequence $0\ra I\ra A\ra A/I\ra 0$ in $\ca C(\calG)$ induces that
            \[F^n_\calH(A/I)\ra F^n_\calH(A)\ra F^n_\calH(I)\]
            is exact in the middle;
            \item (suspension) we denote by $\Sigma_\calH: \ca C(\calH)\ra \ca C(\ca H), A\mapsto A\otimes C_0(\mathbb R)$ the suspension functor, where $\calH$ acts trivially on the factor $C_0(\mathbb R)$. Then there is a natural isomorphism 
            \[S^n_\calH: F^{n+1}_\calH\ra F^n_\calH\circ \Sigma_\calH.\]
            
        \end{enumerate}
        \item \textbf{Induction axiom}: for any $\calH_1, \calH_2\in \ca S(\calG)$ such that $\calH_1\subseteq\calH_2$ and $n\in \mathbb Z$, there is a natural isomorphism $I^{\calH_2}_{\calH_1}(n):F^n_{\calH_1}\ra F^n_{\calH_2}\circ \ind_{\calH_1}^{\calH_2}$, compatible with suspension. Here $\ind_{\calH_1}^{\calH_2}:\ca C(\calH_1)\ra \ca C (\calH_2)$ is the induction functor that send $C_0(Z)$ to $\ind_\Omega C_0(Z)\cong C_0(\Omega\times_{\calH_1} Z)$, where $\Omega=({\calH_2})_{\calH_1\units}$ is the correspondence $\calH_2\leftarrow \calH_1$ as mentioned in example \ref{subgroupoid induction functor}.
    \end{enumerate}
\end{defn}

We remark that there is a natural isomorphism $\ind^{\calH_2}_{\calH_1}\circ \Sigma_{\calH_1}\ra \Sigma_{\calH_2}\circ \ind^{\calH_2}_{\calH_1}$, since for every $\calH_1$-space $Z$, there is canonically an $\calH_2$-equivariant homeomorphism $\Omega\times_{\calH_1}(Z\times\mathbb R)\cong (\Omega\times_{\calH_1}Z)\times \mathbb R$.

If $F$ is a going-down functor for $\calG$, we define
\[F^n(\ca G):=\varinjlim_{Z\subseteq \E\calG}F^n_\calG(C_0(Z)),\]
where $\E\calG$ is a model of classifying space for proper actions of $\calG$, $Z$ runs over all $\calG$-compact subspaces of $\E\calG$. The space $\E\calG$ has the following universal property: for any locally compact Hausdorff proper $\calG$- space $Z$, there exists a $\calG$-equivariant continuous map $Z\ra \E\calG$ which is unique up to $\calG$-homotopy. Therefore, $\E\calG$ is unique up to $\calG$-homotopic equivalence. Recall that all $\calG$-equivariant continuous maps between $\calG$-compact spaces are proper by lemma \ref{maps between G-compact sets are proper}.

\begin{defn}
    Let $F, G$ be two going-down functors for a second countable \'etale groupoid $\calG$. A going-down transformation $\Lambda:F\ra G$ is a collection $(\Lambda^n_\calH)_{\calH\in \ca S(\calG)}$ of natural transformations $\Lambda^n_\calH\in \mathrm{Nat}(F^n_\calH, G^n_\calH)$ that is compatible with suspension and induction, i.e. for any elements $\calH, \calH_1\subseteq\calH_2$ of $\ca S(\calG)$ and $n\in \mathbb Z$,  the following diagrams of natural transformations commute.
    \[\xymatrix{
    F^n_{\calH_1} \ar[r]^{\hspace{-0.8cm}I^{\calH_2}_{\calH_1}(n)} \ar[d]_{\Lambda^n_{\calH_1}} & F^n_{\calH_2}\circ \ind_{\calH_1}^{\calH_2} \ar[d]^{\Lambda^n_{\calH_2}\star id} \\
    G^n_{\calH_1} \ar[r]^{\hspace{-0.8cm}I^{\calH_2}_{\calH_1}(n)} & G^n_{\calH_2}\circ \ind_{\calH_1}^{\calH_2}
    }\quad
    \xymatrix{
    F^{n+1}_\calH \ar[r]^{S_\calH^n} \ar[d]^{\Lambda^{n+1}_\calH} & F^n_\calH \circ \Sigma_\calH\ar[d]^{\Lambda^n_\calH\star id} \\
    G^{n+1}_\calH \ar[r]^{S_\calH^n} & G^n_\calH \circ \Sigma_\calH
    }\]
\end{defn}

For a going-down transformation $\Lambda$, we define $\Lambda^n(\calG):F^n(\calG)\ra G^n(\calG)$ as the inductive limit of $\Lambda^n_\calG(C_0(Z)): F^n_\calG(C_0(Z))\ra G^n_\calG(C_0(Z))$ over all $\calG$-compact subsets $Z$ of $\E\calG$.

\begin{lem}\label{going-down functor long exactness}
    Let $F$ be a going-down functor for a second countable \'etale groupoid $\calG$, $\calH\in \ca S(\calG)$. For every short exact sequence
    \[0\ra I\xrightarrow{i} A\xrightarrow{q} A/I\ra 0\]
    is $\ca C(\calH)$, there are natural boundary maps $\partial_n : F^n_\calH(I)\ra F^{n+1}_\calH(A/I)$, such that there is a long exact sequence
    \[\cdots \ra F^n_\calH(A/I)\xrightarrow{F^n_\calH(q)} F^n_\calH(A)\xrightarrow{F^n_\calH(i)} F^n_\calH(I)\xrightarrow{\partial_n}F^{n+1}_\calH(A/I)\ra \cdots.\]

    Moreover, if $\Lambda:F\ra G$ is a going-down transformation, then the following diagram commutes.
    \[
    \xymatrix{
    \cdots \ra F^n_\calH(A/I) \ar[r] \ar[d]^{\Lambda^n_\calH(A/I)}  & F^n_\calH(A) \ar[r] \ar[d]^{\Lambda^n_\calH(A)} & F^n_\calH (I) \ar[r]^{\hspace{-0.5cm}\partial_n} \ar[d]^{\Lambda^n_\calH(I)} & F^{n+1}_\calH(A/I)\ra \cdots \ar[d]^{\Lambda^{n+1}_\calH(A/I)} \\
    \cdots \ra G^n_\calH(A/I) \ar[r] & G^n_\calH(A) \ar[r] & G^n_\calH(I) \ar[r]^{\hspace{-0.5cm}\partial_n} & G^{n+1}_\calH(A/I) \ra \cdots
    }\]
\end{lem}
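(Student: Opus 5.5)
The plan is the standard Eilenberg--Steenrod argument: first build a long exact sequence for $\mathbb Z$-graded half-exact homotopy invariant functors with suspension isomorphisms, as in the classical $KK$ or $E$-theory setting (cf. Blackadar, Section 21), working throughout inside $\ca C(\calH)$, i.e. with proper commutative $\calH$-algebras. The first point to check is that every construction used stays in this category. Mapping cones and mapping cylinders of $\calH$-equivariant $*$-homomorphisms between proper commutative $\calH$-\cst-algebras are again commutative and separable. They are also proper, since they are $C_0$ of closed or open $\calH$-invariant subsets of spaces such as $Z\times[0,1]$ or $Z\times\mathbb R$, and $\calH$ acts properly on these by proposition \ref{groupoid action-proper action base change}. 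The suspension $\Sigma_\calH$ likewise preserves $\ca C(\calH)$.

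The key steps, in order, are as follows.

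\begin{enumerate}
    \item Let $C_q=\{(a,f)\in A\oplus C_0([0,1),A/I): f(0)=q(a)\}$ be the mapping cone of $q$. The inclusion $e:I\to C_q$, $x\mapsto (x,0)$, induces an isomorphism $F^n_\calH(C_q)\cong F^n_\calH(I)$. To prove this, use half-exactness on the sequences $0\to \Sigma (A/I)\to C_q\to A\to 0$ and $0\to I\to C_q\to C_0([0,1),A/I)\to 0$, together with homotopy invariance applied to the contractible algebra $C_0([0,1),A/I)$ (its $F$ vanishes because it is homotopy equivalent to $0$).
    \item Combine half-exactness with cone and cylinder constructions, following the Puppe-sequence argument. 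This produces the exactness $F(\Sigma B)\to F(C_\phi)\to F(A)\to F(B)$ for every morphism $\phi:A\to B$ in $\ca C(\calH)$.
    \item Define $\partial_n$ as the composite
    \[F^n_\calH(I)\cong F^n_\calH(C_q)\xrightarrow{F^n_\calH(j)}F^n_\calH(\Sigma(A/I))\cong F^{n+1}_\calH(A/I),\]
    where $j:\Sigma(A/I)\to C_q$ is the inclusion, the first isomorphism is the inverse of $F^n_\calH(e)$ from step 1, and the last isomorphism is $(S^n_\calH)^{-1}$.
    \item Splice the exact pieces from step 2 into the long exact sequence, using the suspension isomorphisms to shift degrees. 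Exactness at each spot is either half-exactness or step 2.
\end{enumerate}

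Naturality of $\partial_n$ comes from the functoriality of mapping cones with respect to morphisms of short exact sequences.

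For the second assertion, $\Lambda^n_\calH$ is natural, so it commutes with the maps $F^n_\calH(e)$, $F^n_\calH(j)$, $F^n_\calH(q)$ and $F^n_\calH(i)$. By the suspension compatibility in the definition of a going-down transformation, it also commutes with $S^n_\calH$. Since $\partial_n$ is built entirely from these maps, each square in the ladder commutes.

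I expect the main obstacle to be step 1, the excision isomorphism $F(C_q)\cong F(I)$. Half-exact functors need not be exact in the classical sense, so this requires the usual trick of comparing $I$, $C_q$ and the cone of $q$ restricted to $I$. That comparison involves a few homotopies, and each one must be shown to be $\calH$-equivariant and to stay inside the proper commutative category. I would follow Blackadar's proof of Theorem 21.4.3 line by line and, at each homotopy, check that the underlying space is an $\calH$-invariant locally closed subset of $Z\times\mathbb R^k$ with the diagonal action.
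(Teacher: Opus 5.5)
Your proposal follows essentially the same route as the paper: the same mapping cone $C_q$, the same maps $e$ and $j$, and the same boundary map $\partial_n=S^n_\calH(A/I)^{-1}\circ F^n_\calH(j)\circ F^n_\calH(e)^{-1}$, with exactness from the standard homotopy argument of Blackadar, Section 21.4, and the ladder commuting by naturality of $\Lambda^n_\calH$ plus compatibility with suspension. Two small remarks: since $F^n_\calH$ is contravariant, the Puppe sequence in your step 2 should read $F(B)\to F(A)\to F(C_\phi)\to F(\Sigma B)$; and the two half-exact sequences in your step 1 only give injectivity of $F^n_\calH(e)$, so surjectivity is exactly the extra homotopy argument that you, like the paper, defer to Blackadar.
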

\begin{proof}
The exactness can be proved by following the standard homotopy argument as in \cite[Section 21.4]{blackadar1998k}. Now in order to prove the commutativity of the diagram, we recall the construction of the boundary maps. Let $C_q=\{(a,f)\in A\oplus C_0([0,1),A/I):q(a)=f(0)\}$, $e: I\ra C_q, i\mapsto (i,0)$ is an $\calH$-equivariant *-homomorphism. We have an exact sequence in $\ca C(\calH)$,
\[0\ra I\xrightarrow{e} C_q\ra C_0([0,1),A/I)\ra 0.\]

Since $C_0([0,1),A/I)$ is contractible, we can prove that $F^n_\calH(e):F^n_\calH(C_q)\ra F^n_\calH(I)$ is an isomorphism by using the same method in \cite[Proposition 21.4.1]{blackadar1998k}, 

Define $j:\Sigma_\calH(A/I)\cong C_0((0,1),A/I)\ra C_q, f\mapsto (0,f)$, which is a well-defined $\calH$-equivariant *-homomorphism. The boundary map $\partial_n:F^n_\calH(I)\ra F^{n+1}_\calH(A/I)$ is defined as the composite $S^n_\calH(A/I)\inv\circ F^n_\calH(j)\circ F^n_\calH(e)\inv$.
Now in the diagram, the left two squares commute because of naturality of $\Lambda^n_\calH$. The right square commutes because of naturality of $\Lambda^n_\calH$ and compatibility of $\Lambda$ with suspension.
\end{proof}

\begin{corr}[Mayer-Vietoris] \label{going-down MV-seq}
Let $F$ be a going-down functor for a second countable \'etale groupoid $\calG$, $\calH\in \ca S(\calG)$. For a pullback diagram
\[\xymatrix{
A \ar[r]^{g_1}\ar[d]^{g_2} & A_1\ar[d]^{f_1}\\
A_2 \ar[r]^{f_2} & B
}
\]
in $\ca C(\calH)$ such that $f_1,f_2$ are surjective and $A=\{(a_1,a_2)\in A_1\oplus A_2:f_1(a_1)=f_2(a_2)\}$, there is a long-exact sequence
\[\cdots\ra F^n_\calH(B)\ra F^n_\calH(A_1)\oplus F^n_\calH(A_2) \ra F^n_\calH(A)\ra  F^{n+1}_\calH(B)\ra\cdots\]
If $\Lambda: F\ra G$ is a going-down transformation, then the following diagram commutes.
\[\xymatrix{
\cdots \ra   F^n_\calH(B)\ar[r]\ar[d] &  F^n_\calH(A_1)\oplus  F^n_\calH(A_2) \ar[r]\ar[d] &  F^n_\calH(A) \ar[r]\ar[d] &  F^{n+1}_\calH(B)\ra \cdots \ar[d] \\
\cdots \ra  G^n_\calH(B) \ar[r] & G^n_\calH(A_1)\oplus G^n_\calH(A_2) \ar[r] & G^n_\calH(A) \ar[r] & G^{n+1}_\calH(B) \ra \cdots
}
\]
\end{corr}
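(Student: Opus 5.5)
We follow the standard reduction of Mayer--Vietoris to the long exact sequence of Lemma \ref{going-down functor long exactness}. Since $f_1$ is surjective, so is $g_2:A\ra A_2$: given $a_2\in A_2$, pick $a_1\in A_1$ with $f_1(a_1)=f_2(a_2)$, and then $(a_1,a_2)\in A$. The kernel of $g_2$ is $\{(a_1,0):f_1(a_1)=0\}$, which $g_1$ identifies with $\ker f_1$. This gives a morphism of short exact sequences in $\ca C(\calH)$:
\[0\ra \ker f_1\ra A\xrightarrow{g_2} A_2\ra 0,\qquad 0\ra \ker f_1\ra A_1\xrightarrow{f_1} B\ra 0,\]
with vertical maps $\mathrm{id}$, $g_1$ and $f_2$. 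All objects are commutative, proper and separable: an ideal of $C_0(Z)$ is $C_0(U)$ for an invariant open $U\subseteq Z$, and a pullback $A\subseteq A_1\oplus A_2$ is $C_0$ of a closed invariant subset of $Z_1\sqcup Z_2$ glued along $Y$, which is still proper. So the categories are respected.

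Next we apply Lemma \ref{going-down functor long exactness} to both rows. Naturality of the boundary maps with respect to this morphism of extensions is needed, and it follows from the construction in that lemma: the mapping-cone construction $C_q$ is functorial, the maps $e$ and $j$ commute with the induced map $C_{g_2}\ra C_{f_1}$, and $S^n_\calH$ is natural. This yields a ladder of two long exact sequences in which every third vertical map is the identity on $F^n_\calH(\ker f_1)$. The standard algebraic lemma (Barratt--Whitehead, as in the derivation of Mayer--Vietoris in \cite[Section 21.5]{blackadar1998k}) then produces the long exact sequence
\[\cdots\ra F^n_\calH(B)\xrightarrow{(F(f_1),\,-F(f_2))} F^n_\calH(A_1)\oplus F^n_\calH(A_2)\xrightarrow{F(g_1)+F(g_2)} F^n_\calH(A)\xrightarrow{\partial}F^{n+1}_\calH(B)\ra\cdots.\]
Here $\partial$ is the composite $F^n_\calH(A)\ra F^n_\calH(\ker f_1)\xrightarrow{\partial_n} F^{n+1}_\calH(B)$, with the first map induced by the inclusion $\ker f_1 \hookrightarrow A$ and $\partial_n$ the boundary map of the second row.

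For the comparison with $G$, the maps in the Mayer--Vietoris sequence are built only from maps $F^n_\calH(\phi)$ for equivariant $*$-homomorphisms $\phi$ and from the boundary maps of Lemma \ref{going-down functor long exactness}. The first kind commute with $\Lambda$ by naturality of $\Lambda^n_\calH$, and the second kind commute by the second half of that lemma. Hence the ladder commutes.

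The main obstacle is the naturality of $\partial_n$ for morphisms of extensions, which the earlier lemma does not state explicitly. We verify it directly from the definition $\partial_n=S^n_\calH(A/I)^{-1}\circ F^n_\calH(j)\circ F^n_\calH(e)^{-1}$ using the induced map of mapping cones.
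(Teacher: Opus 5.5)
Your proposal is correct and is the standard argument the paper leaves implicit when it states this as a corollary of Lemma \ref{going-down functor long exactness}: the morphism of extensions $(\mathrm{id},g_1,f_2)$, the mapping-cone check that $\partial_n$ is natural (which the lemma asserts but does not verify), and the Barratt--Whitehead lemma together give both the exact sequence and its compatibility with $\Lambda$. One harmless slip is your description of the spectrum of $A$, which is a quotient of $Z_1\sqcup Z_2$ obtained by gluing along $Y$, not a closed subset of it; this does not matter, because $A\in\ca C(\calH)$ is part of the hypothesis and only $\ker f_1$ and the mapping cones need to be checked to lie in $\ca C(\calH)$.
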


The following two lemmas can be seen as analog of proposition \ref{Rips complexes G-homotopy sufficient 1} and proposition \ref{Rips complexes G-homotopy sufficient 2}.

\begin{lem}\label{property of EG}
    Let $X$ be a local compact Hausdorff proper cocompact $\calG$-space, then there exists a $\calG$-compact subset $Z$ of $\E\calG$, such that there exists a $\calG$-equivariant continuous map $f:X\ra Z$.
\end{lem}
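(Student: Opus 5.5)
The plan is to use the universal property of $\E\calG$ recalled just before the statement, together with Lemma \ref{G-compact iff quotient compact} and Proposition \ref{orbit space of LCH by proper action}. Since $X$ is a locally compact Hausdorff proper $\calG$-space, the universal property gives a $\calG$-equivariant continuous map $\phi:X\ra \E\calG$. It then suffices to show that the image $\phi(X)$ lies inside some $\calG$-compact subset $Z$ of $\E\calG$. The natural choice is $Z=\phi(X)$ itself.

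The key steps are as follows. First, since $X$ is $\calG$-compact, choose a compact subset $K\subseteq X$ with $X=\calG K$. Next, set $Z:=\calG\,\phi(K)$. By equivariance of $\phi$,
\[\phi(X)=\phi(\calG K)=\calG\,\phi(K)=Z.\]
Here one checks that $\phi(\gamma k)=\gamma\phi(k)$ whenever $s(\gamma)=\rho_X(k)$, and that the anchor map of $\E\calG$ composed with $\phi$ equals $\rho_X$, so the two sets coincide. Since $\phi(K)$ is compact, $Z$ is by definition a $\calG$-compact subset of $\E\calG$. It is $\calG$-invariant, and it is closed in $\E\calG$ by Lemma \ref{G-compact subsets are closed}, using that $\E\calG$ is a locally compact Hausdorff proper $\calG$-space. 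Hence $Z$ is a locally compact Hausdorff $\calG$-space in its own right. Finally, $f:=\phi$, regarded as a map $X\ra Z$ with corestricted codomain, is continuous and $\calG$-equivariant.

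The only point needing care is whether "$\calG$-compact subset" in the definition of $F^n(\calG)$ requires $Z$ to be closed and locally compact, so that $C_0(Z)$ makes sense. Lemma \ref{G-compact subsets are closed} settles this, provided $\E\calG$ is taken, as usual, to be a locally compact Hausdorff proper $\calG$-space. Alternatively, one may apply Lemma \ref{G-compact iff quotient compact}: $Z/\calG$ is the image of the compact set $K$ under the continuous map $X\ra \E\calG\ra \E\calG/\calG$, and is therefore compact. I expect no genuine obstacle beyond checking these standing hypotheses on $\E\calG$.
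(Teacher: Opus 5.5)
Your proof is correct and matches the paper's argument: take an equivariant map $X\to\E\calG$ from the universal property, write $X=\calG K$ with $K$ compact, and let $Z$ be the image $\calG\,\phi(K)$. Your added check that $Z$ is closed (hence locally compact) via Lemma \ref{G-compact subsets are closed} is a point the paper leaves implicit.
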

\begin{proof}
    By the universal property, there exists $\calG$-equivariant continuous map $g:Z\ra \E\calG$. Let $K$ be a compact subset of $Z$ such that $Z=\calG K$, then $g(X)=\calG g(K)$ is a $\calG$-compact subset of $\E\calG$. We can take $Z=g(X)$.
\end{proof}

\begin{lem}\label{cofinality in cocompact subsets of EG}
    Let $X$ be a local compact Hausdorff proper cocompact $\calG$-space, $Z_1$ and $Z_2$ be two $\calG$-compact subsets of $\E \calG$ and $f:X\ra Z_1$, $g: X\ra Z_2$ be two $\calG$-equivariant continuous maps. Then there exists a $\calG$-compact subset $Z$ of $\E \calG$ such that $Z\supseteq Z_1\cup Z_2$, and the following diagram commutes up to $\calG$-homotopy.
    \[\xymatrix{
    X \ar[r]^{f} \ar[d]^{g} & Z_1 \ar@{^{(}->}[d] \\
    Z_2 \ar@{^{(}->}[r] & Z
    }\]
\end{lem}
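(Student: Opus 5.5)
The proof will use the universal property of $\E\calG$ twice: once to join $Z_1$ and $Z_2$ inside a single $\calG$-compact subset, and once to produce a $\calG$-homotopy. The homotopy will first be built with values in the cylinder and only afterwards pushed into $\E\calG$, so that everything stays $\calG$-compact.

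First, let $\iota_1:Z_1\hookrightarrow \E\calG$ and $\iota_2:Z_2\hookrightarrow\E\calG$ be the inclusions. Then $\iota_1\circ f$ and $\iota_2\circ g$ are two $\calG$-equivariant continuous maps $X\ra \E\calG$. By uniqueness up to $\calG$-homotopy in the universal property, they are $\calG$-homotopic. Hence there is a $\calG$-equivariant continuous map
\[H:X\times[0,1]\ra \E\calG,\qquad H_0=\iota_1\circ f,\quad H_1=\iota_2\circ g,\]
where $\calG$ acts trivially on $[0,1]$.

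Second, I check that the image of $H$ is $\calG$-compact. The space $X\times[0,1]$ is locally compact Hausdorff. It is proper, by Proposition \ref{groupoid action-proper action base change} or directly from criterion (3) of Proposition \ref{groupoid action-proper action}. It is also $\calG$-compact: if $X=\calG K$ with $K$ compact, then $X\times[0,1]=\calG(K\times[0,1])$. Therefore $H(X\times[0,1])=\calG\, H(K\times[0,1])$ is $\calG$-compact, and by Lemma \ref{G-compact subsets are closed} it is closed in $\E\calG$.

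Third, define $Z:=Z_1\cup Z_2\cup H(X\times[0,1])$. If $Z_i=\calG K_i$ for compact sets $K_i$, then
\[Z=\calG\big(K_1\cup K_2\cup H(K\times[0,1])\big),\]
so $Z$ is a $\calG$-compact subset of $\E\calG$ containing $Z_1\cup Z_2$. The map $H$ corestricts to a $\calG$-homotopy $X\times[0,1]\ra Z$ from $f$ to $g$, where $f$ and $g$ are viewed in $Z$ through the inclusions $Z_1,Z_2\hookrightarrow Z$. This is exactly the claimed $\calG$-homotopy commutativity of the square.

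I expect the main obstacle to be the first step, since it depends on how the universal property of $\E\calG$ is formulated. If uniqueness up to $\calG$-homotopy is stated only for maps from proper spaces that are not assumed cocompact, I still need $X$ itself to be proper, and it is by hypothesis. The one point that needs care is that the homotopy is a $\calG$-homotopy, meaning it is equivariant at every time $t$. I take that as part of the universal property.
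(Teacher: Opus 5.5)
Your proof is correct and is essentially the paper's argument: take the $\calG$-homotopy $H$ between $\iota_1\circ f$ and $\iota_2\circ g$ given by the universal property of $\E\calG$, and set $Z=Z_1\cup Z_2\cup H(X\times[0,1])$. The paper only asserts that $H(X\times[0,1])$ is $\calG$-compact, whereas you verify it via $H(X\times[0,1])=\calG\,H(K\times[0,1])$. Note also that the universal property is actually used only once, despite what your opening sentence says.
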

\begin{proof}
    Let $\iota_1:Z_1\hookrightarrow \E\calG$ and $\iota_2:Z_2\hookrightarrow \E\calG$ be inclusions. By the universal property of $\E\calG$, $\iota_1\circ f$ is $\calG$-homotopic to $\iota_2\circ g$. Let $F:X\times [0,1]\ra \E\calG$ be the $\calG$-homotopy such that $F|_{X\times\{0\}}=\iota_1\circ f$ and $F|_{X\times\{1\}}=\iota_2\circ g$. Then $F(X\times [0,1])$ is $\calG$-compact subset of $\E\calG$. We can take $Z=F(X\times [0,1])\cup Z_1\cup Z_2$.
\end{proof}

The following result with proposition \ref{Rips complexes G-homotopy sufficient 1}, proposition \ref{Rips complexes G-homotopy sufficient 2} implies that the Rips complexes will be a sufficient family of proper $\calG$-compact $\calG$-spaces.

\begin{prop}\label{enough to run over rips complex}
    Let $F,G$ be going-down functors for a second countable \'etale groupoid $\calG$ and $\Lambda:F\ra G$ be a going-down transformation. Then $F^n(\calG)$ is isomorphic to $\varinjlim_{K\subseteq\calG} F^n_\calG(C_0(P_K(\calG))) $, $G^n(\calG)$ is isomorphic to $\varinjlim_{K\subseteq\calG} G^n_\calG(C_0(P_K(\calG))) $ and $\Lambda^n(\calG)$ is equivalent to $\varinjlim_{K\subseteq \calG} \Lambda^n_\calG(C_0(P_K(\calG)))$ (here $K$ runs over compact subsets of $\calG$).
\end{prop}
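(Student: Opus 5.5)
We compare two directed systems. The target is $\varinjlim_{Z\subseteq \E\calG}F^n_\calG(C_0(Z))$, indexed by $\calG$-compact subsets of $\E\calG$ ordered by inclusion. The candidate is $\varinjlim_{K}F^n_\calG(C_0(P_K(\calG)))$, indexed by compact $K\subseteq\calG$ with connecting maps $F^n_\calG(\iota_{K_1,K_2}^*)$. Both functors are contravariant in the algebra, so covariant in the space. Homotopy invariance makes $F^n_\calG(C_0(-))$ a functor on the category of proper $\calG$-compact $\calG$-spaces with $\calG$-homotopy classes of equivariant (automatically proper, by lemma \ref{maps between G-compact sets are proper}) maps. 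We build mutually inverse maps between the limits, working entirely at the level of homotopy classes of maps.

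\textbf{From Rips complexes to $\E\calG$.} By proposition \ref{Rips complex}, each $P_K(\calG)$ is proper and $\calG$-compact. Lemma \ref{property of EG} then gives a $\calG$-compact $Z_K\subseteq\E\calG$ and an equivariant map $f_K:P_K(\calG)\ra Z_K$. Put $\alpha_K=F^n_\calG(f_K^*)$, followed by the canonical map into the colimit over $Z$. Lemma \ref{cofinality in cocompact subsets of EG} shows this is independent of the choices of $Z_K$ and $f_K$, since two choices agree up to homotopy after enlarging $Z$. Applying the same lemma to $f_{K_2}\circ\iota_{K_1,K_2}$ and $f_{K_1}$ shows that the $\alpha_K$ are compatible with the connecting maps, so they induce $\alpha:\varinjlim_K\ra\varinjlim_Z$.

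\textbf{From $\E\calG$ to Rips complexes.} For each $Z$, proposition \ref{Rips complexes G-homotopy sufficient 1} gives a compact $K_Z$ and an equivariant map $\phi_Z:Z\ra P_{K_Z}(\calG)$. Put $\beta_Z=F^n_\calG(\phi_Z^*)$ into the colimit over $K$. Proposition \ref{Rips complexes G-homotopy sufficient 2} gives independence of choices. Applying it to $\phi_{Z_2}\circ(Z_1\hookrightarrow Z_2)$ versus $\phi_{Z_1}$ gives compatibility with inclusions $Z_1\subseteq Z_2$, so the $\beta_Z$ induce $\beta$.

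\textbf{The composites are the identity.} For $\beta\circ\alpha$, the map $\phi_{Z_K}\circ f_K:P_K(\calG)\ra P_{K'}(\calG)$ must be homotopic to $\iota_{K,K''}$ after enlarging to some $K''\supseteq K\cup K'$. This is exactly proposition \ref{Rips complexes G-homotopy sufficient 2}, applied with $Z=P_K(\calG)$, which is legitimate since $P_K(\calG)$ is a proper $\calG$-compact locally compact Hausdorff space. For $\alpha\circ\beta$, the map $f_{K_Z}\circ\phi_Z:Z\ra Z'$ must be homotopic to the inclusion of $Z$ into some larger $Z''$. This follows from lemma \ref{cofinality in cocompact subsets of EG} applied with $X=Z$, which is locally compact Hausdorff because $Z$ is closed in $\E\calG$ by lemma \ref{G-compact subsets are closed}. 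The statements for $G$ follow in the same way.

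\textbf{Compatibility with $\Lambda$.} Each $\Lambda^n_\calG$ is a natural transformation, so $\Lambda^n_\calG(C_0(Z))\circ F^n_\calG(f^*)=G^n_\calG(f^*)\circ\Lambda^n_\calG(C_0(P_K))$. Hence $\alpha$ and $\beta$ intertwine the two limit maps. The only delicate point is bookkeeping: one must check that the homotopies supplied by these results stay inside $\calG$-compact subsets, or inside some larger Rips complex. The cited statements already provide this, so no single step is a real obstacle.
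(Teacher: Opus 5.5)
Your proposal is correct and follows essentially the same route as the paper: both build the two comparison maps from Proposition~\ref{Rips complexes G-homotopy sufficient 1}, Proposition~\ref{Rips complexes G-homotopy sufficient 2}, Lemma~\ref{property of EG} and Lemma~\ref{cofinality in cocompact subsets of EG}, show they are mutually inverse by the same homotopy arguments, and pass to $\Lambda$ by naturality. Two small citation points: the paper first fixes a second countable locally compact Hausdorff model of $\E\calG$ (via Tu), which your appeal to Lemma~\ref{G-compact subsets are closed} implicitly needs; and the properness and $\calG$-compactness of $P_K(\calG)$ come from Propositions~\ref{proper realization is proper} and \ref{simplicial complex-cocompact realisation}, not from Proposition~\ref{Rips complex} alone.
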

\begin{proof}
    By \cite[Proposition 11.4]{tu1999conjecture-bc}, there exists a model of $\E \calG$, which is second countable locally compact Hausdorff. Before all for convenience of writing, when $Z_1\subseteq Z_2$ are $\calG$-compact subsets of $\E\calG$, we use $\iota_{Z_1,Z_2}$ to denote the inclusion $Z_1\hookrightarrow Z_2$.
    
    Firstly for a $\calG$-compact subset $Z$ of $\E\calG$, by proposition \ref{Rips complexes G-homotopy sufficient 1}, there exists a compact $K_0$ of $\calG$ and a $\calG$-equivariant continuous map $f:Z\ra P_{K_0}(\calG)$. We define $\psi_{Z,F^n}$ as the composite
    \[F^n_\calG(C_0(Z))\xrightarrow{F^n_\calG(f^*)} F^n_\calG(C_0(P_{K_0}(\calG)))\ra \varinjlim_{K\subseteq \calG} F^n_\calG(C_0(P_K(\calG))),\]
    by proposition \ref{Rips complexes G-homotopy sufficient 2} and $\calG$-homotopy invariance of $F^n_\calG$, the map $\psi_{Z,F^n}$ does not depend on the choice of $K_0$ and $f$.

    If $Z_1\subseteq Z_2$ are two $\calG$-compact subsets of $\E\calG$, $K_1, K_2$ are compact subsets of $\calG$ and $f_1:Z_1\ra P_{K_1}(\calG)$ and  $f_2:Z_2\ra P_{K_2}(\calG)$ are $\calG$-equivariant continuous maps, then by proposition \ref{Rips complexes G-homotopy sufficient 2}, there exists $K_3\supseteq K_1\cup K_2$ such that the following diagram commutes up to $\calG$-homotopy.
    \[\xymatrix{
    X_1 \ar[r]^{f_1} \ar[d]_{\iota_{Z_1,Z_2}} & P_{K_1}(\calG) \ar[r]^{\iota_{K_1,K_3}} & P_{K_3}(\calG) \\
    X_2 \ar[r]_{f_2} & P_{K_2}(\calG) \ar[ur]_{\iota_{K_2,K_3}}
    }
    \]
    This implies that $\psi_{Z_1,F^n}\circ F^n_\calG(\iota^*_{Z_1,Z_2})=\psi_{Z_2,F^n}$. Therefore, we can define the map
    \[\psi_{F^n}:=\varinjlim_{Z\subseteq\E\calG}\psi_{Z,F^n}:\varinjlim_{Z\subseteq \E\calG}F^n_\calG(C_0(Z))\ra \varinjlim_{K\subseteq \calG} F^n_\calG(C_0(P_K(\calG))).\]

    For any compact subset $K$ of $\calG$, since $P_k(\calG)$ is a proper $\calG$-compact $\calG$-space, by lemma \ref{property of EG}, there exists a $\calG$-compact subset $Z_0$ of $\E\calG$ and a $\calG$-equivariant continuous map $g:P_K(\calG)\ra Z_0$. We define $\phi_{K,F^n}$ as the composite
    \[F^n_\calG(C_0(P_K(\calG)))\xrightarrow{F^n_\calG(g^*)} F^n_\calG(C_0(Z_0))\ra \varinjlim_{Z\subseteq \E\calG}F^n_\calG(C_0(Z)).\]
    Similarly, use proposition \ref{cofinality in cocompact subsets of EG}, the map $\phi_{K,F^n}$ does not depend on choice of $Z_0$ and $g$, and $\phi_{K_1,F^n}\circ F^n_\calG(\iota_{K_1,K_2}^*)=\phi_{K_2,F^n}$ for any $K_1\subseteq K_2$ compact subsets of $\calG$. We define
    \[\phi_{F^n}:=\varinjlim_{K\subseteq \calG}\phi_{K,F^n}:\varinjlim_{K\subseteq \calG} F^n_\calG(C_0(P_K(\calG)))\ra \varinjlim_{Z\subseteq \E\calG}F^n_\calG(C_0(Z)).\]
    
    Claim: $\psi_{F^n}$ are $\phi_{F^n}$ are inverse to each other. Let $Z$ be any $\calG$-compact subset of $\E\calG$, let $K$ be a compact subset of $\calG$ and $f:Z\ra P_K(\calG)$ be a $\calG$-equivariant continuous map, and let $Z'$ be a $\calG$-compact subset of $\E$, which contains $Z$ and $g:P_K(\calG)\ra Z'$ be a $\calG$-equivariant continuous map. By lemma \ref{cofinality in cocompact subsets of EG}, there exists a $\calG$-compact subset $Z''$ of $\E\calG$ such that $Z''\supseteq Z'$ and the following diagram commutes up to $\calG$-homotopy.
    \[\xymatrix{
    Z \ar[r]^{f} \ar[d]_{\iota_{Z,Z'}} & P_K(\calG) \ar[r]^{g} & Z' \ar[d]^{\iota_{Z',Z''}}\\
    Z' \ar[rr]_{\iota_{Z',Z''}} & & Z''
    }\]
    And this implies that $\phi_{F^n}\circ \psi_{F^n}=id$. Similarly, using proposition \ref{Rips complexes G-homotopy sufficient 2} we can prove that $\psi_{F^n}\circ \phi_{F^n}=id$. We can define
    \[\psi_{G^n}:\varinjlim_{Z\subseteq \E\calG}G^n_\calG(C_0(Z))\ra \varinjlim_{K\subseteq \calG} G^n_\calG(C_0(P_K(\calG)))\]
    \[\phi_{G^n}:\varinjlim_{K\subseteq \calG} G^n_\calG(C_0(P_K(\calG)))\ra \varinjlim_{Z\subseteq \E\calG}G^n_\calG(C_0(Z))\]
    in the same way, and they are inverse to each other. Since $\Lambda^n_\calG:F^n_\calG\ra G^n_\calG$ is a natural transformation, we have the following commutative diagram.
    \[\xymatrix{
    \varinjlim_{Z\subseteq \E\calG}F^n_\calG(C_0(Z)) \ar[r]^{\psi_{F^n}} \ar[d]_{\varinjlim_{K\subseteq \calG} \Lambda^n_\calG(C_0(P_K(\calG)))} & \varinjlim_{K\subseteq \calG} F^n_\calG(C_0(P_K(\calG))) \ar[d]^{\Lambda^n(\calG)} \\
    \varinjlim_{Z\subseteq \E\calG}G^n_\calG(C_0(Z)) \ar[r]_{\psi_{G^n}} & \varinjlim_{K\subseteq \calG} G^n_\calG(C_0(P_K(\calG)))
    }\]
    In this diagram $\psi_{F^n}$ and $\phi_{G^n}$ are isomorphisms, hence the two vertical arrows are equivalent.
\end{proof}

\begin{rem}
    Proposition \ref{Rips complexes G-homotopy sufficient 1}, proposition \ref{Rips complexes G-homotopy sufficient 2}, lemma \ref{property of EG} and lemma \ref{cofinality in cocompact subsets of EG} imply that the inclusion functor from the category of Rips complexes of $\calG$ to the category of $\calG$-compact subspaces of $\E\calG$ is final. The proof here is a standard argument. (See \cite[section IX.3, Theorem 1]{MacLane}.)
\end{rem}

\subsection{Going-down principle}

In this section we will prove our main theorem as following.

\begin{thm}\label{going-down principle}
    Let $ F, G$ be two going-down functors for a second countable \'etale groupoid $\calG$, and $\Lambda: F\ra G$ be a going-down transformation. Suppose that for all proper open subgroupoids $\calH\subseteq \calG$ and for all $n\in \mathbb Z$,
    \[
    \Lambda^n_\calH(C_0(\calH\units)):  F^n_\calH(C_0(\calH\units))\ra G^n_\calH(C_0(\calH\units))
    \]
    is an isomorphism. 
    
    Then for all $n\in \mathbb Z$, $\Lambda^n(\calG): F^n(\calG)\ra G^n(\calG)$ is an isomorphism.
\end{thm}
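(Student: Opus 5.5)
By Proposition \ref{enough to run over rips complex}, it suffices to show that $\Lambda^n_\calG(C_0(P_K(\calG)))$ is an isomorphism for every compact $K\subseteq\calG$ and every $n$. Passing to the barycentric subdivision (Proposition \ref{bary center subdivision invariance} and Proposition \ref{promence property of barycenter subdivision}), $P_K(\calG)\cong|\Delta'|$, where $(Y',\Delta')$ is a typed, proper, $\calG$-compact $\calG$-simplicial complex satisfying $(H_1)$ and $(H_2)$. I would therefore prove the stronger claim that $\Lambda^n_\calG(C_0(|\Delta|))$ is an isomorphism for every such typed complex. The proof is by induction on $m=\dim\Delta$.

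\emph{Base case: the 0-dimensional complex.} Here $|\Delta^0|$ is a $\calG$-invariant closed subset $Z$ of a proper étale $\calG$-space $A^0$ (Corollary \ref{center is closed in some etale space}, Lemma \ref{A^m is proper}). First treat a proper $\calG$-compact $\calG$-space $W$ whose anchor map is a local homeomorphism. Using Proposition \ref{local structure of proper cocompact action of étale groupoid}, cover $W$ by finitely many $\calG$-saturations $\calG V_i$ with $V_i$ being $\calH_i$-invariant and $\calG V_i\cong \calG_{\calH_i\units}\times_{\calH_i}V_i$. Each piece is then $\ind^{\calG}_{\calH_i}$ of an $\calH_i$-space. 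Because the anchor is étale and $\calH_i\cong F\ltimes A$, we may shrink $V_i$ so that it is homeomorphic to an $\calH_i$-invariant open of $\calH_i\units$; that is, $V_i\cong\calH'\units$ for a smaller proper open subgroupoid $\calH'$. The induction axiom then reduces $\Lambda$ on $C_0(\calG V_i)$ to $\Lambda^n_{\calH'}(C_0(\calH'^{(0)}))$, which is an isomorphism by hypothesis. Finite unions, and the intersections $\calG V_i\cap\calG V_j$ of the same type, are handled by induction on the number of pieces using the Mayer--Vietoris sequence (Corollary \ref{going-down MV-seq}) and the five lemma.

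For a closed invariant subset $Z\subseteq W$, write the complement $W\setminus Z$ as an increasing union of invariant opens of the same étale form. This gives the exact sequence $0\to C_0(W\setminus Z)\to C_0(W)\to C_0(Z)\to0$. Feeding it into the long exact sequence of Lemma \ref{going-down functor long exactness} together with the five lemma yields the result for $Z$. This is precisely the extra exact sequence the introduction alludes to.

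I expect this step to be the main obstacle. The difficulty is that $W\setminus Z$ need not be $\calG$-compact, so we cannot apply the compact case to it directly. I would try first to arrange, using the local slices, that $Z$ is itself a finite union of pieces $\calG(V_i\cap Z)$. On each such piece the restriction of $\Lambda$ is isomorphic to $\Lambda$ on $\ind_{\calH_i}C_0(V_i\cap Z)$, where $V_i\cap Z$ is a closed invariant subset of $\calH_i\units$. One then needs the hypothesis for $C_0$ of a closed invariant subset of a unit space. I would derive this by a second Mayer--Vietoris and open-exhaustion argument inside $\calH_i\units$, using that opens of $\calH_i\units$ give proper open subgroupoids.

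\emph{Inductive step.} Given a typed complex of dimension $m$, use the exact sequence
\[0\to C_0(|\Delta^m|\setminus|\Delta^{m-1}|)\to C_0(|\Delta^m|)\to C_0(|\Delta^{m-1}|)\to0.\]
The subset $|\Delta^{m-1}|$ is closed in $|\Delta^m|$, which follows from Lemma \ref{reindex}. By Proposition \ref{typed sim com}, the first term is isomorphic to $C_0(center(m,\Delta))\otimes C_0(\mathbb R^m)$. By the suspension axiom and the base case applied to $center(m,\Delta)$, $\Lambda$ is an isomorphism on it. This uses that $center(m,\Delta)$ is closed in the étale proper space $A^m$ and is $\calG$-compact, being a closed subset of the $\calG$-compact $|\Delta|$ (Proposition \ref{simplicial complex-cocompact realisation}). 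The quotient term $C_0(|\Delta^{m-1}|)$ is handled by the induction hypothesis. Finally, Lemma \ref{going-down functor long exactness} and the five lemma give the isomorphism on $C_0(|\Delta^m|)$, and passing to the direct limit over $K$ concludes the proof.
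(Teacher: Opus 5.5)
Your architecture matches the paper's proof. You reduce to Rips complexes (Proposition \ref{enough to run over rips complex}) and pass to the barycentric subdivision to get a typed complex. You then induct on dimension using $0\to C_0(|\Delta|\setminus|\Delta^{m-1}|)\to C_0(|\Delta|)\to C_0(|\Delta^{m-1}|)\to 0$, with the open stratum $\cong center(m,\Delta)\times\mathbb R^m$. Your inductive step is fine.

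The gap is in the base case, at exactly the point you flag. You propose to write $W\setminus Z$ as an increasing union of invariant opens, and your fallback uses an ``open-exhaustion argument inside $\calH_i\units$''. Neither is available. The axioms of a going-down functor contain no continuity property, so nothing relates $F^n_\calG\bigl(C_0(\bigcup_k O_k)\bigr)$ to the groups $F^n_\calG(C_0(O_k))$; there is no $\varprojlim$ or Milnor $\varprojlim^1$ sequence to use. As written, the closed-subset step is unproved.

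The missing observation is that no exhaustion is needed, because invariant opens are already covered by the hypothesis. Let $\calH$ be a proper open subgroupoid and $S\subseteq\calH\units$ a closed $\calH$-invariant set.
\begin{itemize}
\item The set $O=\calH\units\setminus S$ is an invariant open.
\item Hence $\calH'=\calH^{O}_{O}$ is a proper open subgroupoid with $\calH_O=\calH'$.
\item So $C_0(O)\cong\ind^{\calH}_{\calH'}C_0(\calH'^{(0)})$, and the induction axiom plus the hypothesis make $\Lambda^*_\calH(C_0(O))$ an isomorphism.
\item The five lemma on $0\to C_0(O)\to C_0(\calH\units)\to C_0(S)\to 0$ then handles $C_0(S)$.
\end{itemize}
This is the paper's Lemma \ref{goind down small 0-dim case} and diagram (\ref{diagram 4}).

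The paper then uses closed slices $S_i=\overline{V_i}\cap Z$ with $\overline{V_i}\subseteq U_i\cong\calH_i\units$, so each $S_i$ is a compact invariant subset of $\calH_i\units$. This makes $Z=\bigcup_i\calG S_i$ a finite \emph{closed} cover. A closed cover is what Corollary \ref{going-down MV-seq} actually requires, since it is stated for pullbacks along surjections. Your open pieces $\calG V_i$ would instead need an open-cover Mayer--Vietoris sequence, which you would have to derive separately from the long exact sequences.

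Within your own scheme the repair is also immediate: your worry about $\calG$-compactness of $W\setminus Z$ is a red herring. We have $W\setminus Z=\bigcup_i\calG(V_i\setminus Z)$. Each $\rho(V_i\setminus Z)$ is an $\calH_i$-invariant open subset of $\calH_i\units$, so this is again a finite union of slices of the same \'etale type. Your step (a) therefore applies to $W\setminus Z$ directly, and only finiteness of the cover is needed.
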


This is a generalization of \cite[Theorem 4.4.6]{bonicke2018going} and \cite[Theorem 4.6]{bonicke2019going} from ample groupoid case.

Our strategy is approximating $\E \calG$ by the geometric realizations $P_K(\calG)$ of Rips complexes, so it suffices to deal with all finite dimensional proper $\calG$-compact $\calG$-simplicial complex with hypotheses $(H_1)$ and $(H_2)$. Firstly we will deal with 0-dimensional cases.

\begin{lem}\label{invariant subsets}
    Let $\calH$ be an \'etale groupoid.
    \begin{enumerate}
        \item If $A$ is an $\calH$-invariant subset of $\calH\units$, then $\overline{A}$ is an $\calH$-invariant closed subset of $\calH$.
        \item If $B$ is an $\calH$-invariant locally closed subset of $\calH\units$, then there exists an $\calH$-invariant open subset $U$ and an $\calH$-invariant closed subset $C$ such that $B=U\cap C$.
    \end{enumerate}
\end{lem}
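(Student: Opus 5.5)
Both parts are point-set topology. The only groupoid-specific input I plan to use is that $r$ and $s$ are open maps, since $\calG$ is étale. For an $\calH$-invariant subset $A\subseteq\calH\units$, invariance means $r(\calH_A)=A$, or equivalently $s(\calH^A)=A$. (Note: statement (1) presumably means "closed subset of $\calH\units$", and I will prove it in that form.)

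For (1), let $x\in\overline A$ and $h\in\calH$ with $s(h)=x$; I must show $r(h)\in\overline A$. Take any open neighbourhood $W$ of $r(h)$. Then $r^{-1}(W)$ is an open neighbourhood of $h$. Since $s$ is open, $s(r^{-1}(W))$ is an open neighbourhood of $x$, so it meets $A$ at some point $a=s(h')$ with $r(h')\in W$. By invariance $r(h')\in A$, hence $W\cap A\neq\emptyset$. Therefore $r(h)\in\overline A$. Applying the same argument to $h^{-1}$ gives the other direction, so $\overline A$ is invariant. It is closed by construction.

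For (2), let $B$ be locally closed and invariant. I would take $C=\overline B$, which is invariant and closed by (1). For the open set, locally closed means $B$ is open in $\overline B$, so $\overline B\setminus B$ is closed. It is also invariant, since it is a difference of two invariant sets. Set $U=\calH\units\setminus(\overline B\setminus B)$. This is open, and it is invariant because complements of invariant sets are invariant. Then $U\cap C=\overline B\setminus(\overline B\setminus B)=B$.

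The only step needing care is the openness of $s$ in (1), which is exactly where the étale hypothesis (openness of $s$ and $r$) enters; everything else is formal. I do not expect any real obstacle.
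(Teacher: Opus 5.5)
Your proof is correct. Part (1) is essentially the paper's argument. Part (2) takes a different route.

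\textbf{Part (1).} The paper picks an open bisection $W$ around $h$ and a net $a_\lambda\to s(h)$ in $A$. It then observes that $r(s|_W\inv(a_\lambda))\in A$ converges to $r(h)$. Your use of the openness of $s$ at $h$ is the same local-section idea phrased without nets, and it needs only that $s$ is open.

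\textbf{Part (2).} The paper writes $B=V\cap\overline{B}$ with $V$ open and takes the saturation $U=r(s\inv(V))$. This set is open because $r$ is open. The paper then checks $U\cap\overline{B}\subseteq B$: it pulls a point of $U\cap\overline{B}$ back into $V$ along an arrow and uses invariance of both $\overline{B}$ and $B$.

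You instead take $U=\calH\units\setminus(\overline{B}\setminus B)$. This makes (2) a purely formal consequence of (1): invariant subsets of $\calH\units$ are unions of orbits, hence closed under differences and complements, and local closedness of $B$ is exactly closedness of $\overline{B}\setminus B$.

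\textbf{Comparison.} Your version is shorter and invokes the \'etale structure only once, in (1); the paper uses it again for the openness of $r(s\inv(V))$. The two constructions also produce different sets. The paper's $U$ is the smallest invariant open set containing the chosen $V$. Yours is the largest set with $U\cap\overline{B}=B$, namely $B\cup(\calH\units\setminus\overline{B})$. Either choice works for the later use of this lemma, where the paper restricts to $\calH^U_U$ for an invariant locally closed subset in the $0$-dimensional going-down step.

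\textbf{On the wording of (1).} No reinterpretation is needed. Since $\calH$ is Hausdorff, $\calH\units$ is closed in $\calH$, so a closed subset of $\calH\units$ is also closed in $\calH$.
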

\begin{proof}
    (1) It suffices to show that, if $h\in \calH_{\overline{A}}$, then $r(h)\in \overline{A}$. Let $W$ be an open bisection of $h$ in $\calH$. Since $s(h)\in \overline{A}$, there exists a net $(a_\lambda)_\lambda$ in $A$ such that $a_\lambda\ra s(h)$. Then we have $r(h)=\lim_\lambda r(s|_W\inv(a_\lambda))\in \overline{A}$.

    (2) Since $B$ is locally closed, there exists an open $V$ such that $B=V\cap \overline{B}$. Let $U=r(s\inv(V))$, $U$ is therefore an $\calH$-invariant open containing $V$ and $\overline{B}$ is an $\calH$-invariant closed subset by the previous result. Claim: $B=U\cap \overline{B}$.

    If $x\in U\cap \overline{B}$, there exists $h\in \calH_V$ such that $r(h)=x$. By the invariance of $\overline{B}$, $s(h)\in V\cap \overline{B}=B$. Then by invariance of $B$, $x=r(h)\in B$. So we proved that $U\cap \overline{B}\subseteq B$. Another inclusion is trivial and we proved our claim.
\end{proof}

\begin{lem}\label{goind down small 0-dim case}
    Under the same assumptions of theorem \ref{going-down principle}, for any proper open subgroupoid $\calH\subseteq \calG$ and any $\calH$-invariant locally closed subset $S$ of $\calH\units$,
    \[\Lambda_\calH^n(C_0(S)): F_\calH^n(C_0(S))\ra G_\calH^n(C_0(S))\]
    is an isomorphism.
\end{lem}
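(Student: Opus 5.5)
We reduce from a locally closed invariant subset to the whole unit space using the long exact sequences of Lemma \ref{going-down functor long exactness} and the five lemma. Fix a proper open subgroupoid $\calH\subseteq\calG$. Note that $\calH\units$ is itself a proper $\calH$-space, and any $\calH$-invariant locally closed $S\subseteq\calH\units$ is a locally compact Hausdorff proper $\calH$-space. So $C_0(S)\in\ca C(\calH)$, and $F^n_\calH(C_0(S))$ makes sense.

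\textbf{Step 1 (closed invariant subsets).} Let $C\subseteq\calH\units$ be $\calH$-invariant and closed, and put $U=\calH\units\setminus C$, which is open and invariant. This gives a short exact sequence in $\ca C(\calH)$,
\[0\ra C_0(U)\ra C_0(\calH\units)\ra C_0(C)\ra 0.\]
Lemma \ref{going-down functor long exactness} gives long exact sequences for $F$ and for $G$, and $\Lambda$ maps one to the other commutatively. By hypothesis, $\Lambda_\calH^n(C_0(\calH\units))$ is an isomorphism for every $n$. This alone does not let the five lemma conclude for $C$ or for $U$ individually, since two of every three terms are still unknown.

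\textbf{Step 2 (open invariant subsets, via restriction of the groupoid).} To break the circularity, replace $\calH$ by the reduction $\calH_U=\calH|_U$. Since $U$ is invariant, $\calH_U$ is an open subgroupoid of $\calG$, and it is proper because $\calH$ is. So $\calH_U$ is again a proper open subgroupoid with unit space $U$. The open inclusion $\calH_U\subseteq\calH$ is covered by the induction axiom. The correspondence $\Omega=\calH_{U}$ satisfies $\Omega\times_{\calH_U}U\cong U$ as $\calH$-spaces, because $U$ is invariant and hence $\calH_U=\calH^U_U$. Therefore $\ind_{\calH_U}^{\calH}C_0(U)\cong C_0(U)$ as $\calH$-algebras.

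The induction axiom then identifies $F^n_{\calH_U}(C_0(U))\cong F^n_\calH(C_0(U))$, and likewise for $G$. These isomorphisms are compatible with $\Lambda$, since $\Lambda$ is a going-down transformation. Applying the hypothesis to $\calH_U$ shows that $\Lambda^n_{\calH_U}(C_0(U))$ is an isomorphism. Hence $\Lambda^n_\calH(C_0(U))$ is an isomorphism for every invariant open $U$.

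\textbf{Step 3 (closed, then locally closed).} Return to the sequence of Step 1. Now both the $C_0(U)$ terms and the $C_0(\calH\units)$ terms are isomorphisms, so the five lemma shows $\Lambda^n_\calH(C_0(C))$ is an isomorphism for every invariant closed $C$.

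For a general $S$, Lemma \ref{invariant subsets}(2) writes $S=U\cap C$ with $U$ invariant open and $C$ invariant closed. Then $S$ is an invariant open subset of $C$. Apply Steps 1 and 2 with $\calH\units$ replaced by $C$ and $\calH$ replaced by the reduction $\calH_C$:
\begin{itemize}
\item[(i)] $C_0(S)$ is the ideal of $C_0(C)$ with quotient $C_0(C\setminus S)$.
\item[(ii)] The $C_0(C)$ term is an isomorphism by the previous paragraph.
\end{itemize}
The induction trick of Step 2 must then be rerun with $\calH_S=\calH|_S$, which is again proper and open in $\calG$ with $S$ invariant in $\calH\units$. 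It gives $F^n_{\calH_S}(C_0(S))\cong F^n_\calH(C_0(S))$ compatibly with $\Lambda$, and the hypothesis applied to $\calH_S$ concludes directly.

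This last observation actually shortcuts everything: for any invariant locally closed $S$, the reduction $\calH_S$ is a proper subgroupoid of $\calG$ with unit space $S$, and $\ind^{\calH}_{\calH_S}C_0(S)\cong C_0(\calH_S\times_{\calH_S}S)=C_0(S)$. I would present it that way.

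\textbf{Main obstacle.} The delicate point is openness. $\calH_S$ is open in $\calG$ only when $S$ is open in $\calH\units$. For merely locally closed $S$, $\calH_S$ need not be open, and then the induction axiom is unavailable. This is why Steps 1 to 3 (opens via induction, closed via the five lemma, locally closed via Lemma \ref{invariant subsets}) are genuinely needed.

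For the locally closed case I would argue as follows. Consider the sequence $0\ra C_0(S)\ra C_0(U)\ra C_0(U\setminus S)\ra 0$, where $U\setminus S=U\setminus C$ is open, invariant, and closed in $U$. The $C_0(U)$ term is handled by Step 2. The term $C_0(U\setminus S)$ is a closed invariant subset of $U$, so it is handled by Step 3 applied to the proper open subgroupoid $\calH_U$, transported through the induction isomorphism. The five lemma then gives the claim for $C_0(S)$. The remaining verification is that $\ind^{\calH}_{\calH_U}C_0(Y)\cong C_0(Y)$ for any $\calH_U$-space $Y$ anchored in $U$, naturally in $Y$, which follows because $\calH_U\times_{\calH_U}Y\cong Y$.
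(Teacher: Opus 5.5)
Your Steps 2 and 3 are correct and coincide with the first two cases of the paper's proof. For an invariant open $U$ you pass to the proper open subgroupoid $\calH^U_U$, use $\ind^{\calH}_{\calH^U_U}C_0(U)\cong C_0(U)$, and invoke the compatibility of $\Lambda$ with induction. For an invariant closed $C$ you apply the five lemma to $0\ra C_0(\calH\units\setminus C)\ra C_0(\calH\units)\ra C_0(C)\ra 0$.

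The locally closed case, however, fails as written. Write $S=U\cap C$. Then $S$ is \emph{closed} in $U$, and $U\setminus S=U\setminus C$ is \emph{open} in $U$, not closed; this already fails for $U=[0,1]$ and $C=[0,1/2]$. Hence $C_0(S)$ is a quotient of $C_0(U)$, not an ideal, and the sequence $0\ra C_0(S)\ra C_0(U)\ra C_0(U\setminus S)\ra 0$ you use does not exist. For the same reason, $U\setminus S$ is not the closed set that Step 3 would handle.

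The intermediate suggestion to rerun Steps 1 and 2 for the reduction $\calH_C$ fails for the reason you yourself noticed for $\calH_S$. A reduction to a closed invariant set is in general not open in $\calG$, so it does not lie in $\ca S(\calG)$. The functors $F^n_{\calH_C}$ are then not even defined, and neither the hypothesis nor the induction axiom applies.

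The repair is immediate. Use the sequence $0\ra C_0(U\setminus C)\ra C_0(U)\ra C_0(S)\ra 0$ in $\ca C(\calH)$. Both $U$ and $U\setminus C=U\cap(\calH\units\setminus C)$ are $\calH$-invariant open subsets of $\calH\units$. So Step 2 makes $\Lambda^n_\calH$ an isomorphism on both, for every $n$. The five lemma applied to the long exact sequences of Lemma \ref{going-down functor long exactness} then gives the claim for $C_0(S)$.

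This is slightly more direct than the paper's route. The paper instead passes to $\tilde\calH=\calH^V_V$, where $V$ is the invariant open set. It observes that $S$ is a closed $\tilde\calH$-invariant subset of $\tilde\calH\units=V$ and applies the closed case to $\tilde\calH$. It then transports the result back to $\calH$ via $\ind^\calH_{\tilde\calH}C_0(S)\cong C_0(S)$. Both routes run the same exact-sequence argument, in $\ca C(\calH)$ versus $\ca C(\tilde\calH)$.

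The principle to retain is that you may only reduce $\calH$ to \emph{open} invariant subsets of $\calH\units$, never to closed or locally closed ones.
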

\begin{proof}
    Firstly, $S$ is a locally compact Hausdorff proper $\calH$-space, so $C_0(S)$ is a well-defined element of $\mathcal C(\calG)$. When $S$ is an open of $\calH\units$, let $\tilde\calH=\calH^S_S$, which is a proper open subgroupoid of $\calH$, therefore $\Lambda^n_{\tilde\calH}(C_0(S))$ is an isomorphism. Since $\calH_S=\tilde\calH$, $\ind^\calH_{\tilde\calH}C_0(S)\cong C_0(S)$ as an $\calH$-\cst-algebra, hence by the compatibility of $\Lambda$ with induction, the following diagram commutes.
    \[\xymatrix{
     F^n_{\calH}(C_0(S)) \ar[r]^{\Lambda^n_\calH(S)}  & G^n_{\calH}(C_0(S))\\
     F^n_{\tilde\calH}(C_0(S)) \ar[r]^{\Lambda^n_{\tilde\calH}(S)}\ar[u]^{I^\calH_{\tilde\calH}(n)} & G^n_{\tilde\calH}(C_0(S))\ar[u]_{I^\calH_{\tilde\calH}(n)}
    }\]
    By definition the two vertical maps $I^\calH_{\tilde\calH}(n)$ are also isomorphisms. So $\Lambda^n_\calH(C_0(S))$ is an isomorphism.

    Now if $S$ is closed subset of $\calH\units$, then $\calH\units\setminus S$ is an $\calH$-invariant open subset of $\calH\units$. Then $\Lambda^n_\calH(C_0(\calH\units))$ and $\Lambda^n_\calH(C_0(\calH\units\setminus S))$ are isomorphisms. Consider the following short exact sequence in $\ca C(\calH)$,
    \[0\ra C_0(\calH\units\setminus S)\ra C_0(\calH\units)\ra C_0(S)\ra 0.\]
    By lemma \ref{going-down functor long exactness}, it induces the following commutative diagram, where the two rows are exact.
    \begin{equation}\label{diagram 4}
        \xymatrix@C=5em@R=3em{
     F^{n-1}_{\calH}(C_0(\calH\units)) \ar[r]^{\Lambda^{n-1}_\calH(\calH\units)}\ar[d] & G^{n-1}_{\calH}(C_0(\calH\units))\ar[d]\\
     F^{n-1}_{\calH}(C_0(\calH\units\setminus S)) \ar[r]^{\Lambda^{n-1}_\calH(\calH\units\setminus S)} \ar[d]& G^{n-1}_{\calH}(C_0(\calH\units\setminus S))\ar[d]\\
     F^n_\calH(C_0(S)) \ar[r]^{\Lambda^{n}_\calH(S)} \ar[d] & G^n_{\calH}(C_0(S))\ar[d]\\
     F^n_\calH(C_0(\calH\units)) \ar[r]^{\Lambda^{n}_\calH(\calH\units)}\ar[d] & G^n_{\calH}(C_0(\calH\units))\ar[d]\\
     F^n_\calH(C_0(\calH\units\setminus S)) \ar[r]^{\Lambda^{n}_\calH(\calH\units\setminus S)} &G^n_{\calH}(C_0(\calH\units\setminus S))
    }
    \end{equation}
    In this diagram all horizontal arrows except the middle one are isomorphisms. By 5-lemma, this implies that $\Lambda^n_\calH(C_0(S))$ is an isomorphism.

    Now if $S$ is locally closed subset of $\calH\units$, by lemma \ref{invariant subsets}, we can assume that $S=V\cap \overline{S}$, where $V$ is an $\calH$-invariant open of $\calH\units$, $\overline{S}$ is an $\calH$-invariant closed subset of $\calH$. Now let $\tilde\calH=\calH^V_V$, which is an proper open subgroupoid of $\calH$. By the previous result, since $S$ is an $\tilde\calH$-invariant closed subset of $V$, $\Lambda^n_{\tilde\calH}(C_0(S))$ is an isomorphism. Again $C_0(S)\cong \ind^\calH_{\tilde\calH}C_0(S)$ as an $\calH$-\cst-algebra and using the compatibility of $\Lambda$ with induction, $\Lambda_{\calH}^n(C_0(S))$ is equivalent to $\Lambda^n_{\tilde\calH}(C_0(S))$ and hence an isomorphism.
\end{proof}

\begin{prop}\label{going-down 0-dim}
    Under the same assumptions of theorem \ref{going-down principle}, if $Y$ is a locally compact Hausdorff space with proper \'etale $\calG$-action, $Z\subseteq Y$ is a $\calG$-compact subset of $Y$, then the map
    \[\Lambda^n_\calG(C_0(Z)):  F^n_\calG(C_0(Z))\ra G^n_\calG(C_0(Z))\]
    is an isomorphism.
\end{prop}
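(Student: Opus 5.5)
The idea is to reduce to Lemma \ref{goind down small 0-dim case} by cutting $Z$ into finitely many pieces, each of which is induced from a proper open subgroupoid acting on a locally closed invariant subset of its unit space. The tools are the local structure of proper actions and a Mayer--Vietoris induction.

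\emph{Local pieces.} Fix $z_0\in Z$. Apply Proposition \ref{local structure of proper cocompact action of étale groupoid} to the proper $\calG$-space $Y$. This gives an open neighborhood $V$ of $z_0$ and a proper open subgroupoid $\calH\cong F\ltimes A$ with the following properties: $V$ is $\calH$-invariant, $\rho(V)\subseteq\calH\units$, and $\gamma\overline V\cap\overline V=\emptyset$ for $\gamma\notin\calH$. Since $\rho$ is a local homeomorphism, we may shrink $V$ before averaging over $F$ so that $\rho|_V$ is injective. Then $\rho|_V$ is an $\calH$-equivariant homeomorphism onto an $\calH$-invariant open subset $\rho(V)\subseteq\calH\units$.

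Set $S=\rho(V\cap Z)$. Because $Z$ is $\calG$-invariant and closed (Lemma \ref{G-compact subsets are closed}), $S$ is an $\calH$-invariant closed subset of the open set $\rho(V)$, hence locally closed in $\calH\units$. Next, $\calG V=\calG_{\calH\units}\times_\calH V$, since the separation property makes the balanced map injective; it is open because $\calG$ is étale. Consequently $C_0(\calG(V\cap Z))\cong\ind^\calG_\calH C_0(V\cap Z)\cong\ind^\calG_\calH C_0(S)$. The induction axiom together with Lemma \ref{goind down small 0-dim case} then shows that $\Lambda^n_\calG$ is an isomorphism on $C_0$ of the open invariant piece $\calG(V\cap Z)$. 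The same conclusion holds for any $\calG$-invariant locally closed piece of the form $\calG(V\cap L)$ with $L\subseteq Z$ closed and invariant, because the corresponding $\rho(V\cap L)$ is again $\calH$-invariant and locally closed.

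\emph{Gluing.} By $\calG$-compactness (Lemma \ref{G-compact iff quotient compact}), finitely many such saturated open sets $U_i=\calG(V_i\cap Z)$, $i=1,\dots,k$, cover $Z$. I would induct on $k$, proving the stronger claim that $\Lambda^n_\calG$ is an isomorphism on $C_0(W)$ for every $\calG$-invariant locally closed $W$ contained in a union of $k$ such good sets. Write $W=(W\cap U_1)\cup(W\setminus U_1)$. The first piece is open in $W$ and is of the form treated above. The second piece is closed in $W$ and is covered by $k-1$ good sets. The exact sequence
\[0\ra C_0(W\cap U_1)\ra C_0(W)\ra C_0(W\setminus U_1)\ra 0,\]
the long exact sequence of Lemma \ref{going-down functor long exactness}, and the five lemma, exactly as in diagram \ref{diagram 4}, then finish the induction.

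The main obstacle I expect is the identification $C_0(\calG(V\cap W))\cong\ind^\calG_\calH C_0(\rho(V\cap W))$ as $\calG$-algebras. This requires checking that the map $\calG_{\calH\units}\times_\calH (V\cap W)\ra\calG(V\cap W)$ is a homeomorphism onto an open subset, and not merely a continuous bijection. The separation property gives injectivity. I would derive openness from étaleness and the local homeomorphism property of the quotient map (Proposition \ref{orbit space of free proper action of etale grpd}), and verify that $\calH$-invariance of $V$ is preserved when $V$ is intersected with $\rho^{-1}$ of an injectivity domain.
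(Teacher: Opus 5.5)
Your argument is correct, but you glue the local pieces differently from the paper.

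\textbf{The paper's route.} The paper uses closed, $\calG$-compact pieces. It sets $S_i=\overline{V_i}\cap Z$, which is compact and $\calH_i$-invariant, and writes $Z=\bigcup_i\calG S_i$. It then inducts on the number of pieces via the Mayer--Vietoris sequence of Corollary \ref{going-down MV-seq}, applied to the closed cover $Z=\calG S_1\cup Z'$ with $\calG S_1\cap Z'=\calG(S_1\cap Z')$. Keeping everything $\calG$-compact makes the identification $\calG S_1\cong\calG_{\calH_1\units}\times_{\calH_1}S_1$ cheap:
\begin{itemize}
\item injectivity comes from $\gamma\overline{V_1}\cap\overline{V_1}=\emptyset$ for $\gamma\notin\calH_1$;
\item a continuous equivariant bijection between $\calG$-compact proper spaces is proper by Lemma \ref{maps between G-compact sets are proper}, hence a homeomorphism;
\item only the closed case of Lemma \ref{goind down small 0-dim case} is needed, since $\rho(S_1)$ is compact.
\end{itemize}

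\textbf{Your route.} You instead peel off the open saturated sets $U_i=\calG(V_i\cap Z)$ one at a time, using the ideal--quotient long exact sequence of Lemma \ref{going-down functor long exactness} as in diagram \ref{diagram 4}. This dispenses with Mayer--Vietoris and with closures, and it needs only $\gamma V\cap V=\emptyset$. The cost is that your pieces are not $\calG$-compact, and you pay exactly where you expected:
\begin{itemize}
\item the homeomorphism $\calG_{\calH\units}\times_\calH(V\cap L)\cong\calG(V\cap L)$ must come from openness of the \'etale action map, restricted to the saturated set $\calG_{\calH\units}\times_{\calH\units}(V\cap L)$, rather than from properness;
\item you need the non-closed case of Lemma \ref{goind down small 0-dim case}, because $\rho(V\cap L)$ is only closed in the open invariant set $\rho(V)$.
\end{itemize}
Both points go through.

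\textbf{Two small remarks.} First, the sets $W$ that actually occur in your induction are $Z\setminus(U_1\cup\cdots\cup U_j)$, which are closed in $Z$. So the locally closed strengthening of your claim is harmless but unnecessary. Second, choosing the neighborhood $U$ in the local structure proposition with $\rho|_U$ injective makes explicit a point the paper's proof takes for granted, namely when it asserts that $\rho|_{U_z}$ is a homeomorphism onto $\calH(z)\units$.
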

\begin{proof}
    Let $\rho:Y\ra \calG\units$ be the anchor map, which is a local homeomorphism. By the local structure of proper actions of \'etale groupoids (proposition \ref{local structure of proper cocompact action of étale groupoid}), for every $z\in Z$, there is an open neighborhood $U_z$ of $z$ in $Y$, and a proper open subgroupoid $\calH(z)$ in form of $F_z\ltimes \calH(z)\units$, where $F_z$ is the finite group $\{\gamma\in \calG: \gamma z =z\}$, $\calH(z)\units$ is an open neighborhood of $\rho(z)$, such that $\rho|_{U_z}$ is homeomorphism onto $\calH(z)\units$, $U_z$ is $\calH(z)$-invariant, and when $\gamma \in \calG\backslash \calH(z)$, $\gamma U_z\cap U_z=\emptyset$. Let $V_z$ be an $\calH(z)$-invariant relatively compact open subset of $U_z$ such that $V_z\subseteq \overline{V_z}\subseteq U_z$. By lemma \ref{invariant subsets}, $\overline{V_z}$ is also an $\calH(z)$-invariant closed subset of $U_z$.
    
    Since $Z$ is $\calG$-compact, there are finitely many points $z_1,\cdots, z_m$ such that $\calG V_{z_1},\cdots, \calG V_{z_m}$ cover $Z$. Let $U_i=U_{z_i}$, $V_i=V_{z_i}$ and $\calH_i=\calH(z_i)$. Let $S_i=\overline{V_i}\cap Z$. Then $S_i$ is an $\calH_i$-invariant compact subset of $Y$, and
    \[Z=\cup_{i=1}^m\calG S_i\cong \cup_{i=1}^m\calG \times_{\calH_i} S_i.\]
    
    We will prove the statement by induction on $m$. 

    When $m=1$, say $Z=\calG S_1$. By lemma \ref{goind down small 0-dim case}, $\Lambda_{\calH_1}^n(C(S_1))$ is an isomorphism.
    Then using the compatibility of $\Lambda$ with induction, the following diagram commutes.
    \[\xymatrix{
     F^n_{\calH_1}(C(S_1)) \ar[r]^{\Lambda^n_{\calH_1}(S_1)} \ar[d]_{I^\calG_{\calH_1}(n)} & G^n_{\calH_1}(C(S_1))\ar[d]^{I^\calG_{\calH_1}(n)} \\
     F^n_\calG(C_0(\calG S_1)) \ar[r]^{\Lambda^n_\calG(\calG S_1)} & G^n_\calG(C_0(\calG S_1))
    }\]
    Therefore, $\Lambda_\calG^n(\calG S_1)$ is also an isomorphism. We proved the case $m=1$.

    Now assume the case $m-1$ is proved. Let $Z'=\cup_{i=2}^m\calG S_i$,  $S'=S_1\cap Z'$. Since for each $1\leqslant i\leqslant m$, $\calG(V_i\cap Z)$ is an open subset of $Z$ that is contained in $\calG S_i$, we have $Z=\inter_Z(\calG S_1)\cup \inter_Z(Z')$ and $\calG S_1\cap Z'=\calG S'$. Using corollary \ref{going-down MV-seq}, we have a commutative diagram where the two rows are exact sequences.

    {\small
\begin{equation*}
\xymatrix@C=5em@R=3em{
 F^n_\calG(C_0(\calG S')) \ar[r]^{\Lambda^n_\calG(\calG S')} \ar[d] & G^n_\calG(C_0(\calG S')) \ar[d] \\
 F^n_\calG(C_0(Z')) \oplus  F^n_\calG(C_0(\calG S_1)) \ar[r] \ar[d] & G^n_\calG(C_0(Z')) \oplus G^n_\calG(C_0(\calG S_1)) \ar[d] \\
 F^n_\calG(C_0(Z)) \ar[r]^{\Lambda^n_\calG(Z)} \ar[d] & G^n_\calG(C_0(Z)) \ar[d] \\
 F^{n+1}_\calG(C_0(\calG S')) \ar[r]^{\Lambda^{n+1}_\calG(\calG S')} \ar[d] & G^{n+1}_\calG(C_0(\calG S')) \ar[d] \\
 F^{n+1}_\calG(C_0(Z')) \oplus  F^{n+1}_\calG(C_0(\calG S_1)) \ar[r] & G^{n+1}_\calG(C_0(Z')) \oplus G^{n+1}_\calG(C_0(\calG S_1))
}
\end{equation*}
}

    By assumption, the second and the fifth horizontal arrows are isomorphisms. And $S'$ is an $\calH_1$-invariant compact subsets of $S_1$, $\calG S'$ is again same as the case $m=1$, therefore the first and the fourth horizontal arrows are also isomorphisms. The five lemma implies that the middle horizontal arrow is an isomorphism. That is, $\Lambda^n_\calG(Z)$ is an isomorphism for any $n\in \mathbb Z$.
\end{proof}

\begin{prop}\label{going-down sim complex}
    Under the same assumptions of theorem \ref{going-down principle}, if $(Y,\Delta)$ is a finite dimensional proper $\calG$-compact $\calG$-simplicial complex that has hypotheses $(H_1)$ and $(H_2)$, then
    \[\Lambda^n_\calG(C_0(|\Delta|)):  F^n_\calG(C_0(|\Delta|))\ra G^n_\calG(C_0(|\Delta|))\]
    is an isomorphism.
\end{prop}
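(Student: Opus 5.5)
We plan to argue by induction on $\dim\Delta$, first replacing $(Y,\Delta)$ by its barycentric subdivision $(Y',\Delta')$. By proposition \ref{bary center subdivision invariance}, $|\Delta'|\cong|\Delta|$ $\calG$-equivariantly. By proposition \ref{promence property of barycenter subdivision}, $(Y',\Delta')$ is again proper, $\calG$-compact and satisfies $(H_1)$ and $(H_2)$, and moreover it is typed. Its dimension equals that of $\Delta$. We may therefore assume $(Y,\Delta)$ is typed. All the subcomplexes $(Y,\Delta^m)$ are also typed, proper, and satisfy $(H_1)$ and $(H_2)$. They are $\calG$-compact because $|\Delta^0|$ is unchanged. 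So the induction hypothesis may be applied to $|\Delta^{m-1}|$.

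For dimension $0$, $|\Delta|=|\Delta^0|$ is a $\calG$-compact subset of $Y$ by lemma \ref{prob mes dim 0}. Here $Y$ is a proper \'etale $\calG$-space, so proposition \ref{going-down 0-dim} applies directly.

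For the inductive step, fix $m=\dim\Delta\geqslant 1$. The set $|\Delta^{m-1}|$ is a $\calG$-compact subset of the proper space $|\Delta|$ (proposition \ref{simplicial complex-cocompact realisation}), hence closed by lemma \ref{G-compact subsets are closed}. This gives a short exact sequence in $\ca C(\calG)$:
\[0\ra C_0(|\Delta^m|\setminus|\Delta^{m-1}|)\ra C_0(|\Delta|)\ra C_0(|\Delta^{m-1}|)\ra 0.\]
Lemma \ref{going-down functor long exactness} produces compatible long exact sequences for $F$ and $G$. By the five lemma, it suffices that $\Lambda$ is an isomorphism on $C_0(|\Delta^{m-1}|)$, which holds by the induction hypothesis, and on the open stratum.

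By proposition \ref{typed sim com}, the open stratum is $\calG$-equivariantly homeomorphic to $center(m,\Delta)\times\sigma_m$, with $\calG$ acting trivially on $\sigma_m\cong\mathbb R^m$. Hence
\[C_0(|\Delta^m|\setminus|\Delta^{m-1}|)\cong \Sigma_\calG^m\, C_0(center(m,\Delta)).\]
Iterating the suspension isomorphisms $S^n_\calG$, which commute with $\Lambda$ by the definition of a going-down transformation, reduces this case to showing that $\Lambda^{n}_\calG(C_0(center(m,\Delta)))$ is an isomorphism for all $n$.

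By corollary \ref{center is closed in some etale space}, $center(m,\Delta)$ is a $\calG$-invariant closed subset of the \'etale $\calG$-space $A^m$, and $A^m$ is proper by lemma \ref{A^m is proper}. It remains to see that $center(m,\Delta)$ is $\calG$-compact. It is closed in $|\Delta|$ because $A^m\cap|\Delta|$ equals the closed set $|\Delta|\setminus\bigl(\text{open stratum}\bigr)\cup\ldots$ restricted appropriately. More simply, we take the image of the $\calG$-compact set $|\Delta|$ under the continuous barycenter map $\mu\mapsto\frac1{m+1}\sum_{y\in supp\mu}\delta_y$ on the top stratum. Alternatively, $center(m,\Delta)$ is closed in the $\calG$-compact proper space $|\Delta|$, since it is closed in $A^m$ and the top stratum is open. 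Once $\calG$-compactness is established, proposition \ref{going-down 0-dim} (with $Y=A^m$ and $Z=center(m,\Delta)$) finishes the argument.

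The main obstacle is this last point. One must verify carefully that $center(m,\Delta)$ is closed in $|\Delta|$ when $m=\dim\Delta$, so that it inherits $\calG$-compactness. We would argue with nets, using lemma \ref{reindex}: a net of centers of $m$-simplices converging in $|\Delta|$ has a limit whose support has $m+1$ points with equal weights. This is because there are no simplices of higher dimension to absorb mass, and $(H_2)$ keeps the limit in $|\Delta|$.
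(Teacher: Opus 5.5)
Your proposal is correct and follows the paper's own proof: you pass to the typed barycentric subdivision, induct on $\dim\Delta$ using the exact sequence for $C_0(|\Delta|\setminus|\Delta^{m-1}|)\to C_0(|\Delta|)\to C_0(|\Delta^{m-1}|)$ and the five lemma, and reduce the open stratum $center(m,\Delta)\times\sigma_m$ by suspension to proposition \ref{going-down 0-dim} applied to $center(m,\Delta)\subseteq A^m$. The paper only asserts that $center(m,\Delta)$ is $\calG$-compact; your closing net argument via lemma \ref{reindex} correctly supplies this (each support point of the limit keeps weight exactly $\frac{1}{m+1}$, so total mass $1$ forces $m+1$ support points, which makes $center(m,\Delta)$ a closed $\calG$-invariant subset of the $\calG$-compact space $|\Delta|$), but the two ``alternative'' justifications you sketch before it do not work as stated and should be dropped.
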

\begin{proof}
    By proposition \ref{bary center subdivision invariance} and proposition \ref{promence property of barycenter subdivision}, after replacing $(Y,\Delta)$ by its barycenter subdivision, we can assume that $(Y,\Delta)$ is typed. Now we apply induction on dimension of $\Delta$.

    When $\Delta$ has dimension 0, by proposition \ref{AmBm}, $|\Delta|$ is $\calG$-equivariantly homeomorphic to a $\calG$-compact subset of $Y$. In this case the statement is already proved in proposition \ref{going-down 0-dim}.

    Now assume that $\Delta$ has dimension $m$ and the dimension $m-1$ case is proved. Since $\Delta$ is typed, by proposition \ref{typed sim com}, $U:=|\Delta|\setminus|\Delta^{m-1}|$ is $\calG$-equivariantly homeomorphic to $center(m,\Delta)\times \mathbb R^m$. While by corollary \ref{center is closed in some etale space}, $center(m,\Delta)$ is $\calG$-compact subset of some \'etale $\calG$-space. By  proposition \ref{going-down 0-dim}, $\Lambda^*_\calG(center(m,\Delta))$ is an isomorphism. And $\Lambda$ is compatible with suspension, hence $\Lambda^*_\calG(U)$ is also an isomorphism.

    Apply lemma \ref{going-down functor long exactness} to the short exact sequence in $\ca C(\calG)$
    \[0\ra C_0(U)\ra C_0(|\Delta|)\ra C_0(|\Delta^{m-1}|)\ra 0,\]
     we have a commutative diagram where the two rows are exact sequences.

    {\small
\begin{equation}
\xymatrix@C=6em@R=3em{
 F^{n-1}_\calG(C_0(U)) \ar[r]^{\Lambda^{n-1}_\calG(U)} \ar[d] & G^{n-1}_\calG(C_0(U)) \ar[d] \\
 F^{n}_\calG(C_0(|\Delta^{m-1}|)) \ar[r]^{\Lambda^n_\calG(|\Delta^{m-1}|)} \ar[d] & G^n_\calG(C_0(|\Delta^{m-1}|)) \ar[d] \\
 F^n_\calG(C_0(|\Delta|)) \ar[r]^{\Lambda^n_\calG(|\Delta|)} \ar[d] & G^n_\calG(C_0(|\Delta|)) \ar[d] \\
 F^{n}_\calG(C_0(U)) \ar[r]^{\Lambda^{n}_\calG(U)} \ar[d] & G^{n}_\calG(C_0(U)) \ar[d] \\
 F^{n+1}_\calG(C_0(|\Delta^{m-1}|)) \ar[r]^{\Lambda^{n+1}_\calG(|\Delta^{m-1}|)} & G^{n+1}_\calG(C_0(|\Delta^{m-1}|))
}
\end{equation}
}

We have seen that the first and the fourth horizontal arrows are isomorphisms. The second and the fifth arrows are also isomorphisms by assumption. The five lemma implies that $\Lambda^n_\calG(|\Delta|)$ is an isomorphism.
\end{proof}

\begin{proof}[Proof of theorem \ref{going-down principle}]
    Theorem \ref{going-down principle} is implied by proposition \ref{enough to run over rips complex} and proposition \ref{going-down sim complex}.
\end{proof}

\subsection{Baum--Connes conjecture for \'etale groupoids that are strongly amenable at infinity}
In this section we generalize the main result of \cite{bonicke2020going} for all second countable \'etale groupoids. 
It was also firstly proved in \cite{bonicke2024categorical} via a categorical approach.

\begin{lem}\label{Suspension in equivariant kk}
    Let $\calG$ be a second countable \'etale groupoid and $A,B$ be separable $\calG$-\cst-algebras. Let $\kk_n^\calG(A,B):=\kk_0^\calG(A\otimes C_0(\mathbb R)^{\otimes {|n|}},B)$ for any $n\neq 0$. Then for any $n\in \mathbb Z$, there is a natural isomorphism 
    \[S^n_{\calG}:\kk^\calG_{n+1}(-,A)\ra \kk^\calG_{n}(-\otimes C_0(\mathbb R),A).\]
\end{lem}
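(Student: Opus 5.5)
The plan is to reduce the statement to Bott periodicity in equivariant KK-theory together with the definition $\kk^\calG_n(A,B)=\kk^\calG_0(A\otimes C_0(\mathbb R)^{\otimes|n|},B)$ (and $\kk^\calG_0$ for $n=0$). The key input is that $C_0(\mathbb R)^{\otimes 2}=C_0(\mathbb R^2)$, with trivial $\calG$-action, is $\kk^\calG$-equivalent to $\mathbb C$. More precisely, $C_0(\calG\units)\otimes C_0(\mathbb R^2)$ is $\kk^\calG$-equivalent to $C_0(\calG\units)$. This follows by applying $\tau^\calG_{C_0(\calG\units)}$ (pullback along $\calG\units\ra pt$, i.e. the map $p^*:\kk(\mathbb C,\mathbb C)\ra\kk^\calG(C_0(\calG\units),C_0(\calG\units))$ of Le Gall) to the nonequivariant Bott and Dirac elements $\beta\in\kk(\mathbb C,C_0(\mathbb R^2))$ and $\alpha\in\kk(C_0(\mathbb R^2),\mathbb C)$. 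Then $\tau_D$ is applied for $D$ arbitrary to obtain a natural isomorphism $D\otimes C_0(\mathbb R^2)\simeq D$ in $\kk^\calG$. Since $p^*$ and $\tau$ are compatible with Kasparov products, the relations $\beta\otimes\alpha=1$ and $\alpha\otimes\beta=1$ persist.

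I will then define $S^n_\calG$ case by case. For $n\geq 0$, the identity $(D\otimes C_0(\mathbb R))\otimes C_0(\mathbb R)^{\otimes n}\cong D\otimes C_0(\mathbb R)^{\otimes(n+1)}$ gives $\kk_{n+1}^\calG(D,A)=\kk^\calG_n(D\otimes C_0(\mathbb R),A)$ tautologically, so $S^n_\calG$ is induced by this canonical isomorphism of algebras and is clearly natural in $D$. For $n\leq -2$, I have $|n+1|=|n|-1$, and $\kk^\calG_n(D\otimes C_0(\mathbb R),A)=\kk^\calG_0(D\otimes C_0(\mathbb R)^{\otimes(|n|+1)},A)$ must be compared with $\kk^\calG_0(D\otimes C_0(\mathbb R)^{\otimes(|n|-1)},A)$. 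The comparison is precomposition with the Dirac element $\tau_{D\otimes C_0(\mathbb R)^{\otimes(|n|-1)}}(\alpha)$, which is invertible by the first paragraph. For $n=-1$, the element $\kk^\calG_0(D,A)\ra\kk^\calG_{-1}(D\otimes C_0(\mathbb R),A)=\kk^\calG_0(D\otimes C_0(\mathbb R)^{\otimes 2},A)$ is handled the same way.

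Naturality in the first variable is the remaining check. Given $x\in\kk^\calG(D_1,D_2)$, I need $\tau_{C_0(\mathbb R)^{\otimes k}}(x)$ to commute with $\tau_{D}(\alpha)$. This is the usual commutativity of exterior Kasparov products with elements coming from the trivially-acted factor, which holds in $\kk^\calG$ by associativity and commutativity of the exterior product. The analogous $C_0(X)$-version of Proposition \ref{functoriality of tau} handles the case where $x$ is a $*$-homomorphism, and the decomposition property (Lafforgue, as already used in Proposition \ref{ind_f is natural transformation}) extends this to general $x$.

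The main obstacle I expect is justifying that equivariant Bott periodicity holds in Le Gall's groupoid KK-theory, i.e. that $p^*$ is multiplicative and that $\tau$ respects products. I would cite Le Gall's functoriality results for this rather than reprove it.
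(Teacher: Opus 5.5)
Your proposal is correct and is essentially the paper's proof: the identity for $n\geq 0$, and for $n\leq -1$ precomposition with $\tau^\calG_{D\otimes C_0(\mathbb R)^{\otimes(-1-n)}}$ applied to the pullback along $\calG\ra *$ of the Bott/Dirac elements, with naturality coming from Proposition \ref{functoriality of tau}; you correctly use the Dirac element $\alpha$ there, which is what makes the product type-check, whereas the paper writes $\beta$. The only difference is that the paper needs and checks naturality only for equivariant $*$-homomorphisms in $\ca C(\calG)$, so your extension to arbitrary Kasparov morphisms via the decomposition property goes beyond what is required.
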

\begin{proof}
    For $n\geqslant 0$, the functors on the two sides are equal, hence we can take $S^n_\calG$ as the identity natural transformation.

    For $n\leqslant -1$, we need to bring in the Bott periodicity. Let $\beta\in \kk_0(\mathbb C,C_0(\mathbb R)^{\otimes 2})$ and $\alpha\in \kk_0(C_0(\mathbb R)^{\otimes 2},\mathbb C)$ be the Bott elements (see \cite[6.7]{Skandalis91}), hence $\alpha\otimes \beta=1_{\mathbb C}$ and $\beta\otimes \alpha=1_{C_0(\mathbb R)^{\otimes 2}}$. Let $*$ be the groupoid of one point, so the trivial map $\calG\ra *$ is a strict morphism. Then let $\beta_\calG\in \kk_0^\calG(C_0(\calG\units),C_0(\calG\units)\otimes C_0(\mathbb R)^{\otimes 2})$ and $\alpha_\calG\in \kk^\calG_0(C_0(\calG\units)\otimes C_0(\mathbb R)^{\otimes 2},C_0(\calG\units))$ be respectively pullback of $\beta$ and $\alpha$ by this strict morphism. For any $D\in \ca C(\calG)$, let $\beta_{\calG,D}=\tau_D^\calG(\beta_\calG)\in \kk_0^\calG(D,D\otimes C_0(\mathbb R)^{\otimes 2})$, $\alpha_{\calG,D}=\tau_D^\calG(\alpha_\calG)\in \kk^\calG_0(D\otimes C_0(\mathbb R)^{\otimes 2},D)$. So $\alpha_{\calG,D}$ and $\beta_{\calG,D}$ are inverse to each other. For convenience of writing let $id_k=id_{C_0(\mathbb R)^{\otimes k}}$ for any $k\in \mathbb N_+$. Then for any morphism $f:D_1\ra D_2$ in $\ca C(\calG)$, by proposition \ref{functoriality of tau},
    \[(f\otimes id_2)_*(\beta_{\calG,D_1})=f^*(\beta_{\calG,D_2}).\]

    We define the natural isomorphism $S^n_\calG=(S^n_{\calG,D})_{D\in \ca C(\calG)}$ as,
    \[S^n_{\calG,D}:\kk_0^\calG(D\otimes C_0(\mathbb R)^{\otimes(-1-n)},A)\ra \kk_0^\calG(D\otimes C_0(\mathbb R)^{\otimes(1-n)},A),\]
    \[y\mapsto \beta_{\calG,D\otimes C_0(\mathbb R)^{\otimes(-1-n)}}\otimes y.\]

    For any morphism $f:D_1\ra D_2$ in $\ca C(\calG)$, $y\in \kk_0^\calG(D_2\otimes C_0(\mathbb R)^{\otimes(-1-n)},A)$, we have
    \begin{align*}
        & (f\otimes id_{1-n})^*(\beta_{\calG,D_2\otimes C_0(\mathbb R)^{\otimes(-1-n)}}\otimes y) \\
        & = (f\otimes id_{1-n})^*(\beta_{\calG,D_2\otimes C_0(\mathbb R)^{\otimes(-1-n)}})\otimes y\\
        & = \beta_{\calG,D_1\otimes C_0(\mathbb R)^{\otimes(1-n)}}\otimes [f\otimes id_{-1-n}]\otimes y\\
        & = \beta_{\calG,D_1\otimes C_0(\mathbb R)^{\otimes(1-n)}}\otimes (f\otimes id_{-1-n})^*(y).
    \end{align*}
    That is, the following diagram commutes.
    \[\xymatrix{
    \kk_0^\calG(D_2\otimes C_0(\mathbb R)^{\otimes(-1-n)},A)\ar[r]^{S^n_{\calG,D_2}} \ar[d]_{(f\otimes id_{-1-n})^*} &  \kk_0^\calG(D_2\otimes C_0(\mathbb R)^{\otimes(1-n)},A) \ar[d]^{(f\otimes id_{1-n})^*} \\
    \kk_0^\calG(D_1\otimes C_0(\mathbb R)^{\otimes(-1-n)},A)\ar[r]_{S^n_{\calG,D_1}} &  \kk_0^\calG(D_1\otimes C_0(\mathbb R)^{\otimes(1-n)},A)
    }
    \]
    So we proved that $S^n_\calG$ is a well-defined natural isomorphism.
\end{proof}

\begin{prop}\label{going-down functor example 1}
    Let $\calG$ be a second countable \'etale groupoid and $A$ be a separable $\calG$-\cst-algebra. Then the collection $(\kk^\calH_n(-,A|_\calH))_{\calH\in \ca S(\calG)}$ is a going-down functor, where $\kk^\calH_n(-,A|_\calH)$ is defined as $\kk_0^\calH(-\otimes C_0(\mathbb R)^{\otimes |n|}, A|_\calH)$.

    If $A,B$ are separable $\calG$-\cst-algebras and $x\in \kk^\calG(A,B)$, define
    \[\Lambda^n_\calH:=-\otimes res^\calG_\calH(x): \kk^\calH_n(-,A|_\calH)\ra \kk^\calH_n(-,B|_\calH),\]
    then $\Lambda = (\Lambda^n_\calH)_{\calH\in \ca S(\calG)}$ is a going-down transformation.
\end{prop}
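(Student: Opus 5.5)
We check the axioms of a going-down functor one at a time for $F^n_\calH:=\kk^\calH_n(-,A|_\calH)$, restricted to $\ca C(\calG)$ via restriction to $\calH$; that is, we evaluate on $D|_\calH$. Then we check naturality of $\Lambda$.

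\textbf{Cohomological axioms.} Contravariance and additivity of $\kk^\calH_0(-\otimes C_0(\mathbb R)^{\otimes|n|},A|_\calH)$ are standard. Homotopy invariance follows from homotopy invariance of equivariant $\kk$-theory, as in \cite{le1999theorie}: a $\calH$-homotopy $D_1\ra D_2[0,1]$ gives equal classes.

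For half-exactness, take a short exact sequence $0\ra I\ra D\ra D/I\ra 0$ of commutative proper $\calH$-algebras, so $D=C_0(Z)$ with $Z$ proper. We invoke the fact that equivariant $\kk$-theory is half-exact in the first variable for semi-split extensions. Since $\calH$ is étale and $Z$ is proper, a cutoff function (proposition \ref{cuttoff function exist}) averages a completely positive contractive splitting into an equivariant one. Alternatively, commutative extensions $C_0(U)\ra C_0(Z)\ra C_0(Z\setminus U)$ are nuclear, so we get a cp splitting, and averaging with the cutoff function makes it $\calH$-equivariant. This gives the six-term, hence middle, exactness.

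Suspension is exactly lemma \ref{Suspension in equivariant kk}, applied to $\calH$ with coefficient $A|_\calH$.

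\textbf{Induction axiom.} For $\calH_1\subseteq\calH_2$ in $\ca S(\calG)$, let $\Omega=(\calH_2)_{\calH_1\units}$. We define $I^{\calH_2}_{\calH_1}(n)$ as the inverse of the compression isomorphism of theorem \ref{compression isomorphism} (or its $\kk_n$-version, the last corollary of section 4), applied to the groupoid $\calH_2$, its proper open subgroupoid $\calH_1$, and $B=A|_{\calH_2}$, using $(A|_{\calH_2})|_{\calH_1}=A|_{\calH_1}$. When $\calH_2$ is itself proper, the theorem still applies, because only $\calH_1$ needs to be proper.

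Naturality in $D$ comes from the adjunction description: compression equals $x\mapsto \bar\eta_D\otimes \ind_\Lambda(x)$ up to $\upsilon$, and $\bar\eta$ is natural (corollary \ref{induction-restriction adj}). Compatibility with suspension uses the identification $\ind_\Omega(D\otimes C_0(\mathbb R))\cong \ind_\Omega D\otimes C_0(\mathbb R)$. Since $i_D$, restriction, and the Bott elements $\beta_{\calH,-}=\tau(\beta_\calH)$ are all compatible with exterior tensoring by $C_0(\mathbb R)$ (proposition \ref{functoriality of tau}), the squares commute.

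\textbf{Transformation.} Right multiplication by $res^\calG_\calH(x)$ is natural in the first variable by associativity of the Kasparov product. It commutes with $S^n_\calH$ because $S^n_\calH$ is left multiplication by a Bott element, or the identity, and associativity again applies. For induction, $comp^{\calH_2}_{\calH_1}(y\otimes res^{\calG}_{\calH_2}x)=comp(y)\otimes res^{\calG}_{\calH_1}x$, since restriction is a functor and $i_D^*$ acts on the left.

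The main obstacle I expect is half-exactness: the general $\kk^\calH$-theory is only exact for equivariantly semi-split extensions, so the equivariant splitting via cutoff averaging is the step needing care.
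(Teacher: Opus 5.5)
Apart from half-exactness, your proposal follows the paper's proof step by step. Homotopy invariance comes from the definition of $\kk^\calH$, and suspension from lemma~\ref{Suspension in equivariant kk}. The induction isomorphisms are the inverses of the compression maps, natural by corollary~\ref{induction-restriction adj}. The properties of $\Lambda$ follow from associativity of the Kasparov product and functoriality of restriction. (Incidentally, $F^n_\calH$ must be defined on $\ca C(\calH)$, not via restriction from $\ca C(\calG)$, for the induction axiom to make sense; you do in fact use it that way.)

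For half-exactness the paper only cites Tu, and your justification fails exactly at the step you flagged. For groupoids, an $\calH$-equivariant c.p.\ section $\sigma\colon C_0(F)\to C_0(Z)$ must in particular be $C_0(\calH\units)$-linear. The cutoff average of a Choi--Effros section is not even defined: $\gamma$ only maps $Z_{s(\gamma)}$ to $Z_{r(\gamma)}$, so $\sigma(\gamma\inv\cdot f)$ makes sense only if $\sigma$ is already fibrewise. Such a fibrewise section need not exist. Take $\calH=\calG\units\in\ca S(\calG)$, $Z=\calH\units$ and $F=S$ closed but not open; this is exactly the extension used in diagram~(\ref{diagram 4}). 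If $x\notin S$ and $h\in C_0(\calH\units)$ vanishes on $S$ with $h(x)=1$, then $h\,\sigma(f)=\sigma(h|_S f)=0$. So $\sigma(f)$ equals $f$ on $S$ and $0$ off $S$, which is discontinuous as soon as $f$ is nonzero at a boundary point of $S$.

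This is not only a defect of the method: half-exactness actually fails on such extensions. Let $\calG=\calH=[0,1]$ with only units, and let $B=C(S^2)$ be a $C([0,1])$-algebra via $v\mapsto |v_3|$. Consider $0\to C_0((0,1])\to C([0,1])\to C(\{0\})\to 0$.
\begin{itemize}
\item For a $C([0,1])$-linear representation $\pi$ of $C(\{0\})$ on a Hilbert $B$-module, $\xi=\pi(1)e$ satisfies $\xi\cdot|v_3|=0$. Hence $\langle\xi,\xi\rangle$ vanishes off the nowhere dense equator, hence everywhere, so $\kk^\calH(C(\{0\}),B)=0$.
\item On the other hand $\kk^\calH(C([0,1]),B)\cong K_0(C(S^2))$.
\item The Bott class restricts to $0$ in $\kk^\calH(C_0((0,1]),B)$. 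Cut the module down to $\overline{\pi(C_0((0,1]))E}$, which lives over the two open hemispheres, and contract the Bott projection radially to a constant one on each hemisphere. Multiplication by $C_0((0,1])$ stays compact along this homotopy.
\end{itemize}
So the three-term sequence is not exact in the middle. Neither your argument nor the citation in the paper can give half-exactness for every extension in $\ca C(\calH)$. Averaging can at best handle extensions that already admit a fibrewise ($C_0(\calH\units)$-linear) c.p.\ section, so the axiom would have to be restricted to such extensions.
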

\begin{proof}
    For the collection of functors $(\kk^\calH_n(-,A|_\calH))_{\calH\in \ca S(\calG)}$, the homotopy invariance is obtained from definition of groupoid equivariant KK-theory. The half-exactness is proved in \cite[Proposition 7.2, Lemma 7.7]{tu1999conjecture-n}. By lemma \ref{Suspension in equivariant kk}, suspension axiom is also valid.
    
    Using corollary \ref{induction-restriction adj}, for any two elements $\calH_1\subseteq \calH_2$ of $\ca S(\calG)$, there is natural isomorphism 
    \[\kk^{\calH_1}_*(-,A|_{\calH_1})\ra \kk^{\calH_2}_*(\ind^{\calH_2}_{\calH_1}(-),A|_{\calH_2}),\] 
    constitute of inverses of compression isomorphisms. So we proved the induction axiom. In conclusion  $(\kk^\calH_*(-,A|_\calH))_{\calH\in \ca S(\calG)}$ is a going-down functor.

    For any morphism $f:D_1\ra D_2$ in $\ca C(\calG)$, $y\in \kk^\calH_n(D_2,A|_\calH)$,
    \[f^*(y)\otimes res^\calG_\calH(x)=f^*(y\otimes res^\calG_\calH(x)),\]
    that is the diagram
    \[\xymatrix{
    \kk_n^\calH(D_2,A|_\calH) \ar[r]^{\Lambda^n_\calH(D_2)} \ar[d]_{f^*} & \kk_n^\calH(D_2,B|_\calH) \ar[d]^{f^*}\\
    \kk_n^\calH(D_1,A|_\calH) \ar[r]^{\Lambda^n_\calH(D_1)} & \kk_n^\calH(D_1,B|_\calH)
    }\]
    is commutative, therefore $\Lambda_\calH^n$ is a natural transformation.
    
    Clearly $\Lambda^n_\calH$ are compatible with suspension for $n\geqslant 0$. For $n\leqslant -1$, let the natural transformations $S^n_\calH$ be described as in lemma \ref{Suspension in equivariant kk}. Then the compatibility with suspension comes from the associativity of Kasparov product.
    
    Now any two elements $\calH_1\subseteq \calH_2$ of $\ca S(\calG)$, $D\in \ca C(\calH_1)$ and $y\in \kk^{\calH_2}(\ind^{\calH_2}_{\calH_1}D,A|_{\calH_2})$, 
    \begin{align*}
        comp^{\calH_2}_{\calH_1}(y)\otimes res^{\calG}_{\calH_1}(x) & = [i_D]\otimes res^{\calH_2}_{\calH_1}(y)\otimes res^{\calG}_{\calH_1}(x)\\
        & = [i_D]\otimes res^{\calH_2}_{\calH_1}(y)\otimes res^{\calH_2}_{\calH_1}(res^\calG_{\calH_2}(x))\\
        & = [i_D]\otimes res^{\calH_2}_{\calH_1}(y\otimes res^{\calG}_{\calH_2}(x))\\
        & =comp^{\calH_2}_{\calH_1}(y\otimes res^{\calG}_{\calH_2}(x)).
    \end{align*}
    Hence, $\Lambda$ is compatible with induction. (This is also \cite[Lemma 6.7]{bonicke2020going}.)
\end{proof}

\begin{thm}\label{going down-standard example result}
    \textnormal{\cite[Theorem 4.4]{bonicke2024categorical}}
    Let $\calG$ be a second countable \'etale groupoid, $A, B$ be separable $\calG$-\cst-algebras. Suppose that there is an element $x\in \kk_0^\calG(A,B)$ such that
    \[\kk^\calH_*(C_0(\calH\units), A|_\calH) \xrightarrow{-\otimes res_\calH^\calG (x)} \kk^\calH_*(C_0(\calH\units),B|_\calH) \]
    is an isomorphism for all proper open subgroupoid $\calH\subseteq \calG$. Then
    \[-\otimes x:K^{top}_*(\calG;A)\ra K^{top}_*(\calG; B)\]
    is an isomorphism.
\end{thm}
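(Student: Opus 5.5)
The plan is to apply Theorem \ref{going-down principle} to the going-down functors and transformation supplied by Proposition \ref{going-down functor example 1}. Take $F=(\kk^\calH_n(-,A|_\calH))_{\calH\in\ca S(\calG)}$ and $G=(\kk^\calH_n(-,B|_\calH))_{\calH\in\ca S(\calG)}$. By Proposition \ref{going-down functor example 1}, both are going-down functors for $\calG$, and $\Lambda^n_\calH=-\otimes res^\calG_\calH(x)$ defines a going-down transformation $\Lambda:F\ra G$.

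The hypothesis of the theorem says exactly that $\Lambda^n_\calH(C_0(\calH\units))$ is an isomorphism for every proper open subgroupoid $\calH$ and every $n$. One small bookkeeping point needs checking: the statement uses $\kk^\calH_*$ with the usual graded convention. For $n\geqslant 0$ this agrees with the paper's convention $\kk^\calH_n(D,A)=\kk^\calH_0(D\otimes C_0(\mathbb R)^{\otimes |n|},A)$ by definition. For negative $n$ the two conventions are identified through the Bott isomorphisms of Lemma \ref{Suspension in equivariant kk}, which commute with right multiplication by $res^\calG_\calH(x)$ by associativity of the Kasparov product. Since $\kk^\calH_*$ is $2$-periodic, it is in any case enough to know the isomorphism in degrees $0$ and $1$, and these are covered directly.

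With these inputs, Theorem \ref{going-down principle} gives that
\[\varinjlim_{Z\subseteq\E\calG}\kk^\calG_n(C_0(Z),A)\xrightarrow{-\otimes x}\varinjlim_{Z\subseteq\E\calG}\kk^\calG_n(C_0(Z),B)\]
is an isomorphism for every $n$. Here $\Lambda^n_\calG=-\otimes res^\calG_\calG(x)=-\otimes x$. By definition, the left side is $K^{top}_n(\calG;A)$ and the right side is $K^{top}_n(\calG;B)$, and the map is the one induced by $x$ on topological K-theory. This proves the claim.

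The only step that needs care, and hence the main (mild) obstacle, is matching the grading conventions, including negative degrees, with the ones used to define $K^{top}_*$. Apart from that, the whole argument rests on Theorem \ref{going-down principle} and the verification of the going-down axioms in Proposition \ref{going-down functor example 1}, in particular the induction axiom coming from the compression isomorphism.
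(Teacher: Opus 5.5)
Your proposal is correct and is the paper's proof: the paper applies Theorem \ref{going-down principle} to the going-down functors $(\kk^\calH_n(-,A|_\calH))_{\calH}$ and $(\kk^\calH_n(-,B|_\calH))_{\calH}$, with the going-down transformation $-\otimes res^\calG_\calH(x)$ from Proposition \ref{going-down functor example 1}. Your remark on grading is sound but left implicit in the paper: Bott periodicity commutes with $-\otimes res^\calG_\calH(x)$ by associativity, and the convention $\kk_n=\kk_0(-\otimes C_0(\mathbb R)^{\otimes|n|},-)$ then reduces the hypothesis to degrees $0$ and $1$.
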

\begin{proof}
    Apply theorem \ref{going-down principle} to the going-down transformation defined in proposition \ref{going-down functor example 1}.
\end{proof}

About the amenability and amenability at infinity of \'etale groupoids the reference is \cite{anantharaman2000amenable} and \cite{anantharaman2016exact}. Recall that the amenability can be characterized as the following.

\begin{prop}\label{amenability}
    \textnormal{\cite[Proposition 3.2, Remark 3.4]{anantharaman2016exact}}
    Let $\calG$ be an \'etale groupoid. Then $\calG$ is amenable if and only if there exists a net $(\phi_i)_i$ of non-negative functions in $C_c(\calG)$ such that
    \begin{enumerate}
        \item for every $i$, $\phi_i|_{\calG\units}$ is uniformly bounded by 1;
        \item $\phi_i$ converges to 1 uniformly on every compact of $\calG$.
    \end{enumerate}
\end{prop}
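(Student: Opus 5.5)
The statement to prove is the characterization of amenability of an \'etale groupoid by nets of compactly supported positive functions, cited from \cite[Proposition 3.2, Remark 3.4]{anantharaman2016exact}. I will start from the standard definition of (topological) amenability for \'etale groupoids: there is a net $(g_i)_i$ in $C_c(\calG)$ with $g_i\geqslant 0$ such that $x\mapsto\sum_{\gamma\in\calG^x}g_i(\gamma)$ is bounded by $1$, and such that $\sum_{h\in\calG^{r(\gamma)}}|g_i(\gamma\inv h)-g_i(h)|\ra 0$ uniformly on compact subsets of $\calG$. Equivalently, one can ask for a net $(\xi_i)$ in $C_c(\calG)$ with $\|\xi_i\|_{2}\leqslant 1$ fibrewise and with the positive-type coefficients tending to $1$ uniformly on compacta. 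The plan is to pass between these ``approximate invariant mean'' data and positive-type functions.

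For the direction from amenability to the net $(\phi_i)$, I take $\xi_i\in C_c(\calG)$ with $\sum_{h\in\calG^x}|\xi_i(h)|^2\leqslant 1$ and almost invariance, and set
\[\phi_i(\gamma)=\sum_{h\in\calG^{r(\gamma)}}\overline{\xi_i(h)}\,\xi_i(\gamma\inv h).\]
By lemma \ref{sum of fiber} this is continuous, and it is compactly supported, with support in $\mathrm{supp}(\xi_i)\inv\mathrm{supp}(\xi_i)$. To get non-negativity I replace $\xi_i$ by $|\xi_i|$, which only improves the invariance estimate. On units, $\phi_i(x)=\sum_{h\in\calG^x}|\xi_i(h)|^2\leqslant 1$. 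The convergence $\phi_i\ra 1$ uniformly on compacta follows from the Cauchy--Schwarz estimate $|\phi_i(\gamma)-\phi_i(r(\gamma))|\leqslant\|\xi_i(\gamma\inv\cdot)-\xi_i\|_{2,r(\gamma)}$, together with the normalization $\phi_i(x)\ra 1$ uniformly on compact subsets of $\calG\units$.

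For the converse, the key point is that such $\phi_i$ can be upgraded to positive-type functions. I would follow \cite{anantharaman2016exact} (and \cite{anantharaman2000amenable}) and use that a net of compactly supported functions converging to $1$ uniformly on compacta, with the bound on units, yields Schur multipliers on $C^*_r(\calG)$ that converge pointwise to the identity. Nuclearity of $C^*_r(\calG)$, or a direct construction via square roots in $C_0$-Hilbert modules, then gives amenability. The cleanest route I would try first is to apply the GNS/Kasparov--Stinespring factorization over $C_0(\calG\units)$: a positive-type $\phi_i$ is written as $\langle\xi_i,\gamma\xi_i\rangle$ for a section $\xi_i$ of the Hilbert module $L^2(\calG,r)$, and $\xi_i$ is then approximated by compactly supported continuous sections.

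The main obstacle is this converse. As literally stated, the conditions do not include positive definiteness, so I expect to need the implicit hypothesis in \cite[Remark 3.4]{anantharaman2016exact} that the $\phi_i$ are of positive type. With that assumption, the hardest step is producing continuous compactly supported $\xi_i$ from positive-type $\phi_i$ while keeping uniform control over compacta. I would handle this by truncating with a cutoff in $C_c(\calG\units)$ and using the uniform finiteness of fibres of compact sets from lemma \ref{uniformly finite fiber}.
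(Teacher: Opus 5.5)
The paper gives no proof of this statement; it is quoted from \cite{anantharaman2016exact}. You are right that the printed version is missing ``of positive type''. As written the converse is false: for a non-amenable countable group $\Gamma$ (an \'etale groupoid with a single unit), the indicator functions of an increasing exhausting sequence of finite subsets satisfy (1) and (2). The paper itself applies the result in both directions to positive definite functions in Lemma \ref{technical lemma for changing amenable space}, which confirms the intended statement.

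Your forward direction is the standard argument and works, with one correction. The definition you start from must also require $\sum_{\gamma\in\calG^x}g_i(\gamma)\ra 1$ uniformly on compact subsets of $\calG\units$; as you state it, $g_i=0$ would qualify. This is the normalization you invoke later. With it, $\xi_i=g_i^{1/2}$ has the needed $\ell^2$ properties because $|\sqrt{a}-\sqrt{b}|^2\leqslant|a-b|$ for $a,b\geqslant 0$. (With your formula, $\mathrm{supp}(\phi_i)\subseteq\mathrm{supp}(\xi_i)\,\mathrm{supp}(\xi_i)\inv$.)

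The genuine gap is the converse: your sketch does not contain the decisive step.
\begin{itemize}
\item Appealing to nuclearity of $C^*_r(\calG)$ only trades the claim for a deeper theorem (nuclearity implies amenability). The usual proof of that theorem goes through a positive-type criterion of exactly this kind.
\item A bare GNS/Kasparov--Stinespring factorization does not help either. Every positive-type function is a coefficient of \emph{some} $\calG$-Hilbert bundle; the constant function $1$ is a coefficient of the trivial one. The content of the converse is that compact support forces the \emph{regular} module $L^2(\calG,r)$, and your proposal does not say how.
\end{itemize}

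The missing idea is the following. Let $\phi\in C_c(\calG)$ be of positive type. For $\eta=\sum_k c_k\delta_{\gamma_k\inv}$ with $\gamma_k\in\calG^x$, one computes $\langle\lambda_x(\phi)\eta,\eta\rangle=\sum_{j,k}\overline{c_j}c_k\,\phi(\gamma_j\inv\gamma_k)\geqslant 0$. Hence $\phi\geqslant 0$ in $C^*_r(\calG)$.
\begin{itemize}
\item Let $\zeta=\phi^{1/2}\in C^*_r(\calG)$. As a function in $C_0(\calG)$ it has fibrewise $\ell^2$-norms at most $\|\zeta\|_r$. Moreover $\phi=\zeta^* * \zeta$, which after $h\mapsto h\inv$ is a coefficient of the form you use.
\item Approximate $\zeta$ in the reduced norm by $\eta\in C_c(\calG)$. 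Since $\|a\|_\infty\leqslant\|a\|_r$ on $C^*_r(\calG)$, the coefficient $\eta^* * \eta$ is uniformly close to $\phi$ on all of $\calG$.
\item After a slight rescaling, the identity $\|\gamma\eta-\eta\|^2=\|\eta\|^2_{r(\gamma)}+\|\eta\|^2_{s(\gamma)}-2\,\mathrm{Re}\langle\eta,\gamma\eta\rangle$ yields almost invariance uniformly on compacta. Then $|\eta|^2$ satisfies the $\ell^1$ definition.
\end{itemize}

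Your proposed substitute, a cutoff in $C_c(\calG\units)$ together with Lemma \ref{uniformly finite fiber}, cannot do this. Multiplying by a function of $r$ or $s$ does not make a section compactly supported on $\calG$ when the fibres $\calG^x$ are infinite. It also gives no control of the coefficients. What supplies the uniform control is the density of $C_c(\calG)$ in $C^*_r(\calG)$, together with the two norm inequalities above.
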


Recall that a locally compact Hausdorff groupoid is called amenable at infinity, if there exists a locally compact Hausdorff $\calG$-space $Y$ with proper surjective anchor map $\rho$, such that $\calG\ltimes Y$ is amenable. If moreover $\rho$ admits a continuous section, we say that $\calG$ is strongly amenable at infinity. See \cite[Definition 4.1]{anantharaman2016exact}.

\begin{lem}\label{technical lemma for changing amenable space}
    \textnormal{\cite[Proposition 3.1.28]{lassagne2013k}}
    Let $\calG$ be a second countable \'etale groupoid, $Y$ be a second countable locally compact Hausdorff $\calG$-space with proper surjective anchor map $\rho$, and $\rho$ admits a continuous section. Let $(P(Y), \tilde\rho)$ be defined as in the section \ref{section-space of measures}. Then $\tilde \rho$ is a continuous surjective proper map that admits a continuous section, $P(Y)$ is a second countable locally compact Hausdorff space, and $\calG$ acts amenably on $P(Y)$.
\end{lem}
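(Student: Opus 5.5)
The topological assertions follow directly from Proposition \ref{first properties of P(Y)}; the real content is the amenability of $\calG\ltimes P(Y)$, which I would obtain by averaging an amenability net for $\calG\ltimes Y$ against the measures $\mu\in P(Y)$.

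\textbf{Topological assertions.} Since $\rho$ is surjective, proper and admits a continuous section, parts (1), (2) and (3) of Proposition \ref{first properties of P(Y)} show that $\tilde\rho$ is continuous, surjective and proper, that it admits the continuous section $x\mapsto\delta_{s(x)}$, and that $P(Y)$ is locally compact. Hausdorffness of $P(Y)$ was noted earlier. Part (4) gives second countability. Proposition \ref{P(Y) has continuous action} makes $P(Y)$ a $\calG$-space.

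\textbf{Where the amenability input comes from.} The amenability claim has to be read in the setting in which the lemma is used, namely with $Y$ the space witnessing (strong) amenability at infinity, so that $\calG\ltimes Y$ is amenable. The case $Y=\calG\units$ shows why: then $P(Y)=\calG\units$, and the conclusion would say that $\calG$ itself is amenable. So the plan is to transport an amenability net from $Y$to $P(Y)$.

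\textbf{The averaging construction.} By Proposition \ref{amenability} applied to $\calG\ltimes Y$, choose non-negative $\phi_i\in C_c(\calG\ltimes Y)$ with $\phi_i\le 1$ on the unit space and $\phi_i\to 1$ uniformly on compacta. Define
\[\Phi_i(\gamma,\mu)=\int_Y \phi_i(\gamma,y)\,d\mu(y),\qquad (\gamma,\mu)\in\calG\times_{s,\tilde\rho}P(Y).\]
This makes sense because $\mu$ is a probability measure supported in $Y_{s(\gamma)}$. The two conditions of Proposition \ref{amenability} then check out as follows.
\begin{enumerate}
\item On units, $\Phi_i$ is bounded by $1$.
\item If $\phi_i\to1$ uniformly on $L\times_{\calG\units} Y_{s(L)}$ for a compact $L\subseteq\calG$, then $\Phi_i\to1$ uniformly on $L\times_{\calG\units}\tilde\rho\inv(s(L))$. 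This set contains any given compact subset of $\calG\ltimes P(Y)$.
\end{enumerate}

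\textbf{Continuity and compact support of $\Phi_i$ (the main obstacle).} Compact support is the easier half. If $\phi_i$ vanishes off a compact $C$, then $\Phi_i$ vanishes off $pr_\calG(C)\times\tilde\rho\inv(\text{compact})$, which is compact because $\tilde\rho$ is proper.

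Continuity in $(\gamma,\mu)$ is where the difficulty lies. For this I would first extend the compactly supported function $(\gamma,y)\mapsto\phi_i(\gamma,y)$ locally in $\gamma$, using a bisection $W\ni\gamma$, to a function in $C_c(Y)$ that varies continuously in sup-norm. This is the same device as in the proof of Proposition \ref{P(Y) has continuous action}, via Tietze's theorem and \cite[Lemma C.18]{williams2007crossed}. I would then combine weak-$*$ convergence of $\mu_\lambda$ with the estimate $|\mu_\lambda(g_\lambda)-\mu(g)|\le\|g_\lambda-g\|_\infty+|\mu_\lambda(g)-\mu(g)|$.

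Once $\Phi_i$ is continuous and compactly supported, Proposition \ref{amenability} applies to $\calG\ltimes P(Y)$ and gives its amenability.
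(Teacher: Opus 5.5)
Your topological part matches the paper. So do your observation that the statement tacitly needs $\calG\ltimes Y$ to be amenable (the paper's proof does start from ``since $\calG\ltimes Y$ is amenable'') and your averaging $\Phi_i(\gamma,\mu)=\mu(\phi_i(\gamma,-))$.

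\textbf{The gap.} It lies in what you verify about $\Phi_i$. You take $\phi_i$ merely non-negative and check only four things: the bound on units, uniform convergence to $1$ on compacta, continuity, and compact support. In that form the criterion is vacuous. The groupoid $\calG\ltimes P(Y)$ is second countable and locally compact, so it has an exhaustion by compacta $K_1\subseteq\inter(K_2)\subseteq K_2\subseteq\cdots$. Urysohn functions with values in $[0,1]$ that equal $1$ on $K_n$ then satisfy every condition you check, for any $\calG$ whatsoever. So your argument never really uses amenability of $\calG\ltimes Y$, and it cannot establish amenability of $\calG\ltimes P(Y)$. Proposition \ref{amenability} as printed says ``non-negative'', but the characterization in the cited source requires continuous functions of positive type. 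That is what the paper's proof of this lemma actually uses and transports.

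\textbf{The fix.} Choose the $\phi_i$ of positive type and show that $\Phi_i$ is of positive type. Let $\mu\in P(Y)$ with $\tilde\rho(\mu)=x$, and let $\gamma_1,\dots,\gamma_n\in\calG^x$. Then
\[
\bigl[\Phi_i(\gamma_j\inv\gamma_k,\gamma_k\inv\mu)\bigr]_{j,k}=\int_{Y_x}\bigl[\phi_i(\gamma_j\inv\gamma_k,\gamma_k\inv y)\bigr]_{j,k}\,d\mu(y).
\]
For each $y\in Y_x$, the elements $h_j=(\gamma_j,\gamma_j\inv y)$ lie in $(\calG\ltimes Y)^y$ and satisfy $h_j\inv h_k=(\gamma_j\inv\gamma_k,\gamma_k\inv y)$. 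So the integrand is positive semidefinite by the positive type of $\phi_i$, and averaging against the positive measure $\mu$ preserves this. With this step added, the rest of your verification goes through. That includes your bisection-based extension for continuity, which in fact yields an exact extension $G\in C_c(Y)$ with $\Phi_i(\gamma',\mu')=\mu'(G)$ for $\gamma'$ near $\gamma$.
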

\begin{proof}
    By proposition \ref{first properties of P(Y)} and proposition \ref{P(Y) has continuous action}, $\tilde \rho$ is a continuous surjective proper map that admits a continuous section, $P(Y)$ is a second countable locally compact Hausdorff $\calG$-space. It suffices to check the amenability of $\calG\curvearrowright P(Y)$.
    
    Since $\calG\ltimes Y$ is amenable, by lemma \ref{amenability}, there exists a net $(\phi_i)_i$ of continuous positive definite functions in $C_c(\calG\ltimes Y)$ such that,
    \begin{enumerate}
        \item for every $i$, $\phi_i|_{(\calG\ltimes Y)\units}$ is uniformly bounded by 1;
        \item $\phi_i$ converges to 1 uniformly on every compact of $\calG\ltimes Y$.
    \end{enumerate}
    Now we define $\psi_i$ a function on $\calG\ltimes P(Y)$ that $\psi_i(\gamma,\mu)=\mu(\phi_i(\gamma,-))$. If $((\gamma_\lambda,\mu_\lambda))_\lambda$ is a net that converges to $(\gamma,\mu)$ in $\calG\ltimes Y$, $(\phi_i(\gamma_\lambda,-))_\lambda$ is then a net that converges to $\phi_i(\gamma,-)$ in $\ca A$. There exists $g\in C_c(Y)$ such that $g|_{Y_{s(\gamma)}}=\phi_i(\gamma,-)$. The two net $(\phi_i(\gamma_\lambda,-))_\lambda$ and $g|_{Y_{s(\gamma)}}$ converges to a same limit in $\ca A$, therefore $\phi_i(\gamma_\lambda,-)-g|_{Y_{s(\gamma)}}\xrightarrow{\|\cdot\|_\infty}0$. Hence, $\psi_i(\gamma_\lambda,\mu_\lambda)-\psi_i(\gamma,\mu)=\mu_\lambda(\phi_i(\gamma_\lambda,-))-\mu(\phi_i(\gamma,-))=\mu_\lambda(\phi_i(\gamma_\lambda,-)-g|_{Y_{s(\gamma)}})+(\mu_\lambda-\mu)(g)$ converges to zero. So every $\psi_i$ is a continuous function.
    
    If $K_1\times_{\calG\units}K_2\subseteq \calG\ltimes Y$ is a compact subset that contains the support of $\phi_i$, $K_1\times_{\calG\units}\tilde\rho\inv(\rho(K_2))$ is then a compact subset containing the support of $\psi_i$. For every $\mu\in P(Y)$, let $y=\tilde\rho(y)$ and $\gamma_1,\cdots, \gamma_n\in \calG^{\rho(y)}$, the $n\times n$ matrix $[\psi_i(\gamma_j\inv\gamma_k, \gamma_j\inv \mu)]_{j,k}=[\mu(\phi_i(\gamma_j\inv\gamma_k,\gamma_j\inv-))]_{j,k}$ is positive-definite since the matrix $[\phi_i(\gamma_j\inv\gamma_k,\gamma_j\inv-)]_{j,k}$ is. For any $\mu\in P(Y)$, $\psi_i(\tilde\rho(\mu),\mu)=\mu(\phi(\tilde\rho(\mu),-))$ is also uniformly bounded by 1. Finally, $\phi_i\ra 1$ uniformly on every compact in form of $K_1\times_{\calG\units}K_2$, and this implies that $\psi_i\ra 1$ uniformly on every compact in form of $K_1\times_{\calG\units}\tilde\rho\inv(\rho(K_2))$. In conclusion, we proved the amenability of $\calG\ltimes P(Y)$.
\end{proof}

\begin{prop}\label{space whitness strongly amenablity induces homotopy equivalence}
    \textnormal{{\cite[Proposition 8.2]{bonicke2020going}}}
    Let $\calG$ be a second countable \'etale groupoid, and let $Y$ be a second countable locally compact Hausdorff $\calG$-space with proper surjective anchor map $\rho$, such that $\calG\ltimes Y$ is amenable, $\rho$ admits a continuous section $s:\calG\units\ra Y$,  for every $u \in \calG\units$ the fiber $Y_u$ is a convex space and $\calG$ acts by affine transformations. Then for any proper open subgroupoid $\calH \subseteq \calG$, $\rho|_{Y_{\calH\units}}:Y_{\calH\units}=\rho\inv(\calH\units)\ra \calH\units$ admits an $\calH$-equivariant continuous section $\tilde s:\calH\units\ra Y_{\calH\units}$, such that $\tilde s\circ \rho$ is homotopic to $id_{Y_{\calH\units}}$.
\end{prop}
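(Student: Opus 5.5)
We build the section by averaging over the finite stabilizer structure, using the convexity of fibers. By Proposition \ref{local structure of proper cocompact action of étale groupoid} we do not get a global finite group here, but $\calH$ is a proper étale groupoid, so by Proposition \ref{cuttoff function exist} there is a cutoff function $c:\calH\units\ra\mathbb R_{\geqslant 0}$ for $\calH$ ($\calH\units/\calH$ is $\sigma$-compact by second countability). Starting from the given continuous section $s$ of $\rho$, restricted to $\calH\units$, we define
\[\tilde s(x)=\sum_{h\in \calH_x} c(r(h))\, h\inv s(r(h)),\qquad x\in\calH\units.\]
Each term $h\inv s(r(h))$ lies in $Y_{s(h)}=Y_x$, the coefficients are non-negative and sum to $1$, and the sum is finite since $s$ restricted to $supp(c\circ r)$ is proper; so $\tilde s(x)$ is a convex combination in the convex fiber $Y_x$ and $\rho(\tilde s(x))=x$.

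The key steps are then: (i) well-definedness and continuity of $\tilde s$; (ii) $\calH$-equivariance; (iii) the homotopy. For (ii), for $k\in\calH$ we reindex $h\mapsto hk\inv$ exactly as in the computation in Proposition \ref{invariant algebra keeps sigma unity}, and use that $k$ acts by an affine map $Y_{s(k)}\ra Y_{r(k)}$, so it commutes with convex combinations: $k\tilde s(s(k))=\tilde s(r(k))$. For (iii), we take the straight-line homotopy $H_t(y)=t\,y+(1-t)\,\tilde s(\rho(y))$ on $Y_{\calH\units}$, which stays in the fiber $Y_{\rho(y)}$ by convexity, and is $\calH$-equivariant because the action is affine and $\tilde s$ is equivariant.

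The main obstacle is continuity, both of $\tilde s$ and of $H$, since "convex space" must come with a continuous affine structure on the family of fibers; I would interpret the hypothesis (as in \cite{bonicke2020going}) as saying that the fiberwise convex-combination map $Y\times_{\calG\units}Y\times[0,1]\ra Y$ is continuous, and iterate it for finite combinations. For $\tilde s$, I would work locally: near $x_0\in\calH\units$, cover the compact set $\{h\in\calH_{x}: c(r(h))\neq 0\}$ for $x$ near $x_0$ by finitely many open bisections $W_1,\dots,W_k$ (possible by properness of $s$ on $supp(c\circ r)$, as in Lemma \ref{sum of fiber}), with a partition-of-unity refinement so that on a neighborhood of $x_0$ each term becomes $x\mapsto c(r(h_j(x)))\,h_j(x)\inv s(r(h_j(x)))$ with $h_j=s|_{W_j}\inv$ continuous; then $\tilde s$ is locally a finite convex combination of continuous sections with continuous weights, hence continuous. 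With this, $H$ is continuous as a composition, completing the proof.
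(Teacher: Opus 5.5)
Your proposal is correct. The paper gives no argument of its own for this statement: it simply imports \cite[Proposition 8.2]{bonicke2020going}. Your write-up therefore supplies a complete proof in the generality the paper needs.

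Averaging the given section with a cutoff function for $\calH$ works for an arbitrary proper open subgroupoid. Such a function exists by Proposition \ref{cuttoff function exist}, and it is the same device the paper uses in Proposition \ref{invariant algebra keeps sigma unity}. The remaining steps are sound: the local bisection argument for continuity, the reindexing $h\mapsto hk\inv$ together with affineness for equivariance, and the fibrewise straight-line homotopy.

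You are right to make explicit that ``convex space'' has to include continuity of the fibrewise convex-combination map $Y\times_{\calG\units}Y\times[0,1]\ra Y$. This holds for $Y=P(Y_1)$, the space to which the paper later applies the statement.

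One point is worth adding. Since $\rho\circ H_t=\rho$ and $\rho$ is proper, your homotopy is a proper $\calH$-equivariant homotopy. That is exactly what the subsequent use of this proposition needs, namely the invertibility of $res^\calG_\calH(x)$ in $\kk^\calH$.
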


\begin{thm}
    \textnormal{\cite[Theorem 4.8]{bonicke2024categorical}}
    Let $\calG$ be a second countable \'etale groupoid which is strongly amenable at infinity. Then for any separable $\calG$-\cst-algebra $A$, the Baum--Connes assembly map
    \[\mu_{\calG,A}:K^{top}_*(\calG;A)\ra K_*(A\rtimes_r\calG)\]
    is split injective.
\end{thm}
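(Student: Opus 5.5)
The plan is to follow the Dirac–dual-Dirac strategy of \cite{chabert2004going} and \cite{bonicke2020going}, with Theorem \ref{going down-standard example result} doing the main work. First, by Lemma \ref{technical lemma for changing amenable space}, I replace the space $Y$ that witnesses strong amenability at infinity by $P(Y)$. This new space is second countable and locally compact Hausdorff, its anchor map $\tilde\rho$ is proper and surjective with a continuous section, the action of $\calG$ on it is amenable, its fibers $P(Y)_u$ are convex, and $\calG$ acts on them affinely. So $P(Y)$ satisfies all the hypotheses of Proposition \ref{space whitness strongly amenablity induces homotopy equivalence}. Set $D=C_0(P(Y))$ and let $\iota:C_0(\calG\units)\ra D$ be the $\calG$-equivariant inclusion given by pulling back along $\tilde\rho$. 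This map is well defined because $\tilde\rho$ is proper.

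Next I apply $\tau^\calG_A$ to $[\iota]$ to obtain
\[x=\tau_A^\calG([\iota])\in \kk^\calG_0(A,\,A\otimes_{\calG\units}D).\]
I then check the hypothesis of Theorem \ref{going down-standard example result} for $x$. Fix a proper open subgroupoid $\calH\subseteq\calG$. Proposition \ref{space whitness strongly amenablity induces homotopy equivalence} gives an $\calH$-equivariant section $\tilde s$ of $\tilde\rho$ over $\calH\units$ such that $\tilde s\circ\tilde\rho$ is $\calH$-homotopic to the identity. Consequently $\iota|_\calH$ is an $\calH$-homotopy equivalence $C_0(\calH\units)\ra D|_\calH$, with homotopy inverse $\tilde s^*$. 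Properness of $\tilde\rho$ is needed here so that $\tilde s^*$ and the homotopy act on $C_0$-algebras. Tensoring over $\calH\units$ with $A|_\calH$ and using Proposition \ref{functoriality of tau}, I conclude that $res^\calG_\calH(x)$ is invertible in $\kk^\calH$. In particular, right multiplication by $res^\calG_\calH(x)$ is an isomorphism on $\kk^\calH_*(C_0(\calH\units),-)$. Theorem \ref{going down-standard example result} then shows that
\[-\otimes x:K^{top}_*(\calG;A)\ra K^{top}_*(\calG;A\otimes_{\calG\units}D)\]
is an isomorphism.

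Finally I use the naturality of the assembly map with respect to $x$ to form the commutative square
\[\mu_{\calG,A\otimes D}\circ(-\otimes x)=j_{\calG,r}(x)_*\circ\mu_{\calG,A}.\]
Since $\calG\ltimes P(Y)$ is amenable and $A\otimes_{\calG\units}D$ is a $\calG\ltimes P(Y)$-algebra, the assembly map $\mu_{\calG,A\otimes D}$ is an isomorphism. This follows from Tu's theorem \cite{tu1999conjecture-bc} for amenable groupoids together with the standard identification of $\calG$-assembly with coefficients in a $\calG\ltimes P(Y)$-algebra with $\calG\ltimes P(Y)$-assembly. A left inverse of $\mu_{\calG,A}$ is then
\[(-\otimes x)^{-1}\circ\mu_{\calG,A\otimes D}^{-1}\circ j_{\calG,r}(x)_*,\]
which gives split injectivity.

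I expect the last step to be the main obstacle. It requires justifying that the assembly map with coefficients $A\otimes_{\calG\units}C_0(P(Y))$ is bijective via the amenable action groupoid, since this involves comparing $K^{top}_*(\calG;-)$ with $K^{top}_*(\calG\ltimes P(Y);-)$ and the two crossed products. If that comparison is awkward to set up directly, I would try first to cite the corresponding compatibility from \cite{bonicke2020going} or \cite{tu1999conjecture-bc}, which does not depend on the groupoid being ample. A secondary difficulty is making sure that the homotopy provided by Proposition \ref{space whitness strongly amenablity induces homotopy equivalence} is proper enough to yield a homotopy of $\calH$-$C^*$-algebras.
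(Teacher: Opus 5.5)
Your proposal is essentially the paper's proof. Both replace the witness by $P(Y)$ and take $x=\tau^\calG_A([\iota])$. Both deduce invertibility of $res^\calG_\calH(x)$ from Proposition \ref{space whitness strongly amenablity induces homotopy equivalence} and then apply Theorem \ref{going down-standard example result}. Both conclude from naturality of assembly, \cite[Lemma 4.1]{skandalis2002coarse}, and Tu's theorem for the amenable groupoid $\calG\ltimes P(Y)$.

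The one point you skip is second countability of the witness. The definition of strong amenability at infinity does not give a second countable $Y$, but Lemma \ref{technical lemma for changing amenable space} needs one, and so does separability of $A\otimes_{\calG\units}C_0(P(Y))$. Your claim that $P(Y)$ is second countable is therefore unjustified as written. The paper fixes this by first invoking \cite[Proposition 4.8, Lemma 4.9]{anantharaman2016exact} to get a second countable $Y_1$ with proper surjective anchor map, a continuous section and amenable action, and only then passing to $P(Y_1)$.
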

\begin{proof}
    Firstly we need to show that there exists a $\calG$-space $(Y,\rho)$ which satisfies the conditions in proposition \ref{space whitness strongly amenablity induces homotopy equivalence}. By \cite[Proposition 4.8, Lemma 4.9]{anantharaman2016exact}, there exists a second countable locally compact Hausdorff $\calG$-space $Y_1$ with proper surjective anchor map $\rho_1$, such that $\calG\ltimes Y_1$ is amenable and $\rho_1$ admits a continuous section. Then apply lemma \ref{technical lemma for changing amenable space} to $(Y_1,\rho_1)$, let $Y=P(Y_1)$ and $\rho=\tilde\rho_1$, by lemma \ref{technical lemma for changing amenable space} $(Y,\rho)$ satisfies all conditions in proposition \ref{space whitness strongly amenablity induces homotopy equivalence}.

    Let $[\rho^*]\in \kk_0^\calG(C_0(\calG\units),C_0(Y))$ be the element induced by the proper continuous map $\rho: Y\ra \calG\units$. Let $x=\tau_A^\calG([\rho^*])\in \kk_0^\calG(A, A\otimes_{\calG\units}C_0(Y))$. By proposition \ref{space whitness strongly amenablity induces homotopy equivalence}, for every proper open subgroupoid $\calH\subseteq \calG$, $res^\calG_\calH(x)\in \kk_0^\calH(A|_\calH,A|_\calH\otimes_{\calH\units}C_0(Y_{\calH\units}))$ is invertible. Hence, theorem \ref{going down-standard example result} implies that
    \[-\otimes x: K^{top}_*(\calG;A)\ra K^{top}_*(\calG,A\otimes_{\calG\units}C_0(Y))\]
    is an isomorphism.

    By naturality of Baum--Connes assembly map, the following diagram commutes.
    \[
    \xymatrix{
    K^{top}_*(\calG;A) \ar[r]^{\mu_{\calG,A}} \ar[d]^{-\otimes x} & K_*(A\rtimes_r\calG) \ar[d]^{-\otimes j_r(x)} \\
    K^{top}_*(\calG; A\otimes_{\calG\units}C_0(Y)) \ar[r]_{\mu} & K_*((A\otimes_{\calG\units}C_0(Y))\rtimes_r\calG)
    }
    \]
    Here $j_r$ is Kasparov descent map, the bottom arrow $\mu$ is the Baum--Connes assembly map for $\calG$ with coefficients in $A\otimes_{\calG\units}C_0(Y)$. \cite[Lemma 4.1]{skandalis2002coarse} implies that $\mu$ is equivalent to $\mu_{\calG\ltimes Y,A}$, while $\mu_{\calG\ltimes Y,A}$ is an isomorphism since $\calG\ltimes Y$ is amenable by \cite[Theorem 0.1, Lemma 3.5]{tu1999conjecture-bc}. Now the left and the bottom arrows are isomorphisms, hence $\mu_{\calG,A}$ is split injective.
\end{proof}

\begin{rem}\label{remark of Skandalis}
    Georges Skandalis pointed it out that we can prove more directly the split injectivity following the proof of the countable discrete group case by Higson (see \cite[Theorem 3.2, Proposition 3.7]{Higson99}) without using the going-down principle. Use the same notation as above, let $Z$ be any proper $\calG$-compact locally compact Hausdorff $\calG$-space. If there exists a proper open subgroupoid $\calH$ and an $\calH$-invariant relatively compact open $V$ of $Z$ such that $Z\cong \calG_{\calH\units}\times_\calH \overline{V}$, then since the following diagram commutes,
    \[\xymatrix{
    \kk_*^\calG(C_0(Z),A) \ar[r]^{\hspace{-1cm} -\otimes x} \ar[d]_{comp^\calG_\calH} & \kk_*^\calG(C_0(Z),A\otimes_{\calG\units}C_0(Y)) \ar[d]^{comp^\calG_\calH}\\
    \kk_*^\calH(C(\overline{V}),A|_\calH) \ar[r]_{\hspace{-1cm} -\otimes res^\calG_\calH(x)} & \kk_*^\calH(C(\overline{V}),A|_\calH\otimes_{\calH\units}C_0(Y_{\calH\units}))
    }\]
    in which the two vertical arrows are compression isomorphisms (theorem \ref{compression isomorphism}), the bottom arrow is an isomorphism because of proposition \ref{space whitness strongly amenablity induces homotopy equivalence}, we can conclude that the top arrow is an isomorphism. By lemma \ref{local structure of proper cocompact action of étale groupoid}, a general proper $\calG$-compact space $Z$ is a finite union of closed subspaces of the type $\calG_{\calH\units}\times_\calH \overline{V}$. Then follow from the Mayer-Vietoris exact sequence and the five lemma,
    \[-\otimes x:\kk_*^\calG(C_0(Z),A) \ra \kk_*^\calG(C_0(Z),A\otimes_{\calG\units}C_0(Y))\]
    is an isomorphism for any proper $\calG$-compact $\calG$-space $Z$. Let $Z$ runs over all $\calG$-compact subspaces of $\E\calG$,
    \[-\otimes x: K_*^{top}(\calG;A)\ra K_*^{top}(\calG;A\otimes_{\calG\units}C_0(Y))\]
    is an isomorphism. The rest of the proof will be same as above.
\end{rem}

\subsection{Continuity of topological K-theory}

\begin{thm}
    \cite[Proposition 4.12]{bonicke2024categorical}
    Let $\calG$ be a second countable \'etale groupoid, $(A_n, \varphi_n)_n$ be an inductive sequence of separable $\calG$-\cst-algebras and $A=\varinjlim A_n$. Let $\psi_n: A_n \ra A$ be the canonical map, then
    \[\varinjlim\psi_{n,*}: \varinjlim K^{top}_*(\calG;A_n)\ra K^{top}_*(\calG;A)\]
    is an isomorphism.
\end{thm}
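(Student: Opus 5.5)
The plan is to apply the going-down principle (theorem \ref{going-down principle}) to a suitable pair of going-down functors, following \cite{chabert2004going}. Define, for $\calH\in\ca S(\calG)$ and $D\in\ca C(\calG)$,
\[F^n_\calH(D):=\varinjlim_k \kk^\calH_n(D,A_k|_\calH),\qquad G^n_\calH(D):=\kk^\calH_n(D,A|_\calH),\]
and let $\Lambda^n_\calH:F^n_\calH\ra G^n_\calH$ be induced by the compatible family $-\otimes[\psi_k|_\calH]$. Since $\varinjlim_Z$ and $\varinjlim_k$ commute, $F^n(\calG)=\varinjlim_k K^{top}_n(\calG;A_k)$ and $G^n(\calG)=K^{top}_n(\calG;A)$, and $\Lambda^n(\calG)$ is exactly $\varinjlim\psi_{k,*}$.

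First I check that $F$ and $G$ are going-down functors and that $\Lambda$ is a going-down transformation. For $G$ this is proposition \ref{going-down functor example 1}. For $F$, each functor in the system is a going-down functor by that same proposition, and the structure maps $-\otimes res(\varphi_k)$ are going-down transformations. A filtered colimit of abelian groups is exact, so homotopy invariance, half-exactness and the natural isomorphisms $S^n_\calH$ and $I^{\calH_2}_{\calH_1}(n)$ all pass to the colimit, as does compatibility with $\Lambda$ (each $\psi_k$ gives a going-down transformation, and these are compatible with the $\varphi_k$).

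By theorem \ref{going-down principle}, it then suffices to show that for every proper open subgroupoid $\calH$ the map
\[\varinjlim_k \kk^\calH_*(C_0(\calH\units),A_k|_\calH)\ra \kk^\calH_*(C_0(\calH\units),A|_\calH)\]
is an isomorphism. By proposition \ref{local structure of proper cocompact action of étale groupoid}, $\calH$ is locally of the form $F\ltimes U$ with $F$ a finite group. I would first reduce, through Mayer--Vietoris (corollary \ref{going-down MV-seq}) and the induction isomorphisms (as in lemma \ref{goind down small 0-dim case} and proposition \ref{going-down 0-dim}), to the case where $\calH\units$ is covered by finitely many pieces on which $\calH$ is a transformation groupoid $F\ltimes U$. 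In that case I would use the Green--Julg type identification
\[\kk^{F\ltimes U}_*(C_0(U),B)\cong K_*(B\rtimes(F\ltimes U)),\]
valid because the groupoid is proper; this is the step I expect to be the main obstacle. Reduced crossed products commute with inductive limits, and K-theory is continuous, so the required isomorphism follows.

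One difficulty is that $\calH\units$ need not be compact. I would handle this either by exhausting $\calH\units$ by relatively compact invariant opens and passing to a further colimit, or by proving the Green--Julg isomorphism directly for a general proper étale groupoid with $\calH\units/\calH$ $\sigma$-compact, using a cutoff function (proposition \ref{cuttoff function exist}) in the standard way. The second route avoids the local decomposition entirely, and I would try it first.
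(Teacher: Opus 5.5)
Your set-up matches the paper's: the same going-down functors $\varinjlim_k\kk^\calH_*(-,A_k|_\calH)$ and $\kk^\calH_*(-,A|_\calH)$, the interchange of colimits, and theorem \ref{going-down principle}. The proper case is then handled by $\kk^\calH_*(C_0(\calH\units),B)\cong K_*(B\rtimes\calH)$ together with continuity of crossed products and of $K$-theory; the paper gets that isomorphism from bijectivity of the assembly map for proper groupoids. However, the step you flag genuinely fails whenever $\calH\units/\calH$ is not compact, and neither of your fixes can repair it. This is the typical situation, since the local subgroupoids $F\ltimes A$ have open, non-compact $A$. In that case $\kk^\calH_*(C_0(\calH\units),B)$ is a representable group. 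For the trivial groupoid $\calH=\mathbb N$ it is $\prod_n K_*(B_n)$, not $K_*(B\rtimes\calH)=\bigoplus_n K_*(B_n)$.

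Your two routes fail for the following reasons.
\begin{itemize}
\item Route (b): the Green--Julg map needs a cutoff function with compact support, and such a function exists only if $\calH\units$ is $\calH$-compact.
\item Route (a): $C_0(\calH\units)=\varinjlim_j C_0(U_j)$ turns into an inverse limit, with a $\varprojlim^1$ term, in the contravariant variable. That inverse limit does not commute with $\varinjlim_k$, and each $U_j/\calH$ is again non-compact.
\end{itemize}

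In fact the statement you need is false. Take $\calG=\calH=\mathbb N$ as a trivial groupoid and $A_k=C_0(\{n\in\mathbb N:n\geq k\})$ with restriction maps, so $A=0$. The class $[\mathrm{id}]\in\kk^{\mathbb N}_0(C_0(\mathbb N),A_1)$ maps in $\kk^{\mathbb N}_0(C_0(\mathbb N),A_k)$ to the class of the restriction map. Its fibre at $n=k$ is $1\in\kk_0(\mathbb C,\mathbb C)$, so the class is nonzero in $\varinjlim_k\kk^{\mathbb N}_0(C_0(\mathbb N),A_k)$. On the other hand $\kk^{\mathbb N}_0(C_0(\mathbb N),A)=0$. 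So the hypothesis of theorem \ref{going-down principle} fails for this transformation. The theorem's conclusion still holds here, since $K^{top}_*(\mathbb N;A_k)=\bigoplus_{n\geq k}K_*(\mathbb C)$. The paper's proof has the same hole: it reads $\kk^\calH_*(C_0(\calH\units),\cdot)$ as $K^{top}_*(\calH;\cdot)$, which is correct only when $\calH\units$ is $\calH$-compact.

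The missing idea is to check the hypothesis only in the form in which the proof of theorem \ref{going-down principle} actually uses it. That proof uses it only in proposition \ref{going-down 0-dim}, through $\Lambda^n_\calH(C(S))$ for $\calH=F\ltimes A$ and $S\subseteq\calH\units$ compact and $\calH$-invariant (the sets $S_1$ and $S'$). The non-cocompact algebras $C_0(\calH\units)$ and $C_0(\calH\units\setminus S)$ enter only through lemma \ref{goind down small 0-dim case} and the sequence \eqref{diagram 4}.

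So run the argument with the hypothesis ``$\Lambda^n_\calH(C(S))$ is an isomorphism for all proper open $\calH$ and all compact $\calH$-invariant $S\subseteq\calH\units$''. With this hypothesis, the case $m=1$ of proposition \ref{going-down 0-dim} needs no lemma. Then verify it for your transformation:
\begin{itemize}
\item $\kk^\calH_*(C(S),B)\cong\kk^{\calH|_S}_*(C(S),B|_S)\cong K_*(B|_S\rtimes\calH|_S)$, by Green--Julg (or proper Baum--Connes) for the cocompact proper groupoid $\calH|_S=F\ltimes S$;
\item $A|_S\rtimes\calH|_S=\varinjlim_k A_k|_S\rtimes\calH|_S$;
\item continuity of $K$-theory then gives the required isomorphism.
\end{itemize}
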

\begin{proof}
    The proof is very similar to \cite[Theorem 5.2]{bonicke2019going}. We have two going-down functors 
    \[(\varinjlim \kk_*^\calH(-,A_n|_\calH)_{\calH\in \ca S(\calG)},\quad (\kk_*^\calH(-,A|_\calH))_{\calH\in \ca S(\calG)},\]
    and
    \[\varinjlim_n K_*^{top}(\calG;A_n)\cong \varinjlim_{Y\subseteq \E \calG, \calG \text{-compact}}\varinjlim_n\kk^\calG_*(C_0(Y),A_n)\]
    (see \cite[section IX.8]{MacLane} for the commutativity of two colimits).

    For all $\calH\in \ca S(\calG)$ and all locally compact Hausdorff proper $\calH$-space $Y$, the homomorphisms
    \[\varinjlim(\psi_{n}|_\calH)_*:\varinjlim_n\kk^\calH_*(C_0(Y),A_n|_\calH)\ra \kk_*^\calH(C_0(Y),A|_\calH)\]
    constitute a going-down transformation. Hence, by theorem \ref{going-down principle} it suffices to show that when $\calH$ is a proper open subgroupoid and $Y=\calH\units$ this is an isomorphism. In this case, for every $n$, the following diagram commute,
    \[\xymatrix{
    \kk_*^\calH\left(C_0(\calH\units),\, A_n|_\calH\right)
    \ar[r]^(.50){(\psi_n|_\calH)_*}
    \ar[d]_{\mu_{\calH,A_n|_\calH}} &
    \kk_*^\calH\left(C_0(\calH\units),\, A|_\calH\right)
    \ar[d]^{\mu_{\calH,A|_\calH}} \\
    K_*\left(A_n|_{\calH}\rtimes \calH\right)
    \ar[r]^(.55){(\psi_n|_\calH \rtimes \calH)_*} &
    K_*\left(A|_\calH\rtimes \calH\right)
}\]

    The two horizontal arrows $\mu_{\calH,A_n|_\calH}$ and $\mu_{\calH,A|_\calH}$ are isomorphisms since $\calH$ is proper, hence $\varinjlim(\psi_{n}|_\calH)_*$ is equivalent to $\varinjlim ((\psi_n|_\calH)\rtimes\calH)_*$, which is an isomorphism because of continuity of K-theory and $A|_\calH\rtimes \calH=\varinjlim A_n|_\calH\rtimes \calH$ by \cite[Lemma 5.1]{bonicke2019going}.
\end{proof}

\begin{corr}
     Let $\calG$ be a second countable \'etale groupoid, $(A_n, \varphi_n)_n$ be an induction sequence of separable $\calG$-\cst-algebras and $A=\varinjlim A_n$. Assume that either $\calG$ is exact or all $\varphi_n$ are injective. Then if $\calG$ satisfies the Baum--Connes conjecture with coefficients in $A_n$ for all $n$, then $\calG$ satisfies the Baum--Connes conjecture with coefficient in $A$.
\end{corr}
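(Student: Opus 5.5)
The plan is to combine the continuity theorem just proved with the commutative square given by naturality of the Baum--Connes assembly map:
\[\xymatrix{
\varinjlim_n K^{top}_*(\calG;A_n) \ar[r]^{\varinjlim \mu_{\calG,A_n}} \ar[d]_{\varinjlim \psi_{n,*}} & \varinjlim_n K_*(A_n\rtimes_r\calG) \ar[d]^{\varinjlim (\psi_n\rtimes_r\calG)_*}\\
K^{top}_*(\calG;A) \ar[r]_{\mu_{\calG,A}} & K_*(A\rtimes_r\calG)
}\]
Commutativity holds at each finite stage because $\mu$ is natural with respect to equivariant $*$-homomorphisms. It therefore passes to the colimit.

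In this square:
\begin{enumerate}
\item The left vertical arrow is an isomorphism by the preceding theorem.
\item The top arrow is an inductive limit of isomorphisms, by the hypothesis that $\calG$ satisfies Baum--Connes with coefficients in every $A_n$, so it is an isomorphism.
\item Once the right vertical arrow is shown to be an isomorphism, $\mu_{\calG,A}$ is a composite of isomorphisms and the corollary follows.
\end{enumerate}

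The whole difficulty is concentrated in the right vertical arrow. By continuity of $K$-theory under inductive limits of $C^*$-algebras, it suffices to show that $A\rtimes_r\calG\cong\varinjlim_n (A_n\rtimes_r\calG)$ with connecting maps $\varphi_n\rtimes_r\calG$. This is where the two alternative hypotheses enter.

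\textbf{Injective connecting maps.} If every $\varphi_n$ is injective, each $A_n$ embeds $\calG$-equivariantly in $A$. Reduced crossed products preserve injections, since the regular representations are compatible. So each $A_n\rtimes_r\calG$ is a subalgebra of $A\rtimes_r\calG$. The union $\bigcup_n \Gamma_c(\calG,r^*\ca A_n)$ is dense in $\Gamma_c(\calG,r^*\ca A)$ in the inductive limit topology, hence dense in $A\rtimes_r\calG$. This gives the identification of the limit.

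\textbf{Exact groupoid.} In general, write $I_n=\ker\psi_n$, so that $\psi_n(A_n)\cong A_n/I_n$. Exactness of $\calG$ gives
\[(A_n/I_n)\rtimes_r\calG\cong (A_n\rtimes_r\calG)/(I_n\rtimes_r\calG).\]
One then checks that $\varinjlim_n (A_n\rtimes_r\calG)$ equals $\varinjlim_n \psi_n(A_n)\rtimes_r\calG$. This holds because an element of $A_n\rtimes_r\calG$ that dies in the limit is exactly an element of $I_n\rtimes_r\calG=\overline{\bigcup_m \ker(\varphi_{m,n}\rtimes_r\calG)}$. The reduction to the injective case then finishes the argument.

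This is essentially \cite[Lemma 5.1]{bonicke2019going} already used in the proof above. The main obstacle I expect is verifying the equality $I_n\rtimes_r\calG=\overline{\bigcup_m \ker(\varphi_{m,n}\rtimes_r\calG)}$. I would prove it by approximating compactly supported sections valued in $I_n$, using $I_n=\overline{\bigcup_m\ker\varphi_{m,n}}$.
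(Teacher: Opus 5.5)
Your proposal is correct and is essentially the argument the paper intends. The paper states the corollary without proof, as a consequence of the preceding continuity theorem, naturality of the assembly map, and the identification $A\rtimes_r\calG\cong\varinjlim_n A_n\rtimes_r\calG$ from \cite[Lemma 5.1]{bonicke2019going}, which is exactly where the hypothesis that $\calG$ is exact or all $\varphi_n$ are injective is used; your sketch of that identification (density in the injective case, matching the kernels $I_n\rtimes_r\calG=\overline{\bigcup_m\ker(\varphi_{m,n}\rtimes_r\calG)}$ in the exact case) is sound.
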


\subsection{K\"unneth formula}
In this section we study the scope of validity of K\"unneth formula of $K^{top}_*(\calG;-)$, where $\calG$ is an second countable \'etale groupoid. Locally compact group cases are studied in \cite{chabert2004going}. Part of \'etale groupoid cases are studied in \cite{bonicke2019going}, but some results are restricted to ample groupoids. We are going to remedy this by using theorem \ref{going-down principle}.

Let $\calG$ be a second countable locally compact Hausdorff groupoid and $A$ be a separable exact $\calG$-\cst-algebra. For any \cst-algebra $B$, we define $\epsilon:K_*(B)\ra \kk_*^\calG(A, A\otimes B)$ as the composite
\[\kk_*(\mathbb C, B)\ra \kk_*^\calG(C_0(\calG\units),C_0(\calG\units)\otimes B)\xrightarrow{\tau^\calG_A}\kk_*^\calG(A,A\otimes B),\]
here $A \otimes B$ is the minimal tensor products, which is a well-defined $\calG$-\cst-algebra (see \cite[Theorem 4.1]{Lalonde15}), the map $\tau_A^\calG$ is same as in \cite[Proposition 6.6]{bonicke2019going}.  Let $\kk_*, \kk^\calG_*, K_*$ here be $\mathbb Z/2\mathbb Z$-graded, and the first map is induced from the canonical strict morphism from $\calG$ to a trivial groupoid of one point. Now for any proper $\calG$-space $Y$, define $\alpha_{Y}$ as the composite
\[\xymatrix{
\kk^\calG_*(C_0(Y),A)\otimes_{\Z} K_*(B) \ar[r]^{\hspace{-0.5cm}id\otimes_{\Z} \epsilon} \ar[dr]_{\alpha_{Y}} & \kk^\calG_*(C_0(Y),A)\otimes_{\Z} \kk^\calG_*(A,A\otimes B)\ar[d]^{\otimes} \\
& \kk^\calG_*(C_0(Y),A\otimes B).
}\]
And define $\alpha_\calG:K^{top}_*(\calG;A)\otimes_{\Z} K_*(B)\ra K^{top}_*(\calG,A\otimes B)$ by passing the limit over all $\calG$-compact subspaces $Y$ of $\E \calG$.

\begin{prop}
    \textnormal{\cite[Proposition 6.8]{bonicke2019going}}
    For a second countable locally compact Hausdorff groupoid $\calG$ and a separable exact $\calG$-\cst-algebra $A$, the following statements are equivalent.
    \begin{enumerate}
        \item The map $\alpha_\calG$ is an isomorphism for all $B$ with $K_*(B)$ free over $\mathbb Z$.
        \item For every \cst-algebra $B$, there exists a canonical homomorphism
\[\beta_\calG:K^{top}_*(\calG;A\otimes B)\ra \mathrm{Tor}(K^{top}_*(\calG;A),K_*(B))\]
such that 
\[0\ra K^{top}_*(\calG;A)\otimes_{\Z} K_*(B)\xrightarrow{\alpha_\calG} K^{top}_*(\calG,A\otimes B)\xrightarrow{\beta_\calG}\mathrm{Tor}(K^{top}_*(\calG;A),K_*(B))\ra 0\]
is an exact sequence (of $\mathbb Z/2\mathbb Z$-graded abelian groups)
    \end{enumerate}
\end{prop}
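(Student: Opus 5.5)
The approach is to follow the group case of Chabert--Echterhoff--Oyono-Oyono, which Bönicke adapted to groupoids, proving (2)$\Rightarrow$(1) and (1)$\Rightarrow$(2) separately.

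\emph{(2)$\Rightarrow$(1).} If $K_*(B)$ is free over $\mathbb Z$, then $\mathrm{Tor}(K^{top}_*(\calG;A),K_*(B))=0$. The exact sequence in (2) then shows that $\alpha_\calG$ is an isomorphism.

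\emph{(1)$\Rightarrow$(2).} Fix an arbitrary separable \cst-algebra $B$. By the standard geometric resolution of Atiyah and Rosenberg--Schochet, we choose a separable \cst-algebra $D$ with free $K$-theory and an extension
\[0\ra \Sigma B\otimes\calK\ra D\ra C\ra 0\]
(or its equivalent form in $\kk$), such that:
\begin{itemize}
\item $C$ also has free $K$-theory;
\item the resulting six-term sequence in $K$-theory splits into the short free resolution
\[0\ra K_*(D)\ra K_*(C)\ra K_{*}(B)\ra 0,\]
with the degree shift handled by $\mathbb Z/2$-grading.
\end{itemize}
Since $A$ is exact, tensoring this extension with $A$ keeps it exact:
\[0\ra A\otimes\Sigma B\otimes\calK\ra A\otimes D\ra A\otimes C\ra 0.\]
The key input, which we take as known, is half-exactness of $\kk^\calG_*(C_0(Y),-)$ in the second variable on semi-split or exact sequences, as in \cite{tu1999conjecture-n}, together with its passage to the direct limit. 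This yields a six-term exact sequence for $K^{top}_*(\calG;A\otimes -)$.

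We then compare this six-term sequence with the algebraic sequence obtained by tensoring the free resolution with $K^{top}_*(\calG;A)$, using the maps $\alpha_\calG$. The comparison diagram must commute. This follows from naturality of $\epsilon$ in $B$, proved via proposition \ref{functoriality of tau}, and from compatibility of $\tau^\calG_A$ with the boundary maps. By hypothesis (1), $\alpha_\calG$ is an isomorphism for $C$ and for $D$.

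A diagram chase then gives the result. The kernel of $K^{top}_*(\calG;A)\otimes K_*(D)\ra K^{top}_*(\calG;A)\otimes K_*(C)$ is exactly $\mathrm{Tor}(K^{top}_*(\calG;A),K_*(B))$. This identifies $\beta_\calG$ as the boundary map followed by $\alpha^{-1}$, and it identifies $\alpha_\calG$ for $B$ as the cokernel map. Together these give the short exact sequence in (2). We then check that $\beta_\calG$ does not depend on the choice of resolution, by comparing two resolutions through a common refinement.

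The main obstacle is the six-term exact sequence for $K^{top}_*(\calG;A\otimes-)$ attached to the extension, together with commutativity of the boundary map with $\alpha_\calG$. The plan is to reduce to $\calG$-compact $Y\subseteq\E\calG$. For proper $\calG$ and $\calG$-compact $Y$, every $\calG$-extension should be semi-split after averaging completely positive sections with a cutoff function, by proposition \ref{cuttoff function exist}. Long exactness then holds at each level $Y$, and since direct limits are exact it survives in the limit. Naturality of the boundary under $\tau_A^\calG$ should follow from the identification of boundaries with Kasparov products by the extension class.
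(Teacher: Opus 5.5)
Your proposal is correct and follows the same route as the source the paper relies on. The paper gives no proof of its own; it quotes \cite[Proposition 6.8]{bonicke2019going}, which runs the Rosenberg--Schochet geometric-resolution argument as in \cite{chabert2004going}: (2)$\Rightarrow$(1) because $\mathrm{Tor}$ vanishes when $K_*(B)$ is free, and (1)$\Rightarrow$(2) by the diagram chase you describe, using that $\alpha_\calG$ is an isomorphism on the two free terms of the resolution.

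One correction to your last paragraph, though it does not affect the argument. Averaging with a cutoff function (for the proper groupoid $\calG\ltimes Y$, not $\calG$) only converts an existing $C_0$-linear completely positive section into an equivariant one. So ``every extension becomes semi-split'' is false, already for the trivial group.

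You do not need that claim. Your resolution extension is the mapping-cone extension of some $\varphi:C\ra B\otimes\calK$, which has the explicit completely positive section $c\mapsto (c,(1-t)\varphi(c))$. Tensoring this section with $\mathrm{id}_A$ gives a $\calG$-equivariant completely positive section of the tensored extension, since $\calG$ acts only on $A$. Hence half-exactness of $\kk^\calG_*(C_0(Y),-)$ on equivariantly semi-split extensions is all that is used.
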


\begin{defn}
    We say that $A$ satisfies the $\calG$-K\"unneth formula if it satisfies one of the conditions above. We denote the class of all separable exact $\calG$-\cst-algebras that satisfies the $\calG$-K\"unneth formula by $\ca N_\calG$.
\end{defn}

When $\calG$ is a trivial groupoid of one point it is the class $\ca N$ as defined in \cite{chabert2004going}. Some stability properties of $\ca N_\calG$ are proved in \cite[Lemma 6.9]{bonicke2019going}.

For convenience of writing, when $ F^*$ is a $\Z/2\Z$-graded abelian group valued functor, for all $n\in \Z$ let $ F^n$ be $F^{n\bmod 2}$. Then $(F^n)_{n\in\mathbb Z}$ is naturally a $\Z$-graded $\mathrm{Ab}$-valued functor. We use the similar convention for natural transformations.

The K\"unneth formula and Baum--Connes conjecture for groupoids are connected by the following result.

\begin{prop}\label{connect Kunneth with BC}
    \textnormal{\cite[Proposition 6.10, Proposition 6.13]{bonicke2019going}}
    Let $\calG$ be a second locally compact Hausdorff groupoid with a Haar system, $A$ be a separable exact $\calG$-\cst-algebra. 
    \begin{enumerate}
        \item If for any \cst-algebra $B$, $\calG$ satisfies the Baum--Connes conjecture with coefficients in $A\otimes B$, then $A\in \ca N_\calG$ if and only if $A\rtimes_r\calG\in \ca N$.
        \item If $A\in \ca N_\calG$ and $\calG$ satisfies the Baum--Connes conjecture with coefficients in $A$, then $A\rtimes_r\calG\in \ca N$ if and only if $\calG$ satisfies the Baum--Connes conjecture with coefficients in $A\otimes B$ for any \cst-algebra $B$.
    \end{enumerate}
\end{prop}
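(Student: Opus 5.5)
This proposition is cited from \cite[Proposition 6.10, Proposition 6.13]{bonicke2019going}, so I would follow the strategy used there, which mirrors \cite{chabert2004going}. The basic object is the commutative square relating $\alpha_\calG$ to the K\"unneth map $\alpha$ for crossed products. Its rows are
\[K^{top}_*(\calG;A)\otimes_{\Z} K_*(B)\xrightarrow{\alpha_\calG}K^{top}_*(\calG;A\otimes B)\]
and
\[K_*(A\rtimes_r\calG)\otimes_{\Z} K_*(B)\xrightarrow{\alpha}K_*((A\rtimes_r\calG)\otimes B).\]
The vertical arrows are $\mu_{\calG,A}\otimes id$ and $\mu_{\calG,A\otimes B}$. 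We use the identification $(A\otimes B)\rtimes_r\calG\cong (A\rtimes_r\calG)\otimes B$, with $\calG$ acting trivially on $B$; this holds for minimal tensor products, and $A$ exact makes $A\otimes B$ a well-defined $\calG$-\cst-algebra.

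The first step is to check that the square commutes. By construction $\alpha_Y$ is the Kasparov product with $\tau_A^\calG$ of the image of $[b]\in\kk_*(\mathbb C,B)$. The assembly map is defined by descent $j_r$ followed by pairing with the Mishchenko projection of $C_0(Y)\rtimes\calG$. So commutativity reduces to $j_r(\tau_A^\calG(\epsilon(b)))=\tau_{A\rtimes_r\calG}(b)$ together with associativity of the Kasparov product. Both are standard.

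For (1), suppose the Baum--Connes map for $\calG$ is an isomorphism with coefficients $A\otimes B$ for every $B$; taking $B=\mathbb C$ covers $A$ itself. Then both vertical arrows of the square are isomorphisms. Hence $\alpha_\calG$ is an isomorphism exactly when $\alpha$ is, for each $B$ with $K_*(B)$ free. By the first characterisation in the preceding proposition, and its analogue for the trivial groupoid, this is precisely $A\in\ca N_\calG\iff A\rtimes_r\calG\in\ca N$.

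For (2), take $A\in\ca N_\calG$ with $\mu_{\calG,A}$ an isomorphism. I would use the functorial K\"unneth exact sequences with their $\beta$ maps, which come from the geometric resolution of $B$ by algebras with free K-theory. I would check that the square extends to a map of short exact sequences, with $\mathrm{Tor}(\mu_{\calG,A},id)$ on the Tor terms.

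The direction ``$\Leftarrow$'' is then quick. If $\mu_{\calG,A\otimes B}$ is an isomorphism for every $B$, part (1) gives $A\rtimes_r\calG\in\ca N$.

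For ``$\Rightarrow$'', assume $A\rtimes_r\calG\in\ca N$. Both rows are then exact. The outer vertical maps $\mu_{\calG,A}\otimes id$ and $\mathrm{Tor}(\mu_{\calG,A},id)$ are isomorphisms, so the five lemma shows $\mu_{\calG,A\otimes B}$ is an isomorphism.

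I expect the main obstacle to be naturality of $\beta$ and of the Tor terms with respect to assembly. That requires running the Chabert--Echterhoff--Oyono-Oyono construction of $\beta_\calG$ through a short exact sequence $0\ra B'\ra B''\ra B\ra 0$ with $K_*(B'),K_*(B'')$ free. It also requires knowing that assembly is compatible with the boundary maps for $\calG$-algebras $A\otimes B'$, $A\otimes B''$, $A\otimes B$. Exactness of $A$ is exactly what keeps $0\ra A\otimes B'\ra A\otimes B''\ra A\otimes B\ra 0$ exact and the reduced crossed product sequences exact, so that the boundary maps exist and are compatible with assembly.
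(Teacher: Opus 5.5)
Your proposal is correct and follows the standard Chabert--Echterhoff--Oyono-Oyono argument behind the cited result: the commutative square relating $\alpha_\calG$, the K\"unneth map for $A\rtimes_r\calG$ and the assembly maps, then the five lemma on the K\"unneth sequences. The paper itself gives no proof beyond citing \cite{bonicke2019going}. One justification in your last paragraph needs correcting: exactness of $A$ does not make $0\to (A\rtimes_r\calG)\otimes B'\to (A\rtimes_r\calG)\otimes B''\to (A\rtimes_r\calG)\otimes B\to 0$ exact, since that would require $A\rtimes_r\calG$ itself to be exact. You should instead use the semisplit geometric resolution (mapping cylinder and mapping cone, after stabilising $B$), which stays exact under any minimal tensor product and gives $\calG$-equivariant KK boundary classes whose descents are the boundary maps of the crossed-product sequences.
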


\begin{thm}\label{going down class N}
    Let $\calG$ be a second countable \'etale groupoid, $A$ be a separable exact $\calG$-\cst-algebra. Assume that for all proper open subgroupoid $\calH\subseteq \calG$, $A|_\calH\rtimes \calH\in \ca N$. Then $A\in \ca N_\calG$.
\end{thm}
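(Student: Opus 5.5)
We plan to use the first characterization of $\ca N_\calG$: it suffices to show that $\alpha_\calG:K^{top}_*(\calG;A)\otimes_\Z K_*(B)\ra K^{top}_*(\calG;A\otimes B)$ is an isomorphism for every \cst-algebra $B$ with $K_*(B)$ free over $\Z$. We will fix such a $B$ and realize $\alpha_\calG$ as the limit map $\Lambda^n(\calG)$ of a going-down transformation, then apply theorem \ref{going-down principle}. Separability of $B$ is needed to stay inside $\kk^\calG$, so we first reduce to separable $B$. Every \cst-algebra is an inductive limit of separable subalgebras, and both $K_*$ and $K^{top}_*(\calG;-)$ commute with the relevant limits (using the continuity theorem of the previous subsection, extended from sequences to directed systems). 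Freeness of $K_*(B)$ is subtle under this passage, so instead of taking arbitrary limits we argue as in \cite{chabert2004going}: we reduce to checking the free case on separable $B$, or else work directly with the equivalent form (2) of the proposition.

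For each $\calH\in\ca S(\calG)$ and $D\in\ca C(\calH)$, we set
\[F^n_\calH(D)=\kk^\calH_n(D,A|_\calH)\otimes_\Z K_*(B),\qquad G^n_\calH(D)=\kk^\calH_n(D,A|_\calH\otimes B),\]
with $\Z/2$-graded bookkeeping as in the stated convention, and let $\Lambda^n_\calH(D)$ be the map $x\otimes b\mapsto x\otimes\tau^\calH_{A|_\calH}(\epsilon_\calH(b))$, i.e.\ $\alpha$ for the groupoid $\calH$.

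By proposition \ref{going-down functor example 1} applied to the coefficients $A\otimes B$, $G$ is a going-down functor. $F$ is a going-down functor because $K_*(B)$ is free: tensoring with a free abelian group is exact, so homotopy invariance, half-exactness, suspension and the induction isomorphisms (inverse compressions) all pass through $-\otimes_\Z K_*(B)$. Naturality in $D$ is immediate from associativity of the Kasparov product. Compatibility with suspension follows from associativity together with the Bott elements of lemma \ref{Suspension in equivariant kk}, which are tensored on the first variable while $\epsilon$ acts on the second. Compatibility with induction is checked as in proposition \ref{going-down functor example 1}. We need $res^{\calH_2}_{\calH_1}(\tau^{\calH_2}_{A}(\epsilon_{\calH_2}(b)))=\tau^{\calH_1}_{A|_{\calH_1}}(\epsilon_{\calH_1}(b))$, which holds because restriction commutes with $\tau$ and with pullback along the morphisms to the point. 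Then $comp(y)\otimes\epsilon_{\calH_1}(b)=comp(y\otimes\epsilon_{\calH_2}(b))$. We expect this compatibility verification, together with the exactness of $(A|_\calH\otimes B)$-coefficients (using exactness of $A$ so that $A|_\calH\otimes B$ is a genuine $\calH$-algebra with the right half-exactness), to be the main technical point.

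It remains to verify the hypothesis of theorem \ref{going-down principle}: for $\calH$ proper open and $D=C_0(\calH\units)$, the map $\alpha_\calH:\kk^\calH_*(C_0(\calH\units),A|_\calH)\otimes K_*(B)\ra\kk^\calH_*(C_0(\calH\units),A|_\calH\otimes B)$ is an isomorphism. Since $\calH$ is proper, the Baum--Connes assembly map for $\calH$ with any coefficients is an isomorphism (the Green--Julg type identification $\kk^\calH_*(C_0(\calH\units),C)\cong K_*(C\rtimes\calH)$). Under this identification, together with $(A|_\calH\otimes B)\rtimes\calH\cong(A|_\calH\rtimes\calH)\otimes B$, the map becomes the ordinary Künneth map $K_*(A|_\calH\rtimes\calH)\otimes K_*(B)\ra K_*((A|_\calH\rtimes\calH)\otimes B)$. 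This map is an isomorphism for free $K_*(B)$ because $A|_\calH\rtimes\calH\in\ca N$ by assumption. The crossed-product identity holds since $\calH$ is proper, hence amenable, so reduced equals full and exactness causes no issue. Naturality of the identification with $\epsilon$ should follow from the proof of proposition \ref{connect Kunneth with BC} in \cite{bonicke2019going}. Theorem \ref{going-down principle} then gives that $\alpha_\calG$ is an isomorphism, hence $A\in\ca N_\calG$.
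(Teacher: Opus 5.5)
Your route is the paper's route: the same going-down functors $F^*_\calH=\kk^\calH_*(-,A|_\calH)\otimes_\Z K_*(B)$ and $G^*_\calH=\kk^\calH_*(-,A|_\calH\otimes B)$, the same transformation $\alpha$, the same compression computation for induction, and the same appeal to Theorem \ref{going-down principle}. Your separability preamble is extra and left vague, but it is not the issue.

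The step that fails is the base case. You identify $\kk^\calH_*(C_0(\calH\units),C)$ with $K_*(C\rtimes\calH)=K^{top}_*(\calH;C)$, but this Green--Julg identification needs $\calH\units/\calH$ compact. Theorem \ref{going-down principle} asks for the isomorphism at every proper open subgroupoid, and most of these are not cocompact. Any open $U\subseteq\calG\units$, viewed as a groupoid of units, is one, and so typically are the groupoids $\calH(z)\cong F_z\ltimes\calH(z)\units$ used in the proof of Proposition \ref{going-down 0-dim}. For such $\calH$, $\kk^\calH_*(C_0(\calH\units),-)$ is a representable theory, not $\varinjlim_S\kk^\calH_*(C_0(S),-)$ over $\calH$-compact $S$. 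There $\alpha$ can fail to be bijective even though $A|_\calH\rtimes\calH\in\ca N$.

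For instance, take $\calG=\calH=\mathbb N$ (units only), $A=C_0(\mathbb N)$ and $B=c_0(\mathbb N)$, so $K_0(B)=\bigoplus_m\Z$ and $K_1(B)=0$. Assembling fibrewise cycles with uniformly bounded operators shows that restriction to fibres maps $\kk^{\mathbb N}_0(C_0(\mathbb N),C_0(\mathbb N)\otimes B)$ onto $\prod_n K_0(B)$. The composite of $\alpha$ with this restriction sends $(\prod_n\Z)\otimes K_0(B)$ only to sequences with values in a finitely generated subgroup of $K_0(B)$. So $(e_n)_n$ is not hit, and $\alpha$ is not surjective. The paper's one-line justification ``$\alpha_{\calH\units}=\alpha_\calH$'' makes the same identification. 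You have reproduced its argument faithfully, but as it stands the hypothesis of Theorem \ref{going-down principle} is not verified.

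One way to repair this is to weaken what the going-down theorem needs. The proof of Proposition \ref{going-down 0-dim} only uses $\Lambda_\calH(C(S))$ for compact $\calH$-invariant $S\subseteq\calH\units$ (the sets $S_1$ and $S'$ there, identified with their images under $\rho$). So Lemma \ref{goind down small 0-dim case} can be bypassed by requiring the isomorphism only on such $S$. For those $S$:
\begin{itemize}
\item $\kk^\calH_*(C(S),D)\cong\kk^{\calH^S_S}_*(C(S),D|_S)\cong K_*(D|_S\rtimes\calH^S_S)$, by Green--Julg for the proper groupoid $\calH^S_S$ with compact unit space. This turns $\alpha$ into the K\"unneth map for $A|_S\rtimes\calH^S_S$.
\item $A|_S\rtimes\calH^S_S$ is the quotient of $A|_\calH\rtimes\calH$ by $A|_W\rtimes\calH^W_W$, where $W=\calH\units\setminus S$ and $\calH^W_W$ is again a proper open subgroupoid. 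Both of these algebras are in $\ca N$ by hypothesis.
\item The extension stays exact after tensoring with $B$, by exactness of $A$ and amenability of $\calH$. The five lemma on the six-term sequences then puts the quotient in $\ca N$.
\end{itemize}
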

\begin{proof}
    Fix a \cst-algebra $B$ with $K_*(B)$ free. For every $\calH\in \ca S(\calG)$, define two contravariant functors $ F^*_\calH, \ca G^*_\calH$ from $\ca C(\calH)$ to the category of $\mathbb Z/2\mathbb Z$-graded abelian groups as
    \[ F^*_\calH=\kk_*^\calH(-, A|_\calH)\otimes_\Z K_*(B),\]
    \[G^*_\calH=\kk_*^\calH(-,A|_{\calH}\otimes B).\]
    
    Since $A|_\calH\otimes B\cong (A\otimes B)|_\calH$, and tensor product with $K_*(B)$ preserves exactness, by proposition \ref{going-down functor example 1}, $( F^n_\calH)_{\calH\in \ca S(\calG)}, (G^n_\calH)_{\calH\in \ca S(\calG)}$ are going-down functors.

    If $f:C_0(Y_1)\ra C_0(Y_2)$ is a morphism in $\ca C(\calH)$,  for any $x\in \kk_*^\calH(C_0(Y_2),A|_\calH)$ and $y\in K_*(B)$,
    \[f^*(\alpha_{Y_2}(x\otimes_\Z y))=f^*(x\otimes \epsilon(y))=f^*(x)\otimes \epsilon(y)=\alpha_{Y_1}(f^*(x)\otimes_\Z y).\]
    That is the following diagram commutes.
    \[\xymatrix{
    \kk_*^\calH(C_0(Y_2),A|_\calH)\otimes_\Z K_*(B) \ar[r]^{\alpha_{Y_2}} \ar[d]_{f^*\otimes id} & \kk_*^\calH(C_0(Y_2), A|_\calH\otimes B) \ar[d] \ar[d]_{f^*}\\
    \kk^\calH_*(C_0(Y_1),A|_\calH) \ar[r]^{\alpha_{Y_1}} & \kk_*^\calH(C_0(Y_1),A|_\calH\otimes B)
    }\]
    
    Therefore, the maps $\alpha_Y$ for all locally compact Hausdorff proper $\calH$-spaces $Y$ constitute a natural transformation $\Lambda^*_\calH: F^*_\calH\ra \ca G^*_\calH$. Let $H_1\subseteq H_2$ be two elements of $\ca S(\calG)$ and $Y$ is a locally compact Hausdorff proper $\calH_1$-space, $\Omega:=(\calH_2)_{\calH_1\units}$, for any $x\in \kk_*^{\calH_2}(C_0(\Omega\times_{\calH_1}Y),A|_{\calH_2})$, $y\in K_*(B)$,
    \begin{align*}
        \alpha_{Y}\circ (comp^{\calH_2}_{\calH_1}\otimes_{\mathbb Z} id)(x\otimes_{\mathbb Z}y) & = \alpha_Y(comp^{\calH_2}_{\calH_1}(x)\otimes_{\Z} y)\\
        & = comp^{\calH_2}_{\calH_1}(x)\otimes \epsilon(y)\\
        & = [i_{C_0(Y)}]\otimes res^{\calH_2}_{\calH_1}(x)\otimes \epsilon(y)\\
        & = [i_{C_0(Y)}]\otimes res^{\calH_2}_{\calH_1}(x\otimes \epsilon(y))\\
        & = comp^{\calH_2}_{\calH_1}(x\otimes \epsilon(y))\\
        & = comp^{\calH_2}_{\calH_1}\circ \alpha_{\Omega\times_{\calH_1}Y}(x\otimes_\Z y)
    \end{align*}
    So the following diagram commutes.
    \[\xymatrix{
    \kk^{\calH_1}_*(C_0(Y),A|_{\calH_1})\otimes_\Z K_*(B) \ar[d]^{\alpha_{Y}} & \kk^{\calH_2}_*(C_0(\Omega\times_{\calH_1}Y),A|_{\calH_2})\otimes_\Z K_*(B) \ar[d]^{\alpha_{\Omega\times_{\calH_1}Y}} \ar[l]^{\hspace{-0.5cm}comp^{\calH_2}_{\calH_1}\otimes_\Z id}  \\
    \kk_*^{\calH_1}(C_0(Y), A|_{\calH_1}\otimes B) & \kk_*^{\calH_2}(C_0(\Omega\times_{\calH_1}Y), A|_{\calH_1}\otimes B) \ar[l]^{comp^{\calH_2}_{\calH_1}}.
    }\]
    So $\Lambda=(\Lambda^n_\calH)_{\calH\in \ca S(\calG)}$ is compatible with induction. Clearly $\Lambda$ is compatible with suspension. In conclusion $\Lambda$ is a going-down transformation.

    By proposition \ref{connect Kunneth with BC}, for all proper open subgroupoids $\calH$, $A|_{\calH}\rtimes \calH\in \ca N$ if and only if $A|_\calH\in \ca N_{\calH}$. This implies that $\alpha_{\calH\units}=\alpha_\calH$ is an isomorphism. Then by theorem \ref{going-down principle}, $\alpha_\calG$ is an isomorphism, that is $A\in \ca N_\calG$.
\end{proof}

\begin{corr}\label{crossed prod in N}
    Let $\calG$ be a second countable \'etale groupoid and $A$ be a separable exact $\calG$-\cst-algebra. If $A$ satisfies the following assumption:
    \begin{enumerate}
        \item for all proper open subgroupoid $\calH\subseteq \calG$, $A|_\calH\rtimes \calH\in \ca N$;
        \item for all separable \cst-algebra $B$, $\calG$ satisfies the Baum--Connes conjecture with coefficient in $A\otimes B$.
    \end{enumerate}
    Then $A\rtimes_r \calG\in \ca N$.
\end{corr}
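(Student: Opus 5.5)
The plan is to combine Theorem \ref{going down class N} with part (1) of Proposition \ref{connect Kunneth with BC}.

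First, assumption (1) is exactly the hypothesis of Theorem \ref{going down class N}. So that theorem applies directly and gives $A\in \ca N_\calG$, i.e.\ $A$ satisfies the $\calG$-K\"unneth formula.

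Next, assumption (2) says that $\calG$ satisfies the Baum--Connes conjecture with coefficients in $A\otimes B$ for every separable \cst-algebra $B$. To use part (1) of Proposition \ref{connect Kunneth with BC} we need, as a standing hypothesis, that $\calG$ is second countable locally compact Hausdorff with a Haar system; for a second countable \'etale groupoid the counting measures give such a system. Under assumption (2), part (1) asserts the equivalence $A\in \ca N_\calG \iff A\rtimes_r\calG\in \ca N$. Combined with the first step, this yields $A\rtimes_r\calG\in\ca N$.

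The only point to watch is a quantifier mismatch. Proposition \ref{connect Kunneth with BC}(1) is phrased for "any \cst-algebra $B$", whereas assumption (2) only covers separable $B$. I would resolve this by noting that both classes $\ca N$ and $\ca N_\calG$ only involve $B$ through $K_*(B)$. It therefore suffices to test the K\"unneth formula on separable $B$: for instance, by writing a general $B$ as an inductive limit of separable subalgebras and using continuity of $K$-theory. Alternatively, one can simply use the separable version, as in \cite{bonicke2019going}, where the relevant test algebras are separable. This reduction is the step most likely to need care; the rest is a direct citation.
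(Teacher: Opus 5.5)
Your proposal is correct and is exactly the paper's argument: Theorem~\ref{going down class N} turns assumption (1) into $A\in\ca N_\calG$, and then part (1) of Proposition~\ref{connect Kunneth with BC}, applied using assumption (2), gives $A\rtimes_r\calG\in\ca N$. The paper gives only this two-line citation and does not discuss separable versus arbitrary $B$. Your remark on that point is reasonable extra care but not needed to match the paper's proof; if you pursue it, pass to limits in the exact-sequence form of the K\"unneth formula rather than the ``free $K_*(B)$'' form, since separable subalgebras of $B$ need not have free $K$-theory.
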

\begin{proof}
    Using theorem \ref{going down class N} and proposition \ref{connect Kunneth with BC}.
\end{proof}

\begin{corr}
    Let $\calG$ be a second countable \'etale groupoid, $\Sigma$ be a twist over $\calG$. If $\calG$ satisfies the Baum--Connes conjecture with coefficients, then $C_r^*(\calG;\Sigma)\in \ca N$.
\end{corr}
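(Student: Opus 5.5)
Since the twisted reduced groupoid \cst-algebra $C_r^*(\calG;\Sigma)$ can be realized as (a corner of, or Morita equivalent to) a reduced crossed product $A_\Sigma\rtimes_r\calG$ for a suitable separable $\calG$-\cst-algebra $A_\Sigma$, I plan to reduce the statement to corollary \ref{crossed prod in N}. Concretely, I will take $A_\Sigma$ to be the commutative $\calG$-\cst-algebra attached to the twist: the sections of the complex line bundle $L=\Sigma\times_{\mathbb T}\mathbb C$ over $\calG$, restricted appropriately, or more conveniently the algebra $\calK(\ell^2)\otimes C_0(\calG\units)$ with a twisted (Fell-bundle type) action. The standard fact I will use is that $C_r^*(\calG;\Sigma)$ is stably isomorphic, or at least $\kk$-equivalent, to $A_\Sigma\rtimes_r\calG$ with $A_\Sigma$ a continuous-trace $\calG$-algebra over $\calG\units$ whose Dixmier--Douady class encodes $\Sigma$. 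Class $\ca N$ is closed under $\kk$-equivalence, so it suffices to show $A_\Sigma\rtimes_r\calG\in\ca N$.

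Next I will check the two hypotheses of corollary \ref{crossed prod in N} for $A=A_\Sigma$. Exactness holds because $A_\Sigma$ is type I (continuous trace), hence nuclear, hence exact. For hypothesis (2), $A_\Sigma\otimes B$ is a separable $\calG$-\cst-algebra, so Baum--Connes with coefficients in $A_\Sigma\otimes B$ follows directly from the assumption that $\calG$ satisfies Baum--Connes with all coefficients.

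For hypothesis (1), let $\calH\subseteq\calG$ be a proper open subgroupoid. Then $A_\Sigma|_\calH\rtimes\calH$ is $\kk$-equivalent (again by the same construction) to $C_r^*(\calH;\Sigma|_\calH)$. By proposition \ref{local structure of proper cocompact action of étale groupoid}, or rather its structural content, $\calH$ is locally of the form $F\ltimes A$ with $F$ finite; more robustly, $\calH$ is proper, so $C_r^*(\calH;\Sigma|_\calH)$ is type I (even of continuous trace up to a finite-group twist), and type I \cst-algebras are in the bootstrap class, hence in $\ca N$ by Rosenberg--Schochet. Alternatively I would argue via proposition \ref{connect Kunneth with BC}: proper groupoids satisfy Baum--Connes with all coefficients, and $A_\Sigma|_\calH\in\ca N_\calH$ because the topological side for a proper groupoid reduces to $\kk^\calH(C_0(\calH\units),-)$, which is a twisted equivariant K-theory that satisfies Künneth by a Mayer--Vietoris reduction to finite group actions.

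The main obstacle is the identification of $C_r^*(\calG;\Sigma)$ with a crossed product by a genuine $\calG$-action (untwisting), together with showing the restriction to $\calH$ stays compatible, i.e.\ that $A_\Sigma|_\calH\rtimes\calH\sim_{\kk}C_r^*(\calH;\Sigma|_\calH)$ and that the latter lies in $\ca N$. I will first try the Packer--Raeburn-type stabilization trick for groupoid twists (building $A_\Sigma=\calK(L^2(\Sigma\text{-twisted bundle}))$), and for the $\ca N$-membership of the proper pieces use that a proper étale groupoid algebra with twist is a finite-type continuous-trace/type I algebra, falling in the bootstrap class.
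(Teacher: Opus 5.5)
Your proposal is essentially the paper's argument. The paper uses the van Erp--Williams untwisting to make $C_r^*(\calG;\Sigma)$ Morita equivalent to $\calK(E)\rtimes_r\calG$ with type I fibres, applies corollary \ref{crossed prod in N}, and concludes by Morita (hence $\kk$-) invariance of $\ca N$. The one difference is hypothesis (1): your detour through $C_r^*(\calH;\Sigma|_\calH)$ requires checking that the untwisting is compatible with restriction to $\calH$, whereas the paper avoids this by applying \cite[Proposition 10.3]{tu1999conjecture-bc} directly. There, $\calK(E)|_\calH$ has type I fibres and $\calH$ is proper, so $\calK(E)|_\calH\rtimes\calH$ is type I and hence lies in $\ca N$.
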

\begin{proof}
    By \cite[Proposition 5.1]{vanErpWilliams}, there exists a Hilbert $C_0(\calG\units)$-module $E$ and an action of $\calG$ on $\calK(E)$, such that $C_r^*(\calG;\Sigma)$ is Morita equivalent to $\calK(E)\rtimes_r\calG$. For all $x\in \calG\units$, $\calK(E)_x\cong \calK(E_x)$ is of type I, then by \cite[Proposition 10.3]{tu1999conjecture-bc}, for each proper open subgroupoid $\calH$ of $\calG$, $\calK(E)|_\calH\rtimes\calH$ is of type I and hence belongs to $\ca N$ (see \cite[22.3.4, Theorem 23.1.3]{blackadar1998k}). Then corollary \ref{crossed prod in N} implies that $\calK(E)\rtimes_r\calG\in \ca N$. So we have $C_r^*(\calG;\Sigma)\in \ca N$ because of the stability of $\ca N$ under Morita equivalence.
\end{proof}

\addcontentsline{toc}{section}{References}
\bibliography{sample}{}
\bibliographystyle{plain}

\end{document}